\DeclareRobustCommand\widecheck[1]{{\mathpalette\@widecheck{#1}}}
\def\@widecheck#1#2{%
    \setbox\z@\hbox{\m@th$#1#2$}%
    \setbox\tw@\hbox{\m@th$#1%
       \widehat{%
          \vrule\@width\z@\@height\ht\z@
          \vrule\@height\z@\@width\wd\z@}$}%
    \dp\tw@-\ht\z@
    \@tempdima\ht\z@ \advance\@tempdima2\ht\tw@ \divide\@tempdima\thr@@
    \setbox\tw@\hbox{%
       \raise\@tempdima\hbox{\scalebox{1}[-1]{\lower\@tempdima\box
\tw@}}}%
    {\ooalign{\box\tw@ \cr \box\z@}}}
\theoremstyle{definition}
\newtheorem{thm}{Theorem}[section]
\newtheorem{defn}[thm]{Definition}
\newtheorem{rem}[thm]{Remark}
\newtheorem{lem}[thm]{Lemma}
\newtheorem{asm}[thm]{Assumption}
\newtheorem{exmp}[thm]{Example}
\newtheorem{cor}[thm]{Corollary}
\newtheorem{prop}[thm]{Proposition}
\newtheorem{defn-prop}[thm]{Definition-Proposition}
\newtheorem{defn-lem}[thm]{Definition-Lemma}
\newtheorem{prop-defn}[thm]{Proposition-Definition}
\newtheorem{thm*}{Theorem*}[section]
\newtheorem{prop*}[thm*]{Proposition}
\DeclareMathOperator{\pt}{pt}
\DeclareMathOperator{\CSD}{CSD}
\DeclareMathOperator{\Fib}{Fib}
\DeclareMathOperator{\Pin}{Pin}
\DeclareMathOperator{\U}{U}
\DeclareMathOperator{\Spec}{Spec}
\DeclareMathOperator{\grad}{grad}
\DeclareMathOperator{\red}{red}
\DeclareMathOperator{\Hom}{Hom}
\DeclareMathOperator{\HSW}{\mathit{HSW}}
\DeclareMathOperator{\Int}{Int}
\DeclareMathOperator{\reals}{\mathbb R}
\DeclareMathOperator{\C}{\mathbb{C}}
\DeclareMathOperator{\End}{End}
\DeclareMathOperator{\Img}{Im}
\DeclareMathOperator{\Hess}{Hess}
\DeclareMathOperator{\HMR}{\mathit{HMR}}
\DeclareMathOperator{\HM}{\mathit{HM}}
\DeclareMathOperator{\KR}{KR}
\DeclareMathOperator{\SF}{SF}
\DeclareMathOperator{\ev}{ev}
\DeclareMathOperator{\loc}{loc}
\DeclareMathOperator{\ind}{\mathsf{ind}}
\DeclareMathOperator{\Ftwo}{\mathbb F_2}
\renewcommand{\Re}{{\text{Re}}}
\newcommand{\whole}{\underline}
\newcommand{\del}{\ensuremath{\partial}}
\newcommand{\delbar}{\ensuremath{{\bar{\partial}}}}
\newcommand{\vol}{{\text{vol}}}
\newcommand{\T}{{\text{T}}}
\newcommand{\pertL}{{\slashed{\mathcal L}}}
\newcommand{\tr}{\ensuremath{{\sf{tr}}}}
\newcommand{\gr}{\ensuremath{{\sf{gr}}}}
\title{Monopole Floer Homology and Real Structures}
\date{\today}
\begin{document}
\author[J.~Li]{Jiakai Li}
                    
\address{Dept. of Math., 
Harvard Univ., 
Cambridge MA, 
United States 02138}

\email{jiakaili@math.harvard.edu}

\begin{abstract}
 	We define a ``real'' version of Kronheimer-Mrowka’s monopole Floer homology for a 3-manifold equipped with an involution. 
 	As a special case, we obtain invariants for links via their double branched covers. The new input is the notion of a real spin\textsuperscript{c} structure, which consists of a spin\textsuperscript{c} structure along with a compatible anti-linear involution on the spinor bundle.
\end{abstract}
\maketitle
\tableofcontents

\section{Introduction}
Andreas Floer introduced an infinite dimensional analogue of ``Morse homology (in the middle dimension)'',  known as \emph{Floer homology} \cite{FloerInstanton1988}. 
As topological invariants of 3-manifolds, there are several flavours of Floer homologies constructed from either pseudo-holomorphic curve theory or gauge theory.
In the case of Seiberg-Witten equations, Kronheimer and Mrowka \cite{KMbook2007} defined the \emph{monopole Floer homology groups}
\begin{equation*}
	\widehat{\HM}(Y), \quad \widecheck{\HM}(Y),\quad \overline{\HM}(Y)
\end{equation*}
(pronounced ``H-M-to'', ``H-M-from'', and ``H-M-bar'', respectively) of a 3-manifold $Y$.

The goal of this paper is to develop \emph{real} versions of the monopole Floer homology groups, as invariants of 3-manifolds with involutions (i.e. \emph{real} manifolds).
For a pair $(Y,\upiota)$ of a 3-manifold $Y$ and an orientation-preserving involution $\upiota:Y \to Y$,
we construct groups
\begin{equation*}
	\widehat{\HMR}(Y, \upiota), \quad \widecheck{\HMR}(Y, \upiota),\quad \overline{\HMR}(Y, \upiota).
\end{equation*}
Given a spin\textsuperscript{c} structure $\mathfrak s$,
the new input is a choice of an anti-linear involutive lift $\uptau: S \to S$ of  $\upiota$ on the spin\textsuperscript{c} bundle $S$, which in turn can be upgraded to a real structure on the Seiberg-Witten configuration space associated to $\mathfrak s$.

The real monopole Floer homology groups are direct sums over isomorphism classes of $(\mathfrak s, \uptau)$'s, which we will refer to as \emph{real spin\textsuperscript{c} structures} in Definition~\ref{defn:realspincstr}.
We write
\begin{equation*}
	\HMR^{\circ}_{\bullet} (Y,\upiota) =\bigoplus_{(\mathfrak s, \uptau)} \HMR^{\circ}_{\bullet}(Y,\upiota; \mathfrak s, \uptau),
\end{equation*} 
where the circle $\circ \in \{\widehat{\quad}, \widecheck{\quad}, \overline{\quad}\}$ is a placeholder for the flavours of the Floer homology, and the bullet point ``$\bullet$'' denotes the grading.
Each $\HMR^{\circ}_{\bullet}(Y,\upiota; \mathfrak s, \uptau)$ is a graded module over the ring
\begin{equation*}
	\mathcal R_n = \frac{\mathbb F_2[[\upsilon_1, \dots, \upsilon_n]]}{\upsilon_i^2 = \upsilon_j^2},
\end{equation*}
where each $\upsilon_i$ has degree $(-1)$, and $n$ is the number of components of the fixed-point set $Y^{\upiota}$.
It is a distinctive feature of $\HMR^{\circ}$ that the natural ``$U$ maps'' have degree $(-1)$ instead of $(-2)$.
The three groups fit into a long exact sequence of $\mathcal R_n$-modules
\begin{equation*}
	\begin{tikzcd}
		\ar[r] 
		& \widecheck{\HMR}_{k}(Y,\upiota;\mathfrak s,\uptau)	 \ar[r] 
		& \widehat{\HMR}_{k}(Y,\upiota;\mathfrak s, \uptau) \ar[r]
		& \overline{\HMR}_{k-1}(Y,\upiota;\mathfrak s, \uptau) \ar[r]
		& \widehat{\HMR}_{k-1}(Y, \upiota;\mathfrak s, \uptau) \ar[r]
		&{}
	\end{tikzcd}
\end{equation*}
We also establish the functoriality of $\HMR^{\circ}$. 
Given a cobordism $W:Y_- \to Y_+$ and an involution $\upiota_W:W \to W$ which induces involutions $\upiota_{\pm}: Y_{\pm} \to Y_{\pm}$, we have a cobordism map
\begin{equation*}
	\HMR^{\circ}(W,\upiota_W)=
	\sum_{(\mathfrak s_W,\uptau_W)}\HMR^{\circ}(W,\upiota_W;\mathfrak s_W,\uptau_W):
	\HMR^{\circ}_{\bullet}(Y_-,\upiota_-) \to \HMR^{\circ}_{\bullet}(Y_+,\upiota_+),
\end{equation*}
satisfying the composition law, where the infinite sum is taken over all real spin\textsuperscript{c} structures over $(W,\upiota_W)$.

While the definitions apply to general 3-manifolds with involutions, our construction stems from the study of double branched covers of links equipped with the covering involutions.
In particular, for a link $ K \subset S^3$, we denote the real monopole Floer homologies of its branched double cover by
\begin{equation*}
	\widehat{\HMR}(K), \quad \widecheck{\HMR}(K),\quad \overline{\HMR}(K).
\end{equation*}
Every spin\textsuperscript{c} structure on the double branched cover admits a unique real structure, so
\begin{equation*}
	\HMR^{\circ}_{\bullet} (K) =\bigoplus_{\mathfrak s} \HMR^{\circ}_{\bullet}(K; \mathfrak s),
\end{equation*}
as $\mathcal R_n$-modules.
Furthermore, $\HMR^{\circ}$ is functorial with respect to (not necessarily oriented) link cobordisms.
That is, given a properly embedded surface $S \subset  [0,1] \times S^3$ such that $\del S = \overline{K_-} \sqcup K_+$, we have a cobordism map
\begin{equation*}
	\HMR^{\circ}(S):
	\HMR^{\circ}_{\bullet}(K_-) \to 
	\HMR^{\circ}_{\bullet}(K_+),
\end{equation*}
induced from the double branched cover along $S$.

There are two sources of motivation for this work: knot Floer homologies from orbifold-like constructions and ``real'' gauge theoretic invariants.
An example of orbifold-like construction is the singular instanton Floer homologies defined by Kronheimer and Mrowka \cite{KMunknot2011}.
On the other hand, codimension-2 singularities also arise as loci of anti-holomorphic involutions on complex algebraic surfaces.
In the case where a real structure on a 4-manifold acts on gauge theoretic configuration spaces, one may define ``real'' versions of invariants.
For instance,  Gang Tian and Shuguang Wang
\cite{TianWang2009} defined \emph{real} Seiberg-Witten invariants for hermitian almost complex 4-manifolds.
Our construction can be viewed as the associated Floer homology of their theory.

While the orbifold story and the \emph{real} story both give rise to codimension-2 singularities in dimension three, the two directions are, in some sense, complementary.
For one, orbifold gauge fields are rather flexible, while for links in 3-manifolds that are not integral homology spheres, there are obstructions to the existence of double branched covers, and they are often not unique.
For another, if one looks one dimension lower, orbifold Riemann surfaces have point singularities, while \emph{real} Riemann surfaces have circle singularities.
From an Atiyah-Floer perspective, the two theories correspond to different flavours of Lagrangian intersection homology.
The former is closer to the Heegaard-Floer-theoretic knot Floer homology, but the latter should be a \emph{real} Lagrangian intersection homology under anti-symplectic involutions.

Despite being link invariants, our $\HMR$'s do not seem to be isomorphic to any of the existing link Floer homologies from Heegaard Floer theory, instanton theory, or their sutured variations.
For instance, the $\HMR$'s of torus knots contain no irreducible elements, as we shall see in Section~\ref{sec:examples}.
However, the branched cover approach has the advantage that many 3-manifold invariants based on Floer homology (such as the Fr\o yshov invariant \cite{Froyshov2010}) translate to knot invariants.

We expect similar versions of Heegaard Floer homology and embedded contact homology.
Apart from the potential Heegaard-Floer variant, there is evidence for a ``\emph{real} ECH'' for contact 3-manifolds, such as a real (anti-symplectic) version of Taubes' ``Gr=SW'' \cite{TaubesSWGR2000} in Gayet's preprint ~\cite{GayetSWR2004}.
See e.g. \cite{CengizOzturk2021},\cite{OzturkSalepci2015} for \emph{real} contact structures.
It would be interesting to explore contact-geometric applications, as the relevant  invariant submanifolds are naturally Lagrangians and Legendrians.

The idea of producing invariants of knots from Seiberg-Witten theory on double branched covers has been explored elsewhere in the literature.
For example, Baraglia and Hekmati (\cite{BaragliaHekmati2021}, \cite{BaragliaHekmati2022}) constructed equivariant Seiberg-Witten Floer cohomologies for rational homology spheres with finite group actions, using the Floer-spectral approach of Manolescu~\cite{Manolescu2003}.
The covering actions of branched covers act $\C$-linearly on the configurations spaces, and the fixed points descend to orbifold Seiberg-Witten solutions.
For $\C$-anti-linear involutions, Nobuhiro Nakamura \cite{NNakamura2015} studied $\Pin^-(2)$-equivariant homotopy refinements of the Seiberg-Witten invariants.
Furthermore, H. Konno, J. Miyazawa, and M. Taniguchi~\cite{KMT2021} constructed a $\mathbb Z/4$-equivariant Floer K-theory for knots using double branched covers with spin structures.
They recently also extended their construction to other spin\textsuperscript{c} structures \cite{KMT2022}, and proved several interesting applications.
We expect their groups to be isomorphic to ours, in the spirit of Manolescu-Lidman's isomorphism ``$\HSW \cong \HM$'' \cite{LidmanManolescu2018}.
Finally, Montague \cite{montague2022seibergwitten} defined equivariant Floer K-theory of spin 3-manifolds with cyclic group actions which applies to branched covers along knots.

Many aspects of $\HMR$ as a Floer homology for links such as the Fr\o yshov invariant,  unoriented skein exact triangles, along with more calculations of examples, will be developed in a subsequent paper.
We also expect a spectral sequence from a Khovanov-like homology to a suitable version of $\HMR$.

\subsection{Sectional Guide}
We begin in Section~\ref{sec:morse} with the finite dimensional model of Morse theory on real blow-ups of manifolds.
We explain the topological inputs of the theory in Section~\ref{sec:top_setup}.
We define real structures on configuration spaces in Section~\ref{sec:gauge_setup}, and blow-ups of real configuration spaces in Section~\ref{sec:config_slice}.
We set up the perturbation scheme and the associated transversality results for critical points in Section~\ref{sec:perturb} and Section~\ref{sec:Transversality}, respectively.
The moduli spaces of gradient flow lines are discussed in Section~\ref{sec:modspace_trajectories}.
We compactify the trajectory spaces in Section~\ref{sec:compact} and examine their boundary strata in Section~\ref{sec:Gluing}.
In Section~\ref{sec:floer}, we define the real monopole Floer homologies.
Sections~\ref{sec:mod_mfld_boundary} and~\ref{sec:cob_module} develop the functorial aspects of $\HMR^{\circ}$.
We conclude with discussions of examples in Section~\ref{sec:examples}.

\subsection{Acknowledgement}
I am grateful to my advisor Peter Kronheimer for his guidance and support, and for suggesting this problem.
I would also like to thank Xujia Chen, Aliakbar Daemi, Hokuto Konno, Jin Miyazawa, Ian Montague, and Masaki Taniguchi for many helpful conversations.

\section{Finite Dimensional Morse Theory Model}
\label{sec:morse}
The finite dimensional model for the \emph{real} Seiberg-Witten configuration space is a finite dimensional manifold $P$ with $\mathbb Z_2$-action and nonempty $\mathbb Z_2$-fixed point set $Q$.
Instead of applying the Borel model of $\mathbb Z_2$-equivariant homology, we will blow up $P$ along $Q$ to obtain a manifold $P^{\sigma}$ on which $\mathbb Z_2$ acts freely.
The quotient
\begin{equation*}
	 B^{\sigma} = P^{\sigma}/\mathbb Z_2.
\end{equation*}
is a manifold with boundary, and the boundary $\del  B^{\sigma}$ is a $\mathbb{RP}^n$-bundle over $Q$.
The three homology groups 
\begin{equation*}
	H_*(B^{\sigma}), \quad
	H_*(B^{\sigma},\del B^{\sigma}), \quad
	H_*(\del B^{\sigma})
\end{equation*}
associated to the pair $(B^{\sigma},\del B^{\sigma})$ provide finite dimensional models of the three flavours 
\begin{equation*}
	\widecheck{\HMR}, \quad \widehat{\HMR}, \quad \overline{\HMR}.
\end{equation*}
of real monopole Floer homology.
This section is the counterpart of \cite[Section~2]{KMbook2007}, and we use identical notations.

\subsection{Morse theory on real projective spaces}
\hfill \break
Equip $\mathbb{RP}^n$ with induced metric on unit sphere in $\reals^n$, and consider the following function on $\mathbb{RP}^{n}$ 
\begin{equation}
\label{eqref:lambda_fd}
	\Lambda^*(x) = \langle x, Lx\rangle/\|x\|^2,
\end{equation}
where $L$ is a symmetric matrix on $\reals^n$.
\begin{lem}
The gradient trajectories of $(\frac{1}{2}\Lambda^*)$ in $\mathbb{RP}^{n-1}$ are images under the quotient map $\pi:\reals^n \to \mathbb{RP}^{n-1}$, of the nonzero solutions to
\begin{equation*}
	dx/dt = -Lx,
\end{equation*} 	
in $\reals^n$. The equation above is the gradient flow equation for $f(x) = \langle x, Lx \rangle/2$.
\end{lem}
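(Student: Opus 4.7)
The plan is to reduce the statement to a short computation on the sphere $S^{n-1}\subset\reals^n$, using that the covering map $S^{n-1}\to\mathbb{RP}^{n-1}$ is a local isometry for the induced metric. The second assertion is immediate: since $L$ is symmetric, $\nabla f(x)=Lx$ in the Euclidean metric, so $dx/dt=-Lx$ is the negative gradient flow of $f(x)=\tfrac12\langle x,Lx\rangle$.

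For the first assertion, I would work on $S^{n-1}$, where $\Lambda^*|_{S^{n-1}}(x)=\langle x,Lx\rangle$ because $\|x\|=1$. Computing the spherical gradient by orthogonal projection of the Euclidean gradient $Lx$ onto $T_xS^{n-1}=x^{\perp}$ gives
\begin{equation*}
\nabla_{S^{n-1}}\!\bigl(\tfrac{1}{2}\Lambda^*\bigr)(x)=Lx-\langle x,Lx\rangle x.
\end{equation*}

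Next, take a nonzero solution $x(t)=e^{-tL}x_0$ to $dx/dt=-Lx$ and let $\tilde x(t)=x(t)/\|x(t)\|$ be its radial projection to $S^{n-1}$. Since $\tfrac{d}{dt}\|x\|^2=2\langle x,dx/dt\rangle=-2\langle x,Lx\rangle$, a direct differentiation yields
\begin{equation*}
\frac{d\tilde x}{dt}=\frac{1}{\|x\|}\frac{dx}{dt}-\frac{x}{\|x\|^{2}}\frac{d\|x\|}{dt}=-L\tilde x+\langle \tilde x,L\tilde x\rangle\tilde x,
\end{equation*}
which is exactly $-\nabla_{S^{n-1}}(\tfrac12\Lambda^*)(\tilde x)$. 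Thus $\tilde x(t)$ is a negative gradient trajectory of $\tfrac12\Lambda^*$ on $S^{n-1}$, and its image under the local isometry $S^{n-1}\to\mathbb{RP}^{n-1}$ is a gradient trajectory there. This establishes that projections of nonzero solutions of $dx/dt=-Lx$ are gradient trajectories of $\tfrac12\Lambda^*$.

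For the converse, any gradient trajectory $\gamma(t)$ of $\tfrac12\Lambda^*$ on $\mathbb{RP}^{n-1}$ lifts uniquely (up to sign) to a gradient trajectory $\tilde\gamma(t)$ on $S^{n-1}$; by uniqueness for the ODE on $S^{n-1}$ derived above, $\tilde\gamma$ must coincide with the radial projection of $e^{-tL}\tilde\gamma(0)\in\reals^n$, showing every gradient trajectory arises this way. The only subtle point — and the step I expect to watch most carefully — is the bookkeeping of the factor $\tfrac12$ and the normalization of $\|x\|^2$ in \eqref{eqref:lambda_fd}, which is precisely what makes the time parameters on both sides match without reparametrization.
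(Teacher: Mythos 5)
Your proof is correct and follows the same route as the proof of \cite[Lemma~2.3.1]{KMbook2007} that the paper cites (adapted from $\C^n/\U(1)$ to $\reals^n/\{\pm1\}$): pass through the round unit sphere via the local isometry $S^{n-1}\to\mathbb{RP}^{n-1}$, compute the spherical gradient by orthogonal projection, check that the radial projection $x/\|x\|$ of a solution to $\dot x=-Lx$ satisfies $\dot w=-Lw+\langle w,Lw\rangle w$ with the same time parameter, and invoke ODE uniqueness for the converse. The computation agrees with the equation the paper records immediately after the lemma, and your worry about the $\tfrac12$ and $\|x\|^2$ normalizations is resolved correctly.
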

\begin{proof}
	Same as \cite[Lemma~2.3.1]{KMbook2007}.
\end{proof}
Recall that the downward gradient flow of $f$ restricted to the unit sphere $S^n$ is
\begin{equation*}
	dw/dt = - Lw + \Lambda(w)w.
\end{equation*}
From this equation we deduce the critical points.
\begin{lem}
\label{lem:RPn_crit}
	The critical points of the function $\Lambda^*$ on $\mathbb{RP}^n$ are the images of the eigenvectors in $\reals^n$ of $L$.
	Assume in addition that the spectrum of $L$ is simple, i.e. the eigenvalues $\lambda_1 < \dots < \lambda_n$ are distinct.
	Let $w_i$ be the unit eigenvector of $\lambda_i$.
	Then
	\begin{itemize}[leftmargin=*]
		\item The critical points $[w_i]$ are non-degenerate.
		\item The index of $[w_i]$ is $(i-1)$.
		\item The closures of the unstable and stable manifolds of $[w_i]$ in $\mathbb{RP}^{n-1}$ are projective subspaces spanned by
			\begin{equation*}
			[w_1],\dots,[w_i],
			\end{equation*}
			and
			\begin{equation*}
				[w_i],\dots,[w_n]
			\end{equation*}
			respectively. 
			Also, the unstable and stable manifolds themselves are the affine subspaces where the components of $w_i$ is non-zero.
		\item The space $M(i,j)$ of trajectories from $[w_i]$ to $[w_j]$ can be identified with the quotient of $\reals^*$ of the set of non-zero solutions $x(t)$ to the linear equations $dx/dt = -Lx$ which has the asymptotics
		\begin{align*}
			x(t) &\sim c_0 e^{-\lambda_i t}w_i, \quad \text{as }t \to -\infty, \text{ and}\\
			x(t) &\sim c_1 e^{-\lambda_j t}w_j, \quad \text{as }t \to \infty,
		\end{align*}
		where $c_0,c_1 \in \reals^*$.
\end{itemize}
\end{lem}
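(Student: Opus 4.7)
The plan is to reduce everything to the linear flow $dx/dt = -Lx$ on $\reals^n$ afforded by the previous lemma, and then diagonalize $L$ using its eigenbasis $w_1,\dots,w_n$. Write $x = \sum_i a_i w_i$; then the flow becomes $a_i(t) = a_i(0) e^{-\lambda_i t}$ in each eigendirection, and the analysis of $\Lambda^*$ on $\mathbb{RP}^{n-1}$ becomes a matter of tracking which coefficients dominate asymptotically.

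First I would verify the identification of critical points: a nonzero vector $x$ satisfies $d\Lambda^*_{[x]} = 0$ iff $Lx - \Lambda^*(x)\,x$ is proportional to $x$, i.e.\ iff $x$ is an eigenvector of $L$, so $\{[w_1],\dots,[w_n]\}$ exhausts the critical set. Next, to show non-degeneracy and compute the Morse index at $[w_i]$, I would work in local affine coordinates on $\mathbb{RP}^{n-1}$ obtained by writing points as $w_i + \sum_{j \ne i} u_j w_j$ and computing $\Lambda^*$ to second order in the $u_j$'s; using $\langle w_j, w_k\rangle = \delta_{jk}$ and $L w_j = \lambda_j w_j$, a direct expansion gives
\begin{equation*}
\Lambda^*\left(w_i + \sum_{j \ne i} u_j w_j\right) = \lambda_i + \sum_{j \ne i} (\lambda_j - \lambda_i)\, u_j^2 + O(|u|^4),
\end{equation*}
so the Hessian is diagonal with entries $(\lambda_j - \lambda_i)$, $j\ne i$. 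Under the simple-spectrum assumption these are all nonzero, and exactly $(i-1)$ of them (those with $\lambda_j < \lambda_i$) are negative, yielding index $(i-1)$.

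For the stable and unstable manifolds, I would lift a trajectory in $\mathbb{RP}^{n-1}$ through $[x]$ to a linear orbit $x(t) = \sum_k a_k e^{-\lambda_k t} w_k$. As $t \to +\infty$ the class $[x(t)]$ converges to $[w_j]$ iff $a_j$ is the last nonzero coefficient (largest $j$ with $a_k \ne 0$), giving that the stable manifold of $[w_i]$ is the affine chart $\{a_i \ne 0,\ a_{i+1} = \cdots = a_n = 0\}$; symmetrically the unstable manifold is $\{a_1 = \cdots = a_{i-1} = 0,\ a_i \ne 0\}$. Their closures in $\mathbb{RP}^{n-1}$ are then precisely the projective subspaces $\P\langle w_1,\dots,w_i\rangle$ and $\P\langle w_i,\dots,w_n\rangle$.

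Finally, for $M(i,j)$ with $i < j$, the trajectories from $[w_i]$ to $[w_j]$ lift, by the first lemma, to nonzero linear solutions $x(t)$ of $dx/dt = -Lx$, well-defined up to the $\reals^*$-scaling that descends to the identity on $\mathbb{RP}^{n-1}$. The prescribed asymptotics $x(t)\sim c_0 e^{-\lambda_i t}w_i$ as $t\to -\infty$ and $x(t)\sim c_1 e^{-\lambda_j t}w_j$ as $t\to +\infty$ are exactly the condition that only the $w_i$- and $w_j$-coefficients among $a_i,\dots,a_j$ be nonzero in the eigen-expansion, which is equivalent to $[x(t)]$ emanating from $[w_i]$ and limiting to $[w_j]$. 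The main bookkeeping step, and the one I would be most careful about, is checking that the $\reals^*$-quotient really matches the moduli-theoretic notion of unparametrized trajectories (the $\reals$-action by time translation corresponds precisely to the positive rescalings coming from $x(t+s) = e^{-sL}x(t)$, which in each $w_k$-coefficient is multiplication by $e^{-s\lambda_k}$, hence a genuine $\reals^*$ rescaling only after restricting to two-dimensional solutions in $\mathrm{span}(w_i,w_j)$); this reduction again uses simplicity of the spectrum.
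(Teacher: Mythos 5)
Your identification of the critical points and your index computation at $[w_i]$ are both correct, and the affine-chart expansion of $\Lambda^*$ is exactly the clean way to see non-degeneracy and the count $i-1$.  However, two parts of the argument contain genuine errors.

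First, the claim about stable and unstable manifolds is inverted, because you misidentified the dominant mode as $t\to+\infty$.  Writing $x(t)=\sum_k a_k e^{-\lambda_k t}w_k$ with $\lambda_1<\cdots<\lambda_n$, the coefficient that dominates as $t\to+\infty$ is the one with the \emph{smallest} $\lambda_k$ (hence smallest $k$) among the nonzero $a_k$, since $e^{-\lambda_k t}/e^{-\lambda_\ell t}=e^{(\lambda_\ell-\lambda_k)t}\to\infty$ when $\lambda_\ell>\lambda_k$; similarly as $t\to-\infty$ the largest $k$ dominates.  So the stable manifold of $[w_i]$ is $\{a_1=\cdots=a_{i-1}=0,\ a_i\neq 0\}$ with closure $\P\langle w_i,\dots,w_n\rangle$, and the unstable manifold is $\{a_i\neq 0,\ a_{i+1}=\cdots=a_n=0\}$ with closure $\P\langle w_1,\dots,w_i\rangle$.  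You can catch this sign error internally: your (correct) index computation gives $\dim U_{[w_i]}=i-1$, but the unstable manifold you wrote down has dimension $n-i$.

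Second, the description of $M(i,j)$ is off in two respects.  A trajectory from $[w_i]$ to $[w_j]$ requires $\lambda_i>\lambda_j$, i.e.\ $i>j$, not $i<j$ as you wrote.  More importantly, the asymptotic conditions only constrain the \emph{extreme} nonzero coefficients, not the intermediate ones: the solution must satisfy $a_i\neq 0$, $a_j\neq 0$, and $a_k=0$ for $k>i$ and for $k<j$, but $a_k$ for $j<k<i$ may be anything.  Your statement that ``only the $w_i$- and $w_j$-coefficients among $a_i,\dots,a_j$ be nonzero'' would collapse $M(i,j)$ to a space of the wrong dimension whenever $i-j>1$.  Finally, the closing remark conflates two different actions: the $\reals^*$ by which one quotients in the lemma is overall scalar rescaling $x\mapsto cx$, which is needed simply because $[x(t)]$ and $[cx(t)]$ trace the same projective curve (and this is a genuine $\reals^*$-action for any solution, not just two-dimensional ones).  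The time-translation $x(t)\mapsto x(t+s)=e^{-sL}x(t)$ is a separate $\reals$-action, not a scalar one, and quotienting by it produces the \emph{unparametrized} trajectory space $\check M(i,j)$, which is not what the lemma is describing.
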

Computation of the Morse homology of a real projective space is slightly subtler than a complex projective space, for the index difference of adjacent critical points is one instead of two.
The differential is zero in $\mathbb F_2$-coefficients for $\mathbb{RP}^n$ because solutions to the gradient flow from $[w_{i}]$ to $[w_{i-1}]$ are given by
\begin{equation*}
	c_0 e^{-\lambda_i t}w_i + c_1 e^{-\lambda_{i-1}t}w_{i-1}.
\end{equation*}
Up to reparametrization, there are only two possibilities $(c_0 = 1, c_1 = \pm 1)$.

\subsection{Morse theory on manifolds with boundaries}
\hfill \break
Let $(B,\del B)$ be a Riemannian manifold with boundary.
Let $f$ be a Morse function such that the gradient vector field $V$ of $f$ is everywhere tangent to the boundary of $\del B$.
\begin{defn}
	Let $a$ be a critical point of $f$ on $\del B$.
	Then $a$ is \emph{boundary-stable} if the normal vector $N_a$ at $a$ is a positive eigenvector of the Hessian.
	Otherwise, $a$ is \emph{boundary-unstable}.	
\end{defn}
Let $\mathfrak c^s$ and $\mathfrak c^u$ be the set of boundary-stable and boundary-unstable critical points, and let $\mathfrak c^0$ be the set of interior critical points.
Unlike Morse theory in closed manifolds, the usual Morse-Smale condition can never be satisfied between a boundary-stable critical point $a$ and a boundary-unstable critical point $b$, as all trajectories from $a$ to $b$ lie entirely in $\del B$.
We make the following definition. 
\begin{defn}
	Let $a,b$ be two critical points and let $U_a$ be the unstable manifold of $a$, and $S_b$ be the stable manifold.
	The case when $a$ is boundary-stable, and $b$ is boundary-unstable is the \emph{boundary-obstructed} case.
	The Morse function $f$ is \emph{regular} if: 
	\begin{itemize}
		\item in the boundary-obstructed case, $U_a$ and $S_b$ are transverse as intersection in $\del B$, or
		\item in the remaining cases, $M(a,b) = U_a \cap S_b$ is transverse in $B$.
	\end{itemize}
\end{defn}
Let $M(a,b) = U_a \cap S_b$ be the space of trajectories.
Assume $f$ is regular, then $M(a,b)$ is either a manifold or a manifold with boundary. 
The latter happens if and only if $a$ is boundary-unstable and $b$ is boundary-stable; in particular, $\del M(a,b) = M(a,b) \cap \del B$, which we denote as $M^{\del}(a,b)$. 
If $M(a,b)$ or $M^{\del}(a,b)$ is nonempty, then we denote the quotient by reparametrization $\check{M}(a,b) = M(a,b)/\reals$ and $\check{M}^{\del}(a,b) = M^{\del}(a,b)/\reals$, respectively.

The dimension of $M(a,b)$ is given by the difference of Morse index in $B$, except in the boundary-obstructed case, where
\begin{equation*}
	\dim M(a,b) = \ind_M(a) - \ind_M(b) + 1.
\end{equation*}

To define Morse complexes, we first define
\begin{equation*}
	C^o_k = R \otimes \left(\bigoplus_{e_a \in \mathfrak c^o_k} \mathbb Z e_a\right),\quad
	C^s_k = R \otimes \left(\bigoplus_{e_a \in \mathfrak c^s_k} \mathbb Z e_a\right),\quad
	C^u_k = R \otimes \left(\bigoplus_{e_a \in \mathfrak c^u_k} \mathbb Z e_a\right),
\end{equation*}
where $R$ is a ring.
Through out this paper we will take $R = \mathbb F_2$ and ignore orientations of moduli spaces.
The three Morse complexes of interests are
\begin{equation*}
	\bar C_k = C^s_k \oplus C^u_{k+1},\quad
	\check C_k = C^o_k \oplus C^s_k \quad
	\hat C_k = C^o_k \oplus C^u_k.
\end{equation*}
To describe the differential in $\bar C_k$, we define
\begin{equation*}
	\bar\del: \bar C_k \to \bar C_{k-1}, \quad \bar\del e_a = \sum_{b \in \bar C_{k-1}} \# \check M^{\del}(a,b)\cdot e_b,
\end{equation*}
which decomposes as
\begin{equation*}
	\bar\del = \begin{pmatrix}
		\bar\del^s_s & \bar\del^u_s\\
		\bar\del^s_u & \bar\del^u_u
	\end{pmatrix}
\end{equation*}
according to $\bar C_k = C^s_k \oplus C^u_{k+1}$.
Moreover, counting of flow lines in the interior gives rise to the maps
\begin{align*}
	\del^o_o&: C^o_k \to C^o_{k-1}, \quad \del^o_o e_a = \sum_{b \in \mathfrak c^o_k} \# M(a,b)\cdot e_b,\\
	\del^o_s&: C^o_k \to C^s_{k-1}, \quad \del^o_s e_a = \sum_{b \in \mathfrak c^s_k} \# M(a,b)\cdot e_b,\\
	\del^u_o&: C^u_k \to C^o_{k-1}, \quad \del^u_o e_a = \sum_{b \in \mathfrak c^o_k} \# M(a,b)\cdot e_b,\\
	\del^u_s&: C^u_k \to C^s_{k-1}, \quad \del^u_s e_a = \sum_{b \in \mathfrak c^s_k} \# M(a,b)\cdot e_b.
\end{align*}
The Morse differentials $\check\del: \check C_k \to 
\check C_{k-1}$ and $\hat\del: \hat C_k \to \hat C_{k-1}$, can be written in the perspective direct sum decompositions, as
\begin{equation*}
	\check\del = \begin{pmatrix}
		\del^o_o & -\del^u_o\bar \del^s_u \\
		\del^o_s & \bar\del^s_s - \del^u_s\bar\del^s_u
	\end{pmatrix}
	\quad 
	\hat\del = \begin{pmatrix}
		\del^o_o & \del^u_o\\
		-\bar\del^s_u\bar\del^o_s & -\bar\del^u_u -\bar\del^s_u\del^u_s
	\end{pmatrix}
\end{equation*}
We summarize the main results in the following Proposition.
\begin{prop}	
	The squares $\delbar^2, \check\del^2, \hat\del^2$ are all zero.
	There are isomorphisms of between the homology of Morse complexes and singular homology groups:
	\begin{equation*}
		H_k(\bar C_*,\delbar) = H_k(\del B;\mathbb F_2),\quad
		H_k(\check C_*,\check\del) = H_k(B;\mathbb F_2),
			\quad
		H_k(\hat C_*,\hat\del) = H_k(B,\del B;\mathbb F_2).
	\end{equation*}
\end{prop}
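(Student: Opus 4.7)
My plan is to prove the Proposition in two stages: first establish that each differential squares to zero, then identify the three resulting homology groups with the claimed singular homologies. The argument parallels the finite-dimensional model of \cite[Section~2]{KMbook2007}.

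The vanishing $\delbar^2 = 0$ is the easiest step. I observe that $\bar C_*$ with differential $\delbar$ is precisely the Morse complex of the restricted function $f|_{\del B}$ on the closed manifold $\del B$, once the grading is interpreted correctly. Indeed, a boundary-stable critical point $a$ has the normal direction stable, so $\ind_B(a) = \ind_{\del B}(a)$; a boundary-unstable $a$ has the normal direction unstable, so $\ind_B(a) = \ind_{\del B}(a) + 1$. The decomposition $\bar C_k = C^s_k \oplus C^u_{k+1}$ therefore places every boundary critical point in its Morse degree on $\del B$, and $\delbar$ counts unparametrised trajectories in $\del B$. Standard closed-manifold Morse theory then gives both $\delbar^2 = 0$ and $H_k(\bar C_*,\delbar) = H_k(\del B;\Ftwo)$.

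For $\check\del^2 = 0$ and $\hat\del^2 = 0$, I would analyse the ends of the one-dimensional moduli spaces $\check M(a,b)$. For a generic regular $f$ the ends split into two families. The first consists of standard two-segment broken trajectories through an intermediate critical point $c$ for which neither half is boundary-obstructed; these contribute the ``diagonal'' compositions $\del^o_o \del^o_o$, $\del^o_s \del^o_o$, $\bar\del^s_s \bar\del^s_s$, and so on. The second family is new: three-segment broken trajectories whose middle segment is boundary-obstructed, running from some $c_1 \in \mathfrak c^s$ to some $c_2 \in \mathfrak c^u$ inside $\del B$. Because the boundary-obstructed dimension formula shifts $\dim M(c_1,c_2)$ up by one, such configurations have a one-parameter middle segment (counted by $\bar\del^s_u$ after reparametrisation) together with zero-dimensional outer segments, and gluing exhibits them as genuine codimension-one ends of $\check M(a,b)$. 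These account exactly for the off-diagonal cross-terms $\del^u_o \bar\del^s_u \del^o_s$ and $\bar\del^s_u \del^u_s$ appearing in the matrix formulas for $\check\del$ and $\hat\del$. Summing all ends modulo two yields precisely the matrix identities $\check\del^2 = 0$ and $\hat\del^2 = 0$.

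To identify the homologies with singular homology, I would build Morse-theoretic CW models by perturbing $f$ near $\del B$. Attach a small collar $\del B \times [0,\epsilon]$ to $B$ and modify the gradient so it becomes transverse to the new boundary, pointing either inward, so that the enlarged manifold deformation-retracts along the gradient flow onto the union of closed unstable cells of $\mathfrak c^o \cup \mathfrak c^s$, realising $\check C_*$ as a CW chain complex for $B$; or outward, so that the analogous retraction produces $\hat C_*$ as a CW chain complex for $(B,\del B)$. The boundary-obstructed cross-terms in the Morse differentials appear exactly as the contributions of new broken trajectories that traverse the collar. Cellular-equals-singular then gives $H_*(\check C_*) \cong H_*(B;\Ftwo)$ and $H_*(\hat C_*) \cong H_*(B,\del B;\Ftwo)$, while the $\bar C_*$ case is already handled above. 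The hardest step will be the gluing analysis along the boundary-obstructed stratum: one must verify that the three-segment breaks through $\mathfrak c^s \to \mathfrak c^u$ form genuine codimension-one ends with a local product-of-gluing-parameters model, and that the $\reals$-reparametrisation on the one-parameter middle segment is correctly divided out so each such end contributes multiplicity one. Once this boundary-obstructed gluing theorem is in place, the matrix identities and the homology computations reduce to routine bookkeeping over $\Ftwo$.
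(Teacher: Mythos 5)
Your treatment of $\bar C$ is correct: since $\ind_{\del B} a = \ind_B a$ for boundary-stable $a$ and $\ind_{\del B} a = \ind_B a - 1$ for boundary-unstable $a$, the group $\bar C_k = C^s_k \oplus C^u_{k+1}$ collects exactly the critical points of $f|_{\del B}$ of $\del B$-index $k$, and $\delbar$ counts trajectories in $\del B$, so $\bar C_*$ is literally the Morse complex of $f|_{\del B}$ on the closed manifold $\del B$. Both $\delbar^2 = 0$ and $H_*(\bar C) \cong H_*(\del B;\Ftwo)$ then follow from standard closed-manifold Morse theory. Your derivation of $\check\del^2 = 0$ and $\hat\del^2 = 0$ from the codimension-one ends of one-dimensional unparametrized trajectory spaces, with the three-segment broken trajectories through a boundary-obstructed middle accounting for the cross-terms, is also correct and matches the approach of \cite[Section~2]{KMbook2007}, to which the paper defers without giving a proof.

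The identification of $H_*(\check C)$ with $H_*(B)$ and of $H_*(\hat C)$ with $H_*(B,\del B)$ has a gap. You attach a collar $\del B \times [0,\epsilon]$ and modify the gradient near the new boundary so that it becomes transverse there, and then claim the enlarged manifold deformation-retracts onto the union of closed unstable cells of $\mathfrak c^o \cup \mathfrak c^s$. But if the modification is supported near the new boundary, the original boundary critical points of $\mathfrak c^s \cup \mathfrak c^u$ survive as hyperbolic zeros of the modified vector field, now sitting in the interior of the enlarged manifold with indices unchanged (the linearization at such a point is determined by the Hessian of $f$ and cannot be altered by a modification supported elsewhere). In particular the $\mathfrak c^u$ points remain fixed points of the flow, their unstable cells are not retracted away, and the Morse complex of the enlarged manifold has generating set $\mathfrak c^o \cup \mathfrak c^s \cup \mathfrak c^u$, not $\mathfrak c^o \cup \mathfrak c^s$. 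What is needed is a perturbation of $f$ itself, not merely an extension of the vector field away from $\del B$: replace $f$ by $f - \epsilon h$, where $h$ is a boundary-defining function, so that $\grad(f - \epsilon h)$ acquires an outward normal component all along $\del B$. Each boundary-stable critical point then moves to a nearby interior critical point of the same Morse index, while each boundary-unstable critical point is eliminated outright (the would-be perturbed critical point lies on the wrong side of $\del B$), so the perturbed Morse complex has generators exactly $\mathfrak c^o \cup \mathfrak c^s$ and standard Morse theory on a manifold with boundary and inward-pointing $-\grad$ gives $H_*(B)$. Identifying the perturbed differential with $\check\del$, including the cross-terms $\del^u_o\bar\del^s_u$ and $\del^u_s\bar\del^s_u$, requires a degeneration/gluing argument as $\epsilon \to 0$, which is where the boundary-obstructed gluing you already flag as the hard step enters; but it must be coupled to this perturbation of $f$, not to a collar extension. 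The case of $\hat C$ and $H_*(B,\del B)$ is dual, using $f + \epsilon h$.
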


\subsection{Real blow-ups}
\hfill \break
Let $P$ be a smooth, closed Riemannian manifold admitting an involution $\upiota: P \to P$.
Let $Q$ be the set of fixed points and $N = N(Q)$ be the normal bundle of $Q \subset P$.
Let $B$ be the quotient $P/\upiota$.

The real blow-up of $P$ is a manifold with boundary on which $\mathbb Z_2$ acts freely, constructed as follows.
Let $S(N) \subset N$ be the unit sphere bundle of $N$ over $Q$, and $p:S(N) \times [0,\epsilon) \to P$ be the map to a geodesic tubular neighbourhood $W$ of $Q$.
Let $p^{o}:S(N) \times (0,\epsilon) \to W\setminus Q$ be restriction on $(0,\epsilon)$ which is a diffeomorphism. 
Then the real blow-up $P^{\sigma}$ along $Q$ is given by:
\begin{equation*}
	P^{\sigma} = ((S(N) \times [0,\epsilon) \cup (P\setminus Q))/p^o.
\end{equation*}
The involution $\upiota$ lifts to a free involution on the blown-up manifold $P^{\sigma}$, which acts as $-1$ on $S(N) \to S(N)$.
Let $\pi:P^{\sigma} \to P$ be the induced projection, and restricting to the boundary $\del P^{\sigma}$, the map $\pi:S(N) \to Q$ is a sphere bundle.
We define the real blow-up of $B$ as the manifold with boundary
\begin{equation*}
	B^{\sigma} = P^{\sigma}/\upiota.
\end{equation*}
The induced projection $\pi|\del B^{\sigma}: \mathbb P(N) \to Q$ is a real projective bundle with fibre $S(N)/\upiota$.

Let $\tilde f: P \to \reals$ be an $\upiota$-invariant smooth function and $\tilde V$ be its gradient vector field.
Away from $Q$, the projection $\pi: P^{\sigma} \to P$ is diffeomorphism and we can pull back the vector field $\tilde V$ to $P^{\sigma} \setminus \del P^{\sigma}$.
By the same exact argument as in KM, we can extend $\tilde V$ all of $P$, such that $\tilde V$ is is tangent along the boundary $S(N)$.
\begin{exmp}
	Let us consider the model case when $P = \reals^n$, $\upiota (p) = -p$, and $\tilde f(p) = \frac{1}{2}\langle p, Lp \rangle$ where $L$ is a symmetric matrix. 
	The real blow-up $P^{\sigma}$ can be identified with
	\begin{equation*}
		P^{\sigma} = S^{n-1} \times [0,\infty),
	\end{equation*}
	which can be thought of as the extension of the ``polar coordinate" on $P$. 
	The downward gradient flow equation $dp/dt = -Lp$ of $\tilde f$ on $P$, in polar coordinate $P^{\sigma}\setminus S(N) \cong S^{n-1} \times (0,\infty)$ can be rewritten as
	\begin{align*}
		\dot\phi &= -L\phi + \Lambda(\phi)\phi,\\
		\dot s &= -\Lambda(\phi)s,
	\end{align*}
	which extends to a smooth flow on $P^{\sigma}$, preserving the boundary.
	The vector field $\tilde V^{\sigma}$ generated by the flow on $P^{\sigma}$ is everywhere tangent to the boundary.
	The vector field $V^{\sigma}$ is invariant under $\upiota$ and hence descends to the quotient $B^{\sigma} = \mathbb{RP}^{n-1} \times [0,\infty)$.
	Suppose the spectrum $\{\lambda_1 < \dots < \lambda_n\}$ of $L$ is simple and contains no zero. 
	Let $\{\phi_i\}$ be orthogonal unit eigenvectors corresponding to $\{\lambda\}$.
	Then the zeros of $V^{\sigma}$ on $B^{\sigma}$ are exactly $([\phi],0) \in \mathbb{RP}^{n-1} \times [0,\infty)$, where 
	\begin{equation*}
		\ind_{B^{\sigma}}([\phi],0)
		= 
		\begin{cases}
			i - 1 & (\lambda_i > 0)\\
			i	& (\lambda_i < 0).
		\end{cases}
	\end{equation*}
\end{exmp}
Back to the general case.
For any $q \in Q$, let $L_q$ be the restriction of the Hessian $\nabla \tilde V$ of $\tilde f$ to $N_q$, which is symmetric. 
We make the following assumption.
\begin{asm}\label{asm:simplespectrum}
	We suppose that the restriction of $\tilde f$ to $Q$ is Morse on $Q$.
	In addition, we assume for every $q \in Q$ the spectrum of $L_q$ is simple and contains no zero.
	We denote $\lambda_1(q) < \dots < \lambda_n(q)$ eigenvalues and $\phi_1(q),\dots,\phi_n(q)$ the corresponding eigenvectors.
\end{asm}
The zeros of the vector field $V^{\sigma}$ can be described in the similar way as the model case.
\begin{lem}\label{lem:classifyboundarycritical}
	Write $a \in \del B^{\sigma}$ as $(q,[\phi])$ where $q \in Q$ and $[\phi] \in \mathbb P(N_q)$.
	Then $a$ is a stationary point for the vector field $V^{\sigma}$ if and only if $q$ is a critical point for the restriction $\tilde f|_Q$ and $\phi$ is an eigenvector of $L_q$.	
	Under Assumption~\ref{asm:simplespectrum}, the zeros $(q,[\phi_i(q)])$ are nondegenerate, and 
	\begin{equation*}
		\ind_{B^{\sigma}}(q,\phi[q]) =
		\begin{cases}
			\ind_Q(q) + i - 1 & (\lambda_i(q) > 0),\\
			\ind_Q(q) + i & (\lambda_i(q) < 0).
		\end{cases}
	\end{equation*}
\end{lem}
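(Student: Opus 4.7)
The plan is to work locally near a point $q \in Q$ using equivariant tubular coordinates, reducing the general claim to a combination of standard Morse theory on $Q$ and the model computation carried out in the preceding example. By the equivariant tubular neighborhood theorem, I identify a $\upiota$-invariant neighborhood of $q$ in $P$ with a neighborhood of the zero section in $T_q Q \oplus N_q$, where $\upiota$ acts as $(+1) \oplus (-1)$. Introducing real polar coordinates $v = s\phi$ with $\phi \in S(N_q)$ on the normal factor identifies a neighborhood of $\pi^{-1}(q)$ in $P^{\sigma}$ with $T_qQ \times S(N_q) \times [0,\epsilon)$, and hence a neighborhood in $B^{\sigma}$ with $T_qQ \times \mathbb P(N_q) \times [0,\epsilon)$, the boundary corresponding to $\{s=0\}$. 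Because $\tilde f$ is $\upiota$-invariant, its Taylor expansion at $q$ is even in $v$, and takes the form
\[
\tilde f(y,v) = \tilde f(q) + \tilde f|_Q(y) + \tfrac12\langle v, L_q v\rangle + O\bigl(\|y\|^3 + \|v\|^2\|y\| + \|v\|^4\bigr).
\]

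Next I rewrite the flow of $\tilde V$ in the coordinates $(y, \phi, s)$ in direct analogy with the example. The resulting system of ODEs takes the form
\[
\dot y = -\grad\bigl(\tilde f|_Q\bigr)(y) + O(s^2), \qquad \dot\phi = -L_q\phi + \Lambda_q(\phi)\phi + O(s), \qquad \dot s = -\Lambda_q(\phi)\,s + O(s^2),
\]
where $\Lambda_q(\phi) = \langle\phi, L_q\phi\rangle$. This extends smoothly across $\{s=0\}$, descends to $B^{\sigma}$, and is the local expression for $V^{\sigma}$. A point $a = (q,[\phi],0)$ is a zero of $V^{\sigma}$ if and only if $q$ is critical for $\tilde f|_Q$ on $Q$ and $[\phi]$ is critical for the projective function $\Lambda_q^*$ on $\mathbb P(N_q)$; by Lemma~\ref{lem:RPn_crit} together with Assumption~\ref{asm:simplespectrum}, the latter condition forces $[\phi] = [\phi_i(q)]$ for some $i$, which proves the first assertion.

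Finally, for the Morse index I linearize $V^{\sigma}$ at $(q,[\phi_i(q)],0)$. The remainders in the normal-form ODE either carry an extra factor of $s$ or vanish to higher order in $(y,\phi-\phi_i)$, so their contribution to the Jacobian is zero at the critical point and the linearization is block-diagonal with three blocks. The $TQ$-block is $-\Hess(\tilde f|_Q)(q)$, with $\ind_Q(q)$ positive eigenvalues; the tangential projective block, acting on $T_{\phi_i}S(N_q) = \phi_i^{\perp} \subset N_q$, has eigenvalues $\lambda_i(q) - \lambda_j(q)$ for $j \neq i$, of which exactly $i-1$ are positive (those with $\lambda_j < \lambda_i$); and the radial block is the scalar $-\lambda_i(q)$, contributing one additional unstable direction precisely when $\lambda_i(q) < 0$. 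Summing these gives the stated index formula, and non-degeneracy follows because no eigenvalue vanishes under the simplicity of the spectrum together with the Morse condition on $\tilde f|_Q$. The main technical obstacle I anticipate is the careful bookkeeping that shows the remainder terms do not disturb the block-diagonal structure at the critical point; once that is settled, the index count reduces to the three scalar verifications above.
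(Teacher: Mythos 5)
Your approach matches the paper's (which defers to the proof in [KM, Lemma~2.5.3] and only records the shape of the Hessian), and your index count is correct, but one intermediate claim is inaccurate: the linearization of $V^{\sigma}$ at $(q,[\phi_i(q)],0)$ is \emph{not} block diagonal. When you wrote $\dot\phi = -L_q\phi + \Lambda_q(\phi)\phi + O(s)$ you froze $L$ at the point $q$, but the honest equation has $L_{q+y}$, and differentiating in $y$ at the critical point gives a genuine coupling term $x$ from $T_qQ$ into $W = \phi_i^{\perp}$; this is exactly the $x$ the paper records when it writes
\[
H = \begin{pmatrix} h_q & 0 & 0\\ x & L_q - \lambda_i & 0\\ 0 & 0 & \lambda_i \end{pmatrix}.
\]
So what you actually have is a block \emph{lower triangular} Hessian, not a block diagonal one. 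Fortunately this does not disturb your conclusion: the spectrum of a block triangular operator is the union of the spectra of the diagonal blocks, so the three scalar verifications you carry out (counting negative eigenvalues of $h_q$, of $L_q - \lambda_i$ on $\phi_i^{\perp}$, and the sign of $\lambda_i$) still give the stated index, and non-degeneracy still follows from Assumption~\ref{asm:simplespectrum}. You should simply replace ``block-diagonal'' with ``block lower triangular, with diagonal blocks $h_q$, $L_q - \lambda_i$, $\lambda_i$'' and note that the off-diagonal term is irrelevant to the eigenvalue count; the ``careful bookkeeping'' you flagged as a potential obstacle would, if carried out, show that the $x$ term is present but harmless rather than absent.
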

\begin{proof}
	The proof again is the same as in [KM]. 
	We remark here that the Hessian of $V^{\sigma}$ takes the form, in the decomposition $T_qB^{\sigma}=T_qQ \oplus W \oplus \reals$,
	\begin{equation*}
		H = 
		\begin{pmatrix}
			h_q & 0 & 0\\
			x & L_q - \lambda_i & 0\\
			0 & 0 & \lambda_i
		\end{pmatrix}
	\end{equation*}
	where $x$ is the operator
\end{proof}
To sum up, the set $\mathfrak c$ of stationary points of $V^{\sigma}$ decomposes as $\mathfrak c^o \cup \mathfrak c^s \cup \mathfrak c^u$. 
The set $\mathfrak c^o$ consists of interior stationary critical points, which can be identified with critical points of the function $f$ on $B \setminus Q$. 
The set $\mathfrak c^s$ consists of boundary-stable critical points $(q,[\phi_i(q)])$, described in Lemma~\ref{lem:classifyboundarycritical}. Similarly, $\mathfrak c^u$ consists of boundary-unstable critical points.
Note that $(q,[\phi_i(q)])$ is boundary-stable if and only if $\lambda_i(q) > 0$, and boundary-unstable if and only if $\lambda_i(q) < 0$.
\begin{thm}
Suppose the flow generated by the vector field $V^{\sigma}$ is regular. 
Then $(\check C_*,\check\del)$, $(\hat C_*,\hat\del)$, and $(\bar C_*,\delbar)$ are complexes, i.e. the differentials square to zero.
In particular, the homology	of the three complexes are absolute and relative homology for the pair $(B^{\sigma},\del B^{\sigma})$, and homology of $\del B^{\sigma}$, respectively. 
\end{thm}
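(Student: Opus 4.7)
The plan is to reduce the statement to the Morse theory on manifolds with boundary developed by Kronheimer and Mrowka in \cite[Section 2]{KMbook2007}. Once we pass to the blow-up $B^{\sigma}$, the vector field $V^{\sigma}$ is tangent to $\del B^{\sigma}$, its stationary points are classified by Lemma~\ref{lem:classifyboundarycritical} into $\mathfrak c^o,\mathfrak c^s,\mathfrak c^u$, and by assumption the flow is regular in the sense defined earlier. Therefore the general machinery applies essentially verbatim, and the only task is to verify that each local ingredient — gluing, compactification of moduli spaces, index formula in the boundary-obstructed case — takes the same form here as in the closed-manifold setting.

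The first step is $\bar\del^2 = 0$. This reduces to standard Morse theory on the \emph{closed} manifold $\del B^{\sigma}$, since the zeros of $V^{\sigma}|_{\del B^{\sigma}}$ are precisely $\mathfrak c^s \cup \mathfrak c^u$ and the induced flow on $\del B^{\sigma}$ is Morse–Smale by regularity. Compactifying one-dimensional components of $\check M^{\del}(a,b)$ into 1-manifolds with boundary, counting endpoints in $\mathbb F_2$, gives $\bar\del^2 = 0$, and the standard Morse-to-singular comparison identifies the resulting homology with $H_*(\del B^{\sigma};\mathbb F_2)$.

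The main content is $\check\del^2 = 0$ and $\hat\del^2 = 0$. For a pair $(a,b)$ with the appropriate index difference, one compactifies the 1-dimensional moduli space $\check M(a,b)$ to a compact 1-manifold whose boundary consists of broken trajectories. These break into three types: an interior breaking through some $c \in \mathfrak c^o$, a pure boundary breaking through $c \in \mathfrak c^s \cup \mathfrak c^u$, or, in the boundary-obstructed configuration, a broken trajectory $a \to c \to b$ where $c$ is boundary-stable as a limit from above and boundary-unstable as a limit from below. The crucial point from the gluing analysis is that each unobstructed broken trajectory arises as the limit of exactly one end of $\check M(a,b)$, while each boundary-obstructed broken trajectory arises as the limit of \emph{two} ends. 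The matrix formulas defining $\check\del$ and $\hat\del$ are exactly the algebraic bookkeeping of these contributions, and the total $\mathbb F_2$-count of the boundary of a compact 1-manifold being zero yields the desired identities. The identification of the resulting homology groups with singular homology is then carried out by the standard argument: the (compactified) unstable manifolds of critical points give a CW-type decomposition of $B^{\sigma}$ relative to $\del B^{\sigma}$, and induction on Morse index identifies $\check C_*$ with $H_*(B^{\sigma})$, $\hat C_*$ with $H_*(B^{\sigma},\del B^{\sigma})$, and $\bar C_*$ with $H_*(\del B^{\sigma})$, all with $\mathbb F_2$-coefficients.

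The principal obstacle is the local analysis at boundary-obstructed broken trajectories, where one must verify that the normal model for $V^{\sigma}$ near a boundary critical point matches the model used in \cite[Section 2]{KMbook2007}. The computation of the Hessian of $V^{\sigma}$ sketched in the proof of Lemma~\ref{lem:classifyboundarycritical} — with block structure on $T_q B^{\sigma} = T_q Q \oplus W \oplus \reals$ and with the normal $\reals$-direction governed by the eigenvalue $\lambda_i(q)$ — provides exactly this model, so the obstructed gluing theorem applies without modification. Once this local comparison is in place, both $\del^2 = 0$ and the topological identification follow from the arguments of \cite{KMbook2007}, adapted only in notation.
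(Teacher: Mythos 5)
Your overall strategy — reduce to the manifold-with-boundary Morse theory of \cite[Section~2]{KMbook2007}, verify that the local model (tangency of $V^{\sigma}$ to $\del B^{\sigma}$, Lemma~\ref{lem:classifyboundarycritical}, the Hessian block structure) matches, and then import the counting arguments — is exactly what the paper does; the theorem is stated without proof precisely because the argument is a verbatim transcription of \cite{KMbook2007}. You also correctly flag the boundary-obstructed gluing as the technical heart.

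However, your description of that heart is wrong. You write that the boundary-obstructed contribution to $\del \check M^+(a,b)$ is \emph{``a broken trajectory $a \to c \to b$ where $c$ is boundary-stable as a limit from above and boundary-unstable as a limit from below.''} A single critical point is either boundary-stable or boundary-unstable, not both; there is no such two-component breaking. The boundary-obstructed codimension-one stratum is a \emph{three}-component breaking $a \to c_1 \to c_2 \to b$, where $c_1 \in \mathfrak c^s$ is boundary-stable, $c_2 \in \mathfrak c^u$ is boundary-unstable, and the middle moduli space $\check M(c_1,c_2) = \check M^{\del}(c_1,c_2)$ lies entirely in $\del B^{\sigma}$ and has formal dimension one higher than its grading difference (the obstruction vector $(0,1,0)$). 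It is exactly the products $\del^u_o \bar\del^s_u$, $\del^u_s \bar\del^s_u$, $\bar\del^s_u \del^o_s$, $\bar\del^s_u \del^u_s$ appearing in the matrix entries of $\check\del$ and $\hat\del$ that enumerate these three-component strata. Relatedly, the heuristic that a boundary-obstructed broken trajectory \emph{``arises as the limit of two ends''} is not the mechanism: near such a stratum the compactified moduli space is not a manifold with boundary but carries a codimension-one $\delta$-structure, and the $\mathbb F_2$ vanishing comes from the Stokes identity for $\delta$-structures (Lemma~\ref{lem:bound_multiplicities_0}), not from a two-fold covering of ends. Misstating the combinatorics of the boundary-obstructed strata is a genuine gap, because it is precisely what dictates the shape of $\check\del$ and $\hat\del$ and without the correct three-component picture the matrix identities for $\check\del^2 = 0$ and $\hat\del^2 = 0$ do not follow.
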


Let us rewrite the gradient flow of $-V^{\sigma}$ on $\del B^{\sigma} = \mathbb P(N)$, in two different ways.
First, we may view the flow in $\mathbb P(N)$ as a flow on $S(N)$.
We lift a trajectory $(q(t),[\phi(t)])$ of $-V^{\sigma}$ to a trajectory $(q(t),\phi(t))$ of $-\tilde V^{\sigma}$ on $S(N)$, where $|\phi(t)| = 1$ for all $t$.
Then
\begin{align*}
	\frac{d}{dt}q + (\grad 	\tilde f|_{Q}))_{q(t)} &= 0\\
	q^*(\nabla)\phi + ((L_{q(t)} - \Lambda_q(\phi))\phi)dt &= 0
\end{align*}
Here $\nabla$ is the connection on $N \to Q$ induced from the Levi-Civita connection.
Second, we may view the trajectory as a trajectory $(q(t),\phi(t))$ in $N$, and the corresponding equation is
\begin{align*}
	\frac{d}{dt}q + (\grad f)_{q(t)} &= 0\\
	q^*(\nabla)\phi + (L_{q(t)}\phi)dt &= 0.
\end{align*}
We have the analogues of Lemma and Lemma in Subsection.
\begin{lem}
	The trajectories of $-V^{\sigma}$ on the boundary $B^{\sigma}$ are the images under the quotient map \begin{equation*}
		\pi: N \setminus 0 \to \mathbb P(N) = \del B^{\sigma}
		\end{equation*}
		of the solutions $(q(t),\phi(t))$ to the equation which is unique up to the action of the scalar $\reals^*$ on $\phi$.
\end{lem}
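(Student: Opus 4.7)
The plan is to follow exactly the strategy of Lemma~2.3.1 of \cite{KMbook2007} and the earlier lemma in this subsection about $\Lambda^*$ on $\mathbb{RP}^{n}$, but carried out fibrewise over a base trajectory in $Q$. The content of the statement is that the ``projectivized'' equation on $\del B^{\sigma}$ (the first pair of equations displayed above the lemma, involving the Rayleigh term $\Lambda_{q}(\phi)$) is the radial quotient of the ``linear'' equation on $N \setminus 0$ (the second displayed pair, without the $\Lambda_{q}(\phi)$ term), and that the fibre of this quotient over a given projective trajectory is an $\reals^{*}$-orbit.

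First I would show the forward direction: given a solution $(q(t),\phi(t))$ to the linear equation on $N \setminus 0$, decompose $\phi(t) = r(t)\tilde\phi(t)$ with $r(t) = |\phi(t)| > 0$ and $\tilde\phi(t) \in S(N)_{q(t)}$. Differentiating $|\tilde\phi|^{2} = 1$ along the curve and using that $\nabla$ is metric-compatible gives $\langle \nabla_{\dot q}\tilde\phi,\tilde\phi\rangle = 0$. Substituting the decomposition into $\nabla_{\dot q}\phi = -L_{q}\phi$ and pairing once with $\tilde\phi$ and once with its orthogonal complement yields the two equations
\begin{align*}
  \dot r &= -\Lambda_{q}(\tilde\phi)\, r,\\
  \nabla_{\dot q}\tilde\phi &= -\bigl(L_{q} - \Lambda_{q}(\tilde\phi)\bigr)\tilde\phi,
\end{align*}
together with $\dot q = -(\grad f)_{q}$, which is unchanged by the rescaling. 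The second line is exactly the equation for the lift of a $-V^{\sigma}$-trajectory to $S(N)$ stated just before the lemma; projecting from $S(N)$ to $\mathbb P(N)$ via the antipodal quotient then gives a $-V^{\sigma}$-trajectory in $\del B^{\sigma}$, proving the ``image'' assertion.

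Conversely, given a trajectory $(q(t),[\phi(t)])$ in $\del B^{\sigma} = \mathbb P(N)$, I would locally lift $[\phi(t)]$ to a unit-length path $\tilde\phi(t) \in S(N)$ along $q(t)$ (choosing one of the two antipodal lifts, which is possible since $S(N) \to \mathbb P(N)$ is a double cover and the interval is simply connected), then pick any $r_{0} \in \reals^{*}$ and define $r(t)$ as the unique solution of the scalar ODE $\dot r = -\Lambda_{q}(\tilde\phi)\, r$ with $r(0) = r_{0}$. Setting $\phi(t) = r(t)\tilde\phi(t)$ and reversing the computation above shows $(q(t),\phi(t))$ solves the linear equation on $N \setminus 0$ and projects to the original trajectory.

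Finally, uniqueness up to $\reals^{*}$: any two lifts of the same projective trajectory differ only by the choice of sign in the $S(N)$-lift and by the choice of $r_{0} \in \reals^{*}_{>0}$, which together fill out an $\reals^{*}$-orbit; equivalently, linearity of the equation $\nabla_{\dot q}\phi = -L_{q}\phi$ in $\phi$ implies its solution space along a fixed $q(t)$ through a given projective initial condition is a $1$-dimensional real subspace of the fibre minus the origin. I do not anticipate a serious obstacle here; the only point requiring care is keeping track of the metric-compatibility of $\nabla$ on $N$ so that the radial/angular split is clean, and noting that the sign ambiguity from the double cover $S(N) \to \mathbb P(N)$ is absorbed into the $\reals^{*}$ action.
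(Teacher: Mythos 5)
Your proposal is correct and is essentially the intended argument: the paper does not spell out a proof for this lemma but cites it as the analogue of \cite[Lemma~2.3.1]{KMbook2007}, and your radial--angular decomposition $\phi = r\tilde\phi$ with $r = |\phi|$, the metric-compatibility observation $\langle\nabla_{\dot q}\tilde\phi,\tilde\phi\rangle = 0$, and the resulting split into $\dot r = -\Lambda_q(\tilde\phi)r$ and $\nabla_{\dot q}\tilde\phi = -(L_q - \Lambda_q(\tilde\phi))\tilde\phi$ is exactly the fibrewise version of that computation. The treatment of the converse direction (lifting the $\mathbb{P}(N)$-trajectory to $S(N)$ and solving the scalar ODE for $r$) and the identification of the $\reals^*$-ambiguity as coming from linearity plus the antipodal double cover are both correct and complete.
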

\begin{prop}
	Let $a_0 = (q_0,[\phi_{i_0}(q_0)])$ and $a_1 = (q_1,[\phi_{i_1}(q_1)])$ be two zeros of $V^{\sigma}$ on $\del B^{\sigma}$, and $\lambda_0,\lambda_1$ be the corresponding eigenvalues. 
	Then the space of trajectories $M^{\del}(a_0,a_1)$ can be identified with quotients by $\reals^*$ of the set of solutions $(q(t),\phi(t))$ to with the asymptotics
	\begin{align*}
		\phi(t) &\sim c_0e^{-\lambda_0t}\phi_{i_0}(q_0), \quad \text{as }t \to -\infty,\\
		\phi(t) &\sim c_1e^{-\lambda_1t}\phi_{i_1}(q_1), \quad \text{as }t \to \infty,
	\end{align*}
	where $c_0,c_1$ are nonzero real constants.
\end{prop}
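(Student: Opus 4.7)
The plan is to reduce the identification to the content of the preceding lemma, and then do a standard stable/unstable manifold analysis for the linear ODE that governs the flow in $N$. By that lemma, every trajectory of $-V^{\sigma}$ on $\del B^{\sigma}=\mathbb P(N)$ lifts, uniquely up to the $\reals^*$-action on the fibres of $N$, to a nowhere-vanishing solution $(q(t),\phi(t))$ of
\begin{align*}
    \dot q + (\grad f|_Q)_{q(t)} &= 0, \\
    q^{*}(\nabla)\phi + L_{q(t)}\phi\,dt &= 0,
\end{align*}
with $(q(t),[\phi(t)])\to a_0$ as $t\to-\infty$ and $\to a_1$ as $t\to+\infty$; in particular the base path converges exponentially: $q(t)\to q_0$ and $q(t)\to q_1$ by the Morse property of $f|_Q$.

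Next I would analyse the $\phi$-equation at each end. Choose a parallel trivialization of $N$ along the flow lines of $\grad f|_Q$ emanating from $q_0$ and $q_1$, so that the eigenframe $\{\phi_j(q_0)\}$ (resp.\ $\{\phi_j(q_1)\}$) is constant in the trivialization. In this frame the $\phi$-equation reads $\dot\phi + L_{q(t)}\phi = 0$, and since $L_{q(t)}\to L_{q_0}$ exponentially, expanding $\phi(t)=\sum_j \alpha_j(t)\phi_j(q_0)$ yields
\begin{equation*}
    \dot\alpha_j(t) = -\lambda_j(q_0)\alpha_j(t) + O\!\bigl(e^{-\delta|t|}\bigr)\alpha(t)
\end{equation*}
for some $\delta>0$. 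Standard asymptotic integration of a diagonal linear system with exponentially small perturbation (the simple-spectrum hypothesis in Assumption~\ref{asm:simplespectrum} ensures no resonance issues) gives $\alpha_j(t)=c_j e^{-\lambda_j(q_0)t}(1+o(1))$ for each $j$, and an analogous expansion at $+\infty$.

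From this expansion, convergence of $[\phi(t)]$ in $\mathbb P(N_{q_0})$ to $[\phi_{i_0}(q_0)]$ as $t\to-\infty$ is equivalent to the assertion that the largest-eigenvalue component that is turned on equals $\lambda_{i_0}$: i.e.\ $c_j=0$ for every $j$ with $\lambda_j(q_0)>\lambda_{i_0}$, and $c_{i_0}\neq 0$. This is exactly the asymptotic $\phi(t)\sim c_0 e^{-\lambda_0 t}\phi_{i_0}(q_0)$ with $c_0\neq 0$. The same argument at $+\infty$ forces $c_1\neq 0$ in the component of $\phi_{i_1}(q_1)$ and kills all components of $\phi_j(q_1)$ with $\lambda_j(q_1)<\lambda_{i_1}$. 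Conversely, any solution $(q(t),\phi(t))$ satisfying these asymptotics projects to a trajectory with the prescribed limits in $\del B^{\sigma}$. Finally, the $\reals^*$-scaling $\phi\mapsto r\phi$ preserves the equation and scales both $c_0$ and $c_1$ by $r$, so dividing by $\reals^*$ gives the bijection between $M^{\del}(a_0,a_1)$ and the set of $(q,\phi)$-solutions with the stated asymptotics modulo $\reals^*$.

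The main obstacle is making the leading-order asymptotics $\alpha_j(t)\sim c_j e^{-\lambda_j t}$ rigorous despite the fact that $L_{q(t)}$ is only approximately constant and that one must propagate both the stable modes (as $t\to+\infty$) and the unstable modes (as $t\to-\infty$); this is however a standard application of the Hadamard--Perron stable manifold theorem for the hyperbolic linearization at each critical point, using that $q(t)\to q_0,q_1$ at an exponential rate determined by the spectrum of the Hessian of $f|_Q$. Once this is in place, the identification with the previous lemma's solution set is immediate.
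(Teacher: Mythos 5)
Your proposal is correct and follows essentially the approach that the paper (via its reference to Kronheimer--Mrowka's Chapter~2) intends: lift trajectories from $\mathbb P(N)$ to nowhere-vanishing solutions in $N$ using the preceding lemma, then characterize the asymptotics of the linearized flow near each hyperbolic rest point. The identification of the leading mode as $t\to\mp\infty$ with the largest (resp.\ smallest) eigenvalue whose coefficient survives is exactly the right dichotomy, and the $\reals^*$-equivariance of the equation closes the argument.

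One small point of bookkeeping: the coefficients $c_j$ in your expansion at $t\to-\infty$ are taken with respect to the frame $\{\phi_j(q_0)\}$, while those at $t\to+\infty$ use $\{\phi_j(q_1)\}$; you use the same symbol for both, which could be confusing since these are a priori unrelated sets of numbers. Also, it is worth noting that the exponential rate of convergence of $q(t)$ to the rest points follows from the nondegeneracy of $f|_Q$ under Assumption~\ref{asm:simplespectrum}, which is what makes the perturbation $L_{q(t)} - L_{q_0}$ exponentially small and allows the Levinson-type asymptotic integration to go through without resonance. These are cosmetic remarks; the argument is sound.
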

Finally, the trajectories from interior critical points to boundary-stable critical points can be characterized as follows.
\begin{prop}
The trajectories $x(t)$ of the vector field $-V^{\sigma}$ in the interior of $B^{\sigma}$ that approaches the boundary critical point $(q,[\phi_i(q)])$ as $t \to +\infty$ are in one-to-one correspondence with trajectories $y(t)$ to the gradient flow of $f$ on $B \setminus Q$ which approach $q$ and for which the distance from $y(t)$ to $Q$ has the asymptotics $ce^{-\lambda_i t}$.	
\end{prop}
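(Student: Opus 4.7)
The plan is to use the polar-coordinate picture of the blow-up near $\del B^{\sigma}$ to translate between the two flows via the projection $\pi \colon B^{\sigma} \to B$, which restricts to a diffeomorphism $B^{\sigma} \setminus \del B^{\sigma} \to B \setminus Q$. The correspondence $x(t) \leftrightarrow y(t) := \pi(x(t))$ is obviously one-to-one at the level of set-theoretic trajectories on $B \setminus Q$; the content of the proposition is to identify the domain and codomain of this bijection by asymptotic data.

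For the forward direction, suppose $x(t) \to a = (q,[\phi_i(q)])$ as $t \to +\infty$. Near $a$ we may use coordinates $(q, \phi, s)$ on a neighborhood in $S(N) \times [0,\epsilon)$ (lifted from $\mathbb P(N) \times [0,\epsilon) \subset B^{\sigma}$), in which the flow equations read
\begin{align*}
    \dot q &= -(\grad \tilde f|_Q)_{q(t)}, \\
    q^{*}(\nabla)\phi &= -(L_{q(t)} - \Lambda_{q(t)}(\phi))\phi\, dt, \\
    \dot s &= -\Lambda_{q(t)}(\phi(t))\, s,
\end{align*}
as written in the model example. Because $x(t) \to a$, we have $q(t) \to q$, $\phi(t) \to \phi_i(q)$, and $s(t) \to 0$; in particular $\Lambda_{q(t)}(\phi(t)) \to \lambda_i(q) =: \lambda_i$. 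Integrating the last equation gives $s(t) = s(t_0) \exp\!\bigl(-\int_{t_0}^{t} \Lambda_{q(\tau)}(\phi(\tau))\,d\tau\bigr)$, and a standard comparison argument yields $s(t) \sim c e^{-\lambda_i t}$ with $c \neq 0$. Since in the tubular neighborhood $s$ equals the $Q$-distance to leading order, $y(t) = \pi(x(t))$ satisfies the claimed asymptotics.

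For the reverse direction, suppose $y(t) \to q$ with $\mathrm{dist}(y(t), Q) \sim c e^{-\lambda_i t}$. Using the diffeomorphism $\pi$ away from $\del B^{\sigma}$, lift $y(t)$ to $x(t) \in B^{\sigma} \setminus \del B^{\sigma}$. Writing $x(t) = (q(t), \phi(t), s(t))$ in the polar coordinates, we have $q(t) \to q$ and $s(t) \to 0$ with rate $e^{-\lambda_i t}$; the remaining task is to show $[\phi(t)] \to [\phi_i(q)]$ in $\mathbb P(N_q)$. Taking $t \to \infty$ in $\dot s / s = -\Lambda_{q(t)}(\phi(t))$ and using the prescribed decay of $s$ forces $\Lambda_{q(t)}(\phi(t)) \to \lambda_i$; hence any limit point of $[\phi(t)]$ in $\mathbb P(N_q)$ is a critical point of $\Lambda^{*}_q$ with eigenvalue $\lambda_i$, which under Assumption~\ref{asm:simplespectrum} is forced to be $[\phi_i(q)]$. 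A short linearization argument at $[\phi_i(q)]$ using Lemma~\ref{lem:RPn_crit} then shows that the $\omega$-limit is actually a single point; combined with $q(t) \to q$ and $s(t) \to 0$, this gives $x(t) \to (q,[\phi_i(q)])$ in $B^{\sigma}$.

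The genuinely delicate step is the asymptotic rigidity: showing that the decay rate $e^{-\lambda_i t}$ of $s(t)$ selects precisely the $i$-th eigenvector direction for $[\phi(t)]$, ruling out convergence to a different critical point $[\phi_j(q)]$ that might also attract some flow lines of the linearized angular equation. This is handled by the standard stable manifold/Hartman--Grobman analysis for the decoupled-to-leading-order system: a limit $[\phi(t)] \to [\phi_j(q)]$ with $j \ne i$ would instead produce $\Lambda \to \lambda_j$ and hence a radial decay rate $e^{-\lambda_j t}$, contradicting the hypothesis. Once uniqueness of the limit point is secured, continuity of the lift across $\del B^{\sigma}$ is immediate from the construction of the blow-up, completing the bijection.
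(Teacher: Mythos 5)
Your overall plan is the right one: the blow-down $\pi$ restricts to a diffeomorphism on the interior, so the content is purely a translation of asymptotics, which you read off from the polar-coordinate form of the flow. The forward direction is essentially fine (though "a standard comparison argument" is hiding the real work: you need $\Lambda_{q(t)}(\phi(t)) - \lambda_i$ to be \emph{integrable}, not merely tending to zero, and this comes from the exponential convergence of $(q(t),[\phi(t)])$ guaranteed by nondegeneracy; it is worth saying so).

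The reverse direction has a genuine gap. The step ``Taking $t \to \infty$ in $\dot s / s = -\Lambda_{q(t)}(\phi(t))$ and using the prescribed decay of $s$ forces $\Lambda_{q(t)}(\phi(t)) \to \lambda_i$'' is not a valid inference. The hypothesis $s(t) \sim c\,e^{-\lambda_i t}$ only tells you that $\log s(t) + \lambda_i t$ converges, i.e.\ that $\int^t (\lambda_i - \Lambda(\tau))\,d\tau$ has a finite limit; a convergent integral does not force pointwise convergence of the integrand. You then use this unestablished limit $\Lambda \to \lambda_i$ to deduce that the limit points of $[\phi(t)]$ are critical points with eigenvalue $\lambda_i$; but your closing paragraph runs the argument in the opposite order --- it assumes $[\phi(t)] \to [\phi_j(q)]$ for \emph{some} $j$ and compares decay rates to rule out $j \ne i$. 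As written, neither passage supplies the missing piece: the a priori convergence of the angular component to one of the $[\phi_j(q)]$'s. To close the gap you should supply either
\begin{enumerate}[leftmargin=*]
\item[(a)] monotonicity: on the fibre $S(N_q)$ the angular flow is the downward gradient flow of $\Lambda$, so $\Lambda(t)$ is (up to exponentially small perturbations from the $q$-drift and the $O(s^2)$ corrections) decreasing, hence convergent to some $\lambda$; then $\int^t(\lambda_i - \Lambda)$ can only converge if $\lambda = \lambda_i$, and the $\omega$-limit of $[\phi(t)]$ lies in the critical set at level $\lambda_i$, which by simplicity of the spectrum is the single point $[\phi_i(q)]$; or
\item[(b)] a LaSalle/gradient-system $\omega$-limit argument: since $x(t)$ accumulates on the fibre $\mathbb{P}(N_q) \times \{0\}$ and the boundary flow over the critical point $q$ is gradient for $\Lambda$ with isolated critical points, the $\omega$-limit of $[\phi(t)]$ is a single $[\phi_j(q)]$; then the forward direction gives $s(t) \sim c'e^{-\lambda_j t}$, so $\lambda_j = \lambda_i$ and, by the simple-spectrum assumption, $j = i$.
\end{enumerate}
Either route works, but the sequencing in your draft, which tries to extract $\Lambda \to \lambda_i$ directly from the $s$-asymptotics before knowing anything about where $[\phi(t)]$ goes, is circular.

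One smaller remark: the three polar flow equations you quote are the leading-order approximations near $\del B^{\sigma}$; in the general case the full vector field carries $O(s^2)$ error terms in each slot (Taylor expansion of $\grad \tilde f$ about $Q$). They are harmless because $s$ and $q(t)-q$ decay exponentially, but the argument should at least note that these errors are integrable and hence do not affect the leading asymptotics.
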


\section{Topological Setup}
\label{sec:top_setup}
In this section we begin by reviewing real structures on vector bundles.
We then introduce spin\textsuperscript{c} structures compatible with real structures in dimension three and four, and give some examples of 3-manifolds with involutions.

\subsection{Real structures}
\hfill \break
In this subsection, we will define the notion of structure and give criteria for existence and uniqueness of real structures on line bundles.
A prototypical example of a space with real structure
is a complex algebraic variety $X$ equipped with an anti-holomorphic involution $\upiota$. 
And over $X$ there is a holomorphic vector bundle $\mathcal E \to X$, equipped with an anti-linear (conjugate-linear) involution $\uptau$, covering $\upiota$ on the base $X$:
	\[\begin{tikzcd}
	{\mathcal E} & {\mathcal E} \\
	X & X
	\arrow["\uptau", from=1-1, to=1-2]
	\arrow["\upiota", from=2-1, to=2-2]
	\arrow[from=1-1, to=2-1]
	\arrow[from=1-2, to=2-2]
	\end{tikzcd}.\]
In the topological category, the following definition was introduced by Atiyah~\cite{AtiyahK1966} in his construction of $\KR$-theory.
\begin{defn}
	A \emph{real space} $(X,\upiota)$ is a topological space $X$ equipped with an involution $\upiota:X \to X$. 
	A \emph{real vector bundle} $(E,\uptau) \to (X,\upiota)$ over a real space is a  complex vector bundle $E$ together with an anti-linear involution $\uptau: E \to E$ that covers $\upiota$.
	Such an anti-linear involution $\uptau$ is a \emph{real structure} on $E \to (X,\upiota)$.
\end{defn} 
	Here are some relevant examples for real spaces to help building intuitions.
	\begin{itemize}[leftmargin=*]
		\item $X$ is a Riemann surface, $\upiota$ is an anti-holomorphic (so orientation reversing) involution, and $X^{\upiota}$ is a collection of circles.
		\item $X$ is a 3-manifold, $\upiota$ is orientation preserving involution, and $X^{\upiota}$ is a link.
		\item $X$ is an almost complex 4-manifold, and $\upiota$ is anti-holomorphic involution (e.g., if $X$ is a complex algebraic surface and $X^{\upiota}$ is a real algebraic surface).
	\end{itemize}
\begin{rem}
The term ``real vector bundle'' can be easily be mistaken for vector bundles having $\reals^n$ fibres.
We will use the terminology \emph{vector bundles with real structures}.
Similarly, for \emph{real} spaces we will either italicize the adjective \emph{real}, or refer to them as $\mathbb Z_2$-spaces.
\end{rem}
\begin{rem}
	A related notion is a \emph{quarternionic} structure, which is an anti-linear lift of $\upiota$ that squares to $-1$.
	Whether an anti-linear involution admits an order-two (\emph{odd type}) or order-four (\emph{even type}) lift on spin\textsuperscript{c} bundle depends on the codimension of the fixed point set.
	This follows from a local calculation similar to~\cite[Section~8]{AtiyahBott1968II}.
\end{rem}
\begin{defn}
	Let $(E,\uptau) \to (X, \upiota)$ be a bundle with real structure, equipped with a hermitian inner product $\langle \cdot, \cdot \rangle$.
	Then $\uptau$ is \emph{compatible} with $\langle \cdot, \cdot \rangle$ if
	$
		\langle \uptau(\cdot), \uptau(\cdot) \rangle_{\upiota(x)} = \overline{\langle \cdot, \cdot \rangle_{x}}.
	$	
\end{defn}

Since the gauge groups of Seiberg-Witten theory are $\U(1)$-valued, we are primarily concerned with line bundles with real structures.
There are two relevant characteristic classes for $(L, \uptau)$: the first Chern class $c_1(L) \in H^2(X;\mathbb Z)$ and the first Stiefel-Whitney class $w_1(L^{\uptau}|X^{\upiota}) \in H^1(X^{\upiota};\Ftwo)$, where $L^{\uptau}$ is the $\uptau$-invariant sub-bundle of the restriction of $L$ to the fixed point $X^{\upiota}$.
A necessarily condition for a line bundle $L$ to admit a real structure is
\[
\upiota^*(c_1(L)) = c_1(\bar L) = -c_1(L).
\]

\subsubsection*{\textbf{Existence of real structures on line bundles}}
\hfill \break
Let $M$ be a connected manifold, often 3- or 4-dimensional.
Suppose $\upiota: M \to M$ is an involution with non-empty fixed point set.
Let $L \to M$ be a complex line bundle, equipped with a hermitian metric.
We equip $M$ with an $\upiota$-equivariant CW structure; that is, 
\begin{itemize}[leftmargin=*]
	\item for any $n$-cell $e:D^n \to M$, the image $\upiota \circ e$ is again a $n$-cell, and
	\item either $e(\Int D^n) \cap (\upiota \circ e)(\Int D^n) = \emptyset$, or $e(D)$ is fixed point-wise.
\end{itemize}
This cell structure descends to the quotient space $M/\upiota$, where we denote a cell $[e]$ as the coset $\{e,\upiota e\}$.
With the two cell structures on $M$ and $M/\upiota$, we define a chain-level map:
\begin{align*}
	\tilde\Theta:C^n(M;\mathbb Z) 
	&\to C^n(M/\upiota;\mathbb Z) \\
	\beta 
	&\mapsto
	\left([e] \mapsto \beta(e) + \beta(\upiota \circ e)\right).
\end{align*}
In particular, for $e$ invariant under $\upiota$, the image $\tilde\Theta(\beta)$ takes the value $2\beta(e)$.
The map commutes with the CW differential $\delta$'s and thus defines a map on the level of cohomology
\begin{equation}
\label{eq:theta_defn}
	\Theta: H^n(M;\mathbb Z) \to H^n(M/\upiota;\mathbb Z).
\end{equation}
This map fits into the following diagram:
\[\begin{tikzcd}
	{H^n(M;\mathbb Z)} & {H^n(M;\mathbb Z)^{\upiota^*}} \\
	{H^n(M/\upiota;\mathbb Z)}
	\arrow["\Theta", from=1-1, to=2-1]
	\arrow["{\pi^*}"', from=2-1, to=1-2]
	\arrow["{1+\upiota^*}", from=1-1, to=1-2]
\end{tikzcd}\]

\begin{lem}
\label{lem:cohom_char_real_str}
	Let $M$ be a manifold and assume $\upiota$  has nonempty fixed point locus.
	Let $L \to M$ be a complex line bundle.	
	Then $L$ admits a real structure if and only if 
	\begin{equation*}
		\Theta(c_1(L)) = 0.
	\end{equation*}
\end{lem}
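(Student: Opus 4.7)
The approach I propose is \v{C}ech-cohomological on the quotient. Fix an $\upiota$-invariant good open cover $\mathcal U = \{U_\alpha\}$ of $M$ in which each $U_\alpha$ is either disjoint from $\upiota U_\alpha$ or equal to it (the latter in a tubular neighborhood of each fixed-point component), and all finite intersections are contractible. Trivialize $L|_{U_\alpha}$ with transition functions $g_{\alpha\beta}:U_\alpha \cap U_\beta \to U(1)$, and represent $c_1(L)$ by the standard integer \v{C}ech 2-cocycle $c_{\alpha\beta\gamma}$ coming from real-logarithm lifts $\tilde g_{\alpha\beta}$ of the $g_{\alpha\beta}$.

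For the necessity direction, a real structure $\uptau$ is encoded by functions $\phi_\alpha: U_\alpha \to U(1)$ via $\uptau(x,v) = (\upiota x,\phi_\alpha(x)\bar v)$. Compatibility of $\uptau$ with the $g_{\alpha\beta}$ yields the cocycle relation $\phi_\alpha \phi_\beta^{-1} = g_{\alpha\beta}\,(g_{\upiota\alpha,\upiota\beta}\circ\upiota)$, and $\uptau^2 = \mathrm{id}$ yields the involutivity relation $\phi_{\upiota\alpha}\circ \upiota = \phi_\alpha$. Passing to real logarithms $\tilde\phi_\alpha$, set $\tilde h_{\alpha\beta} := \tilde g_{\alpha\beta} + \tilde g_{\upiota\alpha,\upiota\beta}\circ\upiota$; the involutivity relation allows $\tilde h$ to descend to a 1-cochain on the nerve of the quotient cover, while the cocycle relation expresses $\tilde h$ as a coboundary of $\tilde\phi$ modulo integer 1-cochains. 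A direct computation shows that $\delta\tilde h$ agrees at the cochain level with $\Theta$ applied to $c_{\alpha\beta\gamma}$, so $\Theta(c_1(L))$ is exact in $H^2(M/\upiota;\mathbb{Z})$.

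For the sufficiency direction, reverse this argument. Given $\Theta(c_1(L)) = \delta m$ for an integer 1-cochain $m$ on the quotient nerve, use $m$ together with $\tilde g_{\alpha\beta} + \tilde g_{\upiota\alpha,\upiota\beta}\circ\upiota$ to solve for $\tilde\phi_\alpha$. Exponentiating produces $\phi_\alpha$ and hence an anti-linear lift $\uptau$ of $\upiota$. What remains is the involutivity constraint; the obstruction is the globally-defined $U(1)$-valued function $f := \phi_\alpha\cdot(\phi_{\upiota\alpha}\circ\upiota)^{-1}$ (globally defined since the cocycle relation implies the right-hand side is $\alpha$-independent), which satisfies $f\cdot(f\circ\upiota) = 1$. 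I would arrange involutivity by replacing $\uptau$ with $h\cdot\uptau$ for some $h:M\to U(1)$ solving $h/(h\circ\upiota) = \bar f$; existence of such $h$ uses precisely the hypothesis $\Theta(c_1(L)) = 0$, not merely the weaker $(1+\upiota^*)c_1(L) = 0$.

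The main obstacle I expect is arranging the involutivity, not the mere existence of an anti-linear lift. An anti-linear lift alone is governed by $(1+\upiota^*)c_1(L) = 0$, which is strictly weaker than $\Theta(c_1(L)) = 0$ (as the diagram following the definition of $\Theta$ makes explicit, via $\pi^*\Theta = 1+\upiota^*$). The upgrade to involutivity is encoded by the factor of $2$ that $\tilde\Theta$ produces on $\upiota$-invariant cells, namely $\tilde\Theta(\beta)(e) = 2\beta(e)$ when $e$ is fixed, and careful bookkeeping of cells near $M^\upiota$ is needed to match this cellular factor of $2$ with the solvability of $h/(h\circ\upiota) = \bar f$. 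This is where the essential difference between the two conditions manifests and where the heart of the proof lies; the non-emptiness of $M^\upiota$ is used to rule out degenerate phenomena associated with $\upiota$-free components.
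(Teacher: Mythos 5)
Your proposal takes a \v{C}ech-cohomological route over an equivariant good cover, whereas the paper works with CW obstruction theory on an equivariant cell structure; these are parallel models of the same calculation, but your strategy for the sufficiency direction is genuinely different in shape. The paper builds $\uptau$ skeleton-by-skeleton, enforcing $f_e = f_{\upiota e}$ at every stage, so involutivity never fails; you instead propose to first produce a (possibly non-involutive) anti-linear lift and then gauge-correct it. That second step is precisely where you flag a gap, and it really is a gap as written: the equation $h\cdot\overline{(h\circ\upiota)} = f$ is solvable only if $f \equiv +1$ on $M^{\upiota}$ (the constraint $f\cdot(f\circ\upiota)=1$ forces $f(x)=\pm1$ at fixed points, and the left side is $+1$ there), and nothing in your outline shows how the hypothesis $\Theta(c_1(L))=0$, as opposed to the weaker $(1+\upiota^*)c_1(L)=0$, forces the correct sign. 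Your sketch says ``careful bookkeeping near $M^{\upiota}$ is needed,'' which is exactly right, but that bookkeeping is the whole content of the lemma and is not carried out.

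The cleaner version of your approach would skip the two-step entirely: if $\Theta(c_1(L))=0$ then $\tilde h - m'$ (where $m'$ is the pullback of the integer $1$-cochain $m$ from the quotient nerve) is an $\upiota$-equivariant $1$-cocycle, and one should solve $\delta\tilde\phi = \tilde h - m'$ \emph{already over the nerve of the quotient cover}, then pull back. That makes $\tilde\phi_{\upiota\alpha}\circ\upiota = \tilde\phi_\alpha$ automatic and $f\equiv 1$ by construction, with no correction step needed --- this is the \v{C}ech analogue of the paper choosing $f_e = f_{\upiota e}$. But then you must verify that your equivariant good cover descends to a good cover of $M/\upiota$ (contractible quotients of invariant charts and their intersections, cone-on-$\mathbb{RP}^{k-1}$ pieces over the fixed locus, etc.), which is another piece of bookkeeping your proposal does not supply. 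There is also a sign slip: with your convention for $f$ one finds $\uptau^2$ is multiplication by $\bar f$, so the corrective equation should be $h/(h\circ\upiota) = f$, not $\bar f$; this is harmless but symptomatic of the step not being fully worked through.
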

\begin{proof}
	The proof is by obstruction theory, and one key observation is that a real structure over a trivialization on the $1$-skeleton is equivalent to an $\upiota$-invariant map to $S^1$.
	Denote $M^n$ as the $n$th skeleton.
	Over the 1-skeleton $M^1$, choose a trivialization $t_1$ of $L$:
	\begin{equation*}
		t_1: L|_{M^1} \to M^1 \times \C.
	\end{equation*}
	For each $2$-cell $e:(D^2,\del D^2) \to (M^2, M^1)$, choose a trivialization $t_e$ of $e^*L$:
	\begin{equation*}
		t_e: e^*L \to D^2 \times \C.
	\end{equation*}
	Over $\del D^2$, the difference between $t_1$ and $t_e$ is a map $c_e: \del D^2 \to S^1$, such that for any $x \in \del D^2$ and $v \in L_{e(x)}$
	\begin{equation}\label{eqn:real_lem_1}
		t_1(v) = c_e(x) t_e(v).
	\end{equation}
	We define a cochain $\gamma \in C^2(M;\mathbb Z)$, mapping a 2-cell $e$ to the degree of $c_e$ over $\del e$.
	Since $L$ is defined over the entire space, $\gamma$ is a cocycle and in represents the first chern class
	\begin{equation*}
		[\gamma] = e(L) = c_1(L).
	\end{equation*}
	The cochain $\tilde\Theta(\gamma)$ therefore takes a 2-cell $[e]$ to the sum of winding numbers of $c_e$ and $c_{\upiota e}$.

	First, assume $L$ admits a real structure $\uptau: L \to L$.
	From the trivialization $t_1$ on $L \to M^1$, we obtain a map $\beta: M^1 \to S^1$, defined by
	\begin{equation}\label{eqn:real_lem_2}
		t_1(\uptau(v)) = \beta(p) \overline{t_1(v)},
	\end{equation}
	for any $p \in M$ and $v \in L_p$.
	The relation $t_1(v) = t_1(\uptau^2(v))$ impies $\beta = \beta \circ \upiota$ so $\beta$ descends to the quotient, which we denote $\bar\beta: M/\upiota \to S^1$.
	
	Let us define an 1-cochain $\beta' \in C^1(M/\upiota;\mathbb Z)$ as follows.
	Choose a homotopy $h_{\beta}: I \times M^0/\upiota \to S^1$ from $\bar\beta$ to the constant map $1$.
	For an 1-cell $\ell$ connecting 0-cells $a$ to $b$, we set $\beta'(\ell)$ to be the degree of the map
	\begin{equation*}
		h_{\beta}(\cdot ,a)^{-1} * \overline\beta|_{\ell} * h_{\beta}(\cdot ,b): I \to S^1
	\end{equation*}
	where $*$ denotes concatenation of paths. We claim
	\begin{equation*}
		\delta \beta' = \tilde\Theta(\gamma).
	\end{equation*}
	For a 2-cell $e$ of $M$, combination of Equations~\eqref{eqn:real_lem_1} and ~\eqref{eqn:real_lem_2} yields the following:
	\begin{equation}
	\label{eqn:boundary_real_matching}
		t_{\upiota e}(\uptau(v)) =
		\overline{c_{\upiota e}(x)} \cdot \beta(e(x)) 
		\cdot \overline{c_e(x)} \cdot 
		\overline{t_e(v)},
	\end{equation}
	where $x \in D^2$ and $v \in L_{e(x)}$. 
	But $\uptau$ is defined over $e$, as a map $(\del e)^*\uptau: (\del e)^*L \to (\del \upiota e)^*L$ over $D^2$, so
	\begin{equation*}
		p \mapsto 
		\overline{c_{\upiota e}(p)} \cdot
		\overline{c_e(p)} 	\cdot 
		\beta(p) 
	\end{equation*}
	over $\del D^2$ must have degree zero.
	Thus
	\begin{equation*}
	\delta \beta'[e] = 
		\deg \beta = \deg c_{\upiota e} + \deg c_{e} = \tilde\Theta(\gamma)[e].
	\end{equation*} 
	
	Conversely, assume $\tilde \Theta(\gamma)$ is a coboundary $\delta \beta'$, for some $\beta' \in C^1(M/\upiota;\mathbb Z)$.
	The real structure over $M^1$ is equivalent to an $\upiota$-invariant map $\beta: M^1 \to S^1$ via  Equation~\eqref{eqn:real_lem_1}.
	And we define $\beta$ by mapping all 0-cells to $1$, and $1$-cells to $S^1$ with degree $\beta'$.
	In particular, the image of invariant $1$-cells under $M \to M/\upiota$ should takes constant value $1$.
	
	Next, we must extend the real structure to the $2$-skeleton.
	Let $e$ be a 2-cell of $M$ and we seek a function $f_e: D^2 \to S^1$ so as to define $\uptau$ by
	\begin{equation}
	\label{eqn:real_lem_3}
		t_{\upiota e} (\uptau(v)) = f_e(x) \overline{t_e(v)}.
	\end{equation}
	The boundary condition for $\uptau: e^*L \to (\upiota e)^*L$ is given by Equation~\eqref{eqn:boundary_real_matching}: 
	for any $x \in \del D^2$,
	\begin{equation*}
		f_e(x) = \overline{c_{\upiota e}(x)} \cdot
		\overline{c_e(x)} 	\cdot 
		\beta(e(x))
	\end{equation*}
	which indeed is extendable to an $f_e$ by $[\tilde\Theta(\gamma)]=0$.
	Moreover, since the boundary values for $f_e$ and $f_{\upiota e}$ are the same, we arrange so that $f_e = f_{\upiota e}.$
	It then follows that the resulting $\uptau$ is a real structure
	\begin{equation*}
		t_e(\uptau^2(v)) = f_{\upiota e}(x)\overline{f_e(x)} t_e(v) = t_e(v).
	\end{equation*}

	Finally, by induction consider $n$-cells $e: D^n \to (M, M^{n-1})$ and its image $\upiota \circ e$ with $n \ge 2$, we choose a trivialization $t_e: e^*L \to D^n \times \C$.
	The map $\uptau$ will again be defined by Equation~\eqref{eqn:real_lem_3}.
	There is no obstruction to extending from $\del D^n$ to $D^n$ because $\pi_{n-1}(S^1) = 0$, but we must choose $f_e = f_{\upiota e}$ to ensure $\uptau^2 = 1$.
	Indeed, the boundary value of $f_e$ is invariant under $\upiota$, because we assumed a real structure on $M^{n-1}$.
\end{proof}
\subsubsection*{\textbf{Uniqueness of real structures on line bundles}}
\begin{defn}
	Two real structures $\uptau_0$ and $\uptau_1$ on a hermitian line bundle $L$ are \emph{equivalent}   if there exists $g: M \to S^1$ such that
\begin{equation*}
	g \uptau_0 = \uptau_1 g.
\end{equation*}
\end{defn}
Suppose $\uptau_0$ and $\uptau_1$ are two real structures on $L$.
Then the map $u = \uptau_0^{-1}\uptau_1$ must be $\upiota$-invariant, as $(u \uptau_0)^2 = u \cdot \overline{\upiota^* u} \uptau_0^2 = 1$.
Let $\underline{\mathcal G}$ be $C^{\infty}(M,S^1)$ and $\mathcal G^{\upiota}$ be the subgroup of $\upiota$-invariant maps.
Let $\textsf{sym}: \underline{\mathcal G} \to \mathcal G^{\upiota}$ be the ``symmetrizing map'' sending each $g \in \underline{\mathcal G}$ to the invariant automorphism $g(\upiota^*g)$.
Therefore the space of equivalence classes of real structures on $L \to M$ is the quotient:
\begin{equation}
\label{eqn:realstr_quotient}
	\frac{\mathcal G^{\upiota}}{\Img \textsf{sym}}.
\end{equation}
To analyze this quotient, we use the decomposition of $\bar{\mathcal G}$ in Subsection~\ref{subsec:topofconfig}.
If an element $u \in \bar{\mathcal G}$ is homotopic to the identity, then $u = e^{if}$ for some $\upiota$-invariant function $f:M \to \reals$, in which case $u = g(\upiota^*g)$ for $g = \exp(if/2)$.
By considering harmonic maps to $S^1$, the quotient in \eqref{eqn:realstr_quotient} is a quotient of cohomolog groups
\begin{equation*}
	\frac{H^1(M;\mathbb Z)^{\upiota^*}}{\Img(1+\upiota^*:H^1(M;\mathbb Z) \to H^1(M;\mathbb Z)^{\upiota^*})}.
\end{equation*}
Since $H^1(M;\mathbb Z)$ is torsion-free and $H^1(M;\mathbb Z)^{\upiota^*} = H^1(M/\upiota;\mathbb Z)$, the above quotient is equivalent to
\begin{equation*}
	\frac{H^1(M/\upiota;\mathbb Z)}{\Img \Theta},
\end{equation*}
where $\Theta$ is the map defined in \eqref{eq:theta_defn}.

\subsection{Spin\textsuperscript{c} structures, involutions, and real structures on three-manifolds}
\hfill \break
Let $Y$ be a closed 3-manifold and $g$ be a Riemannian metric.
A \emph{spin\textsuperscript{c} structure} $\mathfrak s = (S,\rho)$ is a pair of a hermitian rank-two vector bundle $S \to Y$, and a Clifford multiplication $\rho: \T Y \to \End(S)$.
The Clifford multiplication provides an isometry between the tangent bundle and the subbundle $\mathfrak{su}(S)$ of traceless, skew-adjoint endomorphisms equipped with the inner product $\frac12\tr(a^*b)$.
Locally there is an orthonormal frame $\{e_1,e_2,e_3\}$ so that $\{\rho(e_i)\}$ are Pauli matrices:
\begin{equation*}
	\rho(e_1) = \begin{bmatrix}
		i & 0\\
		0 &-i
	\end{bmatrix},\quad 
	\rho(e_2) = \begin{bmatrix}
		0 & -1\\
		1 & 0
	\end{bmatrix}, \quad 
	\rho(e_3) = \begin{bmatrix}
		0 & i\\
		i & 0
	\end{bmatrix}.
\end{equation*}
Using the Riemannian metric, we extend $\rho$ to the cotangent bundle $\T^*Y$, the complexified cotangent bundle $\T^*_{\C}Y$, and any complex-valued forms $\bigwedge^*\T^*_{\C}(Y)$.
\hfill \break

Involutions on $Y$ interact with spin\textsuperscript{c} structures on $Y$ via real structures on spinor bundles.

\begin{defn}
\label{defn:realspincstr}
	Let $\upiota$ be an involution, $g$ be an $\upiota$-invariant Riemannian metric, and $\mathfrak s= (S,\rho)$ be a spin\textsuperscript{c} structure.
	A \emph{real structure compatible with} $\mathfrak s$
	 is a real structure $\uptau: S \to S$ on the spinor bundle that is compatible with the hermitian metric and compatible with Clifford multiplications in the following sense:
	 \begin{equation}
	\label{eqn:compat1}
		\rho(\upiota_* \xi)\uptau(\Phi_{y}) = \uptau(\rho(\xi)\Phi_y),
	\end{equation}
	for any $y \in Y$, any vector field $\xi$ on $Y$, and any spinor $\Phi \in \Gamma(S)$.
	A pair $(\mathfrak s, \uptau)$ of spin\textsuperscript{c} structure with a real structure $\uptau$ is a \emph{real spin\textsuperscript{c} structure}.
\end{defn}
\begin{rem}
	A real spin\textsuperscript{c} structure is \textbf{not} a $\reals^4$-bundle equipped with Clifford multiplication.
	While \emph{real bundles} can be confusing sometimes, there should not be any confusion in the present article.
\end{rem}

The following lemma gives a sufficient condition for the existence of real structures, in the case when the involution preserves a spin structure.
\begin{lem}
\label{lem:spin_lift}
	Let $(M, \mathfrak s)$ be a spin 3- or 4-manifold.
	Suppose $\upiota$ preserves the isomorphism class of $\mathfrak s$ and has a nonempty codimension-2 fixed point locus.
	Then the spin\textsuperscript{c} structure arising for the spin structure admits a real structure.	
\end{lem}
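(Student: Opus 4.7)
The plan is to exploit the intrinsic quaternionic structure on the spinor representation in dimensions three and four.  Because $\upiota$ preserves the isomorphism class of $\mathfrak s$, any choice of isomorphism $\upiota^*\mathfrak s \to \mathfrak s$ determines a $\mathbb C$-linear, Hermitian bundle automorphism $\tilde\upiota \colon S \to S$ covering $\upiota$ that intertwines Clifford multiplication, $\rho(\upiota_*\xi)\tilde\upiota = \tilde\upiota\rho(\xi)$.  The spin representation of $\Spin(3)$, and each half-spin representation of $\Spin(4)$, carries a $\Spin$-invariant quaternionic structure, which globalizes to a fibre-wise $\mathbb C$-antilinear endomorphism $J\colon S \to S$ covering the identity, with $J^2 = -1$.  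Because $\Spin$ commutes with the quaternionic structure, $J$ commutes with $\tilde\upiota$; a direct check on Pauli matrices shows that $J$ also commutes with Clifford multiplication.

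The central step is to show that $\tilde\upiota^2 = -\mathrm{id}$.  The composition $\tilde\upiota^2$ covers the identity on $M$ and commutes with $\rho$, so it is a locally constant, $\{\pm 1\}$-valued gauge transformation of the spin bundle, hence globally $\pm\mathrm{id}$ on a connected $M$.  To pin down the sign, I would pick any $p$ in the fixed locus and identify an invariant tubular neighbourhood of $p$ with a neighbourhood of the zero section of the normal bundle.  Since the fixed locus has codimension two, $\upiota_*$ acts on the normal plane $N_p$ as rotation by $\pi$.  Under the double cover $\Spin(2) \cong U(1) \xrightarrow{z\mapsto z^2} U(1) \cong SO(2)$, either lift of this rotation is $\pm\sqrt{-1}$, whose square is the nontrivial central element $-1 \in \Spin$, which acts on $S_p$ as $-\mathrm{id}$.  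Hence $\tilde\upiota^2 = -\mathrm{id}$ throughout $M$.

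With this in hand, I set $\uptau := \tilde\upiota \circ J$.  Antilinearity is immediate, and using $\tilde\upiota J = J\tilde\upiota$ together with $\tilde\upiota^2 = -1 = J^2$,
\[
  \uptau^2 = \tilde\upiota J \tilde\upiota J = \tilde\upiota^2 J^2 = 1,\qquad \rho(\upiota_*\xi)\uptau = \tilde\upiota\rho(\xi)J = \tilde\upiota J\rho(\xi) = \uptau\rho(\xi),
\]
which is the Clifford compatibility of Definition~\ref{defn:realspincstr}; compatibility with the Hermitian metric is inherited from $\tilde\upiota$ and $J$.  Thus $(\mathfrak s, \uptau)$ is a real spin\textsuperscript{c} structure.

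The main obstacle is the local identity $\tilde\upiota^2 = -\mathrm{id}$: the codimension-two hypothesis is precisely what forces the spin lift to be of ``even'' type, and the quaternionic $J$ is exactly the ingredient that converts this into an ``odd'' (i.e.\ involutive) anti-linear map on spinors, as anticipated by the Remark following Definition~\ref{defn:realspincstr}.  In other codimensions the analogous construction would fail to square to $+1$, producing only a quaternionic rather than real structure.
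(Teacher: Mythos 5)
Your construction is essentially the same as the paper's: both compose an order-four complex-linear spin lift $\hat\upiota$ satisfying $\hat\upiota^2=-1$ with the quaternionic structure (right multiplication by $j\in\Pin(2)$) to produce the desired anti-linear involution $\uptau$. The only difference is that you re-derive the fact $\hat\upiota^2=-1$ from the local normal form at a codimension-two fixed point, whereas the paper simply cites Atiyah--Bott, Proposition~8.46.
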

\begin{proof}
	By \cite[Proposition 8.46]{AtiyahBott1968II}, an involution having codimension-2 fixed set that preserves the isomorphism class of the spin structure can be lifted to an order-4 complex-linear map
	\begin{equation*}
		\hat{\upiota} :S \to S
	\end{equation*}
	satisfying $(\hat{\upiota})^2=-1$, which we refer to as a \emph{spin lift}.
	On the other hand, we have the anti-linear automorphism $\jmath: S \to S$ from right multiplication by $j \in \Pin(2)$.
(As a convention, the left multiplication by $j$ is complex linear.)
	The composition
	\begin{equation}
	\label{eqn:j_iota}
		\uptau = \jmath \circ \hat{\upiota}: s \mapsto \hat{\upiota}(s)\cdot j,
	\end{equation}
	is anti-linear and involutive.
\end{proof}
\begin{rem}
Involutions having order-4 lifts on the spin bundle are of \emph{odd type}.
The involution in~\eqref{eqn:j_iota} was used to define an involutions $I$ on the Seiberg-Witten configuration spaces in the work of Kato~\cite{Kato2022}, and Konno-Miyazawa-Taniguchi~\cite{KMT2021}.
In particular, involution $\uptau$ in \eqref{eqn:j_iota} commutes with $\jmath$.
If we demand the real structure to commute with $\jmath$, there are finitely many choices for $\uptau$.
\end{rem}
\begin{rem}
The lemma does not imply the existence of real spin\textsuperscript{c} structure structure on arbitrary $3$- or $4$- manifolds with involutions.
A useful sufficient condition for an involution $\upiota$ to preserve the spin structure is triviality of $\upiota^*$ over $H^1(Y;\mathbb Z_2)$.
\end{rem}

Suppose $(\mathfrak s_0, \uptau_0)= (S_0,\rho_0, \uptau_0)$ is a real spin\textsuperscript{c} structure.
Suppose $L$ is a complex line bundle with a real structure $\uptau^L: L \to L$.
Then
\begin{equation*}
		(S_0 \otimes L, \rho_0 \otimes 1_L).
\end{equation*}
is a real spin\textsuperscript{c} structure.
Conversely, any spin\textsuperscript{c} structure $(S, \rho) $ can be written as 
\begin{equation*}
	(S, \rho) = (S_0 \otimes L, \rho \otimes 1_L).
\end{equation*}
If $\uptau$ is a compatible real structure on $S$, then $\uptau_0$ uniquely determines a real structure $\uptau^L:L \to L$.
\begin{lem}
\label{lem:realspinset}
	Let $(Y, \upiota)$ be a 3-manifold with involution.
	If $(Y,\upiota)$ admits at least one real spin\textsuperscript{c} structure, then the space of real spin\textsuperscript{c} structures is a torsor over 
	\begin{equation*}
		\ker(\Theta) \times \frac{H^1(Y;\mathbb Z)^{\upiota^*}}{\Img(1+\upiota^*:H^1(Y;\mathbb Z) \to H^1(Y
		;\mathbb Z)^{\upiota^*})},
	\end{equation*}
	where $\Theta$ is the map defined in \eqref{eq:theta_defn}.
\end{lem}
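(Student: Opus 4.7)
The plan is to reduce the classification of real spin\textsuperscript{c} structures to the classification of line bundles with real structures, using the fact that any spin\textsuperscript{c} structure differs from a fixed one by tensoring with a line bundle. This leverages precisely the two classification results already established immediately before the lemma: Lemma~\ref{lem:cohom_char_real_str} for existence, and the quotient description in and around \eqref{eqn:realstr_quotient} for uniqueness.

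First I would fix a reference real spin\textsuperscript{c} structure $(\mathfrak s_0, \uptau_0) = (S_0,\rho_0,\uptau_0)$, which exists by hypothesis. As noted in the paragraph preceding the lemma, any other spin\textsuperscript{c} structure $(S,\rho)$ can be written as $(S_0 \otimes L, \rho_0 \otimes 1_L)$ for a hermitian line bundle $L$, unique up to isomorphism; furthermore, if $\uptau$ is a real structure on $S$ compatible with $\rho$, then $\uptau_0$ and $\uptau$ together determine a unique real structure $\uptau^L$ on $L$ via $\uptau = \uptau_0 \otimes \uptau^L$, and conversely. Thus the assignment $(\mathfrak s,\uptau)\mapsto (L,\uptau^L)$ sets up a bijection between equivalence classes of real spin\textsuperscript{c} structures on $(Y,\upiota)$ and equivalence classes of hermitian line bundles with real structure on $(Y,\upiota)$. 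Under this bijection, the free transitive action of $\ker(\Theta) \times H^1(Y;\mathbb Z)^{\upiota^*}/\Img(1+\upiota^*)$ on the right hand side (to be verified below) will transport to the desired torsor structure on the left hand side by $(\mathfrak s,\uptau)\mapsto (\mathfrak s\otimes L,\uptau\otimes\uptau^L)$.

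Next I would invoke the two classification results. Complex line bundles on $Y$ are classified up to isomorphism by $c_1 \in H^2(Y;\mathbb Z)$, and by Lemma~\ref{lem:cohom_char_real_str} such an $L$ admits a real structure if and only if $\Theta(c_1(L))=0$. Hence the isomorphism classes of line bundles admitting at least one real structure are in bijection with $\ker(\Theta)\subset H^2(Y;\mathbb Z)$, which is a group (and in particular a torsor over itself). For each such $L$, the set of equivalence classes of real structures on $L$ is, by the computation culminating in \eqref{eqn:realstr_quotient} and its rewriting via harmonic representatives, a torsor over the quotient
\[
\frac{H^1(Y;\mathbb Z)^{\upiota^*}}{\Img\bigl(1+\upiota^*:H^1(Y;\mathbb Z) \to H^1(Y;\mathbb Z)^{\upiota^*}\bigr)}.
\]

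Finally I would combine the two pieces. The two actions are independent: tensoring by a line bundle with trivial $c_1$ but nontrivial real-structure class acts on the second factor, while tensoring with a line bundle of nonzero $c_1 \in \ker(\Theta)$ (equipped with any chosen real structure) acts on the first factor, and the two commute. Together they give a free and transitive action of the product group on equivalence classes of real line bundles, and hence by the bijection above on equivalence classes of real spin\textsuperscript{c} structures, proving the claim. The only mildly subtle point I anticipate is verifying that the action is indeed well-defined and free on equivalence classes rather than just on representatives, which amounts to checking that an isomorphism $(L,\uptau^L)\cong (L',\uptau^{L'})$ in the real category is the same data as an isomorphism of underlying line bundles together with an equivalence of the induced real structures; this is immediate from the definition of equivalence of real structures given just before the lemma.
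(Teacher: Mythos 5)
Your proposal is correct and follows precisely the route the paper sets up: the lemma is stated without an explicit proof, and the paragraph immediately preceding it (the reduction of real spin\textsuperscript{c} structures to line bundles with real structure via $(S,\rho)=(S_0\otimes L,\rho_0\otimes 1_L)$), Lemma~\ref{lem:cohom_char_real_str} (existence, governed by $\ker\Theta$), and the quotient computation around \eqref{eqn:realstr_quotient} (uniqueness, governed by $H^1(Y;\mathbb Z)^{\upiota^*}/\Img(1+\upiota^*)$) are exactly the ingredients you assemble. The one place worth being careful is your claim that the two actions ``commute'' to give a free transitive action of the \emph{product} group: the group of isomorphism classes of line bundles with real structure naturally sits in a short exact sequence of abelian groups with kernel $H^1(Y;\mathbb Z)^{\upiota^*}/\Img(1+\upiota^*)$ and quotient $\ker\Theta$, and identifying it with the product requires a splitting of that extension; choosing ``any real structure'' on each representative gives a set-theoretic section (hence the bijective classification, which is what the paper uses downstream), but this is a subtlety that the paper's statement also glosses over, so your argument is no less rigorous than the intended one.
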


\subsection{Spin\textsuperscript{c} structures with real structures on four-manifolds}
\hfill \break
Let $X$ be an oriented 4-manifold, possibly with boundary.
Let $\upiota: X \to X$ be a smooth involution, and $g$ be an $\upiota$-invariant Riemannian metric on $X$.
When $X$ has nonempty boundary, we assume $\upiota$ preserves the connected components of the boundary.
Let $\mathfrak s_X = (S_X,\rho_X)$ be a spin\textsuperscript{c} structure.
That is, $S_X$ is a hermitian rank-4 vector bundle and
$\rho: TX \to \Hom(S_X,S_X)$ is a Clifford multiplication.
Extend $\rho$ to complex-valued $k$-forms.
Since $\rho(\vol_X)$ has eigenvalue $(\pm 1)$, the rank-4 bundle
$S_X$ decomposes into a sum of two rank-2 bundles $S^+ \oplus S^-$.
For any vector field $\xi$, the Clifford multiplication $\rho(\xi)$ interchanges $S^{\pm}$. 
Locally, there exists an orthonormal frame $\{e_0,e_1,e_2,e_3\}$ such that
\begin{equation*}
	\rho(e_0) = \begin{bmatrix}
		0 & -I_2 \\
		I_2 & 0
	\end{bmatrix}, \quad 
	\rho(e_i) = \begin{bmatrix}
		0 & -\sigma_i^* \\
		\sigma_i  & 0
	\end{bmatrix}, \quad
	(i=1,2,3).
\end{equation*}
\begin{defn}
	A real structure \emph{compatible with} $\mathfrak s_X$
	 is a real structure $\uptau_X: S^{\pm}_X \to S^{\pm}_X$ on the spinor bundle that is compatible with the hermitian metric and satisfies 
	 \begin{equation}
	\label{eqn:compat4}
		\rho(\upiota_* \xi)\uptau(\Phi_{x}) = \uptau(\rho(\xi)\Phi_x),
	\end{equation}
	for any $x \in X$, any vector field $\xi$ on $X$, and any spinor $\Phi \in \Gamma(S^{\pm}_X)$.
	The subscript $x$ indicates value of section over $x$.
	A pair $(\mathfrak s_X, \uptau_X)$ of a spin\textsuperscript{c} structure with a real structure $\uptau_X$ is a \emph{real spin\textsuperscript{c} structure}.
\end{defn}
If there exists one real spin\textsuperscript{c} structure, then the set of real spin\textsuperscript{c} is a torsor over the group of line bundles with real structures.
In addition to Lemma~\ref{lem:realspinset} which provides real structures on spin structures preserved by $\upiota$, the following proposition
\cite[Proposition~2.4]{TianWang2009} applies possibly non-spin manifolds.
\begin{prop}
\label{prop:tianwang_aclift}
	Let $(X,J)$ be an almost complex manifold, and $\upiota:X \to X$ be an anti-holomorphic involution.
	Equip $X$ with a hermitian metric that is compatible with both $J$ and $\upiota$.
	Let $\mathfrak s_J = (S^{\pm}_J,\rho)$ be the canonical spin\textsuperscript{c} structure.
	Then there exists a canonical compatible real structure $\uptau:S^{\pm}_J \to S^{\pm}_J$.
\end{prop}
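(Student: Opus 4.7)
The plan is to construct $\uptau$ explicitly as the composition of fibrewise complex conjugation with $\upiota^*$, acting on the anti-holomorphic forms that realize the canonical spin\textsuperscript{c} bundles. Recall that on an almost complex 4-manifold the canonical spin\textsuperscript{c} structure has $S^+_J = \Lambda^{0,0}X \oplus \Lambda^{0,2}X$ and $S^-_J = \Lambda^{0,1}X$, with Clifford multiplication by a real vector $v$ of the form $\rho(v)\alpha = \sqrt{2}\bigl((v^{\flat})^{0,1}\wedge\alpha - \iota_{v^{1,0}}\alpha\bigr)$, using the hermitian metric to dualize. On $\Lambda^{0,q}X$ I would set
\[
\uptau(\alpha) := \overline{\upiota^{*}\alpha}.
\]
Since $\upiota$ is anti-holomorphic ($d\upiota\circ J = -J\circ d\upiota$), the pullback $\upiota^{*}$ interchanges $\Lambda^{p,q}$ and $\Lambda^{q,p}$; composing with complex conjugation, which also swaps these types anti-linearly, produces an anti-linear, type-preserving bundle map covering $\upiota$. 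The construction uses only the given data $(J,\upiota,g)$ and is therefore canonical.

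The easy verifications come next. Involutivity follows from $\uptau^{2}\alpha = \overline{\upiota^{*}\overline{\upiota^{*}\alpha}} = \upiota^{*}\upiota^{*}\alpha = \alpha$, using that pullback commutes with complex conjugation on complex-valued forms. Hermitian compatibility: the joint $J$- and $\upiota$-invariance of the metric forces the induced hermitian inner product on $\Lambda^{0,q}$ to satisfy $\langle\upiota^{*}\alpha,\upiota^{*}\beta\rangle_{x} = \langle\alpha,\beta\rangle_{\upiota(x)}$, and then anti-linearity gives $\langle\uptau\alpha,\uptau\beta\rangle_{\upiota(x)} = \overline{\langle\alpha,\beta\rangle_{x}}$, as required by the definition of a compatible real structure.

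The main step is Clifford compatibility, i.e.\ $\rho(\upiota_{*}\xi)\,\uptau(\Phi) = \uptau(\rho(\xi)\Phi)$. Because $\uptau$ is multiplicative with respect to the wedge product, and (via the metric) with respect to the dual contraction, this reduces to the two pointwise identities
\[
\uptau\bigl((v^{\flat})^{0,1}\bigr) = ((\upiota_{*}v)^{\flat})^{0,1}, \qquad \uptau\circ\iota_{v^{1,0}} = \iota_{(\upiota_{*}v)^{0,1}}\circ\uptau,
\]
for every real tangent vector $v$. Both follow from the observation that anti-holomorphicity forces $\upiota_{*}$ to exchange $T^{1,0}$ and $T^{0,1}$, hence to exchange the $(1,0)$- and $(0,1)$-parts of $v^{\flat}$, combined with $(v^{\flat})^{1,0} = \overline{(v^{\flat})^{0,1}}$ for real $v$. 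The hard part is precisely this Clifford step: carefully tracking, with the correct conventions for the musical isomorphism, how $\upiota_{*}$ interacts with the decomposition $T_{\mathbb{C}}X = T^{1,0}\oplus T^{0,1}$ and with $v\mapsto v^{\flat}$. Once this is verified in a local unitary frame adapted to both $J$ and $\upiota$, the remaining checks are routine and the resulting $\uptau$ is the claimed canonical compatible real structure.
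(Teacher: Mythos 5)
Your proposal is correct and follows the same route the paper takes: the paper does not prove this Proposition directly but cites Tian--Wang \cite[Proposition~2.4]{TianWang2009}, and its $\mathbb{CP}^2$ illustration describes exactly the conjugate--pullback construction you use, with $\upiota_{*}$ inducing anti-linear involutions on $\Lambda^{0,q}$ and compatibility with $\rho$ coming from the isometry hypothesis. One small bookkeeping point: as written, your second reduced identity pairs $v^{1,0}$ on the left with $(\upiota_{*}v)^{0,1}$ on the right, which mixes conventions for the interior product; with the usual convention $\rho(v)\alpha = \sqrt{2}\bigl((v^{\flat})^{0,1}\wedge\alpha - \iota_{v^{0,1}}\alpha\bigr)$ the identity should read $\uptau\circ\iota_{v^{0,1}} = \iota_{(\upiota_{*}v)^{0,1}}\circ\uptau$, and this indeed follows from $\upiota_{*}$ anti-commuting with $J$ exactly as you indicate, so the argument goes through once the notation is made consistent.
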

We illustrate Proposition~\ref{prop:tianwang_aclift} via an example.
Let $(X,J)$ be the non-spin complex manifold $\mathbb{CP}^2$.
Let the involution $\upiota$ be conjugation $\upiota:[z_0: z_1: z_2] \mapsto [\bar z_0: \bar z_1: \bar z_2]$, which fixes a real projective line $ \mathbb{RP}^2$.
The canonical spin\textsuperscript{c} bundles  can be identified with 
\begin{align*}
	S^+ &= \Lambda^{0,0} \oplus \Lambda^{0,2}, \\
	S^- &= \Lambda^{0,1}.
\end{align*}
The derivative $\upiota_*$ acts anti-linearly on $\T\mathbb{CP}^2$.
This action extends to the complexification $\T_{\C}\mathbb{CP}^2$ preserving the $\pm i$-eigenspaces, inducing
real structures on $\Lambda^{0,i}$.
The Clifford multiplication $\rho : \T\mathbb{CP}^n \to \Hom(S^+,S^-)$ involves dualizing tangent vectors via the hermitian metric.
The compatibility of $\rho$ and $\upiota$ follows from isometry assumption.
		
\subsection{Double branched covers of $\boldsymbol{S^3}$}
\hfill \break 
Let $K$ be an oriented link in $S^3$, with connected components $\{K_i: 1 \le i \le n\}$. 
Let $N_K$ be the the double cover of the  link exterior $(S^3 \setminus K)$ associated to the kernel of the homomorphism given by the sum of linking numbers
\begin{align*}
	H_1(S^3-K;\mathbb Z) 
	&\to \mathbb Z/2\\
	\alpha
	&\mapsto \sum_i \text{lk}(K_i, \alpha).
\end{align*}
Then $\del N_K$ is a union of $n$ tori.
We glue $n$ solid tori to $Y_K$, 
in a way that the meridian of each torus is glued onto the preimage of a meridian of $K_i$ under the 2-to-1 covering map $N_K \to (S^3-K)$.
The resulting manifold
\begin{equation*}
	Y_K = \Sigma_2(S^3, K)= N_K \bigcup  n(S^1 \times D^2)
\end{equation*}
is the double branched cover of $K$, and branch locus is exactly the union of cores of the solid tori.
Alternatively, since $K \subset S^3$ is null-homologous, let $F$ be an oriented connected Seifert surface of $K$.
Remove a regular neighbourhood of $K$ to obtain a manifold with torus-boundaries, and then cut along a regular neighbourhood of $F$.
The resulting manifold $Y'$ contains two copies of $F_{\pm}$, and $F_{\pm}$ are connected by $n$ annuli from the remaining boundary of knot complement.
Glue two copies of $Y'$ along $F_+ \cup F_-$, in a way that $F_{\pm}$ is glued onto $F_{\mp}$.
There are still torus boundaries left, and we glue n solid tori to $Y' \cup Y'$ to obtain $Y_K$.

Let $A$ be the Seifert form of $F$. 
If $\det(A+A^t)$ is nonzero, then
\begin{equation*}
	|H_1(Y_K;\mathbb Z)| = |\det(A+A^{t})| = |\Delta_K(-1)|,
\end{equation*}
where $\Delta_K$ is the Alexander polynomial.
In particular, if $K$ is connected, then $Y_K$ is a rational homology sphere.
In the case when $|\Delta_K(1)| = 0$, the rank of the first homology is equal to the nullity of the matrix $(A+A^t)$ \cite{KauffmanTaylor1976}:
\begin{equation*}
	b_1(Y_K) = \text{nullity}(A+A^t).
\end{equation*}

The covering involution of $Y_K$ acts on the first cohomology $H^1(Y_K;\mathbb Z_2)$ by $-1$, and hence trivially on $H^1(Y_K;\mathbb Z_2)$.
It follows that $\upiota$ preserves the isomorphism class of the spin structures on $Y_K$.
By Lemma~\ref{lem:cohom_char_real_str} and $H^1(Y_K/\upiota;\mathbb Z) = H^1(S^3; \mathbb Z)$, any spin\textsuperscript{c} structure admits a real structure.

Our construction of real monopole Floer homologies also carries over to links in integral homology spheres and their double branched covers
\begin{equation*}
	\Sigma_2(Y,K).
\end{equation*}

\begin{rem}[Double branched covers of general 3-manifolds]
	Let $Y$ be a connected oriented closed 3-manifold, and $K \subset Y$ be a 1-dimensional submanifold.
There exists a double branched cover of $Y$ along $K$ only if $[K] = 0 \in H_1(Y;\mathbb Z_2)$, and the choices of double branched covers form a torsor over the mod-$2$ homology classes $H_2(Y;\mathbb Z_2)$.
In particular, we may define real monopole Floer homolgies for null-homologous links in an arbitrary 3-manifold, by keeping track of all choices of double branched covers.
\end{rem}

\subsection{Double branched covers of $\boldsymbol{[0,1] \times S^3}$}
\hfill \break
Let $I = [0,1]$ and let $S \subset I \times S^3$ be a connected properly embedded (not necessarily orientable) surface such that $\del S = -K_0 \sqcup K_1$, and $K_i \subset \{i\} \times S^3$ is a link.
Since $(I \times S^3)$ is simply connected and $H_2(I \times S^3; \mathbb Z_2) = 0$, there exists a unique double branched cover $W$ along $S$, such that
\begin{equation*}
	\del W = -Y_{K_0} \sqcup Y_{K_1}.
\end{equation*}
Every second cohomology lies in the kernel of $\Theta$, and $H^1(W;\mathbb Z) = 0$, so
each spin\textsuperscript{c} structure supports a unique real structure.
It follows that 
the space of real spin\textsuperscript{c} structures over $W$ is isomorphic to
\begin{equation*}
	H^2(W;\mathbb Z).
\end{equation*}

\begin{rem}[Double branched cover of general four-manifolds]
Let $X$ be a compact orientable 4-manifold with boundary.
Let $S \subset X$ be a properly embedded (not necessarily orientable) surface such that
\begin{equation*}
	[S,\del S] = 0 \in H_2(X,\del X; \mathbb Z_2).
\end{equation*}
There exist $|H^1(X;\mathbb Z_2)|$ choices of double branched covers of $X$.
\end{rem}

\subsection{Doubling a 3-manifold}
\hfill \break
The following construction relates the real monopole Floer homology $\HMR^{\circ}$ to the ordinary monopole Floer homology $\HM^{\circ}$.
Let $Y$ be a connected oriented closed 3-manifold, and $K_0 \subset S^3$ be a link.
We form the connected sum
\begin{equation*}
	Y \# (S^3,K_0).
\end{equation*}
Let $\mathbf{Y}$ be the double branched cover along $K_0$, which is homeomorphic to the connected sum
\begin{equation*}
	Y \# Y_{K_0} \# \bar Y.
\end{equation*}
The involution $\upiota$ interchanges the two $Y$
 factor, and acts on twice-punctured $Y_{K_0}$ as covering involution $\upiota_{Y_{K_0}}$.
Identifying $H^*(\mathbf{Y};\mathbb Z)$ with $H^*(Y; \mathbb Z) \oplus H^*(Y_{K_0};\mathbb Z) \oplus H^*(Y;\mathbb Z)$, the involution-induced map is 
 \begin{equation*}
 	\upiota^* = 
 	\begin{pmatrix}
 		0 & 0 & 1\\
 		0 & \upiota^*_{Y_{K_0}} & 0 \\
 		-1 & 0 & 0
 	\end{pmatrix}.
 \end{equation*}
The kernel of $\Theta$ on $H^2(\mathbf Y;\mathbb Z)$ 
\begin{equation*}
	H^2(Y;\mathbb Z)(1, 0, 1) \oplus \ker (\Theta_{Y_{K_0}}) (0, 1, 0),
\end{equation*}
and the $\upiota^*$-invariant first cohomology is the subgroup
\begin{equation*}
	H^1(Y;\mathbb Z)(1, 0, -1).
\end{equation*}
Since $H^1(Y;\mathbb Z)(1, 0, -1)$ is contained in the image of $(1 + \upiota^*)$, there is a compatible real structure for each choice of spin\textsuperscript{c} structure in $\ker \Theta$.

\subsubsection*{\textbf{Adding atoms}}
\hfill \break
Let $K \subset Y$ and $K_0 \subset S^3$ be links.
The above construction can be phrased in terms of double branched covers of links, and is related to the ``adding atoms'' operation in \cite{KMunknot2011}.
We form the connected sum
\begin{equation*}
	(Y, K) \# (S^3, K_0)
\end{equation*}
and consider double branched covers of $Y$ along disjoint union $K \sqcup K_0$.
The affect of adding the link $K_0$ is forming the union
\begin{equation*}
	\left(\Sigma_2(Y,K) \setminus \text{two balls}\right) \cup_{S^2 \sqcup S^2} \left(\Sigma_2(S^3, K_0) \setminus \text{two balls}\right).
\end{equation*}
along the boundaries of the removed balls.

\subsection{Examples of manifolds with involutions}
\label{subsec:examples_involutions}
\subsection*{(a). $\boldsymbol{S^3}$}
\hfill \break
Let $S^3$ be the unit sphere in $\C^2$. 
The complex conjugation on $\C^2$ fixes a 2-plane that intersects $S^3$ in a great circle, inducing an involution on $S^3$.
Equivalently, by thinking of $S^3$ as $\reals^3 \cup \infty$ this is the involution that rotates around an axis by $180$ degrees, fixing the axis together with the point at infinity.
This involution is covering involution of the double branched cover of the unknot in $S^3$.

\subsection*{(a!). $\boldsymbol{S^3}$}
\hfill \break
The antipodal map on $S^3$ is a non-example as there is no fixed point.

\subsection*{(b). $\boldsymbol{\mathbb{RP}^3}$}
\hfill \break
The projective 3-space is the double branched cover of the Hopf link.
The involution $\upiota:\mathbb{RP}^3 \to \mathbb{RP}^3$ is given by the linear map
\begin{equation*}
	\begin{pmatrix}
		1 & & & \\
		& 1 & & \\
		& & -1 & \\
		& & & -1
	\end{pmatrix}
\end{equation*} 
on $\reals^4\setminus \{0\}$, which preserves exactly 2 planes (so 2 lines under the quotient $\mathbb{RP}^3$.)
Alternatively, $\mathbb{RP}^3$ is the quotient of the unit 3-ball $D^3 = \{x \in \reals^3; \|x\|\le 1\}$ with boundary identified via antipodal map.
The involution $\upiota$ can be realized as an 180-degree of $D^3$ rotation about an axis $\mathbf a$, which commutes with the antipodal map on $\del D^3$. 
The involution fixes $\mathbf a$ and a great circle spanning a plane orthogonal to $\mathbf a$.

\subsection*{(c). $\boldsymbol{S^1 \times S^2}$}
\hfill \break
The double branched cover of the 2-component unlink is $S^1 \times S^2$, where the involution $\upiota$ on the $S^1$-factor is given by
reflection of fixing 2 points, and on the $S^2$ is reflection along a great circle, swapping the upper and lower hemispheres.

\subsection*{(c1). $\boldsymbol{\#_{k-1}(S^1 \times S^2})$}
\hfill \break
The previous example generalizes to $\#_{k-1}(S^1 \times S^2)$ as the double branched cover of $k$-component unlinks.

\subsection*{(d). Lens spaces}
\hfill \break
Let $L(p,q)$ be the lens space for $p,q$ coprime.
Then $L(p,q)$ is the double branched cover of the two bridge knot $K(p,q)$, where the deck transformation can be described as follows.
Let $S^3$ again be the unit sphere in $\C^2$ and $L(p,q)$ be quotient of the $\mathbb Z/p$-action
\begin{equation*}
	(z_1,z_2) \mapsto (e^{2\pi i/p}z_1,e^{2\pi i/q}z_2).
\end{equation*}
The conjugation action $(z_1,z_2)\mapsto(\bar z_1,\bar z_2)$ descends to an involution $\upiota:L(p,q) \to L(p,q)$, such that
\begin{equation*}
	L(p,q)/\upiota = S^3,
\end{equation*}
branched along $K(p,q)$.

\subsection*{(e). Breskorn spheres}
\hfill \break
Let $\Sigma(p,q,r)$ be the Breskorn integral homology sphere, which is the link of a complex surface singularity with canonical orientation:
\begin{equation*}
	\Sigma(p,q,r) = 
	\left\{
		(z_1, z_2, z_3) \in \C^3 |
		z_1^p + z_2^q + z_3^r = 0
	\right\} \cap S^5.
\end{equation*}
There are two orientation-preserving involutions of interest to us.
\begin{itemize}
	\item $(z_1, z_2, z_3) \mapsto (-z_1,z_2,z_3)$.
	\item $(z_1,z_2,z_3) \mapsto (\bar{z}_1, \bar{z}_2, \bar{z}_3)$.
\end{itemize}
If $p = 2$ and $q,r$ are coprime, then the first action is the covering involution as the double branched cover of the right handed torus knot $T(q,r)$.
The second conjugation action is the covering involution as the double branched cover of $S^3$ along a Montesinos link $k(p,q,r)$.
See \cite{Saveliev1999} for more details.

\subsection*{(f). $\boldsymbol{S^1 \times \Sigma_g}$}
\hfill \break
Let $\upiota$ be an involution acting trivially on the $S^1$-factor and as a branched covering involution $\Sigma_g \to \Sigma_{g'}$, where $2g - 2 = 4 - 4g' - b$ and $b$ is the number of branch points.
The fixed point set is $b$ parallel strands of circles
\begin{equation*}
	S^1 \times \{p_1,\dots,p_b\}.
\end{equation*}

\subsection*{(f1). $\boldsymbol{S^1 \times S^2}$}
\hfill \break
Suppose $g=g'=0$, and $b=2$. 
The action of the $\Sigma_0$ factor is a 180-degree rotation.
Clearly, $\upiota$ preserves the both the spin structures on the $S^1$-factor and the unique spin structure on the $S^2$-factor.
Moreover,
since $H^2(S^1 \times S^2;\mathbb Z)^{-\upiota^*} = 0$ and $H^1(S^1 \times S^2;\mathbb Z)^{-\upiota^*} = \mathbb Z$, only the self-conjugate spin\textsuperscript{c} structure supports real structures, and up to isomorphism there are two real structures. 
The spin\textsuperscript{c} bundle $S$ has zero first Chern class and is topological trivial.

\subsection*{(g). $\boldsymbol{S^1 \times \Sigma_g}$}
\hfill \break
This is a generalization of the unlink double branched covers.
Let $\upiota$ be an involution that acts as a orientation reversing involution on $S^1$, and orientable reversing involution on $\Sigma_g$.
The fixed point sets are disjoint union of unlinked circles.

\section{Gauge Theory Setup}
\label{sec:gauge_setup}
In this section, we construct real structures on configuration spaces from real spin\textsuperscript{c} structures.
We will set up the Seiberg-Witten theory over 3-manifolds, 4-manifolds (possibly with boundaries), and cylinders.

\subsection{Configuration spaces in dimension three}
\hfill \break
Let $(Y,\upiota)$ be a 3-manifold equipped with an involution $\upiota$.
Let $g$ be an $\upiota$-invariant metric, and $(\mathfrak s,\uptau)$ be a real spin\textsuperscript{c} structure.
A connection $B$ on $S$ is \emph{spin\textsuperscript{c}} if it
satisfies
\begin{equation*}
	\nabla_B(\rho(\xi)\Phi) = \rho(\nabla \xi)\Phi + \rho(\xi)\nabla_B \Phi,
\end{equation*}
where $\xi$ is any vector field, $\nabla$ is the Levi-Civita connection of $g$, and $\Phi$ is any spinor. 
Any spin\textsuperscript{c} connection $B$ induces a connection $B^t$ on the determinant bundle $\Lambda^2S$.
Let $\whole{\mathcal A}(Y,\mathfrak s)$ be the space of all spin\textsuperscript{c} connections.
\begin{rem}
	From now on, we will decorate the notations for ordinary Seiberg-Witten theory by underlines to distinguish them from the \emph{real} counterparts.
\end{rem}
If $B_0 \in \whole{\mathcal A}(Y,\mathfrak s)$ is a reference connection, then any other spin\textsuperscript{c} connection $B$ can be written as $B_0 + b \otimes 1_S$, for some $b \in \Omega^1(Y;i\reals)$.
Correspondingly, the induced connection the determinant line bundle  is $B_0^t + 2b$.
Therefore $\whole{\mathcal A}(Y,\mathfrak s)$ is an affine space modelled on $\Omega^1(Y;i\reals)$.
The \emph{ordinary Seiberg-Witten configuration space} $\whole{\mathcal C}(Y, \mathfrak s)$ is the product
\begin{equation*}
	\whole{\mathcal C}(Y, \mathfrak s) =
	\whole{\mathcal A}(Y, \mathfrak s) 
	\times
	\Gamma(S).
\end{equation*}
The \emph{ordinary gauge group} $\whole{\mathcal G}(Y)$ consists of maps $u: Y \to S^1$.
Each gauge transformation $u$ acts on $\whole{\mathcal C}(Y, \mathfrak s)$ by
\[u \cdot (A,\Phi) = (A - u^{-1}du, u\Phi).\]
Denote the set of gauge equivalence classes of configurations by
\[
	\whole{\mathcal B}(Y,\mathfrak s) = \whole{\mathcal C}(Y,\mathfrak s)/\whole{\mathcal G}(Y).
\]
We define involutions on $\whole{\mathcal C}(Y, \mathfrak s)$ and $\whole{\mathcal G}(Y)$ as follows.
\begin{itemize}[leftmargin=*]
\item Let the real structure $\uptau$ act spin\textsuperscript{c} connections  by pulling back: $B \mapsto \uptau^*B$, defined as
\begin{equation*}
	\nabla_{\uptau^*B} \Phi = \uptau(\nabla_B(\uptau \Phi)).
\end{equation*}

\item Let $\uptau$ act on spinors $\Phi \in \Gamma(S)$ by $\Phi \mapsto \tau(\Phi)$, where
\begin{equation*}
	\uptau(\Phi)_y =\uptau(\Phi_{\upiota(y)}).
\end{equation*}

\item The map $u \mapsto \upiota(u) := \upiota^* u^{-1}$ defines an involution on $\underline{\mathcal G}(Y)$, which satisfies
\begin{equation*}
	\uptau(u\Phi) = \upiota(u)\uptau(\Phi).
\end{equation*}
\end{itemize}
We refer to the involution on $\whole{\mathcal C}(Y, \mathfrak s)$ as \emph{a real structure} on the configuration space.
Moreover, choose a $\uptau$-invariant reference connection $B_0$, so that $\whole{\mathcal A}(Y,\mathfrak s) = B_0 + 1_S \otimes \Omega(Y,i\mathbb R)$.
By conjugate-linearity, $\uptau$ acts on the imaginary 1-forms $\alpha$ by
\begin{equation*}
	\uptau(\alpha) = -\upiota^* \alpha.
\end{equation*}
We define an involution on all complex-valued forms  $\upiota: \Omega^*(Y;\C) \to \Omega^*(Y;\C)$ by $\alpha \mapsto -\upiota^* \alpha$ over imaginary-valued forms $\Omega^*(Y;i\reals)$, and $\alpha \mapsto \upiota^* \alpha$ over all real-valued forms $\Omega^*(Y;\reals)$. 
This involution is compatible with taking curvatures of spin\textsuperscript{c} connections:
\begin{equation*}
	F_{\uptau(A)} = \upiota(F_A).
\end{equation*}

\begin{defn}
A \emph{real spin\textsuperscript{c} connection} is a $\uptau$-invariant spin\textsuperscript{c} connection, and a \emph{$\uptau$-real spinor} is a $\uptau$-invariant section of $S$.
Denote by $\mathcal A(Y,\mathfrak s,\uptau)$ the space of real spin\textsuperscript{c} connections and denote by $\Gamma(S)^{\uptau}$  the space of $\uptau$-real spinors.
The \emph{(real Seiberg-Witten) configuration space}  $\mathcal C(Y,\mathfrak s,\uptau)$ is the product
\begin{equation*}
	\mathcal C(Y,\mathfrak s,\uptau) = \mathcal A(Y,\mathfrak s, \uptau) \times \Gamma(S)^{\uptau}
\end{equation*}
consisting of real spin\textsuperscript{c} connections and $\uptau$-real spinors.	
The \emph{real gauge group} $\mathcal G(Y,\upiota)$ is the subgroup of $\upiota$-invariant ordinary gauge transformations:
\begin{equation*}
	\mathcal G(Y,\upiota) = \{u: Y \to S^1 |
	\bar u(\upiota(y)) = u(y)\}.
\end{equation*}
The space of $\mathcal G$-equivalence classes of configurations is
\[\mathcal B(Y,\mathfrak s,\uptau)= \mathcal C(Y,\mathfrak s,\uptau)/\mathcal G(Y,\upiota).\]
\end{defn}

Assume $\upiota$ has nonempty fixed point locus on $Y$ so that the only constant gauge transformations in $\mathcal G$ are $\{\pm  1\} \cong \mathbb Z_2$.
The \emph{ordinary Dirac operator} $D_B:\Gamma(S) \to \Gamma(S)$ of a spin\textsuperscript{c} connection $B$ is the composition 
\begin{equation*}
	\begin{tikzcd}
		\Gamma(S) \ar[r,"\nabla_B"] 
		& \Gamma(\T^*Y\otimes S) \ar[r,"\rho"] 
		& \Gamma(S),
	\end{tikzcd}
\end{equation*}
where the second map $\rho$ is given by applying Clifford multiplication of the $1$-form component to the spinorial component.
The Dirac operator is self-adjoint and elliptic over $Y$.
Moreover, $D_B$ is equivariant with respect to $\uptau$, so it descends to an operator on the real spinors
\begin{equation*}
	D_B:\Gamma(S)^{\uptau} \to \Gamma(S)^{\uptau}.
\end{equation*}
The \emph{Chern-Simons-Dirac functional} (CSD) is a function $\mathcal L:\whole{\mathcal C}(\mathfrak s) \to \reals$, defined by
\begin{equation*}
	\mathcal L(B,\Psi) = -\frac18\int_Y (B^t-B_0^t) \wedge (F_{B^t}+F_{B_0^t}) + \frac12 \int_Y \langle D_B \Psi, \Psi\rangle d\vol_Y,
\end{equation*}
for a fixed reference spin\textsuperscript{c} connection $B_0$.
The CSD functional is not invariant under the entire group $\whole{\mathcal G}$, for
\begin{equation*}
	\mathcal L(u \cdot (B,\Psi))-\mathcal L(B,\Psi)= 2\pi^2([u] \cup c_1(S))[Y].
\end{equation*}
Moreover, the CSD functional is invariant under the action of $\uptau$, i.e.
\begin{equation*}
	\mathcal L(\uptau^*B,\uptau(\Psi)) = 
	\mathcal L(B,\Psi).
\end{equation*}
Indeed, both $(B^t-B^t_0)$ and $(F_{B^t}+F_{B_0^t})$ transform by $-\upiota^*$,  and the volume form is preserved by $\upiota^*$.
The second integral is invariant because it is  is compatible with the hermitian metric.
The formal $L^2$-gradient vector field of the Chern-Simons-Dirac function is 
\begin{equation*}
	\left(\frac{1}{2}*F_{B^t} + \rho^{-1}(\Psi\Psi^*)_0,
	D_B\Psi
	\right) \in i\Omega^1(Y) \oplus \Gamma(S),
\end{equation*}
where $(\Psi\Psi^*)_0 = \Psi\Psi^* - \frac12|\Psi|^2$ is a traceless,  self-adjoint endomorphism of the spinor bundle, and hence lies in the image of $\rho(i\Omega^1(Y))$.
The zeros of the above vector field are solutions to the $3$-dimensional Seiberg-Witten equations.
Furthermore, it follows from the compatibility with Clifford multiplication that $\grad \mathcal L$ is equivariant under $\uptau$.
The downward gradient flow lines of $\mathcal L$ in $\underline{\mathcal C}(Y,\mathfrak s)$ satisfying
\begin{equation}\label{eqn:SWflow1}
	\begin{cases}
		dB/dt = -\left(\frac{1}{2}*F_{B^t} + \rho^{-1}(\Psi\Psi^*)_0 \right) \otimes 1_S, \\
	d\Psi/dt = -D_B \Psi.
	\end{cases}	
\end{equation}
lies within the real subspace $\mathcal C(Y,\mathfrak s, \uptau)$, by $\uptau$-equivariance of $\grad \mathcal L$.

A $\uptau$-invariant configuration $(B,\Psi)$ is \emph{reducible} if $\Psi = 0$, and \emph{irreducible} otherwise.
We denote by $\mathcal C^*(Y,\mathfrak s,\uptau)$ the space of irreducible configurations.
In the case when the Spin\textsuperscript{c} structure $\mathfrak s$ is torsion, the set of reducible configurations corresponds to flat $U(1)$-connections on $\det(S)$, which up to gauge, is isomorphic to the torus 
\begin{equation*}
	H^1(Y;i\mathbb R)^{-\upiota^*}/H^1(Y;i\mathbb Z)^{-\upiota^*},
\end{equation*}
whose rank we will denote as 
\begin{equation*}
	b^1_{-\upiota^*}(Y) = \dim_{\reals} H^1(Y;\reals).
\end{equation*}
The adjective ``(ir)reducible" refers to whether $(B,\Psi)$ has a $\mathbb Z_2$-stabilizer.

\subsection{Configuration spaces in dimension four}
\hfill \break
Let $X$ be an oriented 4-manifold, $\upiota_X$ be an involution, and $g_X$ be an $\upiota_X$-invariant Riemannian metric, and $(\mathfrak s_X, \uptau_X)$ be a real spin\textsuperscript{c} structure. 
The ordinary configuration space $\underline{C}(X,\mathfrak s_X)$ is the product of the affine space $\underline{\mathcal A}(X,\mathfrak s_X)$ of spin\textsuperscript{c} connections and sections of the positive spinor bundle:
\begin{equation*}
	\underline{\mathcal C}(X,\mathfrak s_X) = \{(A,\Phi)\} = \underline{\mathcal A}(X,\mathfrak s_X) \times \Gamma(S^+),
\end{equation*}
on which the gauge group $\underline{\mathcal G}=\text{Map}(X,S^1)$ acts.
The real structure $\uptau_X$ defines an involution on  $\underline{\mathcal C}(X,\mathfrak s_X)$ in the same way as before, and we define the real subspace 
\begin{equation*}
	\mathcal C(X,\mathfrak s_X,\uptau_X) =
	\mathcal A(X,\mathfrak s_X,\uptau_X) \times \Gamma(S^+)^{\uptau_X},
\end{equation*}
where $\mathcal A(X,\mathfrak s_X,\uptau_X)$ is the space of $\uptau_X$-invariant spin\textsuperscript{c} connections.
The corresponding real gauge group is
\begin{equation*}
	\mathcal G(X,\upiota_X)=\{u:X \to S^1\big|\overline{u}(\upiota(x))= u(x)\}.
\end{equation*}
Let $D_A^{\pm}= \rho \circ \nabla_A:\Gamma(S^{\pm}) \to \Gamma(S^{\pm})$ be the Dirac operator of $A$ on $S^{\pm}$.
The Seiberg-Witten equations on $X$ are 
\begin{align*}
		\frac12 \rho_{X}(F^+_{A^t})-(\Phi\Phi^*)_0&=0,\\
		D^+_A\Phi &= 0.
\end{align*}
In other words, the Seiberg-Witten solutions are zeros of the Seiberg-Witten map $\mathfrak F$:
\begin{align*}
	\mathfrak F:\underline{C}(X,\mathfrak s_X), &\to C^{\infty}(X;i\mathfrak{su}(S^+)\oplus S^-)\\
	(A,\Phi) &\mapsto \left(\frac12 \rho_{X}(F^+_{A^t})-(\Phi\Phi^*)_0, D_A^+\Phi\right).
\end{align*}
The operator $\mathfrak F$ is both $\underline{\mathcal G}$-equivariant and $\uptau_X$-equivariant.
Thus $\mathfrak F$ descends to a map 
\[
	\mathfrak F: \mathcal C(X,\mathfrak s_X,\uptau_X) \to \Gamma(X;i\mathfrak{su}(S^+)\oplus S^-)^{\uptau_X}.
\]

Assume $X$ has a nonempty boundary $Y$. 
Let $\nu$ be the outward pointing normal vector field, which induces an isomorphism
\begin{equation*}
	\rho(\nu):S^+|_Y \to S^-|_Y,
\end{equation*}
and a spin\textsuperscript{c} structure $\mathfrak s = (S,\rho)$ on $Y$:
\begin{equation*}
	S := S^+|_Y \cong S^-|_Y,\quad
	\rho(\xi):=\rho_X(\nu)^{-1}\rho_X(\xi).
\end{equation*}
Moreover, $\uptau_X$ defines a compatible real structure $\uptau$ over $\mathfrak s$.

\subsection{An index formula for closed four-manifolds}
\hfill \break
The linearized ordinary Seiberg-Witten operator, coupled with a gauge fixing condition, arises from the sum of a (real-linear) elliptic complex on the form part
\[\begin{tikzcd}
	0 & {\Omega^0(X;i\reals)} & {\Omega^1(X;i\reals)} & {\Omega^+(X;i\reals)} & 0
	\arrow["d", from=1-2, to=1-3]
	\arrow[from=1-1, to=1-2]
	\arrow["{d^+}", from=1-3, to=1-4]
	\arrow[from=1-4, to=1-5]
\end{tikzcd},\]
and a complex-linear elliptic complex on the spinorial part
\[\begin{tikzcd}
	0 & {\Gamma(S^+)} & {\Gamma(S^-)} & 0
	\arrow["{D_A^+}", from=1-2, to=1-3]
	\arrow[from=1-1, to=1-2]
	\arrow[from=1-3, to=1-4]
\end{tikzcd}.\]
The index of the Seiberg-Witten operator is given by
\begin{equation*}
	d = 2\ind_{\C}(D^+_A) + \ind_{\reals}(d^* \oplus d^+) = \frac{1}{4}(c_1(S^+)^2[X] - \sigma(X)) + \frac{1}{2}(b^1(X) - b^+(X) - 1).
\end{equation*}
Since the real structure $\uptau_X$ commutes with the Dirac operator, the real analogues of above complexes are 
\[\begin{tikzcd}
	0 & {\Omega^0(X;i\reals)^{-\upiota^*}} & {\Omega^1(X;i\reals)^{-\upiota^*}} & {\Omega^+(X;i\reals)^{-\upiota^*}} & 0
	\arrow["d", from=1-2, to=1-3]
	\arrow[from=1-1, to=1-2]
	\arrow["{d^+}", from=1-3, to=1-4]
	\arrow[from=1-4, to=1-5]
\end{tikzcd},\]
and 
\[\begin{tikzcd}
	0 & {\Gamma(S^+)^{\uptau_X}} & {\Gamma(S^-)^{\uptau_X}} & 0
	\arrow["{D_A^+}", from=1-2, to=1-3]
	\arrow[from=1-1, to=1-2]
	\arrow[from=1-3, to=1-4]
\end{tikzcd}.\]
The index of the first complex is the alternating sum of the dimension of the  $(-\upiota^*)$-invariant cohomology groups, indicated by a subscript ``$(-\upiota^*)$'',
\begin{equation*}
	b^1_{-\upiota^*}(X) - b^+_{-\upiota^*}(X) - b^0_{-\upiota^*}(X).
\end{equation*}
As for the second complex, anti-linearity and $(\uptau_X)^2 = 1$ implies that the index is half of the ordinary one,
\begin{equation*}
	\ind_{\reals}^{\uptau_X}(D_A^+)  = \frac{1}{8}(c_1(S^+)^2[X] - \sigma(X)).
\end{equation*}
\begin{lem}
\label{lem:closed4index}
	The linearized Seiberg-Witten operator with Coulomb gauge condition is of index
\begin{equation*}
	d = \ind_{\reals}^{\uptau_X}(D_A^+)  = \frac{1}{8}(c_1(S^+)^2[X] - \sigma(X)) + (b^1_{-\upiota^*}(X) - b^+_{-\upiota^*}(X) - b^0_{-\upiota^*}(X)).
\end{equation*}
\end{lem}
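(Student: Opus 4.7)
The plan is to decompose the linearized real Seiberg--Witten operator (coupled with Coulomb gauge fixing) into the direct sum of a form elliptic complex and a Dirac complex on the $\uptau_X$-invariant subspaces, compute the index of each summand separately, and add.

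First, by the $\uptau_X$-equivariance established earlier in the section, the ordinary linearized SW operator with gauge fixing splits as a form part on imaginary-valued differential forms and a Dirac part on positive spinors, and both summands are $\uptau_X$-equivariant. Restricting to $\uptau_X$-fixed points yields exactly the two real elliptic complexes displayed in the excerpt. Since the real operator is the direct sum of these two restrictions, its index is the sum of the two indices.

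For the form part, I would apply Hodge theory to the $(-\upiota^*)$-invariant subcomplex
\[
	0 \to \Omega^0(X;i\reals)^{-\upiota^*} \xrightarrow{d} \Omega^1(X;i\reals)^{-\upiota^*} \xrightarrow{d^+} \Omega^+(X;i\reals)^{-\upiota^*} \to 0.
\]
Because $d$, $d^*$, and $d^+$ all commute with $\upiota^*$, the standard $L^2$-Hodge decomposition respects the $(\pm\upiota^*)$-eigenspace splitting. Hence the harmonic representatives identify the cohomology of the above subcomplex with the $(-1)$-eigenspaces of $\upiota^*$ on $H^0(X;\reals)$, $H^1(X;\reals)$, and the space of self-dual harmonic 2-forms, whose dimensions are $b^0_{-\upiota^*}$, $b^1_{-\upiota^*}$, and $b^+_{-\upiota^*}$. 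The rolled-up Fredholm operator $(-d^*, d^+)$ coupled with the gauge inclusion $d$ has index equal to the negative Euler characteristic of the complex, namely $b^1_{-\upiota^*}(X) - b^0_{-\upiota^*}(X) - b^+_{-\upiota^*}(X)$.

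For the Dirac part, the key linear-algebra input is that whenever $D : V \to W$ is a $\C$-linear Fredholm operator between complex Banach spaces intertwining antilinear involutions $\uptau_V, \uptau_W$ with $\uptau_V^2 = \uptau_W^2 = 1$, one has the real decomposition $V = V^{\uptau_V} \oplus i V^{\uptau_V}$, so $\dim_\reals V^{\uptau_V} = \dim_\C V$ (and similarly for $W$). Applying this to $\ker D_A^+$ and $\coker D_A^+$ gives
\[
	\ind_{\reals}^{\uptau_X}(D_A^+) = \ind_\C(D_A^+) = \tfrac{1}{8}\bigl(c_1(S^+)^2[X] - \sigma(X)\bigr),
\]
the last equality being the Atiyah--Singer formula used in the computation preceding the lemma. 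Adding the form index from the previous paragraph yields the stated formula.

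The main obstacle, such as it is, lies in verifying that the restricted operators remain Fredholm with the claimed kernels and cokernels; this reduces to showing that the elliptic estimates and $L^2$-Hodge decomposition respect the $\uptau_X$-invariant splitting, which is immediate from the equivariance already established. No new analytic input beyond the ordinary case is required.
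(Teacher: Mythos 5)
Your proposal is correct and takes essentially the same route as the paper: split off the $(-\upiota^*)$-invariant form complex (handled by equivariant Hodge theory) from the $\uptau_X$-invariant Dirac complex, and add the two indices. Your paragraph on the Dirac part merely spells out the linear-algebra fact (anti-linearity plus $\uptau_X^2 = 1$ gives $\dim_{\reals} V^{\uptau} = \dim_{\C} V$ on kernel and cokernel) that the paper invokes in one line.
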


\subsection{Configuration spaces on cylinder}
\hfill \break
Let $Z$ be the infinite cylinder $\reals_t \times Y$.
Assume the following 3-dimensional data are given: 
\begin{itemize}
	\item an involution $\upiota: Y \to Y$, 
	\item an $\upiota$-invariant Riemannian metric $g$, 
	\item a real spin\textsuperscript{c} structure $(\mathfrak s, \uptau) = (S,\rho, \uptau)$.
\end{itemize}
We define the following 4-dimensional data
\begin{itemize}
	\item an involution $\upiota_Z: Z \to Z$ which acts by $\upiota(t,y) = (t,\upiota(y))$,
	\item an $\upiota_Z$-invariant Riemannian metric $dt^2 + g$ on $Z$,
	\item a spin\textsuperscript{c} structure $\mathfrak s_Z = (S_Z,\rho_Z)$ where $S_Z = S \oplus S$, and
		\begin{equation*}
			\rho_Z(\del/\del t) = 
			\begin{pmatrix}
				0 & -I_S\\ I_S & 0,
			\end{pmatrix}
			\quad
			\rho_Z(v)
			\begin{pmatrix}
				0 & -\rho(v)^*\\ \rho(v) & 0
			\end{pmatrix},
		\end{equation*}
		where $v \in \Gamma(\T Y)$.
	\item a compatible real structure $\uptau_Z := (\uptau,\uptau):S_Z \to S_Z$.
\end{itemize}
We view a section of $S^+$ as a one-parameter family of sections of $S$:
\begin{equation*}
	\Phi = \Psi(t):\reals \to \Gamma(S).
\end{equation*}
If the section $\Phi \in \Gamma(S^+)$ is $\uptau_Z$-invariant, then the path $\Psi(t)$
lies in the $\uptau$-invariant spinors of $Y$.
On the other hand, a general spin\textsuperscript{c} connection $A$ on $Z$ takes the form
\[A =\frac{d}{dt}+(cdt)\otimes 1_S+B,\]
where $c=c(t)$ is a one-parameter family of imaginary-valued functions, and $B = B(t)$ is an one-parameter family of spin\textsuperscript{c} connections on $(Y,\mathfrak s)$.
If $A$ is $\uptau_X$-invariant, then $B(t)$ is $\uptau$-invariant for any $t$.
If $c \equiv 0$, then $A$ is in \emph{temporal gauge}.

In any case, we have the correspondence between $4$-dimensional configurations and paths of $3$-dimensional configurations
\begin{equation*}
	A \mapsto (\check A(t),\check\Phi(t)) = (B(t),\Psi(t)),
\end{equation*}
which forgets the $dt$-component of the connection $c(t)$.
The map is surjective \emph{up to gauge} as any spin\textsuperscript{c} connection on $Z$ can be put in temporal gauge via parallel transportation along the $t$-direction.
There is a map in reverse direction, given by
\begin{equation*}
	(B(t),\Psi(t)) \mapsto \left(A = \frac{d}{dt} + B(t),\Phi = \Psi(t)\right).
\end{equation*}

The four-dimensional Seiberg-Witten equations on $Z$ looks formally like a gradient flow equation:
\begin{equation}\label{eqn:SWflow2}
	\begin{cases}
		dB/dt - dc= -\left(\frac{1}{2}*F_{B^t} + \rho^{-1}(\Psi\Psi^*)_0 \right) \otimes 1_S, \\
	d\Psi/dt + c\Psi= -D_B \Psi,
	\end{cases}	
\end{equation}
which agrees with Equations~\eqref{eqn:SWflow1} if $A$ was in temporal gauge.
The Equations~\eqref{eqn:SWflow2} manifestly have 4-dimensional gauge symmetry, whereas Equations~\eqref{eqn:SWflow1} only have 3-dimensional gauge symmetry.

\subsection{Local model of singularity}
The term ``local model'' is somewhat misleading, as the anti-linearity assumption forces the action $\uptau$ to be global.
Regardless, let us describe the invariant configurations on a solid torus $S^1 \times D^2$ to draw contrasts between the anti-linear setup and the orbifold setup.
Let $Y = I \times D^2 \subset \reals^3$ be the product of $I = (-1,1)$ with the unit 2-disk $D^2 \subset \reals^2$.
Let the singular locus $K$ be $I \times \{0\}$ and the involution be 
\begin{equation*}
	(s, n_1, n_2) \mapsto (s, -n_1, -n_2).
\end{equation*}
Here $s \in I$ and $(n_1,n_2) \in D^2$ are coordinates, such that $\{\del_s, \del_{n_1},\del_{n_2}\}$ is an orthonormal frame.
Let $S$ be the trivial rank-2 bundle with standard hermitian inner product.
Define Clifford multiplication $\rho$ by
\[
	\rho(\del_s) = \sigma_1 = \begin{pmatrix}
		i & 0\\
		0 & -i
	\end{pmatrix}, \quad 
\rho(n_1) = \sigma_2 = \begin{pmatrix}
	0 & -1\\
	1 & 0
\end{pmatrix},	\quad
\rho(n_2) = \sigma_3 = \begin{pmatrix}
	0 & i\\
	i & 0
\end{pmatrix}.
\]
The compatibility of the anti-linear involution $\uptau$, together with
\[
	\upiota_*\left(\frac{\del}{\del s}\right) = \frac{\del}{\del s} \quad 
	\upiota_*\left(\frac{\del}{\del n_1}\right) = -\frac{\del}{\del n_1} \quad 
	\upiota_*\left(\frac{\del}{\del n_2}\right) = -\frac{\del}{\del n_2}
,\] 
implies
\begin{equation}
\label{eqn:dels_tau}
	\uptau \circ \rho (\del_s) = \rho(\del_s) \circ \uptau.
\end{equation}
Assume the lift $\uptau$ takes the form
\begin{align*}
	\uptau: S_y &\to S_{\upiota(y)},\\
	w &\mapsto T_y\bar w,
\end{align*}
where $T = T_y$ is a 2-by-2 complex matrix-valued function, and $\bar w$ is the conjugate of $w \in \C^2$.
Equation~\eqref{eqn:dels_tau} means
\[
	T_y\overline{\sigma_1} = - T_y \sigma_1 = \sigma_1 T_{\upiota(y)}.
\]
If $y \in K$, then $T_y = T_{\upiota(y)}$ must be off diagonal. 
On the other hand, the involution condition $\uptau^2 = 1$ implies 
\[
	T_{\upiota(y)}(\overline{T_y\overline{w}}) = T_{\upiota(y)}\overline T_y w = w,
\]
that is, $T_{\upiota(y)}\overline{T_y} = 1$. 
Therefore, for some $\lambda_1, \lambda_2 \in \C$,
\[
	T_y = \begin{pmatrix}
		0 & \lambda_2\\
		\lambda_1 & 0
	\end{pmatrix},
	\quad
	\lambda_1 \bar \lambda_2 = 1
\]
Since $\uptau$ preserves the hermitian metric, $\lambda_1 = \lambda_2 = \lambda(y) \in S^1$.
For this choice of $T$, the other two compatibility conditions are satisfied.
The invariant subspace $V_y \subset \C^2$ under $\uptau$ are spanned by real linear combinations of the two vectors
\[
	\begin{pmatrix}
		1\\ \lambda(y)
	\end{pmatrix}, 
	\begin{pmatrix}
		i \\ -\lambda(y) i
	\end{pmatrix}.
\]
Let $V \to K$ be the resulting rank-2 real subbundle.
Then any $\uptau$-invariant spinor $\Phi$ on $Y$ restricted to $K \subset I \times D^2$ are sections of the subbundle $V$.

We can also write down the real spin\textsuperscript{c} connections.
Consider the trivial connection $\nabla_0$ on $\underline{\C}^2$ as a reference Spin\textsuperscript{c} connection invariant under $\uptau$.
Any other Spin\textsuperscript{c} connection $\nabla$ is of the form $\nabla_0 + ia \otimes 1$, where  in polar coordinates
\begin{equation*}
	a = a_s ds + a_r dr + a_{\theta} d\theta,
\end{equation*}
such that $r \in (0,1)$ and $\theta \in (0,2\pi)$.
Then $\nabla$ is $\uptau$-invariant if and only if $\upiota^*a= -a$, or equivalently that $a_r$ and $a_{\theta}$ satisfy
\begin{equation*}
	a_r(r, \theta+\pi) = -a_r(r, \theta),
	\quad
	a_{\theta}(r,\theta+\pi) = -a_{\theta}(r, \theta).
\end{equation*}
In particular, $\nabla_0$ descends to the quotient  only if $a_r = a_{\theta} = 0$.

\section{Configuration Spaces and Slices}
\label{sec:config_slice}
We blow up the reducible locus of the configuration space $\mathcal C(M,\mathfrak s_M, \uptau_M)$, for 
$M$ either a 3- or 4- manifold.
The process is the infinite dimensional analogue of the real blowing up Section~\ref{sec:morse}.
\subsection{$\boldsymbol{\sigma}$-blow-up in dimension four}
\hfill \break
Let $X$ be a $4$-manifold, $\upiota_X$ be an involution, $g_X$ be an $\upiota_X$-invariant metric, and $(\mathfrak s_X,\uptau_X)$ be a real spin\textsuperscript{c} structure.
The reducible locus $Q$ of $\mathcal C(X,\mathfrak s_X,\uptau_X)$ is the subspace with zero spinorial parts $\mathcal A(X,\mathfrak s_X,\uptau_X) \times \{0\}$.
We replace $Q$ with
$\mathcal A(X,\mathfrak s_X,\uptau_X) \times \mathbb S(\Gamma(S^+)^{\uptau_X})$, where $\mathbb S$ denotes the unit sphere in the Hilbert space, with respect to the $L^2$-inner product.
Let the \emph{$\sigma$-blown up configuration space} be
\begin{equation*}
	\mathcal C^{\sigma}(X,\mathfrak s,\uptau)
	= 
	\{(A,s,\phi) \in \mathcal A(X,\mathfrak s_X,\uptau_X) \times \reals \times \Gamma(S^+)^{\uptau}\big|
	s \ge 0, \text{ and }\|\phi\|_{L^2(X)} = 1\}.
\end{equation*}
There is a blow-down map $\pi: \mathcal C^{\sigma}(Y,\mathfrak s,\uptau) \to \mathcal C(Y,\mathfrak s,\uptau)$ given by $(A,s,\phi) \mapsto (A,s\phi)$, which is bijective on the irreducible subset, and has fibre $\mathbb S(\Gamma(S^+)^{\uptau_X})$ over $Q$.

The space $\mathcal C^{\sigma}(X,\mathfrak s,\uptau)$ above arises as the real subspace of the blown-up configuration in the ordinary monopole theory
\begin{equation*}
	\underline{\mathcal C}^{\sigma}(X,\mathfrak s)
	= 
	\{(A,s,\phi) \in \underline{\mathcal A}(X,\mathfrak s_X) \times \reals \times \upgamma(S^+)\big|
	s \ge 0, \text{ and }\|\phi\|_{L^2(X)} = 1\},
\end{equation*}
where the $\uptau$-action on the first and third factors are the same as in Section~\ref{sec:gauge_setup}, and trivial on the $\reals$-factor.
The action descends to the non-blown-up configuration $\underline{\mathcal C}(X,\mathfrak s)$, under the blow-down map.
Over $\mathcal C^{\sigma}(X,\mathfrak s,\uptau)$, we extend the Seiberg-Witten map $\mathfrak F$ by setting
\begin{equation*}
	\mathfrak F^{\sigma}(A,s,\phi)= \left(\frac12 \rho(F^+_{A^t}) - s^2(\phi\phi^*)_0, D^+_A\phi\right),
\end{equation*}
which is $\mathcal G(\uptau_X)$-equivariant.

We view the Seiberg-Witten map as a section of $\mathcal V(X,\mathfrak s_X,\uptau_X) \to \mathcal C(X,\mathfrak s_X,\uptau_X)$, where $\mathcal V(X,\mathfrak s_X,\uptau_X)$ is the trivial vector bundle with fibre $C^{\infty}(X;i\mathfrak{su}(S^+)\oplus S^-)$.
Then the blown-up version $\mathfrak F^{\sigma}$ is a section $\mathcal V^{\sigma}(X,\mathfrak s_X,\uptau_X) \to \mathcal C(X,\mathfrak s_X,\uptau_X)$, where $\mathcal V^{\sigma}(X,\mathfrak s_X,\uptau_X)$ is the pullback of the trivial bundle $\mathcal V(X,\mathfrak s_X,\uptau_X)$ by $\pi$.
\subsection{$\boldsymbol{\sigma}$-blow-up in dimension three}
\hfill \break
Next, we define the blown-up configuration space over a 3-manifold $Y$ in the same way:
\begin{equation*}
	\mathcal C^{\sigma}(Y,\mathfrak s,\uptau)
	= 
	\{(B,r,\psi) \in \mathcal A(Y,\mathfrak s,\uptau) \times \reals \times \Gamma(S)^{\uptau}\big|
	r \ge 0, \text{ and }\|\psi\|_{L^2(Y)} = 1\}.
\end{equation*}
The correspondence between $4$-dimensional configurations and paths in the $3$-dimensional configurations continues in the blown-up setting, as follows.
\subsection{Configurations on Cylinders}
\hfill \break
Let $Z = I \times Y$ be the finite cylinder for $I = [-1,1]$.
A configuration in $\upgamma^{\sigma} = (A,s,\phi) \in \mathcal C^{\sigma}(Z,\mathfrak s_Z,\uptau_Z)$ defines a path of configuration
\begin{equation*}
	\check\upgamma(t) = (\check A(t), s\|\check\phi(t)\|_{L^2(Y)}, \check\phi(t)/\|\check\phi(t)\|_{L^2(Y)}),
\end{equation*}
if $\phi(t) \ne 0$ for any $t \in I$.
The correspondence from $\mathcal C^{\sigma}(Z,\mathfrak s_Z,\uptau_Z)$ to $C^{\infty}(I,\mathcal C^{\sigma}(Y,\mathfrak s, \uptau))$ is partially defined.
Assume in addition that $A$ is in temporal gauge. 
The Seiberg-Witten equation $\mathfrak F^{\sigma}(\upgamma^{\sigma}) = 0$ can be written as
\begin{align*}
		\frac{dB}{dt} &= -\left(\frac{1}{2}*F_{B^t} + r^2\rho^{-1}(\psi\psi^*)_0 \right) \otimes 1_S,\\
		\frac{dr}{dt} &= - \Lambda(B,r,\psi)r,\\
		\frac{d\psi}{dt} &= -D_B\psi+ \Lambda(B,r,\psi)\psi,
\end{align*}
where $\Lambda(B,r,\psi)$ is the analogue of the function in Equation~\eqref{eqref:lambda_fd}, defined by
\begin{equation*}
	\Lambda(B,r,\psi) = \langle \psi, D_B\psi\rangle_{L^2(Y)}.
\end{equation*}
The right hand side of the Seiberg-Witten equations 
\begin{equation*}
	(\grad \mathcal L)^{\sigma} = (\left(\frac{1}{2}*F_{B^t} + \rho^{-1}(\Psi\Psi^*)_0 \right), \Lambda(B,r,\psi)r, D_B\psi - \Lambda(B,r,\psi)\psi).
\end{equation*}
is the \emph{blown-up gradient of the Chern-Simons-Dirac funtctional}, denoted as $(\grad \mathcal L)^{\sigma}$.
The blown-up gradient agrees with $\pi^*\grad \mathcal L$ on the irreducible subset and is everywhere tangent to the boundary $\{r=0\}$.
The zeros of the blown-up gradient consist of 
\begin{itemize}
	\item $(B,r,\psi)$ for which $(B,r\psi)$ is a zero of $\grad \mathcal L$ and are preimages of zeros of $\grad \mathcal L$ under $\pi$,
	\item $(B,0,\psi)$ for which $(B,0)$ is a zero of $\grad \mathcal L$ and $\psi$ is an eigenvector of $D_B$.
\end{itemize}

\subsection{$\boldsymbol{\tau}$-blow-up for cylinders}
\hfill \break
Finally, we introduce the $\tau$-model of the blown-up configurations on $Z$, which is equivalent to the $\sigma$-model:
\begin{equation*}
	\mathcal C^{\tau}(Z,\mathfrak s_Z,\uptau_Z) = \{(A,s,\phi) \big| s(t) \ge 0, \text{ and }\|\phi(t)\|_{L^2(Y)} = 1\},
\end{equation*}
as a subspace of $\mathcal A(Z,\mathfrak s_Z,\uptau_Z) \times C^{\infty}(I;\reals) \times \upgamma(S^+)^{\uptau_Z}$.
A configuration $\upgamma = (A,s,\phi)$ determines an element in $\mathcal C^{\sigma}(Z,\mathfrak s_Z,\uptau_Z)$, $\phi(t) \ne 0$ for any $t$.
Moreover, $\upgamma$ determines a path $\check\upgamma = (\check A, s,\check\phi)$ in $\mathcal C^{\sigma}(Y,\mathfrak s,\uptau)$, by forgetting the $dt$-component of the connection.
Conversely, any path in $C^{\sigma}(Y,\mathfrak s,\uptau)$ determines an element in $C^{\tau}(Z,\mathfrak s_Z,\uptau_Z)$ in temporal gauge.
The flow equation for a general configuration $(A = d/dt + cdt + \check A, s(t),\check\phi)$ is
\begin{align*}
	\frac{d\check A}{dt} &= -\left(\frac{1}{2}*F_{\check A^t} + dc + s^2\rho^{-1}(\check\phi\check\phi^*)_0 \right) \otimes 1_S,\\
	\frac{ds}{dt} &= - \Lambda(B,r,\check\phi)s,\\
	\frac{d\phi}{dt} &= -D_B\check\phi - c\check\phi+ \Lambda(B,r,\check\phi)\check\phi.
\end{align*}
Rewriting the flow equation we see the four-dimensional gauge invariance:
\begin{align*}
	\frac{1}{2}\rho(F^+_{A^t}) - s^2(\phi\phi^*)_0 &= 0,\\
	\frac{d}{dt}s+ \Re \langle D_A^+ \phi,\rho(dt)^-1\phi\rangle_{L^2(Y)}s &= 0,\\
	D^+_A\phi - \Re \langle D_A^+ \phi, \rho(dt)^{-1}\phi \rangle_{L^2(Y)}\phi &= 0.
\end{align*}
To interpret the Seiberg-Witten map as a section of vector bundle, given any $(A,s,\phi) \in \mathcal C^{\tau}(Z,\mathfrak s_Z,\uptau_Z)$ we define the fibre
\begin{equation*}
	\mathcal V^{\tau}_{(A,s,\phi)} = 
	\left\{ (\eta,r,\psi)\big|
	\Re\langle \check\phi(t), \check \psi(t)\rangle = 0\right\},
\end{equation*}
which is a subspace of $C^{\infty}(Z;i\mathfrak{su}(S^+))\oplus C^{\infty}(I;\reals)\otimes C^{\infty}(Z;S^-)$.
Together, they form a bundle $\mathcal V^{\tau} \to \mathcal C^{\tau}$.
Hence we have
\begin{equation*}
	\mathfrak F^{\tau}(A,s,\phi)
	=
	\left(\frac{1}{2}*F_{\check A^t} + dc + s^2\rho^{-1}(\check\phi\check\phi^*)_0,
	\Lambda(B,r,\check\phi)s,
	D_B\check\phi + c\check\phi - \Lambda(B,r,\check\phi)\check\phi\right)
\end{equation*}
as a section of $\mathcal V^{\tau}$.

\subsection{Sobolev completions}
\hfill \break
Let $M$ be a smooth manifold of dimension three or four and possibly with boundary.
Let $E \to M$ be a vector bundle equipped with an inner product and a connection $\nabla$.
Let $k \ge 0$ be an integer and let $p > 0$.
The Sobolev space $L^p_k(M;E)$ is the completion of smooth sections of $E$ with respect to the Sobolev norm
\begin{equation*}
	\|f\|^p_{L^p_k} = \int_M |f|^p + |\nabla f|^p + \dots + |\nabla^k f|^p d\vol.
\end{equation*}
If $M$ is closed, the fractional $L^2$-Sobolev space can be defined as
\begin{equation*}
	\|f\|_{L^2_k}=\|(1+\Delta)^{k/2}f\|_{L^2}.
\end{equation*}

The vector bundles of interests will be either be cotangent bundles or spinor bundles.
For simplicity, let $W$ be $S$ if $M$ is a 3-manifold, or be $S^+$ if $M$ is a 4-manifold.
Fix a smooth, $\uptau$-invariant reference spin\textsuperscript{c} connection $A_0$ of $W$ on $M$, and a lift of the involution $\uptau_M: W \to W$.
\begin{defn}
	We define
\begin{itemize}[leftmargin=*]
\item the \emph{ordinary $L^2_k$-configuration space} by
\begin{equation*}
	\underline{\mathcal C}_k(M,\mathfrak s) 
	= 
	(A_0,0) + L^2_k(M;i\T^*M\oplus W)
	= \underline{\mathcal A}_k(M,\mathfrak s) \times L^2_k(M;W), 
\end{equation*}
(where $\underline{A}_k(M,\mathfrak s)$ is the space of all $L^2_k$ -spin\textsuperscript{c} connections,)
\item  the \emph{real $L^2_k$-configuration space} by
\begin{equation*}
	\mathcal C_k(M,\mathfrak s,\uptau) 
	= 
	(A_0,0) + L^2_k(M;i\T^*M\oplus W)^{\uptau}
	= \mathcal A_k(M,\mathfrak s,\uptau) \times L^2_k(M;W)^{\uptau},
\end{equation*}
(where $\mathcal A_k(M,\mathfrak s,\uptau)$ is the space of $\uptau$-invariant spin\textsuperscript{c} connections),

\item and the \emph{(real) $L^2_k$-blown-up configuration space} by
\begin{equation*}
	\mathcal C^{\sigma}_k(M,\mathfrak s,\uptau)  =
	\left \{(A,s,\phi) \in \mathcal A_k(M,\mathfrak s,\uptau) \times \reals_{\ge 0} \times \mathbb S(L^2_k(M;W)^{\uptau}) \bigg| s \ge 0, \|\phi\|_{L^2(M)} = 1
	\right\}.
\end{equation*}
(Note that we are taking the unit sphere of $L^2_k$-spinors with respect to the $L^2$-norm.
This is a smooth Hilbert manifold with boundary $\{s = 0\}$)
\end{itemize}	
\end{defn}
The gauge groups will be completed to a Hilbert manifold as follows.
Assume $2(k+1) > \dim M$ so that $L^2_k \hookrightarrow C^0$.
Let
\begin{itemize}[leftmargin=*]
	\item $\underline{\mathcal G}_{k+1}(M)$ be the subspace of $L^2_k(M;\C)$ of functions with pointwise norm $1$, and
	\item $\mathcal G_{k+1}(M,\upiota)$ be the subset of $\underline{\mathcal G}_{k+1}(M)$ that are in addition invariant under $\upiota$,
\end{itemize}
both equipped with the subspace topology.
Thus $\underline{\mathcal G}_{k+1}(M)$ and $\mathcal G_{k+1}(M,\upiota)$ are Hilbert Lie groups.
The actions of the real gauge groups have $\mathbb Z_2$-stabilizers over the configuration spaces if the spinors are zero, and are free over the blown-up configuration spaces.
In any case, let us define the spaces of gauge equivalence classes of configurations:
\begin{itemize}[leftmargin=*]
	\item $\mathcal B_k(M,\mathfrak s,\uptau) 
	:= \mathcal C_k(M,\mathfrak s,\uptau)/\mathcal G_{k+1}(M,\upiota)$,
	\item $\mathcal B_k^{\sigma}(M;\mathfrak s,\uptau) :
	= \mathcal C_k^{\sigma}(M;\mathfrak s,\uptau)/\mathcal G_{k+1}(M;\upiota)$, 
	\item $\mathcal B_k^{\tau}(Z;\mathfrak s,\uptau) 
	:= \mathcal C_k^{\tau}(Z,\mathfrak s,\uptau)/\mathcal G_{k+1}(Z,\upiota)$,
	\item $\tilde{\mathcal B}_k^{\tau}(Z;\mathfrak s,\uptau): 
	= \tilde{\mathcal C}_k^{\tau}(Z;\mathfrak s,\uptau)/\mathcal G_{k+1}(Z;\upiota)$.
\end{itemize}
In the last two cases, $Z = I \times Y$ is a cylinder.
\begin{rem}
	The spaces $\mathcal B^{\sigma}(M;\mathfrak s,\uptau)$ and $\tilde{\mathcal B}(Z;\mathfrak s,\uptau)$ are Hausdorff by \cite[Proposition~9.3.1]{KMbook2007}.	
\end{rem}
\subsection{Tangent bundles and slices }
\hfill \break
For $j \le k$ and over the non-blown-up configuration space $\mathcal C_k(M,\mathfrak s, \uptau)$, we define the ``generalized'' tangent bundle $\mathcal T_j \to \mathcal C_k(M;\mathfrak s,\uptau)$ by the product bundle
\begin{equation*}
	T_j = L^2_j(M;i\T^*M \oplus W)^{\uptau} \times \mathcal C_k(M;\mathfrak s, \uptau).
\end{equation*}
When $j = k$, this is the ordinary tangent bundle.
Over the blown-up configuration space, we define the $L^2_j$-tangent  tangent bundle $\mathcal T_j^{\sigma} \to \mathcal C^{\sigma}_k(M;\mathfrak s,\uptau)$ by setting the fibre at $\upgamma = (A_0,s_0,\phi_0)$ to be
\begin{equation*}
	\{(a,s,\phi) \big| \Re\langle \phi_0,\phi\rangle_{L^2} = 0\}
	\subset L^2_j(M;i\T^*M)^{\uptau} \times \reals \times L^2_j(M;W)^{\uptau}.
\end{equation*}
When $j = k$, the tangent bundle is exactly $\mathcal T_j^{\sigma}$.
Similarly, we define the ``tangent  bundle'' $\mathcal T_j \to \mathcal C^{\sigma}_k(M;\mathfrak s, \uptau)$ over the blown-up configuration space, which decomposes into a direct sum along the gauge group actions and the orthogonal direction.

Indeed, at $\upgamma = (A_0,\Phi_0)$ the derivative of the gauge group action is
\begin{align*}
	\mathbf{d}_{\upgamma}:\T_e\mathcal G_{k+1}(M)
	= L^2_{k+1}(M;i\reals)^{-\upiota^*} &\to \T_{\upgamma}\mathcal C_k(M)\\
	\xi &\mapsto (-d\xi, \xi\Phi_0).
\end{align*}
Let $\mathcal J_{\upgamma,k}$ be the \emph{$L^2_j$-completion} of the image of $\mathbf{d}_{\upgamma}$ and $\mathcal K_{\upgamma,k}$ be the orthogonal complement, with respect to the \emph{$L^2$-inner product}, that is,
\begin{equation*}
	\mathcal K_{\upgamma,k}
	=
	\{(a,\phi) \big| -d^*a + i\Re\langle i\Phi_0, \phi \rangle = 0 \text{ and } \langle a, \nu \rangle = 0 \text{ at } \del M\},
\end{equation*}
as a subspace of $\mathcal T_j$.
As for the blow-up, let $\mathcal J^{\sigma}_{\upgamma,j}$
be the closure of the image of the map
\begin{align*}
	\mathbf{d}^{\sigma}_{\upgamma}:\T_e\mathcal G_{k+1}(M) = L^2_{k+1}(M;i\reals)^{-\upiota^*}
	&\to \T_{\upgamma}\mathcal C^{\sigma}_k(M)\\
	\xi &\mapsto (-d\xi, 0, \xi\phi_0)
\end{align*}	
for any $\upgamma = (A_0, s_0, \phi_0)$.
The complementary subspace $\mathcal K_{\upgamma,j} \subset \mathcal T_{\upgamma,j}$ is not orthogonal with respect to any natural metric.
Rather, it is defined as the set of triples $(a,s,\phi)$ satisfying
\begin{align*}
	\langle a, \nu \rangle 
	&= 0 \text{ at }\del M,\\
	-d^*a + is_0^2 \Re\langle i\phi_0,\phi \rangle 
	&= 0,\\
	\Re\langle i\phi_0,\phi \rangle_{L^2(M)} &= 0.
\end{align*}
So we have bundle decomposition over $\mathcal C^{\sigma}_k$ (this is the $\uptau$-invariant analogue of \cite[Proposition~9.3.5-6]{KMbook2007}):
\begin{equation}
\label{eqn:tangent_decomp}
	\mathcal T^{\sigma}_{j} = \mathcal J^{\sigma}_{j} \oplus \mathcal K^{\sigma}_{j},
\end{equation}
and a decomposition 
\begin{equation}
\label{eqn:tangent_decomp_irred}
	\mathcal T_{j} = \mathcal J_{j} \oplus \mathcal K_{j} 
\end{equation}
over the irreducible locus of the configuration space $\mathcal C_k^* \subset \mathcal C_k$, compatible with the blow-down map.

Finally, $\upgamma = (A_0, \Phi_0) \in \mathcal C_k(M,\uptau)$, we define the slice 
\begin{equation*}
	\mathcal S_{k,\upgamma} = \{(A_0 + a, \Phi) \big|
	-d^*a + i\Re\langle i\Phi_0,\Phi\rangle = 0 \text{ and } \langle a,\nu \rangle = 0 \text{ at } \del M\}.
\end{equation*}
Then $\mathcal S_{k}$ is a closed Hilbert submanifold of $\mathcal C_k$, whose tangent space at $\upgamma$ is $\mathcal K_{\upgamma,k}$.
Moreover, an open neighbourhood $U \subset \mathcal S_{k,\upgamma}$ of $\upgamma$ provides a local chart for the Hilbert manifold $\mathcal B_k(M,\mathfrak s)$ around $[\upgamma]$, via the composition
\begin{equation*}
	S_k(M,\upgamma) \hookrightarrow \mathcal C_k(M;\mathfrak s) \to \mathcal B(M;\mathfrak s).
\end{equation*}
For a blown-up configuration $\upgamma \in \mathcal C^{\sigma}_k(M,\mathfrak s)$ we define the slice $\mathcal S^{\sigma}_{k,\upgamma}$ to be the closed Hilbert submanifold of $\mathcal C^{\sigma}(M,\mathfrak s)$ consisting of triples $(A_0+a,s,\phi)$ satisfying
\begin{align*}
	\langle a, \nu \rangle &= 0 \text{ at }\del M,\\
	-d^*a + iss_0\Re\langle i\phi_0,\phi \rangle &=0,\\
	\Re\langle i\phi_0,\phi\rangle_{L^2(M)} &=0.
\end{align*}
If $s \ne 0$, then $\mathcal S^{\sigma}_{k,\upgamma}$ projects onto $\mathcal S_{k,\upgamma}$ and can be considered as the proper transform of $\mathcal S_{k,\upgamma}$.

\subsection{Topology of configuration space}
\label{subsec:topofconfig}
\hfill \break
Let $\upgamma_0 = (A_0,0)$ be a reducible configuration, where $A_0$ is $\uptau$-invariant.
The slice $\mathcal S_{k,\upgamma_0}$ defined as
\begin{align*}
	d^* a &= 0,\\
	\langle a,\nu \rangle &= 0 \text{ at }\del M.
\end{align*}
is a global slice.
Indeed, any configuration $(A,\Phi)$ can be put into $\mathcal S_{k,\upgamma}$ via a gauge transformation $u = e^{\xi}$, by solving the Neumann problem
\begin{align*}
	\langle d\xi, \nu\rangle &= \langle a,\nu \rangle \text{ at }\del M,\\
	\Delta \xi &= d^*a,
\end{align*}
which has a unique solution $\xi \in L_{k+1}^{-\upiota^*}$ such that $\int_M \xi = 0$.
Define the following topologically contractible group
\begin{equation*}
	\mathcal G_{k+1}^{\perp} = \left\{e^{\xi} \bigg| \upiota^*(\xi) = -\xi, \text{ and }\int_M \xi = 0\right\}.
\end{equation*}
The gauge group action is a diffeomorphism
\begin{align*}
	\mathcal G_{k+1}^{\perp} \times \mathcal K_{k,\upgamma_0} &\to \mathcal C_k\\
	(e^{\xi},(a,\phi)) &\mapsto (A_0 + (a-d\xi)\otimes 1, e^{\xi}\phi).
\end{align*}
The gauge group $\mathcal G_{k+1}$ decomposes as $\mathcal G^h \times \mathcal G_{k+1}^{\perp}$, where $\mathcal G^h$ consists of the harmonic maps $u: M \to S^1$, satisfying $u(\upiota(x)) = \bar u(x)$.
If the fixed point locus is nonempty (so that the only constant gauge transformations are $\mathbb Z_2 = \{\pm 1\}$), there is a exact sequence
\begin{equation*}
	1 \to \mathbb Z_2 \to \mathcal G^h \to H^1(M;\mathbb Z)^{-\upiota^*} \to 1.
\end{equation*}
For the group of component $\pi_0$, we have the exact sequence
\begin{equation*}
	0 \to 
	\mathbb Z_2 \to \pi_0(\mathcal G^h) \to H^1(M;\mathbb Z)^{-\upiota^*} 
	\to 0
\end{equation*}
Since $\mathcal G^h$ is abelian and $H^1(M;\mathbb Z)^{-\upiota^*}$ is free, 
\begin{equation*}
	\pi_0(\mathcal G^h) \cong \mathbb Z_2 \times H^1(M;\mathbb Z)^{-\upiota^*} 
\end{equation*}
where the isomorphism depends on a section $s:  H^1(M;\mathbb Z)^{-\upiota^*} 
 \to \mathcal G^h$.

Let $\mathcal K^*_{k,\upgamma_0}$ be the subset of $\mathcal K_{k,\upgamma_0}$ for which $\Phi \ne 0$.
We have the following homotopy equivalence
\begin{equation*}
	\mathcal B^{\sigma} \simeq \mathcal K^*_{k,\upgamma_0}/\mathcal G^h,
\end{equation*}
coming from the fibration
\begin{equation*}
	\frac{L^2(M;W)^{\uptau}- 0}{\{\pm 1\}}
	\hookrightarrow
	\frac{\mathcal K^*_{k,\upgamma_0}}{\mathcal G^h}
	\to
	\frac{H^1(M;i\reals)^{-\upiota^*}}{H^1(M;i\mathbb Z)^{-\upiota^*}},
\end{equation*}
which is trivializable by Kuiper's theorem~\cite{KUIPER196519}.
Denote the $(-\upiota^*)$-invariant subtorus of the Picard torus by $\mathbb T$:
\begin{equation*}
	\mathbb T := \frac{H^1(M;i\reals)^{-\upiota^*}}{H^1(M;i\mathbb Z)^{-\upiota^*}}.
\end{equation*}
In other words, we have a fibration
\begin{equation*}
	\mathbb{RP}^{\infty} \to \mathcal B^{\sigma}
	\to \mathbb T.
\end{equation*}
The cohomology $H^*(\mathcal B^{\sigma};\mathbb Z_2)$ is a module of $H^*(\mathbb T;\mathbb Z_2)$, and 
there is a non-canonical isomorphism 
\begin{equation*}
	H^*(\mathcal B^{\sigma};\mathbb Z_2) \cong  \mathbb F_2[\upsilon]\otimes \Lambda^*(H_1(M;\mathbb Z)^{-\upiota^*}/\text{torsion}),
\end{equation*}
where $\upsilon$ has degree $1$.
The choice of $\upsilon$ depends on a trivialization $\mathcal B^{\sigma} \simeq \reals^{\infty} \times \mathbb T$.
We shall return to the module structure in the cobordism section.

\subsection{Seiberg-Witten maps on completions}
\subsubsection*{\textbf{(4-manifolds)}}
Let $X$ be a compact four-manifold, possibly with boundary.
For $j \le k$, let $\mathcal V_j(X,\mathfrak s) \to \mathcal C_k(X,\mathfrak s)$ be the trivial bundle with fibre $L^2_j(X;i\mathfrak{su}(S^+)\oplus S^-)$, and $\mathcal V_j^{\sigma} \to \mathcal C^{\sigma}_k$ be the pullback bundle under blow-down.
The Seiberg-Witten map $\mathfrak F^{\sigma} $ can be extended to be the section of $\mathcal V_j^{\sigma} \to \mathcal C^{\sigma}_k$ be the extension of 
\begin{equation*}
	\mathfrak F^{\sigma}:\mathcal C^{\sigma}_k(X,\mathfrak s) 
	\to \mathcal V^{\sigma}_{k-1} \subset \mathcal V^{\sigma}_j,
\end{equation*}
which is $\mathcal G_{k+1}(X)$-equivariant if $j=k-1$.
\subsubsection*{\textbf{(3-manifolds)}}
In the case of 3-manifolds, we consider the (smooth) gradient vector field $\grad \mathcal L$ on the completions
\begin{align*}
	\grad \mathcal L &: \mathcal C_k(Y,\mathfrak s) \to \mathcal T_{k-1},\\
	(\grad \mathcal L)^{\sigma} &:
	\mathcal C^{\sigma}_k(Y,\mathfrak s) \to 
	\mathcal T_{k-1}^{\sigma}.
\end{align*}
\subsubsection*{\textbf{(cylinders)}}
Let $Z = I \times Y$ and $k > 0$ be integer.
We can define the configuration 
\begin{equation*}
	\mathcal C^{\tau}_k(Z,\mathfrak s)
	\subset
	\mathcal A_k(Z,\mathfrak s) \times L^2_k(I;\reals) \times L^2_k(Z;S^+)
\end{equation*}
as the space of triples $(A,s,\phi)$ with $s(t) \ge 0$ and $\|\phi(t)\|_{L^2(Y)} = 1$ for all $t \in I$.
This is not a Hilbert manifold with boundary because of the condition $s(t) \ge 0$, but it is a closed subspace of the Hilbert manifold
\begin{equation*}
	\tilde{\mathcal C}_k^{\tau}(Z,\mathfrak s) \subset 
	\mathcal A_k(Z,\mathfrak s) \times L^2_k(I;\reals) \times L^2_k(Z;S^+),
\end{equation*}
consisting of triples that satisfy only $\|\phi(t)\|_{L^2(Y)} = 1$ for all $t \in I$, i.e. we drop the $s(t) \ge 0$ assumption.

\subsection{Revisiting equivalence of real structures}
\hfill \break
Let $(M, \upiota)$ be a \emph{real} 3 or 4-manifold.
Let $\mathfrak F$ be the (unperturbed) Seiberg-Witten map, which is either the non-blown-up version, the $\sigma$-version, or the $\tau$-version.
The moduli spaces of Seiberg-Witten solutions are of the form
\begin{equation*}
	\frac{\mathfrak F^{-1}(0)^{\uptau}}{\mathcal G(\upiota)}.
\end{equation*}
Suppose we are given two real structures $\uptau_0$ and $\uptau_1$.
Recall that $\uptau_0$ and $\uptau_1$ are equivalent if there exists an ordinary gauge transformation $g$ such that 
\begin{equation*}
	g\uptau_0 = \uptau_1 g.
\end{equation*}
Hence $g$ induces an isomorphism of the moduli spaces
\begin{equation*}
	\frac{\mathfrak F^{-1}(0)^{\uptau_0}}{\mathcal G(\upiota)}
	\to 
	\frac{\mathfrak F^{-1}(0)^{\uptau_1}}{\mathcal G(\upiota)}.
\end{equation*}
This continues to hold if $\mathfrak F$ is a perturbed Seiberg-Witten map that is equivariant with respect to the ordinary gauge transformation.

\section{Perturbations}
\label{sec:perturb}
In this section, we shall perturb the CSD functional to achieve transversality of moduli spaces of trajectories.
Our perturbation scheme is based on \cite[Section~10-11]{KMbook2007}, where all analytical estimates carry over to our setting.
For notational convenience, we fix once and for all a real spin\textsuperscript{c} structure $(\mathfrak s,\uptau)$, and may suppress ``$\mathfrak s$'' and ``$\uptau$'' in notations.
(All objects in this section are invariant, unless they are underlined.)
\subsection{Abstract perturbations}
\hfill \break
The perturbed CSD functional takes the form of $\pertL = \mathcal L + f$, where $f: \mathcal C(Y,\uptau) \to \reals$ is a $\mathcal G(Y,\upiota)$-invariant function.
The formal gradient field is given by
\[
	\grad \pertL = \grad \mathcal L + \mathfrak q,
\]
where $\mathfrak q$ is a section of $\mathcal T_0 \to \mathcal C(Y,\uptau)$, such that for any path $\upgamma: [0,1] \to \mathcal C(Y,\uptau)$
\[
	f \circ \upgamma(1) - f \circ \upgamma(0) = \int_0^1 \langle \dot\upgamma, \mathfrak q \rangle_{L^2(Y)} dt.
\]
For a path $\upgamma$, we can pull back the perturbation $\mathfrak q$ to $Z = I \times Y$ to get a section
\begin{equation*}
	\hat{\mathfrak q}:\mathcal C(Z, \uptau_Z) \to \mathcal V_0(Z, , \uptau_Z).
\end{equation*}
Indeed, any $(A,\Phi) \in \mathcal C(Z, \uptau_Z)$ restricts to a continuous path $(\check A(t),\check \Phi(t)) \in \mathcal C(Y, \uptau)$ and continuous path $\mathfrak q(\check A(t),\check\Phi(t)) \in L^2(Y;i\T^*Y\oplus S)^{\uptau}$, where $i\T^*Y \oplus S$ can be identified with $i\mathfrak{su}(S^+)\oplus S^-$ via Clifford multiplications.
We pack the desired analytic properties of a perturbation for our applications in the following definition.
\begin{defn}
\label{defn:tame}
	Let $k \ge 2$ be an integer.
	A perturbation $\mathfrak q: \mathcal C(Y,\uptau) \to \mathcal T_0$ is \emph{$k$-tame} it is the formal gradient of a $\mathcal G(Y,\upiota)$-equivariant function on $\mathcal C(Y,\uptau)$, satisfying the following properties:
	\begin{enumerate}[leftmargin=*]
		\item[(i)] the associated four-dimensional perturbation $\hat{\mathfrak q}$ defines a smooth section
			\[
				\hat{\mathfrak q}:\mathcal V_k(Z,\uptau_Z) \to \mathcal C_k(Z,\uptau_Z);
			\]
		\item[(ii)] for every $j \in [1,k]$ the section $\hat{\mathfrak q}$ extends to a continuous section
			\[
				\hat{\mathfrak q}:\mathcal V_j(Z,\uptau_Z) \to \mathcal C_j(Z,\uptau_Z);
			\]
		\item[(iii)] for every $j \in [-k,k]$ the first derivative
			\[
				\mathcal D\hat{\mathfrak q} \in C^{\infty}(\mathcal C_k(Z, \uptau_Z), \Hom(\T\mathcal C_k(Z, \uptau_Z),\mathcal V_k(Z, \uptau_Z))) 
			\]
			extends to a map
			\[
				\mathcal D\hat{\mathfrak q} \in C^{\infty}(\mathcal C_k(Z, \uptau_Z),\Hom(\T\mathcal C_j(Z),\mathcal V_j(Z, \uptau_Z)));
			\]
		\item[(iv)] there is a constant $m_2$ such that
			\[
				\|\mathfrak q(B,\Psi)\|_{L^2} \le m_2(\|\Psi\|_{L^2} + 1)
			\]
			for every $(B,\Psi) \in \mathcal C_k(Y, \uptau)$;
		\item[(v)] for any fixed smooth connection $A_0$, there is a real function $\mu_1$ such that the inequality
			\[
				\|\hat{\mathfrak q}(A,\Phi)\|_{L^2_{1,A}} \le \mu_1(\|(A-A_0,\Phi)\|_{L^2_{1,A_0}})
			\]
			holds for all configurations $(A,\Phi) \in \mathcal C_k(Z, \uptau_Z)$;
		\item[(vi)] the three-dimensional perturbation $\mathfrak q$ defines a $C^1$-section
			\[
				\mathfrak q: C_1(Y, \uptau) \to \mathcal T_0.
			\]
	\end{enumerate}
	Moreover, $\mathfrak q$ is \emph{tame} if it is $k$-tame for every $k \ge 2$.
\end{defn}
\subsection{Perturbed Seiberg-Witten maps and gradient}
\subsubsection*{\textbf{Seiberg-Witten map on $\mathcal C_k(Z, \uptau_Z)$}}
Let $Z = I \times Y$ and $\mathfrak q$  be a perturbation on $Y$.
The perturbed Seiberg-Witten map is a section of $\mathcal V_{k-1} \to \mathcal C_k$ given by
\begin{equation*}
	\mathfrak{F}_{\mathfrak q} = \mathfrak F + \hat{\mathfrak q}.
\end{equation*}
If we decompose the three-dimensional perturbation as $
\mathfrak q = (\mathfrak q^0,\mathfrak q^1)$ where
\begin{equation*}
	\mathfrak q^0 \in L^2(Y;i\T^*Y)^{-\upiota^*},\quad
	\mathfrak q^1 \in L^2(Y;S)^{\uptau},
\end{equation*}
then the the four-dimensional perturbation $\hat{\mathfrak q}$ can be written as $(\hat{\mathfrak q}^0,\hat{\mathfrak q}^1)$ such that
\begin{equation*}
	\hat{\mathfrak q}^0 \in L^2(Z;i\mathfrak{su}(S^+))^{-\upiota_Z^*},\quad
	\hat{\mathfrak q}^1 \in L^2(Z;S^-)^{\uptau}.
\end{equation*}
The perturbed Seiberg-Witten equations are of the form
\begin{align*}
	\rho_Z(F^+_{A^t}) - 2(\Phi\Phi^*)_0 &= -2\hat{\mathfrak q}^0(A,\Phi),\\
	D^+_A\Phi &= -\hat{\mathfrak q}^1(A,\Phi),
\end{align*}
which can be written as a flow 
\begin{align*}
	\frac{d}{dt}B^t &= - * F_{B^t} - 2\rho^{-1}(\Psi\Psi^*)_0 - 2 \mathfrak q^0(B,\Psi),\\
	\frac{d}{dt}\Psi &= -D_B \Psi - \mathfrak q^1(B,\Psi).
\end{align*}

\subsubsection*{\textbf{Seiberg-Witten maps on $\mathcal C_k^{\sigma}(Z, \uptau_Z)$}}
The perturbation $\mathfrak q$ induces a perturbation on the cylindrical blown-up configuration:
\begin{equation*}
	\hat{\mathfrak q}^{\sigma}:\mathcal C^{\sigma}_k(Z, \uptau_Z) \to \mathcal V^{\sigma}_k,
\end{equation*}
by defining $\hat{\mathfrak q}^{\sigma} = (\hat{\mathfrak q}^0,\hat{\mathfrak q}^{1,\sigma})$ and the second factor 
\begin{align*}
	\hat{\mathfrak q}^1:\mathcal C_k^{\sigma}(Z, \uptau_Z) 
	&\to L^2_k(Z;S^-)^{\uptau_Z},\\
	(A,s,\phi) &\mapsto
	\int_0^1 (\mathcal D_{(A,rs\phi)}\hat{\mathfrak q}^1)(\phi)dr.
\end{align*}
Notice that $\hat{\mathfrak q}^1(A,0) = 0$ by $\mathbb Z_2$-equivariance.
There is a corresponding perturbed Seiberg-Witten equations on the blow-up
\begin{equation*}
	\mathfrak F_{\mathfrak q}^{\sigma}=
	\mathcal F^{\sigma} + \hat{\mathfrak q}^{\sigma}:
	\mathcal C_k^{\sigma} \to \mathcal V_{k-1}^{\sigma}
\end{equation*}
as a smooth section of the bundle.
Again, we write the perturbed Seiberg-Witten equation on the blow-up as a gradient flow equation:
\begin{align*}
	\frac{d}{dt}B^t &= -*F_{B^t} - 2r^2\rho^{-1}(\psi\psi^*)_0 - 2\mathfrak q^0(B,r\psi), \\
	\frac{d}{dt}r &= -\Lambda_{\mathfrak q}(B,r,\psi)r,\\
	\frac{d}{dt}\psi &= - D_B \Psi  - \tilde{\mathfrak q}^1(B,\Psi) + \Lambda_{\mathfrak q}(B,r,\psi)r.
\end{align*}
where
similar to $\hat{\mathfrak q}^{1,\sigma}$, we define
\begin{equation*}
	\tilde{\mathfrak q}^1(B,r,\psi) =
	\int_0^1 \mathcal D_{(B,sr\psi)}\mathfrak q^1(0,\psi)ds,
\end{equation*}
and similar to $\Lambda$ in, we define
\begin{equation}
\label{eqn:Lambda_q}
	\Lambda_{\mathfrak q}(B,r,\psi) = \Re \langle \psi,
	D_B\psi + \tilde{\mathfrak q}^1(B,r,\psi)\rangle_{L^2}.
\end{equation}

\subsubsection*{\textbf{Perturbed gradient on $\mathcal C_k^{\sigma}(Y, \uptau)$}}
The perturbed gradient 
\[(\grad \pertL)^{\sigma} = (\grad L)^{\sigma} + \mathfrak q^{\sigma}\] 
on the blown-up configuration is a smooth section of the vector bundle $\mathcal T^{\sigma}_{k-1} \to \mathcal C^{\sigma}_k$, which in coordinates looks like
\begin{equation*}
	\mathfrak q^{\sigma}(B,r,\psi) =
	\left(\mathfrak q^0(B,r\psi), \quad
	\langle \tilde{\mathfrak q}^1(B,r,\psi),\psi\rangle_{L^2(Y)}r, \quad
	\tilde{\mathfrak q}(B,r,\psi)^{\perp}\right).
\end{equation*}
Here $\perp$ denotes the orthogonal projection to the real orthogonal complement of $\psi$.
We define the \emph{perturbed Dirac operator}
\begin{equation*}
	D_{B,\mathfrak q}\psi = D_B\psi + \mathcal D_{(B,0)}\mathfrak q^1(0,\psi).
\end{equation*}
It follows that a point $(B,r,\psi)$ of the gradient is either
\begin{itemize}
	\item $r \ne 0$, and $(B,r\psi)$ is a critical point of $\grad \pertL$, or
	\item $r = 0$, and $(B,0)$ is a critical point of $\grad \pertL$ and $\psi$ is an eigenvector of $D_{B,\mathfrak q}$.
\end{itemize}

\subsection{Cylinder functions}
\hfill \break
We first define two classes of functions, where the first depends only on the form component and the second depends on both form and spinorial components.
\subsubsection*{(o)}
Recall the exact sequence of groups
\begin{equation*}
	\mathbb Z_2 \to \mathcal G^h \to H^1(Y;\mathbb Z)^{-\upiota^*}.
\end{equation*}
A section $v:H^1(Y;\mathbb Z)^{-\upiota^*} \to \mathcal G^h$ defines a subgroup $\mathcal G^{h,o} = v(H^1(Y;\mathbb Z)^{-\upiota^*})$, and
\begin{equation*}
	\mathcal G^o = \mathcal G^{h,o} \times \mathcal G^{\perp} \subset \mathcal G.
\end{equation*}
Fix a point $y_0 \in Y$, we can choose $\mathcal G^{h,o}$ so that
\begin{equation*}
	\mathcal G^{h,o} = \{u | u(y_0) = 1\} \subset \mathcal G^h.
\end{equation*}
The \emph{based configuration space} is the intermediate quotient
\begin{equation*}
	\mathcal B^o_k(Y,\uptau) = \mathcal C_k(Y,\uptau)/\mathcal G_{k+1}^o(Y,\upiota).
\end{equation*}
In particular, $\mathcal B^o_k(Y,\uptau)$ is a branched double cover of $\mathcal B_k(Y,\uptau)$, along the reducible locus.

\subsubsection*{(i)}
Let $c \in \Omega^1(Y;i\reals)$ be a coclosed, $(-\upiota^*)$-invariant one-form.
Define the following function
\begin{equation*}
	r_c:\mathcal C(Y,\uptau) \to \reals,\quad
	r_c(B_0 + b\otimes 1,\Psi)
	= \int_Y b \wedge *\bar c = \langle b, c\rangle_Y,
\end{equation*}
which is invariant under the identity component $\mathcal G^e$ in general, but invariant under $\mathcal G$ if $c$ is harmonic.
Pick an integral basis $\{\omega_1, \dots, \omega_t\}$ of $H^1(Y;i\reals)^{-\upiota^*}$ and let 
\[
	\mathbb T = H^1(Y;i\reals)^{-\upiota^*}/2\pi H^1(Y;i\mathbb Z)^{-\upiota^*}.
\]
We define a ``period'' map
\begin{equation*}
	(B,\Psi) \mapsto (r_{\omega_1}(B,\Psi),\dots, r_{\omega_t}(B,\Psi)) \mod 2\pi\mathbb Z^t,
\end{equation*}
which agrees with the projection map onto the harmonic forms
\begin{equation*}
	(B_0 + b\otimes 1,\Psi) \mapsto
	[b_{\text{harm}}] \in \mathbb T. 
\end{equation*}

\subsubsection*{(ii)}
Consider the smooth rank-two vector bundle 
$$\mathbb S \to \mathbb T \times Y$$
given by the quotient $H^1(Y;i\reals)^{-\upiota^*} \times S \to H^1(Y;i\reals)^{-\upiota^*} \times Y$ by the gauge group $\mathcal G^{h,0}$.
There is an involution $\uptau$ on $\mathbb S$ induced from the involution on $S$.
Any $\uptau$-invariant section $\Upsilon$ of $\mathbb S$ can be lifted to a section
\begin{equation*}
	\tilde{\Upsilon}:H^1(Y;i\reals)^{-\upiota^*} \times Y \to 
	H^1(Y;i\reals)^{-\upiota^*} \times S,
\end{equation*}
which is quasi-periodic in the sense that
\begin{equation*}
	\tilde{\Upsilon}_{\alpha + \kappa}(y) := \Upsilon(\alpha + \kappa, y) = 
	u(y)\Upsilon(y).
\end{equation*}
Any invariant section $\Upsilon$ defines a $\mathcal G^o(Y,\upiota)$-equivariant map
\begin{align*}
	\Upsilon^{\dag}:\mathcal C(Y, \uptau) &\to C^{\infty}(S)^{\uptau}\\
	(B_0 + b \otimes 1,\Psi) &\mapsto
	e^{-Gd^*b}\tilde{\Upsilon}_{b_{\text{harm}}},
\end{align*}
where $G:L^2_{k-1}(Y) \to L^2_{k+1}(Y)$ is Green's operator for $\Delta = d^*d$, and $C^{\infty}(S)^{\uptau}$ is the space of $\uptau$-invariant smooth sections of $S \to Y$.
(Note that $\Upsilon^{\dag}$ does not depend on the second factor.)

Define the $\mathcal G^o(Y,\upiota)$-invariant map
\begin{align*}
	q_{\Upsilon} &:\mathcal C(Y, \uptau) \to \reals \\
	(B,\Psi) &\mapsto
	\int_Y \langle \Psi, \Upsilon^{\dag}(B,\Psi)\rangle
	= \langle \Psi, \tilde{\Upsilon}^{\dag}\rangle_Y,
\end{align*}
which is equivariant under $\{\pm 1\}$ on the spinor.
While the inner product is a priori $\C$-valued, the $\uptau$-invariance implies $q_{\Upsilon}$ is real-valued.
\subsubsection*{(i+ii)}
Choose $(n+t)$ coclosed 1-forms $c_1,\dots,c_{n+t}$ so that the first $n$ 1-forms are coexact and the last $t$ 1-forms are harmonic and represent an integral basis of $H^1(Y;\reals)^{-\upiota^*}$.
Choose $m$ $\uptau$-invariant sections $\Upsilon_1,\dots,\Upsilon_m$ of $\mathbb S$.
Combining the two classes of $\mathcal G^o$-invariant functions, we consider
\begin{equation*}
	p:\mathcal C(Y,\uptau) \to \reals^n \times \mathbb T \times \mathbb \reals^m
\end{equation*}
given by
\begin{equation}\label{eqn:pcylinder}
	p(B,\Psi) = (r_{c_1}(B,\Psi),\dots, r_{c_{n+t}}(B,\Psi),q_{\Upsilon_1}(B,\Psi),\dots, q_{\Upsilon_m}(B,\Psi)),
\end{equation}
which is $\mathcal G^o$-invariant and equivariant under $\{\pm 1\}$.
We use $p$ to define cylinder functions:
\begin{defn}
	A gauge invariant function $f:\mathcal C(Y, \uptau) \to \reals$ is a \emph{cylinder function} if it is of the form $f = g \circ p$ such that
	\begin{itemize}
		\item the map $p:\mathcal C(Y, \uptau) \to \reals^n \times \mathbb T \times \mathbb R^m$ is define as in Equation~\eqref{eqn:pcylinder} for some $n,m \ge 0$.
		\item the function $g$ is a $\mathbb Z_2$-invariant function on $\reals^n \times \mathbb T \times \mathbb R^m$ with compact support.
	\end{itemize}
\end{defn}
\begin{rem}
\label{rem:different_perturb}
	In fact, we do not need to define $p$ using pairing with $\uptau$-invariant objects, as long as we allow $g$ to be only $\mathbb Z_2$-invariant. 
\end{rem}
Indeed, gradients of cylinder functions are tame.
\begin{thm}
	If $f$ is a cylinder function, then its gradient
	\begin{equation*}
		\grad f: \mathcal C(Y,\uptau) \to \mathcal T_0
	\end{equation*}
	is a tame perturbation, in the sense of Definition~\ref{defn:tame}
\end{thm}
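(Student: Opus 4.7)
The plan is to verify each of the six axioms of Definition~\ref{defn:tame} by importing the analysis of \cite[Proposition~11.1.1]{KMbook2007} and checking that it restricts cleanly to the $\uptau$-invariant setting. The key structural observation is that all the data entering the definition of a cylinder function are already $\uptau$-equivariant: the coclosed $1$-forms satisfy $c_j \in \Omega^1(Y;i\reals)^{-\upiota^*}$, the sections $\Upsilon_\ell$ lie in $\Gamma(\mathbb S)^{\uptau}$, and the Hodge Laplacian $d^*d$, its Green's operator $G$, and the harmonic projection all commute with $\upiota^*$. Consequently, formulas derived exactly as in \cite{KMbook2007} automatically produce sections $\grad f$ and $\hat{\mathfrak q}$ valued in the invariant subspaces $\mathcal T_0$ and $\mathcal V_0(Z,\uptau_Z)$, rather than in the full non-invariant completions.

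First I would compute $\grad f$ explicitly. The chain rule applied to $f = g \circ p$ gives
\begin{equation*}
	\grad f = \sum_j (\partial_j g)(p)\, \grad r_{c_j} + \sum_\ell (\partial_{n+t+\ell} g)(p)\, \grad q_{\Upsilon_\ell},
\end{equation*}
where $\grad r_c = (c,0)$ is a constant smooth invariant section, and $\grad q_{\Upsilon}$ is obtained by differentiating $\langle \Psi,\, e^{-G d^*b}\,\tilde{\Upsilon}_{b_{\text{harm}}}\rangle_Y$ in both the form and spinor directions. This yields explicit expressions involving only $g$, its derivatives, $\tilde{\Upsilon}$, Green's operator, and pointwise pairings, all of which manifestly preserve $\uptau$-invariance.

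Next, axioms (i)--(iii) would follow by showing that the induced four-dimensional section $\hat{\mathfrak q}$ factors through the continuous slicewise restriction $\mathcal C_j(Z,\uptau_Z) \to C^0(I;\mathcal C_{j-1/2}(Y,\uptau))$ composed with the slicewise evaluation of $\grad f$. All the relevant derivative estimates reduce to Sobolev multiplication bounds for terms in which smooth functions of the period coordinates (constrained to a compact set by the support of $g$) multiply bounded operators built from $G$ and the smooth sections $\Upsilon_\ell^\dag$. Axioms (iv)--(vi) are then essentially immediate: (iv) produces a bound $\|\grad f(B,\Psi)\|_{L^2}\le C(1+\|\Psi\|_{L^2})$ with $C$ depending on $\|g\|_{C^1}$ and $\|\Upsilon_\ell^\dag\|_{C^0}$; (v) follows from smoothness in the $L^2_1$-topology together with the compact support of $g$; and (vi) follows from the smoothing properties of $G$ combined with the $C^1$-dependence of $g$.

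The principal obstacle, as in \cite{KMbook2007}, is the bookkeeping of axiom (iii): obtaining uniform multiplication estimates across the full range $j\in[-k,k]$, where the action of $G:L^2_{j-1}\to L^2_{j+1}$ must be combined with the embedding $L^2_k\hookrightarrow C^0$ and the smoothness of $g$. Once these estimates have been transcribed from \cite[Section~10]{KMbook2007}, no additional analytic difficulty arises from the $\uptau$-invariance, because invariance has been built in at every stage of the construction of $p$, $\tilde{\Upsilon}$, and $G$; the invariant subspaces are closed under all the operators involved, so the standard estimates apply verbatim.
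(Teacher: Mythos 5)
Your proposal is correct and takes essentially the same approach as the paper: the paper's proof is a one-line reduction to \cite[Section~11.3--5]{KMbook2007} on the grounds that ``there is no analytical difference,'' and your argument is a fleshed-out version of exactly that reduction, with the key observation (that the forms $c_j$, sections $\Upsilon_\ell$, Green's operator $G$, and harmonic projection are all $\uptau$-equivariant, so the invariant subspaces are closed under every operator appearing in the KM estimates) being precisely what the paper's ``no analytical difference'' elides. The added detail in verifying (i)--(vi) is welcome but not a departure from the paper's route.
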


\begin{proof}
	This theorem can be proved exactly the same way as the ordinary version in \cite[Section~11.3-5]{KMbook2007}, as there is no analytical difference.
\end{proof}

\subsection{Embedding critical sets}
\hfill \break
The following theorem, analogous to \cite[Proposition~11.2.1]{KMbook2007}, essentially asserts that any compact subset of the based configuration can be embedded into finite dimensional manifolds.
\begin{prop}\label{thm:emb}
Let $M$ be a finite dimensional $C^1$ submanifold $M $ of $\mathcal B^o_k(Y,\uptau)$ and let $K$ be a compact subset of $M$:
\begin{equation*}
	K \subset M \subset \mathcal B^o_k(Y,\uptau).
\end{equation*}
Assume both $K$ and $M$ are invariant under the action of $\mathbb Z_2$.
Then there exist a collection of co-closed invariant form $c_{\nu}$, a collection of invariant sections $\Upsilon_{\mu}$ of $\mathbb S$, and a neighbourhood $U$ of $K$ in $M$, such that the corresponding map
\begin{equation*}
	p:\mathcal B^o_k \to \reals^n \times \mathbb T \times {\mathbb R}^m
\end{equation*}
is an embedding of $U$. 
\end{prop}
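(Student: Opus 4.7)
The plan is to adapt the proof of \cite[Proposition~11.2.1]{KMbook2007} to the $\uptau$-equivariant setting, carefully preserving invariance of all auxiliary data. The argument has three logical ingredients: (a) at each point $[\upgamma] \in K$, a finite collection of $(-\upiota^*)$-invariant coclosed forms $c_\nu$ and $\uptau$-invariant sections $\Upsilon_\mu$ of $\mathbb S$ can be chosen so that $dp$ is injective at $[\upgamma]$; (b) distinct $\mathbb Z_2$-orbits in $K$ can be separated by such a $p$; and (c) a compactness argument merges these pointwise statements into a single finite collection valid on a neighborhood of $K$.

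For ingredient (a), I would pick a smooth $\uptau$-invariant representative $(B,\Psi)$ of $[\upgamma]$ and identify $T_{[\upgamma]} M$ with a finite-dimensional subspace of the $\uptau$-invariant slice $\mathcal K_{\upgamma,k} \subset \Omega^1(Y;i\reals)^{-\upiota^*} \oplus \Gamma(S)^{\uptau}$. The differentials $d(r_{c_\nu})$ and $d(q_{\Upsilon_\mu})$ at $\upgamma$ are, up to a lower-order spinorial correction in the latter, $L^2$-pairings with $c_\nu$ and with $\Upsilon^{\dag}_\mu(\upgamma)$. Since the $L^2$-inner product restricts nondegenerately to each of the fixed subspaces, finitely many invariant $c_\nu, \Upsilon_\mu$ suffice to span the finite-dimensional cotangent space $T^*_{[\upgamma]} M$. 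Smoothness then propagates surjectivity of $dp$ to an open neighborhood of $[\upgamma]$ in $M$.

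For ingredient (b), two points of $K$ with distinct projections to $\mathbb T$ are separated once an integral basis of $H^1(Y;\reals)^{-\upiota^*}$ is included among the $c_\nu$, since the harmonic periods $r_{\omega_i}$ detect the $\mathbb T$-component. For two points with the same $\mathbb T$-projection, the difference lies in the spinorial factor, and any pair of distinct $\mathbb Z_2$-orbits of $\uptau$-invariant spinors can be separated by some $\uptau$-invariant section $\Upsilon$ of $\mathbb S$, using the Hilbert structure on $L^2(Y;S)^{\uptau}$. Note that $q_\Upsilon$ is odd under $\Psi \mapsto -\Psi$, so $p$ is $\mathbb Z_2$-equivariant with the sign action on $\reals^m$ and separates orbits rather than points.

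The main obstacle, and the only genuine departure from \cite{KMbook2007}, is verifying that smooth \emph{invariant} test data remain dense in the corresponding $L^2$ invariant subspaces; without this, the span argument in (a) and the separation argument in (b) could fail. Density is established by applying the averaging projections $\frac{1}{2}(1 - \upiota^*)$ on forms and $\frac{1}{2}(1 + \uptau)$ on spinors to smooth approximations of arbitrary $L^2$ invariant objects; these projections commute with mollification, preserve coclosedness, and are $L^2$-orthogonal onto the invariant subspaces. Once (a) and (b) hold, compactness of $K$ produces a finite aggregate collection for which $p$ is injective on $K$ and immersive near $K$; the standard lemma that an immersion injective on a compact set is an embedding on a slightly larger open set, applied $\mathbb Z_2$-equivariantly as in \cite{KMbook2007}, completes the proof.
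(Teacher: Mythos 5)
Your overall strategy matches the paper's: reduce the embedding claim to an immersion condition on tangent vectors plus a point-separation condition, verify both with invariant analogues of the KM test data, and use compactness of $K$ to extract a single finite collection. The observation that smooth invariant data are $L^2$-dense in the invariant subspaces, via the averaging projections $\tfrac12(1-\upiota^*)$ on forms and $\tfrac12(1+\uptau)$ on spinors, is a sound technical point and is implicitly what makes the adaptation go through; the paper takes this for granted rather than spelling it out.

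There is, however, a genuine gap in your ingredient (b). You treat only two cases: distinct $\mathbb T$-projections (separated by the harmonic periods $r_{\omega_i}$), and the remaining case, which you assert ``lies in the spinorial factor.'' That assertion is false. After quotienting by $\mathcal G^{\perp}$, a representative has connection part $B = B_0 + b\otimes 1$ with $b$ coclosed, and the coclosed $(-\upiota^*)$-invariant 1-forms decompose as harmonic $\oplus$ coexact. Two configurations can share the same harmonic part (hence the same $\mathbb T$-image) while differing in the \emph{coexact} part of $b$; these are distinct points of $\mathcal B^o_k(Y,\uptau)$ that neither the harmonic periods nor the spinorial functions $q_{\Upsilon}$ will detect if the spinor components also agree. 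The paper's proof has a middle step you have dropped: when the $\mathbb T$-projections agree but $B_x \neq B_y$ in Coulomb gauge, choose a coexact $(-\upiota^*)$-invariant form $c$ with $r_c(B_x) \neq r_c(B_y)$; only after one reduces to $B_x = B_y$ does the separation reduce to the spinor. So your collection $\{c_{\nu}\}$ must include coexact forms not only for the immersion step (a), where you correctly allow arbitrary coclosed forms, but also for injectivity in (b). Once this three-case structure is restored, your proof agrees with the paper's.
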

\begin{proof}
	The proof in \cite[Proposition~11.2.1]{KMbook2007} applies to our $\uptau$-invariant situations.
	The two separation properties (the following two bullet points) in the ordinary case follows from pairing connections (resp. spinors) with connections (resp. spinors) with inner product.
	It suffices to show:
	\begin{itemize}
		\item given two points $x \ne y \in K$, there exists collection $c_i$ and $\Upsilon_{\mu}$ such that $p$ separates $x$ and y;
		\item given $x \in K$ and $v \in T_xM$, there exists $p$ whose differential at $x$ does not annihilate $v$.
	\end{itemize}
	For instance, to prove point separation, let $x = [B_x,\Psi_x]$ and $y = [B_y,\Psi_y]$ be two points, consisting of $\uptau$-invariant connections and spinors.
	After a $\mathcal G^{h,o}$-gauge transformation, we assume that $B_x$ and $ B_y$ have the same harmonic parts; otherwise they project onto distinct image in $\mathbb T$, and are detected by $r_{\omega}$ for harmonic $\omega$'s.
	Moreover, up to a $\mathcal G^{\perp}$-gauge transformation, we can assume $B_x = B_y = B$ where $B= B_0 + b\otimes 1$ and $d^*b = 0$; otherwise, can find a coexact $(-\upiota^*)$-invariant $c$ such that $r_c$ separates $B_x$ and $B_y$.
	Then it must be case that $\Psi_x \ne \Psi_y$, but there exists an $\uptau$-invariant section $\Upsilon$ of $\mathbb S$ such that $\Upsilon_{b_{\text{harm}}}$ has nonzero inner product with $(\Psi_x-\Psi_y)$.
	The second bullet-point is essentially the same.
\end{proof}
We record the following corollary for later use.
\begin{cor}
\label{cor:separatetangent}
	Given any $[B,\Psi]$ in $\mathcal B^*_k(Y,\uptau)$ and any non-zero tangent vector $v$ to $\mathcal B^*_k(Y,\uptau)$ at $[B,\Psi]$, there exists a cylinder function $f$ whose differential $\mathcal D_{[B,\Psi]}f(v)$ is non-zero.
\end{cor}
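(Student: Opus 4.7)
The plan is to deduce this as a direct specialization of Proposition~\ref{thm:emb}, applied to a one-dimensional slice through $[B,\Psi]$ tangent to $v$.

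Since $[B,\Psi]$ is irreducible, the covering map $\pi: \mathcal B^o_k(Y,\uptau) \to \mathcal B_k(Y,\uptau)$ is a local $\mathbb Z_2$-cover near $[B,\Psi]$, with the deck group $\mathcal G/\mathcal G^o \cong \{\pm 1\}$ acting freely. I would first choose a lift $\tilde x \in \pi^{-1}([B,\Psi])$ and lift $v$ to a nonzero tangent vector $\tilde v \in T_{\tilde x}\mathcal B^o_k(Y,\uptau)$. Next, I would choose a one-dimensional $C^1$-submanifold $M_0 \subset \mathcal B^o_k(Y,\uptau)$ passing through $\tilde x$ with $T_{\tilde x} M_0 = \reals\cdot\tilde v$. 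Letting $\sigma$ denote the nontrivial deck transformation and shrinking $M_0$ if necessary so that $M_0 \cap \sigma(M_0) = \emptyset$, the union $M := M_0 \sqcup \sigma(M_0)$ is a $\mathbb Z_2$-invariant one-dimensional $C^1$-submanifold. Take $K := \{\tilde x, \sigma(\tilde x)\} \subset M$; this is $\mathbb Z_2$-invariant and compact.

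Then I would invoke Proposition~\ref{thm:emb} on the pair $(K, M)$ to produce coclosed invariant forms $c_1,\dots,c_{n+t}$ and invariant sections $\Upsilon_1,\dots,\Upsilon_m$ of $\mathbb S$, together with an open neighbourhood $U \subset M$ of $K$, such that the associated map
\begin{equation*}
p:\mathcal B^o_k(Y,\uptau) \to \reals^n \times \mathbb T \times \reals^m
\end{equation*}
restricts to an embedding on $U$. Since $U$ contains an open neighbourhood of $\tilde x$ in $M$ and $M$ is tangent to $\tilde v$ there, the differential $\mathcal D_{\tilde x} p (\tilde v) \in \reals^n \oplus \T_*\mathbb T \oplus \reals^m$ is nonzero.

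Finally, I would pick a smooth compactly supported function $g_0$ on $\reals^n \times \mathbb T \times \reals^m$ whose differential at $p(\tilde x)$ does not annihilate $\mathcal D_{\tilde x}p(\tilde v)$, and symmetrize: since $p$ intertwines the $\mathbb Z_2$-deck action with a $\mathbb Z_2$-action on the target (by construction of the $r_c$ and $q_{\Upsilon}$, which change sign under the generator $(-1)$), the two orbit points $p(\tilde x)$ and $p(\sigma(\tilde x))$ are distinct; shrinking the support of $g_0$ to avoid $p(\sigma(\tilde x))$ and then averaging, I obtain a $\mathbb Z_2$-invariant, compactly supported $g$ with $dg_{p(\tilde x)}(\mathcal D_{\tilde x} p(\tilde v)) \ne 0$. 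By definition, $f = g \circ p$ is then a cylinder function, and $\mathcal D_{[B,\Psi]} f(v)$ equals $dg(\mathcal D_{\tilde x} p (\tilde v))$ under the identification given by $\pi$, which is nonzero.

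The only mild technical point is ensuring $\tilde x$ and $\sigma(\tilde x)$ have genuinely distinct images under $p$, so that bump function support can be localized; this is immediate from the fact that $p$ is an embedding on $U$ and $\{\tilde x, \sigma(\tilde x)\} \subset U$. All the analytic content has already been absorbed into Proposition~\ref{thm:emb}, so no further estimates are needed.
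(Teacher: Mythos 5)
Your proof is correct and is essentially the argument the paper has in mind: the paper records this corollary as an immediate consequence of Proposition~\ref{thm:emb}, whose proof explicitly establishes the tangent-separation property as its second bullet point, and your write-up is the natural unpacking of that one-liner by applying the \emph{statement} of Proposition~\ref{thm:emb} to a constructed one-dimensional $\mathbb Z_2$-invariant $C^1$-slice $M = M_0 \sqcup \sigma(M_0)$ through the two lifts and reading off $\mathcal D_{\tilde x}p(\tilde v)\ne 0$ from the immersion property of the embedding. One small imprecision: the coordinates $r_c$ do \emph{not} change sign under the nontrivial deck transformation $\sigma:(B,\Psi)\mapsto(B,-\Psi)$, since they depend only on the connection; only the $q_{\Upsilon}$ are odd, so the target $\mathbb Z_2$-action on $\reals^n\times\mathbb T\times\reals^m$ is trivial on the first two factors and $(-1)$ only on the $\reals^m$ factor. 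This does not affect your argument, which uses only $\mathbb Z_2$-equivariance of $p$ and injectivity of $p|_U$ to separate the two orbit points before averaging $g_0$.
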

	 From Theorem~\ref{thm:emb} we see that it suffices to use $\uptau$-invariant connections and spinor sections to define function $p$ that embeds compact subset of submanifolds inside $\mathcal B$.

\subsection{Large Banach spaces of tame perturbations}
\hfill \break
Our choice for a Banach space of perturbations consists of the following:
\begin{itemize}
	\item two positive integers $n,m$;
	\item $(-\upiota^*)$-invariant coexact forms $c_1,\dots, c_n$ and $\uptau$-invariant sections $\Upsilon_1,\dots, \Upsilon_m$ of the bundle $\mathbb S$;
	\item a compact subset $K$ of $\reals^n \times \mathbb T \times \reals^m$;
	\item a smooth $\mathbb Z_2$-invariant function $g$ on $\reals^n\times\mathbb Z\times \reals^m$ supported in $K$.
\end{itemize}
First, for any pair $(n,m)$ we choose a countable collections of $(n+m)$-tuples 
\[(c_1,\dots,c_n,\Upsilon_1,\dots,\Upsilon_m)\] which are dense in the $C^{\infty}$-topology in the space of all such $(n+m)$-tuples.
Next, we choose a countable collection of compact subset $K$ of $\reals^n\times \mathbb T \times \reals^m$ that is dense in the Hausdorff topology.
Finally, for each $K$ we choose a collection of functions $g_{\alpha} = g(n,m,K)_{\alpha}$ with the following properties
\begin{itemize}[leftmargin=*]
	\item each $g_{\alpha}$ is $\mathbb Z_2$-invariant and supported in $K$;
	\item the collection $\{g_{\alpha}\}$ is dense in the $C^{\infty}$-topology of smooth, $\mathbb Z_2$-invariant functions, supported in $K$;
	\item the subset of $\{g_{\alpha}\}$ which vanish on the set
		\[K_0 = K \cap (\reals^n \times \mathbb T \times \{0\})\] are dense in the $C^{\infty}$-topology of smooth, $\mathbb Z_2$-invariant functions supported in $K$ and vanishing on $K_0$.
\end{itemize}
Rename the countable collection of perturbations we constructed above by $\{\mathfrak q_i\}$, indexed by $i \in \mathbb N$.
\begin{thm}\label{thm:largebanachspace}
Let $\mathfrak q_i, i \in \mathbb N$ be any countable collection of tame perturbations arising as gradients of cylinder functions on $\mathcal C(Y,\uptau)$.
Then there exists a separable Banach space $\mathcal P$ and a linear map
	\begin{align*}
		\mathfrak D:\mathcal P 
		&\to C^0(\mathcal C(Y,\uptau),\mathcal T_0)\\
		\lambda &\mapsto \mathfrak q_{\lambda},
	\end{align*}
	such that every $\mathfrak q_{\lambda}$ is a tame perturbation and the image contains the family $\{\mathfrak q_i\}_{i \in \mathbb N}$.
	Furthermore, we have
	\begin{itemize}[leftmargin=*]
		\item for a cylinder $Z = I \times Y$ and all $k \ge 2$, the map
			\begin{equation*}
				\mathcal P \times \mathcal C_k(Z,\uptau_Z) \to \mathcal V_k(Z,\uptau_Z)
			\end{equation*}
			given by $(\lambda,\upgamma) \mapsto \hat{\mathfrak q}_{\lambda}(\upgamma)$ is smooth;
		\item the map $\mathcal P \times \mathcal C_1(Y,\uptau) \to \mathcal T_1(Y,\uptau)$ given by $(\lambda,\beta) \mapsto \mathfrak q_{\lambda}\beta$ is continuous and satisfies bounds
			\begin{align*}
				\|\mathfrak q_{\lambda}(B,\Psi)\|_{L^2} &\le \|\lambda\|m_2(\|\Psi\|_{L^2} + 1,)\\
				\|\mathfrak q_{\lambda}(B,\Psi)\|_{L^2_{1,A_0}} &\le \|\lambda\|_{\mu_1}\left(\|B-B_0,\Psi\|_{L^2_{1,A_0}}\right).
			\end{align*}
	\end{itemize}
\end{thm}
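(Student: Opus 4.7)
The plan is to use the classical Floer trick: combine the countable family $\{\mathfrak q_i\}$ into a single weighted $\ell^1$-type Banach space by choosing coefficients that decay fast enough to control every relevant seminorm attached to the tame perturbations simultaneously. This construction is parallel to \cite[Theorem~11.6.1]{KMbook2007}, and the only novelty needed here is to verify that the $\uptau$-invariance is preserved under the weighted sums and that the real-structure bundles $\mathcal T_j$, $\mathcal V_j^{\sigma}$, $\mathbb S$ behave the same way under the relevant Sobolev estimates.

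More concretely, for each $i$ the tameness data of $\mathfrak q_i$ in Definition~\ref{defn:tame} produces a countable list of constants: the $m_2$-bound, the functions $\mu_1$ of (v) evaluated on balls of radius $i$ in $L^2_{1,A_0}$, the operator norms of the extensions in (ii)--(iii) on balls of radius $i$ in $\mathcal C_j$ or $\mathcal C_k(Z,\uptau_Z)$ for $|j|\le k \le i$, together with bounds on the first $i$ derivatives of $\hat{\mathfrak q}_i$ viewed as a smooth map between Hilbert bundles. Call the supremum of all these finitely many numbers $M_i$. Then I would define
\begin{equation*}
    \mathcal P = \Bigl\{\lambda = (\lambda_i)_{i\in\mathbb N}\in \reals^{\mathbb N}\;\Bigm|\; \|\lambda\|_{\mathcal P}:=\sum_{i\ge 1} 2^i M_i\,|\lambda_i| < \infty\Bigr\},
\end{equation*}
which is a separable Banach space, and set $\mathfrak D(\lambda) = \mathfrak q_\lambda := \sum_i \lambda_i \mathfrak q_i$. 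Because the $\mathfrak q_i$'s all arise as gradients of $\mathcal G(Y,\upiota)$-invariant cylinder functions, each partial sum is again the gradient of such a function and is $\uptau$-equivariant; the Banach-space decay $|\lambda_i|\le 2^{-i}M_i^{-1}\|\lambda\|_{\mathcal P}$ immediately gives uniform absolute convergence in every one of the seminorms listed in Definition~\ref{defn:tame}, so the limit defines a tame perturbation. The image contains every $\mathfrak q_i$ since $\mathfrak D$ is surjective onto the obvious coordinate directions.

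For the two bulleted assertions, the key observation is that smoothness in $\lambda$ is linear and therefore automatic once one has $L^\infty$ control on the $\lambda$-derivative: the estimate on $M_i$ was designed precisely so that, on the ball of radius $R$ in $\mathcal C_k(Z,\uptau_Z)$, the sum $\sum_i \lambda_i \hat{\mathfrak q}_i(\upgamma)$ and all of its $\upgamma$-derivatives converge uniformly in the $\mathcal V_k(Z,\uptau_Z)$-norm. The same weights dominate the $L^2$- and $L^2_{1,A_0}$-bounds of (iv) and (v), yielding the two displayed inequalities with $m_2$ and $\mu_1$ replaced by $\|\lambda\|_{\mathcal P}$ times universal functions of the configuration.

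The main obstacle (and the only point where real care is needed) is bookkeeping: one has to verify once and for all that each seminorm relevant to tameness is bounded on bounded subsets of the ambient Hilbert manifolds, so that the single geometric weight $2^i$ really absorbs \emph{all} of them. Once this bookkeeping is in place, the statement reduces to a routine application of the Weierstrass $M$-test in each Sobolev completion, and the $\uptau$-invariance passes to limits because the closed subspaces $L^2_k(\cdot)^{\uptau}\subset L^2_k(\cdot)$ are preserved by each summand. Hence the construction yields the desired $(\mathcal P,\mathfrak D)$.
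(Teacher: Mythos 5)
Your proposal is correct and takes essentially the same route as the paper (which imports the construction of \cite[Theorem~11.6.1]{KMbook2007}): a weighted $\ell^1$ Banach space whose weights $2^i M_i$ dominate, diagonally in $i$, all the seminorms appearing in the tameness conditions, together with the observation that the $\uptau$-invariant subspaces are closed so that $\mathfrak q_\lambda = \sum_i \lambda_i \mathfrak q_i$ remains the gradient of a $\mathcal G(Y,\upiota)$-invariant function. The diagonal choice of $M_i$ (bounding only the data with Sobolev index $\le i$ and configuration-radius $\le i$) is exactly the trick KM use, and your verification that each $e_i \in \mathcal P$ so the image contains $\{\mathfrak q_i\}$ is also the right point to check.
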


\begin{defn}\label{defn:largebanachspaceperturb}
	A \emph{large Banach space of tame perturbation} is a separable Banach space $\mathcal P$ and a linear map $\mathfrak D:\mathcal P \to C^0(\mathcal C(Y,\uptau),\mathcal T_0)$, satisfying the condition of Theorem~\ref{thm:largebanachspace} and contain a countable collection of tame pertubations $\{\mathfrak q_i\}$ obtained by making choices described above.
\end{defn}

\subsection{Some analytical results about perturbed Seiberg-Witten equations}
\hfill \break
In this subsection, we collect some analytic results for later use.
We define the perturbed analytic energy
\begin{equation*}
	\mathcal E^{\text{an}}_{\mathfrak q}(A,\Phi)
	=
	\int^{t_2}_{t_1} \left\| \frac{d}{dt}\check A - dc\right\|^tdt
	+
	\int^{t_2}_{t_1} \left\| \frac{d}{dt}\Phi + c\check\Phi \right\|^tdt
	+
	\int^{t_2}_{t_1} \left\| \grad \pertL\right\|^2dt,
\end{equation*}
which agrees with twice the drop in $\pertL$
\begin{equation*}
	\mathcal E^{\text{an}}_{\mathfrak q}(\upgamma)
	= 2\left(\pertL(\check\upgamma(t_1)) - \pertL(\check\upgamma(t_2))\right)
\end{equation*}
for a solution to the perturbed Seiberg-Witten equation.

\begin{lem}
	There is a continuous function $\zeta$ on $C^{\sigma}_k(Y,\uptau)$ such that for any solution $\upgamma^{\tau}$ to the solution $\mathcal F^{\tau}_{\mathfrak q}(\upgamma^{\tau})$ on a cylinder $Z = [t_1,t_2] \times Y$, we have the inequality:
	\begin{equation*}
		\frac{d}{dt} \Lambda_{\mathfrak q}(\check{\upgamma}^{\tau}(t))
		\le 
		\zeta(\check{\upgamma}(t)) \|\grad \pertL(\check\upgamma(t))\|_{L^2_k(Y)}.
	\end{equation*}
\end{lem}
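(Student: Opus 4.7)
The plan is a direct chain-rule computation of $\tfrac{d}{dt}\Lambda_{\mathfrak q}$ along the $\tau$-flow, followed by Cauchy--Schwarz and the tameness estimates of Definition~\ref{defn:tame}. First, after placing $\upgamma^{\tau}$ in temporal gauge---which is harmless since $\Lambda_{\mathfrak q}$ is gauge invariant---the induced path $\check{\upgamma}^{\tau}(t) = (B(t), r(t), \psi(t))$ in $\mathcal{C}^{\sigma}_k(Y, \uptau)$ satisfies the blown-up gradient flow equations. Writing $\Phi := D_B\psi + \tilde{\mathfrak q}^1(B,r,\psi)$ and decomposing it $L^2$-orthogonally as $\Phi = \Lambda_{\mathfrak q}\psi + \Phi^{\perp}$, these equations read $\dot\psi = -\Phi^{\perp}$, $\dot r = -\Lambda_{\mathfrak q}\, r$, and $\dot B$ proportional to the non-blown-up form-gradient $\grad_B(B, r\psi)$.

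Next, differentiating $\Lambda_{\mathfrak q} = \Re\langle\psi, \Phi\rangle_{L^2(Y)}$ by the product rule and using self-adjointness of $D_B$ on $\uptau$-invariant spinors to rewrite $\Re\langle\psi, D_B\dot\psi\rangle = \Re\langle D_B\psi, \dot\psi\rangle$, the two spinor-derivative contributions combine---upon substituting $\dot\psi = -\Phi^{\perp}$ and $D_B\psi = \Phi - \tilde{\mathfrak q}^1$---to yield $-2\|\Phi^{\perp}\|_{L^2(Y)}^2 + \Re\langle\tilde{\mathfrak q}^1, \Phi^{\perp}\rangle$. The remaining contributions come from $D_{\dot B} = \rho(\dot B)$ and from the chain rule on $\tilde{\mathfrak q}^1$, giving
\begin{equation*}
	\dot\Lambda_{\mathfrak q} = -2\|\Phi^{\perp}\|_{L^2(Y)}^2 + \Re\langle\tilde{\mathfrak q}^1, \Phi^{\perp}\rangle + \Re\langle\psi, \rho(\dot B)\psi\rangle + \Re\langle\psi, (\partial_B\tilde{\mathfrak q}^1)[\dot B] + (\partial_r\tilde{\mathfrak q}^1)\dot r + (\partial_\psi\tilde{\mathfrak q}^1)[\dot\psi]\rangle.
\end{equation*}

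Finally, since $-2\|\Phi^{\perp}\|^2 \le 0$ we drop it in the upper bound, and estimate each remaining term by Cauchy--Schwarz. Using the Sobolev embeddings $L^2_k(Y) \hookrightarrow L^{\infty}(Y)$ and $L^2_k(Y) \hookrightarrow L^4(Y)$ (valid for $k \ge 2$ in dimension three), together with the tameness bounds on $\tilde{\mathfrak q}^1$ and its first derivative (Definition~\ref{defn:tame}, items (iii)--(v)), each term is majorized by $C(\check{\upgamma})\, \|Z\|_{L^2_k(Y)}$ for some $Z \in \{\dot B, \dot r, \Phi^{\perp}\}$ and a function $C$ continuous on $\mathcal{C}^{\sigma}_k(Y, \uptau)$. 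But $\dot B$, $\dot r$, and $\Phi^{\perp} = -\dot\psi$ are exactly the three components of the blown-up perturbed gradient at $\check{\upgamma}$, so each has $L^2_k(Y)$-norm controlled by that of $(\grad\pertL)^{\sigma}(\check{\upgamma})$, yielding the claimed inequality.

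The main subtlety is the global continuity of $\zeta$---particularly along the reducible stratum $\{r = 0\}$, where a bound by the \emph{non-blown-up} gradient norm would fail because the spinor-perpendicular component $\Phi^{\perp}$ is weighted by $r$ in $\grad\pertL(\pi\check{\upgamma})$ but persists at $r=0$. This is precisely why the right-hand side is naturally read as the blown-up gradient, and why the tameness axioms---providing uniform estimates on $\tilde{\mathfrak q}^1$ and its derivatives across $r = 0$---are essential. The argument otherwise parallels \cite[Lemma~10.6.1]{KMbook2007}, with $\uptau$-invariance entering only through self-adjointness of $D_B$ on $\uptau$-invariant spinors and the fact that $\Phi^{\perp}$ lies in the $\uptau$-invariant tangent space.
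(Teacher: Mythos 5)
Your calculation of $\tfrac{d}{dt}\Lambda_{\mathfrak q}$ is on the right track, and the observation that the dominant term $-2\|\Phi^{\perp}\|^2_{L^2(Y)}$ is nonpositive is correct. But there is a genuine gap in the final step, and it is exactly at the point you flag as a ``subtlety.'' The inequality in the lemma---as in \cite[Lemma 10.9.1]{KMbook2007}---really does have the \emph{non-blown-up} gradient $\grad\pertL$ evaluated at the blow-down $\pi(\check\upgamma^\tau(t))$ on the right-hand side, not the blown-up gradient $(\grad\pertL)^{\sigma}$. This is essential for the compactness argument that follows: the energy $\mathcal E^{\mathrm{an}}_{\mathfrak q}$ controls $\int\|\grad\pertL\|^2\,dt$ but gives no control over $\|(\grad\pertL)^{\sigma}\|$ along the reducible stratum, where $\Phi^{\perp}=D_{\mathfrak q,B}\psi-\Lambda\psi$ persists but the non-blown-up spinor gradient $r\Phi$ vanishes. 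So your conclusion that the statement ``is naturally read as the blown-up gradient'' inverts the actual logic; and your step of simply discarding $-2\|\Phi^{\perp}\|^2$ and then bounding $\Phi^{\perp}=-\dot\psi$ by $\|(\grad\pertL)^{\sigma}\|$ proves only the weaker (and here unhelpful) blown-up version of the inequality.

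The piece of structure you are missing is a cancellation: grouping the $\Phi^{\perp}$-linear contributions, one finds that the coefficient is $\tilde{\mathfrak q}^1(B,r,\psi)-\bigl(\partial_\psi\tilde{\mathfrak q}^1\bigr)^{*}\psi$, and this vanishes at $r=0$ because $\tilde{\mathfrak q}^1(B,0,\psi)=\mathcal D_{(B,0)}\mathfrak q^1(0,\psi)$ is \emph{linear and self-adjoint} in $\psi$ (it is the perturbing part of $D_{\mathfrak q,B}$). Thus the surviving $\Phi^{\perp}$-linear term is $O(r)\cdot\|\Phi^{\perp}\|_{L^2}$, and since $r\|\Phi^{\perp}\|_{L^2}\le r\|\Phi\|_{L^2_k}\le\|\grad\pertL(B,r\psi)\|_{L^2_k}$, this is controlled by the non-blown-up gradient. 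Similarly $\dot r=-\Lambda_{\mathfrak q}r$ and $r|\Lambda_{\mathfrak q}|\le r\|\Phi\|_{L^2}\le\|\grad\pertL\|_{L^2_k}$, and the $\dot B$-terms are of course directly part of $\grad\pertL$. Only \emph{after} these observations may one drop the remaining $-2\|\Phi^{\perp}\|^2$ and invoke Cauchy--Schwarz and the tameness estimates (items (iv)--(vi) of Definition~\ref{defn:tame}) to produce the continuous function $\zeta$ on $\mathcal C^{\sigma}_k(Y,\uptau)$. Without the $r$-factor cancellation, the argument cannot close against the non-blown-up gradient, which is what the lemma asserts.
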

\begin{proof}
	See \cite[Lemma~10.9.1]{KMbook2007}.
\end{proof}
\begin{thm}[Compactness of perturbed trajectories]
\label{thm:perturbed_local_compactness_blowup}
	Let $\mathfrak q$ be $k$-tame perturbation.
	Let $Z = [t_1,t_2] \times Y$ be a cylinder, and let $Z_{\epsilon} = [t_1+\epsilon, t_2 - \epsilon]$ be a smaller cylinder.
	Let $\upgamma^{\tau}_n \in C^{\tau}_k(Z,\uptau_Z)$ be a sequence of solutions to the perturbed equations $\mathfrak F_{\mathfrak q}^{\tau}(\upgamma^{\tau}) = 0$ on $Z$, and let $\check{\upgamma}^{\tau}$ be the corresponding paths in $\mathcal C^{\sigma}_k(Y,\uptau)$.
	Suppose that the drop of the Chern-Simons-Dirac functional is uniformly bounded on the larger cylinder $Z$,
	\begin{equation*}
		\pertL(\check{\upgamma}^{\tau}_n(t_1))-\pertL(\check{\upgamma}^{\tau}_n(t_2)) \le C_1,
	\end{equation*}
	and suppose that there are one-sided bounds on the value at the endpoints of the smaller cylinder $Z_{\epsilon}$:
	\begin{align*}
		\Lambda_{\mathfrak q}(\check{\upgamma}^{\tau}_n(t_1 + \epsilon)) &\le C_2\\
		\Lambda_{\mathfrak q}(\check{\upgamma}^{\tau}_n(t_2 - \epsilon)) &\ge C_2.
	\end{align*}
	Then there is a sequence of gauge transformations, $u_n \in \mathcal G_{k+1}(Z,\upiota_Z)$ such that, up to subsequence, the transformed solutions $u_n(\check{\upgamma}^{\tau}_n)$ has the following property:
	for every interior domain $Z' \Subset Z_{\epsilon}$, the transformed solutions belong to $\mathcal C^{\tau}_{k+1}(Z',\uptau_{Z'})$ and converge in the topology of $\mathcal C^{\tau}_{k+1}(Z',\uptau_{Z'})$ to a solution $\upgamma^{\tau} \in \mathcal C^{\tau}_{k+1}(Z',\uptau_{Z'})$ of the equation $\mathfrak F_{\mathfrak q}^{\tau}(\upgamma^{\tau})$.
\end{thm}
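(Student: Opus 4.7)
The plan is to adapt the ordinary compactness theorem \cite[Theorem~10.7.1]{KMbook2007} to the $\uptau_Z$-invariant setting. Because $\mathfrak F^{\tau}_{\mathfrak q}$, $\pertL$, $\Lambda_{\mathfrak q}$, and the gauge action are all equivariant under $\uptau_Z$ (resp.\ $\upiota_Z$), every analytic estimate from the ordinary case applies directly to our $\uptau_Z$-invariant solutions. The genuinely new content is to verify that the gauge transformations $u_n$ can be chosen to lie in the real subgroup $\mathcal G_{k+1}(Z,\upiota_Z)$, and that the limit lies in the $\uptau_Z$-invariant configuration space.

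First I would unfold the energy identity $\mathcal E^{\mathrm{an}}_{\mathfrak q}(\upgamma_n^{\tau}) = 2(\pertL(\check\upgamma_n^{\tau}(t_1)) - \pertL(\check\upgamma_n^{\tau}(t_2)))$ together with the hypothesis $\pertL(\check\upgamma_n^{\tau}(t_1)) - \pertL(\check\upgamma_n^{\tau}(t_2)) \le C_1$, producing the uniform $L^2$ control
\begin{equation*}
\int_{t_1}^{t_2}\|\grad\pertL(\check\upgamma_n^{\tau}(t))\|^2_{L^2(Y)}\,dt \le 2C_1.
\end{equation*}
Combining this with the monotonicity inequality $\tfrac{d}{dt}\Lambda_{\mathfrak q} \le \zeta(\check\upgamma)\|\grad\pertL\|_{L^2_k}$ of the preceding lemma and the one-sided endpoint bounds at $t_1+\epsilon$ and $t_2-\epsilon$, I would derive a uniform two-sided bound on $\Lambda_{\mathfrak q}(\check\upgamma_n^{\tau}(t))$ on $[t_1+\epsilon,t_2-\epsilon]$. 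In view of \eqref{eqn:Lambda_q} and $\|\phi_n(t)\|_{L^2(Y)} = 1$, this gives a uniform $L^2$-bound on $D_{B_n,\mathfrak q}\phi_n$, which via elliptic regularity for the Dirac operator upgrades to $L^2_1$-control on $\phi_n(t)$ slicewise.

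Next I would put each $\upgamma_n^{\tau}$ into temporal plus relative Coulomb gauge against a smooth $\uptau_Z$-invariant reference $A_{\mathrm{ref}} \in \mathcal A(Z,\mathfrak s_Z,\uptau_Z)$. Writing $u_n = e^{\xi_n}$, the gauge-fixing equation for $\xi_n$ is an elliptic boundary value problem with source built from $A_n - A_{\mathrm{ref}}$; this source is $(-\upiota_Z^*)$-anti-invariant because $A_n$ and $A_{\mathrm{ref}}$ both are, so the unique solution $\xi_n$ satisfies $\upiota_Z^*\xi_n = -\xi_n$, placing $u_n$ inside $\mathcal G_{k+1}(Z,\upiota_Z)$. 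Once in gauge, the tameness properties (Definition~\ref{defn:tame}(i)--(v)) of $\mathfrak q$ permit the standard bootstrap: if $u_n\upgamma_n^{\tau}$ is bounded in $L^2_j$ on an interior subdomain, the SW equations and the perturbation estimates promote this to an $L^2_{j+1}$ bound on a slightly smaller interior subdomain. Iterating yields uniform $L^2_{k+1}$ bounds on any $Z' \Subset Z_\epsilon$; Rellich compactness and a diagonal argument extract a subsequence converging in $\mathcal C_{k+1}^{\tau}(Z',\uptau_{Z'})$ to a solution $\upgamma^{\tau}$, which remains $\uptau_Z$-invariant because $\uptau_Z$-invariance is a closed condition under $L^2_{k+1}$-convergence.

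The main obstacle I anticipate is not analytical but bookkeeping: confirming that at each of the bootstrap and gauge-fixing steps the ordinary Kronheimer-Mrowka argument can be restricted to the $\uptau_Z$-fixed loci without losing regularity. This hinges on the observation that the linearization of $\mathfrak F^{\tau}_{\mathfrak q}$ coupled with the Coulomb-gauge condition is an $\upiota_Z$-equivariant elliptic operator, so all of its a priori estimates descend to the real slices and the gauge-fixing PDE has solutions in $\mathcal G_{k+1}(Z,\upiota_Z)$ whenever the data is real. With this verified, the argument reduces to the ordinary case applied to the fixed-point subspace.
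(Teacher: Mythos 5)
Your proposal is correct and matches the paper's approach: the paper's proof is simply a citation of the corresponding compactness result in Kronheimer--Mrowka (specifically Theorem~10.9.2, the blown-up version with the $\Lambda_{\mathfrak q}$ one-sided bounds, rather than Theorem~10.7.1 which you cite and which is the non-blown-up statement), and your proposal spells out why that argument descends to the $\uptau_Z$-invariant locus, which is exactly the implicit content of the citation. The key observations you isolate --- equivariance of $\mathfrak F^{\tau}_{\mathfrak q}$, $\Lambda_{\mathfrak q}$, and the linearized elliptic operators, together with solving the Coulomb gauge-fixing PDE in the $(-\upiota_Z^*)$-anti-invariant subspace so that $u_n \in \mathcal G_{k+1}(Z,\upiota_Z)$ --- are precisely what justifies invoking the ordinary result without change.
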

\begin{proof}
	See \cite[Theorem~10.9.2]{KMbook2007}.
\end{proof}

\section{Hessians and Transversality} 
\label{sec:Transversality}

Recall the bundle decomposition from equation~\eqref{eqn:tangent_decomp}:
\[
	\mathcal T_j^{\sigma} = \mathcal J^{\sigma}_j \oplus \mathcal K^{\sigma}_j,
\]
where the first factor is tangent to the gauge orbits, and the second factor is orthogonal to the gauge action and tangent to the Coulomb slice $S^{\sigma}_j$.
\begin{defn}
	A critical point $\mathfrak s \in \mathcal C^{\sigma}_k(Y,\uptau)$ of the vector field $(\grad \pertL)^{\sigma}$ is nondegenerate if the smooth section $(\grad \pertL)^{\sigma}$ of $\mathcal T^{\sigma}_{k-1}$ is transverse to the subbundle $\mathcal J_{k-1}^{\sigma}$.
\end{defn}
We state the main transversality theorem for perturbed critical points.
\begin{thm}
	Let $\mathcal P$ be a large Banach space of tame perturbations, as in Definition~\ref{defn:largebanachspaceperturb}.
	Then there is a residual (and therefore non-empty) subset of $\mathcal P$ such that for every $\mathfrak q$ in this subset, all the zeros of the subsection $(\grad \pertL)^{\sigma}$ of $\mathcal T_{k-1}^{\sigma} \to \mathcal C^{\sigma}_k(Y)$ are nondegenerate. 
	For such a perturbation, the image of zeros in $\mathcal C_k(Y)$ comprises a finite set of gauge orbits.
\end{thm}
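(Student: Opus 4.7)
The strategy is the standard Sard-Smale approach of \cite[Section~12]{KMbook2007}, adapted to the $\uptau$-invariant setting. Define the universal zero set
\[
	\mathfrak Z = \left\{(\upgamma, \lambda) \in \mathcal C^{\sigma}_k(Y,\uptau) \times \mathcal P \;\middle|\; \pi^{\mathcal K}(\grad \pertL_{\mathfrak q_\lambda})^{\sigma}(\upgamma) = 0\right\},
\]
where $\pi^{\mathcal K}$ projects $\mathcal T^{\sigma}_{k-1}$ onto the Coulomb factor $\mathcal K^{\sigma}_{k-1}$. A critical point $\upgamma$ is nondegenerate precisely when the fibre derivative at $\upgamma$ of the Coulomb-sliced equation surjects onto $\mathcal K^{\sigma}_{k-1}$. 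If the full linearization of the universal equation is surjective at every point of $\mathfrak Z$, then Sard-Smale applied to the projection $\mathfrak Z \to \mathcal P$ produces a residual set of regular values, whose preimages consist of nondegenerate zeros.

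At an irreducible critical point $\upgamma = (B, r, \psi)$ with $r > 0$, the analysis reduces to $\mathcal B^*_k(Y, \uptau)$. Any nonzero cotangent direction at the class $[B, r\psi]$ can be detected by the differential of a cylinder function, by Corollary~\ref{cor:separatetangent}. Since every cylinder-function gradient arises as some $\mathfrak q_\lambda$ in the large Banach space $\mathcal P$, the $\mathcal P$-directional derivative of the universal equation already surjects onto $\mathcal K^{\sigma}_{k-1}$. Transversality at interior critical points is therefore immediate.

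At a reducible critical point $\upgamma = (B, 0, \psi)$, the blown-up gradient vanishing decouples into two conditions: $(B, 0)$ is a critical point of $\pertL$ on the reducible locus, and $\psi$ is a unit eigenvector of the perturbed Dirac operator $D_{B, \mathfrak q}$ with eigenvalue $\Lambda_{\mathfrak q}(B, 0, \psi)$. Correspondingly transversality requires: (a) $(B, 0)$ is nondegenerate as a critical point of $\pertL$ restricted to the torus $\mathbb T$ of $\uptau$-invariant flat connections modulo gauge; (b) $\psi$ is a nondegenerate eigenvector of $D_{B, \mathfrak q}$, with simple and nonzero eigenvalue. For (a), perturbations built from coclosed $(-\upiota^*)$-invariant forms $c_i$, including harmonic representatives, realize arbitrary Morse-type modifications of $\pertL|_{\mathbb T}$ via the period map. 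For (b), perturbations built from $\uptau$-invariant sections $\Upsilon_\mu$ contribute to $\mathcal D_{(B,0)}\mathfrak q^1$; the density of $\{\Upsilon_\mu\}$ among $\uptau$-invariant smooth sections of $\mathbb S$ ensures this contribution realizes any bounded self-adjoint real-linear correction to $D_B$ acting on $\Gamma(S)^{\uptau}$. A standard Kato-Rellich analytic perturbation argument then singles out a residual subset of $\mathcal P$ for which the spectrum of $D_{B, \mathfrak q}$ is simple and does not contain $0$.

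The main obstacle to be verified is that the $\uptau$-invariance constraint on our perturbations does not obstruct realizing arbitrary symmetric perturbations of the real Dirac operator, nor arbitrary Morse data on $\mathbb T$. This is precisely what the $\uptau$-invariant separation properties in the proof of Proposition~\ref{thm:emb} are designed to provide; once granted, the transversality arguments of \cite[Section~12]{KMbook2007} go through unchanged. Finally, the finiteness of zero orbits in $\mathcal C_k(Y)$ follows from compactness: the set of critical orbits is precompact by the standard Seiberg-Witten a priori bounds (in particular the blown-up compactness of Theorem~\ref{thm:perturbed_local_compactness_blowup} applied to constant paths), and nondegenerate critical orbits are isolated in $\mathcal B^{\sigma}_k$, so a compact discrete set is finite.
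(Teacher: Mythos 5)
Your proposal follows essentially the same Sard--Smale strategy as the paper, and the irreducible case is handled correctly: you appeal to Corollary~\ref{cor:separatetangent} to show the $\mathcal P$-directional derivative of the universal zero set surjects onto the cokernel of $\Hess_{\mathfrak q,\alpha}$, which is exactly the content of the paper's Lemma~\ref{lem:g_transverse}. That part matches.

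In the reducible case there are two concrete gaps. First, the invocation of ``a standard Kato--Rellich analytic perturbation argument'' is the wrong tool. Kato--Rellich gives continuity and analyticity of spectra under symmetric perturbations, but it does not by itself produce a residual set with simple nonzero spectrum. What the paper actually does, and what is needed, is to define the space $\text{Op}^{\text{sa}}$ of self-adjoint \textsc{asafoe}-type operators on $\Gamma(S)^{\uptau}$, identify the ``bad set'' (operators with non-simple spectrum or with $0$ in the spectrum) as a countable union of Banach submanifolds $\{\mathcal W_n\}$ of positive codimension, show that the parametrized Dirac map $M:\mathcal A_k \times \mathcal P^{\perp} \to \text{Op}^{\text{sa}}$ is transverse to this stratification, and then apply Sard--Smale to the projection to conclude that a residual set of perturbations avoids the bad set. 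Transversality to a stratified bad set is the essential geometric step and your proposal does not supply it. Relatedly, your claim that the density of $\{\Upsilon_\mu\}$ ensures the perturbations ``realize any bounded self-adjoint real-linear correction to $D_B$'' is stronger than what the cylinder functions actually provide and stronger than what is needed: one only needs transversality to the bad set, not surjectivity onto the operator space.

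Second, you treat conditions (a) (nondegeneracy of the reducible critical point on $\mathbb T$) and (b)--(c) (simplicity and nonvanishing of the eigenvalue) as if they can be addressed by independent classes of perturbations without further justification. The paper is careful here: it first fixes the reducible critical set using the full perturbation space $\mathcal P$ to obtain a regular Banach manifold $\mathcal Z^{\red}$, and then introduces the subspace $\mathcal P^{\perp} \subset \mathcal P$ of perturbations vanishing on the reducible locus. Varying $\mathfrak q^{\perp} \in \mathcal P^{\perp}$ tunes the perturbed Dirac operator without moving the reducible critical set, and the two-stage Sard--Smale is run on the product $\mathcal P^{\perp} \times \mathcal Z^{\red} \to \mathcal P^{\perp} \times \mathcal P$. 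Without this decoupling, perturbing to achieve simple spectrum could change the set of reducible critical points and invalidate the surjectivity you established for condition (a). Your finiteness argument at the end is fine.
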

The rest of the section will be devoted to the proof the theorem, and along the way, we define Hessians and give an alternative criterion of nondegeneracy in terms of surjectivity of the Hessian operator.
\subsection{An intermediate characterization}
\hfill \break
Recall $\mathcal A_k$ is the affine space of invariant Spin\textsuperscript{c} connections, acted on by the invariant gauge group $\mathcal G_{k+1}$.
We write
\begin{equation*}
	\mathcal T_j^{\red} = \mathcal A_k \times L^2_j(Y;i\T^* Y)^{\uptau},
\end{equation*}
which decomposes along tangent space of gauge orbit and its complement as
\begin{equation*}
	 \mathcal T_j^{\red} =\mathcal T^{\red}_j \oplus \mathcal K_j^{\red}
\end{equation*}
and where the fibres consist of exact and coclosed imaginary-valued $L^2_j$ 1-forms, respectively.
Let $(\grad \pertL)^{\red}$ be the restriction of the gradient to $\mathcal A_k \times \{0\} \subset \mathcal C_k(Y)$, as section
\begin{equation*}
	(\grad \pertL)^{\sigma} : \mathcal A_k \to \mathcal T^{\red}_{k-1}.
\end{equation*}
Given $B \in \mathcal A_k$, we define the perturbed Dirac operator
\begin{align*}
	D_{\mathfrak q, B}: L^2_k(Y;S)^{\uptau} 
	&\to L^2_{k-1}(Y;S)^{\uptau},\\
	\phi &\mapsto
	D_B\phi + \mathcal D_{(B,0)} \mathfrak q^1(0,\phi).
\end{align*}
The perturbed Dirac operator acts on the invariant sections as the perturbations are constructed $\uptau$-invariantly.
By the anti-linearity of $\uptau$, the Hilbert space $L^2_k(Y;S)^{\uptau}$ does not have a natural complex structure, and the Dirac operator is only a real operator.
We are ready to state the characterization.
\begin{prop}
\label{prop:interchar}
	Let $\mathfrak a = (B, r, \psi)$ be a critical point of the vector field $(\grad \pertL)^{\sigma}$. Then the non-degeneracy of $\mathfrak a$ an be characterized by one of the following conditions, according to whether $r$ is zero or non-zero.
	\begin{enumerate}[leftmargin=*]
		\item[(i)] If $r \ne 0$, then $\mathfrak a$ is nondegenerate if and only if the corresponding point $(B,r\psi) \in \mathcal C^*_k(Y)$ is a non-degenerate zero of $\grad \pertL$, in the sense that $\grad \pertL$ is transverse to the subbundle $\mathcal J_{k-1}$ of $\mathcal T_{k-1}$.
		\item[(ii)] If $r=0$, then $\psi$ is an eigenvector of $D_{\mathfrak q,B}$, with eigenvalue $\lambda$, say, and $\mathfrak a$ is non-degenerate if and only if the following three conditions hold. 
		\begin{enumerate}
			\item[(a)] $B \in \mathcal A_k$ is a non-degenerate zero of $(\grad \pertL)^{\red}$, in the sense that $(\grad \pertL)^{\red}$ is transverse to the subbundle $\mathcal J_{k-1}^{\red}$ of $\mathcal T^{\red}_{k-1}$;
			\item[(b)] $\lambda$ is a simple eigenvalue of the real operator $D_{\mathfrak q,B}$;
			\item[(c)] $\lambda$ is not zero.
		\end{enumerate}
	\end{enumerate}
\end{prop}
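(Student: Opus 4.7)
The plan is to follow the pattern of \cite[Propositions~12.2.5 and 12.3.1]{KMbook2007}, handling the irreducible and reducible cases in turn. Recall that nondegeneracy of $\mathfrak a \in \mathcal C^{\sigma}_k$ means the linearization $\mathbf{D}_{\mathfrak a}(\grad\pertL)^{\sigma}:\mathcal T^{\sigma}_{k} \to \mathcal T^{\sigma}_{k-1}$ is transverse to the gauge-orbit subbundle $\mathcal J^{\sigma}_{k-1}$, or equivalently that the induced operator from the Coulomb slice $\mathcal K^{\sigma}_{k,\mathfrak a}$ to $\mathcal K^{\sigma}_{k-1,\mathfrak a}$ is an isomorphism.

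For part~(i), the blow-down map $\pi:\mathcal C^{\sigma}_k \to \mathcal C_k$ restricts to a $\mathcal G(Y,\upiota)$-equivariant diffeomorphism on the locus $\{r > 0\}$. A direct computation comparing the three components of $(\grad\pertL)^{\sigma}$ with $\grad\pertL$---the key observation being that upstairs the $r$- and $\psi^{\perp}$-components simply repackage $D_B\psi$ into its $\psi$-parallel and $\psi$-perpendicular parts---shows that $\pi$ intertwines the two vector fields on the irreducible locus. Since $\pi$ also identifies $\mathcal J^{\sigma}_{k-1}$ with $\mathcal J_{k-1}$, the transversality statement at $\mathfrak a$ is equivalent to transversality of $\grad\pertL$ at $\pi(\mathfrak a) = (B, r\psi)$, which is exactly condition~(i).

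For part~(ii), I would work directly at $\mathfrak a = (B, 0, \psi)$ and exploit the $\mathbb Z_2$-equivariance of any tame perturbation arising from a $\mathcal G$-invariant function: in the spinor variable $\mathfrak q^0$ is even and $\mathfrak q^1$ is odd, so $\mathcal D_{(B,0)}\mathfrak q^0(0, \,\cdot\,)$ and the second derivative $\mathcal D^2_{(B,0)}\mathfrak q^1(\,\cdot,\cdot\,)$ on the spinor variable both vanish at the reducible point. A straightforward differentiation of the three components of $(\grad\pertL)^{\sigma}$ written out in Section~\ref{sec:perturb}, together with $r = 0$ and these vanishings, puts the Hessian into block-triangular form on the natural splitting $(a, s, \phi) \in L^2_k(Y; i\T^*Y)^{-\upiota^*} \oplus \reals \oplus L^2_k(Y; S)^{\uptau}$, with diagonal blocks given by the reducible Hessian $(\Hess\pertL^{\red})_B$ on $a$, multiplication by $\Lambda_{\mathfrak q}(B, 0, \psi) = \lambda$ on $s$, and $D_{\mathfrak q, B} - \lambda$ (projected to the real orthogonal complement of $\psi$) on $\phi$.

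Restricting to the Coulomb slice $\mathcal K^{\sigma}_{k,\mathfrak a}$ and reading off when each diagonal block is invertible then yields the three required conditions: the form block corresponds to condition~(a), the $s$-block to $\lambda \ne 0$, namely condition~(c), and the $\phi$-block to the kernel of $D_{\mathfrak q, B} - \lambda$ in $L^2_k(Y; S)^{\uptau}$ being spanned by $\psi$ alone, i.e.\ condition~(b). The step I expect to require the most care is this last point: since $L^2_k(Y; S)^{\uptau}$ carries no natural complex structure compatible with $D_{\mathfrak q, B}$, ``simple eigenvalue'' must be interpreted as one-dimensional \emph{real} eigenspace, which is a genuinely weaker notion than its complex-linear counterpart in \cite{KMbook2007}; I will need to track this distinction carefully, as it affects both the block invertibility argument here and the transversality arguments used later to achieve condition~(b) generically.
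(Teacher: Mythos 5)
Your proposal is correct and follows the same skeleton as the paper's own proof (a sketch modelled on \cite[Proposition~12.2.5]{KMbook2007}): part~(i) via the blow-down diffeomorphism on irreducibles, part~(ii) via the block lower-triangular form of the linearization at a reducible critical point, with the diagonal blocks matched to conditions~(a), (c), (b). The small difference in route is that you restrict the linearization to the Coulomb slice $\mathcal K^{\sigma}_{k,\mathfrak a}$ and test invertibility of the diagonal blocks, whereas the paper augments $\mathcal D_{\mathfrak a}(\grad\pertL)^{\sigma}$ by the gauge differential $\mathbf d^{\sigma}_{\mathfrak a}$ and tests surjectivity row by row; these are equivalent characterizations of nondegeneracy (cf.\ the ``criterion of non-degeneracy'' lemma in Section~\ref{subsec:Hessians}). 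Your version is in fact the cleaner adaptation to the real setting: the paper's sketch carries over from \cite{KMbook2007} the remark that the cokernel $\reals\{i\psi\}$ of $D_{\mathfrak q,B}-\lambda$ is covered by the constant gauge direction $(i,0,0,0)$, but here $i\psi$ is $\uptau$-\emph{anti}-invariant and hence not an element of $L^2_k(Y;S)^{\uptau}$, and the constant $\xi = i$ does not lie in the real gauge Lie algebra $L^2_{k+1}(Y;i\reals)^{-\upiota^*}$ (whose constants vanish), so that step is a vestige of the $\U(1)$ theory. In the real theory $D_{\mathfrak q,B}-\lambda$ is self-adjoint of index zero on $L^2_k(Y;S)^{\uptau}$, and its restriction to $\psi^{\perp}$ is an isomorphism onto $\psi^{\perp}$ exactly when $\lambda$ has a one-real-dimensional eigenspace, with no contribution from the gauge orbit needed, precisely as your argument has it. Your closing caution that ``simple'' must be read as one-dimensional over $\reals$ is well placed, and is indeed what the later reducible transversality argument (working with self-adjoint operators on the real Hilbert space $L^2_k(Y;S)^{\uptau}$) relies upon.
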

\begin{proof}
	We sketch the proof, based on \cite[Proposition~12.2.5]{KMbook2007}. 
	The $r \ne 0$ case follows from the blow-down map being diffeomorphism on the irreducible set.
	Suppose $r=0$.
	The derivative at $(B,0,\psi) \in \mathcal C^{\sigma}_k$  has the shape
	\begin{equation*}
		\mathcal D_{(B,0,\psi)} :
		\begin{bmatrix}
			b \\ t \\ \phi 
		\end{bmatrix}
		\mapsto
		\begin{bmatrix}
			\mathcal D_B(\grad \pertL)^{\red} & 0 & 0\\
			0 & \lambda & 0\\
			x & 0 & D_{\mathfrak q,B} - \lambda
		\end{bmatrix}
		\begin{bmatrix}
			b \\ t \\ \phi
		\end{bmatrix},
	\end{equation*}
	acting on triples $(b,t,\phi)$ with $\Re \langle \psi, \phi \rangle = 0$.
	The entry $x$ vanishes when $\mathfrak a$ is critical point (cf. Proposition~\ref{prop:Hess_op}.)
	If we write $\mathbf d^{\sigma}$ for the derivative of the gauge group action on $\mathcal C^{\sigma}(Y)$, then the non-degeneracy at $\mathfrak a = (B,0,\phi)$ is equivalent to the surjectivity of $\mathbf{d}_{\alpha}^{\sigma} \oplus \mathcal D_{(B,0,\psi)}$ which is the matrix
	\begin{equation*}
		\begin{bmatrix}
			-d & \mathcal D_B(\grad \pertL)^{\red} & 0 & 0\\
			0 & 0 & \lambda & 0\\
			\psi \cdot & x & 0 & D_{\mathfrak q,B} - \lambda
		\end{bmatrix}.
	\end{equation*}
	\begin{itemize}[leftmargin=*]
		\item Condition~(a) is equivalent to the surjectivity of the first row $(-d,\mathcal D_B(\grad \pertL)^{\red})$.
		\item Condition~(c) is equivalent to the surjectivity of the second row, multiplication by $\lambda$.
		\item Condition~(b) means that the operator $D_{\mathfrak q,B} -\lambda$ has cokernel the real span of $i\psi$, but $(0,0,i\psi)$ is the image of $(i,0,0,0)$.\qedhere 
	\end{itemize}
\end{proof}

\subsection{Almost self-adjoint first-order elliptic operators}
\hfill \break
We begin with a definition modelled on the linearization of $(\grad \pertL)^{\sigma}$, which roughly is a sum of self-adjoint elliptic operator and a compact non-symmetric term.
The setup is the following.
Let $E \to Y$ be a vector bundle that decompose as a direct sum of real and complex vector bundles.
Let $\upiota: Y \to Y$ be an involution and $\uptau: E \to E$ be a conjugate-linear involution lifting $\upiota$, in the sense that it acts conjugate-linearly on the complex summand and linearly on the real summand.
Then $\uptau$ acts on sections of $E$ by
\begin{equation*}
	(\uptau s)(y) = \uptau(s(\upiota(y))).
\end{equation*}
We use superscript $\uptau$ to denote $\uptau$-invariant subspaces.
\begin{defn}
\label{defn:kASAFOE}
	An operator $L$
	is \emph{$k$-almost self-adjoint first order elliptic} ($k$-\textsc{asafoe}) if it can be written as
	\begin{equation*}
		L = L_0 + h,
	\end{equation*}
	where
	\begin{itemize}
		\item $L_0$ is a first order, self-adjoint, elliptic differential operator with smooth coefficient, acting on \emph{$\uptau$-invariant sections} of a vector bundle $E \to Y$, and
		\item $h$ is an operator on \emph{$\uptau$-invariant sections} of $E$ which we suppose to be a map
		\begin{equation*}
			h: C^{\infty}(Y;E)^{\uptau} \to L^2(Y;E)^{\uptau}
		\end{equation*}
		which extends to a bounded map on $L^2_j(Y;E)^{\uptau}$ for all $j$ in the range $|j| \le k$.
	\end{itemize}
\end{defn}
\begin{exmp}
An example of a $k$-\textsc{asafoe} operator is the perturbed Dirac operator:
\begin{equation*}
	D_{\mathfrak q, B}:L^2_k(Y;S)^{\uptau} \to L^2_{k-1}(Y;S)^{\uptau}
\end{equation*}
where $B= B_0 + b \otimes 1$ and
	\begin{itemize}
	\item $L_0 = D_{B_0}$,
	\item $h\phi = \rho(b)\phi + \mathcal D_{(B,0)}\mathfrak q^1(0,\phi)$.
\end{itemize}
The verification for $D_{\mathfrak q, B}$ being $k$-\textsc{asafoe} is uses Sobolev multiplication and the tameness of perturbations.
In fact, $h$ is also symmetric because it arises from a formal gradient.
\end{exmp}

The following lemma is concerned with the spectrum of $L$.
\begin{lem}
\label{lem:spectra_kASAFOE}
	Let $L = L_0 + h$ be $k$-\textsc{asafoe}. 
	\begin{enumerate}[leftmargin=*]
		\item[(i)] There are finitely many eigenvalues of the complexfication $L \otimes 1_{\C}$ in any compact subset of the complex plane $\C$, and the generalized eigenspaces of the complexification are finite-dimensional. All the generalized eigenvectors belong to $L^{2,\uptau}_{k+1}$.
		\item[(ii)] If $h$ is symmetric too, then the eigenvalues are real, and there is a complete orthonormal system of eigenvectors $\{e_n\}$ in $L^2(E)^{\uptau}$. Then span of eigenvectors is dense in $L^2_{k+1}$.
		\item[(iii)] In the non-symmetric case, the imaginary parts of the eigenvalues $\lambda$ of $L \otimes 1_{\C}$ are bounded by the $L^2$-operator norm of $(h-h^*)/2$.
	\end{enumerate}
\end{lem}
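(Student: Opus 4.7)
The plan is to treat $L_0$ as the principal part and view $h$ as a relatively bounded (in fact $L^2$-bounded) perturbation, exploiting classical spectral theory for self-adjoint elliptic operators on the closed manifold $Y$. For part~(i), since $L_0$ is self-adjoint first-order elliptic on the closed manifold, its complexification on $L^2(E)^{\uptau}\otimes_{\reals}\C$ is self-adjoint with compact resolvent (G\aa{}rding plus Rellich). The perturbation $h\otimes 1_{\C}$ is bounded on $L^2$, so $L\otimes 1_{\C}$ has the same domain as $L_0\otimes 1_{\C}$ and its resolvent $(L\otimes 1_{\C} - z)^{-1}$, whenever it exists, factors through the compact resolvent of $L_0\otimes 1_{\C}$, hence is compact. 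Analytic Fredholm theory then forces the spectrum to be a discrete set of eigenvalues of finite algebraic multiplicity. For the regularity claim, if $Lu=\lambda u$ with $u\in L^2$, rewrite $L_0u=\lambda u - hu$; since $h$ preserves each $L^2_j$ for $|j|\le k$, the first-order elliptic estimate $\|u\|_{L^2_{j+1}}\lesssim \|L_0u\|_{L^2_j}+\|u\|_{L^2}$ gives by induction on $j$ that $u\in L^2_{k+1}$.

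For part~(ii), symmetry of $h$ promotes $L$ to a self-adjoint operator on the real Hilbert space $L^2(E)^{\uptau}$ with domain $L^2_1(E)^{\uptau}$, and the compact-resolvent argument above then yields a complete orthonormal eigenbasis $\{e_n\}$ with real eigenvalues via the Hilbert--Schmidt spectral theorem applied to $(L-z)^{-1}$. Each $e_n$ lies in $L^2_{k+1}$ by~(i). To obtain density of the span in $L^2_{k+1}$, I would compare Sobolev norms with norms defined by the functional calculus of $L$: the equivalence $\|u\|_{L^2_{k+1}}\sim \|(1+L_0^2)^{(k+1)/2}u\|_{L^2}$ (standard for self-adjoint elliptic $L_0$), combined with $L^2_j$-boundedness of $h$ for $|j|\le k$ (hence relative compactness of $L^2-L_0^2$ at the appropriate level), upgrades to $\|u\|_{L^2_{k+1}}\sim \|(1+L^2)^{(k+1)/2}u\|_{L^2}$ on $L^2_{k+1}(E)^{\uptau}$; diagonalizing in the eigenbasis, the finite partial sums of $u$ converge in the latter norm, hence in $L^2_{k+1}$.

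For part~(iii), decompose $h=h_s+h_a$ with $h_s=(h+h^*)/2$ symmetric and $h_a=(h-h^*)/2$ antisymmetric, so $L=(L_0+h_s)+h_a$ is the sum of a self-adjoint operator and a bounded antisymmetric one. For any eigenvector $v\ne 0$ of $L\otimes 1_{\C}$ with eigenvalue $\lambda\in\C$, pair both sides with $v$:
\[
\lambda\|v\|_{L^2}^2 = \langle (L_0+h_s)v,v\rangle + \langle h_av,v\rangle .
\]
The first summand is real since $L_0+h_s$ is self-adjoint, while $h_a^*=-h_a$ forces $\langle h_av,v\rangle=-\overline{\langle h_av,v\rangle}$, so the second summand is purely imaginary. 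Taking imaginary parts and applying Cauchy--Schwarz yields
\[
|\Im\lambda|\,\|v\|_{L^2}^2 = |\Im\langle h_av,v\rangle| \le \|h_a\|_{L^2\to L^2}\|v\|_{L^2}^2,
\]
which is the claimed bound.

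The main obstacle is the density in $L^2_{k+1}$ in part~(ii): the statement requires comparing the Sobolev norm with the operator-theoretic norm defined by $(1+L^2)^{(k+1)/2}$, which is the one place where the non-self-adjoint lower-order terms from $h$ interact with the Sobolev scale in a non-trivial way. Everything else is a routine assembly of the spectral theorem for self-adjoint operators with compact resolvent and elliptic bootstrap.
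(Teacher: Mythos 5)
Your proposal takes essentially the same route that the paper leaves implicit here — the paper states this lemma without written proof, deferring (as it does throughout this section) to the corresponding statement in Kronheimer--Mrowka \cite[Lemma~12.2.4]{KMbook2007}, and that proof is the standard compact-resolvent perturbation argument for $L_0$, elliptic bootstrap for regularity, and the symmetric/antisymmetric decomposition for part~(iii). Parts~(i) and~(iii) of your write-up are correct and in line with this.

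Two things are worth flagging. First, in part~(i) you prove that genuine eigenvectors are $L^2_{k+1}$, but the statement concerns \emph{generalized} eigenvectors. You need a short extra step: if $(L-\lambda)^m u = 0$, set $u_0 = (L-\lambda)^{m-1}u, u_1 = (L-\lambda)^{m-2}u, \dots, u_{m-1}=u$, so that $L u_0 = \lambda u_0$ and $L u_{j} = \lambda u_j + u_{j-1}$ for $j\ge 1$; iterate your bootstrap down this Jordan chain, using that $u_{j-1}\in L^2_{k+1}$ is already known when you handle $u_j$. This is the same bootstrap, applied $m$ times.

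Second, the density argument in part~(ii) is more elaborate than it needs to be, and as you yourself noticed it is the one place with a real technical burden. You propose establishing an equivalence $\|u\|_{L^2_{k+1}} \sim \|(1+L^2)^{(k+1)/2}u\|_{L^2}$ by perturbing off $\|(1+L_0^2)^{(k+1)/2}u\|_{L^2}$; this requires controlling the commutators in $L^{k+1}-L_0^{k+1}$ and handling the fractional power when $k+1$ is odd, neither of which is automatic from the bare $L^2_j$-boundedness of $h$ for $|j|\le k$. A cleaner argument is a direct induction on the Sobolev index: if $u\in L^2_{j+1}$ and $u_N = \sum_{n\le N}(u,e_n)e_n$, then by self-adjointness $(u,Le_n)=\lambda_n(u,e_n)$, so $Lu_N$ is exactly the $N$-th partial sum of $Lu$. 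Apply the elliptic estimate $\|u-u_N\|_{L^2_{j+1}}\lesssim\|L_0(u-u_N)\|_{L^2_j}+\|u-u_N\|_{L^2_j}$ with $L_0 = L-h$: both $\|L(u-u_N)\|_{L^2_j}$ (partial sums of $Lu\in L^2_j$, by the inductive hypothesis applied to $Lu$) and $\|h(u-u_N)\|_{L^2_j}\lesssim\|u-u_N\|_{L^2_j}$ tend to zero. This needs only the elliptic estimate and the $L^2_j$-boundedness of $h$, never a functional-calculus comparison, and it works uniformly for any $k$.
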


\subsection{Hessians}
\label{subsec:Hessians}
\subsubsection*{(Hessians on non-blown-up configuration spaces)}
The tangent bundle $\mathcal T_j = L^2_j(Y;\uptau)$ is the product bundle
\begin{equation*}
	\mathcal T_j = L^2_j(Y;i\T^*M \oplus S)^{\uptau} \times \mathcal C_k(Y;\uptau).
\end{equation*}
Consider the composition on the irreducible part $\mathcal C_k^*(Y)$ in \eqref{eqn:tangent_decomp_irred}:
\begin{equation*}
	\mathcal T_j|_{\mathcal C^*_k} = \mathcal T_j \oplus \mathcal K_j,
\end{equation*}
and let $\Pi_{\mathcal K_{k-1}}: \mathcal T_{k-1} \to \mathcal K_{k-1}$ be the $L^2$-orthogonal projection.
Since the decomposition was defined using standard $L^2$-inner product, $\grad \pertL$ is a section of $\mathcal K_{k-1} \to \mathcal T_{k-1}$.
\begin{defn}
	Let $\alpha \in \mathcal C^*(Y)$.
	We define the Hessian operator
	\begin{equation*}
		\Hess_{\mathfrak q,\alpha}:\mathcal K_{k,\alpha} \to \mathcal K_{k-1,\alpha},
	\end{equation*}
	as the composition of two linear maps:
	\begin{equation*}
		\Pi_{\mathcal K_{k-1}} \circ \mathcal D_{\alpha} \grad \pertL: 
		T_{\alpha}\mathcal C^*_{k}(Y) = \mathcal T_{k,\alpha} \to \mathcal K_{k-1,\alpha}.
	\end{equation*}
	All in all, the Hessian defines a $\mathcal G_{k+1}(Y,\upiota)$-equivariant smooth bundle map
	\begin{equation*}
		\Hess_{\mathfrak q}:\mathcal K_k \to \mathcal K_{k-1}.
	\end{equation*}
\end{defn}
\begin{prop}
\label{prop:Hess_op}
	The operator $\Hess_{\mathfrak q,\alpha}: \mathcal K_{k,\alpha} \to \mathcal K_{k-1,\alpha}$ is symmetric.
	There is a complete orthonormal system $\{e_n\}$ in $\mathcal K_{0,\alpha}$, with the property that each $e_n$ is smooth, and
	\begin{equation*}
		\Hess_{\mathfrak q,\alpha}e_n = \lambda_n e_n,
	\end{equation*}	
	for some $\lambda_n \in \reals$. The span of the eigenvectors is dense in $\mathcal K_{k,\alpha}$ for all $k$. The number of eigenvalues $\lambda_n$ in any bounded interval is finite.
	In particular, $\Hess_{\mathfrak q,\alpha}$ is Fredholm of index zero, and therefore surjective if and only if it is injective.
\end{prop}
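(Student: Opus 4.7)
The plan is to follow the strategy of \cite[Proposition~12.3.1]{KMbook2007}, adapted to the $\uptau$-invariant setting: realize $\Hess_{\mathfrak q,\alpha}$ as the restriction of an \emph{extended Hessian} $\widehat{\Hess}_{\mathfrak q,\alpha}$ defined on the full tangent space $\mathcal T_{k,\alpha}$, show that the extension is $k$-\textsc{asafoe} in the sense of Definition~\ref{defn:kASAFOE}, and then invoke Lemma~\ref{lem:spectra_kASAFOE} to read off all the spectral conclusions.

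Concretely, write $\alpha=(B,\Phi)\in\mathcal C^*_k(Y,\uptau)$, let
$\mathbf d_\alpha:L^2_{k+1}(Y;i\reals)^{-\upiota^*}\to \mathcal T_{k,\alpha}$
be the linearization of the gauge action, and let $\mathbf d_\alpha^\dagger$ be its formal $L^2$-adjoint (a form of the Coulomb operator). The extension I would use is
\[
\widehat{\Hess}_{\mathfrak q,\alpha}
\;=\;\mathcal D_\alpha \grad\pertL \;+\; \mathbf d_\alpha\,\mathbf d_\alpha^\dagger
\;:\;\mathcal T_{k,\alpha}\longrightarrow \mathcal T_{k-1,\alpha}.
\]
On the $\uptau$-invariant tangent bundle this is first-order with the same principal symbol as the direct sum of the odd signature operator on imaginary $1$-forms and the Dirac operator on real spinors, which is elliptic; the extra algebraic terms from $\mathcal D_\alpha \grad\mathcal L$ (Clifford-multiplication pairings with $\Phi$) and $\mathbf d_\alpha\mathbf d_\alpha^\dagger$ are zeroth and second-order in a degenerate direction that combines with $\mathcal D_\alpha \grad\mathcal L$ to give a genuine first-order elliptic operator on $\mathcal T_{k,\alpha}^{\uptau}$. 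By construction $\widehat{\Hess}_{\mathfrak q,\alpha}$ is formally self-adjoint with respect to the $L^2$-pairing, since $\grad\pertL$ is a formal gradient and $\mathbf d_\alpha\mathbf d_\alpha^\dagger$ is manifestly self-adjoint.

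Next I would verify the splitting into $L_0+h$. Take $L_0$ to be the model extended Hessian at a smooth reference configuration $\alpha_0=(B_0,0)$; this is a smooth, self-adjoint, first-order elliptic differential operator. The difference $h=\widehat{\Hess}_{\mathfrak q,\alpha}-L_0$ consists of algebraic pairings with $(B-B_0,\Phi)\in L^2_k$ and of $\mathcal D_\alpha\hat{\mathfrak q}$; the former is bounded on $L^2_j$ for $|j|\le k$ by Sobolev multiplication (the paper assumes $2(k+1)>\dim M=3$), and the latter is bounded on the same range of Sobolev spaces by clauses (iii) and (vi) of Definition~\ref{defn:tame}. Thus $\widehat{\Hess}_{\mathfrak q,\alpha}$ is $k$-\textsc{asafoe} and symmetric, so Lemma~\ref{lem:spectra_kASAFOE}(ii) gives a complete $L^2$-orthonormal system of smooth, $\uptau$-invariant eigenvectors $\{\hat e_n\}$ with real eigenvalues $\{\hat\lambda_n\}$, finitely many in any bounded interval, with dense span in every $L^2_j$ with $|j|\le k$.

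Finally I would extract the Hessian spectrum from this. The decomposition $\mathcal T_j=\mathcal J_j\oplus \mathcal K_j$ of~\eqref{eqn:tangent_decomp_irred} is preserved by $\widehat{\Hess}_{\mathfrak q,\alpha}$: on $\mathcal J_j=\operatorname{image}\mathbf d_\alpha$ the $\grad\pertL$-term vanishes because $\alpha$ is a critical point and $\grad\pertL$ is gauge-equivariant, so $\widehat{\Hess}_{\mathfrak q,\alpha}$ acts as $\mathbf d_\alpha\mathbf d_\alpha^\dagger \mathbf d_\alpha=\mathbf d_\alpha \Delta$, a positive self-adjoint operator on $\T_e\mathcal G^{\perp}_{k+1}$; on $\mathcal K_j$ the extra term $\mathbf d_\alpha\mathbf d_\alpha^\dagger$ vanishes and $\widehat{\Hess}_{\mathfrak q,\alpha}$ equals $\Pi_{\mathcal K_{k-1}}\circ \mathcal D_\alpha\grad\pertL=\Hess_{\mathfrak q,\alpha}$ by the definition in Section~\ref{subsec:Hessians}. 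Each eigenvector $\hat e_n$ therefore splits consistently, and the subcollection lying in $\mathcal K$ gives the desired orthonormal system $\{e_n\}\subset \mathcal K_{0,\alpha}$ with real eigenvalues $\{\lambda_n\}$, finitely many in any bounded interval and with dense span in $\mathcal K_{k,\alpha}$. Fredholmness of index zero then follows from the standard Riesz-Schauder type argument: $\Hess_{\mathfrak q,\alpha}$ differs from the invertible $L_0|_{\mathcal K}$ (after adding a large real constant to avoid the kernel, which can be subtracted at the end) by a compact operator induced by $h$ via the Rellich embedding $\mathcal K_{k}\hookrightarrow \mathcal K_{k-1}$.

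The main obstacle I anticipate is the first one: verifying that the principal symbol of $\mathcal D_\alpha\grad\pertL+\mathbf d_\alpha\mathbf d_\alpha^\dagger$ really is elliptic on $\mathcal T^{\uptau}_{k,\alpha}$ and that the gauge-fixing addition does not interfere with the symmetry or with the restriction to $\mathcal K$. This is where the $\uptau$-equivariance of the Clifford action, guaranteed by~\eqref{eqn:compat1}, and the $(-\upiota^*)$-behaviour of imaginary $1$-forms must be used carefully so that all operators really preserve the $\uptau$-fixed subspaces; everything else is essentially a formal restriction of the complex-linear calculation in \cite{KMbook2007} to the real subbundle.
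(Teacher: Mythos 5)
There is a genuine gap in your choice of extension. You define the extended Hessian on $\mathcal T_{k,\alpha}$ alone as
\[
\widehat{\Hess}_{\mathfrak q,\alpha}=\mathcal D_\alpha\grad\pertL+\mathbf d_\alpha\mathbf d_\alpha^\dagger,
\]
and claim (flagging it yourself as "the main obstacle") that this is a first-order elliptic operator on $\mathcal T_{k,\alpha}^{\uptau}$. It is not. On the form summand $L^2_j(Y;i\T^*Y)^{-\upiota^*}$ the principal part of $\mathcal D_\alpha\grad\pertL$ is $*d$, whose symbol $\sigma(\xi)=*(\xi\wedge -)$ has nontrivial kernel (the radial direction $\xi$), so $*d$ alone is not elliptic; and the correction term $\mathbf d_\alpha\mathbf d_\alpha^\dagger$ acts on the form part as $dd^*$, which is genuinely second-order. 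The sum $*d+dd^*$ on $\Omega^1$ is not a differential operator of a single order, so it cannot satisfy the hypotheses of Definition~\ref{defn:kASAFOE} (which demands $L_0$ first-order, self-adjoint, elliptic), and hence Lemma~\ref{lem:spectra_kASAFOE} does not apply as written. Your appeal to "the odd signature operator on imaginary $1$-forms" is exactly the hint of what is missing: that operator lives on $\Omega^1\oplus\Omega^0$, not on $\Omega^1$ alone.

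The paper's fix is to adjoin the extra factor explicitly. It defines the extended Hessian on the \emph{enlarged} space
\[
\mathcal T_{k,\alpha}\oplus L^2_k(Y;i\reals)^{-\upiota^*}\longrightarrow
\mathcal T_{k-1,\alpha}\oplus L^2_{k-1}(Y;i\reals)^{-\upiota^*}
\]
as the $2\times 2$ block matrix
\[
\widehat{\Hess}_{\mathfrak q,\alpha}=
\begin{bmatrix}
\mathcal D_\alpha\grad\pertL & \mathbf d_\alpha\\
\mathbf d_\alpha^* & 0
\end{bmatrix},
\]
whose form part becomes $\begin{bmatrix}*d & -d\\ d^* & 0\end{bmatrix}$ on $\Omega^1\oplus\Omega^0$, a genuinely first-order, self-adjoint, elliptic operator. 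Combined with the Dirac operator on $\uptau$-real spinors, this gives the desired $L_0$, and the Sobolev-multiplication and tame-perturbation bounds give $h$. In the three-fold decomposition $\mathcal J_j\oplus\mathcal K_j\oplus L^2_j(Y;i\reals)^{-\upiota^*}$, the cross-terms $x$ vanish at a critical point (which you do note correctly), so $\Hess_{\mathfrak q,\alpha}$ appears as a diagonal block and inherits the spectral conclusions from Lemma~\ref{lem:spectra_kASAFOE}. Your final paragraph (restriction to $\mathcal K$, Fredholmness of index zero) then goes through, once the extension is the right one.
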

\begin{proof}
	This is essentially \cite[Proposition~12.3.1]{KMbook2007}, but we repeat the proof to introduce some notations.
	The symmetry of the Hessian follows from the observation that $\Hess_{\mathfrak q, \alpha}$ is the pullback of a operator that is the covariant Hessian of the circle valued function $\pertL$ on $\mathcal B^*_k(Y)$, with respect to the $L^2$-inner product.
	To prove the rest of the analytic properties, we introduce the extended Hessian operator:
	\begin{equation*}
		\widehat{\Hess}_{\mathfrak q,\alpha}:
		\mathcal T_{k,\alpha} \oplus L^2_k(Y,i\reals)^{-\upiota^*} \to 
		\mathcal T_{k-1,\alpha} \oplus L^2_{k-1}(Y,i\reals)^{-\upiota^*}
	\end{equation*}
	given by
	\begin{equation*}
		\widehat{\Hess}_{\mathfrak q,\alpha} =
		\begin{bmatrix}
			\mathcal D_{\alpha} \grad \pertL & \mathbf d_{\alpha}\\
			\mathbf{d}_{\alpha}^* & 0
		\end{bmatrix}.
	\end{equation*}
	
	In the decomposition
	\begin{equation*}
		\mathcal T_{j,\alpha} = L^2_j(Y;S)^{\uptau} \oplus
			L^2_j(Y;i\T^*Y)^{-\upiota^*},
	\end{equation*}
	the extended Hessian operator takes the form
	\begin{equation*}
		\widehat{\Hess}_{\mathfrak q,\alpha} 
	= 
	\begin{bmatrix}
		D_{A_0} & 0 & 0\\
		0 &*d & -d\\
		0 & d^* & 0 
	\end{bmatrix}+h,
	\end{equation*}
	where $h$ is the sum of a zeroth-order operator and terms arising from the perturbation.
	The extended Hessian is k-\textsc{asafoe}: indeed, the first term is a self-adjoint elliptic operator on $Y$ acting on $\uptau$-invariant sections of $S \oplus i\T^*Y \oplus i\underline{\reals}$, and the second term satisfies the second bullet point of Definition~\ref{defn:kASAFOE}.
	
	In the decomposition
	\begin{equation*}
		\mathcal T_j = \mathcal J_j \oplus \mathcal K_j,
	\end{equation*}
	the Hessian has the shape
	\begin{equation*}
		\widehat{\Hess}_{\mathfrak q, \alpha} =
		\begin{bmatrix}
			0 & x & \mathbf{d}_{\alpha} \\
			x^* & \Hess_{\mathfrak q,\alpha} &0\\
			\mathbf{d}^*_{\alpha} & 0 & 0	
		\end{bmatrix},
	\end{equation*}
	where recall $\mathbf d_{\alpha}$ is the the linearization of the gauge group action and
	\begin{align*}
		x 
		&= \Pi_{\mathcal J_{k-1}} \circ \mathcal D_{\alpha} \left(\grad \pertL |_{\mathcal K_{k,\alpha}}\right),\\
		&= \mathbf d_{\alpha}(\mathbf d_{\alpha}^*\mathbf d_{\alpha})^{-1} \mathbf d_{\alpha}^* \mathcal D_{\alpha} \grad \pertL|_{\mathcal K_{k,\alpha}}.
	\end{align*}
	If $x$ vanishes at critical points, then we can apply Lemma~\ref{lem:spectra_kASAFOE} to conclude the proof.
	Indeed, $x$ is the derivative of a section of a subbundle that takes value in the subbundle, so it vanishes at points where the section vanishes.
\end{proof}

\subsubsection*{(Hessians on the blown-up configuration space)}
Recall that the fibre of the $L^2_j$-tangent bundle of the blown-up configuration space is defined as
\begin{equation*}
	\{(a,s,\phi) \big| \Re\langle \phi_0,\phi\rangle_{L^2} = 0\} \subset L^2_j(Y;i\T^*Y)^{\uptau} \times \reals \times L^2_j(Y;S)^{\uptau}
\end{equation*}
and we have a decomposition
\begin{equation*}
	\mathcal T^{\sigma}_j = \mathcal J^{\sigma}_j \oplus \mathcal K^{\sigma}_{j}
\end{equation*}
that descends to $\mathcal J_j \oplus \mathcal K_j$ over the irreducible subset $\mathcal C^*_k$.
Since $\grad \pertL$ is a section of the subbundle $\mathcal K_{k-1}$ of $\mathcal T_{k-1}$ over $\mathcal C_k$, by continuity the vector field $(\grad \pertL)^{\sigma}$ defines a section of $\mathcal K^{\sigma}_{k-1}$.
To differentiate $(\grad \pertL)^{\sigma}$, we consider $\mathcal C^{\sigma}_k$ as a subspace of the affine space
\begin{equation*}
	\mathcal A_k \times \reals \times L^2_k(Y;S)^{\uptau}.
\end{equation*}
We differentiate $(\grad \pertL)^{\sigma}$ along the submanifold and project it back to $\mathcal T^{\sigma}_{k-1}$, and then to the summand $\mathcal K^{\sigma}_{k-1}$ using the projection $\mathcal J^{\sigma}_{k-1} \to \mathcal K^{\sigma}_{k-1}$.
The resulting map $\mathcal T^{\sigma}_k \to \mathcal K^{\sigma}_{k-1}$ is given by
\begin{equation*}
	x \mapsto \Pi_{\mathcal K^{\sigma}_{k-1}}\mathcal D(\grad \pertL)^{\sigma}(x).
\end{equation*}
Finally, we define the Hessian as the restriction of this operator on $\mathcal K^{\sigma}_k \subset \mathcal T^{\sigma}_k$:
\begin{equation*}
	\Hess_{\mathfrak q}^{\sigma} : \mathcal K^{\sigma}_k \to \mathcal K^{\sigma}_{k-1}.
\end{equation*}

While $\mathcal K^{\sigma}_k$ is the pull-back of the tangent bundle of the quotient configuration space and $\Hess_{\mathfrak q}^{\sigma}$ can be regarded as a covariant derivative of a vector field on $\mathcal B^{\sigma}_{\mathfrak q}$, 
it is not a Levi-Civit\`{a} derivative because $\mathcal J^{\sigma}_j \oplus \mathcal K^{\sigma}_{j}$ is not orthogonal. 
Hence $\Hess_{\mathfrak q}^{\sigma}$ is no longer a symmetric operator.
In terms of the decomposition
\begin{equation*}
	\mathcal T_{j,\mathfrak a}^{\sigma} =
	\mathcal J_{j,\mathfrak a}^{\sigma} \oplus
	\mathcal K_{j,\mathfrak a}^{\sigma},
\end{equation*}
the derivative of the vector field has the shape
\begin{equation*}
	\mathcal D_{\mathfrak a}(\grad \pertL)^{\sigma} =
	\begin{bmatrix}
		0 & x\\
		y & \Hess^{\sigma}_{\mathfrak q,\mathfrak a}
	\end{bmatrix},
\end{equation*}
and $x,y$ both vanish at critical points.
It follows that
\begin{lem}[criterion of non-degeneracy]
	A critical point $\mathfrak a$ in the blown-up configuration space is non-degenerate if and only if $\Hess^{\sigma}_{\mathfrak q,\mathfrak a}$ is surjective.	
\end{lem}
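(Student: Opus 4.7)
The plan is to unpack the transversality condition in the definition of nondegeneracy using the block decomposition of $\mathcal D_{\mathfrak a}(\grad \pertL)^{\sigma}$ recorded immediately above the lemma, and to observe that at a critical point only the Hessian block contributes.

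First, I would note that, by definition, $\mathfrak a$ is nondegenerate if and only if the section $(\grad \pertL)^{\sigma}$ of $\mathcal T^{\sigma}_{k-1}$ is transverse to the subbundle $\mathcal J^{\sigma}_{k-1}$ at the zero $\mathfrak a$. Since $\mathcal T^{\sigma}_{k-1} = \mathcal J^{\sigma}_{k-1} \oplus \mathcal K^{\sigma}_{k-1}$, transversality to $\mathcal J^{\sigma}_{k-1}$ at a zero of the section is equivalent to surjectivity of the $\mathcal K^{\sigma}_{k-1}$-component of the derivative, namely
\[
\Pi_{\mathcal K^{\sigma}_{k-1}} \circ \mathcal D_{\mathfrak a}(\grad \pertL)^{\sigma} : \mathcal T^{\sigma}_{k,\mathfrak a} \longrightarrow \mathcal K^{\sigma}_{k-1,\mathfrak a}.
\]

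Next, decomposing the domain as $\mathcal T^{\sigma}_{k,\mathfrak a} = \mathcal J^{\sigma}_{k,\mathfrak a} \oplus \mathcal K^{\sigma}_{k,\mathfrak a}$ and reading off the block shape of $\mathcal D_{\mathfrak a}(\grad \pertL)^{\sigma}$ displayed just above the lemma, the composition sends $(j,k) \mapsto y(j) + \Hess^{\sigma}_{\mathfrak q, \mathfrak a}(k)$. The surrounding discussion already asserts that $y$ vanishes at critical points, so at $\mathfrak a$ the map reduces to $(j,k) \mapsto \Hess^{\sigma}_{\mathfrak q, \mathfrak a}(k)$. Hence surjectivity of the $\mathcal K^{\sigma}_{k-1}$-component is equivalent to surjectivity of $\Hess^{\sigma}_{\mathfrak q, \mathfrak a} : \mathcal K^{\sigma}_{k,\mathfrak a} \to \mathcal K^{\sigma}_{k-1,\mathfrak a}$, which is the conclusion of the lemma.

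The only substantive input is the vanishing of the off-diagonal block $y$ at a critical point. Since $(\grad \pertL)^{\sigma}$ is itself a section of the subbundle $\mathcal K^{\sigma}_{k-1} \subset \mathcal T^{\sigma}_{k-1}$, at a zero its intrinsic derivative maps the tangent space into $\mathcal K^{\sigma}_{k-1, \mathfrak a}$, which forces both the $x$ and $y$ off-diagonal blocks to vanish there. I expect no real obstacle: once the block decomposition and the vanishing of $y$ at critical points are in hand, the lemma is a formal consequence.
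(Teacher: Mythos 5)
Your overall structure is right — unpacking transversality to $\mathcal J^{\sigma}_{k-1}$ as surjectivity of the $\mathcal K^{\sigma}_{k-1}$-component of the derivative, reading that component off the block decomposition as $(j,k)\mapsto yj + \Hess^{\sigma}_{\mathfrak q,\mathfrak a}k$, and reducing to surjectivity of $\Hess^{\sigma}_{\mathfrak q,\mathfrak a}$ once $y = 0$. That is exactly the paper's ``it follows that.'' However, your justification for the vanishing of $y$ is wrong. The fact that $(\grad\pertL)^{\sigma}$ is a section of the subbundle $\mathcal K^{\sigma}_{k-1}$ constrains the \emph{image} of the intrinsic derivative at a zero: it lands in $\mathcal K^{\sigma}_{k-1,\mathfrak a}$, so the $\mathcal J^{\sigma}_{k-1}$-component of the derivative vanishes. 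In the block matrix $\begin{bmatrix}0 & x\\ y & \Hess^{\sigma}\end{bmatrix}$ with rows indexed by the codomain $\mathcal J^{\sigma}_{k-1}\oplus\mathcal K^{\sigma}_{k-1}$, this kills the \emph{top row}, i.e.\ the $0$ and $x$. It says nothing about $y$, which lives in the bottom row and is precisely the term you must kill to prove the lemma.

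The correct reason that $y$ vanishes at a critical point is gauge equivariance, not the $\mathcal K$-valuedness of the section. Since $(\grad\pertL)^{\sigma}$ is $\mathcal G(Y,\upiota)$-equivariant and vanishes at $\mathfrak a$, it vanishes identically along the entire gauge orbit of $\mathfrak a$. Differentiating this identity along the orbit shows that $\mathcal D_{\mathfrak a}(\grad\pertL)^{\sigma}$ annihilates $\mathcal J^{\sigma}_{k,\mathfrak a}$, i.e.\ the first \emph{column} of the block matrix vanishes, giving $y=0$ (and the top-left $0$). This is the substantive input you wanted; without it your argument only establishes $x=0$, which does not suffice to conclude. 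Note also that in the blown-up setting you cannot fall back on symmetry (as one might in the non-blown-up case, where $y = x^*$), because $\Hess^{\sigma}_{\mathfrak q,\mathfrak a}$ is not symmetric — so the gauge-equivariance argument for $y$ is genuinely independent of the subbundle argument for $x$.
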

Finally, we introduce the blown-up version of extended Hessian, which will appear in the later section on moduli spaces of trajectories:
\begin{equation}
\label{eqn:extendedHessianTL}
	\widehat{\Hess}_{\mathfrak q, \mathfrak a}^{\sigma}: \mathcal T^{\sigma}_{k,\alpha} \oplus L^2_k(Y;i\reals)^{-\upiota^*}
	\to 
	\mathcal T^{\sigma}_{k-1,\alpha} \oplus L^2_{k-1}(Y; i\reals)^{-\upiota^*}.
\end{equation}
To this end, we must first introduce the operator
\begin{equation*}
	\boldsymbol{d}^{\sigma,\dag}_{\mathfrak a}: \mathcal T^{\sigma}_{k,\alpha} \to L^2_{k-1}(Y;i\reals)^{-\upiota^*}
\end{equation*}
given by $\mathfrak a = (B_0, s_0, \psi_0)$ by
\begin{equation*}
	(b,s,\psi) \mapsto -d^*b + is_0^2 \Re \langle i\psi_0, \psi \rangle + i|\psi_0|^2 \Re (\mu_Y \langle i\psi_0,\psi \rangle).
\end{equation*}
whose kernel is $\mathcal K^{\sigma}_{\mathfrak a}$.
In the decomposition of \eqref{eqn:extendedHessianTL}, the Hessian has the matrix form
\begin{equation*}
	\boldsymbol{d}^{\sigma,\dag}_{\mathfrak a} =
	\begin{bmatrix}
		\mathcal D_{\mathfrak a}(\grad \pertL)^{\sigma} & \boldsymbol{d}^{\sigma}_{\mathfrak a}\\
		\boldsymbol{d}^{\sigma,\dag}_{\mathfrak a}& 0
	\end{bmatrix}.
\end{equation*}
Decompose the tangent bundle $\mathcal T^{\sigma}_{j,\alpha} = \mathcal J^{\sigma}_{j,\alpha} \oplus \mathcal K^{\sigma}_{j,\alpha} $, we have

\begin{equation*}
	\widehat{\Hess}_{\mathfrak q, \alpha}^{\sigma} =
	\begin{bmatrix}
		0 & x & \mathbf{d}^{\sigma}_{\alpha} \\
		y & \Hess^{\sigma}_{\mathfrak q,\alpha} &0\\
		\mathbf{d}^{\sigma,\dag}_{\alpha} & 0 & 0	
	\end{bmatrix},
\end{equation*}
where $\Hess^{\sigma}_{\mathfrak q,\mathfrak a}$ is the Hessian on $\mathcal K^{\sigma}_{j,\mathfrak a}$.
In particular, the entries $x,y$ both vanish at critical points.
The extended Hessian is not symmetric operator unlike its blown-down counterpart; however, it still has real spectrum as the following lemma suggests.
\begin{lem}
	\label{lem:ext_Hess_real_eigen}
	If $\mathfrak b$ is non-degenerate critical point in the blown-up configuration space, then the extended Hessian $\widehat{\Hess}_{\mathfrak q, \mathfrak b}^{\sigma}$ is invertible and has real spectrum.
	 In particular, it is hyperbolic.	
\end{lem}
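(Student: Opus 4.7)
The plan is to follow the template of \cite[Lemma~12.4.4]{KMbook2007}, checking that every ingredient survives the passage to $\uptau$-invariant subspaces. First I would exploit the fact that at a critical point the off-diagonal entries $x$ and $y$ in the displayed matrix form of $\widehat{\Hess}^{\sigma}_{\mathfrak q,\mathfrak b}$ vanish, reducing it to the form
\[
\widehat{\Hess}^{\sigma}_{\mathfrak q, \mathfrak b} =
\begin{bmatrix}
0 & 0 & \mathbf{d}^{\sigma}_{\mathfrak b} \\
0 & \Hess^{\sigma}_{\mathfrak q,\mathfrak b} & 0 \\
\mathbf{d}^{\sigma,\dag}_{\mathfrak b} & 0 & 0
\end{bmatrix},
\]
so invertibility decouples into invertibility of $\Hess^{\sigma}_{\mathfrak q,\mathfrak b}:\mathcal K^{\sigma}_{k,\mathfrak b}\to \mathcal K^{\sigma}_{k-1,\mathfrak b}$ and invertibility of the ``gauge block'' $(\mathbf d^{\sigma}_{\mathfrak b},\mathbf d^{\sigma,\dag}_{\mathfrak b})$. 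The non-degeneracy hypothesis gives surjectivity of the first, and the $k$-\textsc{asafoe} structure (argued just as in Proposition~\ref{prop:Hess_op}, with $\uptau$-invariance built into Definition~\ref{defn:kASAFOE}) makes it Fredholm of index zero, hence bijective. The gauge block is invertible by a standard $\uptau$-invariant Hodge decomposition for the Coulomb gauge-fixing condition defining $\mathcal K^{\sigma}_{k,\mathfrak b}$.

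For real spectrum I would split on whether $\mathfrak b=(B,r,\psi)$ is irreducible or reducible. In the irreducible case $(r\ne 0)$, the blow-down map is a local diffeomorphism at $\mathfrak b$ and conjugates $\widehat{\Hess}^{\sigma}_{\mathfrak q,\mathfrak b}$ to the non-blown-up extended Hessian $\widehat{\Hess}_{\mathfrak q,\pi(\mathfrak b)}$, which is symmetric by the $\uptau$-equivariant version of the argument in Proposition~\ref{prop:Hess_op}; conjugate operators share spectra, so reality is immediate. In the reducible case $(r=0)$, one uses the finer decomposition of $\mathcal K^{\sigma}_{k,\mathfrak b}$ into a form part, the normal $\reals$-direction, and the real $L^2$-orthogonal complement $\psi^{\perp}\subset L^2_k(Y;S)^{\uptau}$. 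Inspection of the matrix computed in the proof of Proposition~\ref{prop:interchar} shows the extended Hessian is block-triangular in this splitting, with diagonal pieces (i) the reducible extended Hessian $\widehat{\Hess}^{\red}_{\mathfrak q,B}$ on forms, which is symmetric; (ii) multiplication by the eigenvalue $\lambda$ on $\reals$; and (iii) $D_{\mathfrak q,B}-\lambda$ restricted to $\psi^{\perp}$. Since $\mathfrak q$ is the formal $L^2$-gradient of a function, the spinorial part $\mathcal D_{(B,0)}\mathfrak q^1(0,\cdot)$ is symmetric, so $D_{\mathfrak q,B}$ is a symmetric $k$-\textsc{asafoe} operator with real eigenvalues by Lemma~\ref{lem:spectra_kASAFOE}(ii). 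Taking the union of the three real spectra, $\widehat{\Hess}^{\sigma}_{\mathfrak q,\mathfrak b}$ has real spectrum.

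Hyperbolicity is then automatic: invertibility gives $0\notin\operatorname{spec}$, and reality rules out the rest of the imaginary axis, so the spectrum avoids $i\reals$ entirely. The main obstacle I anticipate is verifying cleanly in the reducible case that the ``non-orthogonal'' couplings coming from the fact that $\mathcal J^{\sigma}_j\oplus \mathcal K^{\sigma}_j$ is not an orthogonal splitting only produce strictly triangular entries with respect to the form/scalar/spinor refinement above, so that they do not contribute to the spectrum. This is essentially a bookkeeping computation using the explicit formula for $\mathbf d^{\sigma,\dag}_{\mathfrak b}$ displayed in Section~\ref{subsec:Hessians} together with the fact that $\psi$ is an eigenvector of $D_{\mathfrak q,B}$; once one checks that the rank-one correction enforcing $\Re\langle\psi_0,\cdot\rangle_{L^2}=0$ interacts with $D_{\mathfrak q,B}-\lambda$ symmetrically on $\psi^{\perp}$, the argument concludes.
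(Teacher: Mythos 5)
Your overall architecture matches the paper's proof (which defers to the argument of [KM, Lemma 12.4.2]): at a critical point the off-diagonal terms $x,y$ vanish, the operator decouples along $\mathcal K^{\sigma}\oplus(\mathcal J^{\sigma}\oplus L^2(Y;i\reals))$, the irreducible case follows by conjugation through the blow-down to the symmetric non-blown-up Hessian, and the reducible case follows from a block lower-triangular structure with symmetric diagonal blocks. Two points in your write-up, however, need sharpening.

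First, the claim that the ``$k$-\textsc{asafoe} structure (argued just as in Proposition~\ref{prop:Hess_op})'' makes $\Hess^{\sigma}_{\mathfrak q,\mathfrak b}$ Fredholm of index zero does not quite follow, because Proposition~\ref{prop:Hess_op} concerns the non-blown-up extended Hessian, where $\mathcal T_j$ is a genuine trivial bundle. The blown-up tangent bundle $\mathcal T^{\sigma}_{j}$ carries the pointwise constraint $\Re\langle\phi_0,\phi\rangle_{L^2}=0$ and is \emph{not} sections of a fixed bundle, so the \textsc{asafoe} framework (Definition~\ref{defn:kASAFOE}) does not apply verbatim. The paper's central technical move — which you omit — is the coordinate change
\begin{equation*}
\boldsymbol{\psi} = \psi + r\psi_0,
\end{equation*}
which converts the constrained triple $(b,\psi,r)$ into an unconstrained $\uptau$-invariant section of $i\T^*Y\oplus S\oplus i\underline{\reals}$, whereupon $\widehat{\Hess}^{\sigma}_{\mathfrak q,\mathfrak b}$ becomes $L_0+h$ with $L_0$ self-adjoint elliptic and $h$ bounded, and only then does Lemma~\ref{lem:spectra_kASAFOE} give discreteness, Fredholmness, and the spectral facts you invoke. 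Without this change of variables your Fredholm-index argument for the $\mathcal K^{\sigma}$-block has no support.

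Second, the subtle point you flag at the end — how the rank-one projection enforcing $\Re\langle\psi_0,\cdot\rangle_{L^2}=0$ interacts with $D_{\mathfrak q,B}-\lambda$ — is not where the difficulty lies in the real setting. In the Kronheimer--Mrowka argument there is a step using $S^1$-invariance to place some vector orthogonal to $i\psi_0$; in the present $\uptau$-invariant situation this simply evaporates, because $\uptau(i\psi_0)=-i\psi_0$, so $i\psi_0\notin L^2(Y;S)^{\uptau}$ and the relevant vector is automatically absent from the domain. Acknowledging this (rather than treating it as an open bookkeeping concern) would complete the reducible case cleanly. With the coordinate change added and this remark in place, your proof closes.
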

\begin{proof}
	The proof of \cite[Lemma~12.4.2]{KMbook2007} applies. 
	We give the main idea of this technical lemma, as a similar trick will appear in the 4-dimensional operator over cylinders.
	The conclusion will eventually follow from Lemma~\ref{lem:spectra_kASAFOE}, but the technical complication arises from the fact that $\mathcal T^{\sigma}_{j,\sigma}$ is not a trivial vector bundle.
	Let $(b,\psi, r)$ be the fibre of $\mathcal T^{\sigma}_{j}$ over a configuration $\mathfrak a = (B_0, r_0, \psi_0)$.
	While neither $\psi$ (as it is required to be orthogonal to $\psi_0$) nor $r$ is a unconstrained section of a vector bundle, the combination
	\begin{equation*}
		\boldsymbol{\psi} = \psi + r\psi_0
	\end{equation*}
	is an unconstrained $\uptau$-invariant section of the spinor bundle $S$.
	Hence we rewrite the formulae (see \cite[Section~12.4]{KMbook2007}) in the new coordinate
	\begin{equation*}
		(b,\mathbf \psi, c) \in l^2_j(Y;i\T^*Y \oplus S \oplus i\reals)^{\uptau}.
	\end{equation*}
	At the end, the extended Hessian takes the form
	\begin{equation*}
		\widehat{\Hess}_{\mathfrak q, \mathfrak a}^{\sigma} (b,\mathbf{\psi},c) = L_0(b,\mathbf{\psi},c) + h_{\mathfrak a}(b, \mathbf{\psi}, c),
	\end{equation*}
	where $L_0$
	is elliptic and self-adjoint, and $h_{\mathfrak a}$ satisfies the definition of $k$-\textsc{asafoe}.
	
	In the irreducible case, the real spectrum follows from the extended Hessian being conjugate to the Hessian in $\mathcal K_{j,\mathfrak a}$ via the projection map.
	In the reducible case, closer examination of the formula shows that the operator has a lower triangular block whose diagonal blocks are symmetric.
	As a side note, at some point in the argument \cite{KMbook2007} use $S^1$-invariance to deduce that some vector is orthogonal to $i\psi_0$; while we do not have $S^1$-invariance, the vector $i\psi_0$ is not an invariant spinor if $\psi_0$ is $\uptau$-invariant.
\end{proof}

\subsection{Proof of transversality}
\hfill \break
In this subsection we prove the transversality for irreducible and reducible critical points. 
Both cases will be based on the following Lemma \cite[Lemma~12.5.1]{KMbook2007}.
\begin{lem}
\label{lem:fredMorseSmale}
	Let $\mathcal E$ and $\mathcal F$ be separable Banach manifolds, and let $\mathcal S \subset \mathcal F$ be a closed submanifold.
	Let 
	\begin{equation*}
		F:\mathcal E \times \mathcal P \to \mathcal F
	\end{equation*}	
	be a smooth map, and write $F_p = F(-,p)$.
	Suppose that $F$ is transverse to $\mathcal S$, and that for all $(e,p)$ in $F^{-1}_p(\mathcal S)$, the composite 
	\begin{equation*}
		\T_e\mathcal E \xrightarrow{\mathcal D_e F_p} \T_f \mathcal F \xrightarrow{\pi} \T_f\mathcal F/\T_f \mathcal S
	\end{equation*}
	is Fredholm.
	Then there is a residual set of $p$ in $\mathcal P$ for which the map $F_p:\mathcal E \to \mathcal F$ is transverse to $\mathcal S$. 
	\hfill \qedsymbol
\end{lem}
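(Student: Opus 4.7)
The plan is to deduce this from the Sard--Smale theorem applied to a natural projection. First, since $F$ is assumed to be transverse to $\mathcal S$, the preimage $\mathcal M = F^{-1}(\mathcal S) \subset \mathcal E \times \mathcal P$ is a closed Banach submanifold whose tangent space at $(e,p)$ is
\begin{equation*}
	\T_{(e,p)} \mathcal M = \{(v,w) \in \T_e \mathcal E \oplus \T_p \mathcal P \mid \mathcal D_{(e,p)} F(v,w) \in \T_{F(e,p)} \mathcal S \}.
\end{equation*}
Let $\pi_{\mathcal P}: \mathcal M \to \mathcal P$ denote the restriction of the projection to the second factor. The key observation is that $p \in \mathcal P$ is a regular value of $\pi_{\mathcal P}$ if and only if the map $F_p$ is transverse to $\mathcal S$ along $F_p^{-1}(\mathcal S)$: given $(e,p) \in \pi_{\mathcal P}^{-1}(p)$, a tangent vector $v \in \T_e \mathcal E$ lifts to an element $(v,0) \in \T_{(e,p)} \mathcal M$ precisely when $\mathcal D_e F_p(v) \in \T_{F_p(e)} \mathcal S$, and surjectivity of $\mathcal D_{(e,p)} \pi_{\mathcal P}$ translates directly into surjectivity of the composition $\pi \circ \mathcal D_e F_p$.

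Next I would show that $\mathcal D_{(e,p)} \pi_{\mathcal P}$ is Fredholm with index equal to the index of the composite $\pi \circ \mathcal D_e F_p$. This follows from a diagram chase: the kernel of $\mathcal D_{(e,p)}\pi_{\mathcal P}$ is isomorphic to $\ker(\pi \circ \mathcal D_e F_p)$, and one identifies the cokernels using the transversality of $F$ itself, which guarantees that $\mathcal D_{(e,p)} F$ surjects onto $\T_{F(e,p)} \mathcal F / \T_{F(e,p)} \mathcal S$. A short algebraic argument then gives a canonical isomorphism
\begin{equation*}
	\coker \mathcal D_{(e,p)} \pi_{\mathcal P} \cong \coker(\pi \circ \mathcal D_e F_p),
\end{equation*}
so the Fredholm hypothesis on the composite transfers to $\pi_{\mathcal P}$.

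Having established that $\pi_{\mathcal P}: \mathcal M \to \mathcal P$ is a smooth Fredholm map between separable Banach manifolds, I apply the Sard--Smale theorem (the infinite-dimensional Sard theorem for Fredholm maps) to conclude that the set of regular values of $\pi_{\mathcal P}$ is residual in $\mathcal P$. Translating back via the equivalence in the first paragraph, the set of $p$ for which $F_p$ is transverse to $\mathcal S$ is residual.

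The main obstacle is purely bookkeeping: one must verify that $\mathcal M$ is actually a smooth Banach submanifold (invoking the implicit function theorem with the transversality hypothesis on $F$) and that the identification of cokernels is natural enough to conclude the Fredholm property of $\pi_{\mathcal P}$. Both are standard but require care, especially since $\mathcal F$ and $\mathcal S$ are infinite-dimensional and the quotient $\T_f \mathcal F / \T_f \mathcal S$ need not be finite-dimensional in general; the Fredholm hypothesis on the composite is precisely what makes Sard--Smale applicable.
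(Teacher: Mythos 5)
Your proof is correct and is the standard Sard--Smale parametric transversality argument: realize the universal solution set $\mathcal M = F^{-1}(\mathcal S)$ as a Banach submanifold via transversality of $F$, verify that the projection $\pi_{\mathcal P}\colon \mathcal M \to \mathcal P$ is Fredholm with the same index as $\pi\circ\mathcal D_e F_p$ (using the identifications of kernel and cokernel you describe), apply Sard--Smale, and observe that regular values of $\pi_{\mathcal P}$ are exactly the parameters $p$ for which $F_p$ is transverse to $\mathcal S$. The paper omits the proof and cites \cite[Lemma~12.5.1]{KMbook2007}, which proves the lemma by precisely this route, so your approach matches the intended one.
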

\subsubsection*{(The irreducible case)}
Before applying the lemma, note that by first part of Proposition~\ref{prop:interchar}, it suffices to consider the vector field $\grad \pertL$ on $\mathcal C^*_k$.
Set $\mathcal P$ to be large Banach space of perturbations.
Let the map
\begin{equation*}
	\mathfrak g: \mathcal C^*_k \times \mathcal P 
	\to \mathcal K_{k-1}
\end{equation*}
be defined by
\begin{align*}
	\mathfrak g(\alpha,\mathcal P) 
	&= \grad \pertL (\alpha)\\
	&= \grad \mathcal L + \mathfrak q(\alpha).
\end{align*}
This is a smooth map between Banach manifolds, which plays the role of $F$ in Lemma~\ref{lem:fredMorseSmale}.
The zero set of $\mathfrak g$ are critical points parametrized by perturbations, we have the following lemma.
\begin{lem}
\label{lem:g_transverse}
	The map $\mathfrak g$ is transverse to zero section of $\mathcal K_{k-1}$.
\end{lem}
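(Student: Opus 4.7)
My plan is to verify surjectivity of the linearization $D\mathfrak g$ at each pair $(\alpha, \lambda_0) \in \mathfrak g^{-1}(0)$, since this is equivalent to transversality to the zero section. At such a pair, $\alpha$ is a critical point of the perturbed functional, so the partial derivative in the $\alpha$-direction, after projecting from $\mathcal T_{k-1}$ onto the fibre $\mathcal K_{k-1,\alpha}$, agrees with the Hessian $\Hess_{\mathfrak q_{\lambda_0},\alpha}$ of Subsection~\ref{subsec:Hessians} (the off-diagonal entries $x, x^*$ in the matrix form from Proposition~\ref{prop:Hess_op} vanish at critical points). By that same proposition, the Hessian is a symmetric Fredholm operator of index zero, so the image of $D_\alpha \mathfrak g$ is closed with finite-dimensional cokernel, and the transversality problem reduces to the non-vanishing of a finite-dimensional obstruction.

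Next, I will exploit the perturbation direction. Since $\mathfrak D:\mathcal P \to C^0(\mathcal C(Y,\uptau),\mathcal T_0)$ is linear, the derivative in the $\lambda$-direction at $(\alpha,\lambda_0)$ is simply the evaluation $\mu \mapsto \mathfrak q_\mu(\alpha)$, which lands in $\mathcal K_{k-1,\alpha}$ because each $\mathfrak q_\mu$ is the formal gradient of a $\mathcal G(Y,\upiota)$-invariant function and is therefore $L^2$-orthogonal to the gauge-orbit directions $\mathcal J_{k-1,\alpha}$. Suppose now that $v \in \mathcal K_{k-1,\alpha}$ is $L^2$-orthogonal to the image of the full differential $D\mathfrak g$. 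Orthogonality to $D_\alpha \mathfrak g$ together with symmetry of the Hessian forces $v \in \ker \Hess_{\mathfrak q_{\lambda_0},\alpha}$, and elliptic regularity for the extended Hessian promotes $v$ to a smooth, $\uptau$-invariant representative that descends to a non-zero tangent vector to $\mathcal B^*_k(Y,\uptau)$ at $[\alpha]$. Orthogonality to $D_\lambda \mathfrak g$ then reads
\[
\langle v, \mathfrak q_\mu(\alpha)\rangle_{L^2} = 0 \quad \text{for every } \mu \in \mathcal P.
\]

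To derive a contradiction, I will invoke Corollary~\ref{cor:separatetangent}, which produces a $\uptau$-equivariant cylinder function $f$ with $\mathcal D_{[\alpha]} f(v) = \langle \grad f(\alpha), v\rangle_{L^2} \ne 0$. By the construction of the large Banach space in Theorem~\ref{thm:largebanachspace}, the gradient $\grad f$ either lies in the image of $\mathfrak D$ directly or is realized as a $C^0$-limit of perturbations $\mathfrak q_{\mu_i}$ in the image of $\mathfrak D$, in a topology strong enough to preserve the non-vanishing of the $L^2$-pairing with the fixed smooth element $v$. This contradicts the displayed orthogonality, so $v = 0$ and $D\mathfrak g$ is surjective, which establishes the claim.

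The main obstacle I anticipate is checking that Corollary~\ref{cor:separatetangent}, built from $\uptau$-invariant coclosed one-forms $c_i$ and $\uptau$-invariant sections $\Upsilon_\mu$ of $\mathbb S$, genuinely separates \emph{every} non-zero tangent vector to the real quotient $\mathcal B^*_k(Y,\uptau)$ and not merely those aligned with $\upiota$-symmetrized directions. The key point is that the anti-linearity of $\uptau$ turns the restriction of the usual hermitian pairing on the spinor bundle into an honest real $L^2$-inner product on $\Gamma(S)^\uptau$, so the separation arguments of Theorem~\ref{thm:emb} go through verbatim in the invariant setting; once this is granted, the remainder of the argument is formally identical to the ordinary case in \cite[Lemma~12.5.2]{KMbook2007}.
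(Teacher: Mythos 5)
Your proof takes essentially the same approach as the paper's own argument: show surjectivity of the linearization by observing the $\alpha$-derivative at a critical point is the symmetric, index-zero Fredholm operator $\Hess_{\mathfrak q,\alpha}$, reduce the obstruction to an element $v$ in its kernel, and then use Corollary~\ref{cor:separatetangent} together with the density property of the large Banach space to produce a perturbation direction $\delta\mathfrak q$ with $\langle \delta\mathfrak q(\alpha), v\rangle_{L^2} \ne 0$. Your extra paragraph addressing whether the $\uptau$-invariant cylinder functions separate all tangent vectors to $\mathcal B^*_k(Y,\uptau)$ is a reasonable concern, and your resolution — that anti-linearity of $\uptau$ gives an honest real $L^2$-inner product on $\Gamma(S)^\uptau$, so the separation arguments of Theorem~\ref{thm:emb} carry over — is exactly what the paper relies on.
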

\begin{proof}
	The proof is identical to the proof of \cite[Lemma~12.5.2]{KMbook2007}.
The main idea is to show surjectivity of the map
	\begin{align*}
		\mathcal K_{k,\alpha} \times \T_{\mathfrak q}\mathcal P 
		&\to \mathcal K_{k-1,\alpha},\\
		((b,\psi),\delta\mathfrak q)
		&\mapsto \Hess_{\mathfrak q,\alpha}(b,\psi) + \delta \mathfrak q(\alpha).
	\end{align*}
This can be reduced to the question whether any non-zero $v$ in the kernel of $\Hess_{\mathfrak q,\alpha}$ there exists a $\delta \mathfrak q \in \mathcal P$ such that the $L^2$-inner product of $\delta\mathfrak q(\alpha)$ and $v$ is non-zero.
But the existence of such $\delta \mathfrak q$ is guaranteed by Corollary~\ref{cor:separatetangent}, together with the density condition of Definition~\ref{defn:largebanachspaceperturb}.
\end{proof}
From Lemma~\ref{lem:g_transverse} and implicit function theorem, we learn that the quotient
\begin{equation*}
	\mathcal Z := \mathfrak g^{-1}(0)/\mathcal G_{k+1}
	\subset
	\mathcal B^*_k(Y) \times \mathcal P
\end{equation*}
is a Banach manifold.
To conclude the proof, we apply Lemma~\ref{lem:fredMorseSmale} to
\def\arraystretch{1.5}%
\begin{center}
\begin{tabular}{ |c|c|c|c|c|} 
 \hline
 $F$ & $\mathcal E$ & $\mathcal P$ & $\mathcal F$   & $\mathcal S$ 
 \\  \hline
 $\mathfrak g$ & $\mathcal B^*_k$  & $\mathcal P$ & $\mathcal K_{k-1}$ &  zero-section of $\mathcal K_{k-1}$
 \\ \hline
\end{tabular}
\end{center}
where we observe that transversality of $\mathfrak g_p$ is equivalent to transversality of the critical point.
\subsubsection*{(The reducible case)}
Arguments in the irreducible case show that the map
\begin{equation*}
	\mathfrak g^{\red}: \mathcal A_k \times \mathcal P \to \mathcal K^{\red}_{k-1}
\end{equation*}
given by
\begin{equation*}
	\mathfrak g^{\red}(B,\mathfrak q) = (\grad \mathcal L^{\red})(B) + \mathfrak q(B)
\end{equation*}
is transverse and the quotient
\begin{equation*}
	\mathcal Z^{\red} = (\mathfrak g^{\red})^{-1}(0)/\mathcal G_{k+1}
	\subset (\mathcal A_k/\mathcal G_{k+1}) \times \mathcal P
\end{equation*}
is a smooth Banach manifold.
Moreover, by the earlier arguments we can
achieves Condition~(ii)(a) of Proposition~\ref{prop:interchar}.
To proceed, we must alter the Hessian of the perturbed gradient in the directions normal to the reducibles.

Denote as
\begin{equation*}
	\mathcal P^{\perp} \subset \mathcal P
\end{equation*}
the perturbations $\mathfrak q$ that vanish at the reducible locus in $\mathcal C_k(Y)$.
Let $\text{Op}^{\text{sa}}$ be the the space of self-adjoint Fredholm maps $L^2_k(Y;S)^{\uptau} \to L^2_{k-1}(Y;S)^{\uptau}$ having the form $D_{B_0}+h$ where $B_0$ is a smooth spin\textsuperscript{c} connection and $h$ is a self-adjoint operator that extends to a bounded operator $h:L^{2,\uptau}_j \to L^{2,\uptau}_j$.
This Banach space is stratified according to the dimension of the kernel, and the set of operators whose spectrum is not simple is a countable union of Fredholm maps $F_n$ of negative index. 
For definition of $F_n$, see the discussion in \cite[Section~12.6]{KMbook2007}.

Once $\mathcal P^{\perp}$ and  $\text{Op}^{\text{sa}}$ are defined, we consider the map 
\begin{align*}
	M: \mathcal A_k \times \mathcal P^{\perp}
	&\to \text{Op}^{\text{sa}}\\
	(B,\mathfrak q^{\perp}) &\mapsto
	D_{\mathfrak q^{\perp},B}.
\end{align*}
\begin{lem}
	The map $M$ is transverse to the stratification of self-adjoint operators according to the dimension of the kernel, and also to the Fredholm maps $F_n$.	
\end{lem}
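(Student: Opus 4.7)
The plan is to verify transversality stratum-by-stratum, reducing everything at each point to surjectivity of the derivative of $M$ modulo the (co)normal directions of the stratum, and then to produce the required perturbations directly from the cylinder-function construction.

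First, I would compute the linearization. Writing $\mathfrak a=(B,\mathfrak q^{\perp})$, the derivative of $M$ at $\mathfrak a$ in the direction $(b,\delta\mathfrak q)\in L^2_k(Y;i\T^*Y)^{-\upiota^*}\oplus \T_{\mathfrak q^{\perp}}\mathcal P^{\perp}$ is
\[
	(\mathcal D_{\mathfrak a} M)(b,\delta\mathfrak q)\cdot \psi
	= \tfrac{1}{2}\rho(b)\psi + \mathcal D_{(B,0)}(\delta\mathfrak q)^{1}(0,\psi),
\]
which is a symmetric zeroth-order operator on $L^2(Y;S)^{\uptau}$ (symmetry in the second term follows from $\delta\mathfrak q$ being a formal gradient, symmetry in the first from $\rho(b)^*=-\rho(b)$ and the anti-linearity of $\uptau$). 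Next, I would recall that near a point $L_0\in\mathrm{Op}^{\mathrm{sa}}$ with $\dim\ker L_0 = n$, the stratum of operators of kernel-dimension $\ge n$ is smooth with conormal $N_{L_0}\cong\mathrm{Sym}(\ker L_0)$, and the projection of a variation $H$ to the conormal is the bilinear form $(\phi_i,\phi_j)\mapsto\langle H\phi_i,\phi_j\rangle_{L^2(Y)}$ on a basis of $\ker L_0$. The transversality condition therefore reduces to the claim: for every symmetric bilinear form $K$ on $\ker D_{\mathfrak q^{\perp},B}\subset L^2(Y;S)^{\uptau}$, there exists $\delta\mathfrak q\in\T_{\mathfrak q^{\perp}}\mathcal P^{\perp}$ such that
\[
	\langle \mathcal D_{(B,0)}(\delta\mathfrak q)^{1}(0,\phi_i),\phi_j\rangle_{L^2(Y)}=K(\phi_i,\phi_j)
\]
for all basis vectors $\phi_i,\phi_j$ of $\ker D_{\mathfrak q^{\perp},B}$.

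The main step, and the only one where the real structure matters, is producing this $\delta\mathfrak q$. I would use the quadratic cylinder functions $q_{\Upsilon}$ built from $\uptau$-invariant sections $\Upsilon$ of the bundle $\mathbb S\to\mathbb T\times Y$: their gradients contribute to $(\delta\mathfrak q)^1(0,\phi)$ a term of the form $\Upsilon^{\dag}_{B}$ paired appropriately with $\phi$, and by combining such quadratic forms one obtains (after symmetrization)
\[
	\mathcal D_{(B,0)}(\delta\mathfrak q)^{1}(0,\phi)\;\sim\;\sum_{\alpha,\beta}c_{\alpha\beta}\,\langle \Upsilon_{\beta,B},\phi\rangle_{L^2(Y)}\,\Upsilon_{\alpha,B}
\]
with symmetric matrix $(c_{\alpha\beta})$. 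Choosing a finite collection of $\uptau$-invariant $\Upsilon_\alpha$ whose restrictions to a neighbourhood of the zero set of $B$ give sections $L^2$-paired with the kernel basis $\{\phi_i\}$ to an arbitrary nonsingular matrix then allows one to realize any prescribed symmetric $K$. The existence of such $\Upsilon_\alpha$ is exactly the content of the embedding statement Proposition~\ref{thm:emb}, together with Corollary~\ref{cor:separatetangent}: invariant spinor sections separate tangent directions on $\mathcal B^o$, and density of the chosen perturbations inside $\mathcal P^{\perp}$ (Definition~\ref{defn:largebanachspaceperturb}) promotes separation into genuine surjectivity at the linearized level. Vanishing on the reducible locus — a defining feature of $\mathcal P^{\perp}$ — is automatic because the relevant perturbations are quadratic in $\Psi$.

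Finally, for the strata $F_n$ encoding multiplicity of non-zero eigenvalues, I would repeat the same argument at a non-zero eigenvalue $\lambda$: replace $\ker D_{\mathfrak q^{\perp},B}$ by the generalized $\lambda$-eigenspace (which, by the self-adjoint part of Lemma~\ref{lem:spectra_kASAFOE}, is a finite-dimensional $\uptau$-invariant subspace of smooth spinors), and realize any symmetric bilinear form on it by the same $\Upsilon_\alpha$-construction. Since each $F_n$ has negative index and there are countably many such strata, residuality of the good locus in $\mathcal P^{\perp}$ then follows from applying Lemma~\ref{lem:fredMorseSmale} stratum-by-stratum to the composite $(\mathcal A_k/\mathcal G_{k+1})\times\mathcal P^{\perp}\to \mathrm{Op}^{\mathrm{sa}}\to N$ onto the conormal fibre. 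The hard part is exactly the surjectivity of $(\mathcal D_{\mathfrak a} M)$ modulo $T\Sigma$; once that is in hand, the abstract machinery is formal.
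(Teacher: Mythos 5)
Your proposal is correct and follows the same route as the paper, which here simply cites \cite[Lemma~12.6.2]{KMbook2007} and notes the replacements: the reduction to surjectivity onto the conormal $\mathrm{Sym}(\ker D_{\mathfrak q^{\perp},B})$ (or $\mathrm{Sym}$ of the $\lambda$-eigenspace for $F_n$), the realization of an arbitrary symmetric bilinear form via a quadratic expression $\sum c_{\alpha\beta}q_{\Upsilon_\alpha}q_{\Upsilon_\beta}$ in the cylinder coordinates, and the observation that such perturbations automatically lie in $\mathcal P^{\perp}$. One small correction: the separation fact you need is that $\uptau$-invariant sections $\Upsilon$ of $\mathbb S$ give spinors $\Upsilon^{\dag}(B)$ pairing nontrivially against any nonzero $\uptau$-invariant $\psi$ at the reducible $(B,0)$ --- this is the spinor-separation step inside the \emph{proof} of Proposition~\ref{thm:emb}, not Corollary~\ref{cor:separatetangent}, which concerns tangent vectors at irreducible configurations.
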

\begin{proof}
	The proof is the same as \cite[Lemma~12.6.2]{KMbook2007}, upon replacing usual configuration space by $\uptau$-configuration space, and replacing $\mathbb R^n \times \mathbb T \times \mathbb C^m$ by $\mathbb R^n \times \mathbb T \times \mathbb R^m$. And instead of \cite[Proposition~11.2.1]{KMbook2007}, we use Proposition~\ref{thm:emb}.
\end{proof}
We define the ``bad set''
\begin{equation*}
	\mathcal W \subset \mathcal P^{\perp} \times (\mathcal A_k/\mathcal G_{k+1})
\end{equation*}
to be the set of pairs $(\mathfrak q^{\perp},B)$, where the spectrum of $D_{\mathfrak q^{\perp},B}$ either is non-simple of contains zero.
Then $\mathcal W$ is a countable union of Banach submanifolds $\mathcal W_n$, each of which has finite positive codimension.
The arguments in \cite[Lemma~12.6.2]{KMbook2007} imply the submanifolds $\mathcal P^{\perp} \times \mathcal Z^{\red}$ and $\mathcal W_k \times \mathcal P$ of $\mathcal P^{\perp} \times (\mathcal A_k/\mathcal G_{k+1}) \times \mathcal P$ intersect transversely.
Therefore the intersection
\begin{equation*}
	(\mathcal P^{\perp} \times \mathcal Z^{\red}) \cap (\mathcal W_k \times \mathcal P)
\end{equation*}
is a locally finite union of Banach submanifolds of $\mathcal P^{\perp} \times \mathcal Z^{\red}$, from each of which the projection to $\mathcal P^{\perp} \times \mathcal P$ is Fredholm of negative index.
The Sard-Smale theorem guarantees a residual set of pairs $(\mathfrak q^{\perp},\mathfrak q)$ which are regular values of the projection $\mathcal P^{\perp} \times \mathcal Z^{\red} \to \mathcal P^{\perp} \times \mathcal P$ are not in the image of the projection
\begin{equation*}
	(\mathcal P^{\perp} \times \mathcal Z^{\red}) \cap (\mathcal W_k \times \mathcal P) \to \mathcal P^{\perp} \times \mathcal P.
\end{equation*} 
Such a pair $(\mathfrak q^{\perp},\mathfrak q)$ satisfies (ii)(a)-(ii)(b) of Proposition~\ref{prop:interchar}.
The subset $\mathcal P$ arising as $\mathfrak q^{\perp} + \mathfrak q$ is then also residual.

\section{Moduli Space of Trajectories}
\label{sec:modspace_trajectories}
We choose the perturbation $\mathfrak q \in \mathcal P$  once and for all, such that all critical points of $(\grad \pertL)^{\sigma}$ are non-degenerate.
We also fix a real spin\textsuperscript{c} structure $(\mathfrak s,\uptau)$.
\subsection{Two notions of configurations on infinite cylinders}
\hfill \break
In this section, we turn to the infinite cylinder
$Z = \reals \times Y$.
There are two definitions of configurations on $Z$: one modelled on $L^2_{k,\text{loc}}$ and the other on $L^2_k$.
The discussion follows \cite[Section~13]{KMbook2007} closely and we list the necessary definitions for later use.
\subsubsection*{The $L^2_{k,\text{loc}}$-definition}
Let $I \subset \reals$ be an interval, possibly equal to $\reals$, and choose a $\uptau$-invariant spin\textsuperscript{c} connection $A_0$.
The locally $L^2_k$ configuration in $\tau$-model
\begin{equation*}
	\tilde{\mathcal C}^{\tau}_{k,\text{loc}}(I\times Y)
	\subset \mathcal A_{k,\loc}(I \times Y) \times L^2_{k,\loc}(I,\reals) \times L^2_{k,\loc}(I \times Y;S^+)^{\uptau},
\end{equation*}
consists of $L^2_{k,\loc}$ triples $(A,s,\phi)$ where $A$ is a $L^2_{k,\loc}$-connection (i.e $A - A_0$ is $L^2_{k,\loc}$) and $\phi(t)$ has $L^2$-norm 1 on $\{t\}\times Y$ for any $t$.
Define the closed subspace $\mathcal C^{\tau}_{k,\loc} \subset \tilde{\mathcal C}^{\tau}_{k,\loc} $ by the additional constraint $s \ge 0$.
Let $\mathcal G_{k+1,\loc}(I\times Y; \upiota)$ be the gauge group of $L^2_{k+1,\loc}$-maps valued in $S^1 \subset \C$.
The two quotient spaces of gauge equivalence classes of configurations will be denoted by
\begin{align*}
	\mathcal B^{\tau}_{k,\loc}(I\times Y) &= 
	\mathcal C^{\tau}_{k,\loc}(I\times Y) /
	\mathcal G^{\tau}_{k,\loc}(I\times Y), \\
	\tilde{\mathcal B}^{\tau}_{k,\loc}(I\times Y) &=
	\tilde{\mathcal C}^{\tau}_{k,\loc}(I\times Y) /
	\mathcal G^{\tau}_{k,\loc}(I\times Y) .
\end{align*}

The $L^2_{k,\loc}$ version of perturbed Seiberg-Witten map is
\begin{equation*}
	\mathfrak F^{\tau}_{\mathfrak q}:\tilde{\mathcal C}^{\tau}_{k,\loc}(I \times Y) \to
	\mathcal V^{\tau}_{k-1,\loc}(I \times Y),
\end{equation*}
where the fibre $V^{\tau}_{j,\loc}$ at $\upgamma = (A_0,s_0,\phi_0)$ is the subspace
\begin{equation*}
	\mathcal V^{\tau}_{j,\loc} \subset
	L^2_{j,\loc}(I \times Y; i \mathfrak{su}(S^+))^{\uptau}
	\oplus L^2_{j,\loc}(I;\reals) \oplus
	L^2_{j,\loc}(I\times Y; S^-)^{\uptau}
\end{equation*}
consisting of triples $(a,s,\phi)$ with $\Re\langle \check \phi_0(t),\check \phi(t)\rangle_{L^2(Y)} = 0$ for all $t$.

Let $\mathfrak a \in \mathcal C^{\sigma}_k(Y)$ be a critical point of $(\grad \pertL)^{\sigma}$, and write $[\mathfrak a]$ for the image in $\mathcal B^{\sigma}_k(Y)$.
There is a translation-invariant element $\upgamma_{\mathfrak a} \in \mathcal C^{\tau}_{k,\loc}(Z)$ such that $\mathfrak F^{\tau}_{\mathfrak q}(\upgamma_{\mathfrak a}) = 0$.
\begin{defn}	
A configuration $[\upgamma] \in \tilde{\mathcal B}^{\tau}_{k,\loc}(Z)$ is \emph{asymptotic} to $[\mathfrak a]$ as $t \to \pm \infty$ if
\begin{equation*}
	[\tau_t^* \upgamma] \to [\upgamma_{\mathfrak a}] \quad
	\text{in }\tilde{\mathcal B}^{\tau}_{k,\loc}(Z),
\end{equation*}
as $t \to \pm \infty$ and $\tau_t: Z \to Z$ is the map $(s,y) \mapsto (s+t,y)$.
In this case, we write
\begin{equation*}
	\lim_{\leftarrow} [\upgamma] = [\mathfrak a],\quad
	\lim_{\rightarrow} [\upgamma] = [\mathfrak a],
\end{equation*}
if $[\upgamma]$ is asymptotic to $[\mathfrak a]$ as $t \to -\infty$ and $t \to +\infty$, respectively.
\end{defn}
\begin{defn}
	Let $M([\mathfrak a],[\mathfrak b])$ be the space of all configurations $[\upgamma]$ in $\mathcal B^{\tau}_{k,\loc}(Z)$ that are asymptotic to $[\mathfrak a]$ as $t \to -\infty$ and to $[\mathfrak b]$ as $t \to \infty$, and solves the perturbed Seiberg-Witten equations: i.e.
	\begin{equation*}
		M([\mathfrak a],[\mathfrak b]) =
		\{[\upgamma] \in \mathcal B^{\tau}_{k,\loc}(Z)|
		\mathfrak F^{\tau}_{\mathfrak q}(\upgamma) = 0,
		\lim_{\leftarrow} [\upgamma] = [\mathfrak a],
		\lim_{\rightarrow} [\upgamma] = [\mathfrak b]\}.
	\end{equation*}
	$M([\mathfrak a],[\mathfrak b])$ is a \emph{moduli space} of trajectories on $Z = \reals \times Y$ and $\check M([\mathfrak a],[\mathfrak b])$ is defined similarly for $\tilde{\mathcal B}^{\tau}_{k,\loc}(I\times Y)$.
\end{defn}
\begin{rem}
	By regularity of the perturbed Seiberg-Witten equations, any element in $M([\mathfrak a],[\mathfrak b])$ has a smooth representative and there are natural homeomorphic bijections between moduli spaces of different regularities.
	Therefore there is no mentioning of regularity in the notation $M([\mathfrak a],[\mathfrak b])$.
\end{rem}
We decompose $M([\mathfrak a],[\mathfrak b])$ as
\begin{equation*}
	M([\mathfrak a],[\mathfrak b]) = 
	\bigcup_z M_z([\mathfrak a],[\mathfrak b])
\end{equation*} 
according to the relative homotopy classes of paths
\begin{equation*}
	z \in \pi_1(\mathcal B^{\sigma}_k(Y),[\mathfrak a],[\mathfrak b]),
\end{equation*}
which is an affine space on $H^1(Y;i\mathbb Z)^{-\upiota^*}$.
\subsubsection*{The $L^2_{k}$-definition}
Given two critical points $[\mathfrak a], [\mathfrak b] \in \mathcal B^{\sigma}_k$, choose smooth representatives $\mathfrak a,\mathfrak b \in \mathcal C^{\sigma}_k(Y)$, and let $\upgamma_{\mathfrak a}, \upgamma_{\mathfrak b}$ be two translation-invariant configurations $Z$.
Choose a smooth configuration $\upgamma_0 \in \mathcal C^{\tau}_{k,\loc}(Z)$ that agrees with $[\mathfrak a],[\mathfrak b]$ near $\pm \infty$, respectively.
The $L^2_k$-configuration space is defined as
\begin{equation*}
	\mathcal C^{\tau}_k(\mathfrak a,\mathfrak b) =
	\{\upgamma \in \mathcal C^{\tau}_{k,\loc}(Z)|
	\upgamma-\upgamma_0 \in L^2_k(Z;i\T^*Z)^{-\upiota^*}\times
	L^2_k(\reals;\reals)\times L^2_{k,A_0}(Z;S^+)^{\uptau}\}.
\end{equation*}
And similarly we define the $\tilde{\mathcal C}^{\tau}_k(\mathfrak a,\mathfrak b)$ as subset of $\tilde{\mathcal C}^{\tau}_{k,\loc}$.
\begin{defn-lem}
	The \emph{gauge group $\mathcal G_{k+1}(Z)$} is the subgroup of $\mathcal G_{k+1,\loc}$ that preserves $\tilde{\mathcal C}^{\tau}_k$, which can also be defined as
\begin{equation*}
	\mathcal G_{k+1}(Z) = \{u: Z \to S^1 | 1 - u \in L^2_{k+1}(Z;\C)\},
\end{equation*}
	which is independent of $[\mathfrak a]$ and $[\mathfrak b]$.
\end{defn-lem}
\begin{proof}
	The key input is an elliptic estimate for a gauge transformation that preserves a configuration. The exact same proof in \cite[Lemma~13.3.1]{KMbook2007} works for invariant trajectories.
\end{proof}
\begin{defn}\label{defn:L2kB}
	Suppose $[\mathfrak a]$ and $[\mathfrak b]$ are two gauge-equivalence classes of critical points in $\mathcal B^{\sigma}_l(Y)$.
	Let $z \in \pi_1(\mathcal B^{\sigma}_k(Y),[\mathfrak a], [\mathfrak b])$ be a relatively homotopy class of paths.
	Choose lifts $\mathfrak a_z$ and $\mathfrak b_z$ such that that a path joining these lifts projects to a path in the class $z$.
	We define the quotient spaces,
	\begin{align*}
		\mathcal B^{\tau}_{k,z}([\mathfrak a],[\mathfrak  b]) 
		&= \mathcal C^{\tau}_{k,z}([\mathfrak a],[\mathfrak  b]) / \mathcal G_{k+1}^{\tau}(Z),\\
		\tilde{\mathcal B}^{\tau}_{k,z}([\mathfrak a],[\mathfrak  b]) 
		&= \tilde{\mathcal C}^{\tau}_{k,z}([\mathfrak a],[\mathfrak  b]) / \mathcal G_{k+1}^{\tau}(Z),
	\end{align*}
	and the union over all homotopy classes:
	\begin{align*}
		\mathcal B^{\tau}_{k}([\mathfrak a],[\mathfrak  b])  &= \bigcup_z \mathcal B^{\tau}_{k,z}([\mathfrak a],[\mathfrak  b]),\\
		\tilde{\mathcal B}^{\tau}_{k}([\mathfrak a],[\mathfrak  b])
		&= \bigcup_z \tilde{\mathcal B}^{\tau}_{k,z}([\mathfrak a],[\mathfrak  b]).
	\end{align*}
	The space $\mathcal B^{\tau}_{k,z}([\mathfrak a],[\mathfrak  b])$ is independent of the choices of lift $\mathfrak a_z$ and $\mathfrak b_z$.
	\hfill \qedsymbol
\end{defn}
\subsubsection*{Equivalence of two notions}
We end the subsection with the following theorem.
\begin{thm}
	Let $\upgamma \in \mathcal C^{\tau}_{k,\loc}$ represent an element $[\upgamma] \in M_z([\mathfrak a],[\mathfrak b])$.
	Let $\mathfrak a = \mathfrak a_z$ and $\mathfrak b = \mathfrak b_z$ be two lifts as in Definition~\ref{defn:L2kB}.
	Then there exists a gauge transformation $u \in \mathcal G_{k+1,\loc}$ such that $u(\upgamma)$ belongs to $\mathcal C^{\tau}_k(\mathfrak a,\mathfrak b)$.
	Moreover, if $u$ and $u'$ are two such gauge transformations, then $u^{-1}u'$ belongs to $\mathcal G_{k+1}(Z)$.
	The resulting bijection is a homeomorphism:
	\begin{equation*}
		M_z([\mathfrak a],[\mathfrak b]) \to
		\{[\upgamma] \in \mathcal B^{\tau}_{k,z}([\mathfrak a],[\mathfrak b]) | \mathfrak F^{\tau}_{\mathfrak q}(\upgamma) = 0\}.
	\end{equation*}	
	A similar statement holds for the large moduli space $\tilde M_z([\mathfrak a],[\mathfrak b])$.
\end{thm}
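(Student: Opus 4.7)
The plan is to follow the template of \cite[Section~13]{KMbook2007}, whose analytical core is the exponential decay of trajectories near non-degenerate critical points; this argument carries over to our setting upon restricting to $\uptau$-invariant sections, using the hyperbolicity statement of Lemma~\ref{lem:ext_Hess_real_eigen}. I would prove the three claims in turn: existence of a gauge representative in $\mathcal C^{\tau}_k(\mathfrak a, \mathfrak b)$, its uniqueness modulo $\mathcal G_{k+1}(Z)$, and the homeomorphism property.

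For existence, my approach is to construct the required gauge transformation $u$ separately on the two ends of the cylinder and then patch. On $[T,\infty) \times Y$, the projection of $\upgamma$ to $\tilde{\mathcal B}^{\tau}_{k,\loc}$ approaches $[\mathfrak b]$, so I can gauge-fix $\upgamma$ slicewise into the Coulomb slice $\mathcal S^{\sigma}_{k,\mathfrak b}$, producing $u_+ \in \mathcal G_{k+1,\loc}([T,\infty)\times Y;\upiota)$; the $\uptau$-invariance of $u_+$ is automatic from uniqueness of the Coulomb projection combined with the $\uptau$-equivariance of $\upgamma$ modulo gauge. In this slice the perturbed Seiberg-Witten flow becomes an ODE of the schematic form
\begin{equation*}
    \frac{d}{dt} x = -\widehat{\Hess}^{\sigma}_{\mathfrak q,\mathfrak b}(x) + \mathfrak n(x),
\end{equation*}
where $x = u_+(\upgamma) - \upgamma_{\mathfrak b}$ takes values in $\mathcal K^{\sigma}_{k,\mathfrak b} \oplus L^2_k(Y;i\reals)^{-\upiota^*}$ and $\mathfrak n$ vanishes to second order at $x = 0$. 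By Lemma~\ref{lem:ext_Hess_real_eigen} the extended Hessian $\widehat{\Hess}^{\sigma}_{\mathfrak q,\mathfrak b}$ is hyperbolic with real spectrum bounded away from zero, so the stable-manifold theorem applied to this ODE on a Hilbert space yields exponential decay of $x(t)$ in the $L^2_k(Y)$-norm. Integration in $t$ then gives $u_+(\upgamma) - \upgamma_{\mathfrak b} \in L^2_k([T,\infty)\times Y)$. The same argument on $(-\infty,-T] \times Y$ yields $u_-$, and patching $u_\pm$ via a cutoff (with any $\mathcal G_{k+1,\loc}$-extension across $[-T,T]\times Y$) produces the desired global $u$.

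For uniqueness, if $u$ and $u'$ both carry $\upgamma$ into $\mathcal C^{\tau}_k(\mathfrak a,\mathfrak b)$, then $v := u^{-1}u' \in \mathcal G_{k+1,\loc}(Z;\upiota)$ and the difference $v \cdot u(\upgamma) - u(\upgamma)$ lies in $L^2_k(Z; i\T^*Z \oplus S^+)$, forcing $v \to 1$ at both ends in the appropriate topology. An elliptic bootstrap modelled on the definition--lemma following Definition~\ref{defn:L2kB}, applied to the identity $dv = v\cdot(u(\upgamma) - v\cdot u(\upgamma))^{(1)}$ in the connection component, then upgrades this to $1 - v \in L^2_{k+1}(Z;\C)$, i.e.\ $v \in \mathcal G_{k+1}(Z)$.

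The forward direction of the bijection is continuous because the subspace $L^2_k$-topology on $\mathcal B^{\tau}_{k,z}([\mathfrak a],[\mathfrak b])$ is finer than the $L^2_{k,\loc}$-topology inherited from $M_z([\mathfrak a],[\mathfrak b])$. For continuity of the inverse I would show that the gauge-fixing construction above depends continuously on $\upgamma$ in the $L^2_{k,\loc}$-topology, using the implicit function theorem for the Coulomb projection together with continuous dependence of the stable-manifold splitting under parameter perturbation. The main obstacle I anticipate is making the exponential-decay estimate uniform on $L^2_{k,\loc}$-precompact families of trajectories, so that the patched gauge transformations vary continuously with $\upgamma$; here the spectral gaps of $\widehat{\Hess}^{\sigma}_{\mathfrak q,\mathfrak a}$ and $\widehat{\Hess}^{\sigma}_{\mathfrak q,\mathfrak b}$ provided by Lemma~\ref{lem:ext_Hess_real_eigen} supply exactly the persistence of hyperbolic structure needed to pass from pointwise decay to a uniform decay rate, and hence from the bijection to a homeomorphism.
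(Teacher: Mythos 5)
Your proposal is correct and takes essentially the same approach as the paper, which simply cites the exponential-decay arguments of \cite[Sections~13.4--13.6]{KMbook2007} and notes that they carry over to $\uptau$-invariant sections, with Lemma~\ref{lem:ext_Hess_real_eigen} supplying the needed hyperbolicity of the linearization. One bookkeeping point worth correcting: the extended Hessian $\widehat{\Hess}^{\sigma}_{\mathfrak q,\mathfrak b}$ is defined on $\mathcal T^{\sigma}_{k,\mathfrak b} \oplus L^2_k(Y;i\reals)^{-\upiota^*}$, not on $\mathcal K^{\sigma}_{k,\mathfrak b}\oplus L^2_k(Y;i\reals)^{-\upiota^*}$, so your flow equation should be posed on the former (four-dimensional Coulomb gauge relative to $\upgamma_{\mathfrak b}$); alternatively, working in temporal-plus-slicewise-Coulomb gauge one should use the restricted operator $\Hess^{\sigma}_{\mathfrak q,\mathfrak b}$ on $\mathcal K^{\sigma}_{k,\mathfrak b}$, whose hyperbolicity follows from the same lemma via the conjugacy noted in Subsection~\ref{subsec:Hessians}.
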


\begin{proof}
	The proof uses exponential decay of trajectories asymptotic to nondegenerate critical points, which in turn is based on the analytic properties of perturbations.
	The arguments can be found in \cite[Section~13.4-13.6]{KMbook2007}.
\end{proof}
\subsection{Spectral flow}
\hfill \break
The first goal is to set up the Fredholm theory for the space of trajectories via spectral flow, introduced by Atiyah-Patodi-Singer.
To begin, let $L = L_0 + h$ be a $k$-\textsc{asafoe} operator on $E \to Y$.
The operators $L, L_0, h$ act on $\uptau$-invariant sections of $E$.
Consider the translation-invariant operator
\begin{equation*}
	D = \frac{d}{dt} + L_0 + h,
\end{equation*}
over the pullback of $E$ on $Z = \reals \times Y$, where $\upiota$ and $\uptau$ extend to the pull-back sections by acting trivially on the $\reals$-factor.
In particular, we view $D$ as an operator between real Hilbert spaces
\begin{equation*}
	D: L^2_j(Z;E)^{\uptau} \to L^2_{j-1}(Z;E)^{\uptau},
\end{equation*}
whose \emph{spectrum} is understood to be the spectrum of its complexification.
\begin{defn}
	A $k$-\textsc{asafoe} operator $L$ is \emph{hyperbolic} if the spectrum of $L$ is disjoint from the imaginary axis.
\end{defn}
For a hyperbolic $k$-\textsc{asafoe} operator, a Fourier decomposition argument shows that
\begin{prop}
	If $L_0 + h$ is hyperbolic, then for all $j$ in the range $|j| \le k$, then operator
	\begin{equation*}
		D = \frac{d}{dt}+L_0+h:L^2_{j+1}(Z;E)^{\uptau} \to L^2_j(Z;E)^{\uptau}
	\end{equation*}	
	is invertible, and in particular Fredholm.
\end{prop}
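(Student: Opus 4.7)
The plan is to Fourier-transform along the $\mathbb{R}$-factor of $Z = \mathbb{R} \times Y$ and thereby reduce invertibility of $D = \partial_t + A$, with $A = L_0 + h$, to a uniform resolvent estimate for the family $\{A + i\xi\}_{\xi \in \mathbb{R}}$ acting on the fibre Hilbert space $H^{\uptau} := L^2(Y;E)^{\uptau}$. The extension of $\uptau$ to $Z$ by acting trivially on $t$ commutes with $\partial_t$ and with $A$, so all Sobolev spaces and Fourier transforms restrict to the $\uptau$-invariant subspace without incident.

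The first step is to establish
\[\sup_{\xi \in \mathbb{R}} \|(A + i\xi)^{-1}\|_{H^{\uptau} \to H^{\uptau}} < \infty,\]
together with $O(1/|\xi|)$ decay at large $|\xi|$. Since $L_0$ is self-adjoint, its spectrum is real and $\|(L_0 + i\xi)^{-1}\| \le 1/|\xi|$, so the factorization
\[A + i\xi \;=\; (L_0 + i\xi)\bigl(1 + (L_0 + i\xi)^{-1} h\bigr)\]
combined with a Neumann series handles $|\xi|$ large, using that $h$ is a bounded operator on $L^2$ from the definition of $k$-\textsc{asafoe}. On the compact range $|\xi| \le R$, hyperbolicity places $i\mathbb{R}$ in the open resolvent set of $-A$, and $\xi \mapsto (A + i\xi)^{-1}$ is norm-continuous into $\mathcal B(H^{\uptau})$ via a local Neumann expansion about each $\xi_0$, hence bounded on $[-R,R]$.

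With this bound in hand, for $f \in L^2_0(Z;E)^{\uptau}$ I would define $u$ via $\widehat{u}(\xi) := (A + i\xi)^{-1}\widehat{f}(\xi)$. Plancherel yields $u \in L^2_0(Z;E)^{\uptau}$ with norm bound, while the identity $\partial_t u + Au = f$ gives $\partial_t u \in L^2_0(Z)$ on the Fourier side via the $O(1/|\xi|)$ decay. Applying ellipticity of $L_0$ fibrewise to $Au = f - \partial_t u$ then upgrades $u$ to $L^2(\mathbb{R}; L^2_1(Y;E)^{\uptau})$, which together with $\partial_t u \in L^2_0$ gives $u \in L^2_1(Z;E)^{\uptau}$. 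This proves invertibility at $j=0$; uniqueness is automatic, since $u \in L^2_1$ with $Du = 0$ satisfies $(A + i\xi)\widehat{u}(\xi) = 0$ for a.e.\ $\xi$, forcing $\widehat{u} \equiv 0$. Bootstrapping to $1 \le j \le k$ iterates this argument using $L^2_j \to L^2_j$ boundedness of $h$, while negative $j$ follows by duality (noting that $A^* = L_0 + h^*$ is again hyperbolic, its spectrum being the complex conjugate of $\sigma(A)$).

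The main obstacle sits in the first step: the uniform control on $i\mathbb{R}$ at low frequencies requires continuity of the resolvent together with compactness of $[-R,R]$, and the high-frequency bound uses crucially the $L_0+h$ splitting (self-adjoint principal part plus a perturbation bounded on $L^2$). Mere hyperbolicity together with the strip confinement of Lemma~\ref{lem:spectra_kASAFOE}(iii) controls the eigenvalues but not the resolvent norm, and it is the particular $k$-\textsc{asafoe} structure that is doing the real analytic work.
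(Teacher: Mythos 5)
Your Fourier-transform argument is correct and follows the same strategy as Kronheimer--Mrowka's Proposition~14.1.2, which is exactly what the paper's one-line proof defers to. The three ingredients you isolate --- the Neumann-series resolvent bound at high frequency coming from self-adjointness of $L_0$ plus $L^2$-boundedness of $h$, the continuity-plus-compactness bound at low frequency from hyperbolicity, and the $O(1/|\xi|)$ decay used to upgrade $u$ from $L^2_0$ to $L^2_1$ --- are precisely what makes the cited proof work, and your remark that the $\uptau$-invariant subspaces pass through the Fourier transform without incident (since $\uptau$ acts trivially on $t$) addresses the only genuinely new point in the real setting.
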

\begin{proof}
	For more details, see \cite[Proposition~14.1.2.]{KMbook2007} and proof therein.
\end{proof}
Next, we drop the translation invariant hypothesis, and assume $L$ is time-dependent in the sense that $h = h_t$ is a time-dependent operator on $Y$.
In contrast with the time-independent case, the operator $D$ on the cylinder is not necessarily invertible, and the index is given by the spectral flow which we shall define now.
Notice that the time-dependent case gives rise to a family of operators $L_0 + h_t$ over $Y$.

To define spectral flow in general, suppose we are given a $[0,1]$-family of operators:
\begin{equation*}
	L_0 + h_t, \quad t \in [0,1]
\end{equation*}
such that $L_0$ is first order, self-adjoint elliptic, and $h_t$ is a continuous path in the space of bounded operators in $L^{2,\uptau}$.
Associated to this family is the following subset of $(0,1) \times \C$
\begin{equation*}
	S = \{(t, \lambda) \big| \lambda \in \Spec(L_0 + h_t)\}.
\end{equation*}
Roughly speaking, the spectral flow is the signed count of $S$ with $(0,1) \times i\reals$, i.e. the number of times $L_0 + h_t$ picks up a purely imaginary eigenvalue.
To make this count well-defined, we deform the path so that
\begin{itemize}[leftmargin=*]
	\item $h_t$ is smooth over $(0,1)$, and
	\item every $(t,\lambda) \in S$ is \emph{simple}, i.e. the generalized $\lambda$-eigenspace of $L_0 + h_t$ is 1-dimensional.
\end{itemize}
The two bulletpoints can always be achieved because the ``bad subset'' in the space of bounded operators on $L^2$ is at least co-dimension two (see \cite[Section 14.2]{KMbook2007}).
They ensure $S$ is an oriented smooth $1$-manifold that intersects $(0,1) \times i\reals$ transversely, for which the oriented count of intersection points is well-defined.
\begin{defn-prop}[Spectral flow]
	Let $L_0 + h_{0}$ and $L_0 + h_{1}$ be two hyperbolic operators, connected by a path of operators $L_0 + h_t$.
	The \emph{spectral flow} $\SF(L_0 + h_t)$ is the intersection number of $S$ for the deformed path described above.
	The spectral flow depends only on the endpoints.
\end{defn-prop}
\begin{prop}
\label{prop:spectralflowfredholm}
	Let $L_0$ be a first-order, self-adjoint elliptic operator acting on sections of a vector bundle $E \to Y$, and let $h_t$ be a time-dependent bounded operator on $L^2(Y;E)^{\uptau}$, varying continuously in the operator norm topology and equal to a constant $h_{\pm}$ on the ends.
	Suppose $L_0 + h_{\pm}$ are hyperbolic.
	Then the operator
	\begin{equation*}
		Q = \frac{d}{dt} + L_0 + h_t:
		L^2_1(Z;E)^{\uptau} \to L^2(Z;E)^{\uptau}
	\end{equation*}	
	is Fredholm and has index equal to the spectral flow of the operators $L_0 + h_t$.
\end{prop}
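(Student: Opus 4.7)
The plan is to follow the Atiyah--Patodi--Singer proof of the spectral flow theorem, adapted to the $\uptau$-invariant subspaces. The argument splits naturally into (a) establishing Fredholmness and (b) identifying the index with spectral flow, and the analysis carries through because $L_0$ preserves the real subspace $L^2(Y;E)^{\uptau}$ and is self-adjoint elliptic on it.

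For (a), I would pick $T>0$ large enough that $h_t = h_\pm$ for $\pm t \ge T$, and build a parametrix by gluing. Choose a smooth partition of unity $\chi_-^2 + \chi_0^2 + \chi_+^2 = 1$ on $\reals$ with $\chi_{\pm}$ supported in $\pm t \ge T/2$ and $\chi_0$ compactly supported. On the ends, $Q$ agrees with the translation-invariant operator $d/dt + L_0 + h_{\pm}$, which is invertible $L^2_1 \to L^2$ of $\uptau$-invariant sections by the preceding proposition; denote the inverses by $P_\pm$. On $[-T-1,T+1] \times Y$, $Q$ is a first-order elliptic operator on a compact manifold with boundary, so it admits a parametrix $P_0$ modulo compact operators. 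Setting $P := \chi_- P_- \chi_- + \chi_0 P_0 \chi_0 + \chi_+ P_+ \chi_+$ gives a two-sided parametrix for $Q$ modulo compact operators on the $\uptau$-invariant Sobolev spaces, hence $Q$ is Fredholm.

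For (b), both the Fredholm index and the spectral flow are invariant under compactly supported perturbations of the path $h_t$ (the index by continuity in operator norm, the spectral flow by its definition), so I may deform $h_t$ to a path that is smooth on $(-T,T)$ and has only finitely many, simple, transverse crossings of the imaginary spectrum through $i\reals$. The argument then proceeds by additivity under cutting: if $t^* \in (-T,T)$ is a time at which $L_0 + h_{t^*}$ is hyperbolic, one shows $\ind(Q) = \ind(Q_{(-\infty,t^*]}) + \ind(Q_{[t^*,\infty)})$ using the hyperbolic slice to set up an APS-type boundary condition, or equivalently a linear gluing/excision argument based on the exponential decay provided by hyperbolicity. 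Using this, I reduce to computing the index contribution of a single simple crossing at some $t_0$, where $\lambda(t) \in \Spec(L_0 + h_t)$ crosses $i\mu \in i\reals$ transversely. After a further deformation supported near $t_0$, one arrives at a model rank-one ODE on the corresponding (complex) generalized eigenline, whose index is $\pm 1$ according to the direction of crossing; summing matches $\SF(L_0 + h_t)$.

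The main obstacle I expect is justifying the generic-path reduction in the $\uptau$-invariant setting, since the operators are only real-linear: a purely imaginary eigenvalue of the complexification need not correspond to a one-complex-dimensional eigenspace stable under $\uptau$, and conjugate pairs could conceivably be forced to cross simultaneously. This is handled exactly as in the construction before the spectral flow definition: the locus of $\uptau$-compatible bounded perturbations $h$ for which $L_0 + h$ fails to have simple spectral crossings is stratified by at least codimension two in the Banach space of such perturbations, so a generic smooth homotopy $h_t$ avoids this stratum and meets only transverse, simple crossings of $i\reals$. Once this transversality is in place, the additivity and local model computations are direct translations of \cite[Section~14]{KMbook2007}.
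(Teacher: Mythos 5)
Your proof follows the same strategy as the paper, which defers to \cite[Proposition~14.2.1]{KMbook2007}: parametrix gluing using the invertibility of the translation-invariant operators at the ends for Fredholmness, and then homotopy-invariance plus reduction to model ODEs over $\reals$ for the index. The analytical machinery lifts verbatim to the $\uptau$-invariant Sobolev spaces, since $L_0$ and $h_t$ preserve them by hypothesis and the real index of $Q$ equals the complex index of its complexification.

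One small clarification on your final paragraph: genericity of the path does \emph{not} prevent conjugate pairs of eigenvalues of the complexification from crossing $i\reals$ simultaneously — for a real-linear operator this is forced whenever the crossing eigenvalue has nonzero imaginary part, and no amount of perturbation within the $\uptau$-equivariant class removes it. The codimension-two argument only guarantees that each individual $(t,\lambda) \in S$ has a one-dimensional generalized eigenspace of the complexification and that $S$ meets $(0,1)\times i\reals$ transversely; it does not rule out two such simple crossings at the same $t$. This is harmless, because the spectral flow (by definition) and the complex index both count each of the conjugate crossings separately, contributing $\pm 2$ per pair, so the identity persists. The concern about the eigenline not being $\uptau$-stable is likewise moot: the local model is a complex rank-one ODE in the complexified bundle (or its real rank-two counterpart), and equality of the real index of $Q$ with the complex index of $Q \otimes \C$ is all that is needed.
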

\begin{proof}
Since we have Fredholmness on the finite cylinder and the Fredholmness on the two cylindrical ends, the Fredholmness of $Q$ follows from the  then by gluing the right-inverses, in an $\upiota$-invariant fashion.
The computation of the Fredholm index involves reducing the problem to operators over $\reals$. 
The reduction argument applies equally well to the $\uptau$-invariant sections.
For details, see \cite[Proposition~14.2.1]{KMbook2007}.
\end{proof}

\subsection{Local structure of moduli spaces}
\label{subsec:loc_str}
\subsubsection*{\textbf{(Slices for gauge group action)}}
The first goal is to define the slice $\mathcal S^{\tau}_{k,\upgamma}$ of the gauge group $\mathcal G_{k+1}(Z;\upiota)$, 
acting on the moduli space $\tilde C^{\tau}_k(\mathfrak a, \mathfrak b)$ over the infinite cylinder $Z$.
Like the finite cylinder setting, the tangent bundle $\mathcal T^{\tau}_j \to \tilde{\mathcal C}$ is defined by setting the fibre over $\upgamma = (A_0, s_0, \phi_0)$ to be
\begin{align*}
	\mathcal T^{\tau}_{j,\upgamma} 
	&= \{(a,s,\phi) | \Re \langle \phi_0|_t, \phi_t\rangle_{L^2(Y)} = 0\}\\
	&\subset L^2_j(Z;i\T^*Z)^{\uptau} \oplus L^2_j(\reals;\reals) \oplus L^2_{j,A_0}(Z;S^+)^{\uptau}.
\end{align*}
Then the linearization of the gauge group action is
\begin{align*}
	\mathbf{d}^{\tau}: \text{Lie}(\mathcal G_{j+1}(Z;\upiota)) \times \tilde{\mathcal C}^{\tau}_k(\mathfrak a, \mathfrak b)
	&\to
	\mathcal T^{\tau}_j\\
	\xi 
	&\mapsto
	(-d\xi, 0, \xi \phi_0).
\end{align*}
\begin{defn}
	The slice $\mathcal S_{k,\upgamma}^{\tau}$ at $(A_0, s_0, \phi_0)$ is the set of triples $(A,s,\phi)$ satisfying
	\begin{equation*}
	-d^*a + iss_0\Re\langle i\phi_0,\phi\rangle + i|\phi_0|^2\Re \mu_Y(\langle i\phi_0,\phi\rangle) = 0,
	\end{equation*}
	where $A = A_0 + a \otimes 1$. The left side of the above equation can be defined as an operator
	\begin{equation*}
		\text{Coul}_{\upgamma}^{\tau}: \tilde{\mathcal C}^{\tau}_k(\mathfrak a, \mathfrak b) \to L^2_{k-1}(Z;i\reals)^{-\upiota^*}.
	\end{equation*}
\end{defn}
The linearization of $\text{Coul}_{\upgamma}^{\tau}$ extends to an operator:
\begin{align*}
	\mathbf{d}^{\tau,\dag}_{\upgamma}: \mathcal T^{\tau}_j 
	&\to L^2_{j-1}(Z;i\reals),\\
	(a,s,\phi) 
	&\mapsto
	-d^*a + is_0^2\Re\langle i\phi_0, \phi \rangle + i|\phi_0|^2 \Re  \mu_Y\langle i\phi_0,\phi \rangle.
\end{align*}
Denote by
\begin{equation*}
	\mathcal K^{\tau}_{j,\upgamma} \subset \mathcal T^{\tau}_{j,\upgamma}
\end{equation*}
the kernel of $\mathbf{d}^{\tau,\dag}_{\upgamma}$ and by
\begin{equation*}
	\mathcal J^{\tau}_{j,\upgamma} \subset \mathcal T^{\tau}_{j,\upgamma}
\end{equation*}
the image of $\mathbf{d}^{\tau,\dag}_{\upgamma}$.
Using the arguments in \cite[Proposition~14.3.2]{KMbook2007}, one can show that $\mathcal K^{\tau}_{j,\upgamma}$ and $\mathcal J^{\tau}_{j,\upgamma}$ define complementary closed sub-bundle of $\mathcal T^{\tau}_j$.
(However these two bundles are not orthogonal with respect any natural metric.)
As a consequence, the quotient space $\tilde{\mathcal B}_k([\mathfrak a],[\mathfrak b])$ is a Hilbert manifold, locally modelled on the slice $\mathcal S^{\tau}_{k,\upgamma}$.

\subsubsection*{\textbf{(Linearization and relative grading)}}
Next, we show the perturbed Seiberg-Witten operator restricted to slice has Fredholm linearization.
Assume the perturbation is chosen so that all critical points are non-degenerate.
Define
\begin{align*}
	\mathcal V^{\tau}_{j,\upgamma} 
	&=
	\left\{
	(\eta, r, \psi) \big|
	\Re \langle \check\phi_0(t), \check\psi(t)\rangle_{L^2(Y)} = 0\right\}\\
	&\subset
	L^2_j(Z;i\mathfrak{su}(S^+))^{-\upiota^*} \oplus L^2_j(\reals;\reals) \oplus L^2_{j,A_0}(Z;S^-)^{\uptau},
\end{align*}
so that the perturbed Seiberg-Witten equations defines a section
\begin{equation*}
	\mathfrak F^{\tau}_{\mathfrak q} = \mathfrak F^{\tau} + \check{\mathfrak q}^{\tau}: \tilde{\mathcal C}^{\tau}_k(\mathfrak a, \mathfrak b) \to \mathcal V^{\tau}_{k-1}.
\end{equation*}
The smoothness of the section can be deduced the same way as \cite[Lemma~14.4.1]{KMbook2007}, from the tameness of perturbations.
\begin{lem}
\label{lem:pSWsmooth_taumodel}
	The perturbed Seiberg-Witten equations define a smooth section $\tilde{\mathcal C}^{\tau}_k(\mathfrak a, \mathfrak b) \to \mathcal V^{\tau}_{k-1}$.	\hfill \qedsymbol
\end{lem}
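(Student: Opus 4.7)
The plan is to mirror the strategy of \cite[Lemma~14.4.1]{KMbook2007} and reduce smoothness of $\mathfrak F^{\tau}_{\mathfrak q}$ to (a) smoothness of the unperturbed part $\mathfrak F^{\tau}$ together with (b) smoothness of the perturbation piece $\check{\mathfrak q}^{\tau}$, plus (c) a check that the image actually lands in the constrained fibre $\mathcal V^{\tau}_{k-1,\upgamma}$ cut out by the pointwise-in-$t$ orthogonality condition $\Re\langle \check\phi_0(t),\check\psi(t)\rangle_{L^2(Y)} = 0$.

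First I would write $\mathfrak F^{\tau}$ in the $\tau$-coordinates $(A,s,\phi)$ with $\|\check\phi(t)\|_{L^2(Y)}=1$, as given in Section~\ref{sec:config_slice}. The form component $\tfrac12\rho_Z(F^+_{A^t}) - s^2(\phi\phi^*)_0$ depends linearly on $A$ in the curvature term and polynomially (degree two) in $(s,\phi)$; by the Sobolev multiplication theorem $L^2_k \cdot L^2_k \hookrightarrow L^2_k$ on $Z$ (valid since $\dim Z = 4$ and $k\ge 2$), this piece is a smooth map into $L^2_{k-1}(Z;i\mathfrak{su}(S^+))^{-\upiota^*_Z}$. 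For the spinor component $D^+_A\phi + c\phi - \Lambda_{\mathfrak q}(B,s,\phi)\phi$ one uses that $D^+_A$ differs from $D^+_{A_0}$ by $\rho_Z(A-A_0)$, which again is controlled by Sobolev multiplication; the $\Lambda_{\mathfrak q}$ term is a real-valued $L^2$-pairing along the slices $\{t\}\times Y$, which by the trace theorem $L^2_k(Z)\hookrightarrow L^2_{k-1/2}(\{t\}\times Y)$ (and then $L^2\cdot L^2 \to L^1$) defines a smooth map into $L^2_{k-1}(\reals;\reals)$. The $s$-component, $\Lambda_{\mathfrak q}(B,s,\phi)s$, is a product of a smooth scalar function with an $L^2_k$ function and is therefore smooth into $L^2_{k-1}(\reals;\reals)$. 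All three components commute with the anti-linear involutions $\uptau_Z$ by construction, so the output lives in the invariant subspaces.

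Next, I would handle the perturbation. Writing $\check{\mathfrak q}^{\tau} = (\hat{\mathfrak q}^0,\langle\tilde{\mathfrak q}^1,\phi\rangle s,\tilde{\mathfrak q}^{1,\sigma})$ (in the notation of Section~\ref{sec:perturb}), the smoothness of each factor as a map into $L^2_{k-1}$ is exactly what the $k$-tameness axioms (i)--(iii) of Definition~\ref{defn:tame} guarantee: (i) gives smoothness on the top Sobolev level for the four-dimensional perturbation $\hat{\mathfrak q}$, (ii) gives continuity on the lower levels we need to apply Lemma~\ref{lem:spectra_kASAFOE}-type control, and (iii) upgrades differentiability. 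The extra ingredient for the blown-up picture is that $\hat{\mathfrak q}^{1,\sigma}(A,s,\phi) = \int_0^1 (\mathcal D_{(A,rs\phi)}\hat{\mathfrak q}^1)(\phi)\,dr$ is smooth in $(A,s,\phi)$ up to the boundary $s=0$, which follows from smoothness of $\hat{\mathfrak q}^1$ in its arguments together with the parameter integral being smooth in parameters. $\uptau_Z$-equivariance of $\mathfrak q$ is built into its definition, so the image is $\uptau_Z$-invariant.

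Finally, I would verify the orthogonality constraint defining $\mathcal V^{\tau}_{k-1,\upgamma}$. The spinorial output of $\mathfrak F^{\tau}_{\mathfrak q}$ is precisely the $L^2(Y)$-orthogonal-projection form $D_B\check\phi + c\check\phi - \Lambda_{\mathfrak q}(B,s,\check\phi)\check\phi + \tilde{\mathfrak q}^1(B,s,\check\phi)^{\perp}$, which by definition of $\Lambda_{\mathfrak q}$ (equation \eqref{eqn:Lambda_q}) is $L^2(Y)$-orthogonal to $\check\phi(t)$ for every $t$; hence $\Re\langle \check\phi_0(t),(\cdot)\rangle_{L^2(Y)}=0$ at $\upgamma=\upgamma_0$, so the section lands in the subbundle $\mathcal V^{\tau}_{k-1}$. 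The main obstacle I anticipate is the interaction of the pointwise-in-$t$ constraint with the $L^2_{k-1}(Z)$-regularity of the output: one must check that the trace of $\check\phi(t)$ at a generic $t$ is controlled so that $\Lambda_{\mathfrak q}$ makes sense as an $L^2_{k-1}(\reals;\reals)$ function and the subtraction $(\cdot)-\Lambda_{\mathfrak q}\check\phi$ preserves $L^2_{k-1}(Z)$-regularity; this is exactly where the trace theorem and tameness bound (iv)--(v) are used, and is handled verbatim as in \cite[\S 14.4]{KMbook2007} since none of the analytic estimates is affected by passing to $\uptau$-invariant subspaces.
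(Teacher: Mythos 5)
Your proposal is correct and follows the same route as the paper, which simply cites \cite[Lemma~14.4.1]{KMbook2007} and observes that the argument carries over to the $\uptau$-invariant subspaces: Sobolev multiplication handles the polynomial unperturbed part, tameness (chiefly axiom~(i) of Definition~\ref{defn:tame}) handles $\hat{\mathfrak q}^{\tau}$, and the definition of $\Lambda_{\mathfrak q}$ guarantees the output satisfies the slicewise orthogonality constraint. Two small slips worth flagging: the multiplication you need is $L^2_k\cdot L^2_k\hookrightarrow L^2_{k-1}$ (not $L^2_k$) on the $4$-dimensional cylinder for $k\ge 2$, and the parenthetical appeal to Lemma~\ref{lem:spectra_kASAFOE} is a misattribution --- that lemma concerns spectra of \textsc{asafoe} operators and plays no role here; axioms~(ii)--(iii) are invoked in this context only to ensure the section and its derivative extend between the appropriate Sobolev levels.
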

Let $\upgamma \in \tilde{\mathcal C}^{\tau}([\mathfrak a],[\mathfrak b])$ be a configuration.
To define $\mathcal D\mathfrak F^{\tau}_{\mathfrak q}$ at $\upgamma$, we ``differentiate and then project''. 
More precisely, we compose the derivative of $\mathcal F^{\tau}_{\mathfrak q}$ in the ambient vector bundle, with the composition
\begin{equation*}
	\Pi^{\tau}_{\upgamma}: 
	L^2_j(Z;i\mathfrak{su}(S^+))^{\uptau} \oplus L^2_j(\reals;\reals) \oplus L^2_{j,A_0}(Z;S^-)^{\uptau}
	\to \mathcal V^{\tau}_{j,\upgamma}
\end{equation*}
defined by $L^2$-projection slicewise.
That is,
\begin{align*}
	\Pi^{\tau}_{\upgamma}: 
	(\eta, r, \psi) 
	&\mapsto (\eta, r, \Pi^{\perp}_{\phi_0(t)}\psi),\\
	\Pi^{\perp}_{\phi_0(t)} \psi
	&=
	\psi - \Re \langle \check\phi_0(t), \psi(t) \rangle_{L^2(Y)} \phi_0.
\end{align*}
The resulting operator 
\begin{equation}
\label{eqn:lin_SW_op}
	\mathcal D \mathfrak F^{\tau}_{\mathfrak q,\upgamma}: \mathcal T^{\tau}_{j,\upgamma} \to \mathcal V^{\tau}_{j,\upgamma}
\end{equation}
can be interpreted as follows.
Let $\check\upgamma = (\check B_0(t), r_0(t), \phi(t))$ be the path $\reals \to \mathcal C^{\sigma}(Y)$ defined by $\upgamma$.
Ignoring regularity, 
\begin{itemize}[leftmargin=*]
	\item the codomain can be regarded as sections along $\check\upgamma$ of the tangent bundle $\mathcal T^{\sigma}(Y) \to \mathcal C^{\sigma}(Y)$, via Clifford multiplications, and
	\item the domain can be regarded as sections along $\check\upgamma$ of $\mathcal T^{\sigma}(Y) \oplus L^2(Y;i\reals)^{-\upiota^*}$, where for any $(a, r, \psi)$ in the domain, we express $a = b + cdt$ such that $b$ is temporal gauge and $c$ correspond to the summand $L^2(Y;i\reals)^{-\upiota^*}$.
\end{itemize}
We write sections in the domain as $(V,c)$, where $V(t) = (b(t), r(t), \psi(t))$ is an element of $\mathcal T^{\sigma}_{\upgamma_0(t)}(Y)$ and $c(t) \in L^2(Y;i\reals)^{-\upiota^*}$.
The derivative of $V$ with respect to $t$ is defined as a covariant derivative
\begin{equation*}
	\frac{D}{dt}V = \left( \frac{db}{dt}, \frac{dr}{dt}, \Pi^{\perp}_{\phi_0(t)} \frac{d\psi}{dt}\right),
\end{equation*}
for the tangent bundle $\mathcal T^{\sigma}$ is not a trivial bundle along the path.
If $\upgamma_0$ is in temporal gauge, then the linearization of perturbed Seiberg-Witten operator ~\eqref{eqn:lin_SW_op} is given by
\begin{equation*}
	(V,c) \mapsto \frac{D}{dt} V + \mathcal D(\grad \pertL)^{\sigma}(V) + \mathbf{d}^{\sigma}_{\upgamma_0(t)}c,
\end{equation*}
where $\mathbf{d}^{\sigma}$ is the derivative of gauge group action on $C^{\sigma}(Y)$.
We couple this operator with a Coulomb gauge fixing condition 
\[
	\mathbf{d}^{\tau, \dag}_{\upgamma_0}(V,c) = 0.
\]
When $(V,c)$'s are viewed as sections of Hilbert vector bundles along $\upgamma_0$, the above equation takes the form
\begin{equation*}
	\frac{d}{dt} c + \mathbf{d}^{\sigma,\dag}_{\upgamma_0(t)}(V) = 0.
\end{equation*}
Combining the two operators we obtain
\begin{align*}
	Q_{\upgamma_0} 
	&=
	\mathcal D_{\upgamma_0}(\mathfrak F^{\tau}_{\mathfrak q}) \oplus \mathbf{d}^{\tau, \dag}_{\upgamma_0},\\
	Q_{\upgamma_0} : 
	\mathcal T^{\tau}_{j,\upgamma_0} 
	&\to \mathcal V^{\tau}_{j-1,\upgamma_0} \oplus L^2_{j-1}(Z;i\reals)^{-\upiota^*}.
\end{align*}
In path notation, we have
\begin{equation*}
	(V,c) \mapsto \frac{D}{dt}(V,c) + L_{\upgamma_0(t)}(V,c),
\end{equation*}
where for any $\mathfrak a \in \mathcal C^{\sigma}_{k}(Y)$, the operator $L$ is
\begin{equation*}
	L_{\mathfrak a} = 
	\begin{bmatrix}
		\mathcal D_{\mathfrak a}(\grad \pertL)^{\sigma} & \mathbf{d}_{\mathfrak a}^{\sigma} \\
		\mathbf{d}_{\mathfrak a}^{\sigma,\dag} & 0
	\end{bmatrix}
	: \mathcal T^{\sigma}_{j,\mathfrak a} \oplus L^2_j(Y;i\reals)^{-\upiota^*} \to 
	\mathcal T^{\sigma}_{j-1,\mathfrak a} \oplus L^2_{j-1}(Y;i\reals)^{-\upiota^*},
\end{equation*}
which is exactly the extended Hessian $\widehat{\Hess}^{\sigma}_{\mathfrak q, \mathfrak a}$ on the blown-up configuration space.
\begin{prop}
\label{prop:Qfredholm}
	For each pair of (non-degenerate) critical points $\mathfrak a,\mathfrak b$ and each $\upgamma_0$ in $\mathcal C^{\tau}_k(\mathfrak a,\mathfrak b)$, the linear operator
	\begin{equation*}
		Q_{\upgamma_0} = \mathcal D_{\upgamma_0}\mathfrak F^{\tau}_{\mathfrak q} \oplus \mathbf{d}_{\upgamma_0}^{\tau,\dag}:
		\mathcal T^{\tau}_{j,\upgamma_0} \to
		\mathcal V^{\tau}_{j-1,\upgamma_0} \oplus
		L^2_j(Z;i\reals)^{-\upiota^*}
	\end{equation*}	
	is Fredholm for all $j$ in the range $1 \le j \le k$, and satisfies a G\r{a}rding inequality,
	\begin{equation*}
		\|u\|_{L^2_j} \le C_1\|Q_{\upgamma_0}u\|_{L^2_{j-1}} + C_2\|u\|_{L^2_{j-1}}.
	\end{equation*}
	The index $Q_{\upgamma_0}$ is independent of $j$.
\end{prop}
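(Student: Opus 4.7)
The plan is to reduce the statement to Proposition~\ref{prop:spectralflowfredholm} by recognizing $Q_{\upgamma_0}$ as (a conjugate of) an operator of the form $\frac{d}{dt}+L_0+h_t$, where $L_0$ is a first-order self-adjoint elliptic operator on $Y$ acting on $\uptau$-invariant sections of a fixed vector bundle, and $h_t$ is a path of bounded operators asymptotic to hyperbolic endpoints. As noted right before the proposition, pulling the linearized Seiberg-Witten operator and the Coulomb condition together, $Q_{\upgamma_0}$ becomes $\frac{D}{dt} + L_{\upgamma_0(t)}$, where $L_{\mathfrak a}$ is precisely the extended Hessian $\widehat{\Hess}^{\sigma}_{\mathfrak q,\mathfrak a}$ on the blown-up configuration space. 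By non-degeneracy of the critical points $\mathfrak a,\mathfrak b$ and Lemma~\ref{lem:ext_Hess_real_eigen}, the asymptotic operators $L_{\mathfrak a}$ and $L_{\mathfrak b}$ are invertible and have real spectrum, hence are hyperbolic.

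The first technical step is to trivialize the bundle in which $(V,c)$ lives so that Proposition~\ref{prop:spectralflowfredholm} applies. The constraint $\Re\langle\check\phi_0(t),\check\psi(t)\rangle_{L^2(Y)}=0$ and the pointwise normalization $\|\check\phi_0(t)\|_{L^2(Y)}=1$ prevent $(b,r,\psi,c)$ from being an unconstrained section of a product bundle. As in the proof of Lemma~\ref{lem:ext_Hess_real_eigen}, I substitute the unconstrained $\uptau$-invariant spinor $\boldsymbol\psi = \psi + r\,\check\phi_0(t)$ and rewrite $Q_{\upgamma_0}$ in the coordinates $(b,\boldsymbol\psi,c) \in L^2_j(Z; i\T^*Y\oplus S\oplus i\reals)^{\uptau}$, absorbing the $t$-dependence of the frame into $h_t$. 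After this change of variables the principal symbol is the constant-coefficient self-adjoint operator $L_0$ on $Y$ built from $d^*\oplus *d \oplus (-d)$ on the form part and $D_{B_0}$ on the spinorial part, and the remaining zeroth-order (or tamely bounded) terms form a continuous path $h_t$ asymptotic to the bounded parts of $\widehat{\Hess}^{\sigma}_{\mathfrak q,\mathfrak a}$ and $\widehat{\Hess}^{\sigma}_{\mathfrak q,\mathfrak b}$, which are $k$-\textsc{asafoe} by the tameness axioms of Definition~\ref{defn:tame}.

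Given this reformulation, Proposition~\ref{prop:spectralflowfredholm} immediately yields Fredholmness for $j=1$, with index equal to the spectral flow of the path $L_{\upgamma_0(t)}$ between the two hyperbolic endpoints. The Gårding inequality is the standard first-order elliptic estimate for $\partial_t+L_0$ on the cylinder plus a compact error from $h_t$, which survives the conjugation above because the conjugating maps are bounded isomorphisms of the relevant Sobolev spaces. Independence of the Fredholm index on $j$ follows in the usual way: elliptic regularity for $Q_{\upgamma_0}$ (via the Gårding inequality, iterated) shows that $\ker Q_{\upgamma_0}$ consists of smooth elements and coincides across $j \in [1,k]$, and a parallel argument for the $L^2$-formal adjoint $Q_{\upgamma_0}^*$ (which has the same structure, only with the sign of $\frac{d}{dt}$ reversed) identifies the cokernels.

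The main obstacle I anticipate is the bundle trivialization step: on the reducible locus ($r=0$) the decomposition $\mathcal T^{\sigma} = \mathcal J^{\sigma}\oplus\mathcal K^{\sigma}$ is not orthogonal, so one must check carefully that the substitution $\boldsymbol\psi = \psi + r\check\phi_0$ actually converts $Q_{\upgamma_0}$ into an operator falling under Proposition~\ref{prop:spectralflowfredholm} — that is, that the path $h_t$ thus produced is continuous in operator norm and that its endpoints agree with the bounded parts of the extended Hessians. This is the same issue handled in Lemma~\ref{lem:ext_Hess_real_eigen}, and the anti-linear involution $\uptau$ causes no new trouble here because the projection $\Pi_{\check\phi_0(t)}^{\perp}$, the Clifford identifications $S\cong S^{\pm}|_Y$, and the derivative $\mathcal D\mathfrak q$ all respect $\uptau$-invariance by construction in Section~\ref{sec:perturb}.
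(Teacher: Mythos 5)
Your proof takes essentially the same route as the paper: both reduce $Q_{\upgamma_0}$ to an operator of the form $\frac{d}{dt}+L_0+h_t$ falling under Proposition~\ref{prop:spectralflowfredholm} via the substitution $\boldsymbol\psi = \psi + r\check\phi_0(t)$ from Lemma~\ref{lem:ext_Hess_real_eigen}, and both invoke that lemma to establish hyperbolicity of the asymptotic extended Hessians. The paper defers the G\r{a}rding inequality and index-independence details to \cite[Theorem~14.4.2]{KMbook2007}, which you instead sketch directly (elliptic bootstrapping for the kernel, the same structure for the formal adjoint); that is consistent with what the cited reference does.
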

\begin{proof}
	
To prove the Fredholmness, we need to put our problem in the framework of Proposition~\ref{prop:spectralflowfredholm} where $L$ is the linearization of $\mathfrak F^{\tau}_{\mathfrak q}$.
This is not straightforward, however, as $ V^{\tau}_j$ is not a trivial bundle, but a subbundle of a natural trivial bundle.
More concretely, a section $V = (b, r, \psi)$ along $(\check B_0(t), r_0(t), \phi_0(t))$ is constrained by $\langle \psi|_t, \phi_0|_t \rangle_{L^2(Y)} = 0$.
To this end, we use the trick in Lemma~\ref{lem:ext_Hess_real_eigen}, by recasting $V$ as a pair $(b,\boldsymbol{\psi})$ and by setting
\begin{equation*}
	\boldsymbol{\psi}(t) = \psi(t) + r(t)\phi_0(t)
\end{equation*}
which is unconstrained. The operator can be written as
\begin{equation*}
	(b,\boldsymbol{\psi}, c) \mapsto
	\frac{d}{dt}(b,\boldsymbol{\psi},c) + 
	L_0 (b,\boldsymbol{\psi}, c) +
	\tilde h_{\upgamma(t)}(b,\boldsymbol{\psi},c),
\end{equation*}
where $L_0$ is a self-adjoint elliptic differential operator acting on $\uptau$-invariant sections of $i\T^* Y \oplus S \oplus i\reals$. 
In verifying the hypothesis of Proposition~\ref{prop:spectralflowfredholm}, we use Lemma~\ref{lem:ext_Hess_real_eigen} to deduce that the  operators on the two ends are hyperbolic when both $\mathfrak a$ and $\mathfrak b$ are non-degenerate.
For details and proof of the G\r{a}rding inequality, see \cite[Theorem~14.4.2]{KMbook2007}.
\end{proof}
The following corollary motivates the definition of the relative grading.
\begin{cor}
	The restriction of the bundle map
	\begin{equation*}
		\mathcal D \mathfrak F^{\tau}_{\mathfrak q}: \mathcal \mathcal K^{\tau}_{j,\upgamma} \to \mathcal V^{\tau}_{j-1,\upgamma}
	\end{equation*}	
	is Fredholm and has the same index as $\mathcal Q_{\gamma}$.
	Furthermore, the index of $Q_{\upgamma}$ depends only on the endpoints $\mathfrak a$ and $\mathfrak b$, and is equal to the spectral flow of the extended Hessian $\widehat{\Hess}^{\sigma}_{\mathfrak q, \check\upgamma}$.
\end{cor}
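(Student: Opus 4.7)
The plan is to deduce the corollary from Proposition~\ref{prop:Qfredholm} together with the slice decomposition $\mathcal T^{\tau}_j = \mathcal J^{\tau}_j \oplus \mathcal K^{\tau}_j$ from Subsection~\ref{subsec:loc_str}. I would proceed in three steps.

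\emph{Step 1 (Fredholmness and index equality).} With respect to the splittings $\mathcal T^{\tau}_{j,\upgamma} = \mathcal J^{\tau}_{j,\upgamma} \oplus \mathcal K^{\tau}_{j,\upgamma}$ of the domain and $\mathcal V^{\tau}_{j-1,\upgamma} \oplus L^2_{j-1}(Z;i\reals)^{-\upiota^*}$ of the codomain, the operator $Q_{\upgamma}$ has the block form
\[
Q_{\upgamma} = \begin{pmatrix} A & B \\ C & 0 \end{pmatrix},
\]
where $B = \mathcal D\mathfrak F^{\tau}_{\mathfrak q}|_{\mathcal K^{\tau}_{j,\upgamma}}$ is the map in question, $C = \mathbf{d}^{\tau,\dag}|_{\mathcal J^{\tau}_{j,\upgamma}}$, and the bottom-right block vanishes because $\mathcal K^{\tau}_j$ is by definition the kernel of $\mathbf{d}^{\tau,\dag}$. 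The block $C$ is an isomorphism: it factors as $(\mathbf{d}^{\tau,\dag}\circ \mathbf{d}^{\tau})\circ (\mathbf{d}^{\tau})^{-1}$, and the coupled Laplacian-type operator $\mathbf{d}^{\tau,\dag}\circ \mathbf{d}^{\tau}$ is an invertible second-order operator in the appropriate Sobolev completions by the Coulomb-slice analysis of Subsection~\ref{subsec:loc_str}. A two-out-of-three argument for Fredholm operators then yields that $B$ is Fredholm with $\ind(B) = \ind(Q_{\upgamma})$.

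\emph{Step 2 (Dependence only on endpoints).} The configuration space $\mathcal C^{\tau}_k(\mathfrak a, \mathfrak b)$ is affine once a reference configuration is fixed as in Definition~\ref{defn:L2kB}, so any two configurations $\upgamma_0, \upgamma_1$ are joined by the straight-line path $\upgamma_s = (1-s)\upgamma_0 + s\upgamma_1$. The coefficients of $Q_{\upgamma_s}$ depend continuously on $s$ in the relevant operator norms by the tameness properties of $\mathfrak q$ in Definition~\ref{defn:tame}, so the homotopy invariance of the Fredholm index gives $\ind(Q_{\upgamma_0}) = \ind(Q_{\upgamma_1})$.

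\emph{Step 3 (Identification with spectral flow).} In path coordinates, $Q_{\upgamma}$ was shown in the proof of Proposition~\ref{prop:Qfredholm} to take the form $\tfrac{D}{dt} + L_{\check\upgamma(t)}$ with $L_{\mathfrak a} = \widehat{\Hess}^{\sigma}_{\mathfrak q, \mathfrak a}$. Applying the change of variables $\boldsymbol{\psi}(t) = \psi(t) + r(t)\phi_0(t)$ from Lemma~\ref{lem:ext_Hess_real_eigen} recasts this as an operator of the form $\tfrac{d}{dt} + L_0 + \tilde h_{\check\upgamma(t)}$ on a trivial bundle over $Z$, where $L_0$ is a fixed self-adjoint elliptic operator and $\tilde h$ is a zeroth-order time-dependent term of $k$-\textsc{asafoe} type. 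Lemma~\ref{lem:ext_Hess_real_eigen} supplies hyperbolicity of the limiting operators at $t = \pm \infty$ (by non-degeneracy of $\mathfrak a$ and $\mathfrak b$), so Proposition~\ref{prop:spectralflowfredholm} applies and identifies $\ind(Q_{\upgamma})$ with the spectral flow of $L_0 + \tilde h_{\check\upgamma(t)}$. Since the change of variables is a continuous invertible identification of bundles over the path $\check\upgamma$, this spectral flow agrees with the spectral flow of the original family $\widehat{\Hess}^{\sigma}_{\mathfrak q, \check\upgamma(t)}$.

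The main obstacle is in Step 1: verifying in full that $\mathbf{d}^{\tau,\dag}|_{\mathcal J^{\tau}_j}$ is an isomorphism (rather than merely Fredholm) in the cylindrical $L^2_j$-topology, which requires propagating the finite-cylinder elliptic estimates for the coupled Laplacian to the infinite cylinder using the asymptotic non-degeneracy—an argument already implicit in the construction of the slice in Subsection~\ref{subsec:loc_str}.
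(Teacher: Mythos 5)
The paper gives no explicit proof of this corollary, and your proposal reconstructs the natural argument that follows Proposition~\ref{prop:Qfredholm} (and is implicit in Kronheimer--Mrowka's Corollary~14.4.3--14.4.4). Steps 1 and 3 are correct: the block form $\begin{pmatrix} A & B \\ C & 0\end{pmatrix}$ is right (the bottom-right block vanishes precisely because $\mathcal K^{\tau}$ is the kernel of $\mathbf d^{\tau,\dag}$), the row operation $\begin{pmatrix}1 & -AC^{-1}\\ 0 & 1\end{pmatrix}$ reduces to the anti-diagonal case, and the change of variables $\boldsymbol{\psi} = \psi + r\phi_0$ from Lemma~\ref{lem:ext_Hess_real_eigen} together with Proposition~\ref{prop:spectralflowfredholm} gives the spectral-flow identification, with the observation that spectral flow is invariant under a continuous invertible conjugation being exactly what one needs. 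Your ``main obstacle'' flag in Step~1 is the right place to worry: invertibility of $\mathbf d^{\tau,\dag}\circ \mathbf d^{\tau}$ on the infinite cylinder is what makes $C$ an isomorphism rather than merely Fredholm. Worth noting that this is in fact \emph{easier} in the real setting than in ordinary monopole theory, because $\Delta_Y$ has no kernel on $L^2(Y;i\reals)^{-\upiota^*}$ — constant functions are forced to vanish by the condition $\upiota^*\xi = -\xi$ — so the limiting translation-invariant coupled Laplacians at $\pm\infty$ have spectrum bounded away from $0$ even at reducible endpoints.

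The one actual slip is in Step~2: $\mathcal C^{\tau}_k(\mathfrak a,\mathfrak b)$ is \emph{not} affine, because of the slicewise constraints $\|\phi(t)\|_{L^2(Y)} = 1$ and $s(t)\ge 0$, so the straight-line interpolation $(1-s)\upgamma_0 + s\upgamma_1$ leaves the configuration space. What you need instead is connectedness, which does hold — one can for instance interpolate linearly on the connection and scalar components and use the fact that the unit sphere in an infinite-dimensional Hilbert space is connected (indeed contractible) to interpolate $\phi_0$ to $\phi_1$ along a path of unit spinors. Once connectedness is in hand, local constancy of the Fredholm index (which follows from the tameness of $\mathfrak q$ and the continuity of the coefficients in $t$-dependent operator norm, as you say) gives path-independence. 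This is a repair rather than a new idea, so the proposal is essentially correct.
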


\begin{defn}
\label{defn:rel_grading}
	For critical points $\mathfrak a, \mathfrak b \in \mathcal C^{\sigma}_{k}(Y)$ and a path $\upgamma \in \mathcal C^{\tau}_k(\mathfrak a,\mathfrak b)$ we define the \emph{relative grading}
	\begin{equation*}
		\gr(\mathfrak a,\mathfrak b) := \ind Q_{\upgamma}.
	\end{equation*}
	For two gauge orbits $[\mathfrak a], [\mathfrak b]$ and $z$ a relative homotopy class of the path $\mathbf{\pi} \circ \check\upgamma$, we define the relative grading 
	\begin{equation*}
		\gr_z([\mathfrak a],[\mathfrak b]) := 
		\gr(\mathfrak a, \mathfrak b).
	\end{equation*}
\end{defn}
By additivity of spectral flows, we have additivity of the relative grading 
\[\gr(\mathfrak a,\mathfrak c) = \gr(\mathfrak a,\mathfrak b) + \gr(\mathfrak b, \mathfrak c).
\]
To compute the change of $\gr_z$ under change of homotopy class of paths between $[\mathfrak a]$ and $[\mathfrak b]$, it suffices to consider the case $[\mathfrak a] = [\mathfrak b]$ and loops based at $[\mathfrak a]$.
Let $\mathfrak a, \mathfrak a'$ be the corresponding lifts in $\mathcal C^{\sigma}_k(Y)$ of a loop $z_u$, where $u$ is a gauge transform such that $\mathfrak a' = u(\mathfrak a)$.

\begin{lem}\label{lem:loopindex1}
Identifying the homotopy class of an $\upiota$-invariant $u:Y \to S^1$ with an element $H^1(Y;\mathbb Z)$, the relative grading is 
	\begin{equation*}
	\gr_{z_u}([\mathfrak a],[\mathfrak a]) = \frac{1}{2}([u] \cup c_1(S))[Y].
\end{equation*}
\end{lem}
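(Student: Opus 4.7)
The plan is to reduce $\gr_{z_u}([\mathfrak{a}], [\mathfrak{a}]) = \ind Q_\upgamma$ to the closed four-manifold index formula of Lemma~\ref{lem:closed4index} via a mapping-torus (``closing up'') argument, mirroring the standard computation in the ordinary theory.

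Since $\ind Q_\upgamma$ depends only on the homotopy class of $z_u$ by additivity and homotopy invariance of spectral flow (Proposition~\ref{prop:Qfredholm}), I close up the cylinder $[0,1] \times Y$ to the closed $4$-manifold $M_u = S^1 \times Y$ equipped with the involution $\upiota_{M_u}(t,y) = (t, \upiota y)$ (trivial on the $S^1$-factor). The cylinder spin\textsuperscript{c} bundle $S_Z$ descends to a twisted spin\textsuperscript{c} bundle $S_u$ on $M_u$ via the gluing $(0, \psi) \sim (1, u(y)\psi)$. Since $u$ is a real gauge transformation, i.e.\ $\bar u(\upiota y) = u(y)$, a direct check shows that the real structure $\uptau_Z$ descends to a compatible $\uptau_u$ on $S_u$, giving a real spin\textsuperscript{c} structure $(\mathfrak{s}_u, \uptau_u)$ on $(M_u, \upiota_{M_u})$.

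A standard excision argument (cf.\ \cite[Chapter~14]{KMbook2007}), which carries over verbatim to the real setting because every operator involved commutes with $\uptau$, identifies $\ind Q_\upgamma$ with the real index $d$ of the linearized Seiberg-Witten operator coupled to Coulomb gauge on $(M_u, \uptau_u)$. Lemma~\ref{lem:closed4index} then gives
\begin{equation*}
d = \tfrac{1}{8}\bigl(c_1(S_u)^2[M_u] - \sigma(M_u)\bigr) + b^1_{-\upiota^*}(M_u) - b^+_{-\upiota^*}(M_u) - b^0_{-\upiota^*}(M_u).
\end{equation*}
The signature $\sigma(S^1 \times Y)$ vanishes. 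Using Künneth and the triviality of $\upiota_{M_u}$ on the $S^1$-factor, I will check that $b^1_{-\upiota^*}(M_u) = b^+_{-\upiota^*}(M_u) = b^1_{-\upiota^*}(Y)$ (via the identification $H^+(M_u) \cong H^1(Y)$ by $\alpha \mapsto dt \wedge \alpha + *(dt \wedge \alpha)$) and $b^0_{-\upiota^*}(M_u) = 0$, so the form contributions cancel. For the Chern class, the gauge group acts on $\det S = \Lambda^2 S$ by $u^2$, so the mapping-torus construction yields $c_1(S_u) = c_1(S) + 2[u] \wedge dt \in H^2(M_u; \mathbb{Z})$. Squaring (using $c_1(S)^2 = 0$ in $H^4(Y)$ and $dt \wedge dt = 0$) and integrating over $M_u$ gives $c_1(S_u)^2[M_u] = 4([u] \cup c_1(S))[Y]$. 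Combining, $d = \tfrac{1}{2}([u] \cup c_1(S))[Y]$, as claimed.

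The main obstacle is the excision/APS-style identification of $\ind Q_\upgamma$ with the closed-manifold index, particularly when $\mathfrak{a}$ is reducible and the blown-up configuration space enters nontrivially. The extended-Hessian framework of Section~\ref{subsec:Hessians} is already set up to be $\uptau$-equivariant, and the boundary-at-infinity contributions in the spectral-flow/gluing comparison cancel exactly as in~\cite[Chapter~14]{KMbook2007}, so no essentially new analysis is required beyond what has already been assembled.
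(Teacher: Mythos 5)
Your proposal follows the same route as the paper's proof: close the cylinder to the mapping torus $S^1 \times Y$ with $\upiota$ acting trivially on the $S^1$-factor, identify $\ind Q_\upgamma$ with the real index on this closed $4$-manifold, observe $c_1(\mathfrak s_u) = c_1(\mathfrak s) + 2\eta \cup [u]$ and $b^1_{-\upiota^*} = b^+_{-\upiota^*} = b^1_{-\upiota^*}(Y)$, $b^0_{-\upiota^*}=0$, $\sigma = 0$, and apply Lemma~\ref{lem:closed4index}. The computation and conclusion agree; this is the paper's argument in slightly more explicit form.
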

\begin{proof}
	This is the $\upiota$-invariant version of \cite[Lemma~14.4.6]{KMbook2007}.
	The index of $Q_{\upgamma}$ can be interpreted as the index of the linearized Seiberg-Witten operator over $S^1 \times Y$, where the real spin\textsuperscript{c} structure $(\mathfrak s_{u},\uptau_u)$ is obtained by gluing the two ends of the pullback real spin\textsuperscript{c} structure of $(\mathfrak s, \uptau)$ over $I \times Y$ by the automorphism $u$.
	The spin\textsuperscript{c} structure has first chern class
	\begin{equation*}
		c_1(\mathfrak s_u) = c_1(\mathfrak s) + 2\eta \cup [u],
	\end{equation*}
	where $\eta$ is the generator of $H^1(S^1;\mathbb Z)$.
	Since $\upiota_X$ acts trivially on the $S^1$-factor, we have 
	\[
	b^1_{-\upiota_X^*}(S^1 \times Y) = b^1_{-\upiota^*}(Y), \quad 
	b^+_{-\upiota_X^*}(S^1 \times Y) = b^1_{-\upiota^*}(Y), \quad 
	b^0_{-\upiota_X^*}(S^1 \times Y) = 0. 
	\]
	By Lemma~\ref{lem:closed4index}, the index of the real Seiberg-Witten operator is 
	\[
		d = \frac{1}{2}\left(\eta \cup [u] \cup c_1(S)\right)[S^1 \times Y]
		= \frac{1}{2} ([u] \cup c_1(S))[Y]. \qedhere
	\]
\end{proof}

\subsubsection*{\textbf{(Boundary-unosbtructed trajectories)}}
We return to the discussion on the moduli spaces
\begin{equation*}
	M_z([\mathfrak a],[\mathfrak b]) \subset  \tilde M_z([\mathfrak a],[\mathfrak b]) \subset \tilde{\mathcal B}^{\tau}_{k,z}([\mathfrak a],[\mathfrak b]).
\end{equation*}
The first space is the subspace subject to $s \ge 0$, which can be identified by the quotient under the involution 
\begin{equation*}
	\mathbf{i}: [A,s,\phi] \mapsto [A, -s, -\phi].
\end{equation*}
in the blown-up coordinates, by the unique continuation property of trajectories.
A neighbourhood $\mathcal U_{\upgamma}$ of $[\upgamma]$ in $\tilde M_z([\mathfrak a],[\mathfrak b])$ is the zero set of the map
\begin{equation*}
	\mathfrak F^{\tau}_{\mathfrak q}|_{\mathcal U_{\upgamma}} : 
	\mathcal U_{\upgamma} \to \mathcal V^{\tau}_{k-1},
\end{equation*}
where the linearization is the Fredholm operator
\begin{equation*}
	\mathcal D_{\upgamma} \mathfrak F^{\tau}_{\mathfrak q}: \mathcal K^{\tau}_{k,\upgamma} \to \mathcal V^{\tau}_{k-1,\upgamma},
\end{equation*}
of index $\ind \mathcal Q_{\gamma}$.
In particular, if $\mathcal D_{\upgamma} \mathfrak F^{\tau}_{\mathfrak q}$ is surjective, then the moduli space $\tilde M_z([\mathfrak a],[\mathfrak b])$ is a smooth manifold near $[\upgamma]$ whose dimension is the index, which is by definition equal to $\gr_z([\mathfrak a],[\mathfrak b])$.

\subsubsection*{\textbf{(Boundary-obstructed trajectories)}}
The regularity in the boundary-obstructed case requires modification, as the moduli spaces are not regular as $\mathcal Q_{\upgamma}$ is never surjective. 
\begin{defn}
	If $\mathfrak a \in \mathcal C^{\sigma}_k(Y)$ belongs to the reducible locus, then $\mathfrak a$ is \emph{boundary-stable} if $\Lambda_{\mathfrak q}(\mathfrak a) > 0$ and \emph{boundary-unstable} if $\Lambda_{\mathfrak q}(\mathfrak a)< 0$.	
\end{defn}
Since $s$ satisfies the equation $ds/dt = -\Lambda_{\mathfrak q}s$, if $M_z([\mathfrak a],[\mathfrak b])$ contains an irreducible trajectory, then $[\mathfrak a]$ is either irreducible or boundary-unstable, and $[\mathfrak b]$ is either irreducible or boundary-stable.
Let $\upgamma$ be a reducible trajectory, then the operator $\mathcal Q_{\upgamma}$ has the following decomposition
\begin{equation*}
	\mathcal Q_{\upgamma} = \mathcal Q^{\del}_{\upgamma} \oplus \mathcal Q^{\nu}_{\upgamma}
\end{equation*}
of the ``boundary'' and ``normal'' parts,
depending on whether the involution $\mathbf{i}$ is trivial or non-trivial.
In particular, the first operator
\begin{equation*}
	\mathcal Q_{\upgamma}^{\del} = (\mathcal D_{\upgamma}\mathfrak F^{\tau}_{\mathfrak q})^{\del} \oplus \mathbf{d}_{\upgamma}^{\tau,\dag},
\end{equation*}
where $(\mathcal D_{\upgamma}\mathfrak F^{\tau}_{\mathfrak q})^{\del}: (\mathcal T^{\tau}_{k,\upgamma})^{\del} \to (\mathcal V^{\tau}_{k,\upgamma})^{\del}$ is the $\mathbf{i}$-invariant part, and the normal part is an operator over the real line
\begin{align*}
	\mathcal Q_{\upgamma}^{\nu}:
	L^2_k(\reals;i\reals) 
	&\to L^2_{k-1}(\reals;i\reals)\\
	\mathcal Q^{\nu}_{\upgamma} &=
	(ds/dt) + \Lambda_{\mathfrak q}(\check\upgamma)s.
\end{align*}
We can easily compute the dimensions of kernels and cokernels of $\mathcal Q^{\nu}_{\upgamma}$, and summarize as follows.
\begin{lem}
\label{lem:dim_cases}
	The dimensions of the kernel and cokernel of $\mathcal Q^{\nu}_{\upgamma}$ are:
	\begin{itemize}
		\item $(1,0)$ if $\mathfrak a, \mathfrak b$ are boundary-unstable and boundary-stable respectively;
		\item $(0,1)$ if $\mathfrak a, \mathfrak b$ are boundary-stable and boundary-unstable respectively;
		\item $(0,0)$ if $\mathfrak a, \mathfrak b$ are either both boundary-stable or boundary-unstable.
	\end{itemize}
	In particular, the cokernels of $\mathcal Q_{\upgamma}$ and $\mathcal Q^{\del}_{\upgamma}$ are the same for reducible trajectories except in the second case.
\end{lem}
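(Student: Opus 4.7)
The operator $\mathcal Q^{\nu}_{\upgamma}$ is a first-order scalar linear ODE of the form $s \mapsto ds/dt + \Lambda_{\mathfrak q}(\check\upgamma(t))\,s$ with coefficient $\Lambda(t) := \Lambda_{\mathfrak q}(\check\upgamma(t))$. My plan is to solve the ODE by quadrature and then read off $L^{2}$-integrability from the asymptotic exponents at $\pm\infty$. Concretely, let $\lambda_a = \Lambda_{\mathfrak q}(\mathfrak a)$ and $\lambda_b = \Lambda_{\mathfrak q}(\mathfrak b)$, which are non-zero by the non-degeneracy of the critical points (they are precisely the eigenvalues of the perturbed Dirac operator $D_{B,\mathfrak q}$ appearing in Proposition~\ref{prop:interchar}(ii)(c)). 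By the sign convention, $\mathfrak a$ is boundary-stable iff $\lambda_a > 0$ and boundary-unstable iff $\lambda_a < 0$, and similarly for $\mathfrak b$.

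First I would establish that $\Lambda(t) \to \lambda_a$ as $t \to -\infty$ and $\Lambda(t) \to \lambda_b$ as $t \to +\infty$, with exponential rate of convergence. This follows from the exponential decay of trajectories asymptotic to non-degenerate critical points (the same fact that was used to identify $L^{2}_{k,\loc}$-trajectories with $L^{2}_{k}$-trajectories in the previous subsection), together with continuity of $\Lambda_{\mathfrak q}$ from formula~\eqref{eqn:Lambda_q}. Consequently $\int_{0}^{t}\Lambda(\tau)\,d\tau = \lambda_a t + O(1)$ as $t\to-\infty$ and $= \lambda_b t + O(1)$ as $t\to+\infty$.

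The kernel of $\mathcal Q^{\nu}_{\upgamma}$ is spanned by $s(t) = \exp\bigl(-\int_{0}^{t}\Lambda(\tau)\,d\tau\bigr)$, so
\begin{equation*}
  s(t) \sim c_{-}e^{-\lambda_a t}\ \text{as}\ t\to-\infty,\qquad s(t) \sim c_{+}e^{-\lambda_b t}\ \text{as}\ t\to+\infty.
\end{equation*}
Membership in $L^{2}(\reals;i\reals)$ therefore requires decay at both ends, which holds precisely when $\lambda_a < 0$ and $\lambda_b > 0$; this gives kernel dimension $1$ in case~1 and $0$ otherwise. For the cokernel, I identify it with the kernel of the formal $L^{2}$-adjoint $-d/dt + \Lambda(t)$, whose solutions are $s(t) = \exp\bigl(\int_{0}^{t}\Lambda(\tau)\,d\tau\bigr)$, with reversed asymptotic exponents. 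The $L^{2}$-condition now selects $\lambda_a > 0$ and $\lambda_b < 0$, giving cokernel dimension $1$ in case~2 and $0$ otherwise. In the two ``same-type'' cases (both boundary-stable or both boundary-unstable), one exponent has the wrong sign at one end for either operator, so both kernel and cokernel vanish.

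The argument is essentially routine ODE analysis; the only point that requires slight care is justifying the exponential convergence $\Lambda(t) \to \lambda_{a/b}$, which ensures that the $O(1)$ remainder in $\int\Lambda$ does not spoil the integrability criterion, but this is standard once non-degeneracy is in hand. The final statement about cokernels of $\mathcal Q_{\upgamma}$ and $\mathcal Q^{\del}_{\upgamma}$ then follows immediately from the direct-sum decomposition $\mathcal Q_{\upgamma} = \mathcal Q^{\del}_{\upgamma} \oplus \mathcal Q^{\nu}_{\upgamma}$: $\coker \mathcal Q_{\upgamma} = \coker \mathcal Q^{\del}_{\upgamma} \oplus \coker \mathcal Q^{\nu}_{\upgamma}$, and the second summand vanishes except in case~2.
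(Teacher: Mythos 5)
Your proof is correct and coincides with the (un‑spelled‑out) computation the paper has in mind: the paper only says ``We can easily compute the dimensions of kernels and cokernels of $\mathcal Q^{\nu}_{\upgamma}$'' and your quadrature argument is exactly that computation, matching the convention that boundary-stable/unstable corresponds to $\Lambda_{\mathfrak q}>0$/$\Lambda_{\mathfrak q}<0$ and using exponential decay of trajectories to control the $O(1)$ remainder in $\int\Lambda$.
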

The case when $\mathcal Q_{\upgamma}^{\nu}$ is not surjective is \emph{boundary-obstructed}, and by Lemma~\ref{lem:dim_cases} the best one can hope is that the only cokernel arises from $\mathcal Q_{\upgamma}^{\nu}$.
\begin{defn}
	A moduli space $M([\mathfrak a],[\mathfrak b])$ is \emph{boundary-obstructed} if $[\mathfrak a]$ and $[\mathfrak b]$ are both reducible, $\mathfrak a$ is boundary-stable, and $\mathfrak b$ is boundary-unstable.	
\end{defn}
\begin{defn}
	Let $[\upgamma]$ be a solution in $M_z([\mathfrak a],\mathfrak b])$.
	Suppose the moduli space is not boundary-obstructed. 
	Then $\upgamma$ is \emph{regular} if $\mathcal Q_{\upgamma}$ is surjective.
	Suppose the moduli space is boundary-obstructed. 
	Then $\upgamma$ is \emph{regular} if $\mathcal Q^{\del}_{\upgamma}$ is surjective.
	The moduli space $M_z([\mathfrak a],\mathfrak b])$ is \emph{regular} if all its elements are regular.
\end{defn}
There are four possibilities of regular $M_z([\mathfrak a],[\mathfrak b])$ according to the type of endpoints.
\begin{prop}
\label{prop:class_regular_mod_space}
	Suppose the moduli space $M_z([\mathfrak a],[\mathfrak b])$ is regular, and $d = \gr_z([\mathfrak a],[\mathfrak b])$. 
	Then $M_z([\mathfrak a],[\mathfrak b])$ is
	\begin{itemize}[leftmargin=*]
		\item a smooth $d$-manifold consisting entirely of irreducible solutions if either $\mathfrak a$ or $\mathfrak b$ is irreducible;
		\item a smooth $d$-manifold with boundary if $\mathfrak a$, $\mathfrak b$ are boundary-unstable and -stable, respectively; in which case the boundary of the moduli space consists of the reducible elements;
		\item a smooth $d$-manifold consisting entirely of reducibles if $\mathfrak a,\mathfrak b$ are either both boundary-stable or both boundary-unstable;
		\item a smooth $(d+1)$-manifold consisting entirely of reducibles in the boundary-obstructed case. 
	\end{itemize}

\end{prop}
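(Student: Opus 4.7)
The plan is to analyze $M_z([\mathfrak a],[\mathfrak b])$ near any $[\upgamma]$ using the slice description of $\tilde{\mathcal B}^{\tau}_{k,z}([\mathfrak a],[\mathfrak b])$ and the Fredholm operator $\mathcal Q_{\upgamma}$ from Proposition~\ref{prop:Qfredholm}, and then track how the constraint $s \ge 0$ (equivalently, the involution $\mathbf i$) interacts with the zero set. In all four cases the implicit function theorem is the engine, and the regularity hypothesis is exactly what delivers a local manifold chart for the zero set of $\mathfrak F^{\tau}_{\mathfrak q}$ restricted to a slice. The ambient dimension comes out to $\ind \mathcal Q_{\upgamma} = \gr_z([\mathfrak a],[\mathfrak b]) = d$, and the four bullets differ only in whether the $s$-coordinate is positive, nonnegative, forced to be zero, or yields an extra cokernel direction.

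First I would treat the case where at least one endpoint is irreducible. The ODE $ds/dt = -\Lambda_{\mathfrak q}(\check\upgamma)s$ and unique continuation (as in \cite[Section~10]{KMbook2007}, which carries over verbatim since the perturbation is tame) imply that any $[\upgamma] \in M_z$ has $s(t) > 0$ for all $t$, so $[\upgamma]$ lies in the interior of the closed half-space cut out by $s \ge 0$; here $\tilde M_z = M_z$ locally, and regularity of $\mathcal Q_{\upgamma}$ together with the implicit function theorem applied to $\mathfrak F^{\tau}_{\mathfrak q}|_{\mathcal S^{\tau}_{k,\upgamma}}$ produces a smooth $d$-manifold chart near $[\upgamma]$. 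Next, for $\mathfrak a$ boundary-unstable and $\mathfrak b$ boundary-stable (the not boundary-obstructed reducible-to-reducible case), solutions with $s \equiv 0$ and solutions with $s > 0$ can both occur; the involution $\mathbf i$ decomposes $\mathcal Q_{\upgamma}$ into $\mathcal Q^{\del}_{\upgamma} \oplus \mathcal Q^{\nu}_{\upgamma}$, and by Lemma~\ref{lem:dim_cases} $\mathcal Q^{\nu}_{\upgamma}$ is surjective with a one-dimensional kernel in the $s$-direction. So $\tilde M_z$ is locally a smooth $d$-manifold with a free $\mathbf i$-action in the $s$-coordinate, and the quotient $M_z = \tilde M_z / \mathbf i$ is a smooth $d$-manifold with boundary $\{s=0\}$ consisting precisely of the reducibles.

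The third case, both endpoints boundary-stable or both boundary-unstable, forces $s \equiv 0$ by the ODE for $s$ (since $\Lambda_{\mathfrak q}$ has a fixed sign at each end), hence every $[\upgamma]$ is reducible. Then $\mathcal Q^{\nu}_{\upgamma}$ is an isomorphism by Lemma~\ref{lem:dim_cases}, so $\ker \mathcal Q_{\upgamma} = \ker \mathcal Q^{\del}_{\upgamma}$ and the implicit function theorem on the $\mathbf i$-invariant slice gives a smooth $d$-manifold of reducible solutions. Finally, in the boundary-obstructed case $\mathfrak a$ boundary-stable and $\mathfrak b$ boundary-unstable, again $s\equiv 0$ on every trajectory, but now $\mathcal Q^{\nu}_{\upgamma}$ has a one-dimensional cokernel while its kernel vanishes, so $\ker\mathcal Q_{\upgamma} = \ker \mathcal Q^{\del}_{\upgamma}$ and $\mathrm{coker}\,\mathcal Q_{\upgamma} = \mathrm{coker}\,\mathcal Q^{\nu}_{\upgamma}$; regularity now means only $\mathcal Q^{\del}_{\upgamma}$ is surjective. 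The implicit function theorem applied inside the $\mathbf i$-invariant slice produces a smooth manifold of dimension $\ind \mathcal Q^{\del}_{\upgamma} = \ind \mathcal Q_{\upgamma} + 1 = d+1$.

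The main obstacle is organizing the $\mathbf i$-equivariant linear algebra of $\mathcal Q_{\upgamma} = \mathcal Q^{\del}_{\upgamma} \oplus \mathcal Q^{\nu}_{\upgamma}$ so that the implicit function theorem can be applied coherently in a single framework, in particular handling the manifold-with-boundary case carefully: one must check that the $s$-coordinate gives a smooth defining function for the boundary on the quotient $\tilde M_z / \mathbf i$, and that near a reducible point the chart extends smoothly across the locus $s=0$. This is a direct adaptation of \cite[Section~14.5]{KMbook2007}; all the analytic inputs (tameness of $\mathfrak q$, Fredholmness and index formula from Proposition~\ref{prop:Qfredholm}, unique continuation, and the kernel/cokernel computation in Lemma~\ref{lem:dim_cases}) have already been established in the $\uptau$-invariant setting, so no new analytic difficulty arises beyond bookkeeping.
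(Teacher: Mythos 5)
Your proof is correct and follows essentially the same route as the paper, which simply cites \cite[Proposition~14.5.7]{KMbook2007} and remarks that the first three cases correspond to the involution $\mathbf i$ acting freely, with codimension-one fixed locus, or trivially. You have accurately unpacked that reference: using the $s$-ODE and unique continuation to sort trajectories into irreducible/reducible according to endpoint types, the splitting $\mathcal Q_{\upgamma} = \mathcal Q^{\del}_{\upgamma} \oplus \mathcal Q^{\nu}_{\upgamma}$ at reducibles, the kernel/cokernel count of Lemma~\ref{lem:dim_cases}, and the implicit function theorem.
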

\begin{proof}
	Same as \cite[Proposition~14.5.7]{KMbook2007}.
	The first three cases correspond to the involution having no fixed points, a fixed submanifold of codimension one, or the involution trivial.
\end{proof}

\subsection{Simplest moduli spaces}
\hfill \break
	Let us consider the simplest situation when $\mathfrak a_1$ and $\mathfrak a_2$ are two reducible points in $\mathcal C^{\sigma}_k(Y)$ on the same fibre of the blow up $\mathbf{\pi}^{-1}(\alpha) \subset S^{\infty}$ where $\alpha  \in \mathcal C_k(Y)$.
	Each $\mathfrak a_i$ corresponds to an eigenvalue of the perturbed Dirac operator $D_{\mathfrak q, B}$.
	The eigenvalues can be recovered as the value of $\Lambda_{\mathfrak q}$ at the critical points
	\begin{equation*}
		\lambda_1 = \Lambda_{\mathfrak q}(\mathfrak a_1), \quad
		\lambda_2 = \Lambda_{\mathfrak q}(\mathfrak a_2).
	\end{equation*}
	Let $z_0$ be the homotopy class of paths joining $[\mathfrak a_1]$ to $[\mathfrak a_2]$ in $\mathcal C^{\sigma}_k(Y)$.
	Let $i$ be the ``signed count'' of the number of eigenvalues of $D_{\mathfrak q,B}$ between $\lambda_1$ and $\lambda_2$, including one endpoint:
	\begin{equation*}
		i = \begin{cases}
			|\{\mu \in \Spec(D_{\mathfrak q, B})|: \lambda_2 < \mu \le \lambda_1, &
			\text{if }\lambda_1 \ge \lambda_2\\
			|\{\mu \in \Spec(D_{\mathfrak q, B})|: \lambda_2 > \mu \ge \lambda_1, &
			\text{if }\lambda_1 \le \lambda_2\
		\end{cases}
	\end{equation*}
	\begin{prop}\label{prop:modspaceRPi}
		Let $i$ be as above.
		If $\lambda_1 \ge \lambda_2$, then $M_{z_0}([\mathfrak a_1],[\mathfrak a_2])$ is diffeomorphic to an open subset of a projective space $\mathbb{RP}^i$, obtained as the complement of two hyperplanes.
		If $\lambda_1 < \lambda_2$, then the moduli space is empty.
	\end{prop}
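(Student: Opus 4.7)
The plan is to reduce the moduli space to a quotient of eigenvector expansions, mirroring the finite-dimensional analysis of Lemma~\ref{lem:RPn_crit} applied infinite-dimensionally. This is the real analogue of \cite[Proposition~14.6.1]{KMbook2007}, with $\mathbb{CP}$ replaced by $\mathbb{RP}$ because the perturbed Dirac operator acts on the real Hilbert space $L^2_k(Y;S)^{\uptau}$ rather than a complex one.

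First I would show that every trajectory in $M_{z_0}([\mathfrak a_1],[\mathfrak a_2])$ is entirely reducible with essentially constant blown-down connection. Since both endpoints satisfy $s=0$, and $s$ evolves by $ds/dt = -\Lambda_{\mathfrak q}s$, unique continuation forces $s\equiv 0$. On $\{s=0\}$ the flow equation for the connection becomes $dB/dt = -(\grad\pertL)^{\red}(B)$, so $B(t)$ is a reducible flow line starting and ending at the critical point $\alpha=[B,0]$. Because $\alpha$ is nondegenerate as a zero of $(\grad\pertL)^{\red}$ (part~(ii)(a) of Proposition~\ref{prop:interchar}) and $z_0$ is the homotopy class of paths between two lifts on the same fibre, the only such trajectory in Coulomb gauge is the stationary one $B(t)\equiv B$.

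Next, with $B$ fixed, the remaining boundary equation is
\begin{equation*}
\frac{d\psi}{dt}=-D_{\mathfrak q,B}\psi+\Lambda_{\mathfrak q}(B,0,\psi)\psi,\qquad \|\psi(t)\|_{L^2}=1.
\end{equation*}
Just as in the finite-dimensional model, solutions are precisely the images under the projection $\Gamma(S)^{\uptau}\setminus 0\to\mathbb{P}(\Gamma(S)^{\uptau})$ of non-zero solutions to the linear equation $d\phi/dt=-D_{\mathfrak q,B}\phi$, taken modulo the $\reals^{*}$-action. By Proposition~\ref{prop:Hess_op} and Lemma~\ref{lem:spectra_kASAFOE}, the real operator $D_{\mathfrak q,B}$ admits a complete orthonormal real eigenbasis $\{v_{\mu}\}$ with real eigenvalues $\mu$, so every solution expands as $\phi(t)=\sum_{\mu}a_{\mu}e^{-\mu t}v_{\mu}$.

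Then I would carry out the asymptotic analysis. The condition $\phi(t)\sim c_0 e^{-\lambda_1 t}v_{\lambda_1}$ as $t\to-\infty$ forces $a_{\mu}=0$ whenever $\mu>\lambda_1$, and forces $a_{\lambda_1}\neq 0$; dually $\phi(t)\sim c_1 e^{-\lambda_2 t}v_{\lambda_2}$ as $t\to+\infty$ forces $a_{\mu}=0$ whenever $\mu<\lambda_2$, and $a_{\lambda_2}\neq 0$. When $\lambda_1<\lambda_2$ no eigenvalues survive and $M_{z_0}$ is empty. When $\lambda_1\ge \lambda_2$ the surviving coefficients parametrize a real subspace $V\subset L^2_k(Y;S)^{\uptau}$ of dimension $(i+1)$, where $i$ counts the eigenvalues $\mu$ in $(\lambda_2,\lambda_1]$ (or equivalently $\dim V=\#\{\mu:\lambda_2\le\mu\le\lambda_1\}$). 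The quotient $(V\setminus 0)/\reals^{*}=\mathbb{RP}^i$, and the open conditions $a_{\lambda_1}\neq 0$, $a_{\lambda_2}\neq 0$ cut out the complement of two disjoint real projective hyperplanes. Smoothness of this identification follows from smooth parameter-dependence of solutions to a linear ODE with bounded coefficients.

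The main obstacle I anticipate is the first step: rigorously pinning down that $B(t)$ is stationary in the homotopy class $z_0$. One must invoke hyperbolicity of the extended Hessian at $\alpha$ (Lemma~\ref{lem:ext_Hess_real_eigen}) for the reducible locus to conclude that the only small-energy loop at $\alpha$ in the Coulomb slice is the constant one, and then account for the discrete gauge stabilizer $\{\pm 1\}$ appropriately so as not to over-count $z_0$-classes. Once this rigidity is in hand, the remaining steps are essentially linear spectral theory of the real operator $D_{\mathfrak q,B}$.
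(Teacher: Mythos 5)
Your proposal is correct and follows essentially the same route as the paper, which simply cites \cite[Proposition~14.6.1]{KMbook2007} together with Lemma~\ref{lem:RPn_crit}: reduce to the stationary reducible locus, pass to the linear spinor equation, expand in the real eigenbasis of $D_{\mathfrak q,B}$, and read off the open subset of $\mathbb{RP}^i$ from the asymptotic constraints $a_{\lambda_1}\ne 0$, $a_{\lambda_2}\ne 0$ (with $\dim V = i+1$ exactly as you computed, using simplicity of the spectrum from Proposition~\ref{prop:interchar}(ii)(b)).

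One small remark on the step you flag as the main obstacle: pinning down that $B(t)$ is constant does not require any appeal to hyperbolicity of the extended Hessian or to small-energy loop considerations. The blow-down $(B(t),0)$ is a genuine downward gradient trajectory of $\pertL^{\red}$ with both endpoints equal to $\alpha$; since $\pertL$ is non-increasing along gradient flow lines and equal at the two ends, it is constant along the trajectory, forcing $\grad\pertL^{\red}(B(t))\equiv 0$ and hence $B(t)\equiv B$ after a gauge transformation. Nondegeneracy of $\alpha$ is used to guarantee the simple spectrum of $D_{\mathfrak q,B}$ and the hyperbolicity needed for the Fredholm setup, not for this rigidity step. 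With that simplification the argument is exactly the infinite-dimensional, $\uptau$-invariant rendering of the $\mathbb{RP}^n$ lemma, and matches the intended proof.
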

	\begin{proof}
	See the proof \cite[Proposition~14.6.1]{KMbook2007} and apply Lemma~\ref{lem:RPn_crit}.
	\end{proof}
	\begin{cor}
		The relative grading is given by
		\begin{equation*}
			\gr(\mathfrak a_1,\mathfrak a_2) =
			\begin{cases}
				i & \text{if $\lambda_1$ and $\lambda_2$ have the same sign},\\
				i-1 & \text{if $\lambda_1$ is positive and $\lambda_2$ is negative},\\
				i+1 & \text{if $\lambda_1$ is negative and $\lambda_2$ is positive}.
			\end{cases}
		\end{equation*}
	\end{cor}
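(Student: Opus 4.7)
The plan is to combine Proposition~\ref{prop:modspaceRPi} with the moduli-dimension classification of Proposition~\ref{prop:class_regular_mod_space} to read off $\gr(\mathfrak{a}_1,\mathfrak{a}_2)$ wherever the moduli space is non-empty, and to close the remaining case by additivity of the relative grading along a third reducible lift.

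When $\lambda_1\ge\lambda_2$, Proposition~\ref{prop:modspaceRPi} exhibits $M_{z_0}([\mathfrak{a}_1],[\mathfrak{a}_2])$ as an open subset of $\mathbb{RP}^i$, hence a smooth $i$-dimensional manifold, and the bullet of Proposition~\ref{prop:class_regular_mod_space} that applies is determined by the signs of $\lambda_1,\lambda_2$. If the two share a sign, both critical points are of the same boundary type, so the moduli space is unobstructed with $\dim M=\gr$, giving $\gr=i$. If $\lambda_1>0>\lambda_2$, the pair is boundary-stable-to-boundary-unstable, the boundary-obstructed case with $\dim M=\gr+1$, giving $\gr=i-1$.

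The main obstacle is the remaining case $\lambda_1<0<\lambda_2$, which forces $\lambda_1<\lambda_2$ so that $M_{z_0}([\mathfrak{a}_1],[\mathfrak{a}_2])$ is empty by Proposition~\ref{prop:modspaceRPi}, and $\gr$ cannot be read off from the moduli dimension. To compute $\gr$ indirectly, I would invoke additivity,
\[
\gr(\mathfrak{a}_1,\mathfrak{a}_2) \;=\; \gr(\mathfrak{a}_1,\mathfrak{a}_3)+\gr(\mathfrak{a}_3,\mathfrak{a}_2),
\]
with an auxiliary reducible lift $\mathfrak{a}_3$ in the same fiber corresponding to an eigenvalue $\lambda_3>\max(\lambda_1,\lambda_2)$, which exists because $\Spec D_{\mathfrak{q},B}$ is unbounded above. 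The term $\gr(\mathfrak{a}_3,\mathfrak{a}_2)$ now sits in the same-sign regime of the previous paragraph, while $\gr(\mathfrak{a}_1,\mathfrak{a}_3)=-\gr(\mathfrak{a}_3,\mathfrak{a}_1)$ by reversing the spectral flow of the extended Hessian, and $\gr(\mathfrak{a}_3,\mathfrak{a}_1)$ falls in the boundary-obstructed case. Collapsing the two eigenvalue counts into the single count from $\lambda_1$ to $\lambda_2$, together with the $+1$ from the boundary-obstructed leg, produces the claimed value of $\gr$.

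One point of bookkeeping is the consistency of homotopy classes under the additivity: the concatenation $\mathfrak{a}_1\leadsto\mathfrak{a}_3\leadsto\mathfrak{a}_2$ must represent the same path class as $z_0$. This is harmless because every path stays inside the projective fiber over $[B,0]$, so any discrepancy is a loop in that fiber; such a loop corresponds to a constant gauge transformation $u\equiv\pm1$ with $[u]=0\in H^1(Y;\mathbb{Z})$, which by Lemma~\ref{lem:loopindex1} contributes zero to $\gr$.
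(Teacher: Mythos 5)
Your overall strategy is sound and matches the argument the paper has in mind: read $\gr$ off from the moduli dimension supplied by Proposition~\ref{prop:modspaceRPi} together with the regularity classification of Proposition~\ref{prop:class_regular_mod_space} when the moduli space is nonempty, and propagate to the empty cases by additivity. But there are two real issues.

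First, you silently drop the sub-case where $\lambda_1<\lambda_2$ with $\lambda_1,\lambda_2$ of the same sign; Proposition~\ref{prop:modspaceRPi} says the moduli space is empty there too, so it needs the same indirect treatment. The cleaner fix, which also removes the need for the auxiliary $\mathfrak a_3$, is to use the anti-symmetry $\gr(\mathfrak a_1,\mathfrak a_2)=-\gr(\mathfrak a_2,\mathfrak a_1)$ directly (this follows from additivity and $\gr(\mathfrak a,\mathfrak a)=0$), which reduces every $\lambda_1<\lambda_2$ case to a $\lambda_1>\lambda_2$ case you already covered. Second, the last step of the $\lambda_1<0<\lambda_2$ case is asserted, not computed, and this hides a sign subtlety. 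Carrying out the arithmetic yields
\[
\gr(\mathfrak a_1,\mathfrak a_2)\;=\;1-(i_{31}-i_{32})\;=\;1-\bigl|\{\mu:\lambda_1<\mu\le\lambda_2\}\bigr|\;=\;1-\bigl|\{\mu:\lambda_2>\mu\ge\lambda_1\}\bigr|,
\]
where the last equality holds because $\lambda_1,\lambda_2$ are both eigenvalues. This equals the corollary's $i+1$ only when $i$ is understood as the \emph{signed} count (negative when $\lambda_1<\lambda_2$), which is what the paper's scare-quoted phrase ``signed count'' signals; the displayed formula as written reads unsigned, and taken literally your computation would give $1-i$, not $i+1$. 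You should make this convention explicit rather than asserting the match.

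The final paragraph about homotopy classes is unnecessary and slightly misdirected: $\gr(\mathfrak a_1,\mathfrak a_2)$ in Definition~\ref{defn:rel_grading} is defined for lifts in $\mathcal C^{\sigma}_k(Y)$, which is contractible, so there is no path ambiguity and no gauge transformations enter; the appeal to Lemma~\ref{lem:loopindex1} is beside the point.
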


\subsection{Transversality of moduli spaces of trajectories}
\hfill \break
We follow \cite[Section~15.1]{KMbook2007}.
\begin{thm}
	There is $\mathfrak q \in \mathcal P$ such that:
	\begin{itemize}
		\item all critical points $\mathfrak a \in \mathcal C^{\sigma}_k(Y,\upiota)$ are nondegenerate;
		\item for each pair of critical points $\mathfrak a, \mathfrak b$ and each relative homotopy class $z$, the moduli space $M_z([\mathfrak a],[\mathfrak b])$ is regular. 
	\end{itemize}
\end{thm}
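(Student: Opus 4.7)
The plan is to follow the standard Sard--Smale strategy in the spirit of \cite[Section~15.1]{KMbook2007}, adapted to the $\uptau$-invariant setting already developed in Sections~\ref{sec:perturb}--\ref{sec:Transversality}. First, apply the earlier transversality theorem to obtain a residual subset $\mathcal P_0 \subset \mathcal P$ for which every critical point of $(\grad \pertL)^{\sigma}$ is nondegenerate; from now on, restrict attention to $\mathfrak q \in \mathcal P_0$. For each ordered pair of critical points $([\mathfrak a], [\mathfrak b])$ and each relative homotopy class $z$, introduce the parametrized (``universal'') moduli space
\[
	\mathcal M_z^{\mathrm{univ}}([\mathfrak a],[\mathfrak b])
	= \bigl\{ ([\upgamma], \mathfrak q') \in \mathcal B^{\tau}_{k,z}([\mathfrak a],[\mathfrak b]) \times \mathcal P_0 : \mathfrak F^{\tau}_{\mathfrak q'}(\upgamma) = 0 \bigr\},
\]
with the obvious modification in the boundary-obstructed case (where one replaces $\mathfrak F^{\tau}_{\mathfrak q'}$ by its $\mathbf i$-invariant restriction, thereby working only with reducible trajectories). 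Since the perturbations depend smoothly on $\mathfrak q'$ (Theorem~\ref{thm:largebanachspace}) and Lemma~\ref{lem:pSWsmooth_taumodel} yields smoothness of the Seiberg--Witten section, this set is cut out by a smooth section of a Banach bundle over $\mathcal B^{\tau}_{k,z} \times \mathcal P_0$.

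Next, I would show that the universal linearization
\[
	\mathcal D_{(\upgamma,\mathfrak q)}\mathfrak F^{\tau}_{\bullet}:
	\mathcal K^{\tau}_{k,\upgamma} \times \T_{\mathfrak q}\mathcal P_0
	\longrightarrow
	\mathcal V^{\tau}_{k-1,\upgamma},
	\qquad
	(v, \delta\mathfrak q) \longmapsto \mathcal D_{\upgamma}\mathfrak F^{\tau}_{\mathfrak q}(v) + \widehat{\delta\mathfrak q}(\upgamma),
\]
is surjective at every zero, so that $\mathcal M_z^{\mathrm{univ}}([\mathfrak a],[\mathfrak b])$ is a smooth Banach manifold. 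The Fredholmness of the first factor is Proposition~\ref{prop:Qfredholm}, so it is enough to pair against elements of the cokernel. If $w \in \mathcal V^{\tau}_{1-k,\upgamma}$ is $L^2$-orthogonal to the image, then in particular $\langle w, \widehat{\delta\mathfrak q}(\upgamma) \rangle = 0$ for every admissible variation $\delta\mathfrak q$; combined with the density condition built into Definition~\ref{defn:largebanachspaceperturb} and the $\uptau$-invariant version of the separation property from Proposition~\ref{thm:emb}, this forces $w$ to vanish on every slice $\{t\} \times Y$ where the path $\check\upgamma(t)$ is irreducible and non-stationary. A unique continuation argument for the linearized Seiberg--Witten operator on a cylinder (as in \cite[Section~7]{KMbook2007}, applied to the $\uptau$-invariant operator, which is still elliptic) then propagates this vanishing to all of $Z$, yielding $w=0$. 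In the boundary-obstructed case one repeats the argument with $\mathcal Q^{\del}_{\upgamma}$ and its restriction to the $\mathbf i$-invariant subspace, noting that the kernel/cokernel contribution of the normal component $\mathcal Q^{\nu}_{\upgamma}$ is accounted for by Lemma~\ref{lem:dim_cases}.

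Having established that $\mathcal M_z^{\mathrm{univ}}([\mathfrak a],[\mathfrak b])$ is a smooth Banach manifold, I would apply the Sard--Smale theorem to the projection to $\mathcal P_0$, whose fibre over $\mathfrak q'$ is exactly $M_z([\mathfrak a],[\mathfrak b])$. This projection has Fredholm differential of index $\gr_z([\mathfrak a],[\mathfrak b])$ (or that plus one in the boundary-obstructed case), so regular values form a residual subset $\mathcal P_{(z, \mathfrak a, \mathfrak b)} \subset \mathcal P_0$. Because the set of critical points is finite (by the first bullet) and the homotopy classes $z$ form a countable set, the intersection
\[
	\mathcal P_{\mathrm{reg}} := \bigcap_{[\mathfrak a],[\mathfrak b],z} \mathcal P_{(z,\mathfrak a,\mathfrak b)}
\]
is still residual, hence nonempty, and any $\mathfrak q \in \mathcal P_{\mathrm{reg}}$ satisfies both conclusions of the theorem.

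The main technical obstacle is the surjectivity step: to conclude that a cokernel element is zero, one needs both the richness of the perturbation space (so that variations $\widehat{\delta\mathfrak q}$ can prescribe essentially arbitrary $\uptau$-invariant inhomogeneities along any compact portion of a non-stationary trajectory) and unique continuation to push local vanishing to the whole cylinder. The anti-linearity of $\uptau$ causes no real difficulty here, because the separation lemma (Corollary~\ref{cor:separatetangent}) and the embedding result (Proposition~\ref{thm:emb}) were already formulated $\uptau$-invariantly; the real-scalar unique continuation for the linearized operator follows from the same Agmon--Nirenberg type estimates used in the ordinary case applied to the $\uptau$-invariant subspace, since that subspace is preserved by the operator.
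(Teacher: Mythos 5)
Your overall Sard--Smale strategy, the invocation of the large Banach space of perturbations, the surjectivity-via-unique-continuation step, and the countable intersection of residual sets are all in the spirit of the paper's proof, which follows \cite[Section~15.1]{KMbook2007}. But there is a genuine gap at the very start that the rest of the argument inherits.

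You write: take the residual set $\mathcal P_0$ where all critical points are nondegenerate, fix a pair of critical points $[\mathfrak a],[\mathfrak b]$, and form the universal moduli space over $\mathcal B^{\tau}_{k,z}([\mathfrak a],[\mathfrak b]) \times \mathcal P_0$. The problem is that $[\mathfrak a]$ and $[\mathfrak b]$ are critical points \emph{for a particular perturbation}. As $\mathfrak q'$ ranges over $\mathcal P_0$, the critical set of $(\grad\pertL)^{\sigma}$ moves; for a generic $\mathfrak q' \neq \mathfrak q$ the configurations $[\mathfrak a],[\mathfrak b]$ are not stationary points of the $\mathfrak q'$-perturbed flow, and the space $\mathcal B^{\tau}_{k,z}([\mathfrak a],[\mathfrak b])$ of $L^2_k$-trajectories asymptotic to $[\mathfrak a]$ and $[\mathfrak b]$ is not even the right ambient space for $\mathfrak F^{\tau}_{\mathfrak q'}$: the exponential decay estimates and the Fredholm theory of Proposition~\ref{prop:Qfredholm} both require the endpoints to be nondegenerate critical points of the perturbation being used. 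So $\mathcal M_z^{\mathrm{univ}}$ is not a well-posed Fredholm problem, and the projection to $\mathcal P_0$ is not the map you want.

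The paper resolves exactly this issue before invoking Lemma~\ref{lem:fredMorseSmale}. Having chosen $\mathfrak q_0$ with nondegenerate critical points, it picks an embedding $p_0$ of the critical set via Proposition~\ref{thm:emb}, surrounds the critical orbits by a small $\mathbb Z_2$-invariant open set $\mathcal O$, and then restricts to the affine subspace
\[
\mathcal P_{\mathcal O} = \{\mathfrak q : \mathfrak q|_{\mathcal O} = \mathfrak q_0|_{\mathcal O}\},
\]
so that for perturbations near $\mathfrak q_0$ in $\mathcal P_{\mathcal O}$ the critical set is unchanged. Only then are $[\mathfrak a], [\mathfrak b]$ fixed independently of the varying perturbation, the map $\mathfrak W(\upgamma,\mathfrak q) = \mathfrak F^{\tau}_{\mathfrak q}(\upgamma)$ is a section of a bundle over a fixed Banach manifold, and Sard--Smale applies. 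There is a secondary side effect of this fix that your surjectivity argument also elides: because the perturbations in $\mathcal P_{\mathcal O}$ are frozen near the critical points, one must locate an interval $J$ on which the trajectory exits $\overline{\mathcal O}$ (guaranteed by unique continuation and the requirement that $\mathcal O$ contains no essential loop based at $p_0([\mathfrak a])$); it is on $J$, not on the cylindrical ends, that one has freedom to vary $\delta\mathfrak q$ and pair against the putative cokernel element. Finally, the degenerate case $[\mathfrak a]=[\mathfrak b]$ with $z$ trivial needs the separate argument cited from \cite[Proposition~15.1.3]{KMbook2007}, which your write-up does not acknowledge.
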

\begin{proof}
	We start with a perturbation $\mathfrak q_0$ satisfying the fist condition, guaranteed by Theorem~\ref{thm:largebanachspace}.
	The critical set for $\mathfrak q_0$ is a finite collection of points, where the irreducible points come in pairs.
	Note that in the ordinary setting, the orbits of irreducible points are circles.
	From Proposition~\ref{thm:emb}, we choose a map that embeds the critical set
	\begin{equation*}
		p_0 : \mathcal B_k^{o} \to \reals^n \times \mathbb T \times \reals^m,
	\end{equation*}
	and separates the orbits of the different critical points.
	For each $[\mathfrak a] \in \mathcal B_k(Y)$, let $\mathcal O_{[\mathfrak a]} \subset \mathcal B^o_k(Y)$ be an open, $\mathbb Z_2$-invariant neighbourhood of the corresponding $\mathbb Z_2$-orbit, and their images under $p_0$ has disjoint closures.
	Denote the union
	\begin{equation*}
		\mathcal O = \bigcup_{[\mathfrak a]} \mathcal O_{[\mathfrak a]} \subset \mathcal B^o(Y).
	\end{equation*}
	We require $\mathcal O$ to be small enough so that there is no essential loop based at any $p_0([\mathfrak a])$ is contained in $\overline{p_0(\mathcal O)}$.
	Let $\mathcal P_{\mathcal O}$ be the closed linear space of perturbations 
	\begin{equation*}
		\mathcal P_{\mathcal O} = \left\{ \mathfrak q : \mathfrak q|_{\mathcal O} = \mathfrak q_0|_{\mathcal O}\right\}.
	\end{equation*}
	The definition ensures there is an open neighbourhood of $\mathfrak q_0$ in $\mathcal P_{\mathcal O}$, for which the perturbed the Seiberg-Witten gradient has no critical points outside $\mathcal O$.
	In other words, perturbation of this form will not change the critical set.
	
	The next goal is to show the set of perturbations $\mathfrak q \in \mathcal P_{\mathcal O}$ satisfying Condition~(ii) for all $\mathfrak a, \mathfrak b$ whose images belong to $\mathcal O \subset \mathcal B_k(Y)$ is a residual subset of $\mathcal P_{\mathcal O}$.
	To this end, we appeal to the general scheme in Lemma~\ref{lem:fredMorseSmale}, by considering the map
	\begin{align*}
		\mathfrak W: \mathcal C^{\tau}_k(\mathfrak a, \mathfrak b) \times \mathcal P_{\mathcal O} 
		&\to \mathcal V^{\tau}_{k-1}(Z) \\
		(\upgamma, \mathfrak q) 
		&\mapsto \mathfrak F^{\tau}_{\mathfrak q}(\upgamma).
	\end{align*}
	One needs to show $\mathcal D\mathfrak W$ is surjective at all irreducibles and a summand $(\mathcal D\mathfrak W)^{\del}$ is surjective at all reducibles.
	Let us mention the ingredients of the proof (and give the reference for details below) in the irreducible case where $[\mathfrak a],[\mathfrak b]$ have distinct image in the blown-down, for instance.
	
	Given an irreducible element $(\upgamma, \mathfrak q)$.
	We appeal to the unique continuation result \cite[Proposition~10.8.2]{KMbook2007} to find an interval $J \subset \reals$ for which $\upgamma|_J$ has no constant image in $\mathcal B(Y)$, and $p_0(\upgamma(J))$ lies outside $\overline{\mathcal O}$.
	Let $V$ be a hypothetical element of $\mathcal V^{\tau}_{k-1,\upgamma}$ orthogonal to the image of $\mathcal D \mathfrak F^{\tau}_{\mathfrak q}$, which under standard isomorphism gives rise to an $L^2_1$-section of $\mathcal T^{\sigma}_0(Y)$ along $\check\upgamma$.
	A linear unique continuation result \cite[Lemma~7.13.]{KMbook2007} implies that $V|_J$ is nonzero, and an integration-by-parts argument shows that $\check V(t)$ is everywhere orthogonal to the tangent space of gauge group action.
	
	On the other hand, we seek a cylinder function $f$ such that
	\begin{equation*}
		\left\langle \delta {\mathfrak q}^{\sigma}(t), \check V(t) \right\rangle_{\mathcal T^{\sigma}_{0,\check\upgamma(t)}} \ge 0.
	\end{equation*}
	with strict inequality at a point.
	The construction of $f$ involves using Propositon~\ref{thm:emb} to embed $\underline{\upgamma}(S)$ for certain compact subset $S \subset \reals$, and the Corollary~\ref{cor:separatetangent} to ensure $\mathcal D f$ nonnegative along certain vector field along $J$.
	For the proof of surjectivity of $\mathcal D\mathfrak W$ and $\mathcal (D\mathfrak W)^{\del}$, and the case when $\mathfrak a= \mathfrak b$ and the homotopy class $z$ is trivial, see \cite[Proposition~15.1.3]{KMbook2007}.
	The aforementioned results in \cite{KMbook2007} are general enough to adapt in the invariant setup.
\end{proof}

\subsection{Atiyah-Patodi-Singer boundary value problem}
\hfill \break
The setup of moduli spaces of finite cylinders involves the Atiyah-Patodi-Singer boundary value problem \cite{APS1973}.
We start with an abstract boundary value problem.
Let $X$ be a compact Riemannian 4-manifold that is cylindrical near the boundary, i.e. containing an isometric copy of $I \times Y$.
Let $\upiota_X$ be an involution on $X$ which is cylindrical near the boundary, in the sense that it is the pullback of some involution $\upiota: Y \to Y$.
Let $E,F$ be two vector bundles over $X$ with inner product, and the restriction near $I \times Y$ are equipped with the pull-back of a bundle $E_0 \to Y$.
Let $\upiota^E_X: E \to E$ and $\upiota^F_X: F \to F$  involutive lifts of $\upiota_X$, and are pullbacks of involutive lifts $\upiota^{E_0}: E_0 \to E_0$, over $I \times Y$.

Suppose we have an operator between the invariant sections 
\begin{equation*}
	D: C^{\infty}(X;E)^{\upiota^E_X} \to L^2(X;E)^{\upiota^F_X}
\end{equation*}
of the form
\begin{equation*}
	D = D_0 + K
\end{equation*}
where $D_0$ is an elliptic first order differential operator and $K$ is an operator which extends to a bounded operator
\begin{equation*}
	K: L^2_j(X;E)^{\upiota^E_X} \to L^2_j(X;F)^{\upiota^F_X}.
\end{equation*}
for $-k+1 \le j \le k-1$.
Near the boundary, we assume in addition that $D_0$ takes the form
\begin{equation*}
	D_0 u|_{I \times Y} = \frac{du}{dt} + L_0u,
\end{equation*}
where 
\begin{equation*}
	L_0: C^{\infty}(Y;E_0)^{\upiota^{E_0}} \to C^{\infty}(Y;E_0)^{\upiota^{E_0}}.
\end{equation*}
Under the above conditions, $D$ extends to a bounded operator $L^{2,\upiota^E}_{j+1} \to L_j^{2,\upiota^F}$ for $j \le k-1$, with closed range, finite-dimensional cokernel, but infinite-dimensional kernel.

For a natural Fredholm boundary value problem, we need to control the negative eigenspaces of the boundary values.
To this end, we define $H^+_0$ and $H^-_0$ as the closures in $L^2_{1/2}(Y;E_0)^{\upiota^{E_0}}$ of the spans of the eigenvectors belonging to positive and non-positive eigenvalues of $L_0$, respectively.
Moreover, define the projection
\begin{equation*}
	\Pi_0: L^2_{1/2}(Y;E_0)^{\upiota^{E_0}} \to L^2_{1/2}(Y;E_0)^{\upiota^{E_0}}
\end{equation*}
with images $H^-_0$ and kernel $H^+_0$.
Notice that we are taking the eigenvalue and eigenvectors of $L_0$ as an operator on the space of invariant sections of $E_0$.
\begin{thm}
\label{thm:cylinder_spectral_fred}
	Suppose that $X$ is a compact manifold with boundary, and let $D$ and $\Pi_0$ be as above.
	Then:
	\begin{enumerate}[leftmargin=*]
		\item[(i)] For $1 \le j \le k$, the operator
		\begin{equation*}
			D \oplus (\Pi_0 \circ r) :
			L^2_j(X;E)^{\iota^E} \to 
			L^2_{j-1}(X;F)^{\iota^F} \oplus 
			(H^-_0 \cap L^2_{j-1/2}(Y;E_0)^{\upiota^{E_0}}
		\end{equation*}
		is Fredholm. In particular, the restriction of $\Pi \circ r$ to $\ker(D)$ is Fredholm.
		\item[(ii)] If $u_i$ is a bounded sequence in $L^2_j(X;E)^{\upiota^E}$ and $Du_i$ is Cauchy in $L^2_{j-1}$, then $(1-\Pi_0) r(u_i)$ has a convergent subsequence in $H^+_0 \cap L^2_{j-1/2}$. In particular, the restriction of $(1-\Pi_0) \circ r$ to $\ker(D)$ is compact.
		\item[(iii)] If $u$ is in $L^2_j(X;E)^{\upiota_X^E}$ for $j \le k$, and the image of $u$ under the operator is in $L^2_{k-1}(X;F)^{\upiota_X^F} \oplus (H^-_0 \cap L^2_{k-1/2}(Y;E_0)^{\upiota^{E_0}}$, then $u$ lies in $L^2_k(X;E)^{\upiota^E_X}$. In particular, the kernel consists of $L^2_k$-sections.
	\end{enumerate}	
\end{thm}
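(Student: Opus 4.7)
The plan is to reduce each of (i), (ii), (iii) to the classical Atiyah--Patodi--Singer boundary value problem, proved in detail in \cite[Chapter~17]{KMbook2007} without an involution, by invoking the equivariance of all the relevant operators and then restricting to $\upiota$-invariant subspaces. The substance of the analysis is therefore already carried out in the ambient setting; the only new issue is to check that restricting to fixed points of a continuous involution does not destroy the Fredholm, compactness, or regularity conclusions.

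The first step is to verify equivariance. By hypothesis $K$ sends invariant sections to invariant sections, and since near the boundary $D_0$ is the pullback of $d/dt + L_0$ from $Y$, the invariance of $D_0$ forces $L_0$ to commute with $\upiota^{E_0}$. Consequently the generalized eigenspaces of $L_0$, acting on the full space $L^2(Y;E_0)$, are each preserved by $\upiota^{E_0}$ and decompose orthogonally into $(\pm 1)$-eigenspaces. The ambient spectral projection $\underline{\Pi}_0$ onto non-positive eigenspaces therefore commutes with $\upiota^{E_0}$, and the operator $\Pi_0$ in the statement is precisely its restriction to $L^2_{1/2}(Y;E_0)^{\upiota^{E_0}}$; similarly for $H^{\pm}_0$.

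With equivariance in hand the argument is formal. Let $\underline{T}_j = D \oplus (\underline{\Pi}_0 \circ r)$ denote the ambient APS operator, which is Fredholm by the classical theory. Both the domain and codomain decompose orthogonally under the direct-sum involution into $(\pm 1)$-eigenspaces, $\underline{T}_j$ respects the decomposition, and the restriction of a Fredholm operator to a closed complemented invariant subspace is Fredholm. The $(+1)$-summand is exactly the invariant subspace that appears in (i), giving that statement. For (ii), a bounded invariant sequence with $Du_i$ Cauchy is a fortiori an admissible ambient sequence; the ambient convergent subsequence of $(1-\underline{\Pi}_0)r(u_i)$ has invariant limit by continuity of the involutions, so the limit lies in $H^+_0\cap L^2_{j-1/2}$. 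For (iii), ambient $L^2_k$-regularity holds by the classical theory, and $L^2_k$-regularity of an invariant element in the full space immediately gives $L^2_k$-regularity in the invariant subspace. The only delicate point in the plan is the spectral compatibility of $L_0$ between invariant and ambient sections, which is precisely what the equivariance step above ensures; beyond that, no new analytic difficulty intervenes.
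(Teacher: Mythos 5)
Your argument is correct, but it follows a different route from the one sketched in the paper. The paper's proof asserts that the parametrix patching argument and the half-cylinder base case from \cite[Chapter~17]{KMbook2007} ``generalize to the $\upiota^E$-invariant situation'' --- i.e.\ it proposes to re-run the construction of the parametrix directly on the $\upiota$-invariant Sobolev spaces. You instead apply the ambient KM theorem as a black box and restrict: since the involutions are bounded isometries, each space splits as the closed complemented sum of its $(\pm1)$-eigenspaces, an equivariant operator respects this splitting, and a Fredholm operator that decomposes as a direct sum $T^+\oplus T^-$ across such a splitting has both summands Fredholm (kernel is the invariant part of a finite-dimensional space; image is $\mathrm{im}(T)\cap W^+$, hence closed; cokernel injects into $\mathrm{coker}(T)$). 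This is a cleaner and more formal argument, and it also dispatches (ii) and (iii) in one stroke by continuity of the involutions. The one small step you gloss over is that the bounded perturbation $K$ is only hypothesized on invariant sections, so in order to have an ambient operator to which the classical theorem applies you must first extend $K$ (e.g.\ by zero on the anti-invariant summand, which is automatically equivariant and preserves the boundedness hypotheses); with that extension $L_0^{\mathrm{amb}}$ already exists as the restriction of $D_0 - d/dt$ to the boundary, and your identification of the invariant spectral projection $\Pi_0$ with the restriction of $\underline\Pi_0$ is exactly right. Worth noting the trade-off: the paper's in-place parametrix argument requires no discussion of how invariant spectral subspaces sit inside the ambient ones, whereas your reduction outsources all the hard analysis to the non-equivariant theorem at the price of one easy extension step and the functional-analytic lemma on restrictions of Fredholm operators.
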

\begin{proof}
	The Fredholmness is a parametrix patching argument combined with the special case when $Z = (-\infty, 0]$ and $L_0$ is invertible on the invariant $L^2_s$ sections, both generalize to the $\upiota^E$-invariant situation.
	The proof rest of the assertions also follows from the ordinary case in \cite[Theorem~17.1.3]{KMbook2007}.
\end{proof}
Once the general theorem is in place, let us prove a gauge-theoretic version.
Let $I \times Y$ be a finite cylinder.
We are interested in the gauge-equivalence classes of solutions to the perturbed Seiberg-Witten equations
\begin{equation*}
	M(Z) \subset \tilde M(Z) = 
	\left\{
	[\upgamma] \in \tilde{\mathcal B}^{\tau}_k(Z) 
	\big| 
	\mathfrak F^{\tau}_{\mathfrak q}(\upgamma)=0
	\right\}
\end{equation*}
over the finite cylinder.
Without boundary conditions, this is an infinite-dimensional space.
In fact, unlike the infinite cylinder case, the space actually depends on $k$ due to the regularity near boundary.
Still, we have the following result.
\begin{thm}
	The subspace $\tilde M(Z) \subset \tilde{\mathcal B}^{\tau}_k(Z)$  is a closed Hilbert submanifold. The subset $M(Z)$ is a Hilbert submanifold with boundary, and can be identified as the quotient of $\tilde M(Z)$ by the involution $\mathbf{i}$.
\end{thm}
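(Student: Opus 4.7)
The plan is to apply the implicit function theorem on slice charts to give $\tilde M(Z)$ its Hilbert submanifold structure, and then to handle the condition $s \geq 0$ via the smooth involution $\mathbf{i}$.

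First I would fix $[\upgamma_0] \in \tilde M(Z)$, choose a smooth representative $\upgamma_0$, and use the slice $\mathcal S^{\tau}_{k,\upgamma_0}$ from Subsection~\ref{subsec:loc_str} (whose construction transfers verbatim from the infinite to the finite cylinder) as a local chart for $\tilde{\mathcal B}^{\tau}_k(Z)$ near $[\upgamma_0]$. Under this chart, $\tilde M(Z)$ corresponds locally to the zero set of the smooth Seiberg-Witten section $\mathfrak F^{\tau}_{\mathfrak q}:\mathcal S^{\tau}_{k,\upgamma_0}\to\mathcal V^{\tau}_{k-1,\upgamma_0}$; smoothness follows from Lemma~\ref{lem:pSWsmooth_taumodel} by the same argument, and $\tilde M(Z)$ is automatically closed as the zero set of a continuous map. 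It then suffices to show the linearization at $\upgamma_0$ is surjective with closed, complementable kernel, after which the implicit function theorem for Hilbert manifolds gives the submanifold structure.

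The next step is the key analytic point. Coupling $\mathcal D_{\upgamma_0}\mathfrak F^{\tau}_{\mathfrak q}$ with Coulomb gauge fixing produces, after the reparametrization $\boldsymbol{\psi}=\psi+r\phi_0$ used in the proof of Proposition~\ref{prop:Qfredholm}, the operator
\[
Q_{\upgamma_0}=\frac{d}{dt}+L_{\check\upgamma_0(t)},
\]
where $L_t$ is a time-dependent $k$-\textsc{asafoe} operator on $\uptau$-invariant sections over $Y$. Unlike the infinite-cylinder setting, no boundary conditions are imposed at $t_1,t_2$, so $Q_{\upgamma_0}$ is \emph{not} Fredholm; instead it is an inhomogeneous first-order linear evolution equation. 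Forward integration from arbitrary $L^2_k$-Cauchy data at $t_1$ against an $L^2_{k-1}$-inhomogeneity produces an $L^2_k$-solution on all of $Z$, yielding surjectivity, and the kernel is canonically identified with the Hilbert space of admissible $L^2_k$-Cauchy data at $t_1$, which is closed and complementable inside $\mathcal T^{\tau}_{k,\upgamma_0}$.

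For the second assertion, I would observe that the map $\mathbf{i}:[A,s,\phi]\mapsto[A,-s,-\phi]$ preserves the perturbed Seiberg-Witten equations, because every term in $\mathfrak F^{\tau}_{\mathfrak q}$ is even in the pair $(s,\phi)$ (using linearity of $\mathfrak q^1$ in its spinor argument and the evenness of $\Lambda_{\mathfrak q}$), so it restricts to a smooth involution of $\tilde M(Z)$. Its fixed set is exactly $\{s\equiv 0\}$: the function $s$ obeys the pointwise ODE $ds/dt=-\Lambda_{\mathfrak q}(B,s,\psi)\,s$, so once $s$ vanishes at one time it vanishes identically. Moreover the differential of $s$ is nonzero transverse to this fixed locus (again by the ODE), so the standard local normal form for a smooth involution with a codimension-one fixed set applies: $M(Z)=\{s\geq 0\}$ is a closed fundamental domain carrying a natural Hilbert-manifold-with-boundary structure, canonically identified with $\tilde M(Z)/\mathbf{i}$, with boundary equal to the fixed set $\{s=0\}$.

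The main obstacle is the surjectivity claim on the finite cylinder. The evolution-equation heuristic is transparent, but rigorously producing $L^2_k$-solutions from $L^2_{k-1}$ data against time-dependent, low-regularity coefficients requires the same mixed-norm parametrix estimates that power Theorem~\ref{thm:cylinder_spectral_fred}, now adapted to $\uptau$-invariant sections and without imposing any spectral boundary conditions. Once this analytic input is in hand, the remaining assembly via the implicit function theorem and the involution quotient is routine.
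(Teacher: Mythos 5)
Your overall strategy matches the paper's (slice charts plus the implicit function theorem, then the involution normal form near $\{s=0\}$), and the second half — using the ODE $ds/dt=-\Lambda_{\mathfrak q}(B,s,\psi)s$ to identify the fixed locus of $\mathbf{i}$ as a smooth codimension-one hypersurface and applying the standard local normal form — is correct. The gap is in the surjectivity of the linearization. The claim that forward integration from arbitrary $L^2_k$-Cauchy data at $t_1$ produces an $L^2_k$-solution on all of $Z$ is false: after the $\boldsymbol{\psi}$-substitution, $Q_{\upgamma_0}=d/dt+L_t$ with $L_t$ a self-adjoint first-order elliptic operator on $Y$ whose spectrum is unbounded in both directions, so the forward evolution operator is \emph{unbounded} on $L^2(Y)$ for $t>t_1$. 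The modes with eigenvalue $-\lambda$, $\lambda>0$, grow like $e^{\lambda(t-t_1)}$, and $\lambda$ is arbitrarily large, so finiteness of the time interval does not help. The Cauchy problem is ill-posed forward in time, exactly as for the backward heat equation, and no choice of Cauchy data at a single end parametrizes solutions. The obstacle you flag at the end is therefore not merely one of regularity estimates with low-regularity coefficients — the heuristic itself fails. (The genuine parametrix of Theorem~\ref{thm:cylinder_spectral_fred} propagates forward in the positive spectral subspace and backward, from $t_2$, in the negative one.)

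The argument the paper actually relies on, made explicit in the proof of Proposition~\ref{prop:mfld_boundary_Hilbert}, does not use an evolution-equation picture at all. The parametrix gives closed range for $Q_{\upgamma_0}$; integration by parts identifies the cokernel with solutions of the formal adjoint equation whose restriction to $\del Z$ vanishes; and a unique continuation theorem from the boundary (Kronheimer--Mrowka's Corollary 17.1.5, which carries over to $\uptau$-invariant sections unchanged) forces any such solution to vanish. You should replace your forward-integration step with this adjoint-plus-unique-continuation argument; the rest of your proposal then assembles correctly.
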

\begin{proof}
	See \cite[Theorem 17.3.1]{KMbook2007}.
\end{proof}
\subsection{Moduli spaces over finite cylinders}
\hfill \break
Write the boundary of $Z$ as $\bar Y \sqcup Y$, where bar over $Y$ indicates orientation reversal.
We have restriction maps
\begin{align*}
	R_Y: \tilde M(Z) 
	&\to \mathcal B^{\sigma}_{k-1/2}(Y),\\
	R_{\bar Y}: \tilde M(Z) 
	&\to \mathcal B^{\sigma}_{k-1/2}(\bar{Y}),
\end{align*}
whose derivatives can be considered as 
\begin{align*}
	\mathcal D R_Y: \tilde M(Z) 
	&\to \mathcal K^{\sigma}_{k-1/2}(Y),\\
	\mathcal D R_{\bar Y}: \tilde M(Z) 
	&\to \mathcal K^{\sigma}_{k-1/2}(\bar{Y}),
\end{align*}
The analogue of the three-dimensional operator $D$ above is the Hessian
\begin{equation*}
	\Hess_{\mathfrak q,\mathfrak a}^{\sigma}: \mathcal C^{\sigma}(Y) \to \mathcal K^{\sigma}_j,
\end{equation*}
which has discrete spectrum and finite-dimensional generalized eigenspaces. 
This follows from the consideration of the extended Hessian, see the discussion preceding \cite[Theorem~17.3.2]{KMbook2007}.
If $\Hess_{\mathfrak q,\mathfrak a}$ is hyperbolic, then there is a spectral decomposition
\begin{equation*}
	\mathcal K^{\sigma}_{k-1/2,\mathfrak a} = \mathcal K^+_{\mathfrak a} \oplus \mathcal K^-_{\mathfrak a}.
\end{equation*}
In the non-hyperbolic case, we pick an $\epsilon$ sufficiently small that there are no eigenvalues in $(0,\epsilon)$, and we define $\mathcal K^{\pm}_{\mathfrak a}$ using the spectral decomposition of $\Hess_{\mathfrak q,\mathfrak a}-\epsilon$.
This defines a family of decompositions, which is not continuous at points where $\Hess_{\mathfrak a}$ has kernel.
Denote
\begin{align*}
	\mathcal K^{\sigma}_{k-1/2, (\mathfrak a, \bar{\mathfrak a})} (\bar Y \sqcup Y) 
	&\cong
	 \mathcal K^{\sigma}_{k-1/2,\bar{\mathfrak a}}(Y) \oplus 
	 \mathcal K^{\sigma}_{k-1/2,\mathfrak a}(Y) \\
 \mathcal K^{-}(\bar Y \sqcup Y) 
	&\cong
	 \mathcal K^{+}_{\bar{\mathfrak a}}(Y) \oplus 
	 \mathcal K^{-}_{\mathfrak a}(Y).
\end{align*}
\begin{thm}
\label{prop:cyinder_finite_fredholm}
	Let $\upgamma, \mathfrak a$ and $\bar{\mathfrak a}$ be as above, and let $\pi$ be the projection
	\begin{equation*}
		\pi: \mathcal K^{\sigma}_{k-1/2, (\mathfrak a, \bar{\mathfrak a})} (\bar Y \sqcup Y) \to \mathcal K^{\sigma}_{k-1/2, (\mathfrak a, \bar{\mathfrak a})} (\bar Y \sqcup Y)
	\end{equation*}
	with kernel $\mathcal K^+_{(\mathfrak a, \bar{\mathfrak a})}(\bar Y, Y)$. Then the two composite maps 
	\begin{align*}
		\pi \circ \mathcal D(R_{\bar Y}, R_Y):
		T_{[\upgamma]}\tilde M(Z) 
		&\to \mathcal K^-_{(\mathfrak a, \bar{\mathfrak a})},\\
		(1-\pi) \circ \mathcal D(R_{\bar Y}, R_Y):
		T_{[\upgamma]}\tilde M(Z) 
		&\to \mathcal K^+_{(\mathfrak a, \bar{\mathfrak a})},
	\end{align*}
	are respectively Fredholm and compact.
\end{thm}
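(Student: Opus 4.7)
The plan is to recognize the statement as a gauge-theoretic incarnation of the abstract Atiyah-Patodi-Singer result Theorem~\ref{thm:cylinder_spectral_fred}, applied to the linearization of $\mathfrak{F}^\tau_\mathfrak q$ together with a Coulomb slice condition. First I would work on the slice: pick a configuration $\upgamma$ representing $[\upgamma]$, and identify $T_{[\upgamma]}\tilde M(Z)$ with the $L^2_k$-kernel of the Fredholm-on-the-interior operator
\begin{equation*}
    Q_\upgamma = \mathcal D_\upgamma \mathfrak{F}^\tau_\mathfrak q \oplus \mathbf d^{\tau,\dag}_\upgamma
    : \mathcal T^\tau_{k,\upgamma} \to \mathcal V^\tau_{k-1,\upgamma} \oplus L^2_{k-1}(Z;i\reals)^{-\upiota_Z^*},
\end{equation*}
exactly as in the infinite-cylinder computation of Subsection~\ref{subsec:loc_str}. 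As in the proof of Proposition~\ref{prop:Qfredholm}, I would use the substitution $\boldsymbol\psi = \psi + r\phi_0$ to unconstrain the spinorial factor, so that $Q_\upgamma$ acquires the form $d/dt + L_0 + h_{\check\upgamma(t)}$ acting on $\uptau$-invariant sections of a bundle pulled back from $Y$, with $L_0$ first-order self-adjoint elliptic and $h$ a lower-order, bounded, $k$-almost-self-adjoint perturbation (by tameness of $\mathfrak q$, Definition~\ref{defn:tame}).

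Next I would match this with the hypotheses of Theorem~\ref{thm:cylinder_spectral_fred}. The role of the boundary operator $L_0$ there is played by the extended Hessian $\widehat{\Hess}^{\sigma}_{\mathfrak q,\mathfrak a}$ of~\eqref{eqn:extendedHessianTL}, which is $k$-\textsc{asafoe} and (by Lemma~\ref{lem:ext_Hess_real_eigen}) has real discrete spectrum with finite-dimensional generalized eigenspaces. Hence the positive/non-positive spectral projection is defined at each hyperbolic endpoint, and one checks that, under the identification of tangent vectors to the slice with pairs $(V,c)$, the subspace $\mathcal K^\sigma_{k-1/2,\mathfrak a}$ is precisely the kernel of the Coulomb-condition map $\mathbf d^{\sigma,\dag}_{\mathfrak a}$, so the spectral decomposition $\mathcal K^+_{\mathfrak a}\oplus \mathcal K^-_{\mathfrak a}$ is the restriction of the one for the extended Hessian. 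Theorem~\ref{thm:cylinder_spectral_fred}(i) then yields Fredholmness of $Q_\upgamma \oplus (\Pi_0\circ r)$, whose kernel is $T_{[\upgamma]}\tilde M(Z)$ intersected with the preimage of $\mathcal K^-$; interpreted on $T_{[\upgamma]}\tilde M(Z)$ itself this is the Fredholmness of $\pi\circ \mathcal D(R_{\bar Y},R_Y)$. Part~(ii) of the same theorem gives compactness of $(1-\pi)\circ\mathcal D(R_{\bar Y}, R_Y)$.

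The last step is to deal with the non-hyperbolic case, where $\Hess^\sigma_{\mathfrak q,\mathfrak a}$ or $\Hess^\sigma_{\mathfrak q,\bar{\mathfrak a}}$ has kernel. Following the parenthetical remark in the theorem's setup, I would shift by a small $\epsilon>0$ avoiding spectrum, replacing $L_0$ by $L_0 - \epsilon$ on a neighbourhood of the boundary (extended smoothly to the interior and absorbed into the lower-order term $h$). The shifted operator is hyperbolic, so Theorem~\ref{thm:cylinder_spectral_fred} applies, and since the shift is a bounded zeroth-order operator it changes the restriction maps only by a compact term, preserving both Fredholmness and compactness of the respective pieces.

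The main obstacle I expect is the bookkeeping at this last step: verifying that the subspaces $\mathcal K^\pm_{\mathfrak a}$ as defined from the three-dimensional Hessian on the Coulomb slice actually agree with the positive/negative spectral subspaces for the four-dimensional boundary operator that naturally appears when writing $Q_\upgamma$ as $d/dt+L_0+h$ in the $\boldsymbol\psi$-coordinate. This is essentially a linear-algebraic reconciliation of the extended versus restricted Hessians and their spectra, and parallels the argument in \cite[Theorem~17.3.2]{KMbook2007}; the only new input needed here is that anti-linearity of $\uptau_Z$ and the $\uptau$-equivariance of the perturbation (Definition~\ref{defn:tame}) preserve all the spectral data when restricted to $\uptau$-invariant sections, which follows from the structural results in Section~\ref{sec:Transversality}.
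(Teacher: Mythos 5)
Your overall strategy is the one the paper follows: identify $T_{[\upgamma]}\tilde M(Z)$ with (a slice for) the kernel of the gauge-fixed linearization $Q_\upgamma$, write $Q_\upgamma$ as $d/dt + L$ via the $\boldsymbol{\psi}=\psi+r\phi_0$ substitution, take $L$ to be the extended Hessian $\widehat{\Hess}^\sigma_{\mathfrak q}$, and then feed this into the APS-type Theorem~\ref{thm:cylinder_spectral_fred}. The $\epsilon$-shift you invoke for the non-hyperbolic case is also the right device. So the architecture is sound.

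However, there is a real gap at the step you flag as the ``main obstacle,'' and you misdiagnose its nature. You assert that ``the spectral decomposition $\mathcal K^+_{\mathfrak a}\oplus\mathcal K^-_{\mathfrak a}$ is the restriction of the one for the extended Hessian,'' so that the abstract projection $\Pi_0$ of Theorem~\ref{thm:cylinder_spectral_fred} literally induces the projection $\pi$ appearing in the statement. That identification would only be valid if the extended Hessian at $\mathfrak a$ were block-diagonal with respect to $\mathcal K^\sigma_{\mathfrak a}\oplus(\mathcal J^\sigma_{\mathfrak a}\oplus L^2(Y;i\reals)^{-\upiota^*})$, which in turn requires the off-diagonal entries $x,y$ in the matrix of $\widehat{\Hess}^\sigma_{\mathfrak q,\mathfrak a}$ (Subsection~\ref{subsec:Hessians}) to vanish. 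This holds only at critical points of $\pertL$. Here $[\upgamma]$ is an arbitrary element of $\tilde M(Z)$, so its boundary restriction $\mathfrak a = R_Y[\upgamma]$ is \emph{not} a critical point; the paper even says explicitly in the discussion preceding the theorem that it handles the non-hyperbolic boundary by an $\epsilon$-shift, and the extended Hessian there has nontrivial $x,y$. Thus the APS spectral projection of $\widehat{\Hess}^\sigma_{\mathfrak q,\mathfrak a}$ on $\mathcal T^\sigma_{k-1/2,\mathfrak a}\oplus L^2_{k-1/2}(Y;i\reals)^{-\upiota^*}$ does \emph{not} restrict to the spectral projection of $\Hess^\sigma_{\mathfrak q,\mathfrak a}$ on $\mathcal K^\sigma_{\mathfrak a}$; the two are only \emph{commensurate}. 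The missing ingredient is precisely the generalization of Theorem~\ref{thm:cylinder_spectral_fred} to commensurate boundary projections, discussed in \cite[Section~17.2]{KMbook2007}: Fredholmness and compactness are stable under replacing the APS projection by any commensurate one. Your ``linear-algebraic reconciliation'' should be replaced by that operator-theoretic argument, after which the two composites in the statement are indeed Fredholm and compact.
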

\begin{proof}
	We consider the linearization of Seiberg-Witten map coupled with Coulomb gauge condition
	\begin{equation*}
		Q_{\upgamma} = \frac{D}{dt} + L_{\upgamma(t)}
	\end{equation*}
	in place of the operator $D$ in Theorem~\ref{thm:cylinder_spectral_fred}, where $L$ is essentially the extended Hessian operator.
	The corresponding minus spectral summand on $Y$, for example, is
	\begin{equation*}
		\tilde H^-_Y \subset
		\mathcal T^{\sigma}_{j-1/2,\mathfrak a} \oplus
		L^2_{j-1/2}(Y;i\reals)^{-\upiota^*}.
	\end{equation*}
	However, the projection operator relevant to our theorem is not the spectral projection above.
	In fact, the theorem follows from a generalization of Theorem~\ref{thm:cylinder_spectral_fred}, involving \emph{commensurate projections}, 
	This generalization is discussed in \cite[Section~17.2]{KMbook2007}.
	The details the proof can be found in \cite[Theorem~17.3.2]{KMbook2007}, which applies verbatim in our case. 
\end{proof}

\section{Compactness}
\label{sec:compact}
We will compactify moduli spaces by adding broken trajectories, following \cite[Section~16]{KMbook2007}.
\subsection{Compactification by broken trajectories}
\begin{defn}[unparametrized trajectory]
	An \emph{unparametrized trajectory} is an equivalence class of non-trivial trajectories in $M_z([\mathfrak a],[\mathfrak b])$ under the action of translations.
	Denote space of unparametrized trajectories as $\check M_z([\mathfrak a],[\mathfrak b])$.
\end{defn}
\begin{defn}[unparametrized broken trajectory]
	An \emph{unparametrzied broken trajectory} joining $[\mathfrak a]$ to $[\mathfrak b]$ consists of the following data:
	\begin{itemize}[leftmargin=*]
		\item an integer $n \ge 0$, the number of components;
		\item an $(n+1)$-tuple of critical points $[\mathfrak a_0],\dots,[\mathfrak a_n]$ with $[\mathfrak a_0] = \mathfrak a$ and $[\mathfrak a_n] = [\mathfrak b]$ the restpoints;
		\item for each $i$ with $1 \le i \le n$, an unparametrized trajectory $[\check{\upgamma}_i]$ in $\check M_{z_i}([\mathfrak a_{i-1},\mathfrak a_i]$, the $i$th component of the broken trajectory.
	\end{itemize}
	The \emph{homotopy class} of the broken trajectory is the class of the path obtained by concatenating representatives of the classes $z_i$, or the constant path at $[\mathfrak a]$ if $n=0$.
	Write $\check M^+_z([\mathfrak a],[\mathfrak b])$ for the space of unparametrized broken trajectories in the homotopy class $z$, and denote a typical element by $[\check{\mathbf{\upgamma}}] = ([\check{\upgamma}_1],\dots,[\check{\upgamma}_n])$. \qed 
\end{defn}

	We topologize the space of unparametrized broken trajectories as follows.
	Let 
	$$[\check{\boldsymbol \upgamma}] = ([\check\upgamma_1], \dots, [\check\upgamma_n])$$ 
	belong to $\check M^+_z([\mathfrak a],[\mathfrak b])$, with $[\check\upgamma_i] \in \check M_{z_i}([\mathfrak a_{i-1}],[\mathfrak a_i])$ being presented by a (parametrized) trajectory
	\begin{equation*}
		[\upgamma_i] \in M_{z_i}([\mathfrak a_{i-1}],[\mathfrak a_i]).
	\end{equation*}
	Let $U_i \subset \mathcal B^{\tau}_{k,\loc}(Z)$ be any open neighbourhood of $[\upgamma_i]$, and $T \in \reals^+$.
	We define 
	$$\Omega = \Omega(U_1,\dots,U_n,T)$$ 
	to be the subset of $\check M^+_z([\mathfrak a],[\mathfrak b])$ consisting of  unparametrized broken trajectories $[\check{\boldsymbol{\delta}}] = ([\check\delta_1],\dots, [\check\delta_n])$ satsifying the following condition:
	there exists a map $(\jmath,s):\{1,\dots,n\} \to \{1,\dots,m\} \times \reals$ such that
	\begin{itemize}[leftmargin=*]
		\item $[\tau_{s(i)}\delta_{\jmath(i)}] \times U_i$, and
		\item if $1 \le i_1 \le i_2 \le n$, then either $\jmath(i_1) < \jmath(i_2)$, or $\jmath(i_1) = \jmath(i_2)$ and $s(i_1) + T \le s(i_2)$.
	\end{itemize}
	Here $\tau_s \delta$ denotes the translate $\tau_s\delta(t) = \delta(s+t)$.
	We declare the sets $\{\Omega(U_1,\dots, U_n, T)\}$ to be a neighbourhood base for $[\boldsymbol{\check \upgamma}$ in $\check M^+_z([\mathfrak a],[\mathfrak b])$.
With this topology, we have the following compactness theorem.
\begin{thm}
	The space of unparametrized broken trajectories $\check M_z^+([\mathfrak a],[\mathfrak b])$ is compact.
\end{thm}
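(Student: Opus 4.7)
The plan is to mimic the standard Floer-theoretic compactness argument as in \cite[Section~16]{KMbook2007}, with the added verification that each step respects the real structure. Let $[\check{\boldsymbol{\upgamma}}^{(m)}]$ be a sequence in $\check M_z^+([\mathfrak a],[\mathfrak b])$. The first observation is that the total drop of the perturbed Chern-Simons-Dirac functional $\pertL$ along any broken trajectory in the class $z$ is a fixed constant $E = 2(\pertL(\mathfrak a_z) - \pertL(\mathfrak b_z))$, where $\mathfrak a_z, \mathfrak b_z$ denote the lifts determined by $z$. Since the number of non-degenerate critical points is finite, and each component of a broken trajectory contributes a positive drop bounded below by
\[
\epsilon_0 = \min\bigl\{\pertL(\mathfrak c) - \pertL(\mathfrak c') \bigm| \mathfrak c \ne \mathfrak c' \text{ critical, connected by a trajectory}\bigr\} > 0,
\]
the number of components $n(m)$ in each $[\check{\boldsymbol{\upgamma}}^{(m)}]$ is bounded by $E/\epsilon_0$. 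After passing to a subsequence, we may assume $n(m) = n$ is constant and the endpoints $[\mathfrak a^{(m)}_i] = [\mathfrak a_i]$ of each piece are independent of $m$, with relative homotopy classes $z_i$ of pieces also fixed.

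Fix an $i$, and consider the $i$-th component $[\check \upgamma_i^{(m)}] \in \check M_{z_i}([\mathfrak a_{i-1}],[\mathfrak a_i])$. Choose parametrized representatives $\upgamma_i^{(m)}$ translated so that, say, $\pertL(\check\upgamma_i^{(m)}(0))$ equals the midpoint of the allowed drop in this piece. Then Theorem~\ref{thm:perturbed_local_compactness_blowup} (local compactness of perturbed trajectories on cylinders) applies on each compact sub-cylinder $[-T,T] \times Y$, since the total energy $\mathcal{E}^{\text{an}}_{\mathfrak q}$ is bounded by $E$ and the one-sided bounds on $\Lambda_{\mathfrak q}$ are furnished by the centred normalization together with the monotonicity of $\Lambda_{\mathfrak q}$ along gradient flows (the lemma on $d\Lambda_{\mathfrak q}/dt$ before Theorem~\ref{thm:perturbed_local_compactness_blowup}). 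Passing to a diagonal subsequence in $T \to \infty$, and applying $\mathcal G_{k+1}(Z,\upiota_Z)$-gauge transformations, we obtain convergence in $\mathcal C^{\tau}_{k+1,\text{loc}}(\reals \times Y)$ to a limit $\upgamma_i^{\infty}$ satisfying $\mathfrak F^{\tau}_{\mathfrak q}(\upgamma_i^{\infty}) = 0$.

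The next step is to identify the asymptotic behaviour of $\upgamma_i^{\infty}$ as $t \to \pm\infty$. The energy of $\upgamma_i^{\infty}$ on every compact interval is bounded, hence the total energy is at most that of the $i$-th piece. Combined with non-degeneracy of all critical points and exponential decay (see the discussion of $L^2_k$-configurations following Definition~\ref{defn:L2kB}), $\upgamma_i^{\infty}$ is asymptotic at $\pm \infty$ to critical points $[\mathfrak b_{i-1}^-]$ and $[\mathfrak b_i^+]$. However, the requirement that $\pertL(\check \upgamma_i^{\infty}(0))$ equals the prescribed midpoint ensures $\upgamma_i^{\infty}$ is non-constant. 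In general the limit may split further into a broken trajectory if intermediate critical points are picked up — this is handled by iterating the centring argument on each maximal sub-interval where $\pertL$ crosses an unreached mid-level, using finiteness of the critical set to terminate the process in finitely many iterations. Concatenating the limiting pieces gives an unparametrized broken trajectory $[\check{\boldsymbol{\upgamma}}^{\infty}]$ joining $[\mathfrak a]$ to $[\mathfrak b]$, and its total relative homotopy class agrees with $z$ by additivity of the class under concatenation together with the convergence on compact sets.

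Finally, one verifies that the convergence $[\check{\boldsymbol{\upgamma}}^{(m)}] \to [\check{\boldsymbol{\upgamma}}^{\infty}]$ holds in the neighbourhood basis $\Omega(U_1,\dots,U_{n'},T)$ defined above: the $C^{\infty}_{\loc}$-convergence after appropriate translates of each piece is exactly what the definition of $\Omega$ demands, and the ordering condition $s(i_1)+T \le s(i_2)$ follows from the fact that pieces in the limit are separated by the lingering time near intermediate critical points, which tends to infinity along the subsequence. The main obstacle is the bookkeeping in this last step, particularly in the boundary-obstructed case where reducible trajectories of index one exist as $1$-parameter families (cf.\ Proposition~\ref{prop:class_regular_mod_space}), and one must ensure that limiting along the reducible locus is consistent with the real structure $\uptau$; but here the $\mathbb Z_2$-equivariance of all gauge transformations used, and the fact that the blown-up configurations $\mathcal C_k^{\sigma}(Y,\uptau)$ are closed under $\mathcal G(Y,\upiota)$-convergence, suffice to conclude.
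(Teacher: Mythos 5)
Your overall strategy — bound the number of components via an energy gap, pass to a subsequence with fixed combinatorics, centre each piece, apply the local compactness result on finite cylinders, take a diagonal limit, and iterate centring to recover intermediate breaking — is the same Kronheimer--Mrowka argument that the paper invokes via the stronger proposition immediately following the theorem, deferring to \cite[Proposition~16.1.4]{KMbook2007}. There is, however, one step in your sketch that is not correct as written.

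You claim the one-sided bounds on $\Lambda_{\mathfrak q}$ required to invoke Theorem~\ref{thm:perturbed_local_compactness_blowup} are ``furnished by the centred normalization together with the monotonicity of $\Lambda_{\mathfrak q}$ along gradient flows,'' citing the lemma just before that theorem. But that lemma does not say $\Lambda_{\mathfrak q}$ is monotone: it gives only the one-sided derivative estimate
\[
	\frac{d}{dt}\Lambda_{\mathfrak q}(\check\upgamma(t)) \le \zeta(\check\upgamma(t))\,\|\grad\pertL(\check\upgamma(t))\|_{L^2_k(Y)},
\]
which allows $\Lambda_{\mathfrak q}$ to increase whenever the perturbed gradient is nonzero. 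The monotonicity you have in mind holds for the literal quadratic finite-dimensional model of Section~\ref{sec:morse}, but $\pertL$ is not a quadratic form, and $\Lambda_{\mathfrak q}$ need not decrease along trajectories. Moreover, even granting monotonicity, it would only produce an ordering $\Lambda_{\mathfrak q}(t_1+\epsilon) \ge \Lambda_{\mathfrak q}(t_2-\epsilon)$ along each trajectory, not a bound by a constant $C_2$ uniform in $n$; and your centring fixes a CSD level, not a $\Lambda_{\mathfrak q}$ level.

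The mechanism the paper actually uses, and explicitly flags as the crucial input, is that the \emph{total variation} of $\Lambda_{\mathfrak q}$ along a trajectory of bounded energy is uniformly bounded. This is the $\uptau$-invariant analogue of \cite[Corollary~13.5.3]{KMbook2007}: one combines the derivative estimate above with exponential decay of $\grad\pertL$ near non-degenerate critical points, decomposing $\reals$ into high-energy intervals (of bounded total length, by the energy bound) and low-energy intervals (on which exponential decay forces the integral of the right-hand side to be uniformly small). Without this input, your invocation of local compactness has no uniform $\Lambda_{\mathfrak q}$-control and the diagonal limit does not get off the ground. The remaining steps of your sketch — the $\epsilon_0$ gap (which should really be phrased as the energies $\mathcal E_{\mathfrak q}(z')$, not the raw values $\pertL(\mathfrak c) - \pertL(\mathfrak c')$, since $\pertL$ is not gauge-invariant), the iterated centring, the verification in the $\Omega$-topology — are consistent with the argument the paper defers to.
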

This theorem follows from the stronger proposition below.
\begin{prop}
	For any $C > 0$ and any $[\mathfrak a]$, there are only finitely many $z$ with energy $\mathcal E_{\mathfrak q}(z) \le C$ for which $\check M_z^+([\mathfrak a],[\mathfrak b])$ is non-empty.
	Furthermore, each $\check M_z^+([\mathfrak a],[\mathfrak b])$ is compact.
	In other words, the space of broken trajectories $[\mathbf{\check \upgamma}] \in \check M^+([\mathfrak a],[\mathfrak b])$ with energy $\mathcal E_{\mathfrak q}(\mathbf{\upgamma}) \le C$ is compact.
\end{prop}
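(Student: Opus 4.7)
My plan is to establish both assertions by combining the local compactness theorem for perturbed trajectories (Theorem~\ref{thm:perturbed_local_compactness_blowup}) with exponential decay near non-degenerate critical points, following the approach of \cite[Section~16]{KMbook2007} and adapting it to the $\uptau$-invariant setting. The key observation is that every step of the argument respects the real structure: the gauge transformations produced by local compactness are in $\mathcal G_{k+1,\loc}(Z,\upiota_Z)$, the limits of $\uptau_Z$-invariant configurations remain $\uptau_Z$-invariant, and the asymptotic critical points lie in $\mathcal C^{\sigma}_k(Y,\uptau)$.

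For the finiteness of homotopy classes with bounded energy, I would lift the CSD functional to the based configuration space $\mathcal B^o_k(Y,\uptau)$ (or its further cover on which $\pertL$ is single-valued), and use the identity $\mathcal E^{\text{an}}_{\mathfrak q}(\upgamma) = 2(\pertL(\check\upgamma(-\infty)) - \pertL(\check\upgamma(+\infty)))$. Different relative homotopy classes $z \in \pi_1(\mathcal B^{\sigma}_k,[\mathfrak a],[\mathfrak b])$ correspond to distinct lifts $\tilde{\mathfrak b}_z$ of $[\mathfrak b]$ in the cover, and the change in CSD value under the deck transformation group $H^1(Y;\mathbb Z)^{-\upiota^*}$ is given explicitly (via $2\pi^2([u]\cup c_1(S))[Y]$ and linear functionals of the harmonic part). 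Since the set of critical values in each fundamental domain is finite and the period map is discrete, only finitely many $z$ admit a drop $\le C/2$.

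For the compactness of a single $\check M^+_z([\mathfrak a],[\mathfrak b])$, I would take a sequence $[\upgamma_n]$ of unparametrized trajectories, choose representatives, and apply the local compactness Theorem~\ref{thm:perturbed_local_compactness_blowup} to extract a subsequence converging in $\mathcal C^{\tau}_{k+1,\loc}$ on every compact subset of $\reals$, after suitable gauge transformations. The translation action $\tau_s$ allows us to repeat this procedure after recentering on time intervals where the energy drop concentrates: using a standard center-of-mass argument, one identifies up to $m$ translation parameters $s_n^{(1)} < \dots < s_n^{(m)}$ where the loss of energy from passing near an intermediate critical point occurs, extracts convergent subsequences near each, and fills in intermediate limits using the non-degeneracy of critical points (and exponential decay). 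The $\uptau_Z$-invariance is preserved throughout since the $\upiota$-invariant metric and perturbation are translation-invariant. The resulting collection forms an unparametrized broken trajectory in $\check M^+_z$, and convergence in the topology defined above follows directly from the construction.

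The main obstacle will be controlling the behavior of the blown-up $\tau$-model near reducible asymptotic configurations, in particular ensuring that the spinor direction $\phi_n(t)$ (of unit $L^2(Y)$-norm) converges to an eigenvector of the appropriate perturbed Dirac operator at each intermediate breaking point, rather than vanishing or jumping projective classes. This is handled exactly as in \cite[Theorem~16.1.3]{KMbook2007}, using the one-sided bounds on $\Lambda_{\mathfrak q}$ from the hypotheses of Theorem~\ref{thm:perturbed_local_compactness_blowup} together with the eigenvalue analysis of the extended Hessian $\widehat{\Hess}^{\sigma}_{\mathfrak q,\mathfrak a}$ from Lemma~\ref{lem:ext_Hess_real_eigen}; the anti-linearity of $\uptau$ only affects the ambient complex structure on spinors, not the real-linear spectral theory used here. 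Once these technicalities are dispatched, the compactness of $\check M^+_z$ and hence of $\check M^+([\mathfrak a],[\mathfrak b]) \cap \{\mathcal E \le C\}$ follows.
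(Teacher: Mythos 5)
Your approach matches the paper's: both follow the argument of \cite[Section~16]{KMbook2007}, extracting convergence from the local compactness Theorem~\ref{thm:perturbed_local_compactness_blowup}, controlling $\Lambda_{\mathfrak q}$ via the exponential-decay estimates near nondegenerate critical points (the $\upiota$-invariant analogue of \cite[Corollary~13.5.3]{KMbook2007}), recentering on the intervals of concentrated energy drop, and observing that all constructions respect the real structure. That is exactly the route the paper sketches.

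There is, however, a gap in your separate argument for the finiteness of admissible $z$. You argue that the CSD period map $[u] \mapsto 2\pi^2([u]\cup c_1(S))[Y]$ is discrete and combine this with finiteness of critical values per fundamental domain. This is fine when $c_1(S)$ is non-torsion, but when $c_1(S)$ is torsion the period vanishes identically and $\pertL$ is fully gauge-invariant, so \emph{every} $z \in \pi_1(\mathcal B^{\sigma}_k,[\mathfrak a],[\mathfrak b])$ has the same energy drop. The set of $z$ with $\mathcal E_{\mathfrak q}(z)\le C$ is then infinite, and your argument does not establish that only finitely many of them have $\check M^+_z$ non-empty. In the paper (following \cite[Proposition~16.1.4]{KMbook2007}), the finiteness statement is not proved independently at all: it is a \emph{corollary} of the compactness of the full space of broken trajectories with bounded energy, since each non-empty $\check M^+_z$ is an open-closed subset of that compact space. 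You should either delete the separate finiteness argument and derive finiteness from compactness, or restrict the period argument to the non-torsion case and handle the torsion case as a compactness corollary.
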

\begin{proof}
	Let us give a brief sketch of the proof in \cite[Proposition~16.1.4]{KMbook2007}.
	The local compactness result Proposition~\ref{thm:perturbed_local_compactness_blowup} over finite cylinder can be strengthened to $L^2_{k,\loc}$-compactness over the infinite cylinder $Z = \reals \times Y$, assuming uniform bounds on both $\mathcal E_{\mathfrak q}$  and the function $\Lambda_{\mathfrak q}$.
	Although we do not assume control of $\Lambda_{\mathfrak q}$ in the statement of the proposition, the uniform bound on the the variation of $d\Lambda_{\mathfrak q}/dt$ follows from the $\upiota$-invariant analogue of \cite[Corollary~13.5.3]{KMbook2007}, which relies crucially on the exponential decay near critical points.
	Given a sequence of trajectories $\{\check{\upgamma}_n^{\tau}\}$, we
	extract subsequence and possibly translate the trajectories, and then decompose the real line into intervals where trajectories either have high energy or low energy. 
	Finally we appeal to local compactness.
\end{proof}
Without assumptions on the energy upper bound but with assumptions on regularity of the moduli spaces, we have following.
\begin{prop}
	Suppose that all moduli spaces $M_z([\mathfrak a],[\mathfrak b])$ are regular. Then for given $[\mathfrak a]$ and $[\mathfrak b]$, there are only finitely many homotopy classes $z$ for which the moduli space $\check M^+_z([\mathfrak a],[\mathfrak b])$ is non-empty.
\end{prop}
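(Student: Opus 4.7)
The plan is to combine two constraints on any class $z$ with non-empty $\check M^+_z([\mathfrak a],[\mathfrak b])$: a lower bound on $\gr_z$ coming from regularity, and the linear coupling of $\gr_z$ with the energy $\mathcal E_{\mathfrak q}(z)$ inherited from the transformation laws under gauge. Together these will force a uniform upper bound on $\mathcal E_{\mathfrak q}(z)$, after which the preceding proposition finishes the argument.

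First, if $\check M^+_z$ is non-empty, I pick a broken representative $[\boldsymbol{\check\upgamma}] = ([\check\upgamma_1],\dots,[\check\upgamma_n])$ with $[\check\upgamma_i] \in \check M_{z_i}$ and $z = z_1 * \cdots * z_n$, so $\gr_z = \sum_i \gr_{z_i}$. Regularity and Proposition~\ref{prop:class_regular_mod_space} imply $\dim \check M_{z_i} \geq 0$ for each $i$, hence $\gr_{z_i} \geq 1$ in the non-boundary-obstructed case and $\gr_{z_i} \geq 0$ in the boundary-obstructed case. So $\gr_z \geq 0$.

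Next, I fix a base class $z_0 \in \pi_1(\mathcal B^\sigma_k,[\mathfrak a],[\mathfrak b])$ (if none exists, there is nothing to prove). Any other $z$ differs from $z_0$ by a loop $z_u$ determined by $u \in \pi_0(\mathcal G(Y,\upiota))$ with class $[u] \in H^1(Y;\mathbb Z)^{-\upiota^*}$. Lemma~\ref{lem:loopindex1} gives
\[
\gr_z - \gr_{z_0} \,=\, \tfrac{1}{2}\bigl([u]\cup c_1(S)\bigr)[Y],
\]
while the CSD transformation law $\mathcal L(u\cdot\gamma) - \mathcal L(\gamma) = 2\pi^2([u]\cup c_1(S))[Y]$ recorded in Section~\ref{sec:gauge_setup} extends unchanged to $\pertL = \mathcal L + f$ since $f$ is real-gauge invariant. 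Using $\mathcal E_{\mathfrak q} = 2\cdot(\text{drop in }\pertL)$ and shifting the lift at one endpoint by $u$, this yields
\[
\mathcal E_{\mathfrak q}(z) - \mathcal E_{\mathfrak q}(z_0) \,=\, -8\pi^2\bigl(\gr_z - \gr_{z_0}\bigr).
\]
Combining with $\gr_z \geq 0$ produces the uniform bound $\mathcal E_{\mathfrak q}(z) \leq \mathcal E_{\mathfrak q}(z_0) + 8\pi^2\,\gr_{z_0}$.

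The preceding proposition then closes the argument: only finitely many $z$ can satisfy $\mathcal E_{\mathfrak q}(z) \leq C$ with $\check M^+_z$ non-empty. The step requiring the most care will be the sign in the energy/grading coupling, which depends on whether one lifts the loop at $[\mathfrak a]$ or $[\mathfrak b]$; both choices lead to the displayed relation, so this amounts to bookkeeping rather than a genuine obstacle.
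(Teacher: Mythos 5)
Your proof is correct and follows the approach the paper implicitly relies on (the real analogue of \cite[Proposition~16.4.3]{KMbook2007}): use regularity to show that each component of a broken trajectory contributes a nonnegative amount to $\gr_z$, giving $\gr_z \ge 0$, then combine with the linear relation between $\gr_z$ and $\mathcal E_{\mathfrak q}(z)$ coming from Lemma~\ref{lem:loopindex1} and the transformation law of $\pertL$ to obtain a uniform energy bound, and finally invoke the preceding compactness proposition. The sign bookkeeping you flag does indeed come out the right way, and the argument degenerates harmlessly when $c_1(S)$ is torsion (both sides of the coupling vanish and the energy bound is automatic), so no separate case is needed.
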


We also consider a smaller compactification.
\begin{defn}
	The space $\bar M_z(X^*,[\mathfrak b])$ is the image of $M^+_z(X^*,[\mathfrak b])$ under the map
	\begin{align*}
		r:M^+_z(X^*,[\mathfrak b]) 
		&\to \mathcal B^{\sigma}_{k,\loc}(X^*)\\
		([\upgamma_0],[\check\upgamma]) &\mapsto
		[\upgamma_0].
	\end{align*}
	For the family version, if $P$ is a manifold parametrizing a family of metrics and perturbations on $X$, then $\bar M_z(X^*,[\mathfrak b])_P$ is defined as the image
	\begin{align*}
		M^+_z(X^*,[\mathfrak b])_P
		&\to P \times \mathcal B^{\sigma}_{k,\loc}(X^*)\\
		(p,[\upgamma_0],[\check\upgamma]) &\mapsto
		(p,[\upgamma_0]).
	\end{align*}
\end{defn}

\subsection{The compactification as a stratified space}
\begin{defn}
	A space $N^d$ is a \emph{$d$-dimensional space stratified by manifolds} if there are closed subsets
	\begin{equation*}
		N^d \supset N^{d-1} \supset \cdots \supset N^0 \supset N^{-1} = \emptyset
	\end{equation*}	
	such that $N^d \ne N^{d-1}$ and each space $N^e \setminus N^{e-1}$ (for $0 \le e \le d)$) is either empty or homeomorphic to manifold of dimension $e$.
	The difference $N^e \setminus N^{e-1}$ is the \emph{$e$-dimensional stratum}.
	we will also use the term \emph{stratum} to refer to any union of path components of $N^e \setminus N^{e-1}$.
\end{defn}
\begin{exmp}
\label{exmp:bad_stratified}
	Here is a pathological example.
	Let $N^1$ be the union of all circles $C_n$ of centre $(-1/n,0)$ and radius $1/n$, indexed by $n \in \mathbb N$, and the segment joining $(0,0)$ and $(1,0)$.
	Then $N^0 = \{(0,0), (1,0)\}$ and $N^1 \setminus N^0$ is a manifold with countably many path-components.
\end{exmp}
The moduli spaces of Seiberg-Witten solutions are naturally stratified.
\begin{prop}
	Suppose that $M_z([\mathfrak a],[\mathfrak b])$ is nonempty and of dimension $d$.
	Then the compactification $\check M^+_z([\mathfrak a],[\mathfrak b])$ is a $(d-1)$-dimensional space stratified by manifolds.
	If $M_z([\mathfrak a],[\mathfrak b])$ contains irreducible trajectories, then the $(d-1)$-dimensional stratum of $\check M^+_z([\mathfrak a],[\mathfrak b])$ consists of the irreducible part of $\check M^+_z([\mathfrak a],[\mathfrak b])$.
\end{prop}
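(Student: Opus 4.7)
The plan is to follow the template of \cite[Proposition~16.5.1]{KMbook2007}, adapted to the real setting where moduli spaces modelled on $\mathbb{RP}^i$ replace the usual $\mathbb{CP}^{i-1}$. The key inputs are the classification of regular moduli spaces in Proposition~\ref{prop:class_regular_mod_space}, the additivity of the relative grading from Definition~\ref{defn:rel_grading}, and the description of convergence in $\check{M}^+_z([\mathfrak a],[\mathfrak b])$.

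\textbf{Step 1: Index the strata.} To each tuple of data $([\mathfrak a_0]=[\mathfrak a],\dots,[\mathfrak a_n]=[\mathfrak b])$ together with homotopy classes $(z_1,\dots,z_n)$ whose concatenation is $z$, I will associate the subset
\[
\check M_{z_1}([\mathfrak a_0],[\mathfrak a_1])\times\cdots\times \check M_{z_n}([\mathfrak a_{n-1}],[\mathfrak a_n]) \;\subset\; \check M^+_z([\mathfrak a],[\mathfrak b]).
\]
By Proposition~\ref{prop:class_regular_mod_space}, each factor is a smooth manifold of dimension $\gr_{z_i}-1$ in the unobstructed case or $\gr_{z_i}$ in the boundary-obstructed case (one less than $\dim M_{z_i}$, since dividing by the free $\reals$-translation drops dimension by $1$). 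I will set $N^e$ equal to the union of those strata of total dimension at most $e$.

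\textbf{Step 2: Dimension bookkeeping.} Let $k$ denote the number of boundary-obstructed pieces in a stratum with $n$ components. By additivity of the relative grading,
\[
\sum_{i=1}^n \dim \check M_{z_i}([\mathfrak a_{i-1}],[\mathfrak a_i]) \;=\; \sum_{i=1}^n(\gr_{z_i}-1)+k \;=\; \gr_z([\mathfrak a],[\mathfrak b])-n+k.
\]
A boundary-obstructed piece enters at a boundary-stable point and exits at a boundary-unstable one, so two consecutive pieces cannot both be boundary-obstructed; consequently $k\le n-1$ whenever $n\ge 2$. Combined with $d=\dim M_z=\gr_z$ in the unobstructed cases and $d=\gr_z+1$ in the (endpoint) boundary-obstructed case, this yields total dimension $\le d-1$ in every stratum, with equality achieved precisely by the $n=1$ stratum $\check M_z([\mathfrak a],[\mathfrak b])$.

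\textbf{Step 3: Verify the stratification axioms.} Each $N^e$ is a finite union of products of smooth manifolds (finite because the compactness proposition bounds the number of homotopy classes $z_i$ with bounded energy reaching $[\mathfrak a_{i-1}]$, $[\mathfrak a_i]$). Closedness of $N^e$ in $\check M^+_z$ follows from the neighbourhood-base definition of convergence: a limit of broken trajectories with $n$ pieces can only acquire further breakings, which strictly increases $n-k$ (since no break can turn a non-obstructed piece into a boundary-obstructed one without introducing a new restpoint) and hence strictly decreases the stratum dimension. So $N^e\setminus N^{e-1}$ is a disjoint union of manifolds of dimension $e$, and $N^{d-1}\ne N^{d-2}$ because the $n=1$ stratum $\check M_z$ is non-empty.

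\textbf{Step 4: Irreducibles occupy the top stratum.} If $M_z([\mathfrak a],[\mathfrak b])$ contains irreducibles, Proposition~\ref{prop:class_regular_mod_space} gives two subcases: either one of the endpoints is irreducible and $\check M_z$ is entirely irreducible, or $[\mathfrak a]$ is boundary-unstable, $[\mathfrak b]$ is boundary-stable, and $\check M_z$ is a smooth $(d-1)$-manifold with boundary whose interior is $\check M_z^{\ast}$ and whose codimension-$1$ boundary consists of reducible trajectories. In the latter case, the reducible boundary trajectories form a disjoint collection of smooth manifolds of dimension $d-2$, and hence lie in $N^{d-2}$. In either subcase, $N^{d-1}\setminus N^{d-2}$ coincides with the irreducible part of $\check M^+_z$.

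\textbf{Main obstacle.} The hardest ingredient is the closedness of $N^e$, i.e. that a convergent sequence of broken trajectories limits to a broken trajectory of no larger stratum dimension. This is essentially the content of the convergence topology introduced above, but making the dimension-drop rigorous requires the combinatorial fact that inserting a break either (i) strictly increases $n-k$, or (ii) turns a piece of $\check M_z$ with $d_i$ irreducible dimensions into pieces whose grading sum is preserved and whose count of $-1$ contributions increases. The real-theoretic grading computation (Lemma~\ref{lem:loopindex1}) and the $\mathbb{RP}^i$ model of Proposition~\ref{prop:modspaceRPi} feed into verifying that the reducible boundary component behaves correctly in the stratification, with no new subtleties beyond what is addressed by the general theory.
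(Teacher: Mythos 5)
Your decomposition into strata by number of components, the formula $\dim = \gr_z - (n-k)$ for an $n$-piece stratum with $k$ boundary-obstructed pieces, and the reduction of the irreducibility claim to Proposition~\ref{prop:class_regular_mod_space} all follow the standard Kronheimer--Mrowka template and are the right approach. However, there is a gap in Step~2 that then invalidates Step~4.

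The bound $k \le n-1$ that you derive from ``no two consecutive pieces can both be obstructed'' is too weak. In the unobstructed case $d = \gr_z$, an $n$-piece stratum with $k = n-1$ has dimension $\gr_z - 1 = d-1$, tying the $n=1$ stratum. Your statement that ``equality is achieved precisely by the $n=1$ stratum'' is therefore not established, and in fact is false for general non-empty $M_z$: when $M_z$ is entirely reducible with $[\mathfrak a]$ boundary-stable, a two-piece stratum whose first factor is boundary-obstructed does have dimension $d-1$. This is harmless for the first assertion of the proposition (which only needs a $(d-1)$-dimensional top), but it is fatal for the second assertion, because Step~4 silently uses uniqueness of the top stratum to identify $N^{d-1}\setminus N^{d-2}$ with the irreducible part of $\check M_z$.

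What you are missing is the constraint on the \emph{endpoint} pieces, which you already used in Step~4 but failed to feed back into the dimension count. If $M_z([\mathfrak a],[\mathfrak b])$ contains an irreducible trajectory, the equation $ds/dt = -\Lambda_{\mathfrak q}s$ forces $[\mathfrak a]$ to be irreducible or boundary-unstable and $[\mathfrak b]$ to be irreducible or boundary-stable. A boundary-obstructed piece begins at a boundary-stable critical point and ends at a boundary-unstable one, so neither the first nor the last piece of a broken trajectory can be boundary-obstructed. Consequently $k \le n-2$ whenever $n \ge 2$, giving $\dim \le \gr_z - 2 = d-2$ for every multi-piece stratum, which is exactly what Step~4 needs. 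Once you insert this observation into Step~2, the uniqueness of the $(d-1)$-dimensional stratum is genuine and the rest of your argument goes through. (Step~3's heuristic that a further breaking ``strictly increases $n-k$'' has the same weakness — a non-obstructed piece starting at a boundary-stable restpoint can break off an obstructed piece, preserving $n-k$ — but this only occurs for middle pieces and, again, is controlled by tracking reducibility rather than just the obstruction count.)
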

The stratification structure of $M_z([\mathfrak a],[\mathfrak b])$ can be complicated due to the presence of boundary-obstructed solutions.
We will only discuss the codimension-1 strata in the following proposition.
For more details, we refer the readers to \cite[Section~16.5]{KMbook2007}.
To state the proposition,
consider the space
\begin{equation}
\label{eqn:productmodspace}
	\check M_{z_1}([\mathfrak a_0],[\mathfrak a_1]) \times \cdots \times
	\check M^{z_{\ell}}([\mathfrak a_{n-1}],[\mathfrak a_n]).
\end{equation}
Denote $d_i$ the dimension of $M_{z_i}([\mathfrak a_{i-1}],[\mathfrak a_i])$, and $\epsilon_i$ such that $d_i - \epsilon_i = \gr_{z_i}([\mathfrak a_{i-1}],[\mathfrak a_i])$.
The vector $(d_i-\epsilon_i)$ is the \emph{grading vector}, and $(\epsilon_i)$ is the \emph{obstruction vector}.
\begin{prop}
	Let $M_z([\mathfrak a],[\mathfrak b])$ be a $d$-dimensional moduli space containing irreducibles, so that $\check M_z^+([\mathfrak a],[\mathfrak b])$ is a compact $(d-1)$-dimensional space stratified by manifolds, with top stratum the irreducible part of $\check M_z([\mathfrak a],[\mathfrak b])$.
	Then the $(d-2)$-dimensional stratum of $\check M_z^+$ is the union of pieces of three types:
	\begin{itemize}[leftmargin=*]
		\item the top stratum is of the form~\eqref{eqn:productmodspace} with grading vector $(d_1,d_2)$ and obstruction vector $(0,0)$;
		\item the top startum of form~\eqref{eqn:productmodspace} with grading vector $(d_1,d_2-1,d_3)$ and obstruction vector $(0,1,0)$;
		\item the intersection of $\check M_z([\mathfrak a],[\mathfrak b])$ with the reducibles, if $M_z([\mathfrak a],[\mathfrak b])$ contains both reducibles and irreducibles.
	\end{itemize}
	The third case occurs only when $[\mathfrak a]$ is boundary-unstable and $[\mathfrak b]$ is boundary-stable.
	In the first case, $d_1 + d_2 = d$.
	In the second case, $d_1 + d_2 + d_3 = d + 1$.
	In all cases, the $d_i$ are positive.
 \end{prop}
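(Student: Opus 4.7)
The plan is to enumerate the strata of $\check M^+_z([\mathfrak a],[\mathfrak b])$ and track their dimensions via additivity of the relative grading. A generic stratum is the product $\check M_{z_1}([\mathfrak a_0],[\mathfrak a_1]) \times \cdots \times \check M_{z_n}([\mathfrak a_{n-1}],[\mathfrak a_n])$ with $[\mathfrak a_0] = [\mathfrak a]$, $[\mathfrak a_n] = [\mathfrak b]$, and $z_1 \ast \cdots \ast z_n = z$. By Proposition~\ref{prop:class_regular_mod_space}, each factor has dimension $d_i - 1$, where $d_i = \gr_{z_i} + \epsilon_i$ and $\epsilon_i \in \{0,1\}$ records whether the $i$th piece is boundary-obstructed; additivity of the relative grading (Definition~\ref{defn:rel_grading}) gives $\sum_i \gr_{z_i} = d$. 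Hence the stratum has dimension
\begin{equation*}
\sum_{i=1}^n (d_i - 1) = d + \sum_{i=1}^n \epsilon_i - n,
\end{equation*}
so the $(d-2)$-dimensional strata are exactly those with $n - \sum_i \epsilon_i = 2$.

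Next I would constrain the admissible $\epsilon_i$ using the endpoint types. Because $M_z([\mathfrak a],[\mathfrak b])$ contains irreducibles, the flow $ds/dt = -\Lambda_{\mathfrak q}\,s$ forces $[\mathfrak a]$ to be irreducible or boundary-unstable and $[\mathfrak b]$ to be irreducible or boundary-stable. A boundary-obstructed piece by definition starts at a boundary-stable critical point and ends at a boundary-unstable one, so $\epsilon_1 = \epsilon_n = 0$, and two consecutive obstructed pieces are impossible since their shared endpoint would have to be simultaneously boundary-stable and boundary-unstable. Thus the obstructed positions form a non-adjacent subset of $\{2,\dots,n-1\}$. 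For $n=2$ this forces $\sum \epsilon_i = 0$, giving case~(i) with $d_1 + d_2 = d$; for $n=3$ it forces $\epsilon_2 = 1$ and $\epsilon_1 = \epsilon_3 = 0$, giving case~(ii) with $d_1 + d_2 + d_3 = d + 1$. For $n \ge 4$, satisfying $\sum \epsilon_i = n-2$ with the above constraints would require every interior position to be obstructed, which immediately violates non-adjacency, ruling out those configurations.

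Finally I would add the reducible locus of the unbroken $\check M_z([\mathfrak a],[\mathfrak b])$ itself, which is case~(iii). By Proposition~\ref{prop:class_regular_mod_space}, when $[\mathfrak a]$ is boundary-unstable and $[\mathfrak b]$ is boundary-stable, $M_z$ is a smooth $d$-manifold whose boundary is its reducible subset; quotienting by $\reals$-translation yields a $(d-2)$-dimensional stratum of $\check M_z \subset \check M_z^+$, while in all other endpoint configurations the moduli space contains no reducible trajectories and this contribution is absent. The principal obstacle I anticipate lies in case~(ii): beyond the combinatorial matching above, one must check that each factor $\check M_{z_i}$ genuinely has the predicted positive dimension, so that no piece degenerates to a point or fails to be a smooth manifold---this uses the non-degeneracy of all critical points and the exclusion of trivial broken pieces, and mirrors the invariant analogue of the bookkeeping carried out in \cite[Section~16.5]{KMbook2007}.
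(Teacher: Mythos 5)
Your proof is correct and takes essentially the same route as the one the paper defers to (\cite[Section~16.5]{KMbook2007}): the dimension count $\sum_i(d_i-1)=d+\sum_i\epsilon_i-n$, the boundary conditions $\epsilon_1=\epsilon_n=0$ forced by the endpoint types, the non-adjacency of boundary-obstructed pieces, the reduction to $n=2,3$, and the separate identification of the reducible boundary of $\check M_z$ with case~(iii) are all exactly the bookkeeping carried out there. Your closing worry about positivity of the $d_i$ is not really an obstacle: an unparametrized factor $\check M_{z_i}$ consists of non-constant trajectories modulo translation, so if it is nonempty then $\dim M_{z_i}\ge 1$, i.e.\ $d_i\ge 1$, automatically.
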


We end the subsection with the description of the reducible moduli spaces.
\begin{prop}
	Suppose $M^{\red}_z([\mathfrak a],[\mathfrak b])$ is non-empty and of dimension $d$.
	Then the space of unparametrized, broken reducible trajectories, $\check M^{\red +}_z([\mathfrak a],[\mathfrak b])$ is a compact $(d-1)$-dimensional space stratified by manifolds.
	The top stratum consists of $\check M^{\red}_z([\mathfrak a],[\mathfrak b])$ alone.
	The $(d-\ell)$-dimensional stratum consists of the spaces of unparametrized broken trajectories with $\ell$-factors:
	\begin{equation*}
		\check M_z^{\red}([\mathfrak a_0],[\mathfrak a_1]) \times
		\cdots \times 
		\check M_z^{\red}([\mathfrak a_{\ell-1}],[\mathfrak a_{\ell}])
	\end{equation*}
\end{prop}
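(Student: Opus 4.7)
The plan is to obtain this proposition as a specialization to the reducible locus of the compactness and stratification results just established for the full moduli spaces, leveraging two simplifying features of the reducible case: the flow preserves reducibility, and, by Proposition~\ref{prop:modspaceRPi}, each individual reducible moduli space is an open subset of a real projective space. Concretely, since the evolution of the normal coordinate $s$ in the $\tau$-model is governed by $ds/dt = -\Lambda_{\mathfrak q}(\check\upgamma)s$, any trajectory with $s \equiv 0$ remains in the reducible locus; consequently, any sequence of reducible trajectories converges (after passing to a subsequence and breaking) to a broken trajectory whose intermediate rest points $[\mathfrak a_1],\dots,[\mathfrak a_{\ell-1}]$ are all reducible. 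Compactness of $\check M^{\red +}_z([\mathfrak a],[\mathfrak b])$ is then inherited directly from the compactness of $\check M^+_z([\mathfrak a],[\mathfrak b])$ by restriction to the closed subset of reducible configurations, using the same neighbourhood base $\Omega(U_1,\dots,U_n,T)$ as before.

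Second, I will identify the individual strata. For any sequence of reducible critical points $[\mathfrak a]=[\mathfrak a_0],[\mathfrak a_1],\dots,[\mathfrak a_\ell]=[\mathfrak b]$ and homotopy classes $z_i$ concatenating to $z$, each factor $\check M^{\red}_{z_i}([\mathfrak a_{i-1}],[\mathfrak a_i])$ is a manifold of dimension $d_i - 1$, where $d_i$ is the dimension of $M^{\red}_{z_i}([\mathfrak a_{i-1}],[\mathfrak a_i])$; this follows from Proposition~\ref{prop:modspaceRPi} together with the identification of the reducible moduli space with an open subset of $\mathbb{RP}^{i}$ cut out by removal of two projective hyperplanes. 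Since the critical set is finite and the energy of a broken reducible trajectory is bounded by that of the top stratum, only finitely many such tuples contribute, so the union is a finite disjoint union of products of manifolds.

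The third step is the dimension count, which is where monotonicity of $\Lambda_{\mathfrak q}$ along reducible trajectories becomes crucial. Set $\epsilon_i = 1$ if the factor $([\mathfrak a_{i-1}],[\mathfrak a_i])$ is boundary-obstructed (that is, $[\mathfrak a_{i-1}]$ is boundary-stable and $[\mathfrak a_i]$ is boundary-unstable) and $\epsilon_i = 0$ otherwise, so that $d_i = \gr_{z_i}([\mathfrak a_{i-1}],[\mathfrak a_i]) + \epsilon_i$ by Proposition~\ref{prop:class_regular_mod_space}. By additivity of the relative grading, $\sum_i \gr_{z_i} = \gr_z([\mathfrak a],[\mathfrak b])$. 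Because Proposition~\ref{prop:modspaceRPi} forces $\Lambda_{\mathfrak q}(\mathfrak a_{i-1}) \ge \Lambda_{\mathfrak q}(\mathfrak a_i)$ along a non-empty reducible trajectory, the sequence of sign types of the $\mathfrak a_i$ is monotone: all boundary-stable first, then possibly all boundary-unstable. Hence there is at most one transition, so $\sum_i \epsilon_i$ equals $0$ if $([\mathfrak a],[\mathfrak b])$ is not boundary-obstructed and equals $1$ if it is. In either case $\sum_i \epsilon_i = d - \gr_z([\mathfrak a],[\mathfrak b])$, and the $\ell$-factor stratum has dimension
\begin{equation*}
	\sum_{i=1}^{\ell}(d_i - 1) \;=\; \sum_i \gr_{z_i} + \sum_i \epsilon_i - \ell \;=\; d - \ell,
\end{equation*}
as claimed. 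The top stratum ($\ell=1$) is $\check M^{\red}_z([\mathfrak a],[\mathfrak b])$ of dimension $d-1$.

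The main technical obstacle is verifying that the nested closed subsets actually define a stratification in the sense of the definition (and not merely a partition into manifolds as in Example~\ref{exmp:bad_stratified}); this amounts to showing that the $(d-\ell)$-stratum is closed in the $(d-\ell+1)$-stratum. This follows from the compactness statement together with the observation that every limit of a sequence of $\ell$-factor broken trajectories with bounded energy is a broken trajectory with at least $\ell$ factors, using the $\Omega$-topology. Since the critical set is finite and the energy drop over each factor is uniformly bounded below (the gap between adjacent values of $\pertL$ at reducible critical points), the number of factors is locally bounded, so the stratification is well-defined.
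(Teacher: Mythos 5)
Your proposal has a genuine gap in the dimension count.

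You set $\epsilon_i = 1$ when the $i$-th factor is boundary-obstructed (boundary-stable $\to$ boundary-unstable) and $\epsilon_i = 0$ otherwise, and claim $\dim M^{\red}_{z_i} = \gr_{z_i} + \epsilon_i$. This omits a case: when $[\mathfrak a_{i-1}]$ is boundary-unstable and $[\mathfrak a_i]$ is boundary-stable, Proposition~\ref{prop:class_regular_mod_space} says the \emph{full} moduli space $M_{z_i}$ is a manifold of dimension $\gr_{z_i}$ \emph{with boundary}, and the reducibles form that boundary; hence $\dim M^{\red}_{z_i} = \gr_{z_i} - 1$, not $\gr_{z_i}$. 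You attempt to exclude this case by claiming $\Lambda_{\mathfrak q}$ is monotone along reducible trajectories, so that the types are ordered ``stable first, then unstable.'' But the citation of Proposition~\ref{prop:modspaceRPi} for this is a misapplication: that proposition is stated only for the ``simplest'' reducible moduli spaces in which $\mathfrak a_1, \mathfrak a_2$ lie in the \emph{same} fibre of the blow-down (same reducible connection $\alpha$). For a general reducible trajectory the connection $B(t)$ varies, and $\Lambda_{\mathfrak q}(\check\upgamma(t)) = \Re\langle\psi, D_{\mathfrak q,B(t)}\psi\rangle$ need not be monotone — the derivative picks up a term $\Re\langle\psi, (\tfrac{d}{dt}D_{\mathfrak q,B})\psi\rangle$ of indeterminate sign. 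Indeed, already in the finite-dimensional blow-up model one can manufacture broken boundary trajectories whose intermediate rest points alternate type (e.g.\ a first factor within a single fibre of $\mathbb{P}(N)\to Q$ from a stable to an unstable eigenvector, followed by a factor that follows the Morse flow on $Q$ back to a stable one), so unstable $\to$ stable pieces do occur.

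The fix is both simpler and more robust than the monotonicity you invoked: replace the non-negative $\epsilon_i$ with the \emph{signed} type-difference. Set $s_j = 0$ if $[\mathfrak a_j]$ is boundary-stable and $s_j = 1$ if boundary-unstable, and let $\delta_i := s_i - s_{i-1} \in \{-1,0,1\}$. Then in all three cases of Lemma~\ref{lem:dim_cases} one has $\dim M^{\red}_{z_i} = \gr_{z_i} + \delta_i$, so
\begin{equation*}
	\sum_{i=1}^{\ell}\bigl(\dim M^{\red}_{z_i} - 1\bigr)
	= \sum_i \gr_{z_i} + \sum_i \delta_i - \ell
	= \gr_z([\mathfrak a],[\mathfrak b]) + (s_\ell - s_0) - \ell
	= d - \ell,
\end{equation*}
since the sum telescopes and $d = \gr_z([\mathfrak a],[\mathfrak b]) + (s_\ell - s_0)$ is exactly the dimension of $M^{\red}_z([\mathfrak a],[\mathfrak b])$. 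No ordering hypothesis on the intermediate rest points is required. The rest of your argument — inheriting compactness from the full compactification by restriction to the closed reducible locus, identifying each factor as a manifold, and verifying local finiteness/closedness of strata from uniform lower bounds on the energy drop — is sound and matches the intended approach.
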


\subsection{\v{C}ech cohomology}
This subsection provides a \v{C}ech model for evaluating cohomology classes over moduli spaces of Seiberg-Witten equations, which will be applied in defining cobordism maps and module structure.

An \emph{open cover} $\mathcal U$ of a space $B$ has \emph{covering order $\le d + 1$} if every $(d+2)$-fold intersection
\begin{equation*}
	U_0 \cap U_1 \cap \cdots \cap U_{d+1}
\end{equation*}
is empty, for $U_i \in \mathcal U$ distinct.
To an open covering $\mathcal U$, one can associate the \emph{nerve} $K(\mathcal U)$ of $\mathcal U$, which is a simplicial complex. 
The \v{C}ech cohomology of $B$ is the limit of simplicial homology of the nerves
\begin{equation*}
	\check{H}^n(B;\mathbb Z_2) = \lim_{\rightarrow} H^n_{\text{simp}}(K(\mathcal U); \mathbb Z_2)
\end{equation*}
as $\mathcal U$ runs through the open coverings of $B$.

If $\mathcal U' \subset \mathcal U$ and the open subsets $\{U \cap B': U \in \mathcal U'\}$ covers $B'$, then $K(\mathcal U'|B')$ is a subcomplex of $K(\mathcal U')$.
We define the relative \v{C}ech cohomology of the pair $(B,B')$ as
\begin{equation*}
	\check H^n(B,B';\mathbb Z_2) =
	\lim_{\rightarrow} H^n_{\text{simp}}(K(\mathcal U),K(\mathcal U'|B');\mathbb Z_2).
\end{equation*}

Suppose $N = N^d$ is a space stratified by manifolds $:
	N^d \supset N^{d-1} \supset \cdots \supset N^0$.
Assume $N$ embeds in a metric space $B$.
Then an open cover $\mathcal U$ of $B$ is \emph{transverse to the strata} if $\mathcal U|N^e$ has covering order no larger than $(e+1)$ for all $e \le d$.
A transverse refinement always exists \cite[Lemma~21.2.1]{KMbook2007}:
\begin{lem}
\label{lem:refinetransversestrata}
	Let $N^{d_k}_k$ be a countable locally finite collection of spaces stratified by manifolds.
	Then every open cover $\mathcal U$ of $B$ has a refinement $\mathcal U'$ that is transverse to the strata in every $N^{d_k}_k$.
	\hfill \qedsymbol
\end{lem}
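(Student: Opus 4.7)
The plan is to prove the lemma by induction on stratum dimension, reducing to the classical dimension-theoretic fact that every open cover of a topological $e$-manifold admits a refinement of covering order at most $e+1$ (Menger--Urysohn; see for instance Hurewicz--Wallman).

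I would first treat the case of a single stratified space $N = N^d \supset N^{d-1} \supset \cdots \supset N^0$ embedded in $B$. Given an open cover $\mathcal U$ of $B$, proceed by induction on $e$. For $e=0$, $N^0$ is discrete in $N$, and using the metric on $B$ one can shrink members of $\mathcal U$ to produce a refinement whose traces on $N^0$ are pairwise disjoint (covering order $\le 1$). Inductively, suppose a refinement $\mathcal U_{e-1}$ of $\mathcal U$ is transverse to $N^{e-1}$. The open set $N^e \setminus N^{e-1}$ is an $e$-dimensional topological manifold, so by dimension theory the induced cover on it admits a refinement of covering order $\le e+1$. I would realize this refinement by covering $N^e \setminus N^{e-1}$ with open subsets of $B$ that are chosen (i) inside prescribed members of $\mathcal U_{e-1}$, and (ii) contained in a small metric thickening of $N^e \setminus N^{e-1}$ disjoint from a preassigned open neighborhood of $N^{e-1}$ that has already been transversely covered. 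Iterating up to $e=d$ produces a refinement $\mathcal U'$ transverse to all strata of $N$.

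For the general statement, enumerate the countable collection $\{N_k\}_{k \ge 1}$ and refine successively: first produce a refinement $\mathcal U^{(1)}$ transverse to the strata of $N_1$, then refine $\mathcal U^{(1)}$ to $\mathcal U^{(2)}$ transverse to the strata of $N_2$ without destroying transversality to $N_1$, and so on. Local finiteness of $\{N_k\}$ ensures that only finitely many $N_k$ meet any given relatively compact subset of $B$, so a standard exhaustion-diagonal procedure (using a locally finite open cover $\{B_j\}$ of $B$ and assembling from compatible refinements) yields a single cover $\mathcal U'$ that is transverse to the strata of every $N_k$.

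The main obstacle is the compatibility across strata (and across the different $N_k$): refining near a higher stratum threatens to introduce new multiplicities on strata where transversality has already been arranged. I would control this by combining two ingredients at every inductive step: the shrinking lemma for locally finite open covers in the paracompact metric space $B$, which lets one replace any member of the current cover by an open subset with closure inside it; and the choice, made immediately after the stratum $N^{e-1}$ is treated, of a fixed open thickening $U_{N^{e-1}}$ of $N^{e-1}$ in which no subsequent refinement is allowed to subdivide. Outside $U_{N^{e-1}}$ the stratum $N^e \setminus N^{e-1}$ is a closed $e$-manifold (inside its ambient open $e$-manifold), and the dimension-theoretic refinement applies there; the uncontrolled behavior of the refinement inside $U_{N^{e-1}}$ is then irrelevant because that region has already been fixed. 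This is the same scheme as in \cite[Lemma~21.2.1]{KMbook2007}, to which the present statement essentially reduces.
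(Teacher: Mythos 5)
The paper itself gives no proof of this lemma: the \qedsymbol{} after the statement indicates it is quoted verbatim from Kronheimer--Mrowka, and the only ``proof'' the paper offers is the citation to \cite[Lemma~21.2.1]{KMbook2007}. You, too, ultimately defer to that reference, so in that respect your proposal takes the same route. Your sketch of \emph{what} the KM argument must do---reduce to the dimension-theoretic refinement theorem for manifolds, induct on stratum dimension, and iterate over the $N_k$ using local finiteness---is directed at the right ingredients.

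There is, however, a genuine gap in the mechanism you propose for preserving lower-stratum transversality. Suppose you have arranged covering order $\le e$ on $N^{e-1}$, fix an open thickening $U \supset N^{e-1}$ over which the cover is frozen, and then refine over $M^e \setminus U$ to achieve order $\le e+1$ there. Two problems arise. First, the inductive hypothesis only controls the order of the restriction to the closed set $N^{e-1}$, not on any neighborhood of it in $B$; a point of $M^e$ that lies inside $U$ is not constrained by that hypothesis at all, so ``freezing'' the cover there does nothing (to fix this one must first invoke a shrinking-lemma argument to propagate the bound from $N^{e-1}$ to a neighborhood, which you do not do). Second, and more seriously, even after such a shrinking there is an annular interface region $M^e \cap (U \setminus U'')$ on which both the old sets (kept to cover the inner neighborhood $U''$) and the newly introduced sets coexist; the covering order at such a point is a priori bounded only by their sum, roughly $e + (e+1)$, not by $e+1$. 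Choosing the new sets to be disjoint from $U''$ does not remove this overlap. The KM argument avoids this by a finer dimension-theoretic device (one uses that $A_e = \bigcup_k (N_k^e \setminus N_k^{e-1})$ has covering dimension $\le e$ by the locally-finite sum theorem, and produces a single refinement simultaneously controlling the order on each $A_e$), rather than by a freeze-and-refine iteration; your sequential scheme, as written, does not close.

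A related, smaller issue: iterating over $k$ by ``refining without destroying transversality to $N_1, \dots, N_{k-1}$'' is not automatic, since refinement does not in general preserve covering order, and since distinct $N_j$ can overlap---local finiteness alone does not resolve the conflict. The cleaner formulation is again to work with the unions $A_e$ across all $k$ at once.
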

By the above lemma, we compute the \v{C}ech cohomology of $B$ as
\begin{equation*}
	\check H^n(B;\mathbb Z_2) = \lim_{\rightarrow} H^n(K(\mathcal U);\mathbb Z_2)
\end{equation*}
over open coverings of $B$ that are transverse to the stratification of a stratified space $N^d$.

\subsection{The Stokes theorem and boundary multiplicities}
\hfill \break
Suppose $N^d$ is a space stratified by manifolds $
	N^d \supset N^{d-1} \supset \cdots \supset N^0$, and denote $M^e = N^e \setminus N^{e-1}$.
Suppose  each $N^d$ admits a covering of dimension at most $d$ (this is always achievable by the result of Nagami \cite{KNagami1970}).
Then
\begin{equation*}
	\check H^d(N^d, N^{d-1};\mathbb Z_2) = H^d_c(M^d;\mathbb Z_2)
\end{equation*}
is a free abelian group with generators $\mu_{\alpha}^d$, corresponding to the components $M^d_{\alpha}$ of $M^d$.
Let
\begin{equation*}
	I_{\alpha}: \check H^d(N^d, N^{d-1};\mathbb Z_2) \to \mathbb Z_2,
\end{equation*}
be the map which is $1$ on the generator and zero on the other.
From the long exact sequence of the triple $(N^d, N^{d-1}, N^{d-2})$, there is a coboundary map
\begin{equation*}
	\delta_*:H^{d-1}_c(M^{d-1};\Ftwo)
	= \bigoplus_{\beta} H^{d-1}_c(M^{d-1}_{\beta};\Ftwo) \to 
	\bigoplus_{\alpha} H^{d-1}_c(M^{d}_{\alpha};\Ftwo) =
	H^d_c(M^d;\Ftwo).
\end{equation*}
The entry 
\begin{equation*}
	\delta_{\alpha\beta} = I_{\alpha} \delta_* \mu_{\beta}^{d-1}.
\end{equation*}
is the 
\emph{multiplicity} of 
of $M^{d-1}_{\beta}$ appearing in boundary of $M^d_{\alpha}$.
For a component $M^{d-1}_{\beta}$, the \emph{boundary multiplicity} is the finite sum
\begin{equation*}
	\delta_{\beta} = \sum_{\alpha} \delta_{\beta\alpha}.
\end{equation*}
Suppose $N^d$ is embedded in a metric space $B$ and choose $\mathcal U$ to be an open covering of $B$ that is transverse to all of the manifold strata.
Every \v{C}ech cochain
\begin{equation*}
	u \in \check C^d(\mathcal U|N^d;\mathbb Z_2)
	= C^d_{\text{simp}}(K(\mathcal U|N^d);\mathbb Z_2)
\end{equation*}
is automatically coclosed, which vanishes on $K(\mathcal U|N)$, as there is no $d$-simplex, and hence defines a class $[u]$ in $\check H^d(N^d, N^{d-1};\mathbb Z_2)$.
We therefore have the ``integration'' maps
\begin{align*}
	\langle -,[M_{\alpha}^d]\rangle : \check C^d(\mathcal U;\mathbb Z_2) &\to \mathbb Z_2\\
	u &\mapsto I_{\alpha}[u|_{N^d}]
\end{align*}
In particular, for $x \in \check H^{d-1}(N^{d-1}, N^{d-2};\Ftwo) = H^{d-1}_c(M^{d-1};\Ftwo)$, 
\begin{equation*}
	I_{\alpha}(\delta_*x) = \sum_{\alpha} \delta_{\alpha \beta}I_{\beta}x.
\end{equation*}
The Stokes theorem holds by definition:
	for $v \in \check{C}^{d-1}(\mathcal U;\Ftwo)$, we have
	\begin{equation}
	\label{eqn:stokes}
		\sum_{\beta} \delta_{\alpha\beta}\langle v, [M^{d-1}_{\beta}]\rangle
		= \langle \delta v, [M^d_{\alpha}]\rangle,
	\end{equation}
	where $\delta$ is the \v{C}ech coboundary map $\check C^{d-1}(\mathcal U;\Ftwo) \to \check C^{d}(\mathcal U;\Ftwo)$.
\hfill \break

The Stokes theorem has the following application to 1-dimensional spaces admitting \emph{$\delta$-structures}. 
(We will define $\delta$-structures in Section~\ref{sec:Gluing}).
\begin{lem}
\label{lem:bound_multiplicities_0}
	Let $N^1$ be a compact 1-dimensional space stratified by manifolds, so that $N^0$ is a finite number of points.
	Suppose that $N^1$ has a codimension-1 $\delta$-structure along $N^0$. 
	Then the number of points of $N^0$ is zero modulo two.
\end{lem}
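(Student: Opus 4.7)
The plan is to apply the Stokes identity \eqref{eqn:stokes} to the constant \v{C}ech $0$-cochain. First I would embed $N^1$ in an ambient metric space and invoke Lemma~\ref{lem:refinetransversestrata} to choose an open cover $\mathcal U$ transverse to the stratification. Taking $v \equiv 1 \in \check C^0(\mathcal U;\Ftwo)$, which is a cocycle, one has $\delta v = 0$ in $\check C^1(\mathcal U;\Ftwo)$.

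Applied to each component $M^1_\alpha$ of the top-dimensional stratum $M^1 = N^1 \setminus N^0$, formula \eqref{eqn:stokes} reduces to
\begin{equation*}
	\sum_\beta \delta_{\alpha\beta}\, \langle 1, [M^0_\beta] \rangle
	= \langle \delta v, [M^1_\alpha] \rangle = 0,
\end{equation*}
since each $0$-dimensional component $M^0_\beta$ is a single point and $\langle 1, [\mathrm{pt}]\rangle = 1$. Summing these identities over $\alpha$ and interchanging the order of summation gives
\begin{equation*}
	\sum_\beta \delta_\beta = \sum_\beta \sum_\alpha \delta_{\alpha\beta} \equiv 0 \pmod{2},
\end{equation*}
where $\delta_\beta = \sum_\alpha \delta_{\alpha\beta}$ is the total boundary multiplicity at the point $M^0_\beta$.

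The key input from the hypothesis is that the codimension-1 $\delta$-structure along $N^0$ (as will be defined in Section~\ref{sec:Gluing}) forces $\delta_\beta = 1 \in \Ftwo$ for every point $M^0_\beta$. Since $N^0$ is finite, one then concludes $|N^0| = \sum_\beta 1 = \sum_\beta \delta_\beta \equiv 0 \pmod{2}$, which is exactly the desired statement. Morally, this is the stratified analogue of the classical fact that the boundary of a compact $1$-manifold consists of an even number of points.

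The main obstacle to formalise is the legitimacy of interchanging the sums over $\alpha$ and $\beta$: although $N^1$ is compact, the open stratum $M^1$ may \emph{a priori} have infinitely many components (cf.\ Example~\ref{exmp:bad_stratified}), and one needs to know that only finitely many pairs $(\alpha,\beta)$ contribute nonzero multiplicity. This is precisely where the $\delta$-structure will pay off: it supplies a local collar-type model near each point of $N^0$ in which only finitely many $1$-strata can accumulate, rendering the double sum genuinely finite and the rearrangement above unambiguous. Once that local finiteness is extracted from the definition in Section~\ref{sec:Gluing}, the remainder of the argument is pure bookkeeping with \eqref{eqn:stokes}.
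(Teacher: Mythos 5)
Your argument is correct and reconstructs exactly the proof of \cite[Corollary~21.3.2]{KMbook2007}, which is what the paper's own proof consists of (a citation to that result). The two facts you rightly single out as doing all the work are precisely the content of \cite[Proposition~21.3.1]{KMbook2007} (the $\delta$-structure forces $\delta_\beta = 1 \in \Ftwo$ at each point of $N^0$) and the paper's standing assertion that each $\delta_\beta = \sum_\alpha \delta_{\alpha\beta}$ is a \emph{finite} sum; combined with the lemma's explicit hypothesis that $N^0$ is finite, the set of pairs $(\alpha,\beta)$ with $\delta_{\alpha\beta}\ne 0$ is finite, so the interchange you worry about is unproblematic even when $M^1$ has infinitely many components.
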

\begin{proof}
	This is \cite[Corollary~21.3.2]{KMbook2007}.
\end{proof}
As a demonstration of Lemma~\ref{lem:bound_multiplicities_0}, recall example~\ref{exmp:bad_stratified}.
The stratified space $N$ is a union of infinitely many circles, where the $0$-stratum
	$N^0$ has two points, even though $N^1 \setminus N^0$ has infinitely many components.

\section{Gluing}
\label{sec:Gluing}
This section is concerned with the structure of the neighbourhoods of the boundary strata as gluing of broken trajectories.
\subsection{An abstract gluing theorem}
\hfill \break
The abstract gluing theorem is an variant of the one in \cite[Section~18.2]{KMbook2007}.
Let us set up some notations.
Let $Z^T$ be a finite cylinder $[-T,T] \times Y$, for $T > 0$, and $Z^{\infty}$ be the broken infinite cylinder
\begin{equation*}
	Z^{\infty} = (\reals^{\ge} \times Y) \sqcup
	 (\reals^{\le} \times Y).
\end{equation*}
Let $E \to Z$ be the pullback of a bundle $E_0$ on $Y$, where $Z$ denotes either $Z^T$, $Z^{\infty}$, or $\reals \times Y$ that projects onto the factor $Y$.
Let $\upiota$ be an involution on $Y$.
Let $\uptau: E_0 \to E_0$ be an involutive lift of $\upiota:Y \to Y$ which induces an involution on the sections $L^2_k(Y;E_0)$ by pullback, i.e. 
\begin{equation*}
	\uptau s(y) := \uptau(s(\upiota(y)))
\end{equation*} 
and thus an involution on $L^2_k(Z;E)$.
Let $D$ be an operator acting on sections of $E$ that commutes with $\uptau$, having the form
\begin{equation*}
	Du = \frac{du}{dt} + Lu,
\end{equation*}
where $L$ is an operator on $Y$, also commutating with $\uptau$:
\begin{equation*}
	L:L^2_k(Y;E_0) \to L^2_{k-1}(Y;E_0),
\end{equation*}
and hence defines an operator on the invariant sections:
\begin{equation*}
	L:L^2_k(Y;E_0)^{\uptau} \to L^2_{k-1}(Y;E_0)^{\uptau}.
\end{equation*}
The domain of $D$ will either be $L^2_k(Z^T;E)^{\uptau}$ or $L^2_k(Z^{\infty};E)^{\uptau}$.
In the case of the infinite cylinder $Z^{\infty}$, the operator acts on the weighted Sobolev space $L^2_{k,\delta}(Z^{\infty},E)$ where $\delta \in \reals$ and by definition
\begin{equation*}
	s \in L^2_{k,\delta}(Z^{\infty};E) \iff e^{\delta|t|}s \in L^2_{k-1,\delta}(Z^{\infty},E).
\end{equation*}
Suppose we are given a linear map
\begin{equation*}
	\Pi: L^2_{k-1,1/2}(Y \sqcup \bar Y;E_0)^{\uptau} \to H,
\end{equation*}
to some Hilbert space $H$. Omitting the restriction from our notations, we write
\begin{equation*}
	\Pi: L^2_k(Z;E)^{\uptau} \to H
\end{equation*}
for the composition of $\Pi$ with the restriction map, over $Z = Z^{T}$ or $Z^{\infty}$.
We also simplify notations if no confusion arises, and write
\begin{align*}
	\mathcal E^T &= L^2_k(Z^T;E)^{\uptau},\\
	\mathcal F^T &= L^2_{k-1}(Z^T;E)^{\uptau},
\end{align*}
and $\mathcal E^{\infty}_{\delta}$ and $\mathcal F^{\infty}_{\delta}$ for the weighted versions.
We make the following assumption.
\begin{asm}
\label{asm:inftyinvertible}
	We suppose the linear operator
	\begin{equation*}
		(D,\Pi):\mathcal E^{\infty} \to \mathcal F^{\infty} \oplus H
	\end{equation*}
	is invertible.
\end{asm}
Let $\delta > 0$ be sufficiently small so that the operator $(D,\Pi)$ is invertible on the weighted Sobolev space.
Let $C_0$ be a constant at least as large as the operator norm of the inverse of $(D,\Pi)$ in both the unweighted and weighted spaces. That is, for all $u$,
\begin{equation}
	\label{eqn:C0}
\begin{aligned}
	\|u\| &\le C_0(\|Du\| + \|\Pi u\|), \\
	\|u\|_{\delta} &\le C_0(\|Du\|_{\delta} + \|\Pi u\|),
\end{aligned}
\end{equation}

We shall consider a nonlinear operator $D+\alpha$ having $D$ as its linearization at $u = 0$.
Suppose there is a continuous map
\begin{equation*}
	\alpha_0: C^{\infty}(Y;E_0) \to L^2(Y;E_0),
\end{equation*}
and that
\begin{equation*}
	\alpha: C^{\infty}(\reals \times Y; E) \to
	L^2_{\loc}(\reals \times Y;E)
\end{equation*}
is defined by restriction to slices $\{t\} \times Y$, both of which commute with $\uptau$ and define operators on the respective invariant sections.
We also consider $\alpha$ operators over compact intervals.
\begin{asm}
\label{asm:smoothandvanishsecondorder}
	We suppose that $\alpha$ defines a smooth map
	\begin{equation*}
		\alpha: L^2_{k}([-1,1] \times Y;E)^{\upiota} 
		\to
		L^2_{k-1}([-1,1] \times Y;E)^{\upiota}.
	\end{equation*}
	We assume also that $\alpha(0) = 0$ and $\mathcal D_0 \alpha = 0$.
\end{asm}

By Assumption~\ref{asm:smoothandvanishsecondorder} and proof of Lemma~\ref{lem:pSWsmooth_taumodel} 
(corresponding to \cite[Lemma~14.4.1]{KMbook2007}), $\alpha$ defines a smooth map $\mathcal E^T \to \mathcal F^T$ on the finite cylinders, and smooth map over the infinite cylinders.
Moreover, since $\alpha$ is $C^1$ in both $\mathcal E^{\infty}$ and $\mathcal E^{\infty}_{\delta}$, and has vanishing derivatives at the origin, it is uniformly Lipshitz with small Lipshitz constants on small balls about $0$.
That is, for any $\epsilon > 0$, we can find an $\eta > 0$, such that for all $u, u'$ in $\mathcal E^{\infty}$, 
\begin{equation*}
	\|u\|,\|u'\| \le \eta \implies
	\|\alpha(u) - \alpha(u')\| \le \epsilon \|u - u'\|,
\end{equation*}
and for all $u, u' \in \mathcal E^{\infty}_{\delta}$,
\begin{equation*}
	\|u\|_{\delta},\|u'\|_{\delta} \implies
	\|\alpha(u) - \alpha(u')\|_{\delta} \le \epsilon
	\|u - u'\|_{\delta}.
\end{equation*}
\begin{asm}
	We will suppose $\eta_1 > 0$ is chosen so that the above Lipshitz property holds with $\epsilon = 1/(2C_0)$, where $C_0$ is the constant in ~\eqref{eqn:C0}.
\end{asm}
Finally, we consider the zeros of the maps
\begin{align*}
	F^T &= D+ \alpha : \mathcal E^T \to \mathcal F^T,\\
	F^{\infty} &= D + \alpha : \mathcal E^{\infty} \to \mathcal F^{\infty},
\end{align*}
and write
\begin{align*}
	M(T) &= (F^T)^{-1}(0) \subset \mathcal E^T,\\
	M(\infty) &= (F^{\infty})^{-1}(0) \subset \mathcal E^{\infty}.
\end{align*}
We are ready to state the following abstract gluing theorem, which is the $\uptau$-invariant analogue of \cite[Theorem~18.3.5]{KMbook2007}.
\begin{thm}
	For $T \ge T_0$, the solution sets $M(T)$ and $M(\infty)$ are Hilbert submanifolds of $\mathcal E^T$ and $\mathcal E^{\infty}$ in a neighbourhood of zero. There is an $\eta > 0$ and smooth maps from the $\eta$-ball in the Hilbert space $H$ to the solution sets,
	\begin{align*}
		u(T,-) &: B_{\eta}(H) \to M(T) \\
		u(\infty,-) &: B_{\eta}(H) \to M(\infty),	
	\end{align*}
	which are diffeomorphisms onto their images, and which satisfy
	\begin{equation*}
		\Pi u (T,h) =\Pi u(\infty,h) = h.
	\end{equation*}
	Furthermore, for $T \in [T_0,\infty]$, the map
	\begin{equation*}
		\mu_T: B_{\eta}(H) \to L^2_{k-1/2}(Y \sqcup \bar Y;E_0)^{\uptau}
	\end{equation*}
	defined by composing $u(T,-)$ with the restriction maps to the boundary,
	\begin{equation*}
		\mu_T(h) = r u(T,h)
	\end{equation*}
	is a smooth embedding of $B_{\eta}(H)$.
	As a function on $[T_0,\infty) \times B_{\eta}(H)$, the map $(T,h) \mapsto \mu_T(h)$ is smooth for finite $T$; and $\mu_T$ converges to $\mu_{\infty}$ in the $C^{\infty}_{\loc}$-topology as $T \to \infty$.
	Finally, there is an $\eta' > 0$, independent of $T$, such that the images of the maps $u(T,-)$ contain all solutions $u \in M(T)$ with $\|u\| \le \eta'$.
\end{thm}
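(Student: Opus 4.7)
The plan is to reduce everything to a parametric Banach fixed-point argument, following the scheme of \cite[Theorem~18.3.5]{KMbook2007} with the additional bookkeeping that every space is replaced by its $\uptau$-invariant subspace; since all operators involved commute with $\uptau$, this restriction is harmless and the analytical estimates transfer verbatim.

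First I would upgrade Assumption~\ref{asm:inftyinvertible} to a uniform statement on finite cylinders. By cutting off a right-inverse for $(D,\Pi)$ on $\mathcal E^\infty$ supported in $Z^\infty$ and patching its two halves across the middle of $Z^T$, one constructs for $T \ge T_0$ an approximate right-inverse, whose error is small because the inverse on the infinite cylinder decays exponentially (this uses the weighted estimate with constant $C_0$ in \eqref{eqn:C0}). A Neumann series then produces genuine right-inverses $G_T:\mathcal F^T \oplus H \to \mathcal E^T$ with operator norms bounded by a $T$-independent constant, say $2C_0$. Equivalently $(D,\Pi):\mathcal E^T \to \mathcal F^T \oplus H$ is surjective with right-inverse of norm $\le 2C_0$ for $T \ge T_0$, and the same holds on $\mathcal E^\infty$.

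Next I would solve the nonlinear equation by a contraction argument. For $h \in H$ small and $T \in [T_0,\infty]$, define
\[
\mathcal N_{T,h}: \mathcal E^T \to \mathcal E^T, \qquad u \mapsto -G_T(\alpha(u), h).
\]
A fixed point of $\mathcal N_{T,h}$ is a solution of $F^T(u) = 0$ with $\Pi u = h$. Using the uniform Lipschitz estimate $\|\alpha(u)-\alpha(u')\|\le (2C_0)^{-1}\|u-u'\|$ valid on the $\eta_1$-ball, together with $\|G_T\|\le 2C_0$, one checks that $\mathcal N_{T,h}$ contracts the $\eta$-ball into itself provided $\|h\|\le \eta/(4C_0)$. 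The Banach fixed-point theorem gives a unique solution $u(T,h)$ of size $\le \eta$, which depends smoothly on $h$ by the implicit function theorem applied to the smooth map $F^T \oplus \Pi$, whose linearization at $u(T,h)$ is invertible for small data (it is a small perturbation of $(D,\Pi)$). This immediately identifies $M(T)$ and $M(\infty)$ with Hilbert submanifolds diffeomorphic to $B_\eta(H)$ via $\Pi$, and the uniqueness in the ball of size $\eta' := \eta$ also falls out.

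The main obstacle, and what I would handle last, is the $C^\infty_{\loc}$ convergence $\mu_T \to \mu_\infty$ as $T \to \infty$ and smooth dependence of $\mu_T(h)$ on $(T,h)$ for finite $T$. The idea is to compare $u(T,h)$ and $u(\infty,h)$ through the difference equation satisfied by $v_T := u(T,h) - u(\infty,h)|_{Z^T}$, which has inhomogeneity consisting of a boundary term and a quadratic-in-$v_T$ correction. Using the weighted estimate in \eqref{eqn:C0} and the fact that $u(\infty,h)$ lies in $\mathcal E^\infty_\delta$ (so its restriction to $\{\pm T\}\times Y$ is $O(e^{-\delta T})$), one gets $\|v_T\|_{\mathcal E^T} = O(e^{-\delta T})$, and bootstrapping on Sobolev regularity via the smoothness in Assumption~\ref{asm:smoothandvanishsecondorder} and interior elliptic regularity for $D$ yields convergence on any compact subset in all $C^k$. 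Smooth dependence on $T$ for finite $T$ is obtained by differentiating the fixed-point equation with respect to $T$ (the cylinder parameter enters only through the domain, so one reparametrizes $Z^T$ as $[-1,1]\times Y$ with a $T$-dependent metric and checks that the resulting nonlinear map is smooth in $T$). Finally, the embedding property of $\mu_T$ follows from injectivity of $\mathrm{r}\circ G_T$ on the image of $H$, which in turn is a consequence of unique continuation: any two solutions agreeing on the boundary would differ by an element of $\ker(D+\alpha',\Pi)$, and shrinking $\eta$ makes this kernel trivial by the $(D,\Pi)$ invertibility.
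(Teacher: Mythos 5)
Your proposal follows exactly the scheme of \cite[Theorem~18.3.5]{KMbook2007} that the paper invokes, and the key observation that all operators commute with $\uptau$ so the estimates transfer verbatim is precisely the content of the paper's one-line proof. Two details to tighten: with $\|G_T\|\le 2C_0$ and Lipschitz constant $1/(2C_0)$, the composite in your fixed-point iteration has Lipschitz constant $\le 1$, not strictly less than $1$, so you should either use the sharper asymptotic bound $\|G_T\|\le (1+O(e^{-\delta T}))C_0$ for $T\ge T_0$, or shrink $\eta_1$ so that the Lipschitz constant of $\alpha$ on the relevant ball is, say, $1/(4C_0)$. Also, the embedding property of $\mu_T$ falls out more directly from the relation $\Pi u(T,h)=h$: unwinding $\Pi$ as the composition of the projection onto $H$ with the boundary restriction $r$, this says that the projection composed with $\mu_T$ equals the identity on $H$, so $\mu_T$ has a continuous left-inverse and its derivative is a split injection at every point; the unique-continuation detour you invoke is unnecessary and would require additional care since $\alpha$ is nonlinear.
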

\begin{proof}
	The proof in \cite[Section~18.3]{KMbook2007} applies, under the new assumptions.
\end{proof}
\subsection{Gluing gauge-theoretic trajectories near critical points}
\hfill \break
Let $\mathfrak a \in \tilde{\mathcal C}^{\sigma}_k(Y)$ be a critical point of $(\grad \pertL)^{\sigma}$, and $\upgamma_{\mathfrak a}$ be the translation invariant solution on the cylinder, in temporal gauge.
Let $T > 0$ and $Z^T = [-T, T] \times Y$.
We may treat $\upgamma_{\mathfrak a}$ as an element in $\tilde M(Z^T)$.
Let $Z^{\infty}$ be the disjoint union of two cylinders:
\begin{equation*}
	Z^{\infty} = (\reals^{\le} \times Y) \sqcup (\reals^{\ge} \times Y),
\end{equation*}
where we heuristically regard the two boundary components with $\{-T\} \times Y$ and $\{T\} \times Y$, as $T \to \infty$ and the cylinder becomes broken in the middle.
Let $\tilde M(Z^{\infty},[\mathfrak a])$ be the space of solutions that are asymptotic to $[\mathfrak a]$ on both ends, which can either be regarded as in $L^2_k$ or $L^2_{k,\loc}$.
In particular,
\begin{equation*}
	\tilde M(Z^{\infty},[\mathfrak a]) = \tilde M(\reals^{\ge} \times Y,[\mathfrak a]) \times \tilde M(\reals^{\le} \times Y,[\mathfrak a]).
\end{equation*}
The boundary of $Z^T$ and $Z^{\infty}$ are both $Y \sqcup \bar Y$, and there are restriction maps
\begin{align*}
	R: \tilde M(Z^{T},[\mathfrak a]) &\to \tilde B^{\sigma}_{k-1/2}(Y \sqcup \bar Y),\\
	R: \tilde M(Z^{\infty},[\mathfrak a]) &\to \tilde B^{\sigma}_{k-1/2}(Y \sqcup \bar Y),	
\end{align*}
defined as the $L^2_{k-1/2}$-completion of the complement $K^{\sigma}_{k,\mathfrak a}$ to the gauge group orbit.

\begin{thm}
\label{thm:gauge_glue_crit}
	There exists $T_0$ such that for all $T \ge T_0$, we can find smooth maps
	\begin{align*}
		u(T,-) &: B(\mathcal K) \to \tilde M(Z^T),\\
		u(\infty,-) &: B(\mathcal K) \to \tilde M(Z^{\infty}),
	\end{align*}
	which are diffeomorphisms from a ball $B(\mathcal K) \subset \mathcal K$ onto neighbourhoods of the constant solution $[\upgamma_{\mathfrak a}]$.
	These can be chosen so that the map
	\begin{equation*}
		\mu_T:B(\mathcal K) \to \tilde B^{\sigma}_{k-1/2}(Y \sqcup 
		\bar Y)^{\uptau}
	\end{equation*}
	defined by composing $u(T,-)$ with the restriction maps to the boundary
	\begin{equation*}
		\mu_T(h) = Ru(T,h)
	\end{equation*}
	is a smooth embedding of $B(\mathcal K)$ for $T \in [T_0,\infty]$, with the following properties:
	\begin{itemize}[leftmargin=*]
		\item as a function on $[T_0,\infty) \times B(\mathcal K)$, the map $(T,h) \to \mu_T(h)$ is smooth for finite $T$; 
		\item $\mu_T$ converges to $\mu_{\infty}$ in the $C^{\infty}_{\loc}$ topology as $T \to \infty$;
		\item there is an $\eta > 0$, independent of $T$, such that the images of the maps $u(T,-)$ can be taken to contain all solutions $[\upgamma] \in M(Z^T)$ with $\|\upgamma_{\mathfrak a} - \upgamma\|_{L^2_{Z^T}} \le \eta$;
	\end{itemize}
	Finally, in the case that $\mathfrak a$ is reducible, the parametrization $u$ are equivariant for the $\mathbb Z/2$-action arising from the standard $\mathbb Z/2$-action $\mathbf i$ on $\tilde C^{\sigma}_{k-1/2}(Y)$.
\end{thm}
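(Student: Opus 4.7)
The plan is to specialize the abstract gluing theorem from the previous subsection to the gauge-theoretic setting, working in a Coulomb slice through the translation-invariant solution $\upgamma_{\mathfrak a}$. Take $E \to Z$ to be the pullback of $E_0 = i\T^*Y \oplus S \oplus i\reals$, where the $i\reals$ summand records the gauge parameter, carrying the induced involution $\uptau$. In this slice the perturbed Seiberg-Witten map coupled with the Coulomb gauge condition of Section~\ref{subsec:loc_str} takes the form $F(u) = Du + \alpha(u)$ with $D = d/dt + L$ and $L = \widehat{\Hess}^{\sigma}_{\mathfrak q,\mathfrak a}$ the extended Hessian of~\eqref{eqn:extendedHessianTL}. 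For the Hilbert space $H$ in the abstract theorem I take the (finite-dimensional) kernel $\mathcal K$ of the extended Hessian pulled back to $Y \sqcup \bar Y$, and let $\Pi$ be the $L^2$-orthogonal projection onto $\mathcal K$ composed with the boundary restriction.

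Assumption~\ref{asm:inftyinvertible} reduces to invertibility of $(D,\Pi)$ on the broken infinite cylinder. Since $\mathfrak a$ is nondegenerate, Lemma~\ref{lem:ext_Hess_real_eigen} shows that $\widehat{\Hess}^{\sigma}_{\mathfrak q,\mathfrak a}$ is hyperbolic with real spectrum. Hyperbolicity gives invertibility of $D$ on each half-cylinder once the spectral projection is prescribed on the boundary, and since $\mathcal K$ exhausts the null directions we obtain the required invertibility on the broken cylinder, simultaneously in the unweighted and the $e^{\delta|t|}$-weighted Sobolev spaces for any $\delta$ smaller than the spectral gap. For Assumption~\ref{asm:smoothandvanishsecondorder}, the term $\alpha$ packages the quadratic curvature and quadratic spinor pieces of $\mathfrak F^{\tau}$ together with the higher-order part of the tame perturbation $\hat{\mathfrak q}^{\tau}$; tameness (Definition~\ref{defn:tame}) guarantees smoothness between the relevant Sobolev spaces, $\uptau$-equivariance is automatic from the $\uptau$-equivariance of $\mathfrak F^{\tau}$ and $\mathfrak q$, and $\mathcal D_0 \alpha = 0$ holds by construction since $L$ is the full linearization at $\upgamma_{\mathfrak a}$.

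The abstract gluing theorem then furnishes the parametrizations $u(T,-)$ and $u(\infty,-)$, together with smoothness in $T$, $C^{\infty}_{\loc}$-convergence of $\mu_T \to \mu_\infty$, and the uniform bound $\eta'$ capturing all nearby solutions. The relation $\Pi \circ u(T,h) = h$ shows $\mu_T$ has injective derivative at $0$ and is therefore an embedding for $h$ small, and a contraction mapping argument uniform in $T$ yields the joint smoothness. I expect the main obstacle to be the careful bookkeeping of the Coulomb slice against the boundary restriction: one must check that the kernel $\mathcal K$ used to build $\Pi$ is precisely the Zariski tangent space to the gauge-theoretic moduli space at the constant solution, so that the slice model genuinely matches the quotient $\tilde{\mathcal B}^{\sigma}_{k-1/2}$, and that restriction of $\uptau$-invariant cylinder configurations lands in $\uptau$-invariant sections over $Y \sqcup \bar Y$.

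Finally, when $\mathfrak a$ is reducible the standard involution $\mathbf i : (A,s,\phi) \mapsto (A,-s,-\phi)$ preserves the slice, commutes with $D$ (which is block-diagonal with respect to the boundary/normal decomposition described in Section~\ref{subsec:loc_str}) and with $\alpha$, and acts on $\mathcal K$ through the same block decomposition. Uniqueness in the contraction mapping (equivalently, the implicit function theorem) then implies that $u(T,\mathbf i h) = \mathbf i \, u(T, h)$, giving the desired $\mathbb Z/2$-equivariance of the parametrizations.
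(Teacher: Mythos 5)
Your overall strategy — specialize the abstract gluing theorem, work in a Coulomb slice at $\upgamma_{\mathfrak a}$ with $D = d/dt + \widehat{\Hess}^{\sigma}_{\mathfrak q,\mathfrak a}$, identify the nonlinearity $\alpha$ and verify tameness, then invoke the parametrization maps $u(T,-)$ and argue $\mathbb Z/2$-equivariance by uniqueness — is the right one, and matches the route the paper takes. However, the identification of the Hilbert space $H$ and the projection $\Pi$ is wrong, and this is precisely the step that makes the abstract theorem hook up to gauge theory.

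You take $H$ to be the finite-dimensional kernel of the extended Hessian restricted to $Y \sqcup \bar Y$, and $\Pi$ the orthogonal projection onto it. But $\mathfrak a$ is nondegenerate (that is standing throughout Section~\ref{sec:modspace_trajectories}), so by Lemma~\ref{lem:ext_Hess_real_eigen} the extended Hessian is \emph{invertible} and its kernel is zero, which would make $H = 0$ and $\Pi = 0$. That cannot be right, because $\tilde M(Z^T)$ is an infinite-dimensional Hilbert manifold — no boundary conditions have been imposed on the finite cylinder — so the ball $B(\mathcal K)$ parametrizing a neighbourhood of $[\upgamma_{\mathfrak a}]$ must be a ball in an infinite-dimensional space. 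The correct choice, as in the Atiyah--Patodi--Singer boundary-value framework developed earlier in the paper, is the infinite-dimensional negative spectral subspace: one takes $H = H^-_{\bar Y}\oplus H^-_Y$ where $H^-_Y = \{0\}\oplus \mathcal K^-_{\mathfrak a}\oplus L^2_{k-1/2}(Y;i\reals)^{-\upiota^*}$ and $H^-_{\bar Y} = \{0\}\oplus \mathcal K^+_{\mathfrak a}\oplus L^2_{k-1/2}(Y;i\reals)^{-\upiota^*}$, with $\Pi$ the corresponding spectral projections $\Pi^-_{\bar Y}\oplus\Pi^-_Y$ composed with boundary restriction. This is what makes Assumption~\ref{asm:inftyinvertible} hold: on each half-cylinder, $D$ is invertible after prescribing the \emph{negative} spectral part of the boundary data (a codimension-$\infty$ condition), not after prescribing a finite kernel. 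Relatedly, your invertibility argument ("since $\mathcal K$ exhausts the null directions we obtain the required invertibility") would need $\mathcal K$ to account for the full exponentially-decaying solution space on each half-cylinder, which is infinite-dimensional — another signal that the finite-dimensional $\mathcal K$ is the wrong object. Once $H$ and $\Pi$ are replaced by the negative spectral data, the rest of your outline (tameness of $\alpha$, contraction-mapping smoothness in $T$, uniqueness giving the $\mathbf i$-equivariance) goes through as you describe.
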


\begin{proof}
	The theorem can be proved by following the arguments in \cite[Section~18.4]{KMbook2007} and replacing objects by the associated $\upiota$-invariant analogues.
	For completeness, let us sketch how the abstract version applies.
	Roughly, we have the following correspondence.
\bgroup
\def\arraystretch{1.5}%
\begin{center}
\begin{tabular}{ |c|c|} 
 \hline
 Abstract version & Gauge theory version
 \\  \hline
 $\mathcal E^T$ & $\tilde C^{\tau}_k(Z^T)$ 
 \\ \hline
 $\mathcal E^{\infty}$ & $\tilde C^{\tau}_k(Z^{\infty};[\mathfrak a])$ 
 \\ \hline
 $\mathcal F^T$ & $V^{\tau}_{k-1}(Z^T)$
 \\ \hline
 $\mathcal F^{\infty}$ & $V^{\tau}_{k-1}(Z^{\infty})$ 
 \\ \hline
 $H$ & $H^-_{\bar Y} \oplus H^-_{Y}$ 
 \\ \hline
 $\Pi$ & $\Pi^-_{\bar Y} \oplus \Pi^-_{Y}$
 \\ \hline
 $D+\alpha$ & $\mathfrak F^{\tau}_{\mathfrak q} + \text{Coul}^{\tau}_{\mathfrak a}$
 \\ \hline
\end{tabular}
\end{center}
\egroup
The Hilbert spaces $H^-_{\bar Y}$ and $H^-_{Y}$  are defined by
\begin{align*}
	H^-_Y &= \{0\} \oplus \mathcal K^- \oplus L^2_{k-1/2}(Y;i\reals)^{-\upiota^*}\\
	H^-_{\bar Y} &= \{0\} \oplus \mathcal K^+ \oplus L^2_{k-1/2}(Y;i\reals)^{-\upiota^*},
\end{align*}
and $\Pi^-_{\bar Y} \oplus \Pi^-_{Y}$ are the corresponding projection maps.
There are two major caveats.
\begin{itemize}
	\item $\tilde C^{\tau}_k(Z^T)$ is not a linear space because of the slice-wise norm constraint, so we must instead work in a local chart;
	\item neither the domain nor the range are sections of finite-dimensional vector bundles.
\end{itemize}
Finally, we refer the readers to \cite[Section~18.4]{KMbook2007} for the verification of Assumptions \ref{asm:inftyinvertible} and \ref{asm:smoothandvanishsecondorder}, and derivation of the rest of Theorem~\ref{thm:gauge_glue_crit} from abstract gluing.
Assumption~\ref{asm:inftyinvertible} is essentially a consequence of the $\uptau$-invariant version of  \cite[Proposition~17.2.6]{KMbook2007}.
\end{proof}

\subsection{Neighbourhoods of the boundary strata}
\begin{defn}
	Let $(Q,q_0)$ be a topological space, let $\pi: S \to Q$ be a continuous map, and let $S_0 \subset \pi^{-1}(q_0)$.
	Then $\pi$ is a \emph{topological submersion} along $S_0$, if for all $s_0 \in S_0$ we can find a neighbourhood $U$ of $s_0 \in S$, a neighbourhood $Q'$ of $q_0$ in $Q$, and a homeomorphism $(U \cap S_0) \times Q' \to U$ such that the diagram
\[\begin{tikzcd}
	{(U \cap S_0) \times Q'} && U \\
	{Q'} && {Q'}
	\arrow[Rightarrow, no head, from=2-1, to=2-3]
	\arrow[from=1-1, to=2-1]
	\arrow[from=1-3, to=2-3]
	\arrow[from=1-1, to=1-3]
\end{tikzcd}\]
	commutes.
\end{defn}
Consider the stratum
\begin{equation}
\label{eqn:unobs_stratum}
	\prod_{i=1}^n \check M([\mathfrak a_{i-1}],[\mathfrak a_i]) \subset \check M^+([\mathfrak a_0],[\mathfrak a_n]),
\end{equation}
and assume none of the moduli spaces involved are boundary-obstructed.
Then the following theorem says that the neighbhood of the stratum~\eqref{eqn:unobs_stratum} looks like a product of itself  with an $(n-1)$-cube.
\begin{thm}
[Boundary-unobstructed]
	Suppose the moduli spaces $M([\mathfrak a_{i-1}],[\mathfrak a_i])$ are boundary-unobstructed for $i = 1,\dots, n$.
	Then there is a neighbourhood $\check W$ of the subset~\eqref{eqn:unobs_stratum} in $\check M^+([\mathfrak a_0],\dots,[\mathfrak a_n])$ and a map
	\begin{equation*}
		\mathbf S: \check W \to (0,\infty]^{n-1}
	\end{equation*}
	such that $\mathbf S^{-1}(\infty,\dots,\infty)$ is the subset~\eqref{eqn:unobs_stratum}, and such that $\mathbf S$ is a toplogical sumbersion along~\eqref{eqn:unobs_stratum}.
\end{thm}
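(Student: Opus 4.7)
The plan is to construct the submersion $\mathbf S$ by parameterizing nearby unparametrized broken trajectories via a gluing construction at each of the $n-1$ intermediate rest points. Concretely, near each intermediate critical orbit $[\mathfrak a_i]$ ($1 \le i \le n-1$), I will invoke the gluing Theorem~\ref{thm:gauge_glue_crit}, which provides, for every $T_i \in [T_0,\infty]$, a smooth embedding $\mu_{T_i}: B(\mathcal K_i) \hookrightarrow \tilde{\mathcal B}^{\sigma}_{k-1/2}(Y\sqcup\bar Y)^{\uptau}$ parameterizing a neighbourhood of the constant solution at $[\mathfrak a_i]$ on the cylinder $Z^{T_i}$, with $\mu_\infty$ corresponding to the broken pair $\tilde M(\reals^{\le}\times Y,[\mathfrak a_i])\times \tilde M(\reals^{\ge}\times Y,[\mathfrak a_i])$. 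Setting $S_i = 1/T_i \in [0,1/T_0)$ (or equivalently $T_i \in (T_0,\infty]$) gives the desired gluing parameter at the $i$th break.

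First I would truncate the $n$ components $[\upgamma_1],\dots,[\upgamma_n]$ of a broken trajectory $[\check{\boldsymbol\upgamma}]$ far along their cylindrical ends and splice in a cylinder of length $2T_i$ at each rest point $[\mathfrak a_i]$, using the central portion of the parametrization $u(T_i,h_i)$ from Theorem~\ref{thm:gauge_glue_crit} to interpolate the truncated ends of $[\upgamma_i]$ and $[\upgamma_{i+1}]$. The matching condition at both boundaries of the inserted finite cylinder $Z^{T_i}$ lives in the finite-dimensional space $H^-_{\bar Y,\mathfrak a_i}\oplus H^-_{Y,\mathfrak a_i}$ of Atiyah--Patodi--Singer boundary data, and is exactly the parameter space of the abstract gluing in Theorem~\ref{prop:cyinder_finite_fredholm}. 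Because each $M([\mathfrak a_{i-1}],[\mathfrak a_i])$ is boundary-unobstructed, the linearized Seiberg--Witten operator restricted to the Coulomb slice over the semi-infinite cylinders is surjective with prescribed negative spectral boundary data; combining this surjectivity for each of the $n$ components with the invertibility statement of Theorem~\ref{prop:cyinder_finite_fredholm} yields, by the parametrix-patching of \cite[Section~18]{KMbook2007}, that for $T_i \ge T_0$ the linearized glued problem is Fredholm and surjective. A contraction-mapping (Newton iteration) argument, exactly as in the proof of the abstract gluing theorem, then produces a smooth family of genuine perturbed Seiberg--Witten solutions on $\reals\times Y$ parameterized by the open neighbourhood $\prod_i \check M([\mathfrak a_{i-1}],[\mathfrak a_i]) \times (T_0,\infty]^{n-1}$ of the stratum.

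Next I would define $\mathbf S = (S_1,\ldots,S_{n-1})$ by $S_i = 1/T_i$, extended by $S_i = 0$ on the stratum itself, and verify that the inverse of the gluing map provides the local product chart required for topological submersion. The map is a homeomorphism onto its image by the uniqueness clause of Theorem~\ref{thm:gauge_glue_crit} (the third bullet, with the uniform $\eta$): any trajectory in $\check M^+$ sufficiently close to the broken trajectory $[\check{\boldsymbol\upgamma}]$ must, after translation-fixing each component, lie in the image of some $u(T_i,h_i)$ at each neck, and the $T_i$ and $h_i$ are uniquely determined by the solution modulo the free translation action (which is quotiented out in $\check M$). The convergence $\mu_{T_i} \to \mu_\infty$ in $C^\infty_{\loc}$ as $T_i \to \infty$ guarantees that the resulting family of solutions converges in the topology of $\check M^+$ defined by the neighbourhood base $\Omega(U_1,\ldots,U_n,T)$ to the original broken trajectory as $S_i \to 0$, which is exactly the continuity required for the submersion property.

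The main obstacle will be the bookkeeping of gauge-fixing and translation-fixing when $n \ge 3$: each $[\upgamma_i]$ is only defined modulo translations, so when splicing one must normalize by cutting at a specific slice (say where the distance to $[\mathfrak a_i]$ first crosses a threshold), and one must check that the resulting parameterization is consistent across overlapping neighbourhoods and compatible with the $\mathcal G_{k+1}(Z,\upiota)$-action near reducibles, where the $\mathbf i$-equivariance clause of Theorem~\ref{thm:gauge_glue_crit} enters. A secondary technical point is that the abstract gluing produces Hilbert-manifold charts, whereas the topology on $\check M^+$ is defined via the $L^2_{k,\loc}$ topology on $\mathcal B^{\tau}_{k,\loc}(Z)$; verifying continuity of $\mathbf S^{-1}$ relative to that coarser topology requires the uniform exponential-decay estimates underlying the convergence $\mu_{T_i} \to \mu_\infty$, which hold in our setting because the extended Hessian $\widehat{\Hess}^{\sigma}_{\mathfrak q,\mathfrak a_i}$ is hyperbolic by Lemma~\ref{lem:ext_Hess_real_eigen}.
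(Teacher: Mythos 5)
Your proposal is correct and follows essentially the same route as the paper, which simply observes that the gluing argument of \cite[Section~19.3]{KMbook2007} carries over word-for-word once all function spaces are replaced by their $\uptau$-invariant subspaces. Two small slips worth fixing: the theorem's convention places the stratum at $\mathbf{S}^{-1}(\infty,\dots,\infty)$, so you should take $S_i$ to increase with the gluing length (say $S_i = T_i$) rather than $S_i = 1/T_i$; and the APS boundary-data spaces $H^-_{\bar Y,\mathfrak a_i}\oplus H^-_{Y,\mathfrak a_i}$ appearing as the gluing parameter space in Theorem~\ref{thm:gauge_glue_crit} are infinite-dimensional, not finite-dimensional --- the finite-dimensionality of the glued moduli arises only after intersecting with the (Fredholm) restriction data from the half-infinite pieces, and the reference you want there is the abstract gluing theorem just before Theorem~\ref{thm:gauge_glue_crit}, not Theorem~\ref{prop:cyinder_finite_fredholm}.
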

The proof of the boundary-unobstruted case in \cite[Section~19.3]{KMbook2007}, applies word-by-word, with $\uptau$-invariant subspaces.
The boundary-obstructed case below, based on \cite[Theorem~19.4.1]{KMbook2007},  is more elaborate.
\begin{thm}
[Boundary-obstructed]
	Suppose the moduli spaces $M([\mathfrak a_{i-1}],[\mathfrak a_i])$ are boundary-obstructed for $i \in O$ and boundary-unobstructed for $i \in O' = \{1,\dots,n\}\setminus O$.
	Then there is an open set $\check W$ such that 
	\[ 
		\prod_{i=1}^n \check M([\mathfrak a_{i-1},[\mathfrak a_i]) 
		\subset \check W \subset \check M^+([\mathfrak a_0],[\mathfrak a_n])
	\]
	admitting an map
	\begin{equation*}
		\mathbf S: \check W \to (0, \infty]
	\end{equation*}
	which satisfies the following properties.
	\begin{enumerate}[leftmargin=*]
		\item[(i)] There is a topological embedding of $\check W$ in a space $E\check W$ with a map $\mathbf S$ to $(0,\infty]^{n-1}$ such that the following diagram commutes:
	\[\begin{tikzcd}
	{\check W} && {E\check W} \\
	{(0,\infty]^{n-1}} && {(0,\infty]^{n-1}}
	\arrow[Rightarrow, no head, from=2-1, to=2-3]
	\arrow["{\mathbf S}", from=1-1, to=2-1]
	\arrow["{\mathbf S}"', from=1-3, to=2-3]
	\arrow["j"', from=1-1, to=1-3]
	\end{tikzcd}\]
		\item[(ii)] The map $\mathbf S: E\check W \to (0,\infty]^{n-1}$ is a topological submersion along the fibre over $\boldsymbol{\infty}$.
		\item[(iii)] The image $j(\check W) \subset E\check W$ is the zero set of a continuous map
		\begin{equation*}
			\delta:E\check W \to \reals^O
		\end{equation*}
		which vanishes at the fibre over $\boldsymbol{\infty}$.
		In particular therefore, the fibre over $\boldsymbol{\infty}$ in both $\check W$ and in $E\check W$ is identified with the stratum $\prod \check M([\mathfrak a_{i-1}],[\mathfrak a_i])$.
		\item[(iv)] If $\check W^o \subset W$ and $E\check W^o \subset E\check W$ are the subsets where none of the $S_i$ is infinite, then the restriction of $j$ is an embedding of smooth manifolds, and $\delta|_{E\check W^o}$ is transverse to zero,
		\item[(v)] Let $i_0 \in O$, and let $\delta_{i_0}$ be the corresponding component of $\delta$. Then for all $z \in E\check W$, we have
		\begin{itemize}
			\item If $i_0 \ge 2$ and $S_{i_0-1}(z) = \infty$, then $\delta_{i_0}(z) \ge 0$;
			\item If $i_0 \le n-1$ and $S_{i_0} = \infty$, then $\delta_{i_0}(z) \le 0$.
		\end{itemize}
	\end{enumerate}
\end{thm}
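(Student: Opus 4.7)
The plan is to adapt the arguments of \cite[Theorem~19.4.1]{KMbook2007} to the $\uptau$-invariant setting, using Theorem~\ref{thm:gauge_glue_crit} as the main analytic input in place of its non-real counterpart. By an iterated gluing argument it suffices to handle the case of a single junction; the general statement then follows by composing local models, since the different parameters $S_i$ do not interact except through the obstruction factors. At each junction the linearized gluing equation is the fibre-product of the two side-linearizations with the extended Hessian on the common end, and exactly as in Section~\ref{subsec:loc_str} this operator is surjective on $\uptau$-invariant sections precisely when the pair is \emph{not} boundary-obstructed.

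For each $i \in O'$, Theorem~\ref{thm:gauge_glue_crit} and the boundary-unobstructed version of the gluing theorem produce, for all large $T$, a smooth family of true solutions in $\check M^+$ parametrized by the matching parameters $\mu_T$, giving a local topological submersion onto $(0,\infty]$. These local models patch by the standard cutoff-and-preglue construction to yield the required submersion structure over the unobstructed factors.

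For $i_0 \in O$, the normal summand $\mathcal Q^\nu_\upgamma = d/dt + \Lambda_{\mathfrak q}(\check\upgamma(t))$ acting on $L^2(\reals;i\reals)^{-\upiota^*}$ has one-dimensional cokernel, and the gluing equation is genuinely obstructed. Following \cite[Section~19.4]{KMbook2007}, I would enlarge the problem by adding an auxiliary real parameter $\delta_{i_0} \in \reals$ coupled to a fixed $\uptau$-invariant cutoff section supported near the neck and spanning the missing direction. The modified operator becomes surjective, and the implicit function theorem in the $\uptau$-invariant Hilbert spaces produces, for $T$ sufficiently large, a smooth family of approximate solutions parametrized by $(\mu_T,\delta_{i_0})$. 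This family is $E\check W$; the true solutions $\check W \subset E\check W$ are cut out by pairing the residual error with a fixed cokernel generator, which defines the component $\delta_{i_0}$ of the map $\delta$. The submersion property on $E\check W$ along $\mathbf S^{-1}(\boldsymbol\infty)$ follows from applying the implicit function theorem uniformly as $S_{i_0}\to\infty$, with all estimates carried over $\uptau$-equivariantly from the exponential decay results of \cite[Section~13]{KMbook2007}.

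The hard part will be establishing the sign conditions in (v). These come from the scalar ODE $ds/dt = -\Lambda_{\mathfrak q} s$ governing the normal component of a reducible trajectory: boundary-stability of $[\mathfrak a_{i_0-1}]$ forces $\Lambda_{\mathfrak q}(\check\upgamma_{i_0-1}(t)) > 0$ for $t$ large and positive, so the normal mode of the approximate solution arriving at the neck from the left has a definite sign, and comparing this with the sign convention for $\delta_{i_0}$ yields $\delta_{i_0} \ge 0$ whenever $S_{i_0-1} = \infty$; boundary-instability of $[\mathfrak a_{i_0}]$ gives the opposite inequality when $S_{i_0} = \infty$. Making these inequalities uniform in the remaining gluing parameters is the same analytic point as in \cite{KMbook2007}, and the full $\uptau$-equivariance of the configuration spaces, perturbations, and Fredholm framework ensures no new analytic difficulty beyond that treated in the ordinary case.
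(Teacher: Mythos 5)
Your proposal follows the same route the paper takes: the paper's proof consists of observing that \cite[Theorem~19.4.1]{KMbook2007} carries over verbatim once the Hilbert spaces, slices, and Fredholm operators are replaced by their $\uptau$-invariant analogues, which is exactly what you spell out, citing Theorem~\ref{thm:gauge_glue_crit} as the local analytic input and introducing the auxiliary parameters $\delta_{i_0}$ to cure the obstruction. One small imprecision: the normal operator $\mathcal Q^{\nu}_{\upgamma}$ acts on the $s$-coordinate, a scalar direction transverse to $\del \mathcal B^{\sigma}$ on which $\upiota^*$ does not act, so the decoration $L^2(\reals;i\reals)^{-\upiota^*}$ should simply be $L^2(\reals;\reals)$; this does not affect your argument. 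Also keep in mind that the reduction ``to a single junction'' is only a bookkeeping device here — the $\delta$-structure is genuinely a coupled multi-parameter object because the cokernels at different obstructed junctions have to be filled simultaneously in the enlarged problem $E\check W$, and the sign conditions (v) relate consecutive $S_i$'s; the KM argument handles all necks at once for exactly this reason, so you should not literally compose independent one-junction models.
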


\subsection{Codimension-one strata}
\hfill \break
The following definition is modelled on codimension-one strata of Seiberg-Witten trajectory spaces.
\begin{defn}
\label{defn:delta1str}
	Let $N^d$ be a $d$-dimensional space stratified by manifolds and $M^{d-1} \subset N$ be a union of components of the $(d-1)$-dimensonal stratum.
	Then $N$ has a \emph{codimension-1 $\delta$-structure} along $M^{d-1}$ if $M^{d-1}$ is smooth and we have the following additional data:
	\begin{itemize}
		\item an open set $W \subset N$ containing $M^{d-1}$,
		\item an embedding $j:W \to EW$, and
		\item a map
		\begin{equation*}
			\mathbf{S} = (S_1,S_2): EW \to (0,\infty]^2
		\end{equation*}
	\end{itemize}
	satisfying the following:
	\begin{enumerate}[leftmargin=*]
		\item[(i)] the map $\mathbf{S}$ is a topological submersion along the fibre over $(\infty,\infty)$;
		\item[(ii)] the fibre of $\mathbf S$ over $(\infty,\infty)$ is $j(M^{d-1})$;
		\item[(iii)] the subset $j(W) \subset EW$ is the zero set of a map $\delta:EW \to \reals$;
		\item[(iv)] the function $\delta$ is strictly positive where $S_1 = \infty$ and $S_2$ is finite, and strictly negative where $S_2 = \infty$ and $S_1$ is finite;
		\item[(v)] on the subset of $EW$ where $S_1$ and $S_2$ are both finite, $\delta$ is smooth and transverse to zero.
	\end{enumerate}
\end{defn}
An example of $\delta$-structure to keep in mind is the following.
\begin{exmp}
	Let $EW$ be $\{(x_0,x_1) \in \reals^2 \big| x_0,x_1 \ge 0\}$ and
\begin{equation*}
	W = \{(x_0,x_1) \in EW \big| x_0 = x_1 \} \subset EW,
\end{equation*}
$S_i = 1/x_i$, and
\begin{equation*}
	\delta = x_1 - x_2.
\end{equation*}
The fibre of $(\infty,\infty)$ is a point, which is the embedded image of $M^0$, and $W$.
\end{exmp} 
We state the following counterpart of \cite[Theorem~19.5.4]{KMbook2007} about structure of codmension-1 strata in terms of $\delta$-structures.
\begin{thm}
	Suppose the moduli space $M_z([\mathfrak a],[\mathfrak b])$ is $d$-dimensional and contains irreducible trajectories, so that the moduli space $\check M^+_z([\mathfrak a],[\mathfrak b])$ is a $(d-1)$-dimensional space stratified by manifolds.
	Let $M' \subset \check M^+([\mathfrak a],[\mathfrak b])$ be any component of the codimension-1 stratum.
	Then along $M'$, the moduli space $\check M^+([\mathfrak a],[\mathfrak b])$ either is a $C^0$-manifold with boundary, or has a codimension-1 $\delta$-structure.
	The latter only occurs when $M'$ consists of 3-component broken trajectories, with the middle component boundary-obstructed.	
\end{thm}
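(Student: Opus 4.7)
The plan is to combine the classification of codimension-2 strata in $\check M^+_z([\mathfrak a],[\mathfrak b])$ given by the earlier compactification result (stated as a proposition in Section~\ref{sec:compact}) with the two gluing theorems (boundary-unobstructed and boundary-obstructed) of this section. The proposition enumerates exactly three kinds of codimension-1 strata of $\check M^+_z([\mathfrak a],[\mathfrak b])$: (a) a two-factor product $\check M_{z_1}([\mathfrak a],[\mathfrak a_1])\times \check M_{z_2}([\mathfrak a_1],[\mathfrak b])$ with obstruction vector $(0,0)$; (b) a three-factor product with obstruction vector $(0,1,0)$, where the middle factor is boundary-obstructed; and (c) the reducible locus of $\check M_z([\mathfrak a],[\mathfrak b])$ when this moduli space contains both irreducibles and reducibles.

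For cases (a) and (c), I would appeal directly to the boundary-unobstructed version of the gluing theorem just stated. In case (a) this gives a topological submersion $\mathbf{S}:\check W\to (0,\infty]$ along $M'$, whose fibre over $\infty$ is $M'$ and whose fibres over finite values lie in the irreducible top stratum; by the local product structure this exhibits $\check M^+$ as a $C^0$-manifold with boundary along $M'$. In case (c), $M'$ is itself a boundary stratum of $\check M_z([\mathfrak a],[\mathfrak b])$ (a smooth manifold with boundary by Proposition~\ref{prop:class_regular_mod_space} applied to the boundary-unstable-to-stable pair $[\mathfrak a],[\mathfrak b]$), so no further gluing is needed---the manifold-with-boundary structure along the reducible locus is intrinsic to the moduli space. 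In both subcases the conclusion is the ``$C^0$-manifold with boundary'' alternative of the theorem.

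For case (b), I would invoke the boundary-obstructed gluing theorem with $n=3$ and $O=\{2\}$. This produces a space $E\check W$ containing a neighbourhood $\check W$ of $M'$, together with $\mathbf S=(S_1,S_2):E\check W\to(0,\infty]^2$ and $\delta:E\check W\to \reals$ with $\check W=j^{-1}\{\delta=0\}$. Properties (i)--(iii) of Definition~\ref{defn:delta1str} are then immediate from parts (i)--(iii) of the obstructed gluing theorem. Property (v) (smoothness of $\delta$ and transversality to zero where both $S_i$ are finite) is exactly part (iv) of the gluing theorem restricted to $E\check W^o$. Property (iv) (the sign conditions) is precisely the content of part (v) of the gluing theorem with $i_0=2$: when $S_1=\infty$ and $S_2$ is finite one has $\delta\ge 0$, and when $S_2=\infty$ and $S_1$ is finite one has $\delta\le 0$; transversality of $\delta$ on the finite locus upgrades these inequalities to strict ones away from the zero set.

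The principal obstacle is case (b), specifically verifying that the sign structure of $\delta$ furnished by the obstructed gluing theorem matches the codimension-1 $\delta$-structure axiom. The sign information comes from the normal component $\mathcal Q^{\nu}_{\upgamma}$ of the linearized operator at the middle boundary-obstructed factor: the one-dimensional cokernel of $\mathcal Q^{\nu}_{\upgamma}$ (Lemma~\ref{lem:dim_cases}) is the source of the scalar obstruction, and its sign is controlled by the direction in which the gluing parameter for an adjacent component becomes infinite. Once this sign is identified using the gluing-parameter conventions of the obstructed theorem, strict positivity/negativity on the respective faces of $(0,\infty]^2$ follows from transversality. All remaining combinatorial possibilities for codimension-1 strata are excluded by the earlier classification, completing the dichotomy.
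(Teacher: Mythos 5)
Your overall plan is the right one and matches the route the paper is pointing at: the theorem is stated with a pointer to Kronheimer--Mrowka's Theorem~19.5.4, and their proof proceeds exactly by enumerating the three types of codimension-1 strata (two-factor unobstructed product, three-factor product with boundary-obstructed middle, and the reducible locus of an $M_z$ that is already a manifold with boundary), then applying the unobstructed gluing theorem to the first and third types and the obstructed gluing theorem to the second. Your identification of the axioms of Definition~\ref{defn:delta1str} with the parts of the obstructed gluing theorem is also correct for (i)--(iii) and (v).

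The one place where your argument has a real gap is the strictness in axiom~(iv). Part~(v) of the obstructed gluing theorem gives only the non-strict inequalities $\delta \ge 0$ on $\{S_1 = \infty\}$ and $\delta \le 0$ on $\{S_2 = \infty\}$. You try to upgrade these to strict inequalities by invoking ``transversality of $\delta$ on the finite locus,'' but that transversality statement (part~(iv) of the gluing theorem) is a statement about $E\check W^o$, the locus where \emph{both} $S_i$ are finite; it says nothing about $\delta$ at points with exactly one $S_i = \infty$. Saying the inequalities become strict ``away from the zero set'' is circular, since what needs to be shown is precisely that the zero set of $\delta$ avoids the faces $\{S_1 = \infty,\ S_2 < \infty\}$ and $\{S_2 = \infty,\ S_1 < \infty\}$. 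The missing input is the geometric fact (supplied by the obstructed gluing analysis, and made explicit in the codimension-$c$ version, Definition~\ref{defn:delta2str}(iv), as ``with equality only if $S_i(e) = \infty$ for all $i$'') that on $j(\check W)$ the gluing parameters are either all finite or all infinite: the boundary obstruction forbids gluing across the middle factor on only one side. Once you add that observation---or simply note that the gluing theorem's conclusion should be read with the ``equality only if all $S_i = \infty$'' refinement in place---your identification of the $\delta$-structure axioms goes through.
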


\subsection{Gluing of reducible solutions}
\hfill \break
The structure theorem for codimension-1 strata in the reducible moduli spaces is easier to state and is based on \cite[Theorem~19.6.1]{KMbook2007}.
\begin{thm}
	Suppose that the moduli space $M^{\red}_z([\mathfrak a],[\mathfrak b])$ is $d$-dimensional and non-empty, so that the compactified moduli space of broken reducible trajectories $\check M^{\red +}([\mathfrak a], [\mathfrak b])$ is a $(d-1)$-dimensional space stratified by manifolds.
	Let $M' \subset \check M^{\red +}([\mathfrak a], [\mathfrak b])$ be any component of the codimension-1 stratum.
	Then along $M'$ the moduli space $\check M^{\red +}([\mathfrak a],[\mathfrak b])$ is a $C^0$-manifold with boundary.
\end{thm}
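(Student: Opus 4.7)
The plan is to reduce this to the boundary-unobstructed gluing theorem, after observing that the notion of ``boundary-obstructedness'' does not appear within the purely reducible moduli spaces. First I would describe the codimension-1 stratum explicitly: since $\check M^{\red +}_z([\mathfrak a],[\mathfrak b])$ is compact and stratified by products $\check M^{\red}_{z_1}([\mathfrak a_0],[\mathfrak a_1]) \times \cdots \times \check M^{\red}_{z_n}([\mathfrak a_{n-1}],[\mathfrak a_n])$ with $\sum z_i = z$ and total dimension $d - n$, the codimension-1 stratum $M'$ consists of precisely the two-factor broken trajectories
\[
	\check M^{\red}_{z_1}([\mathfrak a],[\mathfrak c]) \times \check M^{\red}_{z_2}([\mathfrak c],[\mathfrak b])
\]
for intermediate reducibles $[\mathfrak c]$, with $d_1 + d_2 = d - 1$ where $d_i = \dim \check M^{\red}_{z_i}$.

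Next I would verify that the gluing problem is unobstructed in the abstract sense. A reducible trajectory is of the form $(B(t), 0, \psi(t))$, where $\psi(t)$ is a unit section of the $\tau$-real spinor bundle with respect to which a perturbed Dirac operator's spectrum evolves. The linearization $\mathcal Q_{\upgamma}$ at such a trajectory splits as $\mathcal Q^{\del}_{\upgamma} \oplus \mathcal Q^{\nu}_{\upgamma}$, where $\mathcal Q^{\del}$ governs variations tangent to the reducible locus $\{r = 0\}$ and $\mathcal Q^{\nu}$ governs the $r$-variable. Regularity in $M^{\red}_z$ is defined using $\mathcal Q^{\del}_{\upgamma}$, which by hypothesis is surjective, so the Fredholm problem controlling gluing of reducibles to reducibles is governed entirely by the surjective operator $\mathcal Q^{\del}$. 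Equivalently, the cokernel of $\mathcal Q^{\nu}_{\upgamma}$ that produced a $\delta$-structure in the irreducible three-factor case never appears here, because every broken component lives in the boundary stratum of the blown-up configuration and the ``normal'' direction is entirely absent.

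With unobstructedness in hand, I would invoke the abstract gluing theorem in the form used to treat the boundary-unobstructed strata in the previous result, specialized to $\tau$-invariant configurations restricted to $r = 0$. This produces, for any $[\check{\boldsymbol{\upgamma}}] = ([\check\upgamma_1],[\check\upgamma_2]) \in M'$, an open neighbourhood $\check W$ in $\check M^{\red +}_z$ together with a topological submersion $S : \check W \to (T_0, \infty]$ whose fibre over $\infty$ is (an open subset of) $M'$. Because there is a single gluing parameter and no obstruction function $\delta$ to impose, the local model near the stratum is a genuine product neighbourhood $(M' \cap \check W) \times (T_0, \infty]$, which under the change of coordinate $T \mapsto 1/T$ is a half-space collar $(M' \cap \check W) \times [0, 1/T_0)$.

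The main technical burden, and the step I expect to be most delicate, is transcribing the abstract gluing theorem to the reducible setting while keeping track of the $\tau$-invariance: one must set up the weighted Sobolev spaces $\mathcal E^{\infty}_{\delta}$ and $\mathcal F^{\infty}_{\delta}$ for $\uptau$-invariant fields on a broken infinite cylinder, verify that Assumption~\ref{asm:inftyinvertible} holds for the linearization of the reducible Seiberg-Witten equations with Coulomb gauge, and choose Atiyah-Patodi-Singer projections $\Pi$ compatible with $\uptau$. All three inputs follow from the non-degeneracy of the critical points of $(\grad \pertL)^{\sigma}$ and the hyperbolicity statement of Lemma~\ref{lem:ext_Hess_real_eigen}, restricted to the boundary summand. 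Once the abstract theorem is applied, the absence of any $\delta$-obstruction function is automatic, so the conclusion ``manifold with boundary'' follows directly, ruling out the $\delta$-structure alternative that occurred in the irreducible case.
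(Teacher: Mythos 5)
Your approach is essentially the same as the one the paper defers to (Kronheimer--Mrowka, Theorem~19.6.1, adapted to $\uptau$-invariant configurations), and the key observation is correctly identified: because the reducible moduli space lives entirely in the boundary stratum $\{r=0\}$ of the blown-up configuration space, the Fredholm problem governing gluing is the operator $\mathcal Q^{\del}$ rather than the full $\mathcal Q$, and regularity of $M^{\red}_z$ (in all three cases of endpoint type, including the boundary-obstructed one) is exactly surjectivity of $\mathcal Q^{\del}$, so the gluing is unobstructed and a single gluing parameter produces a half-open collar, hence a $C^0$-manifold with boundary. Your characterization of the codimension-1 stratum as exactly the two-factor products $\check M^{\red}_{z_1}([\mathfrak a],[\mathfrak c]) \times \check M^{\red}_{z_2}([\mathfrak c],[\mathfrak b])$, with no three-factor pieces, is also correct and matches the reducible stratification stated in the preceding proposition. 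One small arithmetic slip: since the codimension-1 stratum has dimension $(d-1)-1 = d-2$, the correct relation is $d_1 + d_2 = d - 2$ (where $d_i = \dim\check M^{\red}_{z_i}$), not $d-1$ as you wrote; this is consistent with your own earlier statement that the $n$-factor stratum has total dimension $d-n$. This does not affect the logic of the argument.
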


\section{Floer Homologies}
\label{sec:floer} 
Let $Y$ be a compact connected 3-manifold and $ \upiota:Y \to Y$ be an involution.
Let $g$ be an $\upiota$-invariant Riemannian metric, and $(\mathfrak s, \uptau)$ be a real spin\textsuperscript{c} structure.
Let $\mathcal P = \mathcal P(Y,\mathfrak s, \uptau)$ be a large Banach space of tame perturbations in the sense of Definition~\ref{defn:largebanachspaceperturb}.

Choose $\mathfrak q \in \mathcal P$ so that all critical points of $(\grad \pertL)^{\sigma}$ in $\mathcal B^{\sigma}_k(Y,\mathfrak s, \uptau)$ are nondegenerate, and all moduli spaces $M([\mathfrak a],[\mathfrak b])$ are regular.
In addition, if $c_1(S)$ is not torsion, choose the perturbation so that there are no reducible critical points.

\begin{defn}
	A tame perturbation $\mathfrak q$ is \emph{admissible} for $(Y,g,\mathfrak s,\uptau)$ if the critical points are non-degenerate, the moduli spaces are regular, and there are no reducibles unless $c_1(S)$ is torsion.
\end{defn}

\subsection{Floer Homologies for Three-manifolds with Involutions}
\hfill \break
Let $\mathfrak C \subset \mathcal B^{\sigma}_k(Y;\mathfrak s,\uptau)$ be the set of critical points of the perturbed CSD functional.
We decompose $\mathfrak C$ into irreducible, boundary-stable, and boundary-unstable reducible critical points:
\begin{equation*}
	\mathfrak C = \mathfrak C^o \cup \mathfrak C^s \cup \mathfrak C^u.
\end{equation*}
The critical points generate the following groups over $\mathbb F_2$
\begin{equation*}
	C^o = \bigoplus_{[\mathfrak a] \in \mathfrak C^o} \mathbb F_2[\mathfrak a], \quad 
	C^s = \bigoplus_{[\mathfrak a] \in \mathfrak C^s} \mathbb F_2[\mathfrak a], \quad 
	C^u = \bigoplus_{[\mathfrak a] \in \mathfrak C^u} \mathbb F_2[\mathfrak a],
\end{equation*}
and chain complexes
\begin{equation*}
	\check C = C^o \oplus C^s, \quad
	\hat C = C^o \oplus C^u, \quad
	\bar C = C^s \oplus C^u.
\end{equation*}
We define the differential $\bar\del:\bar C \to \bar C$ by
\begin{equation*}
	\bar\del [\mathfrak a] = \sum_{[\mathfrak b],z} \# \check M_z^{\text{red}}([\mathfrak a],[\mathfrak b]) \cdot \mathfrak [\mathfrak b],
\end{equation*}
where $[\mathfrak a],[\mathfrak b] \in (\mathfrak C^s \cup \mathfrak C^u)$, and $\check M_z^{\text{red}}([\mathfrak a],[\mathfrak b])$ are unparametrized moduli spaces of \emph{reducible} trajectories with dimension zero.
In components $\bar C = C^s \oplus C^u$, the differential $\bar\del$ looks like
\begin{equation*}
	\bar\del = \begin{pmatrix}
		\bar\del^s_s && \bar\del^u_s\\
		\bar\del^s_u && \bar\del^u_u
	\end{pmatrix}.
\end{equation*}
Similarly, we define
\begin{align*}
	\del^o_o: C^o \to C^o \quad [\mathfrak a] \mapsto \sum_{[\mathfrak b] \in \mathfrak C^o} \#\check M_z([\mathfrak a],[\mathfrak b]) \cdot [\mathfrak b],\\
	\del^o_s: C^o \to C^s \quad [\mathfrak a] \mapsto \sum_{[\mathfrak b] \in \mathfrak C^s} \#\check M_z([\mathfrak a],[\mathfrak b]) \cdot [\mathfrak b],\\
	\del^u_o: C^u \to C^o \quad [\mathfrak a] \mapsto \sum_{[\mathfrak b] \in \mathfrak C^o} \#\check M_z([\mathfrak a],[\mathfrak b]) \cdot [\mathfrak b],\\
	\del^u_s: C^u \to C^s \quad [\mathfrak a] \mapsto \sum_{[\mathfrak b] \in \mathfrak C^s} \#\check M_z([\mathfrak a],[\mathfrak b]) \cdot [\mathfrak b],
\end{align*}
by counting points in zero-dimensional moduli spaces modulo two.
\begin{defn}
	On the ``from'' chain group $\check C = C^o \oplus C^s$ the differential $\check\del : \check C \to \check C$ is
	\begin{equation*}
		\check\del = 
		\begin{pmatrix}
			\del^o_o && -\del^u_o\bar\del^s_u\\
			\del^o_s && \bar\del^s_s - \del^u_s\bar\del^s_u
		\end{pmatrix}.
	\end{equation*}	
	On the ``to'' chain group $\hat C = C^o \oplus C^u$, the differential $\hat C : \hat C \to \hat C$ is
	\begin{equation*}
		\hat\del = \begin{pmatrix}
			\del^o_o && \del^u_o\\
			-\bar\del^s_u\del^o_s &&
			-\bar\del^u_u\del^u_s
		\end{pmatrix}.
	\end{equation*}
\end{defn}
\begin{prop}
	The squares	$(\bar\del)^2,(\check\del)^2$, and $(\hat\del)^2$ are zero.
\end{prop}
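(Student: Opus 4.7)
The plan is to mirror the Kronheimer--Mrowka argument for the ordinary monopole Floer differentials (\cite[Section~22]{KMbook2007}), exploiting the fact that in $\mathbb F_2$-coefficients the proof reduces to a count of codimension-one boundary strata of one-dimensional moduli spaces, which by the Stokes principle for $\delta$-structures (Lemma~\ref{lem:bound_multiplicities_0}) must be even. All the analytic input (compactification, classification of codimension-one strata, $\delta$-structures along boundary-obstructed strata) has been set up in Sections~\ref{sec:compact} and \ref{sec:Gluing}, and the anti-linear involution $\uptau$ enters only through the choice of the ambient Banach manifolds: none of the combinatorial bookkeeping is altered.

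First, I would expand each of $(\bar\del)^2$, $(\check\del)^2$, $(\hat\del)^2$ in the block decompositions $\bar C = C^s \oplus C^u$, $\check C = C^o \oplus C^s$, $\hat C = C^o \oplus C^u$, and identify each matrix entry of the square as an $\mathbb F_2$-linear combination of counts of two-step broken trajectories between specified types of critical points. Each such count is the total number of points of the zero-dimensional stratum $N^0$ of a one-dimensional compactified moduli space $N^1 = \check M^+_z([\mathfrak a],[\mathfrak c])$. For example, $\bar\del^2[\mathfrak a] = \sum_{[\mathfrak c]} \#\{\text{2-component reducible broken trajectories}\} \cdot [\mathfrak c]$, and the other three squares decompose analogously.

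Next, I would invoke the structure theorems for codimension-one strata (the boundary-unobstructed and boundary-obstructed structure theorems of Section~\ref{sec:Gluing}) to verify that each relevant $N^1$ carries a codimension-one $\delta$-structure along $N^0$ in the sense of Definition~\ref{defn:delta1str}. In the boundary-unobstructed case, $N^1$ is literally a topological manifold with boundary and the $\delta$-structure is trivial. The more delicate case is the boundary-obstructed one, where three-component broken trajectories with middle factor boundary-obstructed contribute to $N^0$; here one uses the $\delta$-structure explicitly to realize $N^1$ as the zero set of a transverse function on a larger topological submersion, and Lemma~\ref{lem:bound_multiplicities_0} then forces $\#N^0 \equiv 0\pmod 2$.

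The main step, and where I expect the only subtlety, is matching the precise combinatorics of boundary strata to the algebraic form of the three differentials. The ``extra'' compositions $\bar\del^s_u$, $\del^u_o\bar\del^s_u$, $\bar\del^s_u\del^o_s$ that appear in $\check\del$ and $\hat\del$ are there precisely to account for the three-component broken trajectories with a boundary-obstructed middle factor, which appear as codimension-one boundary of one-dimensional irreducible moduli spaces rather than codimension-two. Concretely, for an entry such as the $(o,o)$-component of $\check\del^2$, the stratum $N^0$ consists of: two-component breakings through an irreducible, through a boundary-stable, and through a boundary-unstable intermediate point, together with three-component broken trajectories $[\mathfrak a] \to [\mathfrak b^u] \to [\mathfrak b^s] \to [\mathfrak c]$ whose middle factor lies in a boundary-obstructed moduli space. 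Grouping terms, one recognizes the resulting $\mathbb F_2$-count as $\del^o_o \del^o_o + \del^u_o \bar\del^s_u \del^o_s$, which vanishes by the $\delta$-structure Stokes argument; analogous bookkeeping handles the remaining entries of $(\check\del)^2$ and $(\hat\del)^2$, and the purely reducible calculation handles $(\bar\del)^2$. Because we work over $\mathbb F_2$, there are no signs to track, so the argument goes through unchanged from the ordinary monopole setting once the $\uptau$-equivariant compactness/gluing package of Sections~\ref{sec:compact}--\ref{sec:Gluing} is in place.
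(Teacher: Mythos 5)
Your approach is exactly the paper's: the printed proof is a one-line citation to \cite[Proposition~22.1.4]{KMbook2007} together with the remark that Sections~\ref{sec:compact}--\ref{sec:Gluing} supply the $\uptau$-invariant analogues of the needed compactness, stratification, and $\delta$-structure results, which is precisely the argument you outline. One small correction to your illustrative $(o,o)$-example: the three-component chain accounting for the term $\del^u_o\bar\del^s_u\del^o_s$ is $[\mathfrak a]\to[\mathfrak b^s]\to[\mathfrak b^u]\to[\mathfrak c]$ (boundary-\emph{stable} then boundary-\emph{unstable}, since the boundary-obstructed case is stable-to-unstable), and between two irreducible endpoints there are in fact \emph{no} two-component breakings through a reducible $[\mathfrak b]$ --- an irreducible trajectory into a reducible forces it boundary-stable, while an irreducible trajectory out of a reducible forces it boundary-unstable --- so only the breakings through an irreducible intermediate point and the three-component boundary-obstructed breaking contribute to $N^0$, which is why your final $\Ftwo$-count $\del^o_o\del^o_o + \del^u_o\bar\del^s_u\del^o_s$ is nonetheless correct.
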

\begin{proof}
	The proposition is formally the same as \cite[Proposition~22.1.4]{KMbook2007}.
	The ingredients of proof have $\uptau$-invariant analogues in Section~\ref{sec:compact} and Section~\ref{sec:Gluing}.
\end{proof}
Now, apply homology to the chain complexes, and suppress the perturbation $\mathfrak q$ in the notation.
\begin{defn}
	We define the three flavours of Floer homology as the homology groups of the respective chain complexes:
\begin{equation*}
	\widecheck{\HMR}(Y;\mathfrak s, g,\uptau) = H_*(\check C,\check \del), \quad
	\widehat{\HMR}(Y;\mathfrak s, g,\uptau) = H_*(\hat C,\hat \del), \quad
	\overline{\HMR}(Y;\mathfrak s, g,\uptau) = H_*(\overline C,\bar \del).
\end{equation*}
\end{defn}
We denote $\HMR^{\circ}$ as one of the flavours of the real monopole Floer homologies, where $\circ \in \{\bigvee, \bigwedge, -\}$.
The following is a Floer-theoretic version of the homology long exact sequence of pairs for $(\mathcal B^{\sigma}, \del \mathcal B^{\sigma})$:
\begin{prop}
	For any $(Y;\mathfrak s, \uptau)$, there is an exact sequence
	\begin{equation*}
	\begin{tikzcd}
		\ar[r,"i_*"] 
		& \widecheck{\HMR}(Y;\mathfrak s,g,\uptau)	 \ar[r,"j_*"] 
		& \widehat{\HMR}(Y;\mathfrak s, g,\uptau) \ar[r,"p_*"]
		& \overline{\HMR}(Y;\mathfrak s, g,\uptau) \ar[r,"i_*"]
		& \widehat{\HMR}(Y;\mathfrak s, g,\uptau) \ar[r]
		&{}
	\end{tikzcd}
\end{equation*}
in which the maps $i_*, j_*$ and $p_*$ arise from the (anti-) chain maps
\begin{equation*}
	i: \bar C \to \check C,\quad
	j: \check C \to \hat C, \quad
	p: \check C \to \bar C,
\end{equation*}
given by
\begin{equation*}
	i = \begin{pmatrix}
		0 & \del^u_o \\
		1 & -\del^u_s
	\end{pmatrix},
	\quad 
	j = \begin{pmatrix}
		1 & 0 \\
		0 & -\delbar^u_s
	\end{pmatrix}, 
	\quad 
	p = \begin{pmatrix}
		\del^o_s & \del^u_s \\
		0 & 1
	\end{pmatrix}.
\end{equation*}
The first two maps are chain maps, and $p$ satisfies $p\check\del = - \delbar p$.
\end{prop}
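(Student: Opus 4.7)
This proposition is a purely algebraic statement, following the template of \cite[Section~22.2]{KMbook2007}. Given the relations $\bar\del^2=\check\del^2=\hat\del^2=0$ from the preceding proposition, together with the explicit matrix forms of the three differentials, both the chain-map property of $i, j, p$ and the exactness of the induced long sequence will reduce to $2{\times}2$ matrix computations.

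First, I would verify the identities $\check\del\, i = i\,\bar\del$, $\hat\del\, j = j\,\check\del$, and $\bar\del\, p = -p\,\check\del$. Each is a $2\times 2$ matrix identity that unpacks into four entrywise relations among the operators $\del^o_o,\del^o_s,\del^u_o,\del^u_s$ and $\bar\del^{\bullet}_{\bullet}$. These follow directly by matching terms in the explicit expressions for $\check\del$ and $\hat\del$, using only the identity $\bar\del^2=0$ (which yields, for instance, $\bar\del^s_u\bar\del^u_s+(\bar\del^u_u)^2=0$ and the three other quadratic relations among the entries of $\bar\del$), together with the quadratic relations implicit in $\check\del^2=0$ and $\hat\del^2=0$ among the $\del^{\bullet}_{\bullet}$ (coming from the $\delta$-structure analysis of Section~\ref{sec:Gluing} applied to one-dimensional moduli spaces). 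No new analytic input is required.

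Second, I would establish exactness at each of the three positions in the long sequence. The cleanest algebraic route is to package the three complexes into a single mapping-cone short exact sequence: form $\mathrm{Cone}(j)_k=\check C_{k-1}\oplus\hat C_k$ with differential $(x,y)\mapsto(-\check\del x,\,\hat\del y+j(x))$, giving an SES $0\to\hat C\to\mathrm{Cone}(j)\to\check C[-1]\to 0$ and hence a LES in homology. The proposition then follows once I identify $H_*(\mathrm{Cone}(j))$ with $H_*(\bar C)$ (up to degree shift) via an explicit chain map built out of $i$ and $p$, and verify that under this identification the connecting homomorphism matches $i_*$ (and $p_*$ matches the projection to the cofactor). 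Equivalently—and more in the spirit of KM's original argument—one may exhibit, for each consecutive pair of maps in the sequence, an explicit $\mathbb F_2$-linear null-homotopy witnessing vanishing on homology, and then verify the reverse inclusion (kernel $\subseteq$ image) by an element chase in $C^o\oplus C^s\oplus C^u$ using the quadratic relations above.

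The main obstacle is purely bookkeeping. With three chain complexes, four reduced differentials, four entrywise identities per chain-map check, and three exactness positions to verify, the computation is lengthy; however, all of the analytic content has already been absorbed into the preceding proposition ($\check\del^2=\hat\del^2=\bar\del^2=0$) and the compactness/gluing results of Sections~\ref{sec:compact}--\ref{sec:Gluing} that underlie it. The one genuinely new feature relative to \cite{KMbook2007} is the degree-$(-1)$ grading of the ``$U$-like'' maps coming from the real blow-up, but this affects only the indexing in Definition~\ref{defn:rel_grading} and not the algebraic form of the triangle.
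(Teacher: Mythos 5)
Your approach is correct and essentially reproduces \cite[Proposition~22.2.1]{KMbook2007}, which is precisely what the paper's proof cites. One point should be sharpened before the matrix bookkeeping will close up: the chain-map identities for $i$, $j$, $p$ do \emph{not} all follow from the three assembled squares $\check\del^2 = \hat\del^2 = \bar\del^2 = 0$ together with the four entries of $\bar\del^2$. Both those squares and the chain-map identities are parallel consequences of a single stock of primitive quadratic relations among the eight individual operators $\del^o_o,\del^o_s,\del^u_o,\del^u_s,\bar\del^s_s,\bar\del^s_u,\bar\del^u_s,\bar\del^u_u$ (the real analogue of \cite[Lemma~22.2.2]{KMbook2007}), and at least one of these primitives is not recoverable from any entry of the three packaged $\del^2$ matrices. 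Concretely, the $(2,2)$ entry of $\check\del\,i = i\,\bar\del$ demands
\begin{equation*}
\del^o_s\del^u_o + \bar\del^s_s\del^u_s + \del^u_s\bar\del^s_u\del^u_s + \bar\del^u_s + \del^u_s\bar\del^u_u = 0,
\end{equation*}
which is the mod-2 count of ends of a one-dimensional compactified moduli space from a boundary-unstable to a boundary-stable critical point; it is its own output of the compactness and $\delta$-structure analysis, not an entry of $\check\del^2$, $\hat\del^2$, or $\bar\del^2$. Your parenthetical already points at Section~\ref{sec:Gluing} as the source, so the idea is present, but the dependency runs opposite to how you wrote it: one should enumerate the primitive relations first, and then derive both the squares and the chain-map identities from them. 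With that inversion made explicit, the rest of your plan — including the mapping-cone packaging of exactness, which is equivalent to Kronheimer--Mrowka's route — is sound.
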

\begin{proof}
	See \cite[Proposition~22.2.1]{KMbook2007}.
\end{proof}

\begin{defn}
	We define the reduced Floer homology group $\HMR_*(Y;\mathfrak s, \uptau)$ as the image of the map
	\begin{equation*}
		j_*: \widehat{\HMR}_*(Y;\mathfrak s,\uptau) \to 
		\widecheck{\HMR}_*(Y;\mathfrak s,\uptau)
	\end{equation*}	
	The reduced $\HMR$ has finite rank, by the same arguments as \cite[Proposition~22.2.3]{KMbook2007}.
\end{defn}

\subsection{Grading of $\HMR^{\circ}$}
\hfill \break
We grade the Floer homology groups by a group $\mathbb J(\mathfrak s, \uptau)$.
The definition of $\mathbb J(\mathfrak s, \uptau)$ involves an equivalence relation $\sim$ on $\mathcal B^{\sigma}_k(Y,\mathfrak s,\uptau) \times \mathcal P \times \mathbb Z$.
Let $([\mathfrak a],\mathfrak q_1,n)$ and $([\mathfrak b],\mathfrak q_2,n)$ be in $\mathcal B^{\sigma}_k(Y,\mathfrak s,\uptau) \times \mathcal P \times \mathbb Z$.
Let $\zeta$ be a path joining $[\mathfrak a]$ to $[\mathfrak b]$, and $\mathfrak p$ be an one-parameter family of perturbations joining $\mathfrak q_1$ to $\mathfrak q_2$.
There is a Fredholm operator $P_{\upgamma,\mathfrak p}$ associated to $\zeta$ and $\mathfrak p$, described in \cite[Section~20]{KMbook2007}, which roughly is
\begin{equation*}
	\mathcal Q_{\upgamma} \oplus -\Pi^+ \oplus \Pi^-,
\end{equation*}
over the cylinder, and where the $\mathcal Q_{\upgamma}$ is the linearization of perturbed Seiberg-Witten operator coupled with Coulomb gauge, and $\Pi^{\pm}$ are some spectral projections (such that the boundary value problem is Fredholm.)

We declare $([\mathfrak a],\mathfrak q_1,m) \sim ([\mathfrak b],\mathfrak q_2,n)$ if there exists paths $\zeta$ and $\mathfrak p$ such that
	\begin{equation*}
		\ind (P_{\upgamma,\mathfrak p}) = n - m.
	\end{equation*}
	When $[\mathfrak a]$ and $[\mathfrak b]$ are critical points of the same perturbation, the index of $P_{\upgamma}$ is equal to $\gr_z([\mathfrak a],[\mathfrak b])$.
The grading group is the quotient
	\begin{equation*}
		\mathbb J(\mathfrak s, \uptau) = (\mathcal B^{\sigma}_k(Y,\mathfrak s,\uptau)\times \mathcal P \times \mathbb Z)/\sim.
	\end{equation*}
The action of $\mathbb Z$ on the quotient is given by $([\mathfrak a],\mathfrak q,m) \mapsto ([\mathfrak a],\mathfrak q,m+n)$ for $n \in \mathbb Z$.
The grading of a critical point $[\mathfrak a]$ of the perturbed CSD function by $\mathfrak q$ is then by definition, the equivalence class
	\begin{equation*}
		\gr[\mathfrak a] = ([\mathfrak a],\mathfrak q,0)/\sim.
	\end{equation*}
We write $j+n$ for the resulting element of $n \in \mathbb Z$ acting on $j \in \mathbb J$.
This grading is additive in the sense that
	\begin{equation*}
		\gr[\mathfrak a]= \gr[\mathfrak b] + \gr_z([\mathfrak a],[\mathfrak b]) \in \mathbb J(\mathfrak s).
	\end{equation*}
In the reducible case, we introduce another grading
	\begin{equation*}
		\bar\gr[\mathfrak a] = \begin{cases}
			\gr[\mathfrak a], & [\mathfrak a]\in \mathfrak C^s\\
			\gr[\mathfrak a]-1, & [\mathfrak a] \in \mathfrak C^u
		\end{cases}.
	\end{equation*}
The chain complexes decomposes according to the grading $j \in \mathbb J(\mathfrak s)$:
	\begin{align*}
		\check{C}_j &= \mathbb F_2 \left\{[\mathfrak a] \in \mathfrak C^o \cup \mathfrak C^s: \gr[\mathfrak a]= j\right\},\\
		\hat{C}_j &= \mathbb F_2 \left\{[\mathfrak a] \in \mathfrak C^o \cup \mathfrak C^u: \gr[\mathfrak a]= j\right\},\\
		\bar{C}_j &= \mathbb F_2 \left\{[\mathfrak a] \in \mathfrak C^s \cup \mathfrak C^u: \bar\gr[\mathfrak a]= j\right\}.
	\end{align*}
	If we grade all chain groups $C^o, C^s, C^u$ by the grading $\gr$, then
	\begin{equation*}
		\check{C}_j = C^o_j \oplus C_j^s, \quad
		\hat{C}_j = C^o_j \oplus C^u_j, \quad
		\bar{C}_j = C^s_j \oplus C^u_j.
	\end{equation*}

\begin{lem}
\label{lem:ZactsonJ}
	The action of $\mathbb Z$ on $\mathbb J(\mathfrak s)$ is transive and the stablizer is the image of the map
	\begin{align*}
		H_2(Y;\mathbb Z)^{-\upiota^*} 
		&\to \mathbb Z\\
		[\sigma] &\mapsto
		\frac{1}{2}\langle c_1(\mathfrak s),[\sigma]\rangle.
	\end{align*}
	In particular, the action is free if and only if $c_1(\mathfrak s)$ is torsion.
\end{lem}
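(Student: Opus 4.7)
Both claims reduce to computing which integers arise as $\ind P_{\zeta, \mathfrak p}$ as $(\zeta,\mathfrak p)$ varies. For transitivity, observe that $\mathcal B^{\sigma}_k(Y,\mathfrak s,\uptau)$ is path-connected: this follows from the fibration $\mathbb{RP}^{\infty} \to \mathcal B^{\sigma}_k \to \mathbb T$ of Subsection~\ref{subsec:topofconfig}, whose base and fibre are both path-connected, while $\mathcal P$ is path-connected as a Banach space. Hence any two triples $([\mathfrak a],\mathfrak q_1,m)$ and $([\mathfrak b],\mathfrak q_2,n)$ can be joined by some $(\zeta,\mathfrak p)$, and the relation yields $([\mathfrak a],\mathfrak q_1,m) \sim ([\mathfrak b],\mathfrak q_2, m + \ind P_{\zeta,\mathfrak p})$, showing that the $\mathbb Z$-orbit of any class is all of $\mathbb J(\mathfrak s,\uptau)$.

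For the stabilizer at $([\mathfrak a],\mathfrak q,0)$, I would count indices of $P_{\zeta,\mathfrak p}$ over loops based at $([\mathfrak a],\mathfrak q)$. Since $\mathcal P$ is contractible, we may restrict to loops $\zeta$ in $\mathcal B^{\sigma}_k$ with $\mathfrak q$ held fixed. The fibration above induces a short exact sequence
\[
0 \to \mathbb Z/2 \to \pi_1(\mathcal B^{\sigma}_k) \to H^1(Y;\mathbb Z)^{-\upiota^*} \to 0,
\]
in which the $\mathbb Z/2$-subgroup is generated by the fibrewise loop $\phi \leadsto -\phi$ (realized by the constant real gauge transformation $u \equiv -1$), while the quotient is realized by loops $\zeta_u$ coming from the harmonic real gauge transformations in $\mathcal G^h$. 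By Lemma~\ref{lem:loopindex1}, the index of $P_{\zeta_u}$ equals $\tfrac12([u] \cup c_1(\mathfrak s))[Y]$, and this vanishes for $u \equiv -1$ because $[u] = 0 \in H^1(Y;\mathbb Z)$; so the $\mathbb Z/2$-factor contributes trivially and the stabilizer is exactly
\[
\Bigl\{\tfrac12([u]\cup c_1(\mathfrak s))[Y] : [u] \in H^1(Y;\mathbb Z)^{-\upiota^*}\Bigr\}\subset \mathbb Z.
\]

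Finally, I would invoke Poincar\'e duality on the oriented 3-manifold $Y$: cap product with $[Y]$ gives an isomorphism $H^1(Y;\mathbb Z) \cong H_2(Y;\mathbb Z)$, and because $\upiota$ is orientation-preserving it intertwines $\upiota^*$ with $\upiota_*$, so it restricts to $H^1(Y;\mathbb Z)^{-\upiota^*} \cong H_2(Y;\mathbb Z)^{-\upiota^*}$. Under this identification, $([u]\cup c_1(\mathfrak s))[Y] = \langle c_1(\mathfrak s),[u]\cap[Y]\rangle$, which exhibits the stabilizer as the image of $[\sigma]\mapsto \tfrac12\langle c_1(\mathfrak s),[\sigma]\rangle$. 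Freeness then follows: because $\mathfrak s$ admits a real structure one has $\upiota^*c_1(\mathfrak s) = -c_1(\mathfrak s)$, so $c_1(\mathfrak s) \in H^2(Y;\mathbb Z)^{-\upiota^*}$, and the Poincar\'e pairing with $H_2(Y;\mathbb Z)^{-\upiota^*}$ is nondegenerate modulo torsion; the stabilizer vanishes precisely when $c_1(\mathfrak s)$ is torsion. The step I expect to require the most care is the identification of the $\mathbb Z/2$-factor of $\pi_1(\mathcal B^{\sigma}_k)$ with the constant-gauge loop $u\equiv -1$: this uses the $\{\pm 1\}$-stabilizer of irreducible configurations from Subsection~\ref{subsec:topofconfig}, which in turn relies on the assumption that $Y^{\upiota}$ is non-empty.
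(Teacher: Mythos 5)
Your proof is correct and takes the same route as the paper's, which simply cites connectedness of $\mathcal B^{\sigma}_k$ for transitivity and Lemma~\ref{lem:loopindex1} for the stabilizer formula. The extra steps you supply — computing $\pi_1(\mathcal B^{\sigma}_k)$ via the $\mathbb{RP}^{\infty}$-fibration, checking that the $\mathbb Z/2$-factor contributes index zero (which is also automatic, since any homomorphism to $\mathbb Z$ kills torsion), and applying Poincar\'e duality to convert $[u] \in H^1(Y;\mathbb Z)^{-\upiota^*}$ into a class in $H_2(Y;\mathbb Z)^{-\upiota^*}$ — are precisely what the paper's two-line proof leaves implicit, and you handle them correctly.
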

	\begin{proof}
		Transitivity is by definition and the fact that $\mathcal B^{\sigma}_k(Y,\mathfrak s,\uptau)$ is connected.
		The formula about the stablizer follows from Lemma~\ref{lem:loopindex1}.
		In particular, unlike the ordinary case, there is a $(1/2)$-factor.
		Compare this with \cite[Lemma~23.2.2]{KMbook2007}.
\end{proof}
\begin{rem}
	By the extra $(1/2)$-factor in Lemma~\ref{lem:ZactsonJ}, 
	there is no analogus absolute $\mathbb Z/2$-gradings in \cite[Section~22.4]{KMbook2007} on the real monopole Floer homology groups.
\end{rem}

\begin{lem}
		The free abelian groups $\check C_j$, $\hat C_j$, and $\bar C_j$ are all finitely generated.
\end{lem}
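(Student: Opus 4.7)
The plan is to decompose each chain group along the three types of critical points, namely
\[
\check C_j = C^o_j \oplus C^s_j, \qquad \hat C_j = C^o_j \oplus C^u_j, \qquad \bar C_j = C^s_j \oplus C^u_j,
\]
so that it suffices to show that $C^o_j$, $C^s_j$, and $C^u_j$ are each finitely generated for every $j \in \mathbb J(\mathfrak s,\uptau)$. The statement for $C^o_j$ is essentially free: admissibility of $\mathfrak q$ gives, via the transversality theorem of Section~\ref{sec:Transversality}, that the zero set of $(\grad \pertL)^{\sigma}$ descends to a \emph{finite} set of gauge orbits in $\mathcal B_k(Y,\mathfrak s,\uptau)$, hence $C^o$ itself is already finitely generated.

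For $C^s_j$ and $C^u_j$ I would first dispose of the case where $c_1(\mathfrak s)$ is not torsion: admissibility then forces no reducible critical points, so these groups are zero. So the real content is the torsion case, where the reducible locus in $\mathcal B_k$ is modeled by the torus $\mathbb T = H^1(Y;i\reals)^{-\upiota^*}/H^1(Y;i\mathbb Z)^{-\upiota^*}$, and after admissible perturbation the restriction of $\pertL$ to this locus has only finitely many non-degenerate critical points $[\alpha_1],\ldots,[\alpha_N]$.

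Next, by Proposition~\ref{prop:interchar}(ii), the reducible critical points of $(\grad\pertL)^{\sigma}$ lying over a given $[\alpha_i]$ are in bijection with lines in $L^2_k(Y;S)^{\uptau}$ spanned by eigenvectors of the perturbed Dirac operator $D_{\mathfrak q,B_i}$, and after further perturbation these eigenvalues are real, simple, and non-zero (Lemma~\ref{lem:spectra_kASAFOE} and the transversality theorem in Section~\ref{sec:Transversality}). So the reducible critical points over $[\alpha_i]$ form a discrete sequence indexed by the ordered non-zero eigenvalues of $D_{\mathfrak q,B_i}$, with sign of the eigenvalue controlling boundary-stable versus boundary-unstable. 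The key step is then to show that within each $[\alpha_i]$ the grading increases by exactly one as we move up one step in the eigenvalue list; this follows from the local model in Lemma~\ref{lem:classifyboundarycritical}, made global via the relative grading being spectral flow (Definition~\ref{defn:rel_grading}) together with the explicit computation of $\gr(\mathfrak a_1,\mathfrak a_2)$ between two reducibles in the same fiber carried out in Proposition~\ref{prop:modspaceRPi} and its corollary. Consequently each fiber over $[\alpha_i]$ contributes at most one generator of grading $j$, and since there are only finitely many $[\alpha_i]$, $C^s_j$ and $C^u_j$ are finitely generated.

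The only real obstacle here is bookkeeping: one must identify the abstract grading $\gr$ in $\mathbb J(\mathfrak s,\uptau)$ with the eigenvalue position so that the statement ``shift by one per eigenvalue step'' makes sense across different fibers. This is already encoded in the spectral flow description of the relative grading, so no new analytic input is required beyond what has been established in Sections~\ref{sec:Transversality} and~\ref{sec:modspace_trajectories}.
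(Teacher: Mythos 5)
Your proposal is correct and takes essentially the same approach as the paper: the argument rests on (1) the finiteness of critical points downstairs, from compactness and admissibility, and (2) the observation that over a fixed reducible, the reducible critical points in the blown-up fibre all carry distinct gradings, so each fibre contributes at most one generator to a given $C^s_j$ or $C^u_j$. You've merely made explicit the bookkeeping (the decomposition into $C^o$, $C^s$, $C^u$ and the appeal to the fibre grading formula from Proposition~\ref{prop:modspaceRPi} and its corollary) that the paper leaves as a citation.
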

\begin{proof}
		The lemma follows from the compactness results downstairs and the fact that fibres above reducible all have different gradings.
		See \cite[Lemma~22.3.3]{KMbook2007}.
		
\end{proof}

We introduce the cochain complexes, graded by $\mathbb J(\mathfrak s, \uptau)$, by applying Hom to $\mathbb Z_2$:
\begin{equation*}
	\check C^j = \Hom(\check C_j,\mathbb Z_2), \quad
	\hat C^j = \Hom(\hat C_j, \mathbb Z_2), \quad 
	\bar C^j = \Hom(\bar C_j,\mathbb Z_2).
\end{equation*}
These give rise to the cohomology groups
\begin{equation*}
	\widehat{\HMR}^j(Y;\mathfrak s,\uptau), \quad 
	\widecheck{\HMR}^j(Y;\mathfrak s,\uptau), \quad 
	\overline{\HMR}^j(Y;\mathfrak s,\uptau),
\end{equation*}
related by a long exact sequence
\begin{prop}
	For any $(Y;\mathfrak s, \uptau)$, there is an exact sequence
\begin{equation*}
	\begin{tikzcd}
		{...}
		& \widecheck{\HMR}^k \ar[l,"i^*"] 
		& \widehat{\HMR}^k \ar[l,"j^*"]
		& \overline{\HMR}^{k-1} \ar[l,"p^*"]
		& \widehat{\HMR}^{k-1} \ar[l,"i^*"]
		&{...} \ar[l,"j^*"]
	\end{tikzcd}
\end{equation*}
\end{prop}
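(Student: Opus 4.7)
The plan is to deduce the cohomological long exact sequence by dualizing the homology long exact sequence already established in the previous proposition. The key enabling fact is the lemma stated earlier that each graded piece $\check C_j$, $\hat C_j$, $\bar C_j$ is a finite-dimensional $\mathbb{F}_2$-vector space; combined with the fact that $\mathbb F_2$ is a field, this makes the contravariant functor $\Hom_{\mathbb F_2}(-,\mathbb F_2)$ exact on the relevant chain complexes, and the Universal Coefficient Theorem produces canonical identifications
\[
	\widecheck{\HMR}^j(Y;\mathfrak s,\uptau) \cong \Hom_{\mathbb F_2}\!\bigl(\widecheck{\HMR}_j(Y;\mathfrak s,\uptau),\mathbb F_2\bigr),
\]
and similarly for the other two flavors, with no $\mathrm{Ext}$ contributions.

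First I would apply $\Hom_{\mathbb F_2}(-,\mathbb F_2)$ degreewise to the (anti-)chain maps $i:\bar C\to \check C$, $j:\check C\to \hat C$, $p:\hat C\to \bar C$ that drive the homology LES, producing (anti-)cochain maps $i^*:\check C^*\to \bar C^*$, $j^*:\hat C^*\to \check C^*$, $p^*:\bar C^*\to \hat C^*$; concretely these are obtained by transposing the matrix expressions for $i,j,p$. Because $p$ is an anti-chain map, $p^*$ is an anti-cochain map, but the zig-zag construction that yields a long exact sequence is insensitive to this sign (and in any case all signs are trivial modulo $2$). Applying the contravariant exact functor to the homology LES then immediately yields a long exact sequence with arrows reversed, which, upon relabeling via the UCT identification, takes the form stated in the proposition, with the grading shift $k\mapsto k-1$ on the bar term inherited from the corresponding shift in the homology sequence.

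The only routine verification required is that the transposed matrices $i^*, j^*, p^*$ obtained by dualization genuinely satisfy the relations $i^* \check\delta^* = \bar\delta^* i^*$, $j^* \hat\delta^* = \check\delta^* j^*$, and $p^* \bar\delta^* = -\hat\delta^* p^*$, which follow from the corresponding homological identities by taking transposes.

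There is no substantive analytic obstacle; the content of the proposition is purely algebraic, and the real work was already done in establishing the homology LES (which required the full machinery of Sections~\ref{sec:compact} and~\ref{sec:Gluing} on compactness and gluing) and in proving finite-generation of the graded pieces.
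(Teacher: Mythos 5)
Your approach is correct and is exactly the (implicit) one: the paper states the cohomological long exact sequence without further proof, treating it as an immediate consequence of defining $\check C^j = \Hom(\check C_j, \Ftwo)$ etc., dualizing the (anti-)chain maps $i,j,p$, and invoking finite-generation of each graded piece together with the field coefficients so that $\Hom(-,\Ftwo)$ is exact. Your remark that the anti-chain-map sign on $p$ is harmless over $\Ftwo$ and that the UCT carries no $\Ext$ terms is precisely what makes the argument go through, so nothing is missing.
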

The \emph{reduced Floer cohomology} $\HMR^l(Y;\mathfrak s;\uptau)$ is the image of $j^*$.
\begin{defn}
	We write $\widehat{\HMR}^*(Y;\mathfrak s,\uptau), \widecheck{\HMR}^*(Y;\mathfrak s,\uptau),
	\overline{\HMR}^*(Y;\mathfrak s,\uptau), \HMR^*(Y;\mathfrak s,\uptau)$ for the total Floer homologies, as the direct sums, e.g.
	\begin{equation*}
		\widehat{\HMR}^*(Y;\mathfrak s,\uptau) = \bigoplus_{j \in \mathbb J(\mathfrak s, \uptau)} \widehat{\HMR}^j(Y;\mathfrak s,\uptau).
	\end{equation*}
	For homologies, we write, for instance
	\begin{equation*}
		\widehat{\HMR}_*(Y;\mathfrak s,\uptau) = \bigoplus_{j \in \mathbb J(\mathfrak s, \uptau)} \widehat{\HMR}_j(Y;\mathfrak s,\uptau).
	\end{equation*}
\end{defn}

\subsection{Three-sphere}
\hfill \break
Let $Y = S^3$ be the three-sphere, and let $\upiota$ be the involution on $S^3$ as the covering involution of the double branched cover along the unknot.
Concretely, $\upiota$ is the conjugation of coordinates on $\C^2$, where $S^3 \subset \C^2$ is the unit sphere.
In particular, the induced metric from the Euclidean metric on $\C^2$ is $\upiota$-invariant.
Let $\mathfrak s = (S,\rho)$ be the unique spin\textsuperscript{c} structure on $S^3$ and $\uptau: S \to S$ be a compatible real structure.
Since $Y$ has positive scalar curvature, there is a unique critical point $[B,0]$ of the unperturbed Chern-Simons-Dirac functional $\mathcal L$, where $B$ is a spin\textsuperscript{c} connection invariant under $\uptau$.
The spectrum of the Dirac operator
\begin{equation*}
	D_B: \Gamma(S)^{\uptau} \to \Gamma(S)^{\uptau}
\end{equation*} 
is not simple, so we pick a perturbation $\mathfrak q \in \mathcal P$ for which the perturbed Dirac operator $D_{\mathfrak q,B}$ has simple spectrum which contains no zero.

Label the eigenvalues $\{\lambda_i:i \in \mathbb Z\}$ of $D_{\mathfrak q, B}$ in increasing order, so that $\lambda_0$ is the smallest positive eigenvalue.
Let $\{[\mathfrak a_i]\} \subset \mathcal B^{\sigma}(Y,\mathfrak s, \uptau)$ be the reducible critical points of $(\grad \pertL)^{\sigma}$.
In particular, $\mathfrak C^s$ consists of $[\mathfrak a_i]$ with $i \ge 0$, and $\mathfrak C^u$ consists of $[\mathfrak a_i]$ with $i < 0$.
As for the grading, $\mathbb Z$ acts freely and transitively on $\mathbb J$, and so we identify $\mathbb J(\mathfrak s, \uptau)$ with $\mathbb Z$ so that $\gr[\mathfrak a_0] = 0$.
For any homotopy class $z$:
\begin{equation*}
	\gr_z([\mathfrak a_i],[\mathfrak a_{i-1}]) = 
	\begin{cases}
			1 & \text{if $\lambda_i$ and $\lambda_{i-1}$ have the same sign}\\
			0 & \text{if $i=0$}
	\end{cases}.
\end{equation*}
As a result,
\begin{equation*}
	\gr[\mathfrak a_i] = 
	\begin{cases}
		i & i \ge 0\\
		i+1 & i < 0
	\end{cases},
\end{equation*}
and $\bar\gr[\mathfrak a_i] = i$ for all $i$.
It follows that
\begin{itemize}
	\item $\check C_j = \mathbb F_2$ for all $j$ nonegative,
	\item $\hat C_j = \mathbb F_2$ for all $j$ nonpositive,
	\item $\overline{C}_j = \mathbb F_2$ for all $j$.
\end{itemize}
Unlike the usual monopole Floer homology, the adjacent gradings differ only by one instead of two.
But by Proposition~\ref{prop:modspaceRPi}, the unparameterized moduli space of trajectories between neighbouring critical points consists of two points, and the differentials are all zero.
Thus the Floer homology groups are isomorphic to their respective chain groups.

\subsection{Floer homology for links}
\hfill \break
Assume $(Y,\upiota)$ is the double branched cover of $S^3$ along a link $K$, equipped with the covering involution $\upiota$.
Let $g$ be an $\upiota$-invariant Riemannian metric and $\mathfrak s$ be any spin\textsuperscript{c} structure on $Y$.
By Lemma~\ref{lem:cohom_char_real_str}, any spin\textsuperscript{c} structure on the cover admits a real structure $\uptau$.
Moreover, all real structures on $\mathfrak s$ are equivalent.
For such choices of $(Y,\upiota,g,\mathfrak s, \uptau)$, we choose an admissible perturbation $\mathfrak q \in \mathcal P(K,\mathfrak s,\uptau)$.

\begin{defn}
	We define the Floer homology $\HMR(K, \mathfrak s,\uptau, \mathfrak q)$ of the link $K$, as the Floer homology of its double branched cover $Y$ with covering involution $\upiota$:
\begin{align*}
	\widecheck{\HMR}(K,\mathfrak s, \uptau, \mathfrak q) 
	&:= \widecheck{\HMR}(Y,\mathfrak s, g,\uptau, \mathfrak q), 
	\\
	\widehat{\HMR}(K,\mathfrak s,\uptau,\mathfrak q)
	&:= \widehat{\HMR}(Y,\mathfrak s, g,\uptau,\mathfrak q),
	\\
	\overline{\HMR}(K,\mathfrak s,\uptau,\mathfrak q) 
	&:= \overline{\HMR}(Y,\mathfrak s, g,\uptau,\mathfrak q). 
\end{align*}
\end{defn}

\subsection{The completion $\HMR^{\circ}_{\bullet}$}
\hfill \break
We shall see that the cobordism maps in $\HMR_*^{\circ}$ may have infinitely many entries along the negative direction.
There is a need to take the ``negative completion''.
(Dually, the cobordism maps in the Floer cohomology may have infinitely many entries in the positive degrees.)
For example, the negative completion of the ring of finite Laurent polynomials $\mathbb F_2[\upsilon^{-1},\upsilon]$ is the ring of Laurent series $\Ftwo[[\upsilon^{-1},\upsilon]$ which is infinite in the negative degree.
 
In general, let $G_*$ be an abelian group graded by a set $\mathbb J$.
Let $O_{\alpha}$, $\alpha \in A$ be the free $\mathbb Z$-orbits in $\mathbb J$, and choose an element $j_{\alpha} \in O_{\alpha}$ for each $\alpha$.
Let $G_*[n] \subset G_*$ be the subgroup
\begin{equation*}
	G_*[n] = \bigoplus_{\alpha}\bigoplus_{m \ge n} G_{j_{\alpha} - m}.
\end{equation*}
This defines a decreasing filtration for $G_*$.
We define the negative completion of $G_8$ to be the topological group $G_{\bullet} \supset G_*$ obtained by completing with respect to this filtration.
This is an analogous construction of positive completion where we replace $j_{\alpha} - m$ by $j_{\alpha} + m$ in the above definition.
Finally, we apply the positive/negative completion to the grading $\mathbb J = \mathbb J(Y,\mathfrak s,\uptau)$.
\begin{defn}
	The Floer homology group $\HMR^{\circ}_{\bullet}(Y,\mathfrak s,\uptau)$ is defined to be the completion of $\HMR^{\circ}_*$ in the negative completion of $\mathbb J(Y,\mathfrak s,\uptau)$.
	The Floer cohomology group $\HMR^{\circ,\bullet}(Y,\mathfrak s,\uptau)$ is defined to be the completion of $\HMR^{\circ, *}$ in the positive completion of $\mathbb J(Y,\mathfrak s,\uptau)$.
\end{defn}

\section{Moduli Spaces over Manifolds with Boundaries}
\label{sec:mod_mfld_boundary}
This section is modelled on \cite[Section~24]{KMbook2007}. 
We study various aspects of Seiberg-Witten moduli spaces over general real 4-manifolds with boundaries, from the Fredholm theory, perturbations, and transversality, to the structure of their compactifications.
We also discuss similar results for families of moduli spaces, which are needed for later applications.

\subsection{Perturbations and moduli spaces}
\subsubsection*{(\textbf{Compact with boundary})}
Let $X$ be a compact, connected, oriented 4-manifold with non-empty boundary $\del X = Y$.
Let $\upiota_X$ be an involution which preserves the components of the boundary.
Equip $X$ with an $\upiota_X$-invariant Riemannian metric.
For the restriction of the involution on the boundary $Y$, we write $\upiota: Y \to Y$.
Assume $X$ contains an equivariant isometric copy of $(I \times Y, 1_I \times \upiota)$ for some interval $I = (-C,0]$ with $\del X$ identified with $Y$.
We label the boundary components by $\alpha$:
\begin{equation*}
	Y = \bigsqcup_{\alpha} Y^{\alpha}.
\end{equation*}
Write the configuration space and Banach spaces of tame perturbation as
\begin{equation*}
	\mathcal B^{\sigma}_k(Y,\mathfrak s,\uptau) =
	\prod \mathcal B^{\sigma}_k(Y^{\alpha},\mathfrak s^{\alpha},\uptau^{\alpha}),
	\quad
	\mathcal P^{\sigma}_k(Y,\mathfrak s,\uptau) 
	=\prod 
	\mathcal P^{\sigma}_k(Y^{\alpha},\mathfrak s^{\alpha},\uptau^{\alpha}).
\end{equation*}
The Seiberg-Witten map on $X$ is a smooth section
\begin{equation*}
	\mathfrak F^{\sigma}:\mathcal C^{\sigma}_k(X,\mathfrak s_X, \uptau_X) \to \mathcal V^{\sigma}_{k-1}.
\end{equation*}

Let us perturb $\mathfrak F^{\sigma}$ on the cylindrical regions.
Let $\beta$ be a cut-off function, equal to $1$ near $t = 0$ and to $0$ near $t = -C$.
Let $\beta_0$ be a bump-function with compact support in $(-C,0)$.
Let $\mathfrak q$ and $\mathfrak p_0$ be two elements in $\mathcal P(Y,\mathfrak s,\uptau)$.
Define the section $\hat{\mathfrak p}:\mathcal C_k(X,\mathfrak s_X) \to \mathcal V_k$ by the formula
\begin{equation*}
	\hat{\mathfrak p} = \beta \hat{\mathfrak q} + \beta_0 \hat{\mathfrak p}_0.
\end{equation*} 
 Over the blown-up configuration space $\mathcal C_k^{\sigma}(X,\mathfrak s_X)$, we define the perturbation $\hat{\mathfrak p}^{\sigma}$ by
 \begin{align*}
 	\hat{\mathfrak p}^{0,\sigma}(A,s,\phi) &= \hat{\mathfrak p}^0(A,s\phi),\\
 	\hat{\mathfrak p}^{1,\sigma}(A,s,\phi) &=
 		(1/s)\hat{\mathfrak p}^1(A,s\phi),
 \end{align*}
 when $s \ne 0$ and where the $\{0,1\}$ denote the components of $\mathcal V_k = L^2_k(X;i\mathfrak{su}(S^+)\oplus S^-)^{-\uptau_X}$.
Denote the perturbed Seiberg-Witten operator by $\mathfrak F^{\sigma}_{\mathfrak p}$, and the moduli space of solutions of the perturbed Seiberg-Witten equations by
\begin{equation*}
	M(X,\mathfrak s_X,\uptau_X) = \{(A,s,\phi)|\mathfrak F^{\sigma}_{\mathfrak p} = 0\}/\mathcal G_{k+1}(X,\uptau_X).
\end{equation*}
Similarly, we have the larger space $\tilde M(X, \mathfrak s_X, \uptau_X) \subset \tilde B_k^{\sigma}(X,\mathfrak s_X,\uptau_X)$ obtained by dropping the condition $s \ge 0$.
\begin{prop}
\label{prop:mfld_boundary_Hilbert}
	The section $\mathfrak F^{\sigma}_{\mathfrak p}$ of $\mathcal V^{\sigma}_{k-1}$ is transverse to zero, and the subset $\tilde M(X,\mathfrak s_X) \subset \tilde B_k^{\sigma}(X,\mathfrak s_X)$ is a smooth Hilbert manifold.
	The moduli space $M(X,\mathfrak s_X)$ is therefore a Hilbert manifold with boundary, and can be idenitified with the quotient of $\tilde M(X,\mathfrak s_X)$ by the involution $s \mapsto -s$.	
\end{prop}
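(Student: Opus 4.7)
The plan is to prove transversality for a generic choice of the interior perturbation $\mathfrak p_0$ and then invoke the implicit function theorem. The statement as written is an abuse of notation: for the specific $\mathfrak p = \beta \hat{\mathfrak q} + \beta_0 \hat{\mathfrak p}_0$ to be regular, one must choose $\mathfrak p_0$ from a residual subset of $\mathcal P(Y,\mathfrak s,\uptau)$, exactly as in \cite[Proposition~24.4.2]{KMbook2007}. I will therefore set up the universal moduli space over the Banach parameter space of such $\mathfrak p_0$.

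First I would introduce the universal Seiberg-Witten map
\[
\mathfrak W : \tilde{\mathcal C}^{\sigma}_k(X,\mathfrak s_X,\uptau_X) \times \mathcal P(Y,\mathfrak s,\uptau) \to \mathcal V^{\sigma}_{k-1},
\qquad (\upgamma,\mathfrak p_0) \mapsto \mathfrak F^{\sigma}_{\mathfrak p}(\upgamma),
\]
coupled with the Coulomb slice condition as in Section~\ref{subsec:loc_str}. The linearization $\mathcal D \mathfrak W$ decomposes at a solution $(\upgamma,\mathfrak p_0)$ as the sum of the linearized Seiberg-Witten operator $\mathcal D_{\upgamma}\mathfrak F^{\sigma}_{\mathfrak p}$ and the variation $\delta \mathfrak p_0 \mapsto \beta_0\, \hat{\delta\mathfrak p_0}(\upgamma)$. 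The former is Fredholm by the Atiyah-Patodi-Singer package developed in Theorem~\ref{prop:cyinder_finite_fredholm} and the boundary value theory of Theorem~\ref{thm:cylinder_spectral_fred}, both of which I have already adapted to $\uptau$-invariant sections. I would then verify that $\mathcal D \mathfrak W$ is surjective at every zero by the standard duality trick: any element $V \in (\mathcal V^{\sigma}_{k-1,\upgamma})^*$ orthogonal to the image is a weak solution of an elliptic equation on the whole of $X$, so by the $\uptau$-invariant linear unique continuation result (the analogue used in Lemma~\ref{lem:g_transverse} combined with \cite[Lemma~7.1.3]{KMbook2007}) it is either zero or nowhere vanishing on the support of $\beta_0$. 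In the latter case Corollary~\ref{cor:separatetangent} furnishes a cylinder function in $\mathcal P(Y,\mathfrak s,\uptau)$ whose associated gradient pairs non-trivially with $V$, giving a contradiction. This is the main obstacle: one needs the non-linear unique continuation from \cite[Section~10.8]{KMbook2007} to hold for invariant configurations, and to verify that on the sub-cylinder $\{\beta_0 \ne 0\}$ no $\tau$-invariant trajectory is translation-invariant, which in turn requires excluding degenerate behaviour at the blown-up reducible stratum.

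Once $\mathcal D\mathfrak W$ is surjective I would apply the Sard-Smale theorem to the projection $\mathfrak W^{-1}(0) \to \mathcal P(Y,\mathfrak s,\uptau)$, which is Fredholm since $\mathcal D_{\upgamma}\mathfrak F^{\sigma}_{\mathfrak p}$ is. This produces a residual set of $\mathfrak p_0$ for which $\mathfrak F^{\sigma}_{\mathfrak p}$ is transverse to the zero section. For such $\mathfrak p_0$, the implicit function theorem in the Hilbert manifold $\tilde{\mathcal B}^{\sigma}_k(X,\mathfrak s_X,\uptau_X)$ (whose smoothness was obtained in the slice discussion of Section~\ref{sec:config_slice}) exhibits $\tilde M(X,\mathfrak s_X,\uptau_X)$ as a closed smooth Hilbert submanifold. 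The reducible stratum $\{s=0\}$ requires separate attention: there the fibrewise perturbation $\hat{\mathfrak p}^{1,\sigma}$ is defined by the tameness clause of Definition~\ref{defn:tame}, and its linearization is the derivative of $\mathfrak p^1$ at zero spinor, so one recovers transversality by the same duality argument applied to the form and spinor components separately.

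Finally, to identify $M(X,\mathfrak s_X,\uptau_X)$ with $\tilde M/\mathbf{i}$, where $\mathbf{i}:(A,s,\phi) \mapsto (A,-s,-\phi)$, I would argue exactly as in the cylindrical case treated in Section~\ref{subsec:loc_str}: the equations $\mathfrak F^{\sigma}_{\mathfrak p} = 0$ are manifestly $\mathbf{i}$-invariant, so $\mathbf{i}$ acts on $\tilde M$; the fixed locus is $\{s=0\}$, which is smooth by the previous paragraph; and $M$ is the closed half $\{s\ge 0\}$ by construction, giving it a Hilbert manifold with boundary structure whose boundary is exactly the reducible locus. This completes the argument modulo the unique-continuation step flagged above.
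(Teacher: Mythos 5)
There is a genuine misconception driving your whole approach: you treat the proposition as a generic-transversality statement and set up a universal moduli space over $\mathcal P(Y,\mathfrak s,\uptau)$ so as to invoke Sard--Smale, but the proposition as stated asserts \emph{unconditional} transversality --- the section $\mathfrak F^{\sigma}_{\mathfrak p}$ is transverse to the zero section for \emph{every} choice of $\mathfrak p_0$, not for a residual set of them. The reason is that $X$ is a compact 4-manifold with boundary and no boundary conditions are being imposed on configurations. Without Atiyah--Patodi--Singer boundary conditions the linearized operator $\mathcal Q^{\sigma}_{\upgamma}$ has infinite-dimensional kernel but closed range and finite-dimensional cokernel, and one can show that the cokernel is in fact zero: this is the content of \cite[Corollary~17.1.5]{KMbook2007}, which the paper invokes. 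By duality, any element $v$ of the cokernel is a weak solution of the formal adjoint equation $\mathcal Q^{*}v = 0$ with vanishing restriction to $\del X = Y$, and unique continuation for elliptic operators of APS type then forces $v \equiv 0$. So there is no universal moduli space, no Sard--Smale argument, and no residual set of perturbations in the paper's proof. Your framing ``The statement as written is an abuse of notation'' is incorrect.

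Your confusion likely comes from conflating this proposition (the analogue of \cite[Proposition~24.3.1]{KMbook2007}) with the later transversality statement for the moduli space $M(X^*;[\mathfrak b])$ on the cylindrical-end manifold, which the paper cites as \cite[Proposition~24.4.7]{KMbook2007}. That \emph{is} a Sard--Smale result: genericity of $\mathfrak p_0$ is needed to make the restriction maps $R_\pm$ to $\mathcal B^{\sigma}_{k-1/2}(Y)$ transverse in the fibre-product description. But the current proposition is its preliminary input --- it establishes that $\tilde M(X,\mathfrak s_X,\uptau_X)$ is an (infinite-dimensional) smooth Hilbert manifold for all $\mathfrak p_0$ whatsoever, so that the later fibre-product and restriction maps between Hilbert manifolds are even well-defined. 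If you only proved the statement for a residual set of $\mathfrak p_0$, you would have no control over which $\mathfrak p_0$ work at once for the later genericity argument, and the logical architecture of Section~\ref{sec:mod_mfld_boundary} would collapse. The closing paragraph of your proposal --- identifying $M(X,\mathfrak s_X,\uptau_X)$ with the quotient $\tilde M/\mathbf i$ via the involution $(A,s,\phi)\mapsto(A,-s,-\phi)$ --- is fine, but the bulk of the argument needs to be replaced by the unique-continuation-based surjectivity argument sketched above.
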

\begin{proof}
	The proof of \cite[Proposition 24.3.1]{KMbook2007} translates to our setup.
	Let us give a sketch.
	As before, it suffices to prove the subjectivity of an larger operator 
\[
\mathcal Q_{\gamma}^{\sigma} = \mathcal D_{\upgamma} \mathfrak F^{\sigma}_{\mathfrak p} \oplus 
	\mathbf{d}^{\sigma, \dag}_{\upgamma},
\]
	which is the linearization of the perturbed Seiberg-Witten operator coupled with a Coulomb gauge condition.
	The proof of surjectivity is based on \cite[Corollary 17.1.5]{KMbook2007}, which roughly states that  for perturbation of a elliptic operator acting on sections of vector bundles
	\begin{equation*}
		D = D_0 + K,
	\end{equation*}
	if $D^*$ has the property that every non-zero restriction of $D^*v= 0$ has non-zero restriction to the boundary $Y$, then $D$ is surjective.
	One then use the usual trick to cast the operator $\mathcal Q^{\sigma}_{\gamma}$ as an operator between fixed vector bundles.
	Finally, to verify the hypothesis of the corollary one appeals to the unique continuation properties.
\end{proof}
Moreover, for the restriction map 
\begin{equation*}
	R: \tilde M(X,\mathfrak s_X,\uptau_X) \to 
	\tilde{\mathcal B}^{\sigma}_{k-1/2} (Y,\mathfrak s, \uptau),
\end{equation*}
we have the analogue of Theorem~\ref{prop:cyinder_finite_fredholm}, where the proof is essentially the same as the cylindrical case:
\begin{prop}
\label{prop:mfld_boundary_res_fred}
	Let $X$ be a compact 4-manifold with boundary $Y$. 
	Let $[\upgamma] \in \tilde M(X,\mathfrak s_X)$ and let $[\mathfrak a] \in \mathcal B^{\sigma}_{k-1/2}(Y,\mathfrak s)$ be the restriction of $[\upgamma]$ to the boundary.
	Let $\pi$ be the projections of $\mathcal K^{\sigma}_{k-1/2,\mathfrak a}$ to $\mathcal K^{-}_{\mathfrak a}$, with kernel $\mathcal K^{+}_{\mathfrak a}$.
	Then the linear operators
	\begin{align*}
		\pi \circ \mathcal D R 
		&: T_{[\upgamma]}\tilde M(X,\mathfrak s_X) \to \mathcal K^-_{\mathfrak a},\\
		(1-\pi) \circ \mathcal D R 
		&: T_{[\upgamma]}\tilde M(X,\mathfrak s_X) \to \mathcal K^+_{\mathfrak a},
	\end{align*}
	are Fredholm and compact, respectively.
	\hfill \qedsymbol
\end{prop}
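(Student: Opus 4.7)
The plan is to imitate the proof of Theorem~\ref{prop:cyinder_finite_fredholm}, replacing the finite cylinder with the compact four-manifold $X$ and its cylindrical collar. First I would identify $T_{[\upgamma]}\tilde M(X,\mathfrak s_X)$ with the kernel of
\[
\mathcal Q^{\sigma}_{\upgamma} \;=\; \mathcal D_{\upgamma}\mathfrak F^{\sigma}_{\mathfrak p} \,\oplus\, \mathbf{d}^{\sigma,\dagger}_{\upgamma}
\;:\; \mathcal T^{\sigma}_{k,\upgamma} \oplus L^2_k(X;i\reals)^{-\upiota_X^*}\;\to\; \mathcal V^{\sigma}_{k-1,\upgamma} \oplus L^2_{k-1}(X;i\reals)^{-\upiota_X^*},
\]
the linearisation of the perturbed Seiberg--Witten section coupled with the Coulomb slice condition. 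This is the same operator that appeared in the proof of Proposition~\ref{prop:mfld_boundary_Hilbert}, and the Coulomb summand $\mathbf d^{\sigma,\dagger}$ cuts the ambient tangent bundle $\mathcal T^{\sigma}_{k}$ down to the slice kernel $\mathcal K^{\sigma}_{k}$.

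Next, I would rewrite the slice-constrained triple $(b,r,\psi)$ via the substitution $\boldsymbol{\psi}=\psi+r\phi_0$, as in Lemma~\ref{lem:ext_Hess_real_eigen} and Proposition~\ref{prop:Qfredholm}, so that $\mathcal Q^{\sigma}_{\upgamma}$ becomes an honest first-order differential operator acting on unconstrained $\uptau_X$-invariant sections of the vector bundle $i\T^*X\oplus S^+\oplus i\underline{\reals}$. On the cylindrical neighbourhood $(-C,0]\times Y$ the operator then takes the standard APS form $d/dt + L_0 + h_t$, in which $L_0$ is a self-adjoint elliptic operator acting on $\uptau$-invariant sections of a bundle over $Y$ and $h_t$ is a bounded zeroth-order perturbation of the sort appearing in the definition of a $k$-\textsc{asafoe} operator; at $\{0\}\times Y$ the $Y$-operator is precisely the extended Hessian $\widehat{\Hess}^{\sigma}_{\mathfrak q,\mathfrak a}$ from Subsection~\ref{subsec:Hessians}.

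I would then invoke the $\uptau$-invariant Atiyah--Patodi--Singer result, Theorem~\ref{thm:cylinder_spectral_fred}, to conclude that $\mathcal Q^{\sigma}_{\upgamma}$ coupled with the spectral boundary projection $\Pi_0$ of $L_0$ onto non-positive eigenspaces is Fredholm, while the $H^+_0$-component of the boundary restriction is compact on $\ker \mathcal Q^{\sigma}_{\upgamma}$. The remaining step is to pass from $\Pi_0$, which lives on the extended boundary space including the gauge directions, down to the projection $\pi$ onto $\mathcal K^-_{\mathfrak a}$ inside $\mathcal K^{\sigma}_{k-1/2,\mathfrak a}$. This is the commensurate-projection step: after restricting to the slice via $\mathbf d^{\sigma,\dagger}$, the projections $\pi$ and $\Pi_0|_{\mathcal K^{\sigma}}$ differ by an operator of finite rank in the sense of \cite[Section~17.2]{KMbook2007}. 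Since Fredholmness and compactness are both stable under commensurate changes of boundary projection, the two assertions of the proposition follow.

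The main obstacle will be the commensurate-projection bookkeeping in this last step: one must verify that on $\mathcal K^{\sigma}_{k-1/2,\mathfrak a}$ the spectral projection of the slice Hessian $\Hess^{\sigma}_{\mathfrak q,\mathfrak a}$ differs by a compact, indeed finite-rank, operator from the restriction of the spectral projection of the full extended Hessian, even at reducible configurations where the decomposition $\mathcal T^{\sigma}=\mathcal J^{\sigma}\oplus\mathcal K^{\sigma}$ is not $L^2$-orthogonal. This is the $\uptau$-invariant analogue of the argument in the second half of the proof of \cite[Theorem~17.3.2]{KMbook2007}; the only new input is that the ambient elliptic theory takes place on $\uptau$-invariant sections, which is controlled by Lemma~\ref{lem:spectra_kASAFOE}. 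No analytic difficulty beyond what was already handled for cylinders in Theorem~\ref{prop:cyinder_finite_fredholm} should appear.
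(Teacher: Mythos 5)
Your proposal follows the same route as the paper: the paper proves this by declaring that ``the proof is essentially the same as the cylindrical case,'' i.e.\ Theorem~\ref{prop:cyinder_finite_fredholm}, which itself passes through $\mathcal Q^{\sigma}_{\upgamma}$, the $\boldsymbol{\psi}=\psi+r\phi_0$ substitution, the $\uptau$-invariant APS result Theorem~\ref{thm:cylinder_spectral_fred}, and the commensurate-projection generalization from \cite[Section~17.2, Theorem~17.3.2]{KMbook2007}. Your write-out of these steps is accurate and matches the intended argument.
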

\subsubsection*{(\textbf{Cylindrical ends})}
Next, we attach (equivariant) cylindrical ends to $X$ at boundaries.
Let $Z = [0,\infty) \times Y$, equipped with the involution acting trivially on $[0,\infty)$ and $\upiota$ on the $Y$ factor.
The manifold with cylindrical end is 
\begin{equation*}
	X^* = X \cup_Y Z,
\end{equation*}
and we define the $L^2_{k,\loc}$-configuration space
\begin{equation*}
	\mathcal C_{k,\loc}(X^*,\mathfrak s_X) =
	\mathcal A_{k,\loc} \times L^2_{k,\loc}(X^*,S^+).
\end{equation*}
Due to the absence of a $\tau$-model and $L^2_{k,\loc}$ not being Banach, the blown-up configuration spaces must be treated as in \cite[Section~6.1]{KMbook2007}.
That is, instead taking the unit sphere of a Hilbert space, we define $\mathbb S$ to be the topological quotient $L^2_{k,\loc}(X^*;S^+)\setminus 0$ by the action of $\reals^+$.
Thus the real blow-up of $L^2_{k,\loc}(X^*;S^+)^{\uptau_X}$ is the set of pairs 
\begin{equation*}
	\left\{(\reals^+\phi, \Phi)| \Phi \in \reals^{\ge}\phi\right\}.
\end{equation*}
The blown-up configuration space is defined as
\begin{equation*}
	\mathcal C^{\sigma}_{k,\loc}(X^*,\mathfrak s_X) =
	\left\{(A,\reals^{\ge}\phi,\Phi): \Phi \in \reals^{\ge}\phi \right\}
	\subset \mathcal A_{k,\loc} \times \mathbb S \times
	L^2_{k,\loc}(X^*,S^+)^{\uptau_X}.
\end{equation*}
The blown-up version of the tangent bundle is defined as
\begin{equation*}
	\mathcal V^{\sigma}_{k-1}
	= \mathcal O(-1)^* \otimes \pi^*(\mathcal V_{k-1}) \to \mathcal C^{\sigma}_{k,\loc}(X^*,\mathfrak s_X,\uptau_X),
\end{equation*}
where $\mathcal O(-1)$ is the tautological real line bundle on $\mathbb S$, and $\pi$ is the blown-down map.
The Seiberg-Witten map is
\begin{align*}
	\mathfrak F^{\sigma}: \mathcal O(-1) 
	&\to \pi^*(\mathcal V_{k-1}),\\
	\mathfrak F^{\sigma}(A,\reals^+\phi, \Phi)(\psi)&=
	\left(\frac{1}{2}\rho(F^+_{A^t}) - (\Phi\Phi^*)_0, D^+_A\psi\right).
\end{align*}
To perturb $\mathfrak F^{\sigma}$,
we start with a perturbation $\hat{\mathfrak p}^{\sigma}$ supported on the collar $(-C,0] \times Y$ and extend it to be $\hat{\mathfrak q}_0^{\sigma}$ on the rest of the cylindrical end.
The perturbed Seiberg-Witten map $\mathfrak F_{\mathfrak p}= \mathfrak F^{\sigma} + \hat{\mathfrak p}^{\sigma}$, as a section 
\begin{equation*}
	\mathcal V^{\sigma}_j \to C^{\sigma}_{k,\loc}(X^*;\mathfrak s_X,\uptau_X),
\end{equation*}
can be defined using the recipe in Section~\ref{sec:perturb}.
The zeros of $\mathfrak F^{\sigma}_{\mathfrak p}$ in $\mathcal B^{\sigma}_{k,\loc}(X^*;\mathfrak s_X,\uptau_X)$ restricts to configuration on $Z$, as a map
\begin{equation*}
	\{[\upgamma] \in \mathcal B^{\sigma}_{k,\loc}(X^*,\mathfrak s_X)| \mathfrak F^{\sigma}_{\mathfrak p}(\upgamma) = 0\} \to 
	\mathcal B^{\tau}_{k,\loc}(Z;\mathfrak s,\uptau_X|_Z).
\end{equation*}
Assume the perturbation $\mathfrak q$ on the cylinder is regular.
\begin{defn}
	Let $[\mathfrak b]$ be a critical point in $\mathcal B^{\sigma}_k(Y,\mathfrak s)$. The moduli space
	\begin{equation*}
		M(X^*,\mathfrak s_X;[\mathfrak b]) \subset
		\mathcal B^{\sigma}_{k,\loc}(X^*,\mathfrak s_X)
\end{equation*}	
	is the set of all $[\upgamma]$ such that $\mathfrak F^{\sigma}_{\mathfrak p}(\upgamma) = 0$ and such that the restriction of $[\upgamma]$ is asymptotic to $[\mathfrak b]$ on the cylindrical end $Z$. 
	Decomposing the moduli space according to spin\textsuperscript{c} structures and lifts, we write
	\begin{equation*}
		M(X^*;[\mathfrak b]) = \bigsqcup_{\mathfrak s_X, \uptau_X} M(X^*;\mathfrak s_X, \uptau_X;[\mathfrak b]).
	\end{equation*}
\end{defn}
We have been using  $\tau$-model throuhgout our analysis of the Seiberg-Witten trajectories over the infinite cylinder.
Since only the $\sigma$-model is available for non-cylinders, it is convenient to adopt the fibre product description of $M(X^*;[\mathfrak b])$, where we use $\sigma$-model on the finite part, and $\tau$-model on the cylindrical part.
To this end, consider the following restriction maps:\begin{align*}
	R_+ : M(X,\mathfrak s_X, \uptau_X) 
	&\to \mathcal B^{\sigma}_{k-1/2}(Y,\mathfrak s, \uptau),\\
	R_- : M(Z,\mathfrak s_Z, \uptau_Z, [\mathfrak b]) 
	&\to \mathcal B^{\sigma}_{k-1/2}(Y,\mathfrak s,\uptau).
\end{align*}
Let $\Fib(R_+,R_-)$ be the fibre product, and consider the restriction map to the two regions:
\begin{equation*}
	\rho: M(X^*,\mathfrak s_X,\uptau_X) \to \Fib(R_+,R_-) \subset M(X,\mathfrak s_X, \uptau_X) \times
	M(Z, \mathfrak s_Z, \uptau_Z;[\mathfrak b]).
\end{equation*}
The fibre product can be shown to be a homeomorphism, by the proof of \cite[Lemma~24.2.2]{KMbook2007}.

\subsubsection*{(\textbf{Family version})}
In neck-stretching arguments or more generally definitions of chain maps in later sections, we must analyze moduli spaces parametrized by a family of Riemmannian metrics, and their degenerations at boundary.
To set the stage, let $P$ be a smooth finite dimensional manifold, possibly with boundary.
Assume each member $g^p$ contains an isometric copy of $(I \times Y, 1 \times \upiota)$.
In fact, there is a corresponding smooth family of real spin\textsuperscript{c} structures $(\mathfrak s_X^P,\uptau_X^P)$. 
We will identify the spinor bundles in the family with a fixed spinor bundle, and real structures with a fixed real structure.
Let $\mathfrak p_0^P \in \mathcal P(Y,\mathfrak s,\uptau)$ be a smooth family of perturbations, and define 
\begin{equation*}
	\mathfrak p^p = \beta(t)\mathfrak q_0 + \beta_0(t)\mathfrak p^p_0.
\end{equation*}
For each $p \in P$, we have a moduli space $M(X^*,\mathfrak s_X, \uptau_X, [\mathfrak b])$ and the total space
\begin{align*}
	M(X^*,\mathfrak s_X, \uptau_X, [\mathfrak b])_P
	&= 
	\bigcup_p \{p\}
 \times M(X^*,\mathfrak s_X, \uptau_X, [\mathfrak b])_p\\
	&\subset 
	P \times \mathcal B^{\sigma}_{k,\loc}(X^*,\mathfrak s_X, \uptau_X).
\end{align*}

\subsection{Transverality}
\hfill \break
We begin with definition of a single moduli space, based on the fibre product description.
Denote an element of the fibre product  by $([\upgamma_1], [\upgamma_2])$, where
\[ 
[\upgamma_1] \in M(X,\mathfrak s_X, \uptau_X), \quad
[\upgamma_2] \in M(Z,\mathfrak s_Z,\uptau_Z,[\mathfrak b]).
\]
By Proposition~\ref{prop:mfld_boundary_res_fred}, the sum of the derivatives 
\begin{equation}
\label{eqn:sum_der_restrictions}
\mathcal D_{[\gamma_1]} R_+ + \mathcal D_{[\gamma_2]} R_-: 
\T_{[\upgamma_1]} M(X,\mathfrak s_X,\uptau_X) \oplus
\T_{[\upgamma_2]} M(Z,\mathfrak s_Z,\uptau_Z,[\mathfrak b]) \to 
\T_{[\mathfrak b]} \mathcal B^{\sigma}_{k-1/2} (Y,\mathfrak s, \uptau)
\end{equation}
is Fredholm (cf. \cite[Lemma~24.4.1]{KMbook2007}.)
\begin{defn}
	Let $[\upgamma] \in M(X^*,\mathfrak s, \uptau;[\mathfrak b])$. If $[\mathfrak \upgamma]$ is irreducible, then the moduli space	$[\upgamma] \in M(X^*,\mathfrak s, \uptau;[\mathfrak b])$ is \emph{regular} at $[\upgamma]$ if $R_+$ and $R_-$ are transverse at $\rho[\upgamma]$. 
	If $[\upgamma]$ is reducible, then the moduli space is regular at $[\upgamma]$ if the restrictions
	\begin{align*}
		R_+: M^{\red}(X,\mathfrak s_X,\uptau_X) 
		&\to \prod_{\alpha} \mathcal B^{\sigma}_{k-1/2} (Y^{\alpha},\mathfrak s^{\alpha}, \uptau^{\alpha}), \\
		R_-: M^{\red}(Z,\mathfrak s_Z,\uptau_Z, [\mathfrak b]) 
		&\to \prod_{\alpha} \mathcal B^{\sigma}_{k-1/2} (Y^{\alpha},\mathfrak s^{\alpha}, \uptau^{\alpha}),
	\end{align*}
	are transverse at $\rho[\upgamma]$.
	The moduli space is \emph{regular} if it is regular at all points.
\end{defn}
For regular moduli spaces, there are three scenarios.
The following is the 4-manifold version of Proposition~\ref{prop:class_regular_mod_space} in the cylinder case.
\begin{prop}
	Let $[\mathfrak b]$ be a critical point, and let $[\mathfrak b^{\alpha}]$ be the restriction to the $\alpha$-component of $Y$.
	Suppose the moduli space $M(X^*,\mathfrak s, \uptau_X;[\mathfrak b])$ is non-empty and regular.
	Then the moduli space is:
	\begin{enumerate}[leftmargin=*]
		\item[(i)] a smooth manifold consisting only of irreducibles, if any $[\mathfrak b^{\alpha}]$ is irreducible;
		\item[(ii)] a smooth manifold consisting only of reducibles, if any $[\mathfrak b^{\alpha}]$ is reducible and boundary-unstable;
		\item[(iii)] a smooth manifold consisting only of reducibles, if any $[\mathfrak b^{\alpha}]$ is reducible and boundary-stable.
	\end{enumerate}
\end{prop}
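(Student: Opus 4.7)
The plan is to transcribe the strategy of Proposition~\ref{prop:class_regular_mod_space} from the cylindrical setting into the present one, using the fibre product description and Proposition~\ref{prop:mfld_boundary_res_fred}. Concretely, I identify
\[
	M(X^*,\mathfrak s_X,\uptau_X;[\mathfrak b]) \;\cong\; \Fib(R_+,R_-),
\]
where $R_\pm$ are the two restriction maps to $\prod_\alpha \mathcal B^\sigma_{k-1/2}(Y^\alpha,\mathfrak s^\alpha,\uptau^\alpha)$. The Fredholm/compact splitting of $\mathcal{D}R_+$ from Proposition~\ref{prop:mfld_boundary_res_fred}, combined with the analogous assertion on the cylindrical side (Theorem~\ref{prop:cyinder_finite_fredholm}), shows that the sum~\eqref{eqn:sum_der_restrictions} is Fredholm. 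Regularity is then exactly the surjectivity of that sum, and the implicit function theorem on fibre products in Hilbert manifolds yields a smooth manifold structure of the expected dimension.

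For the three reducibility dichotomies I would argue as follows. In case~(i), if $[\mathfrak b^\alpha]$ is irreducible then the asymptotic spinor is a nonzero $\uptau$-invariant eigenvector, so the exponential-decay control near infinity on the end $[0,\infty)\times Y^\alpha$ forces the spinor component of any asymptotic solution to be nonzero on an open set of $X^*$; unique continuation (cf.\ the $\upiota$-invariant version of \cite[Proposition~10.8.2]{KMbook2007}) then propagates irreducibility across the connected manifold $X^*$. In case~(ii), the $\Lambda_{\mathfrak q}([\mathfrak b^\alpha])<0$ condition means the ODE $ds/dt=-\Lambda_{\mathfrak q}s$ has $s\equiv 0$ as the only bounded forward solution asymptotic to zero, so $s\equiv 0$ on the end; unique continuation again forces the global configuration to be reducible. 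Case~(iii) uses the mirror observation: the normal Fredholm summand $\mathcal Q_{\upgamma}^\nu$ from Lemma~\ref{lem:dim_cases} has trivial kernel at a boundary-stable asymptote, so any infinitesimal irreducible deformation of a reducible solution is obstructed, which under the regularity hypothesis forces the solution locus near the reducibles to coincide with the reducible stratum.

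The main obstacle will be case~(iii), where one has to convert the linear cokernel computation into a nonlinear statement about the structure of the moduli space near reducibles. The correct bookkeeping is to separate $\widehat{\Hess}_{\mathfrak q,\mathfrak b}^\sigma$ into its $\mathbf i$-invariant and normal parts (as in Lemma~\ref{lem:dim_cases}), verify that the Fredholm perturbation of $R_+\oplus R_-$ respects this splitting, and then apply the implicit function theorem on each summand. The remaining work is essentially a verbatim adaptation of \cite[Proposition~24.4.7]{KMbook2007} to $\uptau$-invariant sections, relying on the hyperbolicity of $\widehat{\Hess}_{\mathfrak q,\mathfrak b}^\sigma$ at non-degenerate critical points established in Lemma~\ref{lem:ext_Hess_real_eigen}.
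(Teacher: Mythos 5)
Your arguments for cases~(i) and~(ii) reproduce the correct mechanism: unique continuation from a nonzero asymptotic spinor, and, for a boundary-unstable $[\mathfrak b^\alpha]$, the constraint $ds/dt = -\Lambda_{\mathfrak q}s$ with $\Lambda_{\mathfrak q}\to\Lambda_{\mathfrak q}(\mathfrak b^\alpha)<0$ forcing $s\equiv 0$ on the cylindrical end and hence reducibility everywhere. Case~(iii), however, fails. You appeal to Lemma~\ref{lem:dim_cases} to assert that $\mathcal Q^\nu_{\upgamma}$ has trivial kernel at a boundary-stable asymptote; but that lemma concerns $d/dt+\Lambda_{\mathfrak q}$ over the \emph{bi-infinite} cylinder $\reals\times Y$, where prescribed asymptotics at both ends over-determine the problem. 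In the fibre-product description of $M(X^*;[\mathfrak b])$ the relevant end is only $[0,\infty)\times Y^\alpha$, with the value at $t=0$ supplied by the matching with the compact piece and no asymptotic constraint at a second end. With $\Lambda_{\mathfrak q}(\mathfrak b^\alpha)>0$ the equation $ds/dt=-\Lambda_{\mathfrak q}s$ on $[0,\infty)$ has a one-dimensional space of exponentially decaying $L^2$ solutions, so the normal kernel is one-dimensional, not zero. That surviving normal direction is precisely what produces irreducible solutions in the boundary-stable case: $s$ can be strictly positive on $X$ and decay on the end.

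Consequently the argument cannot yield "consists only of reducibles." When all $[\mathfrak b^\alpha]$ are boundary-stable, the conclusion one actually obtains (the analogue of \cite[Proposition~24.4.3]{KMbook2007}) is that the regular moduli space is a smooth manifold with (possibly empty) boundary, the boundary being the reducible locus; this is what Proposition~\ref{prop:W_cod1_strata} presupposes when it allows the moduli space over a cobordism to "contain both reducibles and irreducibles." The literal "only reducibles" wording of item~(iii) in the statement appears to be a slip, and your proof should have derived the manifold-with-boundary structure rather than attempted to justify that wording. Finally, even if the normal kernel were trivial, your implicit-function-theorem step would only give openness of the reducible locus, not global reducibility; one would still need a separate a priori mechanism of the kind you correctly used in case~(ii).
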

Case~(ii) is the analogue of the boundary-obstructed case.
In particular, the Fredholm operator \eqref{eqn:sum_der_restrictions} above is not surjective.
The dimension of the cokernel depends on the number of boundary-unstable critical points on the cylindrical ends.

\begin{defn}
	If case~(ii) occurs and more than one of the $[\mathfrak b^{\alpha}]$ is boundary-unstable, then the solution $[\upgamma]$ is \emph{boundary-obstructed}.
	If $c+1$ of the $[\mathfrak b^{\alpha}]$ are boundary-unstable, then $[\upgamma]$ is boundary-obsructed with \emph{corank} c.	
\end{defn}
To analyze the equi-dimensional pieces of the total moduli spaces on cylindrical manifolds, we decompose $\mathcal B^{\sigma}(X, \upiota_X; [\mathfrak b])$ according to homotopy classes
\begin{equation*}
	z \in \pi_0(\mathcal B^{\sigma}(X, \upiota_X;[\mathfrak b])).
\end{equation*}
Since each $[\gamma] \in M(X^*,[\mathfrak b])$ is homotopic to configuration which is equal to the pull-back of $[\mathfrak b]$ on the cylindrical end, 
\begin{equation*}
	M(X^*;[\mathfrak b]) = \bigcup_{z} M_z(X^*;[\mathfrak b]).
\end{equation*}
The set $\pi_0(\mathcal B^{\sigma}(X;[\mathfrak b]))$ is a principal homogeneous space of 
\[ 
	H^2(X,Y;\mathbb Z)^{-\upiota_X^*} \times
	\frac{H^1(X,Y;\mathbb Z)^{-\upiota_X^*}}{2H^1(X,Y;\mathbb Z)^{-\upiota_X^*}}
\]
parametrizing the spin\textsuperscript{c} structures with real structures.
An element $z \in \pi_0(\mathcal B^{\sigma}(X;[\mathfrak b])$ belong to a given pair $(\mathfrak s_X, \uptau_X)$ lies in a homogeneous space of 
\begin{equation*}
	H^1(Y;\mathbb Z)^{-\upiota^*}/i^*_Y(H^1(X;\mathbb Z)^{-\upiota_X^*}).
\end{equation*}

Given $z \in \pi_0(\mathcal B^{\sigma}(X;[\mathfrak b]))$, let $[\upgamma] \in \mathcal B^{\sigma}(X;[\mathfrak b])$ and $\gamma$ be a gauge representative.
Let $[\upgamma_{\mathfrak b}]$ be the constant trajectory in $\mathcal B^{\tau}(Z,\mathfrak s,\uptau_Z)$ corresponding to $\mathfrak b$.
Recall the operator $\mathcal Q^{\sigma}_{\upgamma}$ on $X$ is defined in Proposition~\ref{prop:mfld_boundary_Hilbert}, and the translation-invariant operator $\mathcal Q_{\upgamma_{\mathfrak b}}$ on $Z$ is  defined in subsection~\ref{subsec:loc_str}.
We have the restriction maps 
\begin{align*}
	r_+ : \ker(\mathcal Q^{\sigma}_{\upgamma}) 
	&\to L^2_{k-1/2}(Y; i\T^*Y \oplus S \oplus i\reals)^{-\uptau},\\
	r_- : \ker(\mathcal Q_{\upgamma_{\mathfrak b}}) 
	&\to L^2_{k-1/2}(Y; i\T^*Y \oplus S \oplus i\reals)^{-\uptau}.
\end{align*}
\begin{defn}
\label{defn:Xgrade}
We define $\gr_z(W,[\mathfrak b])$ to be the index of 
\begin{equation*}
	r_+ - r_-:\ker(\mathcal Q^{\sigma}_{\upgamma}) \oplus \ker(\mathcal Q_{\upgamma_{\mathfrak b}})
	\to L^2_{k-1/2}(Y; i\T^*Y \oplus S \oplus i\reals)^{-\uptau}.
\end{equation*}
\end{defn}
This quantity is manifestly dependent only on $[\mathfrak b]$ and $z$, and makes sense even if the moduli space is empty.
When the moduli space $M_z(X^*, \upiota;[\mathfrak b])$ is nonempty, $\gr_z(W,[\mathfrak b])$ is equal to the index of the Fredholm opeator \eqref{eqn:sum_der_restrictions}.
It follows that nonempty regular boundary-unobsrtucted moduli spaces  has dimension equal to $\gr_z$.
Furthermore, if $M_z(X,\upiota_X;[\mathfrak b])$ is boundary-obstructed of corank $c$, then its dimension is $\gr_z(X;[\mathfrak b]) + c$.
We can also concatenate $z \in \pi_0(\mathcal B^{\sigma}(X, \upiota_X;[\mathfrak b]))$ with an homotopy class $z_1 \in \pi_1(\mathcal B^{\sigma}(Y);[\mathfrak a],[\mathfrak b])$ to obtain
\begin{equation*}
	z_1 \circ z \in 
\pi_0(\mathcal B^{\sigma}(X, \upiota_X;[\mathfrak b])) 
\end{equation*}
The index is additive under concatenation:
\begin{equation*}
	\gr_{z_1 \circ z}(X;[\mathfrak b]) =
	\gr_z(X;[\mathfrak a]) + \gr_z([\mathfrak a],[\mathfrak b]).
\end{equation*}

\begin{prop}
	Let $\mathfrak q$ be a fixed admissible perturbation for $Y$, let
	\begin{equation*}
		\hat{\mathfrak p} = \beta(t) \hat{\mathfrak q} + \beta_0(t) \hat{\mathfrak q}_0
	\end{equation*}	
	on the collar $I \times Y \subset X$, as before, 
	and let $[\mathfrak b]$ be a critical point on $Y$, then there is a residual subset of $\mathcal P(Y,\mathfrak s, \uptau)$ such that for all $\mathfrak p_0$ in this subset, the moduli space $M(X^*;[\mathfrak b])$ is regular.
\end{prop}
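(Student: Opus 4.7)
The plan is a standard Sard--Smale argument on a parametrized moduli space, adapted to the real setting and to the fibre-product description. Fix $\mathfrak q$ and an open ball $\mathcal P^{o} \subset \mathcal P(Y,\mathfrak s,\uptau)$ of admissible interior perturbations $\mathfrak p_0$, and form the universal moduli space
\[
\mathbf{M}(X^*;[\mathfrak b]) = \{([\upgamma],\mathfrak p_0) : \mathfrak F^{\sigma}_{\mathfrak p}(\upgamma) = 0, \ [\upgamma] \text{ asymptotic to } [\mathfrak b]\} \subset \mathcal B^{\sigma}_{k,\loc}(X^*,\mathfrak s_X,\uptau_X) \times \mathcal P^{o},
\]
with the induced projection $\pi:\mathbf{M}(X^*;[\mathfrak b]) \to \mathcal P^{o}$. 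Via the fibre-product identification $M(X^*;[\mathfrak b]) \cong \mathrm{Fib}(R_+,R_-)$, I would recast the universal problem as the zero set of the section
\[
(\upgamma_1,\upgamma_2,\mathfrak p_0)\ \longmapsto\ R_+(\upgamma_1) - R_-(\upgamma_2)
\]
on the product of the $X$-moduli space (depending on $\mathfrak p_0$) and the fixed cylinder moduli space $M(Z,\mathfrak s_Z,\uptau_Z,[\mathfrak b])$. Proposition~\ref{prop:mfld_boundary_Hilbert} and Proposition~\ref{prop:mfld_boundary_res_fred}, together with smoothness of $\mathfrak p_0 \mapsto \hat{\mathfrak p}^{\sigma}$ from Theorem~\ref{thm:largebanachspace}, make $\mathbf{M}(X^*;[\mathfrak b])$ into a smooth Banach manifold once we verify universal transversality, at which point Sard--Smale selects a residual set of regular values $\mathfrak p_0$.

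The main analytical content is the surjectivity of the universal linearization. At an irreducible $[\upgamma]$ I would consider the operator
\[
\mathcal D\mathfrak F^{\sigma}_{\mathfrak p}\big|_{[\upgamma]} \oplus \mathbf d^{\sigma,\dag}_{\upgamma} \oplus \bigl(\delta \mathfrak p_0 \mapsto \beta_0(t)\,\delta \hat{\mathfrak p}_0^{\sigma}(\upgamma)\bigr)
\]
and test against a would-be cokernel element $V = (\eta, r, \psi, c) \in L^2_{-k+1}$, $\uptau_X$-invariant, lying in the kernel of the formal adjoint. The paired PDE for $V$ is elliptic of the form studied in Proposition~\ref{prop:Qfredholm}, and Theorem~\ref{thm:cylinder_spectral_fred}(iii) together with the exponential-decay estimates of Section~\ref{sec:perturb} imply $V$ is smooth and decays on the cylindrical end. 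If $V \ne 0$, the linear unique continuation result that underwrites the transversality proof in Section~\ref{sec:Transversality} forces the restriction $\check V(t)$ to be non-zero and everywhere orthogonal to gauge orbits along some interval $J$ contained in the support of $\beta_0$. Applying Theorem~\ref{thm:emb} to an embedded compact piece of $\upgamma$ and Corollary~\ref{cor:separatetangent} to the non-zero slice vector $\check V(t_0)$, I obtain a cylinder function whose gradient $\delta \mathfrak p_0$ pairs non-trivially with $\check V$, contradicting $V$ being in the cokernel. The boundary-obstructed reducible case is handled in parallel by replacing the universal linearization with its $\mathbf i$-invariant summand $(\mathcal D\mathfrak F^{\sigma}_{\mathfrak p})^{\partial}$ and choosing the corresponding cylinder perturbation from the subclass vanishing on the reducible locus, exactly as in the proof of Theorem~\ref{lem:g_transverse} above.

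The hard part will be this unique continuation step: the perturbation $\beta_0(t)\hat{\mathfrak p}_0$ lives only on a compact slab of the collar, so I must rule out cokernel elements supported on $X \setminus \mathrm{supp}(\beta_0)$ or on the cylindrical end. This is the exact $\uptau$-invariant analogue of the unique continuation results used in \cite[Section~15.1]{KMbook2007}; the operators in question commute with $\uptau_X$, and the weak unique continuation principle for first-order elliptic systems respects the $\uptau$-invariant subspaces, so the arguments transfer. Once surjectivity of the universal linearization is established at every $([\upgamma],\mathfrak p_0)$, the conclusion is automatic: Sard--Smale gives a residual set $\mathcal P_{\mathrm{reg}} \subset \mathcal P^{o}$ of regular values, and for every $\mathfrak p_0 \in \mathcal P_{\mathrm{reg}}$ the fibre $M(X^*,\mathfrak s_X,\uptau_X;[\mathfrak b])$ is regular in the sense defined above. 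Taking a countable exhaustion of $\mathcal P$ by such balls $\mathcal P^{o}$ and intersecting yields a residual subset of the whole Banach space.
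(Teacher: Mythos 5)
Your proposal follows the same Sard--Smale/unique-continuation strategy that the paper invokes by referring to \cite[Proposition~24.4.7]{KMbook2007}, and the substantive steps---the fibre-product description of the moduli space, universal transversality tested against a would-be cokernel element $V$, unique continuation forcing $V$ to be non-zero on the slab where $\beta_0$ is supported, and the use of Proposition~\ref{thm:emb} and Corollary~\ref{cor:separatetangent} to produce a cylinder function whose gradient pairs non-trivially with $\check V$---all match. Two points deserve adjustment.

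First, the opening restriction of $\mathfrak p_0$ to a ball $\mathcal P^o$ and the closing sentence ``Taking a countable exhaustion of $\mathcal P$ by such balls and intersecting yields a residual subset of the whole Banach space'' is both unnecessary and, as phrased, incorrect: a residual subset of a ball $B_n$ is a countable intersection of dense open subsets \emph{of $B_n$}, and intersecting such sets over $n$ (or taking their union) need not be residual in $\mathcal P$. There is no reason to restrict to a ball at all---unlike the cylinder transversality argument where one shrinks to $\mathcal P_{\mathcal O}$ to keep the critical set from moving, here $\mathfrak q$ is already fixed and $\mathfrak p_0$ lives only in the compact slab of the collar, so the critical points and the cylinder moduli space $M(Z,\mathfrak s_Z,\uptau_Z,[\mathfrak b])$ are unchanged as $\mathfrak p_0$ varies over all of $\mathcal P$. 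Apply Sard--Smale directly to the Fredholm projection from the universal moduli space over $\mathcal P$; the only countable intersection you need is over the countably many homotopy classes $z$ and the two cases (irreducible/reducible), and that is automatically residual.

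Second, the parenthetical ``choosing the corresponding cylinder perturbation from the subclass vanishing on the reducible locus, exactly as in the proof of Theorem~\ref{lem:g_transverse}'' misattributes the reducible argument: Lemma~\ref{lem:g_transverse} is the \emph{irreducible} transversality statement, and the perturbations $\mathcal P^\perp$ vanishing on the reducible locus are used in Section~\ref{sec:Transversality} only to perturb the normal (Dirac) directions while leaving the reducible critical manifold alone. For the present proposition the reducible regularity condition is about transversality of the restriction maps $R_\pm$ from the \emph{reducible} moduli spaces $M^{\red}(X)$ and $M^{\red}(Z)$, so one should pair the $\mathbf i$-invariant cokernel element against a perturbation that does act on the reducible locus, not one chosen from $\mathcal P^\perp$. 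This is a small imprecision rather than a conceptual gap, but as written it would send the reader to the wrong construction.
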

\begin{proof}
	See the proof of \cite[Proposition~24.4.7]{KMbook2007}.
\end{proof}

\subsubsection*{\textbf{(Family version)}} 
\begin{defn}
	Let $(p,[\upgamma]) \in M(X^*,\mathfrak s_X,\uptau_X;[\mathfrak b])_P$ and let $\rho[\upgamma] = ([\upgamma_0],[\upgamma_1])$.
	If $[\upgamma]$ is irreducible, then the moduli space $M(X^*,\mathfrak s_X,\uptau_X;[\mathfrak b])_P$ is \emph{regular} at $(p,[\upgamma])$ if the maps of Hilbert manifolds
	\begin{align*}
		R_+: M(X,\mathfrak s_X, \uptau_X)_P &\to \mathcal B^{\sigma}_{k-1/2}(Y,\mathfrak s,\uptau),\\
		R_-: M(Z,\mathfrak s_Z, \uptau_Z, [\mathfrak b])_P &\to \mathcal B^{\sigma}_{k-1/2}(Y,\mathfrak s,\uptau),
	\end{align*}
	are transverse at $((p,[\upgamma_0]),[\upgamma_1])$.
	If $[\upgamma]$ is reducible, then the moduli space is \emph{regular} if the restrictions
	\begin{align*}
		R_+: M^{\red}(X,\mathfrak s_X, \uptau_X)_P &\to \mathcal B^{\sigma}_{k-1/2}(Y,\mathfrak s,\uptau),\\
		R_-: M^{\red}(Z,\mathfrak s_Z, \uptau_Z, [\mathfrak b])_P &\to \mathcal B^{\sigma}_{k-1/2}(Y,\mathfrak s,\uptau),
	\end{align*}
	are transverse at $((p,[\upgamma_0]),[\upgamma_1])$.
	The moduli space is \emph{regular} if it is regular at all points.
\end{defn}
\begin{prop}
	Let $\mathfrak q$ be a fixed admissible perturbation for $Y$, let $g^P$ be a smooth family of Riemannian metric parametrized by $p \in P$ all containing an isometric copy of $(I \times Y, 1 \times \upiota)$, and let $\hat{\mathfrak p}^P$ be a family of perturbations defined as before.
	Let $P_0 \subset P$ be a closed subset and supposed that the parametrized moduli space $M(X^*,\upiota_X,[\mathfrak b])_P$ is regular at all points $(p_0,[\upgamma])$ with $p_0 \in P_0$.
	Then there is a new family of perturbations $\tilde{\mathfrak p}^P$, with
	\begin{equation*}
		\tilde{\mathfrak p}^p = \mathfrak p^p \quad
		\text{for all }p \in P_0 
	\end{equation*}
	such that the corresponding parametrized moduli space $M(X^*,\upiota_X,[\mathfrak b])_P$ is regular everywhere.
\end{prop}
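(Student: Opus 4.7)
The plan is to run a relative version of the Sard--Smale argument used to prove the absolute transversality statement in the previous proposition, where the word ``relative'' refers to the requirement that the perturbation be unchanged over the closed set $P_0 \subset P$. First, I would introduce a Banach space of ``relative'' perturbation families. Fix a large Banach space $\mathcal P$ of tame perturbations as in Definition~\ref{defn:largebanachspaceperturb}, and consider smooth families $\mathfrak r : P \to \mathcal P$ of class $C^N$ (for $N$ sufficiently large, with the strong Whitney topology) that vanish to infinite order along $P_0$. This space, call it $\mathcal P^{\text{rel}}_{P,P_0}$, is a separable Banach space, and for any such family the perturbed data $\mathfrak p^p + \mathfrak r^p$ agrees with $\mathfrak p^p$ for all $p \in P_0$.

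Next, I would form the universal parametrized moduli space
\begin{equation*}
\mathcal M^{\text{univ}} = \{(p,[\upgamma],\mathfrak r) : [\upgamma] \in M(X^*,\mathfrak s_X,\uptau_X;[\mathfrak b])_p \text{ using } \mathfrak p^p + \mathfrak r^p\}
\end{equation*}
inside $P \times \mathcal B^{\sigma}_{k,\loc}(X^*) \times \mathcal P^{\text{rel}}_{P,P_0}$. Via the fibre product description, $\mathcal M^{\text{univ}}$ is the zero set of the Seiberg--Witten section on each factor coupled through the matching condition at $Y$. I would show $\mathcal M^{\text{univ}}$ is a smooth Banach manifold by proving that at every point $(p,[\upgamma],\mathfrak r)$ with $p \notin P_0$ the linearization is surjective; this uses the same ingredients as in the proof of the single-space version (Proposition~24.4.7 of \cite{KMbook2007}, adapted to our setting): a unique continuation argument guarantees that any cokernel element has nonzero restriction to an open interval in the interior of the collar on which $\mathfrak r^p$ may be freely varied, and then Corollary~\ref{cor:separatetangent} together with the density properties of Definition~\ref{defn:largebanachspaceperturb} supply a perturbation direction pairing nontrivially with it. Along $P_0$, transversality holds by hypothesis, so the universal moduli space is smooth everywhere once one notes that the tangent to $P_0$ contributes an extra surjective block at those points.

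Then I would split the obstruction and regular cases, as in the preceding proposition, and apply the Sard--Smale theorem to the projection
\begin{equation*}
\pi : \mathcal M^{\text{univ}} \to \mathcal P^{\text{rel}}_{P,P_0}, \qquad (p,[\upgamma],\mathfrak r) \mapsto \mathfrak r,
\end{equation*}
which is Fredholm by Proposition~\ref{prop:mfld_boundary_res_fred} and its family version. Regular values $\mathfrak r$ of $\pi$ (and of its restriction to the reducible stratum, treated separately) give perturbations $\tilde{\mathfrak p}^P = \mathfrak p^P + \mathfrak r$ whose parametrized moduli space is regular everywhere, and $\tilde{\mathfrak p}^p = \mathfrak p^p$ for $p \in P_0$ by construction.

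The main obstacle is the behaviour at the ``seam'' $\del P_0$: I need $\mathcal P^{\text{rel}}_{P,P_0}$ to be large enough to furnish transverse directions at every $p \notin P_0$, yet all of its members must vanish on $P_0$. The standard remedy is to build bump-function cutoffs $\chi_\epsilon$ on $P$ that are $0$ on $P_0$ and $1$ outside an $\epsilon$-neighbourhood, and to take $\mathcal P^{\text{rel}}_{P,P_0}$ to consist of products $\chi_\epsilon(p)\, \sigma(p)$ for smooth maps $\sigma : P \to \mathcal P$; since surjectivity of the linearization at a fixed $(p,[\upgamma])$ depends only on the value of the perturbation family near $p$, this class suffices for density away from $P_0$. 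Because $P_0$ is closed and the required perturbations are arbitrarily small and concentrated in $P \setminus P_0$, the resulting $\tilde{\mathfrak p}^P$ is as claimed. A mild technical point is that $P$ may itself have boundary, but this does not affect the Sard--Smale machinery.
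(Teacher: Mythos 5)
Your sketch follows exactly the relative Sard--Smale strategy that the paper implicitly invokes (it gives no explicit proof, in keeping with its convention of deferring such routine arguments to \cite{KMbook2007}, where the corresponding family-version statement is proved precisely this way). The structure is right: build a Banach space of perturbation families supported away from $P_0$, form the universal parametrized moduli space, verify surjectivity of the linearization via unique continuation plus the density properties of the perturbation space, split off the reducible stratum and the boundary-obstructed case, and apply Sard--Smale to the projection onto the perturbation parameter.

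Two small points are worth tightening. First, ``$C^N$ families vanishing to infinite order along $P_0$'' is not a Banach condition (it is Fréchet), so your later replacement of this by the cutoff construction $\chi_\epsilon(p)\,\sigma(p)$ is in fact the definition you should adopt from the start, not a repair. Second, the sentence about ``the tangent to $P_0$ contributing an extra surjective block'' glosses over the real mechanism, which is openness of regularity: regularity on the closed set $P_0$ together with properness of the parametrized moduli space over $P$ (established in the compactness theorem for the family) implies regularity on an open neighbourhood $U \supset P_0$; one then chooses the cutoff $\chi$ to vanish on a slightly smaller neighbourhood so that the unperturbed family already handles $U$, and the relative perturbations only need to achieve transversality over $P \setminus U$, where the full strength of the large Banach space is available. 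With those clarifications your argument is complete.
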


\subsection{Compactness}
\begin{defn}
	Let $[\mathfrak a]$ be a critical point, and $[\mathfrak b^{\alpha}]$ be its restriction to the $\alpha$-th component.
	A \emph{broken $X$-trajectory asymptotic to $[\mathfrak b]$} consists of the following data
	\begin{itemize}
		\item an element $[\upgamma_0]$ in a moduli space $M_{z_0}(X^*,\upiota_X,[\mathfrak b_0])$;
		\item for each component $Y^{\alpha}$, an unparametrized broken trajectory $[\check{\upgamma}^{\alpha}]$ in a moduli space $\check M_{z^{\alpha}}^+([\mathfrak b_0^{\alpha}],[\mathfrak b^{\alpha}]$, where $[\mathfrak b_0^{\alpha}]$ is the restriction of $[\mathfrak b_0]$ to $Y^{\alpha}$.
	\end{itemize}
	Moreover, if $z_1$ is the homotopy class of paths from $[\mathfrak b_0]$ to $[\mathfrak b]$ whose $\alpha$-th component is $z^{\alpha}$, then the \emph{homotopy class} of the broken $X$-trajectory is the element
	\begin{equation*}
		z = z_1 \circ z_0 \in \pi_0(\mathcal B^{\sigma}(X,\upiota_X,[\mathfrak b])).
	\end{equation*} 
	We write $M^+_z(X^*,\upiota_X,[\mathfrak b])$ for the space of $X$-trajectories in the homotopy class $z$.
	This contains $M_z(X^*,[\mathfrak b])$ as the special case when each of the broken trajectories $[\check{\upgamma}^{\alpha}]$ has no components.
	We write a typical element of $M^+(X^*,[\mathfrak b])$ as $([\upgamma_0],[\check\upgamma])$, where $[\check\upgamma]$ is a possibly empty collection $[\check\upgamma^{\alpha}_i]$ of unparameterized trajectories on the components $Y^{\alpha}$, $1 \le i \le n^{\alpha}$.
\end{defn}
The compactification $M^+_z(X^*,[\mathfrak b])$ can be topologized as in \cite[Section 24.6]{KMbook2007}, in a way similar to the compactification of trajectories on cylinders.

\begin{thm}
	Let $\mathfrak p_0$ be chosen so that the moduli space of $X$-trajectories $M(X^*,[\mathfrak b])$ are regular for all critical points $[\mathfrak b]$ on $Y$.
	Then each moduli space of broken $X$-trajectories $M^+_z(X^*,[\mathfrak b])$ is compact.	
\end{thm}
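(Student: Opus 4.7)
The plan is to adapt the cylinder compactness arguments from Section~\ref{sec:compact} to the $X^*$-setting by exploiting the fibre-product decomposition
\[
M(X^*,\mathfrak s_X,\uptau_X;[\mathfrak b]) \cong \Fib(R_+,R_-) \subset M(X,\mathfrak s_X,\uptau_X) \times M(Z,\mathfrak s_Z,\uptau_Z;[\mathfrak b])
\]
already used to define the moduli space.  Fix a homotopy class $z$.  The first step is to observe that, because $z$ and $[\mathfrak b]$ are fixed and every representative of a broken $X$-trajectory asymptotes to $[\mathfrak b]$ on the cylindrical ends, the total drop of the perturbed Chern--Simons--Dirac functional across all components is a topological constant, bounded in terms of the relative grading $\gr_z(X^*,[\mathfrak b])$ and the energy of a reference configuration.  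This gives a uniform a priori bound on the analytic energy $\mathcal E^{\mathrm{an}}_{\mathfrak q}$ of any sequence $([\upgamma_0^n],[\check{\upgamma}^n])$ in $M^+_z(X^*,[\mathfrak b])$.

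Next, I would split the argument into a ``compact-piece'' step and a ``neck'' step.  On the compact manifold-with-boundary $X$, the proof of Proposition~\ref{prop:mfld_boundary_Hilbert} combined with the $\uptau$-invariant version of local compactness from Theorem~\ref{thm:perturbed_local_compactness_blowup} shows that any sequence of solutions of $\mathfrak F^{\sigma}_{\mathfrak p}=0$ with bounded energy has, up to $\mathcal G_{k+1}(X,\upiota_X)$-gauge transformations and passage to a subsequence, a limit in $M(X,\mathfrak s_X,\uptau_X)$; the restriction $R_+([\upgamma_0^n])$ therefore converges in $\mathcal B^{\sigma}_{k-1/2}(Y,\mathfrak s,\uptau)$ (with exponential-decay improvements coming from nondegeneracy of all critical points, chosen via the admissibility of $\mathfrak q$).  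On each cylindrical end, the compactness theorem for unparametrized broken trajectories on $\reals\times Y$ provides a subsequential limit $[\check{\upgamma}^{\alpha,\infty}]$ in some $\check M^+_{z^\alpha}([\mathfrak b_0^\alpha],[\mathfrak b^\alpha])$, for restpoints $[\mathfrak b_0^\alpha]$ on each component.  The two limits agree on the interface $Y$ precisely because convergence on both sides is realized in $\mathcal B^{\sigma}_{k-1/2}(Y,\mathfrak s,\uptau)$ and the fibre-product conditions are closed.

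Finally, I would check that the homotopy class is preserved in the limit.  Because $\mathfrak q$ is admissible, only finitely many homotopy classes $z^{\alpha}$ on each cylinder can support trajectories with energy below our uniform bound, and likewise only finitely many classes $z_0$ of $X^*$-solutions.  Concatenation then identifies the limiting $(z_0,\{z^\alpha\})$ with a class summing to $z$, so the limit lies in $M^+_z(X^*,\mathfrak s_X,\uptau_X;[\mathfrak b])$.  The topology defined on $M^+_z(X^*,[\mathfrak b])$ (mirroring the cylindrical one: small neighbourhoods given by $\Omega(U_0,U_1,\dots,U_n,T)$-data with a large ``neck parameter'' $T$) is precisely the one in which the subsequential convergence above is continuous, so the limit point represents the limit of the sequence in $M^+_z(X^*,[\mathfrak b])$.

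The main obstacle I anticipate is bookkeeping for the reducible/boundary-obstructed degenerations on the cylindrical ends: a sequence of irreducible solutions on $X^*$ may converge to a broken configuration in which one of the cylindrical components becomes reducible and boundary-obstructed, and one must check that the fibre-product limit still produces a well-defined element of $M^+_z$ (rather than something that sits only in the larger space $\tilde M^+$) and that the involution $\mathbf{i}:s\mapsto -s$ interacts correctly with the compactification.  This is handled, as in \cite[Section~24.6]{KMbook2007}, by showing that the slice-wise constraint $s\ge 0$ passes to the limit because of the exponential-decay estimates governed by the (hyperbolic) extended Hessian of Lemma~\ref{lem:ext_Hess_real_eigen}, and because $\uptau$-equivariance of all operators means no new phenomena appear beyond the ordinary case.
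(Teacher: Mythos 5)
Your proposal is correct and follows essentially the same route as the paper, which for this theorem defers to \cite[Sections~24.5--24.6]{KMbook2007}: a uniform topological energy bound in the fixed homotopy class $z$, local compactness via Theorem~\ref{thm:perturbed_local_compactness_blowup} on the compact piece $X$, broken-trajectory compactness on the cylindrical ends, matching of the limits across the interface $Y$ in $\mathcal B^{\sigma}_{k-1/2}(Y,\mathfrak s,\uptau)$, and finiteness of the contributing homotopy classes below the energy cap. One small imprecision: the uniform energy bound comes directly from the topological energy formula for the class $z$ together with the critical value $\pertL(\mathfrak b)$, not from the relative grading $\gr_z(X^*,[\mathfrak b])$ --- that is a Fredholm index, and although both quantities are topological invariants of $z$, the energy is not controlled by the index.
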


\subsubsection*{(\textbf{Family version})}
The compactification of the parametrized moduli space is defined fibrewise:
\begin{equation*}
	M^+(X^*,\upiota_X,[\mathfrak b])_P
	= 
	\bigcup_{p} \{p\} \times M^+(X^*,\upiota_X,[\mathfrak b])_p.
\end{equation*}
\begin{thm}[Compactness: family Version]
	Suppose that the families $M(X^*,\upiota_X,[\mathfrak b])_P$ are regular for all $[\mathfrak b]$.
	Then for each $[\mathfrak b]$ the family of moduli spaces is proper over $P$;
	that is, the map 
\[
	M_z^+(X^*,\upiota_X,[\mathfrak b])_P \to P
\]
is proper.
Moreover, this parametrized moduli space is non-empty for only finitely many components $z \in \pi_0(\mathcal B^{\sigma}_k(X,\upiota_X,[\mathfrak b]))$.
\end{thm}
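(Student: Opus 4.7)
The plan is to reduce to the unparametrized compactness theorem by slicing along convergent sequences of parameters, using the fact that the family of perturbations $\mathfrak{p}^P$ and the family of metrics $g^P$ vary smoothly in $p$, hence uniformly over any compact subset $K \subset P$. For properness, I would take a compact $K \subset P$ and a sequence $(p_n, [\upgamma_n, \check{\boldsymbol{\upgamma}}_n])$ in the preimage of $K$ under the projection map. Since $K$ is compact, we may pass to a subsequence with $p_n \to p_\infty \in K$. The smooth dependence on $p$ ensures that the perturbed Seiberg-Witten maps $\mathfrak{F}^\sigma_{\mathfrak{p}^{p_n}}$ converge to $\mathfrak{F}^\sigma_{\mathfrak{p}^{p_\infty}}$ in every $C^\ell$-norm on compact sets, and similarly for the cylinder-end perturbations on $Z$.

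The next step is to apply the local compactness result Theorem~\ref{thm:perturbed_local_compactness_blowup} on every finite sub-cylinder of the cylindrical ends, and the analogous interior compactness on the compact piece $X$. This yields, after gauge transformations in $\mathcal{G}_{k+1}(X^*, \upiota_{X^*})$ and possibly translating components of the broken trajectory on the ends, a limiting broken $X$-trajectory $([\upgamma_\infty], [\check{\boldsymbol{\upgamma}}_\infty])$ over the parameter $p_\infty$, in the same topology used to define $M^+_z(X^*,\upiota_X,[\mathfrak{b}])_P$. The only obstacle to convergence in $M^+_z(X^*,\upiota_X,[\mathfrak{b}])_P$ is energy escaping to the cylindrical end, which is handled precisely by allowing broken trajectories; the non-family argument of the preceding compactness theorem then carries over verbatim once the parameter has been pinned down.

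For the finiteness of non-empty components $z$, the key input is a uniform energy bound. Since $K$ (or all of $P$ when it is compact) supports a smooth family of perturbations $\mathfrak{p}^p$, the perturbed Chern-Simons-Dirac functional $\pertL^p$ depends smoothly on $p$, so its values at the finitely many critical points on $Y$ (which are the same for all $p$, since the three-dimensional perturbation $\mathfrak{q}$ was fixed) vary in a uniformly bounded range. The analytic energy $\mathcal{E}^{\text{an}}_{\mathfrak{q}}$ of any element of $M_z(X^*,\upiota_X,[\mathfrak{b}])_p$ is controlled by $\pertL^p(\check{\upgamma}(0)) - \pertL(\mathfrak{b})$ together with the contribution from $X$, both of which are uniformly bounded as $p$ ranges over $K$. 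The topological energy is determined by $z$, and since the relative grading $\gr_z$ of Definition~\ref{defn:Xgrade} differs from $\mathcal{E}^{\text{top}}$ by bounded terms, only finitely many $z$ can have the moduli space non-empty within a bounded energy window. When $P$ itself is not compact one still obtains finiteness over each compact subset, which is what is needed for the properness conclusion.

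The main obstacle will be bookkeeping in the boundary-obstructed strata: the gluing structures of Section~\ref{sec:Gluing} are used to show that the compactification $M^+_z(X^*,\upiota_X,[\mathfrak{b}])_P$ is genuinely closed under the limits just constructed (including the codimension-one $\delta$-structure behaviour), and that the parametrized regularity hypothesis ensures no pathological ``hidden'' strata appear through fibrewise degeneration. Once the unparametrized compactness theorem is in hand together with uniform bounds in $p$ over compacta, the family version follows by the above slicing argument; the delicate point is simply verifying that all constants in the local-compactness estimates (the $C_0$ in \eqref{eqn:C0}, the exponential decay rates near critical points, and the Lipschitz bounds on $\alpha$) can be chosen uniformly for $p$ in a compact set, which is automatic from smoothness of the family.
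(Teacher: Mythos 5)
Your high-level slicing strategy for properness (pass to a convergent subsequence of parameters, use smooth dependence of the family of perturbations and metrics to get uniform local estimates, then quote the non-family compactness argument to produce a limiting broken $X$-trajectory over $p_\infty$) matches what the paper implicitly relies on from the Kronheimer--Mrowka treatment, and is correct.

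The gap is in the finiteness argument for the components $z$. You write that the analytic energy is controlled by $\pertL^p(\check{\upgamma}(0)) - \pertL(\mathfrak{b})$ plus the contribution from $X$, both ``uniformly bounded'' — but $\pertL^p(\check{\upgamma}(0))$ is evaluated at the restriction of the solution to $\{0\}\times Y$, which is not a priori bounded; what you actually need is a bound depending only on $z$, the metric, and the perturbation, independent of the solution itself. That is the content of the Weitzenb\"ock-type a priori estimate: the maximum principle gives a pointwise bound on $|\Phi|^2$ in terms of the scalar curvature of $X$ and the perturbation, which then bounds $\|F_{A^t}\|_{L^2}$, and it is this curvature bound that caps the topological energy $\mathcal E^{\mathrm{top}}(z)$ from above (the lower bound being automatic). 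You do not invoke it, and your substitute — that $\gr_z$ differs from $\mathcal E^{\mathrm{top}}(z)$ by bounded terms — is not right: both are quadratic in $z$ through $c_1(\mathfrak s_X)^2$ but with incompatible coefficients (the index formula in the real setting has a $\tfrac18 c_1^2$ term, the energy a $\tfrac14 c_1^2$ term, so their difference grows with $z$). Even if it were right, you would still need an a priori upper bound on $\gr_z$ for non-empty regular moduli spaces, and ``dimension nonnegative'' only gives the lower bound, so the reasoning is circular. For the family version one then also has to check that the constants appearing in the Weitzenb\"ock estimate (the scalar-curvature minimum and the $C^0$ norms of $\hat{\mathfrak p}^{0,p}$) are uniform over compacta in $P$, which does follow from smoothness of the family, but this is the step that actually delivers the finiteness conclusion rather than the uniform bounds on the gluing constants that you emphasize.
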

The space $M_z^+(X^*,\upiota_X,[\mathfrak b_0])_P$ is stratified by subspaces of the form
\begin{equation*}
	M_{z_0}(X^*,\upiota_X,[\mathfrak b_0])_P
	\times
	\prod_{\alpha}
	\check M^+_{z^{\alpha}}([\mathfrak b^{\alpha}_0],[\mathfrak b^{\alpha}]),
\end{equation*}
where each of the factors is stratified by manifolds.
If $M_z(X^*,\upiota_X,[\mathfrak b])_P$ contains irreducible trajectories, then the top stratum consists of the irreducible parts.


\subsection{Gluing}
\hfill \break
The  boundary-obstructed codimension-1 strata over general 4-manifold, 
unlike purely cylindrical case, 
can have corank higher than $1$.
We modify the definition $\delta$-structure in Definition~\ref{defn:delta1str} to incorporate this new phenomenon.
\begin{defn}
\label{defn:delta2str}
	Let $N^d$ be a $d$-dimensional space stratified by manifolds and $M^{d-1} \subset N$ a union of components of the $(d-1)$-dimensional stratum.
	Then $N$ has a \emph{codimension-$c$ $\delta$-structure} along $M^{d-1}$ if $M^{d-1}$ is smooth and we have the following additional data:
	\begin{itemize}
		\item an open set $W \subset N$ containing $M^{d-1}$,
		\item an embedding $j:W \to EW$, and
		\item a map
		\begin{equation*}
			\mathbf{S} = (S_1,S_2): EW \to (0,\infty]^{c+1}
		\end{equation*}
	\end{itemize}
	satisfying the following:
	\begin{enumerate}[leftmargin=*]
		\item[(i)] the map $\mathbf{S}$ is a topological submersion along the fibre over $(\infty,\infty)$;
		\item[(ii)] the fibre of $\mathbf S$ over $(\infty,\infty)$ is $j(M^{d-1})$;
		\item[(iii)] the subset $j(W) \subset EW$ is the zero set of a map $\delta:EW \to \Pi^c$, where $\Pi^c \subset \reals^{c+1}$ is the hyperplane $\Pi^c = \{\delta \in \reals^{c+1}| \sum \delta_i = 0\}$;
		\item[(iv)] if $e \in EW$ has $S_{i_0}(e) = \infty$ for some $i_0$, then $\delta_{i_0}(e) \le 0$, with equality only if $S_{i}(e) = \infty$ for all $i$;
		\item[(v)] on the subset of $EW$ where all $S_i$ are both finite, $\delta$ is smooth and transverse to zero.
	\end{enumerate}
\end{defn}
\begin{exmp}
	Let $EW \subset \reals^{c+1}$ be the set $\{x|x_i \ge 0\text{ for all }i\}$, let $S_i = 1/x_i$ as a function from $EW$ to $(0,\infty]$, and let 
	\begin{equation*}
		\delta_i = cx_i = \sum_{j \ne i}x_i.
	\end{equation*}
	The zero locus of $\delta$ is the half-line $W \subset EW$ where all $x_i$ are equal.
\end{exmp}
\begin{thm}
	Suppose the moduli space $M_z(X^*,\upiota_X,[\mathfrak b])_P$ is $d$-dimensional and contains irreducible trajectories, so that $M^+(X,\upiota_X,[\mathfrak b])$ is a $d$-dimensional space stratified by manifolds having $M_z(X^*,\upiota_X,[\mathfrak b])_P$ as its top stratum.
	Let $M' \subset M_z^+(X^*,\upiota_X,[\mathfrak b])_P$ be any component of the codimension-1 stratum. Then along $M'$, the moduli space $M^+(X^*,\upiota_X,[\mathfrak b])_P$ either is a $C^0$-manifold with boundary, or has a codimension-$c$ $\delta$-structure.
\end{thm}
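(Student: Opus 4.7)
The plan is to follow the strategy of the corresponding cylindrical theorem, but now allowing the boundary-obstructed case of arbitrary corank. First I would classify the possible shapes of a codimension-$1$ stratum $M' \subset M^+_z(X^*,\upiota_X,[\mathfrak b])_P$. Using the product stratification, $M'$ has the form
\[
M' = M'_0 \times \prod_{\alpha} \prod_{i=1}^{n^{\alpha}} \check M'^{\,\alpha}_i,
\]
with $M'_0$ a stratum of $M_{z_0}(X^*,\upiota_X,[\mathfrak b_0])_P$ and each $\check M'^{\,\alpha}_i$ a stratum of $\check M^+_{z^{\alpha}_i}([\mathfrak b^{\alpha}_{i-1}],[\mathfrak b^{\alpha}_i])$. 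Matching the dimension count from Definition~\ref{defn:Xgrade} (and the corank correction) against $d-1$, the only possibilities are that exactly one factor has a drop of $1$ in dimension. This drop is either (a) a non-boundary-obstructed breaking of a single cylindrical trajectory, or (b) an intersection with the reducible locus, or (c) a configuration in which a boundary-obstructed four-dimensional piece is paired with broken cylindrical ends.

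In case (a) and (b) I would invoke the standard gluing package. The boundary-unobstructed gluing on cylinders (Theorem~\ref{thm:gauge_glue_crit}) together with the finite-end fibre-product description
$\rho: M(X^*,\mathfrak s_X,\uptau_X;[\mathfrak b]) \to \mathrm{Fib}(R_+,R_-)$
provides, for each nearby configuration, a unique gluing parameter $S \in (0,\infty]$ and identifies a neighborhood of $M'$ in $M^+$ with $M' \times [0,\epsilon)$, giving a $C^0$-manifold with boundary. The $\mathcal{G}_{k+1}$-equivariance inherited from the $\uptau$-invariant construction, combined with Proposition~\ref{prop:mfld_boundary_res_fred}, ensures the needed transversality of the restriction maps along the matching locus, just as in the cylindrical case.

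The main obstacle is case (c). Suppose $[\mathfrak b_0]$ has $c+1$ boundary-unstable components $\alpha_0,\dots,\alpha_c$ (so that $[\upgamma_0]$ on $X^*$ is boundary-obstructed of corank $c$), and we glue a cylindrical trajectory $[\check\upgamma^{\alpha_j}]$ to each such end. Each gluing introduces a neck-length parameter $S_j \in (0,\infty]$, producing the map $\mathbf{S} : EW \to (0,\infty]^{c+1}$ required in Definition~\ref{defn:delta2str}. However, the cokernel of the linearized operator $\mathcal Q_{\upgamma_0}^{\sigma}$ is precisely $c$-dimensional (this is the $\uptau$-invariant analogue of Lemma~\ref{lem:dim_cases}, since each boundary-unstable end contributes a one-dimensional cokernel to the normal component $\mathcal Q^{\nu}$, subject to a single overall constraint coming from the global sign of $\Lambda_{\mathfrak q}$). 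I would model the obstruction by a map
\[
\delta = (\delta_0,\dots,\delta_c) : EW \to \Pi^c \subset \reals^{c+1},
\]
where each $\delta_j$ is essentially $\Lambda_{\mathfrak q}$ evaluated along the glued $s$-component at the $j$-th end (with sign chosen so that $\delta_j > 0$ corresponds to the gluing being pushed into the irreducible region), and the hyperplane constraint $\sum \delta_j = 0$ reflects the one-dimensional overcounting when all necks become infinite simultaneously. The key properties (i)--(v) of Definition~\ref{defn:delta2str} would then be verified: (i) from the submersion property of the unobstructed pieces of the gluing on the cylindrical ends via Theorem~\ref{thm:gauge_glue_crit}; (ii) tautologically; (iii) from the implicit function theorem applied modulo the obstruction, after absorbing the unobstructed directions using the fibre product; (iv) by tracing signs at infinity through the dependence of $\Lambda_{\mathfrak q}$ on the boundary-unstable eigenvalue, exactly as in the codimension-one cylindrical case; and (v) by the family transversality and Fredholmness established in Proposition~\ref{prop:mfld_boundary_res_fred}. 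The technical heart of the work is the signed analysis of $\delta$ at the cusps $S_j = \infty$, which adapts the argument in \cite[Section~19.5--19.6]{KMbook2007} to the $\uptau$-invariant setting, using Lemma~\ref{lem:ext_Hess_real_eigen} to guarantee hyperbolicity of the three-dimensional extended Hessian at the reducible critical points and hence invertibility of the relevant linearized operators on the weighted spaces.
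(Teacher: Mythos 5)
Your proposal follows essentially the same route as the paper, which simply cites the $\uptau$-free argument of Kronheimer--Mrowka (Theorem~24.7.2 of \cite{KMbook2007}) and asserts that it adapts without essential change. Your classification of the codimension-one strata into boundary-unobstructed breakings, reducible boundary, and boundary-obstructed four-manifold piece, together with the gluing package and the construction of the $\delta$-map from the $s$-equations at the boundary-unstable ends, is the content of that proof. Two points worth tightening: (1) in case (c) you should also account for codimension-one strata whose boundary-obstruction comes purely from a middle cylindrical factor of corank one, not just from the four-manifold piece --- these give a codimension-$1$ $\delta$-structure and slot into the same framework; (2) the $\delta_j$ in Kronheimer--Mrowka are not literally $\Lambda_{\mathfrak q}$ but are defined via the discrepancy of the $s$-component of pregluings across matching regions, with the hyperplane constraint $\sum\delta_j=0$ arising from the one-parameter rescaling freedom of the reducible normal direction; your description is a correct heuristic but would need to be replaced by the precise matching construction. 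Neither issue represents a gap in strategy, only in the level of detail, and both are resolved by transcribing the argument from \cite[Sections~19.4--19.5 and 24.7]{KMbook2007} verbatim with $\uptau$-invariant subspaces in place of the full configuration spaces, as the paper intends.
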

\begin{proof}
	Adaption of the proof of \cite[Theorem~24.7.2]{KMbook2007} requires no essential changes.
\end{proof}

\section{Cobordism Maps, Invariance, and Module Structures}
\label{sec:cob_module}
Our treatment follows \cite[Section~23-25]{KMbook2007}.
The definition of a cobordism map involves all real spin\textsuperscript{c} structures at once, so let us set up the notations.
Let $(Y,\upiota)$ be a 3-manifold with involution.
Consider the product of large Banach spaces of tame perturbations, over all isomorphism classes of real spin\textsuperscript{c} structures
\begin{equation*}
	\mathcal P(Y,\upiota) = \prod_{\mathfrak s,\uptau} \mathcal P(Y,\upiota;\mathfrak s, \uptau).
\end{equation*}
In addition to the usual transversality assumptions in the definition of admissibility, we assume some uniformity in the estimates.
\begin{defn}
	An element $\mathfrak q = \{\mathfrak q_{\mathfrak s,\uptau}\}$ in $\mathcal P(Y,\upiota)$	if all components of $\mathfrak q$ are admissible, and the bound $m_2$ in Definition~\ref{defn:tame} is uniform across all $(\mathfrak s,\uptau)$.
\end{defn}
Given an admissible perturbation $\mathfrak q$, we write $\HMR^{\circ}(Y,\upiota, \mathfrak q)$ for the direct sum over all isomorphism classes of real spin\textsuperscript{c} structures
\begin{equation*}
	\HMR^{\circ}_*(Y,\upiota,g,\mathfrak q) =
	\bigoplus_{\mathfrak s,\uptau} \HMR^{\circ}_*(Y,\upiota,g,\mathfrak q; \mathfrak s, \uptau).
\end{equation*}
Same for the negative completion $\HMR^{\circ}_{\bullet}(Y,\upiota,g,\mathfrak q) $.
The grading set of $\HMR^{\circ}(Y,\upiota, \mathfrak q)$ is the union over all isomorphism classes of real spin\textsuperscript{c} structures
\begin{equation*}
	\mathbb J(Y,\upiota, g, \mathfrak q) = 
	\coprod_{\mathfrak s,\uptau} \mathbb J(Y,\upiota, g, \mathfrak q; \mathfrak s,\uptau).
\end{equation*}
We formulate $\HMR^{\circ}$ as functor to the category $\textsc{group}$ of abelian groups.
\begin{defn}
	Let $\textsc{cob}_{\mathbb Z/2}$ be the category whose objects are pairs $(Y, \upiota)$, where $Y$ is a compact, oriented, closed 3-manifold and $\upiota: Y \to Y$ is an involution with codimension-1 fixed points.
	A morphism in $\textsc{cob}_{\mathbb Z/2}$ is a connected cobordism $W$, equipped with an involution $\iota_W$ that induces the respective involutions on the boundary 3-manifolds.
	Two objects $(Y_1, \upiota_1)$ and $(Y_2, \upiota_2)$ in $\textsc{cob}_{\mathbb Z/2}$ are isomorphic if the underlying $3$-manifolds admit a diffeomorphism that intertwines the two involutions.
\end{defn}
\begin{defn}
	Let $\widetilde{\textsc{cob}}_{\mathbb Z/2}$ be the category whose objects are quadruples $(Y, \upiota; g, \mathfrak q)$, where $(Y,\upiota)$ is an object in $\textsc{cob}_{\mathbb Z/2}$, $g$ is an $\upiota$-invariant Riemannian metric, and $\mathfrak q$ is an admissible perturbation.
	A morphism in $\widetilde{\textsc{cob}}_{\mathbb Z/2}$ is a morphism in $\textsc{cob}_{\mathbb Z/2}$.
	Two objects $(Y_1, \upiota_1; g_1, \mathfrak q_1)$ and $(Y_2, \upiota_2; g_2, \mathfrak q_2)$ in $\widetilde{\textsc{cob}}_{\mathbb Z/2}$ are isomorphic if the underlying \emph{real} $3$-manifolds admit a diffeomorphic that intertwines the two involutions.
\end{defn}
\begin{thm}
\label{thm:HMRfunctor_tilde_COB}
	The real monopole Floer homologies define covariant functors
	\begin{equation*}
		\HMR^{\circ}_{\bullet}: \widetilde{\textsc{cob}}_{\mathbb Z/2} \to \textsc{group},
	\end{equation*}
	The real monopole Floer cohomology defines contravariant functors
	\begin{equation*}
		\HMR^{\circ,\bullet}: \widetilde{\textsc{cob}}_{\mathbb Z/2} \to \textsc{group}.
	\end{equation*}
\end{thm}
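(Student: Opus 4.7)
The plan is to construct cobordism maps by counting $\upiota_W$-equivariant Seiberg-Witten solutions on $W$, verify the chain-map property using boundary descriptions of $1$-dimensional moduli spaces, and establish the composition law by a neck-stretching argument. Throughout we work with the machinery of Section~\ref{sec:mod_mfld_boundary}, and the negative completion $\HMR^{\circ}_{\bullet}$ is essential for accommodating the infinite sum over real spin\textsuperscript{c} structures.

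First I would fix a morphism $W:(Y_-,\upiota_-;g_-,\mathfrak q_-) \to (Y_+,\upiota_+;g_+,\mathfrak q_+)$ in $\widetilde{\textsc{cob}}_{\mathbb Z/2}$. Choose a $\upiota_W$-invariant metric $g_W$ that is cylindrical near the boundary and interpolates between $g_\pm$, and extend the admissible perturbations $\mathfrak q_\pm$ to a family perturbation $\mathfrak p_W$ on $W^*$ supported on collars and one interior bump region, chosen (as in Section~\ref{sec:mod_mfld_boundary}) so that all moduli spaces $M_z(W^*, \mathfrak s_W,\uptau_W;[\mathfrak a],[\mathfrak b])$ are regular for every real spin\textsuperscript{c} structure $(\mathfrak s_W,\uptau_W)$ and every pair of critical points. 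For each real spin\textsuperscript{c} structure and each pair of critical points, define matrix coefficients by signed counts (mod $2$) of zero-dimensional moduli spaces, decomposed into the pieces $m^o_o$, $m^o_s$, $m^u_o$, $m^u_s$, $\bar m^s_s$, $\bar m^u_s$, $\bar m^s_u$, $\bar m^u_u$ analogous to the $\partial$-pieces in Section~\ref{sec:floer}. Assembled as in \cite[Section~25]{KMbook2007}, these define $\mathbb F_2$-linear maps
\begin{equation*}
\widecheck{m}(W,\upiota_W;\mathfrak s_W,\uptau_W),\quad
\widehat{m}(W,\upiota_W;\mathfrak s_W,\uptau_W),\quad
\overline{m}(W,\upiota_W;\mathfrak s_W,\uptau_W),
\end{equation*}
graded by $\gr_z(W;[\mathfrak b])$ of Definition~\ref{defn:Xgrade}. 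Since only finitely many $(\mathfrak s_W,\uptau_W)$ give nontrivial contribution in each bounded range of gradings (compactness of parametrized moduli spaces plus the index formula), the sum over real spin\textsuperscript{c} structures converges in the negative completion, yielding $\HMR^{\circ}_{\bullet}(W,\upiota_W)$.

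Next I would establish the chain map property. The squares $\check m \circ \check\del = \check\del \circ \check m$, and analogously for $\hat m$ and $\bar m$, come from enumerating the codimension-$1$ strata of compactified $1$-dimensional parametrized moduli spaces $M^+_z(W^*;[\mathfrak a],[\mathfrak b])$, and applying the Stokes-type relation \eqref{eqn:stokes}. The boundary strata come in the usual three types: trajectory-breaking at the incoming end, trajectory-breaking at the outgoing end, and boundary-obstructed strata with a codimension-$c$ $\delta$-structure (Definition~\ref{defn:delta2str}). Pairing each with the appropriate \v Cech cochain shows the total boundary multiplicities vanish mod $2$, exactly as in \cite[Section~25.3]{KMbook2007}. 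The boundary-obstructed pieces are precisely what forces the presence of the $\bar\partial$-correction terms, just as in the definition of the Floer differential.

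The main obstacle is the composition law. Given composable morphisms $W_1:Y_0\to Y_1$ and $W_2:Y_1\to Y_2$, form the family of metrics $g_T$ on $W_2 \circ W_1$ obtained by inserting a neck $[-T,T]\times Y_1$ of length $2T$, and consider the $1$-parameter family of moduli spaces over $T\in [0,\infty]$. At $T=\infty$ the moduli space factors through the fibre product over $\mathcal{B}^\sigma(Y_1)$, yielding the composition $m(W_2)\circ m(W_1)$; at $T=0$ it yields $m(W_2\cup W_1)$. The parametrized moduli space is a cobordism between these counts, and I would analyze its boundary strata, using the gluing results of Section~\ref{sec:Gluing} at both finite broken trajectories and at $T=\infty$, to conclude the chain-homotopy identity. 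The delicate point is again the boundary-obstructed corners, where the codimension-$c$ $\delta$-structures must be used together with the Stokes argument to get the correct signs (or rather, $\mathbb F_2$-multiplicities) for each type of corner, so that the final identity holds at the level of homology once the chain-homotopies coming from the family are taken into account. The identity $\HMR^{\circ}_{\bullet}(\text{id}_{Y})=\text{id}$ follows by a standard argument: the identity cobordism $[0,1]\times Y$ (with product involution, metric, perturbation) is isotopic through the neck-stretching family to one for which the cobordism map visibly equals the identity on chain level, using translation invariance of trajectories on $\mathbb R\times Y$. Contravariant functoriality of $\HMR^{\circ,\bullet}$ then follows formally by dualizing.
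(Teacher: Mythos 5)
Your proposal matches the paper's overall structure closely: define the cobordism map by counting solutions in regular moduli spaces on $W^*$, verify the chain-map identities by examining codimension-$1$ strata of compactified $1$-dimensional moduli spaces (including the boundary-obstructed pieces with $\delta$-structures), establish the composition law by neck-stretching over $T\in[0,\infty]$ and reading off the chain homotopy from the parametrized family, and obtain the identity for the trivial cylinder from translation-invariance. This is precisely the decomposition the paper uses, reducing Theorem~\ref{thm:HMRfunctor_tilde_COB} to Proposition~\ref{prop:trivialcob} and Proposition~\ref{prop:compositionlaw} after the chain maps are constructed.

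There is, however, one step you skip that the paper is careful to include and that is logically necessary: well-definedness of the map $\HMR^{\circ}_{\bullet}(W,\upiota_W)$. You write ``choose a $\upiota_W$-invariant metric $g_W$\dots\ and extend the admissible perturbations to a family perturbation $\mathfrak p_W$,'' but the morphisms of $\widetilde{\textsc{cob}}_{\mathbb Z/2}$ are not decorated by these interior choices, so before the composition law can even be stated for the induced maps you must show the chain-homotopy class of $\check m$, $\hat m$, $\bar m$ is independent of $(g_W,\mathfrak p_W)$. The paper handles this by interpolating with a one-parameter family $(g(t),\mathfrak p(t))$, $t\in[0,1]$, forming the family moduli space $M_z([\mathfrak a],W^*,\upiota_W,[\mathfrak b])_{[0,1]}$, and reading off a chain homotopy $\check K$ from the boundary strata lying over interior parameters. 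The techniques are the same ones you already invoke for the composition law (family moduli spaces, Stokes, $\delta$-structures at boundary-obstructed corners), so this is not a failure of approach but a missing piece of the argument. Note also that your argument for the identity cobordism (isotopy through a neck-stretching family) implicitly leans on exactly this invariance; the paper's argument is shorter and avoids this circularity by directly observing that a $0$-dimensional $M_z([\mathfrak a],W^*,\upiota_W,[\mathfrak b])$ on the cylinder consists only of translation-invariant solutions, which forces $z$ trivial and $[\mathfrak a]=[\mathfrak b]$.
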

\begin{proof}
	The theorem will be divided into Proposition~\ref{prop:trivialcob} and Proposition~\ref{prop:compositionlaw}.
\end{proof}
Since the morphisms $\widetilde{\textsc{cob}}_{\mathbb Z/2}$ is not decorated with any auxiliary data, Theorem~\ref{thm:HMRfunctor_tilde_COB} contains the invariance of $\HMR^{\circ}$ with respect to metrics and perturbations.
In fact, the cylinder cobordism $[0,1] \times Y$ provides a canonical isomorphism.
\begin{cor}
	If two objects $(Y, \upiota; g_1, \mathfrak q_1)$ and $(Y, \upiota; g_2, \mathfrak q_2)$ in $\widetilde{\textsc{cob}}_{\mathbb Z/2}$ have the same underlying $\mathbb Z_2$ 3-manifold, then  $\HMR^{\circ}(Y, \upiota; g_1, \mathfrak q_1)$ and $\HMR^{\circ}(Y, \upiota; g_2, \mathfrak q_2)$ are canonically isomorphic.
\end{cor}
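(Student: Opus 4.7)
The plan is to realize the isomorphism as the cobordism map associated to the cylinder $W=[0,1]\times Y$ equipped with the product involution $\upiota_W = 1_{[0,1]}\times \upiota$, viewed as a morphism in $\widetilde{\textsc{cob}}_{\mathbb Z/2}$ from $(Y,\upiota;g_1,\mathfrak q_1)$ to $(Y,\upiota;g_2,\mathfrak q_2)$. Since morphisms in $\widetilde{\textsc{cob}}_{\mathbb Z/2}$ carry no auxiliary data (only the underlying real cobordism), Theorem~\ref{thm:HMRfunctor_tilde_COB} produces a well-defined map
\begin{equation*}
	\HMR^{\circ}(W,\upiota_W):\HMR^{\circ}_{\bullet}(Y,\upiota;g_1,\mathfrak q_1)\longrightarrow \HMR^{\circ}_{\bullet}(Y,\upiota;g_2,\mathfrak q_2),
\end{equation*}
which does not depend on the particular choice of interpolating metric and perturbation on $W$ used to compute it.

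First I would show the construction is unambiguous at the chain level up to chain homotopy. Pick any $\upiota_W$-invariant Riemannian metric $g_W$ on $W$ that is cylindrical near the boundary, agreeing with $g_i$ on the respective end, and an $\upiota_W$-invariant perturbation $\hat{\mathfrak p}$ on the collar that interpolates between $\mathfrak q_1$ and $\mathfrak q_2$ via the cut-off construction of Section~\ref{sec:mod_mfld_boundary}. Counting regular moduli spaces $M_z(W^*,\upiota_W;[\mathfrak a],[\mathfrak b])_0$ defines a chain map at the level of $\check C,\hat C,\bar C$, in the usual way using the codimension-1 $\delta$-structures of the compactified spaces. Independence from the choice of $(g_W,\hat{\mathfrak p})$ follows from a parametrized moduli space argument over the interval $P=[0,1]$ of interpolating data; this produces the requisite chain homotopy, exactly as in the ordinary case, with the one change that all configuration spaces, perturbations and gauge transformations are taken to be $\upiota_W$-invariant.

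Next I would verify that $\HMR^{\circ}(W,\upiota_W)$ is an isomorphism. Reversing the cylinder gives a morphism $\bar W:(Y,\upiota;g_2,\mathfrak q_2)\to (Y,\upiota;g_1,\mathfrak q_1)$ in $\widetilde{\textsc{cob}}_{\mathbb Z/2}$, whose composition with $W$ is isomorphic in $\widetilde{\textsc{cob}}_{\mathbb Z/2}$ to the cylinder $[0,2]\times Y$ with any interpolation back to $(g_1,\mathfrak q_1)$ at both ends. The composition law (the other half of Theorem~\ref{thm:HMRfunctor_tilde_COB}) identifies the composed map with $\HMR^{\circ}([0,2]\times Y,1\times \upiota)$, and by the previous paragraph this can be computed using the constant family $(g_1,\mathfrak q_1)$, where a standard broken-trajectory stretching argument identifies it with the identity: for the constant family the only rigid solutions on $W^*$ are pulled back from critical points on $Y$, with trivial index contributions on both ends. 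The same argument applied to $W\circ \bar W$ gives the other composition, proving that $\HMR^{\circ}(W,\upiota_W)$ is a two-sided inverse.

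Finally, canonicity is already built into the statement: because the isomorphism coincides with the morphism assigned by the functor $\HMR^{\circ}_{\bullet}$ to the cylinder, and every two cylindrical cobordisms $[0,1]\times Y$ carrying the product involution are isomorphic in $\widetilde{\textsc{cob}}_{\mathbb Z/2}$, there is no residual ambiguity. I expect the main technical obstacle to be checking independence of the chain map on the interpolating data in the reducible/boundary-obstructed strata, where the parametrized moduli spaces over $P$ may acquire higher-corank boundary-obstructed strata; this is handled as in Section~\ref{sec:mod_mfld_boundary} by using the codimension-$c$ $\delta$-structures of Definition~\ref{defn:delta2str} and the Stokes-type identity of \eqref{eqn:stokes} to cancel all boundary contributions modulo two.
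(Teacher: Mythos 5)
Your proposal is correct and follows exactly the paper's route: the corollary is derived from Theorem~\ref{thm:HMRfunctor_tilde_COB} (whose content is Propositions~\ref{prop:trivialcob} and~\ref{prop:compositionlaw}) by considering the cylinder cobordism $[0,1]\times Y$, with $\bar W\circ W$ and $W\circ\bar W$ reducing to the trivial cylindrical cobordism and hence the identity. The paper leaves this unpacking implicit, but the argument you spell out is precisely what is intended.
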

We can define the following decorated category $\widetilde{\textsc{link}}_{S^3}$ of links in $S^3$.
The objects are triples $(K, g, \mathfrak q)$ where $g$ is an orbifold metric on $S^3$ and $\mathfrak q$ is an adimssible perturbation on the double branched cover of $S^3$ along $K$.
The morphisms are unoriented cobordisms of links, i.e. properly embedded surfaces in $I \times S^3$ whose boundary are the the respective links.
The isomorphisms are ambient diffeomorphisms of $S^3$ preserving the links.
Then $\HMR^{\circ}$ of links can be thought of as a composite of functors
\begin{equation*}
	\begin{tikzcd}
			\widetilde{\textsc{link}}_{S^3} \ar[d] \ar[rd]\\
			\widetilde{\textsc{cob}}_{\mathbb Z/2} \ar[r] 
			&  \textsc{group}
	\end{tikzcd}
\end{equation*}
where the vertical map takes a link to its double branched cover, and link cobordisms to their double branched covers.

\subsection{Moduli space over a cobordism}
\hfill \break
We switch the point of view from manifolds with boundary to corbordisms and rephrase the results of the preceding sections.
Let $(W,\upiota_W)$ be a $\mathbb Z_2$-cobordism between a pair $(Y_{\pm},\upiota_{\pm})$ of non-empty connected $\mathbb Z_2$-3-manifolds.
Here, we use $\pm$-sign to suggest that $Y_-$ is the incoming manifold, and $Y_+$ is the outgoing manifold.
The boundary of $W$ is equipped with the orientation convention
\begin{equation*}
	\del W = \overline{Y_-} \sqcup Y_+,
\end{equation*}
where the overline indicate reversal of orientations.
Equip $W$ with a real spin\textsuperscript{c} structure $(\mathfrak s_W, \uptau_W)$, and denote $(\mathfrak s_{\pm}, \uptau_{\pm})$ the restriction of the real spin\textsuperscript{c} structure.
We denote the disjoint union of equivalence classes of configurations, over the isomorphism classes of real spin\textsuperscript{c} structures, as
\begin{equation*}
	\boldsymbol{\mathcal B}^{\sigma}(M, \upiota) 
	= \coprod_{\mathfrak s,\uptau}
	\mathcal B^{\sigma}(M, \upiota;\mathfrak s, \uptau),
\end{equation*}
where $M$ is either $Y_{\pm}$ or $W$, and $\upiota$ is either $\upiota_{\pm}$ or $\upiota_W$.

\begin{defn}
A $W$-path $\xi$ from $[\mathfrak a_0]$ to $[\mathfrak a_1]$ is an element $[\upgamma]$ in $\boldsymbol{\mathcal B}^{\sigma}(W)$ which under the  partially defined restriction map
\[
	r: \boldsymbol{\mathcal B}^{\sigma}(W,\upiota_W)
	\to 
	\boldsymbol{\mathcal B}^{\sigma}(Y_-,\upiota_-)
	\times
	\boldsymbol{\mathcal B}^{\sigma}(Y_+,\upiota_+)
\] 
is the given pair: $r([\upgamma]) = ([\mathfrak a_0],[\mathfrak a_1])$. Two $W$-paths are homotopic if they belong to the same path component of the fiber $r^{-1}([\mathfrak a_0],[\mathfrak a_1])$. We write
\begin{equation*}
	\boldsymbol{\pi}([\mathfrak a_0], W, \upiota_W, [\mathfrak a_1])
\end{equation*}
for the set of homotopy classes of $W$-paths. 
\end{defn}
Adjoining the cylindrical ends to the cobordism $W$, we obtain
\begin{equation*}
	W^* = (-\infty,0]\times Y_- \cup W \cup [0,\infty) \times Y_+.
\end{equation*}
Fix admissible perturbations $\mathfrak q_{\pm}$ on $Y_{\pm}$. 
Suppose $[\mathfrak a] \in \mathcal B^{\sigma}_{k-1/2}(Y_-)$ and $[\mathfrak b] \in \mathcal B^{\sigma}_{k-1/2}(Y_+)$ are critical points for perturbed  CSD functionals.
We denote moduli space by
\begin{equation*}
	M([\mathfrak a],(W^*, \upiota_W), [\mathfrak b]) \subset \boldsymbol{\mathcal B}^{\sigma}_{k,\loc}(W^*)
	= \bigcup_{\mathfrak s_W, \uptau_W} B^{\sigma}_{k,\loc}(W^*,\mathfrak s_W, \uptau_W).
\end{equation*}
Decomposing with respect to homotopy classes of $W$-paths, we write
\begin{equation*}
	\bigcup_{\mathfrak s_W, \uptau_W} M([\mathfrak a],W^*, \upiota_W ,\mathfrak s_W,[\mathfrak b]) =
	\bigcup_z M_z([\mathfrak a], (W^*,\upiota_W),[\mathfrak b]).
\end{equation*}

Similar to the cylinder case, the moduli space is boundary-obstructed (with corank-1) if $[\mathfrak a]$ is boundary-stable and $[\mathfrak b]$ is boundary-unstable.
Choose perturbation $\mathfrak p = (\mathfrak p_-,\mathfrak p_+)$ supported over the collars of $Y_{\pm}$, so that all moduli spaces are regular.

Let $\gr_z([\mathfrak a],(W,\upiota_W), [\mathfrak b])$ be as in Definition~\ref{defn:Xgrade}.
This grading agrees with the dimension of the moduli space, except in the boundary-obstructed case, when the dimension of the moduli spaces is larger by 1.
Let $M^+([\mathfrak a],(W^*,\upiota_W),[\mathfrak b])$ be the compactification consists of broken trajectories.
We write a typical element as $([\check\upgamma_-],[\upgamma_0],[\check\upgamma_+])$, where
\begin{equation*}
	[\check{\pmb{\upgamma}}_-] \in \check M^+([\mathfrak a],[\mathfrak a_0]), \quad
	[\check{\pmb{\upgamma}}_+] \in \check M^+([\mathfrak b_0],[\mathfrak b]), \quad
	[\upgamma_0] \in M([\mathfrak a_0], W^*,[\mathfrak b_0]).
\end{equation*}
In the case when $W$ is a cobordism, we describe the codimension-1 strata more explicitly.

\begin{prop}
\label{prop:W_cod1_strata}
	Suppose $M_z([\mathfrak a], (W^*,\upiota_W) , [\mathfrak b])$ contains irreducible solutions and has dimension $d$.
	Then $M_z([\mathfrak a], W^*,\upiota_W, [\mathfrak b])$ is a $d$-dimensional space stratified by manifolds, with top stratum the irreducible part of $M_z([\mathfrak a], W^*,\upiota_W, [\mathfrak b])$.
	The $(d-1)$-dimensional stratum in $M_z([\mathfrak a], W^*,\upiota_W, [\mathfrak b])$ consists of elements of the types:
\begin{align*}
	\check M_{-1} &\times M_0	\\
	M_0 &\times \check M_1\\
	\check M_{-2} \times &\check M_{-1} \times M_0\\
	\check M_{-1} \times &M_0 \times \check M_1\\
	M_0 \times &\check M_1 \times \check M_2,
\end{align*}
and finally
\begin{equation*}
	M_z^{\red}([\mathfrak a], W^*, \upiota_W, [\mathfrak b])
\end{equation*}
in the case that the moduli space contains both reducibles and irreducibles.
We use $M_0$ to denote a moduli space on $W^*$ and $\check M_{-n}$ and $\check M_n$ $(n > 0)$ to denote typical unparametrized moduli spaces on $Y_-$ and $Y_+$ (which changes from line to line).
In the strata with three factors, the middle factor is boundary-obstructed.
\end{prop}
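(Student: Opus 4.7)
The plan is to classify the codimension-1 strata by combining the gluing theorems of Section~\ref{sec:Gluing} with a dimension count based on the additivity of relative grading and the dimensional correction coming from corank. A broken $W$-trajectory representing a typical element of $M^+_z$ has the shape $([\check{\boldsymbol{\upgamma}}_-], [\upgamma_0], [\check{\boldsymbol{\upgamma}}_+])$, where $\check{\boldsymbol{\upgamma}}_{\pm}$ is an unparametrized broken trajectory on $Y_{\pm}$ with $k_{\pm}$ components and $[\upgamma_0] \in M_{z_0}([\mathfrak a_0], W^*, [\mathfrak b_0])$ is a cobordism trajectory. Using the formulas from Section~\ref{sec:modspace_trajectories} (dimension $= \gr - 1 + \epsilon$ for each cylinder factor) and Section~\ref{sec:mod_mfld_boundary} (dimension $= \gr + \epsilon_0$ for the $W^*$ factor, where $\epsilon_0 \in \{0,1\}$ is the corank), together with additivity of $\gr_z$ under concatenation, the stratum has total dimension
\begin{equation*}
	d - (k_- + k_+) + \epsilon,
\end{equation*}
where $\epsilon$ is the total number of boundary-obstructed components among all factors.

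Codimension-1 strata correspond to solutions of $(k_- + k_+) - \epsilon = 1$. The key constraint is that an obstructed factor can never be at the ``outside'' end of a broken trajectory: an obstructed cylinder component requires its left endpoint to be boundary-stable and its right endpoint to be boundary-unstable, and an obstructed $W^*$ component requires the same pattern at $([\mathfrak a_0], [\mathfrak b_0])$; but the outer endpoints $[\mathfrak a]$ and $[\mathfrak b]$ are fixed, and the $\check M^+$-compactifications include no broken trajectories starting from a boundary-stable point and ending at a boundary-unstable one without passing through interior strata. This forces any obstructed factor to be flanked on both sides, so $\epsilon$ factors require at least $\epsilon + 1$ total components; moreover, two adjacent obstructed factors would demand incompatible endpoint types, so $\epsilon \le 1$ at codimension 1. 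This restricts us to the two cases $(k_-+k_+, \epsilon) = (1,0)$ and $(2,1)$, yielding exactly the five product strata listed in the statement, with the middle factor in each three-factor product being the boundary-obstructed one.

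It remains to account for the reducible locus. When $M_z([\mathfrak a], W^*, [\mathfrak b])$ contains both reducible and irreducible solutions, the reducible stratum $M_z^{\red}$ appears as a codimension-1 subset of the top stratum: this mirrors the cylinder situation (Proposition~\ref{prop:class_regular_mod_space}, case $[\mathfrak a]$ boundary-unstable, $[\mathfrak b]$ boundary-stable) and follows from the fact that $s = 0$ cuts out a codimension-1 subspace of the blown-up configuration space, with the regularity hypothesis ensuring that the irreducible locus meets the reducible locus transversely along a boundary. The absence of irreducibles in the purely reducible interior cases is handled as in Proposition~\ref{prop:class_regular_mod_space}.

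The main technical obstacle will be verifying that the three-factor strata with a boundary-obstructed middle component genuinely have the structure of codimension-1 boundary (rather than, say, a corner or a higher-codimension stratum). This is where the codimension-$c$ $\delta$-structure of Definition~\ref{defn:delta2str} enters, via the gluing theorem for $W^*$-trajectories in Section~\ref{sec:mod_mfld_boundary}; the argument is a direct adaptation of \cite[Theorem~24.7.2]{KMbook2007} to the $\uptau$-invariant setting, using the compactness and gluing results of Sections~\ref{sec:compact} and~\ref{sec:Gluing} and the Fredholm theory of Proposition~\ref{prop:mfld_boundary_res_fred}.
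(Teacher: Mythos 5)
Your overall strategy — classify codimension-1 strata by the dimension formula for products of compactified moduli spaces, using the correction $\epsilon$ for boundary-obstructed factors — is the right approach and is how the Kronheimer--Mrowka argument (Proposition 24.6.10 in their book) proceeds. The dimension count $d - (k_- + k_+) + \epsilon$ is correct, and your conclusion matches the statement. However, the argument that rules out obstructed factors at the ends of the broken trajectory is not correctly justified.

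You write that obstructed factors cannot be at the outside end because ``the outer endpoints $[\mathfrak a]$ and $[\mathfrak b]$ are fixed, and the $\check M^+$-compactifications include no broken trajectories starting from a boundary-stable point and ending at a boundary-unstable one without passing through interior strata.'' That phrase is not a proof, and as stated it does not establish what you need. The correct reason is the hypothesis that $M_z([\mathfrak a], (W^*,\upiota_W), [\mathfrak b])$ \emph{contains irreducible solutions}: because $s$ solves $ds/dt = -\Lambda_{\mathfrak q}s$, a moduli space containing an irreducible trajectory must have $[\mathfrak a]$ either irreducible or boundary-unstable, and $[\mathfrak b]$ either irreducible or boundary-stable. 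An obstructed first factor would force $[\mathfrak a]$ to be boundary-stable (contradiction); an obstructed last factor would force $[\mathfrak b]$ to be boundary-unstable (contradiction); and if $k_- = 0$ (resp.\ $k_+ = 0$), an obstructed $W^*$-factor likewise forces $[\mathfrak a]$ boundary-stable (resp.\ $[\mathfrak b]$ boundary-unstable). Without this use of the hypothesis, the dimension formula alone does not exclude the obstructed-at-end cases; it merely assigns them formal codimension $0$, which is a contradiction only once you already know the stratification structure. Your proof therefore has a genuine gap here.

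A secondary, smaller issue: the arithmetic justifying $\epsilon \le 1$ is stated loosely. What you actually need is: at codimension $1$ the total factor count is $n = \epsilon + 2$, so the interior positions $\{2,\dots,n-1\}$ number exactly $\epsilon$; placing $\epsilon$ mutually non-adjacent obstructed factors in $\epsilon$ consecutive interior slots is possible only for $\epsilon \le 1$. The ``$\epsilon + 1$ total components'' claim as written does not deliver this, though your conclusion is correct. The reducible-boundary portion of your argument is fine, and the final paragraph about the $\delta$-structure concerns a separate (subsequent) theorem rather than this proposition.
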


\subsection{Cobordism maps for 3-manifolds with involutions}
\hfill \break
Our first goal is to define the cobordism map 
\begin{equation*}
	\HMR^{\circ}(W,\upiota_W): \HMR^{\circ}(Y_-, \upiota_-) \to \HMR^{\circ}(Y_+, \upiota_+).
\end{equation*}
In fact, we follow \cite{KMbook2007} by considering a more general situation where we simultaneously evaluate a cohomology class $u$ over $\mathcal B^{\sigma}(W, \upiota_W)$, as a map
\begin{equation*}
	\HMR^{\circ}(u|W, \upiota_W):
	\HMR^{\circ}(Y_-,\upiota_-) \to \HMR^{\circ}(Y_+,\upiota_+)
\end{equation*}
This construction allows to define both cobordism maps (if $u=1$) and module structures (if $W = I \times Y$.)
\newline

Let $d_0 > 0$ be an integer.
Consider all triples $(z,[\mathfrak a],[\mathfrak b])$ for which the moduli space $\bar M_z([\mathfrak a],W^*,\upiota_W,[\mathfrak b])$ or $\bar M_z^{\text{red}}([\mathfrak a],W^*,\upiota_W,[\mathfrak b])$ has dimension $d_0$ or less.
The compatifications 
\[\bar M_z([\mathfrak a],W^*,\upiota_W,[\mathfrak b])\text{ and }\bar M_z^{\text{red}}([\mathfrak a],W^*,\upiota_W,[\mathfrak b])
\] form a locally finite collection of closed subsets of $\mathcal B^{\sigma}_{k,\loc}(W^*)$.
By Lemma~\ref{lem:refinetransversestrata}, every open cover of $\mathcal B^{\sigma}_{k,\loc}$ has a refinement transverse to all strata in all compactified moduli spaces $\bar M$ and $\bar M^{\red}$ of dimension at most $d_0$.
Let $\mathcal U$ be an open cover transverse to these moduli spaces and  $u \in C^d(\mathcal U;\Ftwo)$ be a \v{C}ech cochain with $d \le d_0$.
If $M_z([\mathfrak a],W^*,[\mathfrak b])$ has dimension $d$, then there is a well-defined evaluation
\begin{equation*}
	\langle u, [M_z([\mathfrak a], W^*,[\mathfrak b])]\rangle \in \Ftwo,
\end{equation*}
where we set the evaluation to be zero if the dimension of the moduli space is not $d$.
From this evaluation, we define several maps, first using irreducible trajectories:
\begin{align*}
	m^o_o:C^d(\mathcal U;\Ftwo) \otimes C^o_\bullet(Y_-, \upiota_-) &\to C^o_\bullet(Y_+, \upiota_+), \quad
u \otimes [\mathfrak a]\mapsto \sum_{[\mathfrak b] \in \mathfrak C^o(Y_+, \upiota_+)} \sum_z 
	\langle u, [M_z([\mathfrak a], W^*,[\mathfrak b])]\rangle [\mathfrak b],\\
	m^o_s:C^d(\mathcal U;\Ftwo) \otimes C^o_\bullet(Y_-, \upiota_-) &\to C^s_\bullet(Y_+, \upiota_+), \quad
u \otimes [\mathfrak a]~\mapsto \sum_{[\mathfrak b] \in \mathfrak C^s(Y_+, \upiota_+)} \sum_z 
	\langle u, [M_z([\mathfrak a], W^*,[\mathfrak b])]\rangle [\mathfrak b],\\
	m^u_o:C^d(\mathcal U;\Ftwo) \otimes C^u_\bullet(Y_-, \upiota_-) &\to C^o_\bullet(Y_+, \upiota_+), \quad
u \otimes [\mathfrak a]\mapsto \sum_{[\mathfrak b] \in \mathfrak C^o(Y_+, \upiota_+)} \sum_z 
	\langle u, [M_z([\mathfrak a], W^*,[\mathfrak b])]\rangle [\mathfrak b],\\
	m^u_s:C^d(\mathcal U;\Ftwo) \otimes C^u_\bullet(Y_-, \upiota_-) &\to C^s_\bullet(Y_+, \upiota_+), \quad
u \otimes [\mathfrak a]\mapsto \sum_{[\mathfrak b] \in \mathfrak C^s(Y_+, \upiota_+)} \sum_z 
	\langle u, [M_z([\mathfrak a], W^*,[\mathfrak b])]\rangle [\mathfrak b],
\end{align*}
and then using reducible trajectories:
\begin{align*}
	\bar m^s_s :C^d(\mathcal U;\Ftwo) \otimes \bar C^s_\bullet(Y_-, \upiota_-) 
	&\to C^s_\bullet(Y_+, \upiota_+), \quad
	u \otimes [\mathfrak a] 
	\mapsto \sum_{[\mathfrak b] \in \mathfrak C^s(Y_+, \upiota_+)} \sum_z 
	\langle u, [M_z^{\red}([\mathfrak a], W^*,[\mathfrak b])]\rangle [\mathfrak b],
	\\
	\bar m^u_u : C^d(\mathcal U;\Ftwo) \otimes C^u_\bullet(Y_-, \upiota_-) 
	&\to C^u_\bullet(Y_+, \upiota_+), \quad
	u \otimes [\mathfrak a] 
	\mapsto \sum_{[\mathfrak b] \in \mathfrak C^u(Y_+, \upiota_+)} \sum_z 
	\langle u, [M_z^{\red}([\mathfrak a], W^*,[\mathfrak b])]\rangle [\mathfrak b],
	\\
	\bar m^s_u :C^d(\mathcal U;\Ftwo) \otimes C^s_\bullet(Y_-, \upiota_-) 
	&\to C^u_\bullet(Y_+, \upiota_+), \quad
	u \otimes [\mathfrak a] 
	\mapsto \sum_{[\mathfrak b] \in \mathfrak C^u(Y_+, \upiota_+)} \sum_z 
	\langle u, [M_z([\mathfrak a], W^*,[\mathfrak b])]\rangle [\mathfrak b],
	\\
	\bar m^u_s:C^d(\mathcal U;\Ftwo) \otimes C^u_\bullet(Y_-, \upiota_-) 
	&\to C^s_\bullet(Y_+, \upiota_+), \quad
	u \otimes [\mathfrak a] 
	\mapsto \sum_{[\mathfrak b] \in \mathfrak C^s(Y_+, \upiota_+)} \sum_z 
	\langle u, [M_z([\mathfrak a], W^*,[\mathfrak b])]\rangle [\mathfrak b].
\end{align*}
We combine the above maps as matrix entries, to obtain the chain maps in the following definitions.
\begin{defn}
\label{defn:cech_chain_map}
	The map $\bar m: C^d(\mathcal U;\Ftwo) \oplus \bar C_\bullet(Y_-, \upiota_-) \to \bar C_\bullet(Y_+, \upiota_+)$ with respect to the decomposition $\bar C_\bullet = C^s \oplus C^u$ is given by
\begin{equation*}
	\bar m = \begin{pmatrix}
		\bar m^s_s && \bar m^u_s\\
		\bar m^s_u && \bar m^u_u
	\end{pmatrix}.
\end{equation*}
Over $\check C_\bullet = C^o_\bullet \oplus C^s_\bullet$, we define
\begin{equation*}
	\check m: C^d(\mathcal U;\Ftwo) \otimes
	\check C_\bullet(Y_-, \upiota_-) \to
	\check C_\bullet(Y_+, \upiota_+),
\end{equation*}
for $d \le d_0$, by
\begin{equation*}
	\check m = \begin{pmatrix}
		m^o_o && -m^u_o \bar\del^s_u(Y_-) - \del^u_o(Y_+)\bar m^s_u\\
		m^o_s && \bar m^s_s - m^u_s\bar\del^s_u(Y_-,) -\del^u_s(Y_+)\bar m^s_u
	\end{pmatrix},
\end{equation*}
where $\del^u_o(Y_+)$ denotes the operator on $Y_+$.
Over $\hat C_\bullet = C^o_\bullet \oplus C^u_\bullet$, we define
\begin{equation*}
	\hat m:C^d(\mathcal U;\Ftwo) \otimes
	\hat C_\bullet(Y_-, \upiota_-) \to
	\hat C_\bullet(Y_+, \upiota_+),
\end{equation*}
by
\begin{equation*}
	\hat m = \begin{pmatrix}
		m^o_o && m^u_o\\
		\bar m^s_u \del^o_s(Y_-)\sigma  - \bar\del^s_u(Y_+)m^o_s 
		&& 
			\bar m^u_s \sigma + \bar m^s_u \del^u_s (Y_-) \sigma - \bar\del^s_u(Y_+m^u_s
	\end{pmatrix}.
\end{equation*}
\end{defn}
These maps define maps on the homology level, by the following proposition.
\begin{prop}
\label{prop:check_identities}
	The operators $\check m, \hat m,$ and $\bar m$ satisfy the following identities:
	\begin{align*}
		(-1)^d\check\del(Y_+, \upiota_+) \check m(u \otimes \check \xi) 
		&= -\check m(\delta u \otimes \check\xi) + \check m(u \otimes \check\del(Y_-, \upiota_-)\check\xi)\\
		(-1)^d\hat\del(Y_+, \upiota_+) \hat m(u \otimes \hat \xi) 
		&= -\hat m(\delta u \otimes \hat\xi) + \hat m(u \otimes \hat\del(Y_-, \upiota_-)\hat\xi)\\
		(-1)^d\bar\del(Y_+, \upiota_+) \bar m(u \otimes \bar \xi) 
		&= -\bar m(\delta u \otimes \bar\xi) + \bar m(u \otimes \bar\del(Y_-, \upiota_-)\bar\xi),
	\end{align*}
	for $u \in C^d(\mathcal U;\Ftwo)$, $\check\xi \in \check C_\bullet(Y_-)$, etc. and $d \le d-1$, which descends to maps
	\begin{align*}
		\check m : \check H^d(\mathcal U; \Ftwo) \otimes
		\widecheck{\HMR}_j(Y_-, \upiota_-) 
		&\to \widecheck{\HMR}_{k-d}(Y_+, \upiota_+)\\
		\hat m : \check H^d(\mathcal U; \Ftwo) \otimes
		\widehat{\HMR}_j(Y_-, \upiota_-) 
		&\to \widehat{\HMR}_{k-d}(Y_+, \upiota_+)\\
		\bar m : \check H^d(\mathcal U; \Ftwo) \otimes
		\overline{\HMR}_j(Y_-, \upiota_-) 
		&\to \overline{\HMR}_{k-d}(Y_+, \upiota_+)
	\end{align*}
	for any open cover $\mathcal U$ of $\mathcal B^{\tau}_{k,\loc}(W^*)$ transverse to all the moduli spaces of dimension less than or equal to $d_0$.
\end{prop}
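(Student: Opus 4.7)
The three identities are Stokes-type relations, proved separately for each matrix entry by examining a compactified moduli space of dimension $d+1$ and applying the pairing formula \eqref{eqn:stokes}. My plan is to fix $[\mathfrak a]$ (from $Y_-$) and $[\mathfrak b]$ (from $Y_+$) with $\gr_z([\mathfrak a], W, [\mathfrak b]) = d+1$ (or $d$ in the boundary-obstructed case, where the moduli space dimension is one larger than the grading), and to chase the terms contributed by the codimension-$1$ strata listed in Proposition~\ref{prop:W_cod1_strata}. For a $d$-cochain $u$ refined so that $\mathcal U$ is transverse to all relevant compactifications, applying the Stokes formula \eqref{eqn:stokes} to the top stratum of $\bar M_z([\mathfrak a], (W^*, \upiota_W), [\mathfrak b])$ equates $\langle \delta u, [M_z]\rangle$ with the sum of boundary multiplicities of $u$ paired against the codimension-$1$ faces.

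The faces of types $\check M_{-1}\times M_0$ and $M_0\times \check M_1$ are responsible for the compositions $\check m(u\otimes \check\del(Y_-)\,-\,)$ and $\check\del(Y_+)\check m(u\otimes -)$ in the boundary-unobstructed cases; pairing with $u$ uses the fact that a product decomposition of the boundary stratum allows us to distribute $u$ onto the factor carrying $M_0$, while the other factor is zero-dimensional and contributes a point count corresponding to a differential on $Y_\pm$. The three-factor strata $\check M_{-2}\times \check M_{-1}\times M_0$ and $M_0\times \check M_1\times \check M_2$, with boundary-obstructed middle, are exactly what forces the $\delta$-structure of Definition~\ref{defn:delta1str} and contributes the correction terms $-\del^u_o(Y_+)\bar m^s_u$, $-\del^u_s(Y_+)\bar m^s_u$ (resp.\ $\bar m^s_u \del^o_s(Y_-)$, $\bar m^s_u \del^u_s(Y_-)$) through Lemma~\ref{lem:bound_multiplicities_0}: the $\delta$-structure converts an odd-dimensional obstructed face into a pairing of boundary multiplicities against $u$ that assembles precisely into $\bar m^s_u$ composed with a three-dimensional differential. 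The reducible stratum $M_z^{\red}$ present only when $[\mathfrak a]$ is boundary-unstable and $[\mathfrak b]$ is boundary-stable contributes the terms $\bar m^s_s$ and $\bar m^s_u$ appearing in the matrices of $\check m$ and $\hat m$.

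For $\bar m$, the situation is entirely in the reducible locus, and one applies the same Stokes argument to $\bar M^{\red}_z([\mathfrak a],(W^*,\upiota_W),[\mathfrak b])$, whose codimension-$1$ faces are products of reducible moduli spaces on $W^*$ with one unparametrized reducible trajectory on $Y_\pm$. For $\hat m$, one uses $\bar M_z([\mathfrak a],(W^*,\upiota_W),[\mathfrak b])$ when $[\mathfrak a]\in \mathfrak C^o\cup\mathfrak C^u$ and $[\mathfrak b]\in \mathfrak C^o\cup\mathfrak C^u$, taking care that $[\mathfrak b]$ being boundary-unstable requires the trajectory on $Y_+$ to terminate at a boundary-unstable critical point, which forces an insertion of $\bar\del^s_u(Y_+)$ against the boundary-stable output. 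The signs $(-1)^d$ reduce to the signs appearing when commuting $u$ past the \v{C}ech coboundary; over $\mathbb F_2$ they are harmless and serve only as a reminder of the chain-homotopy structure.

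The main obstacle is the careful bookkeeping of the boundary-obstructed three-factor strata: one must verify that the two-sided $\delta$-structure (Definition~\ref{defn:delta1str}) provides the right even count of zero-dimensional matchings so that the Lemma~\ref{lem:bound_multiplicities_0} contribution to $\langle \delta u, [\bar M]\rangle$ is precisely the product-decomposed evaluation of $u$ against the top stratum of the middle (boundary-obstructed) factor's own compactification, paired with point counts on $Y_\pm$. Once each matrix entry is verified term-by-term (eight entries for $\check m$ and $\hat m$ combined, plus four for $\bar m$), the statement is a matter of reassembling into the matrix form of Definition~\ref{defn:cech_chain_map}. Descent to $\HMR^\circ$ is immediate: for $u$ a \v{C}ech cocycle, $\delta u = 0$, and the identity becomes the statement that $\check m(u\otimes -),\hat m(u\otimes -),\bar m(u\otimes -)$ are chain maps; homotoping $u$ within its cohomology class produces, by the same Stokes argument one degree higher, a chain homotopy between the associated chain maps.
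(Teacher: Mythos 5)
Your proposal is correct and follows essentially the same approach as the paper, which simply defers to the proof of \cite[Proposition~25.3.4]{KMbook2007}: a Stokes-theorem argument over the codimension-one strata of the compactified moduli spaces listed in Proposition~\ref{prop:W_cod1_strata}, with the boundary-obstructed three-factor strata handled through the $\delta$-structures of Section~\ref{sec:Gluing} and Lemma~\ref{lem:bound_multiplicities_0}. You have correctly identified all the key ingredients (the \v{C}ech Stokes formula \eqref{eqn:stokes}, the classification of boundary faces, the obstruction-vector bookkeeping via the $\delta$-structure, and the descent to homology via $\delta u = 0$ for cocycles and one degree higher for cohomologous representatives), so your spelled-out version is a faithful reconstruction of the argument the paper cites.
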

\begin{proof}
	Similar to the $\del^2 = 0$ arguments, the identities follows from studying the codimension-1 strata of moduli spaces  of the form
	\[M_z([\mathfrak a], W^*, \upiota_W, [\mathfrak b]),\]
	described by Proposition \ref{prop:W_cod1_strata}.
	The description is identical in the ordinary case, and the precise formulae can be found in  \cite[Lemma~25.3.6]{KMbook2007}.
	For a complete proof of the proposition, see the proof of \cite[Proposition 25.3.4]{KMbook2007}. 
\end{proof}
\begin{defn}
	By taking the limit over all open covers of $\boldsymbol{\mathcal B}^{\sigma}_{k,\loc}(W^*,\upiota_W)$ transverse to the moduli space, and identifying the \v{C}ech cohomology $\check{H}^d(\boldsymbol{\mathcal B}^{\sigma}_{k,\loc}(W^*,\upiota_W); \Ftwo)$ with $H(\boldsymbol{\mathcal B}^{\sigma}_{k,\loc}(W^*,\upiota_W); \Ftwo)$
\begin{align*}
		\widecheck{\HMR}(u|W) : H^d(\mathcal B^{\sigma}_{k,\loc}(W^*,\upiota_W); \Ftwo) \otimes
		\widecheck{\HMR}_j(Y_-, \upiota_-) 
		&\to \widecheck{\HMR}_{k-d}(Y_+, \upiota_+)\\
		\widehat{\HMR}(u|W): H^d(\mathcal B^{\sigma}_{k,\loc}(W^*,\upiota_W); \Ftwo) \otimes
		\widehat{\HMR}_j(Y_-, \upiota_-) 
		&\to \widehat{\HMR}_{k-d}(Y_+, \upiota_+)\\
		\overline{\HMR}(u|W) :H^d(\mathcal B^{\sigma}_{k,\loc}(W^*,\upiota_W); \Ftwo) \otimes
		\overline{\HMR}_j(Y_-, \upiota_-) 
		&\to \overline{\HMR}_{k-d}(Y_+, \upiota_+)
\end{align*}
\end{defn}
So far, the notation $\HMR^{\circ}(u|W)$ is somewhat ambiguous as the definition involves a choice of an $\upiota_W$-invariant metric $g_W$, and perturbation $\mathfrak p$.
The following Proposition states that $\HMR^{\circ}(u|W)$ is independent of such choices.
By setting $u = 1$, we obtain Proposition~\ref{prop:trivialcob}, which is part of Theorem~\ref{thm:HMRfunctor_tilde_COB}.
\begin{prop}
	Let $g(0)$ and $g(1)$ be two $\upiota_W$-invariant metrics on $W$, isometric in a collar of the boundary to the same equivariant cylindrical metric.
	Let $\mathfrak p(0)$ and $\mathfrak p(1)$ be two perturbations on $W$, again constructed using the same perturbations on $Y_{\pm}$.
	Assume the corresponding moduli spaces are regular in both cases.
	Let $u$ be a \v{C}ech cocycle as above, and $\check m(0)$ and $\check m(1)$ be defined by the formulae in Definition~\ref{defn:cech_chain_map}., using the moduli space obtained from $(g(0),\mathfrak p(0))$ and $(g(1),\mathfrak p(1))$, respectively.
	Then there is an operator
	\begin{equation*}
		\check K: C^d(\mathcal U;\Ftwo) \otimes 
		\check C_{\bullet}(Y_-, \upiota_-) \to 
		\check C_{\bullet}(Y_+, \upiota_+)
	\end{equation*}
	for $d \le d_0$, satisfying the chain-homotopy identity
	\begin{equation*}
		(-1)^d\check\del \check K(u \otimes \check\xi) = 
		-\check K(\delta u \otimes \check\xi) + 
		\check K(u \otimes \check\del \check\xi)
		+(-1)^d\check m(0)(u \otimes \check\xi)
		-(-1)^d\check m(1)(u \otimes \check\xi).
	\end{equation*}
\end{prop}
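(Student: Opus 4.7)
The plan is to construct the chain homotopy $\check K$ (and analogously $\hat K$, $\bar K$) by counting solutions over a one-parameter family of metrics and perturbations, in the spirit of the chain-map construction of the preceding proposition. Let $P = [0,1]$ and choose a smooth family $(g(t), \mathfrak p(t))$ interpolating between the two endpoint data and constant in a collar of $\partial W$, matching the fixed cylindrical data $(g_Y, \mathfrak q_{\pm})$. By the family version of the transversality results in Section~\ref{sec:mod_mfld_boundary}, we may assume (after a further perturbation of $\mathfrak p(t)$ supported on $0<t<1$ and rel the endpoints) that all parametrized moduli spaces $M_z([\mathfrak a], W^*,\upiota_W,[\mathfrak b])_P$ of dimension at most $d_0 + 1$ are regular, and the $\upiota_W$-invariant real structure is preserved throughout the family.

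Given an open cover $\mathcal U$ of $\boldsymbol{\mathcal B}^{\sigma}_{k,\loc}(W^*,\upiota_W)$ transverse to the strata of every parametrized moduli space of dimension $\le d_0+1$ (such a cover exists by Lemma~\ref{lem:refinetransversestrata}), I define $\check K$ by the same matrix of entries as in Definition~\ref{defn:cech_chain_map}, but evaluated on the parametrized moduli spaces, so that a moduli space contributing to $\check K$ has dimension exactly one larger than one contributing to $\check m$. Concretely, for $u \in C^d(\mathcal U;\Ftwo)$ and $[\mathfrak a],[\mathfrak b]$ critical points, the entries of $\check K$ are built from the evaluations
\[
\langle u, [M_z([\mathfrak a], W^*, \upiota_W, [\mathfrak b])_P] \rangle \in \Ftwo,
\]
taken on spaces whose dimension equals $d$, together with the appropriate reducible counts $\langle u, [M_z^{\red}(\cdots)_P]\rangle$, all composed with the $\bar\partial^s_u$ operators on $Y_\pm$ exactly as in the block-matrix formulae for $\check m$, $\hat m$, $\bar m$.

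The chain-homotopy identities then emerge, as always, from the codimension-one strata of the parametrized compactified moduli spaces $\bar M^+_z([\mathfrak a], W^*,\upiota_W,[\mathfrak b])_P$, combined with the Stokes relation~\eqref{eqn:stokes} for \v{C}ech cochains. Applying the stratification analogous to Proposition~\ref{prop:W_cod1_strata} in the parametrized setting, the codimension-one boundary consists of:
\begin{enumerate}[leftmargin=*]
\item the fibres over $t=0$ and $t=1$, which produce $\check m(0)$ and $\check m(1)$;
\item strata $\check M_{-1} \times M_0(t) $ and $M_0(t) \times \check M_1$ for $t \in P$, producing $\check K \circ \check \partial(Y_-)$ and $\check \partial(Y_+) \circ \check K$;
\item three-factor strata with a boundary-obstructed middle factor, whose $\delta$-structures must be glued into the count;
\item the reducible locus of parametrized moduli spaces on $W^*$, accounting for the $\bar m^s_u$-type contributions and their compositions with $\bar\partial^s_u$.
\end{enumerate}
Summing the multiplicities over these strata and invoking Stokes gives the desired identity, with the $\check K(\delta u \otimes \check \xi)$ term arising from applying $\delta$ to the \v{C}ech cochain paired against the top stratum.

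The main obstacle will be the bookkeeping for boundary-obstructed three-factor strata with the middle factor on $W^*$, whose contribution is only well-defined through the codimension-$c$ $\delta$-structures of Definition~\ref{defn:delta2str}; these already arose in the verification that $\check m$ is a chain map (Proposition~\ref{prop:check_identities}), and here the same $\delta$-structure analysis must be carried out one dimension up, for the parametrized moduli space, so that the obstructed contributions cancel in pairs modulo $2$. Once this cancellation is verified, the mod-two count of the signed boundary vanishes, the Stokes identity delivers the chain-homotopy formula, and the standard passage to the limit over refinements of $\mathcal U$ shows independence of the cover. The $\hat K$ and $\bar K$ cases are proved by the same argument with the corresponding block formulae, and they agree with $\check K$ under the maps $i,j,p$.
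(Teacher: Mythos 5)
Your proposal is correct and follows essentially the same route as the paper's proof: define $\check K$ by evaluating the \v{C}ech cocycle on the parametrized moduli spaces over $P=[0,1]$, then derive the chain-homotopy identity by applying the Stokes relation to the codimension-one strata of the compactified parametrized moduli, separating contributions above $\partial P$ (giving $\check m(0)$ and $\check m(1)$) from contributions above the interior (giving the broken-trajectory terms), with boundary-obstructed pieces handled via the $\delta$-structures. The paper's proof is more terse and mainly defers to Kronheimer--Mrowka's Proposition~25.3.8; your version spells out the same bookkeeping in more detail.
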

\begin{proof}
	This follows from counting boundary points in a parametrized moduli space
	\begin{equation*}
		M_z([\mathfrak a], W^*, \upiota_W, [\mathfrak b]),
	\end{equation*}
	over $P = [0,1]$.
	There are two sources of contributions to the codimension-1 strata, depending on whether they sit above interior points of $P$ or boundary points.
	We define $\check K$ to be the contributions from the compactified moduli space above interior points, which are described in Proposition~\ref{prop:W_cod1_strata}
	The contributions above the boundary points constitute the rest of the terms.
	The reference for this proposition is \cite[Proposition~25.3.8]{KMbook2007}.
	In fact the proof is easier as we work over $\Ftwo$, and the signs in the formula above is immaterial.  
\end{proof}
\begin{prop}
\label{prop:trivialcob}
	If $W$ is the trivial cylindrical cobordism from $(Y,\upiota;g,\mathfrak q)$ to itself.
	Then $\HMR^{\circ}(W,\upiota_W)$	 induces the identity map.
\end{prop}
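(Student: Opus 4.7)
The plan is to use the invariance of $\HMR^{\circ}(W,\upiota_W)$ under the choice of metric and perturbation on $W$ (established in the preceding proposition) to reduce to the case where $W^*$ is an equivariant isometric copy of $\reals \times Y$ equipped with the translation-invariant data $(g,\mathfrak q)$, and then analyze the resulting zero-dimensional moduli spaces directly.

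First, I would choose on $W = [0,1] \times Y$ the product metric $dt^2 + g$ together with the pulled-back real spin\textsuperscript{c} structure $(\mathfrak s_W, \uptau_W)$, and set the interior perturbation to $\mathfrak p_0 \equiv 0$, so that the cylindrically-extended perturbation is simply $\hat{\mathfrak p} = \beta(t)\hat{\mathfrak q}$. After attaching cylindrical ends, $W^*$ is equivariantly isometric to $\reals \times Y$, and a Seiberg-Witten solution on $W^*$ asymptotic to $[\mathfrak a]$ at $-\infty$ and $[\mathfrak b]$ at $+\infty$ is precisely a \emph{parametrized} (not $\reals$-quotiented) element of the trajectory space $M_z([\mathfrak a],[\mathfrak b])$ of Section~\ref{sec:modspace_trajectories}. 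Under this identification $\gr_z([\mathfrak a], W, [\mathfrak b])$ agrees with the cylindrical relative grading $\gr_z([\mathfrak a],[\mathfrak b])$ of Definition~\ref{defn:rel_grading} in the unobstructed case, with the usual $+1$ shift in the boundary-obstructed case.

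Second, I would compute each matrix entry of Definition~\ref{defn:cech_chain_map} at $u = 1$ from the zero-dimensional moduli spaces. In every entry the dimension requirement reduces to $\gr_z = 0$ (unobstructed) or $\gr_z = -1$ (boundary-obstructed). Whenever $[\mathfrak a] \ne [\mathfrak b]$, the $\reals$-action on the parametrized trajectory space is free, forcing the relevant moduli space to have dimension $\ge 1$ and hence be empty in the required dimension; when $[\mathfrak a] = [\mathfrak b]$, the only possible contribution is the constant trajectory $\upgamma_{\mathfrak a}$ in the trivial relative homotopy class, since non-trivial classes have non-zero grading by Lemma~\ref{lem:ZactsonJ}. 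The constant trajectory is regular and isolated: its linearization is the translation-invariant operator $\tfrac{d}{dt} + \widehat{\Hess}^{\sigma}_{\mathfrak q, \mathfrak a}$, which is invertible by the hyperbolicity of the extended Hessian (Lemma~\ref{lem:ext_Hess_real_eigen}). It follows that $m^o_o$, $\bar m^s_s$, $\bar m^u_u$ each equal the identity, while $m^o_s$, $m^u_o$, $m^u_s$, $\bar m^s_u$, $\bar m^u_s$ all vanish (the last two because a boundary-stable and a boundary-unstable critical point cannot coincide, ruling out any constant solution). Substituting into the matrix formulae yields $\check m = \hat m = \bar m = \mathrm{id}$ on the chain level, hence the identity on homology.

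The principal technical obstacle is that the translation-invariant pull-back of $\mathfrak q$ to $W^*$ need not render all $W^*$-moduli spaces regular. I would address this by introducing a generic small, time-supported perturbation of the type provided in Section~\ref{sec:mod_mfld_boundary} to achieve transversality; the invariance proposition just established guarantees that the resulting chain map is chain-homotopic to the one computed above, and the implicit function theorem applied to the invertible linearization $\tfrac{d}{dt} + \widehat{\Hess}^{\sigma}_{\mathfrak q, \mathfrak a}$ on appropriate weighted Sobolev spaces ensures both that the isolated constant-trajectory contributions persist under small perturbation and that no additional zero-dimensional solutions appear nearby. A separate bookkeeping check on the boundary-obstructed reducible strata then confirms that the cancellations in the matrix formulae of Definition~\ref{defn:cech_chain_map} go through, so that the induced maps on $\widecheck{\HMR}$, $\widehat{\HMR}$, and $\overline{\HMR}$ are indeed the identity.
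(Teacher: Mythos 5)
Your proof is essentially correct and follows the same core approach as the paper's short proof: with the product metric and translation-invariant data, the zero-dimensional moduli spaces $M_z([\mathfrak a],W^*,\upiota_W,[\mathfrak b])$ contain only the constant trajectory (which exists precisely when $[\mathfrak a]=[\mathfrak b]$ and $z$ is the trivial class), because a non-constant solution carries a free $\reals$-translation action and hence lives in a positive-dimensional stratum; the matrix entries of Definition~\ref{defn:cech_chain_map} then collapse to the identity.

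Two remarks, neither fatal. First, the ``principal technical obstacle'' you flag --- that the translation-invariant pull-back of $\mathfrak q$ might fail to make the $W^*$-moduli spaces regular --- is not actually an obstacle. With the product metric and $\mathfrak p_0 = 0$, the moduli space $M_z([\mathfrak a],W^*,[\mathfrak b])$ is literally the parametrized trajectory space $M_z([\mathfrak a],[\mathfrak b])$ of Section~\ref{sec:modspace_trajectories}, and these were already made regular by the choice of an admissible $\mathfrak q$. So the perturbation-and-chain-homotopy workaround, while valid, is unnecessary machinery. Second, the appeal to Lemma~\ref{lem:ZactsonJ} to exclude non-trivial classes $z$ when $[\mathfrak a]=[\mathfrak b]$ is imprecise: non-trivial loops $z_u$ can have zero grading, for instance whenever $c_1(\mathfrak s)$ is torsion, so ``non-trivial $\Rightarrow$ non-zero grading'' is false in general. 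The correct (and simpler) exclusion is the one you already use for $[\mathfrak a]\neq[\mathfrak b]$: a non-trivial class $z$ contains no translation-invariant solution, so the $\reals$-action is free on $M_z([\mathfrak a],[\mathfrak a])$ and any non-empty such moduli space has dimension at least one, hence cannot contribute to a zero-dimensional count.
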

\begin{proof}
	The $0$-dimensional moduli space $M_z([\mathfrak a], W^*, \upiota_W, [\mathfrak b])$ must consist of translation invariant solutions on the infinite cylinder, for non-constant solutions belong to dimension at least 1.
	But this only happens if $z$ is trivial, and $[\mathfrak a] = [\mathfrak b]$, in which case the moduli space consists of a single point.
\end{proof}

\subsection{Composition of Cobordisms}
\hfill \break
Let $W = W_1 \circ W_2$.
The composition laws asserts that
\begin{equation*}
	\HMR^{\circ}(W_2 \circ W_1) 
	=
	\HMR^{\circ}(W_2) \circ \HMR^{\circ}(W_1).
\end{equation*}
We continue with the more general cobordism maps incorporating cohomology of $\boldsymbol{\mathcal B}^{\sigma}(W,\upiota_W)$.
To this end, we define the product $u = u_1u_2 \in \boldsymbol{\mathcal B}^{\sigma}(W_1,\upiota_{W_1})$ for $u_i \in \boldsymbol{\mathcal B}^{\sigma}(W_i,\upiota_{W_i})$ as follows.
Let $R_i$ be the partially defined restriction map
\begin{equation*}
	R_i: \boldsymbol{\mathcal B}^{\sigma}(W,\upiota_W) \to \boldsymbol{\mathcal B}^{\sigma}(W_i,\upiota_{W_i}),
\end{equation*}
which is well-defined over a weak homotopy equivalence subset, and defines a pullback map
\begin{equation*}
	R_i^*: H^*(\boldsymbol{\mathcal B}^{\sigma}(W_i,\upiota_{W_i})) \to \boldsymbol{\mathcal B}^{\sigma}(W,\upiota_W).
\end{equation*}
The product $u$ is by definition
\begin{equation*}
	u_1 u_2 = R_1^*(u_1) \cup R_2^*(u_2).
\end{equation*}
\begin{prop}
\label{prop:compositionlaw}
	Let $(Y_0,\upiota_0), (Y_1,\upiota_1)$, and $(Y_2,\upiota_2)$ be $\mathbb Z_2$-3-manifolds with metrics and admissible perturbations. Let $(W_1, \upiota_{W_1}): (Y_0,\upiota_0) \to (Y_1,\upiota_1)$ and  $(W_2, \upiota_{W_2}): (Y_1,\upiota_1) \to (Y_2,\upiota_2)$ be cobordisms.
	Denote the composite cobordism as $W$.
	For $i=1,2$, let $u_i \in H^{d_i}(\boldsymbol{\mathcal B}^{\sigma}(W_i,\upiota_{W_i});\Ftwo)$ be cohomology classes and $u = u_1u_2$ be the product $H^d(\boldsymbol{\mathcal B}^{\sigma}(W_1,\upiota_{W_1}))$.
	Then
	\begin{equation*}
		\HMR^{\circ}(u|W, \upiota_W) 
	=
	\HMR^{\circ}(u_2|W_2,\upiota_{W_2}) \circ \HMR^{\circ}(u_1|W_1,\upiota_{W_1}).
	\end{equation*}
\end{prop}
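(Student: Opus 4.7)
The plan is to establish this via a neck-stretching argument parametrized by $T \in [0, \infty]$, directly adapting \cite[Section~26]{KMbook2007} to the real setting. First I would introduce a smooth family of $\upiota_W$-invariant Riemannian metrics $g(T)$ on $W^*$, together with an equivariant family of perturbations $\mathfrak{p}(T)$, obtained by inserting an isometric copy of $[-T,T] \times Y_1$ (equipped with the product metric coming from $(Y_1,\upiota_1)$) into the neck of $W = W_2 \circ W_1$, and tapering via an $\upiota_W$-compatible cut-off. At $T=0$ this is the original geometric data on $W$, so the resulting cobordism map is $\HMR^{\circ}(u|W,\upiota_W)$; as $T\to\infty$, the cobordism breaks into $W_1^* \sqcup W_2^*$ joined by a long cylinder over $(Y_1,\upiota_1)$. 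The first technical step is to invoke the family version of the transversality results from Section~\ref{sec:mod_mfld_boundary} to arrange that the parametrized moduli space $M_z([\mathfrak{a}],W^*,\upiota_W,[\mathfrak{b}])_{[0,\infty)}$ is regular.

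Next I would analyze the compactification over the closed interval $[0,\infty]$. The fiber at $T=\infty$ decomposes as a finite union, over critical points $[\mathfrak{c}]$ on $Y_1$ and relative homotopy classes $(z_1,z_2)$, of the fibered product
\[
M_{z_1}([\mathfrak{a}],W_1^*,\upiota_{W_1},[\mathfrak{c}]) \times_{[\mathfrak{c}]} M_{z_2}([\mathfrak{c}],W_2^*,\upiota_{W_2},[\mathfrak{b}])
\]
together with the additional breakings on the $Y_1$-cylinder described in Proposition~\ref{prop:W_cod1_strata}. Because $u = R_1^*u_1 \cup R_2^*u_2$, choosing a \v{C}ech cover of $\boldsymbol{\mathcal{B}}^\sigma(W,\upiota_W)$ that is a common refinement of pull-backs of covers on $W_1$ and $W_2$, transverse to all relevant strata in the sense of Lemma~\ref{lem:refinetransversestrata}, the evaluation $\langle u, -\rangle$ over the product restricts to the product of $\langle u_i, -\rangle$ on the two factors. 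This produces precisely the matrix entries of $\check m(W_2,u_2) \circ \check m(W_1,u_1)$ (and likewise for $\hat m$, $\bar m$).

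The third step is to enumerate the codimension-$1$ strata of the parametrized space $M^+_z([\mathfrak{a}],W^*,\upiota_W,[\mathfrak{b}])_{[0,\infty]}$ in top-plus-one dimension, that is when $\gr_z([\mathfrak a],W,[\mathfrak b]) + d = -1$. Over the interior $T \in (0,\infty)$, these are the boundaries of the single-cobordism moduli spaces, producing after \v{C}ech evaluation the terms $\check\del(Y_+)\check m - \check m\,\check\del(Y_-) + \check m(\delta u \otimes -)$ (with the boundary-obstructed corrections handled exactly as in Proposition~\ref{prop:check_identities} via the codimension-$c$ $\delta$-structures of Definition~\ref{defn:delta2str}). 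The fibers over $T=0$ and $T=\infty$ contribute $\check m(W,u)$ and $\check m(W_2,u_2)\circ\check m(W_1,u_1)$ respectively. Defining a chain homotopy $\check K$ by counting points of the $0$-dimensional parametrized interior strata and applying the Stokes formula~\eqref{eqn:stokes} to this stratified space yields the identity
\[
(-1)^d\check\del\,\check K - \check K\,\check\del - \check K(\delta u\otimes -) = \pm\bigl(\check m(W,u) - \check m(W_2,u_2)\circ\check m(W_1,u_1)\bigr),
\]
and analogously for $\hat m$ and $\bar m$. Passing to homology gives the stated equality.

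The main obstacle is the bookkeeping for boundary-obstructed trajectories: the codimension-$1$ strata over the interior and over $T=\infty$ both contain three-component broken configurations whose middle factor is boundary-obstructed, and at $T=\infty$ one can have a corank-$2$ $\delta$-structure arising when both the $Y_1$-limit and the final $Y_+$-limit are boundary-unstable. Verifying that the $\delta$-multiplicities (in the sense of Lemma~\ref{lem:bound_multiplicities_0}) from these higher-corank strata cancel in exactly the way needed to convert the naive count into the precise matrix identity of Definition~\ref{defn:cech_chain_map} requires carefully paralleling the corank analysis of \cite[Section~26.1]{KMbook2007}. In the real setting this goes through unchanged because every input to that analysis, in particular the gluing theorem of Section~\ref{sec:Gluing} and the $\uptau$-invariant Fredholm theory of Section~\ref{sec:modspace_trajectories}, has been established equivariantly.
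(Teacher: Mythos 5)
Your proposal matches the paper's proof essentially step for step: neck-stretching via a family of metrics over $T\in[0,\infty]$, regularity of the parametrized moduli space, decomposition of the $T=\infty$ fiber as fibered products, refinement of \v{C}ech covers so that the exterior product $u_1\times u_2$ restricts correctly at $T=0$, a chain homotopy $\check K$ built from the parametrized moduli spaces via the Stokes formula, and the corank analysis for boundary-obstructed configurations. The only slip is the dimension count ``$\gr_z([\mathfrak a],W,[\mathfrak b]) + d = -1$,'' which should instead read $\gr_z = d-1$ so that the parametrized space over $[0,\infty]$ has dimension $d$ and pairs nontrivially with a degree-$d$ cochain; this does not affect the structure of the argument.
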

\begin{proof}
	The proof of the composition law for $\HM^{\circ}$ occupies \cite[Section~26]{KMbook2007}, which readily applies to the real monopole Floer case.
	Let sketch the proof.
	Let $W(S)$ be the composite cobordism with neck $Y_1$ of length $T$:
	\begin{equation*}
		W(T) = W_{1} \cup ([0,T] \times Y_1) \cup W_2.
	\end{equation*}
	The perturbation $\hat{\mathfrak p}$ is be $t$-dependent and supported on the four collars of the boundaries of $W_1$ and $W_2$, unlike the usual perturbations supported only on the collar of boundaries of $W(T)$.
	Still, we continue to denote the Seiberg-Witten moduli space over the cylindrical-end manifold $W(T)^*$ as 
	$\mathcal M_z([\mathfrak a],W(T)^*,[\mathfrak b])$. As $T$ varies in $[0,\infty)$, we obtained the parametrized moduli space
	\begin{equation*}
		\mathcal M_z([\mathfrak a],[\mathfrak b]) = \bigcup_{S \in [0,\infty)}\mathcal M_z([\mathfrak a],W(T)^*, \upiota_T,[\mathfrak b]).
	\end{equation*}
	This space can be compactified as $
		\mathcal M_z^+([\mathfrak a],[\mathfrak b])$, and there is an associated smaller compactification 
	\[
		\bar{\mathcal M}_z([\mathfrak a],[\mathfrak b]) 
		\subset
		[0,\infty] \times
		\boldsymbol{\mathcal B}^{\sigma}_k(W_1) \times
		\boldsymbol{\mathcal B}^{\sigma}_k(W_2).
	\]
	Since the parameter space $[0,\infty)$ is noncompact, one must add limits of sequences of trajectories that live on ever-longer neck.
	In particular, the codimension-1 strata above $T = \infty$,
	\begin{enumerate}
		\item[(i)] $M_{01} \times M_{12}$
		\item[(ii)] $\check M_0 \times M_{01} \times M_{12}$
		\item[(iii)] $M_{01} \times \check M_{1} \times M_{12}$
		\item[(iv)] $M_{01} \times M_{12} \times \check M_{2}$
	\end{enumerate}
	where $M_{(i-1)i}$ denotes a typical moduli space on $W_i$, and $\check M_i$ denotes a typical unparametrized moduli space on $\reals \times Y_i$.
	This contribution will be compared with the contribution of $T = 0$.
	
	Next, we choose open covers $\mathcal U_{01}$, $\mathcal U_{12}$, 
	$\mathcal V$, for $\boldsymbol{\mathcal B}^{\sigma}_k(W_1)$, $\boldsymbol{\mathcal B}^{\sigma}_k(W_2)$, $[0,\infty] \times \boldsymbol{\mathcal B}^{\sigma}_k(W_1) \times \boldsymbol{\mathcal B}^{\sigma}_k(W_2)$ respectively, so that they are transverse to all moduli spaces.
	The last cover is a refinement of the product of first two covers.
	Let $\mathcal U_{02}^{\circ}$ be the pullback of $\mathcal V$ of
	\begin{equation*}
		\boldsymbol{\mathcal B}^{\sigma}_k(W)^{\circ} \to 
		\{0\} \times \boldsymbol{\mathcal B}^{\sigma}_k(W_1) 
		\times \boldsymbol{\mathcal B}^{\sigma}_k(W_2).
	\end{equation*}
	Let $u_{01} \in C^{d_{01}}(\mathcal U_{01})$, $u_{12} \in C^{d_{12}}(\mathcal U_{12})$, and let $u_{02}^{\circ}$ be the pullback of $u_{01} \times u_{12}$ in the above map, which represents $u = u_{01}u_{12}$.
	In other words, we have an external product 
	\begin{align*}
		C^{d_{01}}(\mathcal U_{01}) \otimes
		C^{d_{12}}(\mathcal U_{12}) 
		&\to 
		C^{d_{01} + d_{12}}(\mathcal V)\\
		u_{01} \otimes u_{12}
		&\mapsto
		u_{01} \times u_{12},
	\end{align*}
	and an inner product
	\begin{align*}
		c:C^{d_{01}}(\mathcal U_{01}) \otimes
		C^{d_{12}}(\mathcal U_{12}) 
		&\to 
		C^{d_{01} + d_{12}}(\mathcal V)\\
		u_{01} \otimes u_{12}
		&\mapsto
		u_{02}^{\circ},
	\end{align*}
For the assertion of the proposition, it suffices to prove that these two products are related by a chain homotopy $K$.

The chain homotopy $\check K$, for instance, will be constructed using pairing of $u_{01} \times u_{12}$ with $\mathcal M_{z}([\mathfrak a],[\mathfrak b])$.
To give a taste of the formula, we present one of the entry:
\begin{equation*}
	K^o_o(u_{01} \otimes u_{12} \otimes -)
	=
	\sum_{[\mathfrak a] \in \mathfrak C^o(Y_0,\upiota_0)} 
	\sum_{[\mathfrak b] \in \mathfrak C^o(Y_2,\upiota_2)} 
	\sum_z
	\langle u_{01} \times u_{12}, \mathcal M_z([\mathfrak a], [\mathfrak b]\rangle.
\end{equation*}
There are seven other entries, including the count of reducible trajectories.
The homotopy formula \cite[Lemma~26.2.2]{KMbook2007} looks like
\begin{equation*}
	-\check K(\underline{\delta} \otimes 1)
	-\check K(1 \otimes 1 \otimes \check\del)
	-\check \del \check K(\sigma \otimes 1)
	-\check m_{02}(c \otimes 1) 
	+\check m_{12}(1 \otimes \check m_{01})(\tau \otimes 1) = 0,
\end{equation*}
where $\underline{\delta}:C^*(\mathcal U_{01} \otimes C^*(\mathcal U_{12}) \to C^*(\mathcal U_{01}) \otimes C^*(\mathcal U_{12})$ is the coboundary map on the tensor product locus,
and $\tau$ is the operator that interchanges the factors in the tensor product.

The proof of the lemma is a careful analysis of the boundary strata, and uses the version of Stokes theorem in Section~\ref{sec:Gluing}.
Once we plug in \v{C}ech cocycles $u_{01}$, $u_{12}$, $u_{02} = c(u_{01} \otimes u_{12})$ and $\check\xi$, the homotopy formula becomes
\begin{equation*}
	-\check m_{02}(u_{02} \otimes \check\xi) +
	\check m_{12}(u_{12} \otimes \check m_{01} (u_{01} \otimes \check\xi)) =
	(-1)^d\del\check\check K(u_{01} \otimes u_{12} \otimes \check\xi).
\end{equation*}
At the level of homology, this implies the statement of the Proposition.
We have kept $(-1)^{\bullet}$ from the formulae in \cite[Section~26]{KMbook2007}, although all coefficients are $\Ftwo$.
\end{proof}

\subsubsection*{\textbf{Double branched covers of link corbordism}}
\hfill \break
Let $K_{\pm} \subset S^3$ be two links and $I = [0,1]$.
Let $Y_{\pm}$ be the double branched cover $K_{\pm}$, and choose a real spin\textsuperscript{c} structure $(\mathfrak s, \uptau)$.
With admissible perturbations $\mathfrak q_{\pm}$ on each end, we consider the Floer homology $\HMR^{\circ}(K_{\pm},\mathfrak s, \uptau)$.
A \emph{corbodism} from $K_-$ to $K_+$ is a smooth embedded (not necessarily orientable) surfaces $S \subset I \times Y$ with boundary $\del S = \overline{K_-} \sqcup K_+$, where overline indicates the mirror.
The double branched cover of $I \times S^3$ along $S$ gives rise to a cobordism $(W, \upiota_W)$ of 3-manifolds and a covering involution $\upiota_W$.
\begin{defn}
	Let the cobordism maps $\HMR^{\circ}(S)$ of $S$ be the corresponding cobordism maps on the double branched covers of $S$:
\begin{align*}
		\widecheck{\HMR}(u|S) : H^d(\mathcal B^{\sigma}(S); \Ftwo) \otimes
		\widecheck{\HMR}_j(K_-)
		&\to \widecheck{\HMR}_{k-d}(K_+),\\
		\widehat{\HMR}(u|S): H^d(\mathcal B^{\sigma}(S); \Ftwo) \otimes
		\widehat{\HMR}_j(K_-)
		&\to \widehat{\HMR}_{k-d}(K_+),\\
		\overline{\HMR}(u|S) : H^d(\mathcal B^{\sigma}(S); \Ftwo) \otimes
		\overline{\HMR}_j(K_-) 
		&\to \overline{\HMR}_{k-d}(K_+).
\end{align*}
\end{defn}

We use the notation $\mathcal B^{\sigma}(S)$ for the spaces of equivalence classes of blown-up configuration spaces over $W$, and use $\mathcal B^{\sigma}(Y)$ to denote the configuration space on the double branched cover of $S^3$, and so on.

\subsection{Point classes and module structure}
\hfill \break
Let $W: Y_- \to Y_+$ be a cobordism, and $K_{\pm} \subset Y_{\pm}$ be the fixed point sets for $\upiota_{\pm}$.
The fixed point set $\Sigma \subset W$ is a 2-manifold with boundary $-K_- \cup K_+$.
There are types of point classes for $\HMR$, of cohomological degree $1$ and $2$, respectively.
Let $x \in W$ be a point.
\begin{itemize}[leftmargin=*]
	\item If $\upiota(x) = x$, that is $x \in \Sigma$, then any 4-dimensional gauge transformation $\mathcal G(\upiota_W)$ necessarily takes value $\pm 1$.
Evaluation at $x$ defines a homomorphism
\begin{equation*}
	\ev_x: \mathcal G(\upiota_W) \to \{\pm 1\}.
\end{equation*}
Since $\mathcal B^{\sigma}(W,\uptau_W)$ has the homotopy type of $\pt/\mathcal G(\upiota_W)$, 
we obtain a real line bundle
\begin{equation*}
	L_x \to \mathcal B^{\sigma}(W,\upiota_W),
\end{equation*}
and the associated first Stiefel-Whitney class $w_1(L_x) \in H^1(B^{\sigma}(W,\upiota_W);\Ftwo)$.

	\item If $\upiota(x) \ne x$, i.e. $x$ does not lie on the fixed point locus $\Sigma$, then the evaluation map is $S^1$-valued
\begin{equation*}
	\ev_x: \mathcal G(\upiota_W) \to S^1
\end{equation*}
which gives rise to a complex line bundle
\[
	L_x \to \mathcal B^{\sigma}(W,\upiota_W)
\]
and the second Stiefel-Whitney class $w_2(L_x) \in H^2(B^{\sigma}(W,\upiota_W);\Ftwo)$.
\end{itemize}
\begin{rem}
The map $\HMR^{\circ}(w_i(L_x)|W)$ can be more concretely defined by counting moduli spaces of the form
\begin{equation*}
	M_z([\mathfrak a],(W^*,\upiota_W),[\mathfrak b]) \cap V_x,
\end{equation*}
where $V_x \subset \mathcal B^{\sigma}([\mathfrak a], (W^*,\upiota_W,),[\mathfrak b] ) $ is the zero set of a smooth section $s_x$ of $L_x$.
\end{rem}

Assume now $(Y_{\pm}, \upiota_{\pm}) = (Y,\upiota)$ and $(W,\upiota_W) = ([0,1] \times Y, \text{id} \times \upiota)$.
The fixed point locus is simply $[0,1] \times K$, where $K \subset Y$ is the fixed point locus of $\upiota$.
Fix $y \in Y$.
We choose a point $x = (t, y) \in [0,1] \times Y$ for some $t \in (0,1)$.
Associated to the above two bullet points, we define the action of point classes on $\HMR(Y,\upiota)$ as follows.
\begin{itemize}[leftmargin=*]
\item If $x$ is a fixed point,
	, we define $\upsilon_x$ as $\HMR^{\circ}(w_1(L_x)|W)$:
\begin{equation*}
	\upsilon_x:\HMR_{*}^{\circ}(Y, \upiota) \to \HMR_{*-1}^{\circ}(Y, \upiota).
\end{equation*}

	\item If $x$ is not a fixed point, we define $U_x$ as $\HMR^{\circ}(w_2(L_x)|W)$:
\begin{equation*}
	U_x:\HMR_*^{\circ}(Y, \upiota) \to \HMR_{*-2}^{\circ}(Y, \upiota)  
\end{equation*}
\end{itemize}
If $x_1, x_2$ lie on the same component of fixed points, then $\upsilon_{x_1} = \upsilon_{x_2}$.
Moreover, all $U_x = U$ are cohomologous, and
\begin{equation*}
	U = \upsilon_x^2
\end{equation*}
for any fixed point $x$. 
In general, to define the Floer cap product we use the isomorphism
\begin{equation*}
	H^*(\mathcal B^{\sigma}([0,1] \times Y,1 \times \upiota_Y) \cong
	H^*(\mathcal B^{\sigma}(Y,\upiota_Y) 
\end{equation*}
to obtain an action of 
\begin{equation*}
	H^1(\mathbb T(Y,\upiota)) \cong H^1(Y;\mathbb Z)^{-\upiota^*} \cong H_1(Y;\mathbb Z)^{-\upiota_*}/\text{Tor}.
\end{equation*}
By composition law, this defines a action of the exterior algebra $\Lambda(H_1(Y;\mathbb Z)^{-\upiota_*}/\text{Tor})$ on $\HMR^{\circ}_*(Y,\upiota)$
equipped with negative grading.
Similarly, for Floer cohomology, this procedure defines Floer theoretic cup products.

\subsubsection*{\textbf{Module structures on} $\boldsymbol{\HMR^{\circ}(K)}$}
\hfill \break
In the case of double branched cover of 3-spheres along links $K$, evaluations of the point classes define actions:
\begin{equation*}
	\upsilon_x, U_x:\HMR^{\circ}(K) \to \HMR^{\circ}(K) ,
\end{equation*}
where $\upsilon_x$ depends on the component of $K$.
Let $n$ be the number of components of $K$.
Then $\HMR^{\circ}(K)$ is a $\mathcal R_n$-module, over the graded ring 
\begin{equation*}
	\mathcal R_n = \frac{\mathbb F_2[\upsilon_1, \dots, \upsilon_n]}{\upsilon_i^2 = \upsilon_j^2},
\end{equation*}
and each $\upsilon_j$ has degree $-1$.

\section{More Examples}
\label{sec:examples}
While this section is a continuation of Subsection~\ref{subsec:examples_involutions} on examples of 3-manifolds with involutions, the exposition will be self-contained.
\subsection{Involution-invariant positive scalar curvature}
\label{subsec:psc}
\hfill \break 
Let $Y$ be a 3-manifold, and $\upiota:Y \to Y$ be an involution.
Suppose the Riemannian metric is $\upiota$-invariant and has positive scalar curvature.
Let $(\mathfrak s, \uptau)$ be a real spin\textsuperscript{c} structure, such that $c_1(\mathfrak s)$ is torsion.
Let $\mathbb T$ be the invariant Clifford torus $H^1(Y;i\reals)^{-\upiota^*}/H^1(Y;i\mathbb Z)^{-\upiota^*}$,
parametrizing the reducible solutions to the unperturbed equations.
Let $f: \mathbb T \to \reals$ be a Morse function, and $f_1 = f \circ p$ be the corresponding function on $\mathcal B(Y,\upiota;\mathfrak s, \uptau)$, defined in the perturbation section.
The connections $[A]$ in $\mathbb T$ are those with $A^t$ flat, and it follows the Weitzenb\"{o}ck formula that the corresponding Dirac operator $D_A$ has no kernel.
It follows that for any path $[A(t)]$ in $\mathbb T$, the family of 3-dimensional Dirac operator has no spectral flow.
If $[\alpha]$ and $[\beta]$ are critical points in the blow-up as $[\mathfrak a_i]$ and $[\mathfrak b_j]$ in increasing order of the index, with $[\mathfrak a_0]$ and $[\mathfrak b_0]$ corresponding to the first positive eigenvalues of the Dirac operator at $\alpha$ and $\beta$ respectively.
The critical points are boundary stable if $i \ge 0$ and unstable if $i < 0$.
The relative grading on the complex $\bar C_*$ is given by
\begin{equation*}
	\bar{\gr}([\mathfrak a_i], [\mathfrak b_j]) =
	\ind_f[\mathfrak a] - \ind_f[\beta] + i - j,
\end{equation*}
where the first two terms are the ordinary Morse indices.
\begin{lem}
	The component $\bar\del^s_u$ of the boundary map $\bar\del$ in the complex $\bar C_*(Y,\upiota;\mathfrak s, \uptau)$ is zero.	
\end{lem}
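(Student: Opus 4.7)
The plan is to realize $\bar\del^s_u$ as a count of zero-dimensional reducible moduli spaces, use a dimension argument to single out one case, and compute that case explicitly modulo $2$.

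First I would write down the dimension of the relevant moduli spaces. A matrix entry of $\bar\del^s_u$ from $[\mathfrak a_i]$ (boundary-stable, $i\ge 0$) to $[\mathfrak b_j]$ (boundary-unstable, $j\le -1$) is a mod-$2$ count of $\check M^{\red}([\mathfrak a_i],[\mathfrak b_j])$, and by the relative grading formula displayed just above the lemma,
$$
\dim \check M^{\red}([\mathfrak a_i],[\mathfrak b_j])=\bar\gr([\mathfrak a_i])-\bar\gr([\mathfrak b_j])-1=\ind_f(\alpha)-\ind_f(\beta)+i-j-1.
$$
For the count to be defined we need this to vanish, and since $i-j\ge 1$ this forces $\ind_f(\alpha)-\ind_f(\beta)=1-(i-j)\le 0$.

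Next I would exploit a product structure on reducibles. Because the cylinder function $f_1=f\circ p$ depends only on the harmonic component of the connection, its gradient is tangent to $\mathbb T$ and has zero spinorial component; consequently on the reducible $\sigma$-blow-up the perturbed gradient flow splits as the downward gradient flow of $f$ on $\mathbb T$ together with the (time-dependent) projectivized Dirac flow on the $\mathbb{RP}^{\infty}$-fibre $\mathbb P(\Gamma(S)^{\uptau})$. Hence any reducible trajectory projects to a Morse trajectory of $f$ on $\mathbb T$ from $\alpha$ to $\beta$. Taking $f$ Morse--Smale rules out non-constant trajectories once $\ind_f(\alpha)\le\ind_f(\beta)$, forcing $\alpha=\beta$ with $\ind_f(\alpha)=\ind_f(\beta)$; combined with the dimension constraint this pins down $(i,j)=(0,-1)$.

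In this sole surviving case the projection to $\mathbb T$ is constant and the fibre equation is the gradient flow of $\Lambda^\ast(\psi)=\langle\psi,D_\alpha\psi\rangle/\|\psi\|^2$ on $\mathbb P(\Gamma(S)^{\uptau})$; admissibility of $\mathfrak q$ gives simple spectrum for $D_\alpha$ and the positive-scalar-curvature hypothesis rules out zero eigenvalues. The calculation in Section~\ref{sec:morse}, in particular the explicit parametrization of flow lines between adjacent critical points in $\mathbb{RP}^n$, then shows that the unparametrized trajectories from $[w_0(\alpha)]$ to $[w_{-1}(\alpha)]$ are
$$
c_0 e^{-\lambda_0 t}w_0(\alpha)+c_1 e^{-\lambda_{-1}t}w_{-1}(\alpha),\qquad c_0=1,\ c_1=\pm 1,
$$
so $\check M^{\red}([\mathfrak a_0],[\mathfrak b_{-1}])$ consists of exactly two points and contributes zero in $\mathbb F_2$. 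The only delicate step is the splitting used in the second paragraph, which rests on $\grad f_1$ having no spinor component and on the fact that when the base path in $\mathbb T$ is stationary the projectivized fibre flow reduces verbatim to the finite-dimensional model of Section~\ref{sec:morse}; both are immediate from the construction.
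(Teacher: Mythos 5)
Your proof is correct, and in fact it fills what looks like a genuine gap in the paper's written proof. The paper projects the counted trajectory to a Morse trajectory on $\mathbb{T}$, asserts the Morse index difference $\ind_f(\alpha)-\ind_f(\beta)$ ``can be assumed to be positive'', and deduces $\bar\gr\ge 2$ so that no zero-dimensional unparametrized moduli space exists. But this disposes only of trajectories whose base projection in $\mathbb{T}$ is non-constant; if the base is stationary at a single critical point $\alpha=\beta$, the Morse index difference is $0$ and $\bar\gr([\mathfrak a_i],[\mathfrak b_j])=i-j$ can equal $1$ — precisely the dimension constraint $\dim\check M^{\red}=0$ requires. (This is exactly where the real case differs from KM's Lemma~36.1.1: there the fibre index is doubled, so the constant-base case gives $\bar\gr\ge 2$ automatically, and no separate argument is needed.) You correctly isolate the surviving case as $(i,j)=(0,-1)$, observe that the trajectory then lies entirely in the projectivized fibre, and count it mod $2$ via the finite-dimensional $\mathbb{RP}^n$ model — two unparametrized trajectories, hence $0\pmod 2$ — mirroring the paper's own treatment of the $S^3$ example one subsection earlier. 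That is the missing ingredient. One small point worth flagging: the simple-spectrum hypothesis on $D_{\mathfrak q,\alpha}$ needed for the $\mathbb{RP}^n$ model is not stated explicitly in the positive-scalar-curvature subsection, where only the cylinder perturbation $f_1=f\circ p$ is named; but it is part of admissibility of $\mathfrak q$, exactly as in the $S^3$ computation, so your appeal to it is legitimate.
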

\begin{proof}
	While the statement is the same as \cite[Lemma 36.1.1]{KMbook2007}, the arguments are slightly different.
	Let $[\mathfrak a_i], [\mathfrak b_j]$ be two critical points.
	Since the trajectories in $\mathcal B^{\sigma}(Y,\upiota;\mathfrak s, \uptau)$ counted by $\bar\del$ projects to ordinary Morse trajectories, the index difference $\ind_f[\alpha] - \ind_f[\beta]$ can be assumed to be positive.
	If without loss of generality $i \ge 0$ and $j < 0$, then
	\begin{equation*}
		\bar{\gr}([\mathfrak a_i],[\mathfrak b_j]) \ge 2
	\end{equation*}
	and there is no trajectory from boundary-stable to boundary unstable critical points.
\end{proof}
\begin{lem}
	If we replace the Morse function $f$ by a smaller positive multiple, $\epsilon f$, then for small enough $\epsilon$, the component $\bar\del^u_s$ of the boundary map $\bar\del$ in the complex $\bar C_*(Y,\upiota;\mathfrak s,\uptau)$ is zero.	
\end{lem}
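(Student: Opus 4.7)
The component $\bar\del^u_s$ counts zero-dimensional unparametrized boundary-obstructed moduli spaces $\check M([\mathfrak a_i],[\mathfrak b_j])$ with $[\mathfrak a_i]$ boundary-unstable above some $\alpha \in \mathbb T$ (so $\lambda_i(\alpha) < 0$) and $[\mathfrak b_j]$ boundary-stable above some $\beta \in \mathbb T$ (so $\lambda_j(\beta) > 0$). Each such trajectory lies in the boundary $\mathbb T \times \mathbb{RP}^\infty$ of the blown-up reducible stratum, where the flow equations read
\[
\dot A = -\epsilon\,\grad f(A),\qquad \dot\phi = -D_A\phi + \Lambda(\phi)\phi, \quad \Lambda(\phi) = \langle\phi, D_A\phi\rangle,
\]
with $|\phi| = 1$. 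The function $\Lambda(\check\upgamma(t))$ must interpolate from $\lambda_i(\alpha) < 0$ at $t = -\infty$ to $\lambda_j(\beta) > 0$ at $t = +\infty$, and therefore must change sign along the trajectory. Rescaling $f \mapsto \epsilon f$ slows only the $\mathbb T$-motion while leaving the spectral dynamics on $\mathbb{RP}^\infty$ at their natural speed.

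The positive scalar curvature hypothesis, together with the Weitzenb\"ock formula $D_A^2 = \nabla_A^*\nabla_A + s/4$ (valid since the determinant connection $A^t$ is flat on $\mathbb T$), yields
\[
\|D_A\phi\|_{L^2}^2 \ge \tfrac{s_0}{4}\|\phi\|_{L^2}^2 \quad \text{for all } A \in \mathbb T,
\]
where $s_0 > 0$ is the minimum of the scalar curvature. Consequently $D_A$ has a uniform spectral gap about $0$ for every $A \in \mathbb T$, and the sign of any continuously varying eigenvalue of $D_A$ is preserved as $A$ moves along any path in $\mathbb T$.

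Suppose for contradiction that there is a sequence $\epsilon_n \to 0$ and trajectories $\check\upgamma_n = (A_n,\phi_n)$ counted by $\bar\del^u_s$. By the compactness results of Section~\ref{sec:compact}, after extracting a subsequence and applying gauge transformations, the $\check\upgamma_n$ converge in a broken sense to a concatenation of pieces at $\epsilon = 0$ of two types: \emph{fiber pieces} over a fixed critical point $A_*$ of $f$, which are downward gradient lines of $\Lambda$ in the $\mathbb{RP}^\infty$ fiber; and \emph{adiabatic pieces} visible in the rescaled time $\tau = \epsilon_n t$, in which $A(\tau)$ follows a Morse flow line of $f$ on $\mathbb T$ while $\phi$ relaxes to an eigenvector of the instantaneous $D_{A(\tau)}$. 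Along a fiber piece, $\Lambda$ decreases monotonically (the fiber flow is downward gradient of $\Lambda$); along an adiabatic piece, $\Lambda$ equals the followed eigenvalue, whose sign is preserved by the uniform spectral gap above. Hence $\mathrm{sgn}(\Lambda)$ along the limit can pass from $+$ to $-$ but never from $-$ to $+$, contradicting the boundary conditions.

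The main obstacle is justifying the adiabatic description of the second type of piece: this is a two-time-scale problem in which, on the fast time scale, any component of $\phi$ transverse to the followed eigenvector must decay uniformly, which is precisely what the spectral gap of Paragraph~2 supplies, while the slow motion of $A$ is controlled by the rescaled Morse flow. Once this adiabatic reduction is in place, the sign-monotonicity argument forces $\bar\del^u_s$ to vanish for all sufficiently small $\epsilon$, completing the proof.
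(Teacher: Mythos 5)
You have correctly identified the two essential ingredients — the Weitzenb\"ock / positive scalar curvature bound giving a uniform spectral gap for $D_A$ over $\mathbb T$, and the fact that $\Lambda$ must pass from a negative to a positive value along any trajectory counted by $\bar\del^u_s$. However, the route you take through a broken adiabatic limit as $\epsilon_n \to 0$ has a genuine gap that you partly acknowledge but underestimate: the compactness results of Section~\ref{sec:compact} and Section~\ref{sec:Gluing} are stated for a single admissible perturbation, and the limit $\epsilon = 0$ is \emph{not} admissible (the critical locus becomes the Morse--Bott manifold $\mathbb T$, and the gradient of the perturbation vanishes entirely on the reducible locus). Establishing the decomposition of limiting trajectories into ``fiber pieces'' and ``adiabatic pieces,'' and showing that the transverse spinor component relaxes uniformly on the fast time scale, is a full two-scale/adiabatic compactness theorem that the paper never proves and does not need. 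Citing the spectral gap as ``precisely what the adiabatic reduction needs'' names the ingredient but does not supply the argument.

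The argument used in the reference (\cite[Lemma~36.1.2]{KMbook2007}) is much shorter and avoids taking $\epsilon \to 0$ at all: one computes $d\Lambda/dt$ directly along a reducible boundary trajectory. With $\|\psi\|_{L^2}=1$, the unperturbed spinor flow $\dot\psi = -D_B\psi + \Lambda\psi$ gives
\[
\frac{d\Lambda}{dt}
= -2\|D_B\psi\|_{L^2}^2 + 2\Lambda^2 + \big\langle \psi,\,\rho(\dot b)\psi\big\rangle ,
\]
and the last term is $O(\epsilon)$ because $\dot b = -\epsilon\,\grad f$ is $O(\epsilon)$. The Weitzenb\"ock estimate $\|D_B\psi\|^2 \ge (s_0/4)\|\psi\|^2$ (valid because $B^t$ is flat on $\mathbb T$) makes the leading term $\le -s_0/2$ when $\Lambda = 0$, so for $\epsilon$ small enough $d\Lambda/dt < 0$ whenever $\Lambda$ is near zero. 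Thus $\Lambda$ cannot cross from negative to positive, and $\bar\del^u_s = 0$ — for every fixed small $\epsilon$, not merely in an adiabatic limit. I would encourage you to replace the compactness argument with this direct monotonicity computation, which also removes the dependence on an unproved adiabatic theorem.
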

\begin{proof}
	Same as \cite[Lemma~36.1.2]{KMbook2007} which follows from a Weitzenb\"{o}ck formula type argument.

\end{proof}
To analyze the complexes $\check C_*$ and $\check C_*$, observe that because of positive scalar curvature
\begin{equation*}
	\check C_* = C^s_*, \check\del = \bar\del^s_s, \quad 
	\hat C_* = C^u_*, \hat \del = -\bar\del^u_u.
\end{equation*}
Also, for small enough $\epsilon$, with $[-1]$ indicating a degree shift, we have
\begin{equation*}
	\bar C_* = \check C_* \oplus \hat C_*[-1], \quad
	\delbar = \begin{pmatrix}
		\check\del & 0\\
		0 & -\hat\del
	\end{pmatrix}.
\end{equation*}
We have the following structural result of 3-manifolds admitting positive scalar curvature and involution.
\begin{prop}
\label{prop:psc_Floer}
	Suppose $(Y,\upiota)$ has strictly positive scalar curvature, and $(\mathfrak s, \uptau)$ is a real spin\textsuperscript{c} structure on $Y$ with $c_1(\mathfrak s)$ torsion.	
	Then the map $j$ in the long exact sequence relating the three flavours of $\HMR$ is zero and
	\begin{equation*}
		\overline{\HMR}_*(Y,\upiota;\mathfrak s,\uptau) = 
		\widecheck{\HMR}_*(Y,\upiota;\mathfrak s,\uptau) 
		\oplus
		\widehat{\HMR}_*(Y,\upiota;\mathfrak s,\uptau)_{[-1]} 
	\end{equation*}
	as groups graded by $\mathbb J(Y,\mathfrak s) \cong \mathbb Z$.
	Furthermore, we can choose a base point on the fixed point locus of $\upiota$ and an identification of $\mathbb J(Y,\mathfrak s)$ with $\mathbb Z$, we have, as $\mathbb Z$-graded module over $\mathbb F_2[\upsilon]$, we have
	\begin{align*}
		\overline{\HMR}_*(Y,\upiota;\mathfrak s,\uptau) 
		&= H_*(\mathbb T;\mathbb F_2) \otimes \mathbb F_2[\upsilon^{-1},\upsilon]\\
		\widehat{\HMR}_*(Y,\upiota;\mathfrak s,\uptau) 
		&= H_*(\mathbb T;\mathbb F_2) \otimes \mathbb F_2[\upsilon]\\
		\widecheck{\HMR}_*(Y,\upiota;\mathfrak s,\uptau) 
		&= H_*(\mathbb T;\mathbb F_2) \otimes \left( \mathbb F_2[\upsilon^{-1},\upsilon]/\mathbb F_2[\upsilon]\right).
	\end{align*}
	If the spin\textsuperscript{c} structure non-torsion, then the Floer homology group is trivial.
\end{prop}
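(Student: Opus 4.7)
The proof splits naturally into a structural part (obtaining the splitting and vanishing of $j$) and a computational part (identifying the summands explicitly).

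\textbf{Step 1 (reduction to the reducible locus).} The Weitzenböck formula together with strict positivity of the scalar curvature forces $\ker D_A = 0$ for every $\uptau$-invariant spin\textsuperscript{c} connection $A$ whose induced connection on $\det(\mathfrak s)$ is flat, and more generally $\langle D_A^2 \Phi, \Phi\rangle \geq c\|\Phi\|^2$ on an open neighbourhood of $\mathbb T$. Combined with $c_1(\mathfrak s)$ torsion, this guarantees the unperturbed equations have only reducible solutions, and the Morse perturbation $\epsilon f \circ p$ concentrates the critical set of $(\grad \pertL)^{\sigma}$ in the $\mathbb{RP}^{\infty}$-fibres over the critical points of $f$ on $\mathbb T$. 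Hence $C^o_* = 0$, and every critical point is labelled $[\mathfrak a^{\alpha}_i]$ with $\alpha$ a critical point of $f$ and $i \in \mathbb Z$ indexing a simple eigenvalue of $D_{\mathfrak q,B_{\alpha}}$, boundary-stable precisely when $i \ge 0$.

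\textbf{Step 2 (splitting and vanishing of $j$).} By the two preceding lemmas, $\bar\del^s_u = 0$ and, after replacing $f$ by a sufficiently small multiple, $\bar\del^u_s = 0$. Therefore
\[
\bar\del \;=\; \begin{pmatrix} \bar\del^s_s & 0 \\ 0 & \bar\del^u_u \end{pmatrix},
\qquad \check\del \;=\; \bar\del^s_s, \qquad \hat\del \;=\; -\bar\del^u_u,
\]
and the chain map $j = \bigl(\begin{smallmatrix}1 & 0 \\ 0 & -\bar\del^s_u\end{smallmatrix}\bigr)$ is zero (the identity block is killed by $C^o_* = 0$ and the other block vanishes by $\bar\del^s_u = 0$). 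The block-diagonal form of $\bar\del$ immediately yields the splitting
\[
\overline{\HMR}_*(Y,\upiota;\mathfrak s,\uptau) \;\cong\; \widecheck{\HMR}_*(Y,\upiota;\mathfrak s,\uptau) \;\oplus\; \widehat{\HMR}_*(Y,\upiota;\mathfrak s,\uptau)_{[-1]},
\]
and the long exact sequence forces the connecting maps into standard form.

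\textbf{Step 3 (identification with tensor products).} I would now show that, within the reducible locus, gradient trajectories split into "horizontal" Morse trajectories of $f$ on $\mathbb T$ and "fibrewise" trajectories running through the $\mathbb{RP}^{\infty}$-fibre of eigenvectors, mirroring the analysis in Subsection~2.1 and the finite-dimensional model. Over $\mathbb F_2$, the fibrewise Morse differential on $\mathbb{RP}^{\infty}$ vanishes (two unparametrized trajectories between adjacent eigenlines), and for $\epsilon$ small the contribution of mixed trajectories is absent. Thus $\bar\del^s_s$ and $\bar\del^u_u$ factor as $\del_f \otimes 1$, where $\del_f$ is the Morse differential on $\mathbb T$. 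Fixing a base point in $Y^{\upiota}$ determines the lowest eigenvalue in each fibre, identifying $\mathbb J(Y,\mathfrak s,\uptau)$ with $\mathbb Z$ and the $i$-index with a power of $\upsilon$ (of degree $-1$). Taking homology recovers
\[
\overline{\HMR}_* = H_*(\mathbb T; \mathbb F_2)\otimes \mathbb F_2[\upsilon^{-1},\upsilon],
\]
with the boundary-stable half (resp.\ the quotient by the boundary-unstable half) giving the formulae for $\widehat{\HMR}$ and $\widecheck{\HMR}$.

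\textbf{Step 4 (non-torsion case) and main obstacle.} If $c_1(\mathfrak s)$ is not torsion, there are no flat connections on $\det(\mathfrak s)$ at all, so admissibility combined with the Weitzenböck argument (which forbids irreducibles under psc) leaves the critical set empty and all three groups vanish. The principal technical obstacle is Step~3: one must verify rigorously that the trajectory count defining $\bar\del^s_s$ really factors as the horizontal Morse differential of $(\mathbb T, f)$ tensored with the identity on the vertical eigenvalue tower. This requires showing that, in the small-$\epsilon$ regime, non-trivial trajectories which simultaneously shift both the Morse critical point of $f$ and the eigenvalue index are excluded by energy and index considerations; this is the direct analogue of the argument in \cite[Section~36.1]{KMbook2007}, and uses crucially that the gap between adjacent Dirac eigenvalues is bounded away from zero along the compact torus $\mathbb T$ of flat connections.
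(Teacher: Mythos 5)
Your proof follows the paper's approach essentially step for step: use the Weitzenböck formula to reduce the critical set to the reducible locus above $\mathbb T$, invoke the two vanishing lemmas $\bar\del^s_u = 0$ and $\bar\del^u_s = 0$ (the second for small $\epsilon$) to put $\bar\del$ in block-diagonal form, deduce the splitting $\bar C_* = \check C_* \oplus \hat C_*[-1]$ and the vanishing of $j$, and then argue the tensor-product description by separating horizontal Morse trajectories on $\mathbb T$ from the vertical $\mathbb{RP}^\infty$-tower (with the same reference to the small-$\epsilon$ energy/index exclusion of mixed trajectories that underlies the argument in both the paper and KM Section 36.1). The non-torsion case and the acknowledged technical obstacle are handled as in the paper.

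One slip at the end of Step 3: since $C^o_* = 0$, the boundary-\emph{stable} critical points (indices $i \ge 0$) generate $\check C_* = C^s_*$, which computes $\widecheck{\HMR}$, while the boundary-\emph{unstable} ones (indices $i < 0$) generate $\hat C_* = C^u_*$, which computes $\widehat{\HMR}$. Your final sentence assigns the boundary-stable half to $\widehat{\HMR}$, and also conflates "boundary-stable half" with "quotient by the boundary-unstable half" (which denote the same subset of generators) while claiming they yield different flavours. The correct matching is boundary-unstable $\leadsto \widehat{\HMR} \cong H_*(\mathbb T)\otimes\mathbb F_2[\upsilon]$ and boundary-stable $\leadsto \widecheck{\HMR} \cong H_*(\mathbb T)\otimes\mathbb F_2[\upsilon^{-1},\upsilon]/\mathbb F_2[\upsilon]$. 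This does not affect the validity of the splitting or the vanishing of $j$, but the labeling should be fixed.
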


\subsection{Lens spaces and 2-bridge knots}
\hfill \break 
Let $L(p,q)$ be the lens space for $p > q$ coprime and $p > 2$. 
Concretely, $L(p,q)$ is the quotient space of the unit sphere $S^3 \subset \C^2$ under the isometric action
\begin{equation*}
	(z_1,z_2) \mapsto (e^{2\pi i/p}z_1,e^{2\pi i/q}z_2).
\end{equation*}
Under the induced metric from $S^3$, the conjugation involution $\upiota$
\begin{equation*}
	(z_1,z_2) \mapsto (\bar z_1,\bar z_2)
\end{equation*}
descends to an isometry, so we are in the framework of Proposition~\ref{prop:psc_Floer}.
Furthermore, $\upiota$ is the covering involution of  $L(p,q) \to (S^3, K(p,q))$ where $K(p,q)$ is a two-bridge knot.
There is a unique spin structure and $p$ spin\textsuperscript{c} structures.
Hence every spin\textsuperscript{c} structure admits a unique compatible real structure, so there are $p$ real spin\textsuperscript{c} structures.
The $\HMR^{\circ}$ of the lens space with $\upiota$ action, equipped with real spin\textsuperscript{c} structure $(\mathfrak s,\uptau)$, is isomorphic to $\HMR^{\circ}$ of $S^3$ with the unique real spin\textsuperscript{c} structure:
\begin{align*}
		\overline{\HMR}_*(K(p,q);\mathfrak s,\uptau) 
		&\cong \mathbb F_2[\upsilon^{-1},\upsilon],\\
		\widehat{\HMR}_*(K(p,q);\mathfrak s,\uptau) 
		&\cong \mathbb F_2[\upsilon],\\
		\widecheck{\HMR}_*(K(p,q);\mathfrak s,\uptau) 
		&\cong \mathbb F_2[\upsilon^{-1},\upsilon]/\mathbb F_2[\upsilon].
\end{align*}
When the spin\textsuperscript{c} is the one induced from the spin structure, we will assign absolute $\mathbb Q$-gradings to $\HMR^{\circ}$ in a later paper, which completely determines the Floer homologies of the two-bridge knot.

\subsection{$\boldsymbol{S^1\times S^2}$ and the 2-component unlink}
\hfill \break 
Let $Y = S^1 \times S^2$ and $\upiota$ be the covering involution of the covering map $Y \to (S^3, U_2)$.
Equip $Y$ with a product metric which is invariant under $\upiota$ and has positive scalar curvature.
Let $\mathfrak s_0$ be the torsion spin\textsuperscript{c} structure and $\uptau_0:S \to S$ be a real structure.
The involution $-\upiota^*$ acts trivially on first cohomology, so the invariant Clifford torus is a circle
\begin{equation*}
	\mathbb T = H^1(Y; i\reals)/H^1(Y; i\mathbb Z).
\end{equation*}
The space of equivalence classes of configuration $\mathcal B^{\sigma}(Y, \upiota;\mathfrak s, \uptau)$ has the homotopy type of
\begin{equation*}
	\mathbb T \times \mathbb{RP}^{\infty}.
\end{equation*}
Choose a Morse function $f: \mathbb T \to \reals$ with two critical points, so that
\begin{equation*}
	H_*(\mathbb T; \mathbb F_2) = \mathbb F_2 \alpha^0 \oplus \mathbb F_2 \alpha^1,
\end{equation*}
where $\alpha^i$ has Morse index $i$.
From Proposition~\ref{prop:psc_Floer}, we deduce that the Floer homologies of the 2-component unlink consists of two towers:
\begin{cor}
The real monopole Floer homologies of the 2-component unlink, with torsion spin\textsuperscript{c} structure $\mathfrak s_0$ and a compatible real structure $\uptau_0$ are
	\begin{align*}
		\overline{\HMR}_*(U_2;\mathfrak s_0,\uptau_0) 
		&\cong \mathbb F_2[\upsilon^{-1},\upsilon]  \oplus \mathbb F_2[\upsilon^{-1},\upsilon]_{[+1]}\\
		\widehat{\HMR}_*(U_2;\mathfrak s_0,\uptau_0) 
		&\cong \mathbb F_2[\upsilon] \oplus F_2[\upsilon]_{[+1]},\\
		\widecheck{\HMR}_*(U_2;\mathfrak s_0,\uptau_0) 
		&\cong \mathbb F_2[\upsilon^{-1},\upsilon]/\mathbb F_2[\upsilon] \oplus
		\left(\mathbb F_2[\upsilon^{-1},\upsilon]/\mathbb F_2[\upsilon]\right)_{[+1]},
\end{align*}
where the bracket $[+1]$ denotes shifting of degree by $+1$.
Moreover, for a non-torsion spin\textsuperscript{c} structure, all real monopole Floer homology groups are zero.
\end{cor}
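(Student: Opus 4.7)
The plan is to deduce the corollary from Proposition~\ref{prop:psc_Floer} once the hypotheses are verified for $(Y,\upiota)=(S^1\times S^2,\upiota)$. First, I would fix a product metric $g = g_{S^1}\oplus g_{S^2}$ where $g_{S^1}$ is the standard round metric (invariant under the order-two reflection of $S^1$ fixing two points) and $g_{S^2}$ is the round metric (invariant under reflection across a great circle). This metric is $\upiota$-invariant, and since scalar curvature is additive for products and $S^1$ contributes zero while $S^2$ contributes a positive constant, $g$ has strictly positive scalar curvature. Next, since $H^2(Y;\mathbb{Z})\cong \mathbb{Z}$ is torsion-free, the torsion spin\textsuperscript{c} structure $\mathfrak{s}_0$ is the unique one with $c_1(\mathfrak{s}_0)=0$; because $\upiota$ acts trivially on $H^1(Y;\mathbb{F}_2)$ and $\Theta$ is the zero map on $H^2(Y;\mathbb{Z})$ in this case, Lemma~\ref{lem:cohom_char_real_str} produces a compatible real structure $\uptau_0$ on $\mathfrak{s}_0$, so we are in the setting of Proposition~\ref{prop:psc_Floer}.

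Second, I would compute the invariant Picard torus $\mathbb{T}$. Since the reflection on $S^1$ reverses orientation, $\upiota^*$ acts by $-1$ on $H^1(S^1;\mathbb{Z})\cong \mathbb{Z}$, and by the K\"unneth decomposition $\upiota^*$ acts by $-1$ on $H^1(Y;\mathbb{Z})$. Consequently
\[
\mathbb{T} \;=\; H^1(Y;i\mathbb{R})^{-\upiota^*}/H^1(Y;i\mathbb{Z})^{-\upiota^*} \;=\; H^1(Y;i\mathbb{R})/H^1(Y;i\mathbb{Z}) \;\cong\; S^1,
\]
so $H_*(\mathbb{T};\mathbb{F}_2) = \mathbb{F}_2\alpha^0\oplus \mathbb{F}_2\alpha^1$ with $\alpha^i$ of Morse index $i$.

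Third, I would pick a base point on the fixed point locus $\{p_0,p_1\}\times S^1_{\mathrm{eq}}$ (which is nonempty) and the corresponding identification $\mathbb{J}(Y,\mathfrak{s}_0,\uptau_0)\cong \mathbb{Z}$ as in Proposition~\ref{prop:psc_Floer}. Then the three isomorphisms
\begin{align*}
\overline{\HMR}_*(U_2;\mathfrak{s}_0,\uptau_0) &\;\cong\; H_*(\mathbb{T};\mathbb{F}_2)\otimes \mathbb{F}_2[\upsilon^{-1},\upsilon],\\
\widehat{\HMR}_*(U_2;\mathfrak{s}_0,\uptau_0) &\;\cong\; H_*(\mathbb{T};\mathbb{F}_2)\otimes \mathbb{F}_2[\upsilon],\\
\widecheck{\HMR}_*(U_2;\mathfrak{s}_0,\uptau_0) &\;\cong\; H_*(\mathbb{T};\mathbb{F}_2)\otimes \bigl(\mathbb{F}_2[\upsilon^{-1},\upsilon]/\mathbb{F}_2[\upsilon]\bigr)
\end{align*}
follow directly, and the two summands correspond respectively to $\alpha^0$ (placed in degree zero) and $\alpha^1$ (placed in degree $+1$), producing the claimed shift $[+1]$ on the second factor.

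Fourth, for a non-torsion real spin\textsuperscript{c} structure (i.e.\ one with $c_1(\mathfrak{s})\ne 0$ in $H^2(Y;\mathbb{Z})$), the final clause of Proposition~\ref{prop:psc_Floer} applies verbatim and the three Floer homologies vanish. The main subtlety I anticipate is keeping careful track of the grading shift: one must confirm that the generator $\alpha^1$, which sits at the top of the Morse cohomology of $\mathbb{T}$, is indexed so that its contribution appears in degree $+1$ (not $-1$) after applying the identification of $\mathbb{J}$ with $\mathbb{Z}$. This is a book-keeping issue traceable through the definition of $\bar{\gr}$ in Subsection~\ref{subsec:psc}, but it is the only place where signs and gradings must be checked with care; everything else is a direct application of the structural result.
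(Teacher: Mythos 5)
Your proposal is correct and follows essentially the same route as the paper: identify the invariant Picard torus $\mathbb T\cong S^1$ (using that the reflection on the $S^1$-factor makes $\upiota^*=-1$, hence $-\upiota^*=\mathrm{id}$, on $H^1$) and then invoke Proposition~\ref{prop:psc_Floer}, with the two generators of $H_*(S^1;\Ftwo)$ accounting for the two towers and the $[+1]$ shift. The only cosmetic remark is that to see $\mathfrak s_0$ admits a real structure you only need $\Theta(c_1(\mathfrak s_0))=\Theta(0)=0$, not that $\Theta$ itself is zero on all of $H^2$ (though that is also true here since $H^2(Y/\upiota;\mathbb Z)=H^2(S^3;\mathbb Z)=0$).
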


\subsection{Another involution on $\boldsymbol{S^1 \times S^2}$}
\hfill \break 
Set $Y = S^1 \times S^2$, and let $\upiota: Y \to Y$ be of the form $\text{id}\times \varphi$ where $\varphi: S^2 \to S^2$ is an  order-2 rotation fixing the north and the south pole.
Choose a product Riemann metric on $Y$ of the form $\eta^2 + g_{S^2}$, invariant under $\upiota$ and where $\eta = dt$ for $t$ the coordinate on $S^1$, and $g_{S^2}$ has constant curvature.
The set of spin\textsuperscript{c} structures is a torsor over the set of line bundles, isomorphic to
\begin{equation*}
	H^2(Y) \cong  
	H^2(S^2;\mathbb Z)
	\cong \mathbb Z,
\end{equation*}
on which $\upiota^*$ acts trivially.
The condition $\upiota^*L\cong \bar{L}$ implies that only the trivial line bundle admits a real structure, so there is a unique spin\textsuperscript{c} structure admitting real structures.
Moreover, by Lemma~\ref{lem:realspinset} the choices of real structures are isomorphic to
\begin{equation*}
	H^1(S^1;\mathbb Z)/2H^1(S^1;\mathbb Z) \cong \mathbb Z/2\mathbb Z.
\end{equation*}
There exist two spin structures on $Y$, both of which are fixed by $\iota$.
Consider the unique spin structure on $S^2$, that is, a square root $E_0$ of the canonical bundle $K_{S^2}$.
The spin bundle on $S^2$ is given by 
	$$E_0 \oplus (E_0 \otimes K_{S^2}^{-1})$$
where the Clifford multiplication is given by the symbol of $\sqrt{2}(\delbar +\delbar^*)$.
Pulling back this bundle to $Y$ and letting $\eta$ act by $\pm i$ on $E_0 \oplus (E_0 \otimes K_{S^2}^{-1})$ we obtain a spin bundle on $Y$.
There is no irreducible solution to the 3-d Seiberg-Witten equations for positive scalar curvature reason. 
For each choice of real lift, there is a unique reducible critical point of CSD up to gauge.
Let $\uptau_0$ be the real structure coming from the spin lift of $\upiota$ as in Lemma~\ref{lem:spin_lift}, and $\uptau_1$ the other lift which differs by a nontrivial homomorphism $\pi_1(S^1 \times S^2) \to \{\pm 1\} \subset \U(1)$.
By similar arguments in Subsection~\ref{subsec:psc}, we expect
\begin{equation*}
	\widehat{\HMR}_*(Y,\upiota;\mathfrak s,\uptau_0) 
		\cong 
		\widehat{\HMR}_*(Y,\upiota;\mathfrak s,\uptau_1) 
		\cong
		 \mathbb F_2[\upsilon].
\end{equation*}
There are isomorphisms in the other two flavours.
This example highlights the role of choices real structures on a given spin\textsuperscript{c} structure.

\subsection{Brieskorn spheres and torus knots}
\label{subsec:Brieskorn-torus}
\hfill \break
Consider the Brieskorn integral homology sphere $Y = \Sigma(p,q,r)$ for $p = 2$, and $(p,q,r)$ coprime.
As a concrete model, $\Sigma(p,q,r)$ is
given by the link of singularity 
$$
	S^5 \cap \{z_1^p + z_2^q + z_3^r = 0\} \subset \C^3
$$
Since $p = 2$, the map $\upiota(z_1,z_2,z_3) = (-z_1,z_2,z_3)$ is an involution.
Alternatively, view $Y$ as a Seifert fibred space with three exceptional fibres of order $2,q,r$ respectively.
Let $\pi: Y \to C$ be the corresponding projection map where $C$ is the orbifold sphere $S^2(2,q,r)$.
Then $\upiota$ acts on $Y$ by rotation of the $S^1$-fibres, fixing the fibre above the order-2 orbifold point.
The image of this invariant fibre under the quotient of $Y$ to $Y/\upiota = S^3$ is the torus knot $T(q,r)$.

Let us describe the action of $\upiota$ on the Seiberg-Witten moduli spaces, via Mrowka-Ozsv\'{a}th-Yu~\cite{MOY}.
We choose a metric $g$ on $Y$ of the form $\eta^2 + \pi^*(g_C)$ where $i\eta$ is a connection form for a constant curvature connection on $C$.
Since the involution rotates the fibres, $g$ is $\upiota$-invariant.
In fact, \cite{MOY} is based a metric connection (instead of the Levi-Civita connection) that is compatible with a splitting $\T Y \cong \underline{\reals} \oplus \pi^*(\T C)$.
We can think of the use of this nonstandard metric connection as introducing a tame perturbation to the CSD functional.
There is a unique spin\textsuperscript{c} structure $\mathfrak s$ on $Y$ induced from the spin structure on $C$, so the spin bundle over $Y$ is of the form
\begin{equation*}
	S = \pi^*(E_0) \oplus \pi^*(E_0 \otimes K_{C}^{-1}).
\end{equation*}
By the vanishing spinors argument of \cite[Subsection~5.5]{MOY}, the solutions to the Seiberg-Witten equations are circle invariant, and naturally corresponding to (orbifold) K\"ahler vortices on $C$.
A \emph{K\" ahler vortex} is a triple $(B, \alpha, \beta)$, 
satisfying
\begin{align*}
	2F_{B} - F_{K_{C}} &= i(|\alpha|^2 + |\beta|^2) \\
	\delbar_{B}\alpha = 0
	&\text{ and } 
	\delbar_{B}^*\beta = 0\\
	\alpha = 0
	&\text{ or }
	\beta = 0,
\end{align*}
where $B$ is an orbifold connection on an orbifold line bundle $E$, and $\alpha,\beta$ are orbifold sections of $E$ and $E \otimes K_{C}$, respectively.
The moduli space of vortices $\mathcal M^*_{v}$, under taking the zero-set of the spinors, is isomorphic to the moduli space of effective orbifold divisors.
By \cite[Theorem~5.19]{MOY}, the space of the irreducible Seiberg-Witten solutions $\mathcal M_{sw}^*$ consits of the union of points
\begin{equation}
\label{eqn:sum_over_divisors}
    \coprod_E \mathcal \{\alpha_E, \beta_E\},
\end{equation}
where $\alpha_E$ is a vortex whose $\beta_E$-component is zero, and vice versa.
The union is taken over orbifold line bundles
$E \to C$ subject to condition that 
\begin{equation*}
    0 \le \deg(E) < \frac{\deg (K_{\Sigma})}{2}
    = \frac{1}{2}(1 - \frac{1}{2} - \frac{1}{q} - \frac{1}{r}) < 1
\end{equation*}
and $\pi^*(E) \cong \pi^*(E_0)$.
Let $\uptau$ be the real structure over $S \to Y$ induced from the spin lift $\hat{\upiota}$  of $\upiota$.
By circle invariance of solutions, $\hat{\upiota}$ acts trivially on the irreducible part of moduli space, whereas $\jmath$ acts freely by swapping $\alpha_E$ and $\beta_E$.
In particular, $\uptau$ has no irreducible fixed points.
On the other hand, the real structure a unique reducible solution $\theta$.
It follows that
\begin{equation*}
    \HMR^{\circ}(Y;\mathfrak s,\uptau) \cong
    \HMR^{\circ}(S^3;\mathfrak s_{S^3}, \mathfrak \uptau_{S^3}).
\end{equation*}
For a more detailed description of the action of $\uptau$ on the critical points and trajectory see \cite[Lemma~8.7]{montague2022seibergwitten}.

\subsection{Brieskorn spheres and Montesinos knots}
\hfill \break
Let $Y$ again be $\Sigma(p,q,r)$, and follow this same notations above.
Suppose $p,q,r$ are coprime integers.
Viewing $\Sigma(p,q,r)$ as a subset of $\C^3$, we consider the involution
\begin{equation*}
	\upiota(z_1,z_2,z_3) = 
	(\bar z_1, \bar z_2, \bar z_3).
\end{equation*}
This involution is the deck transformation (see e.g. \cite{Saveliev1999}) of the double branched cover $Y \to S^3$, along the Montesinos knot $k(p,q,r)$.
If we think of $Y$ as a Seifert-fibred space $\pi:Y \to C$, then $\upiota$ fixes two points on each fibre, and the image of the fixed point set in $Y$ under $\pi$ is a circle in $C = S^2(p,q,r)$.
And $\upiota$ descends to $C$ as a reflection along the circle.
In particular, the three orbifold points lie on this circle.

We will not carry out the computations of $\HMR^{\circ}(Y)$ in full, but let us convince the readers that there are examples where $\HMR$'s do contain irreducibles.
We appeal again to the Mrowka-Ozsv\'{a}th-Yu description of the critical points of the CSD functional on Seifert-fibred spaces.

First of all, 
the metric $\eta^2 + \pi^*(g_C)$ can be assumed to be $\upiota$-invariant.
Let $\mathfrak s$ be the unique spin structure on $Y$, and $S$ be the spin bundle.
Choose a spin lift $\hat{\upiota}$ and compose it with the right $j$-multiplication to obtain a real structure $\uptau: S \to S$.
To describe the  $\uptau$-real solutions to the Seiberg-Witten equations, modulo the real gauge group, it suffices to describe the action of $\uptau$ on the equivalence classes of ordinary Seiberg-Witten solutions.
Indeed, since $b^1(Y) = 0$, for any solution whose gauge equivalence class is preserved by $\uptau$, we choose an appropriate square root of a gauge transformation to find a representative fixed by $\uptau$.

The pullback 2-form $i\pi^*(\mu_{C})$ of the volume form $\mu_{C}$ on $C$ acts on $S$ by Clifford multiplication, which induces an eigen-decomposition $S \cong S^+ \oplus S^-$, where $i\pi^*(\mu_{C})$ acts by $\pm 1$ on $S^{\pm}$.
Each $S^{\pm}$ is isomorphic to the pullback of a line bundles on $C$, and \cite{MOY} tells us that a solution to the Seiberg-Witten equations are sections either of $S^{+}$ or $S^-$.
But for any spinor $\Psi \in \Gamma(S^+)$, we have
\begin{equation*}
   \rho(i\pi^*(\mu_{C}))\uptau(\Psi)
    = -\uptau(\rho(i\upiota^*\pi^*(\mu_{C}))\Psi)
    = \uptau(\rho(i\pi^*(\mu_{C}))\Psi)
    = \uptau(\Psi).
\end{equation*}
by the anti-linearity, compatibility, and the orientation-reversing property of $\upiota$ on the orbifold Riemann surface.
Since $\upiota$ fixes the divisors, it follows from the above observation that $\uptau$ acts trivially on $\mathcal M_{sw}^*$.
Hence the set of real critical points consists of a unique reducible $\theta$ and the set of irreducible critical points
\begin{equation*}
    \coprod_E \mathcal \{\alpha_E, \beta_E\}.
\end{equation*}

Unlike the ordinary monopole theory, it is no longer the case that the relative indices between the irreducible critical points are even integers.
Indeed, since there is no spectral flow in the connection part, the relative index in ordinary $\HM$ is the real index of a complex Dirac operator, and in $\HMR$ the relative index will be half of the ordinary index.
The relative index in turn was calculated in \cite[Corollary~1.4]{MOY}, using the holomorphic description of the trajectory spaces as divisors on ruled surfaces.
Since the real structure is compatible with the Seifert structure, the holomorphic description can be adapted to the real setting.
The corresponding spaces of real trajectories between the irreducible critical points of the CSD functional can be understood as spaces of divisors invariant under an anti-holomorphic involution on the corresponding ruled surface in \cite{MOY}.
There is a similar interpretation of trajectories from irreducibles to the reducible \cite[Section~10]{MOY} in the real setup.
The author hopes to return to the MOY description of real trajectories in a future paper, as it is crucial for computations of $\HMR^{\circ}$ and Fr\o yshov invariant.

In special cases a closer examination at the values of the CSD functional, compared with the relative indices, rules out the existence of trajectories between some critical points, allowing us to compute $\HMR^{\circ}$.
Indeed, let us consider some families of Brieskorn spheres, with deck transformations from Montesinos knots.

We begin with some general facts about the $\Sigma(p,q,r)$'s.
The chain complexes $\mathfrak C$ of $\HMR^{\circ}$'s are generated by the irreducible solutions of the Seiberg-Witten equations, along with a $\mathbb Z$-tower $\{\theta_i: i \in \mathbb Z\}$ of reducible critical points above the unique reducible Seiberg-Witten solution $\theta$.
Consecutive reducibles differ by relative index $\gr(\theta_i,\theta_{i-1}) = 1$, except that $\gr(\theta_0,\theta_{-1}) = 0$, where $\theta_0$ is boundary stable, and $\theta_{-1}$ is boundary unstable.
Since the value of CSD at the reducible $\theta$ is strictly less than the values at the irreducibles and the flow lines on the blow-up projects down to flow lines of CSD, there does not exist any flow line from the reducibles to irreducibles.
As a convention, we set an absolute grading by declaring the canonical irreducible from the empty orbifold divisor to have degree zero.
\begin{itemize}[leftmargin=*]
	\item $Y_k = \Sigma(2,3,6k+1)$. 
	There are $2\lfloor k/2 \rfloor$ irreducibles all with grading $0$.
	The reducible $\theta_{-1}$ is one degree lower than the irreducibles, so there is no differential between the reducibles and irreducibles, or between irreducibles.
	We conclude that
	\begin{equation*}
		\widehat{\HMR}(Y_k,\upiota;\mathfrak s,\uptau) \cong
		(\mathbb F_2)^{2\lfloor k/2 \rfloor} \oplus
		\mathbb F_2[\upsilon], \quad
		\widecheck{\HMR}(Y_k,\upiota;\mathfrak s,\uptau) \cong
		(\mathbb F_2)^{2\lfloor k/2 \rfloor} \oplus
		\mathbb F_2[\upsilon^{-1},\upsilon]/\mathbb F_2[\upsilon],
	\end{equation*}
	where $\upsilon$ has degree $(-1)$ and each $(\mathbb F_2)$-summand is generated by an irreducible.
	
	\item $Y_k = \Sigma(2,3,6k-1)$.
	There are again $2\lfloor k/2 \rfloor$ irreducibles supported at index zero, but now the reducible $\theta_{-1}$ has index $-1$.
	Since $\theta$ achieves minimum of $\CSD$, we can still deduce that
	\begin{equation*}
		\widecheck{\HMR}(Y_k,\upiota;\mathfrak s,\uptau) \cong
		(\mathbb F_2)^{2\lfloor k/2 \rfloor} \oplus
		\mathbb F_2[\upsilon^{-1},\upsilon]/\mathbb F_2[\upsilon],
	\end{equation*}
	but there could be trajectories from the boundary unstable critical point to an irreducible that may contribute to the differential $\hat{\del}$ of $\widehat{\HMR}$.
	Computation of this component of the differential requires better understanding of the real flow lines from irreducible to reducibles.
	Thus we are unable to compute $\widehat{\HMR}(Y_k,\upiota;\mathfrak s,\uptau)$ the same way as the previous family.
	\item $Y_k = \Sigma(2,5,10k-1)$. 
	For $0 \le i \le k-1$, there are two irreducibles with index $i$, and at index $k$ there are $2\lfloor k/2 \rfloor$ irreducibles.
	The indices are inverse-proportional with respect to the values of CSD, so there exists no trajectory between the irreducible critical points.
	Moreover, the reducible $\theta_{-1}$ has index $k+1$, and the same reasoning before implies that
	\begin{equation*}
		\widehat{\HMR}(Y_k,\upiota;\mathfrak s,\uptau) \cong
		(\mathbb F_2)^{2k+2\lfloor k/2 \rfloor} \oplus
		\mathbb F_2[\upsilon], \quad
		\widecheck{\HMR}(Y_k,\upiota;\mathfrak s,\uptau) \cong
		(\mathbb F_2)^{2k+2\lfloor k/2 \rfloor} \oplus
		\mathbb F_2[\upsilon^{-1},\upsilon]/\mathbb F_2[\upsilon].
	\end{equation*}
	\item $Y_k = \Sigma(2,5,10k+1)$. 
	The irreducibles are the same as in the $(10k-1)$-case, but $\theta_{-1}$ now has grading $k$.
	Hence we stil have
	\begin{equation*}
		\widehat{\HMR}(Y_k,\upiota;\mathfrak s,\uptau) \cong
		(\mathbb F_2)^{2k+2\lfloor k/2 \rfloor} \oplus
		\mathbb F_2[\upsilon], \quad
		\widecheck{\HMR}(Y_k,\upiota;\mathfrak s,\uptau) \cong
		(\mathbb F_2)^{2k+2\lfloor k/2 \rfloor} \oplus
		\mathbb F_2[\upsilon^{-1},\upsilon]/\mathbb F_2[\upsilon].
	\end{equation*}
	\item $Y = \Sigma(2,7,29)$. 
	There are $6$ pairs of irreducibles, and in the decreasing order of CSD-values, their gradings are
	\begin{equation*}
		0, 2, 4, 5, 6, 5.
	\end{equation*}
	The index-$1$ flow lines from an index-$6$ critical point to an index-$5$ critical point cannot be excluded for index reason alone.
	(The ordinary monopole relative index would be $2$.)
	We expect such components of the differential to vanish.
	Indeed, from the holomorphic description of trajectories \cite[Section~7,8,9]{MOY}, the non-empty space of index-$1$ trajectories is isomorphic to $\C^* \cong S^1 \times \reals$.
	The corresponding moduli space of real trajectories is the union of two $\reals$'s, which modulo reparametrization, contributes $0$ over $\mathbb F_2$ to the differential.	
	Finally, the boundary unstable reducible $\theta_{-1}$ has grading $7$ so there is no differential between the reducibles and irreducibles.
\end{itemize}
\bibliographystyle{alpha}
\bibliography{./HMR.bib}
\end{document}